\pgfplotsset{compat=1.13}
\providecommand{\Abs}[1]{\left\lvert#1\right\rvert}
\newcommand\pgfmathsinandcos[3]{%
  \pgfmathsetmacro#1{sin(#3)}%
  \pgfmathsetmacro#2{cos(#3)}%
} \newcommand\LongitudePlane[3][current plane]{%
  \pgfmathsinandcos\sinEl\cosEl{#2} 
  \pgfmathsinandcos\sint\cost{#3} 
  \tikzset{#1/.style={cm={\cost,\sint*\sinEl,0,\cosEl,(0,0)}}} }
\newcommand\LatitudePlane[3][current plane]{%
  \pgfmathsinandcos\sinEl\cosEl{#2} 
  \pgfmathsinandcos\sint\cost{#3} 
  \pgfmathsetmacro\yshift{\cosEl*\sint} \tikzset{#1/.style={cm={\cost,0,0,\cost*\sinEl,(0,\yshift)}}} %
} \newcommand\NewLatitudePlane[4][current plane]{%
  \pgfmathsinandcos\sinEl\cosEl{#3} 
  \pgfmathsinandcos\sint\cost{#4} 
  \pgfmathsetmacro\yshift{#2*\cosEl*\sint} \tikzset{#1/.style={cm={\cost,0,0,\cost*\sinEl,(0,\yshift)}}} %
} %
  \newcommand\DrawLatitudeCircle[2][1]{ \LatitudePlane{\angEl}{#2} \tikzset{current plane/.prefix style={scale=#1}} \pgfmathsetmacro\sinVis{sin(#2)/cos(#2)*sin(\angEl)/cos(\angEl)} \pgfmathsetmacro\angVis{asin(min(1,max(\sinVis,-1)))} \draw[current plane] (\angVis:1) arc (\angVis:-\angVis-180:1); \draw[current plane,dashed] (180-\angVis:1) arc (180-\angVis:\angVis:1); }
\tikzset{%
  >=latex, 
  inner sep=0pt,%
  outer sep=2pt,%
  mark coordinate/.style={inner sep=0pt,outer sep=0pt,minimum size=3pt, fill=black,circle}%
}
\theoremstyle{plain} 
\newtheorem{theorem} {Theorem}
\theoremstyle{remark}
\newtheorem{definition} [theorem] {Definition}
\newtheorem{example} [theorem] {Example} \newtheorem{remark} [theorem] {Remark}
\newtheoremstyle{itplain} 
{6pt} 
{5pt\topsep} 
{\itshape} 
{} 
{\itshape} 
{.}  
{5pt plus 1pt minus 1pt} 
{} 
\theoremstyle{itplain} 
\newtheorem{lemma}[theorem]{Lemma}
\newtheorem{proposition} [theorem]{Proposition}
\newtheorem{corollary} [theorem] {Corollary}
\patchcmd{\section}{\scshape}{\bfseries}{}{}
\makeatletter \renewcommand{\@secnumfont}{\bfseries} \makeatother
\renewcommand{\Re}{\mathrm{Re}} \renewcommand{\Im}{\mathrm{Im}}
\renewcommand{\geq}{\geqslant} \renewcommand{\leq}{\leqslant}
\def\h{\operatorname{h}} \numberwithin{equation}{section} \numberwithin{theorem}{section} \DeclareMathOperator{\sgn}{sgn} \DeclareMathOperator{\SL}{SL}  \DeclareMathOperator{\GL}{GL}     \def\Gr{\operatorname{Gr}}      \DeclareMathOperator{\SO}{SO}  \DeclareMathOperator{\ad}{ad} \DeclareMathOperator{\Ad}{Ad}     \DeclareMathOperator{\Hom}{Hom}      \DeclareMathOperator{\End}{End}  \DeclareMathOperator{\image}{image}   \def\eps{\varepsilon}     \DeclareMathOperator{\glLie}{\mathfrak{g}\mathfrak{l}} \def\Haar{\operatorname{Haar}} \def\PGL{\operatorname{PGL}}               \DeclareMathOperator{\id}{id}     \DeclareMathOperator{\U}{U}   \DeclareMathOperator{\trace}{trace}                        \DeclareMathOperator{\jac}{jac}  \DeclareMathOperator{\Aut}{Aut}       \DeclareMathOperator{\dist}{dist} \DeclareMathOperator{\RS}{RS}  \DeclareMathOperator{\Gal}{Gal}   \DeclareMathOperator{\diag}{diag} \DeclareMathOperator{\Sym}{Sym}    \DeclareMathOperator{\Lie}{Lie}   \DeclareMathOperator{\stab}{stab}     \def\O{\operatorname{O}}   \DeclareMathOperator{\Opp}{Op}    \DeclareMathOperator{\sym}{sym}                   \DeclareMathOperator{\rank}{rank} \DeclareMathOperator{\pr}{pr}    \DeclareMathOperator{\reg}{reg}    \DeclareMathOperator{\Ind}{Ind}      \DeclareMathOperator{\res}{res}       \newcommand{\ev}{\mathrm{ev}}          \DeclareMathOperator{\vol}{vol}     \DeclareMathOperator{\supp}{supp}
\newcommand{\sm}{\left(\begin{smallmatrix}} \newcommand{\esm}{\end{smallmatrix}\right)} \newcommand{\bpm}{\begin{pmatrix}} \newcommand{\ebpm}{\end{pmatrix}}
\ExplSyntaxOn \NewDocumentCommand{\giit}{O{}} { \str_case:nn { #1 } { {}{/\mkern-6mu/} {\big}{\big/\mkern-7mu\big/} {\Big}{\Big/\mkern-10mu\Big/} {\bigg}{\bigg/\mkern-14mu\bigg/} {\Bigg}{\Bigg/\mkern-18mu\Bigg/} } } \ExplSyntaxOff
\author{Paul D. Nelson} \address{Aarhus University, Denmark}
 \email{nelson.paul.david@gmail.com} \subjclass[2010]{Primary 11F67; Secondary 11F70, 11M99, 35A27, 53D50}
\title{Spectral aspect subconvex bounds for $\U_{n+1} \times \U_n$} \hypersetup{ pdfkeywords={}, pdfsubject={}, pdfcreator={Emacs 24.5.1 (Org mode 8.2.10)}}
\begin{document}

\begin{abstract}
  Let $(\pi,\sigma)$ traverse a sequence of pairs of cuspidal automorphic representations of a unitary Gan--Gross--Prasad pair $(\U_{n+1},\U_n)$ over a number field, with $\U_n$ anisotropic.  We assume that at some distinguished archimedean place, the pair stays away from the conductor dropping locus, while at every other place, the pair has bounded ramification and satisfies certain local conditions (in particular, temperedness).  We prove that the subconvex bound
  \[
    L(\pi \times \sigma,1/2) \ll C(\pi \times \sigma)^{1/4 - \delta}
  \]
  holds for any fixed
  \[
    \delta < \frac{1}{8 n^5 + 28 n^4 + 42 n^3 + 36 n^2 + 14 n}.
  \]
  Among other ingredients, the proof employs a refinement of the microlocal calculus for Lie group representations developed with A.\ Venkatesh and an observation of S.\ Marshall concerning the geometric side of the relative trace formula.
\end{abstract}

\maketitle

\setcounter{tocdepth}{1} \tableofcontents

\maketitle

\section{Introduction}\label{sec:introduction}

\subsection{The refined GGP conjecture
for unitary groups}
Let $F$ be a number field with adele ring $\mathbb{A}$.  Let $E/F$ be a quadratic extension, let $V$ be an $(n+1)$-dimension hermitian space over $E$, and let $W$ be an $n$-dimensional nondegenerate subspace of $V$.  The pair of unitary groups $(G,H) := (\U(V),\U(W))$ will be referred to as a unitary Gan--Gross--Prasad (GGP) pair over $F$ (see \S\ref{sec:unitary-groups-ggp} for details).

Let $S$ be a finite set of places of $F$, containing every archimedean place.  We assume that $S$ is sufficiently large in various technical senses enumerated in \S\ref{sec:assumpt-conc-s}.

Let $(\pi,\sigma)$ be a pair of cuspidal automorphic representations $\pi$ of $G(\mathbb{A})$ and $\sigma$ of $H(\mathbb{A})$, respectively, that are unramified outside $S$ and tempered inside $S$.  One may attach to this pair a branching coefficient $\mathcal{L}(\pi,\sigma)$, quantifying how automorphic forms in $\pi$ correlate with those in $\sigma$ (see \S\ref{sec:period-formulas-1} and \S\ref{sec:branch-coeff}).

Assume for the moment that $\pi$ and $\sigma$ are everywhere tempered, i.e., that each of their local components is tempered.  As explained in \cite[\S1.1.6]{2020arXiv200705601B} $\pi$ and $\sigma$ are known to admit base change lifts $\pi_E$ and $\sigma_E$ to $\GL_{n+1}(\mathbb{A}_E)$ and $\GL_{n}(\mathbb{A}_E)$, respectively.  We write
\[
  L(\pi, \sigma, s) := L(\pi_E \otimes \sigma_E^\vee, s),
\]
where a superscripted $\vee$ denotes the contragredient representation and the RHS is the finite part of the Rankin--Selberg $L$-function, given for $s$ of large real part by a degree $2 n(n+1)$ Euler product over the finite primes of $F$.  It is known then that the central $L$-value $L(\pi, \sigma, 1/2)$ coincides up to mild factors with the branching coefficient $\mathcal{L}(\pi,\sigma)$ (see \S\ref{sec:period-formulas} for a more precise statement).  This result, an affirmation of conjectures of Ichino--Ikeda \cite{MR2585578} and N.\ Harris \cite{MR3159075}, was established in the stated generality recently by Beuzart-Plessis--Chaudouard--Zydor \cite[\S1.1.6]{2020arXiv200705601B}, generalizing earlier results of Wei Zhang \cite{MR3164988}, Beuzart-Plessis \cite{2016arXiv160206538B,2018arXiv181200047B} and Beuzart-Plessis--Liu--Zhang--Zhu \cite{2019arXiv191207169B}.

\subsection{Conductor dropping}
We fix an archimedean place $\mathfrak{q} \in S$, which plays a privileged role in our analysis.  According as the pair $(F_\mathfrak{q},E_\mathfrak{q})$ of local components of our fields is $(\mathbb{R},\mathbb{R} \times \mathbb{R})$ or $(\mathbb{R},\mathbb{C})$ or $(\mathbb{C},\mathbb{C} \times \mathbb{C})$, the pair $(G(F_\mathfrak{q}), H(F_\mathfrak{q}))$ of local components of our GGP pair is
\[
  (\GL_{n+1}(\mathbb{R}), \GL_n(\mathbb{R})), \quad (\U(p+1,q), \U(p,q)) \text{ with } p+q=n, \quad (\GL_{n+1}(\mathbb{C}),\GL_n(\mathbb{C})).
\]

As explained in \cite[\S15]{nelson-venkatesh-1}, the local factor of $L(\pi, \sigma, s)$ at the distinguished archimedean place $\mathfrak{q}$ is given in terms of the ``archimedean Satake parameters'' (see \S\ref{sec:satake-param-arch}) $\lambda_{\pi,1},\dotsc,\lambda_{\pi,n+1}$ of $\pi_\mathfrak{q}$ and $\lambda_{\sigma,1},\dotsc,\lambda_{\sigma,n}$ of $\sigma_\mathfrak{q}$ by the formula
\begin{equation}\label{eq:gamma-factors-intro}
  L_\mathfrak{q}(\pi, \sigma, s)
  =
  \begin{cases}
    \prod _{i,j,\pm} \Gamma_\mathbb{R}(s + (\pm (\lambda_{\pi,i} - \lambda_{\sigma,j}))^+ + a_{i j}) & \text{ if } F = \mathbb{R}, \\
    \prod _{i,j,\pm} \Gamma_\mathbb{C}(s + (\pm (\lambda_{\pi,i} - \lambda_{\sigma,j}))^+) & \text{ if } F = \mathbb{C}, \\
  \end{cases}
\end{equation}
for some $a_{i j} \in \{0, 1\}$, where $(x + i y)^+ := |x| + i y$, $\Gamma_\mathbb{R}(s) := \pi^{-s/2} \Gamma(s/2)$ and $ \Gamma_\mathbb{C}(s) := 2 (2 \pi )^{-s} \Gamma(s)$.  The local analytic conductor at $\mathfrak{q}$ is thus
\begin{equation}\label{eqn:conductor-at-q}
  C_\mathfrak{q}(\pi, \sigma)
  \asymp
  \prod_{i,j}
  (1 + |\lambda_{\pi,i} - \lambda_{\sigma,j}|_{F_\mathfrak{q}})^{2},
\end{equation}
where $|x|_{\mathbb{R}} = |x|$ and $|x|_{\mathbb{C}} = |x|^2$.

Informally, we say that $(\pi, \sigma)$ experiences \emph{conductor dropping} at $\mathfrak{q}$ if the factors in \eqref{eqn:conductor-at-q} are not all of comparable size.

\subsection{Main results}\label{sec:main-results}
Our results address the subconvexity problem, a fundamental problem in the analytic theory of $L$-functions (see for instance \cite[\S2]{MR1826269}, \cite[\S4, \S5]{MR2331346}, \cite{MichelVenkateshICM} and \cite{michel-2009}).  Informally, we show that if there is no conductor dropping at $\mathfrak{q}$, then the branching coefficient $\mathcal{L}(\pi,\sigma)$ enjoys a subconvex bound in the archimedean depth aspect at $\mathfrak{q}$.  In the everywhere tempered case, this yields a subconvex bound for $L(\pi, \sigma, 1/2)$.

We now formulate our results more precisely.

The number field $F$ and the GGP pair $(G,H)$ are regarded as fixed, as is the finite set of places $S$ taken large enough in the sense of \S\ref{sec:assumpt-conc-s}.

We assume that $H$ is anisotropic (so that $H(F) \backslash H(\mathbb{A})$ is compact) and that $G(F_\mathfrak{p})$ and $H(F_\mathfrak{p})$ are compact for all archimedean places $\mathfrak{p} \neq \mathfrak{q}$.  The anisotropy of $H$ excludes in particular the split case $E = F \times F$, corresponding to $(G,H) = (\GL_{n+1}(F), \GL_n(F))$.

For each ``auxiliary'' place $\mathfrak{p} \in S - \{\mathfrak{q} \}$, we fix ``bounded subsets'' $\Pi_{G,\mathfrak{p}}$ and $\Sigma_{H,\mathfrak{p}}$ of the respective unitary duals of $G(F_\mathfrak{p})$ and $H(F_\mathfrak{p})$.  More precisely:
\begin{itemize}
\item For archimedean $\mathfrak{p} \neq \mathfrak{q}$ (so that $G(F_\mathfrak{p})$ and $H(F_\mathfrak{p})$ are compact), the sets $\Pi_{G,\mathfrak{p}}$ and $\Sigma_{H,\mathfrak{p}}$ are assumed to be finite.
\item For non-archimedean $\mathfrak{p} \in S$, we require that $\Pi_{G,\mathfrak{p}}$ and $\Sigma_{H,\mathfrak{p}}$ have ``bounded depth'' in the sense that there is a compact open subgroup of $G(F_\mathfrak{p})$ -- call it $J$, for the moment -- such that every element of $\Pi_{G,\mathfrak{p}}$ has a nonzero $J$-invariant vector, and similarly for $\Sigma_{H,\mathfrak{p}}$.
\end{itemize}

We fix a positive constant $c > 0$, which we use to quantify conductor dropping at the place $\mathfrak{q}$.

We denote by $\mathcal{F}$ the set of pairs $(\pi,\sigma)$ of cuspidal automorphic representations of $G(\mathbb{A})$ and $H(\mathbb{A})$ with the following properties:\index{representations!families $\mathcal{F}, \mathcal{F}_T$}

\begin{enumerate}[(i)]
\item $\pi$ and $\sigma$ have unitary central characters.
\item $(\pi,\sigma)$ is locally distinguished: there is a nonzero $H(\mathbb{A})$-invariant functional $\pi \rightarrow \sigma$.  Moreover, $(\pi_\mathfrak{q}, \sigma_\mathfrak{q})$ is orbit-distinguished (see \S\ref{sec:relat-coadj-orbits}).
\item For $\mathfrak{p} \notin S$, the local components $\pi_\mathfrak{p}$ and $\sigma_\mathfrak{p}$ are unramified, i.e., contain nonzero vectors invariant by $G(\mathbb{Z}_\mathfrak{p})$ and $H(\mathbb{Z}_\mathfrak{p})$ (see \S\ref{sec:assumpt-conc-s}).
\item For $\mathfrak{p} \in S$, the local components $\pi_\mathfrak{p}$ and $\sigma_\mathfrak{p}$ are tempered.
\item For $\mathfrak{p} \in S - \{\mathfrak{q} \}$, the local components $\pi_\mathfrak{p}$ and $\sigma_\mathfrak{p}$ belong to the bounded subsets $\Pi_{G,\mathfrak{p}}$ and $\Sigma_{H,\mathfrak{p}}$ of the respective unitary duals fixed above.
\item Let \[T := \max ( \{|\lambda_{\pi,i}|_{F_\mathfrak{q}}\} \cup \{|\lambda_{\sigma,j}|_{F_\mathfrak{q}}\})\] denote the size of the largest archimedean Satake parameter of either $\pi_\mathfrak{q}$ or $\sigma_\mathfrak{q}$.  Then for all $i$ and $j$, we have
  \begin{equation}\label{eqn:no-conductor-drop-assumption}
    |\lambda_{\pi,i} - \lambda_{\sigma,j}|_{F_\mathfrak{q}} \geq c T.
  \end{equation}
\end{enumerate}
We write $\mathcal{F}_T \subseteq \mathcal{F}$ for the subset corresponding to a given value of the parameter $T$ defined above.

For $T \geq 1$ and $(\pi,\sigma) \in \mathcal{F}_T$, the ramification of $\pi$ and $\sigma$ at places $\mathfrak{p} \neq \mathfrak{q}$ is uniformly bounded, so the global analytic conductor $C(\pi, \sigma)$ is comparable to the local analytic conductor $C_\mathfrak{q}(\pi, \sigma)$.  The content of the assumption \eqref{eqn:no-conductor-drop-assumption} is thus that
\begin{equation*}
  C(\pi, \sigma)
  \asymp
  T^{2n(n+1)}.
\end{equation*}

We refer to \S\ref{sec:asymptotic-notation} for our conventions on asymptotic notation and to \S\ref{sec:prel-hecke-algebra} for the meaning of ``$\vartheta$-tempered;'' we note for now only that
\begin{itemize}
\item ``$0$-tempered'' has the same meaning as ``tempered,'' and
\item if the lift of $\sigma$ (as above) to $\GL_n/E$ is cuspidal, then $\sigma$ is known by work of Luo--Rudnick--Sarnak \cite{MR1703764} to be $\vartheta_n$-tempered, where $\vartheta_n := 1/2 - 1/(n^2+1) < 1/2$.
\end{itemize}

\begin{theorem}\label{thm:main}
  Let $T$ be sufficiently large.  Let $(\pi,\sigma) \in \mathcal{F}_T$.  Suppose for some fixed $\vartheta \in [0,1/2)$ that $\sigma$ is $\vartheta$-tempered at every finite place $\mathfrak{p} \notin S$ that splits in $E$.  Then the subconvex bound
  \begin{equation*}
    \mathcal{L}(\pi, \sigma)
    \ll
    T^{n(n+1)/2 - \delta}
  \end{equation*}
  holds for each fixed
  \begin{equation*}
    \delta < 
    \frac{1 - 2 \vartheta }{2 (A + 1 - 2 \vartheta)   },
    \quad
    A :=
    (2 (n+1)^2 - n + 1/2)(n+1).
  \end{equation*}
\end{theorem}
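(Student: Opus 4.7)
The plan is to apply the Jacquet--Rallis relative trace formula (RTF) for the GGP pair $(G,H)$ with a carefully amplified test function, and to extract the subconvex bound by comparing the spectral and geometric sides. I would take factorizable test functions $f = \bigotimes_v f_v$ on $G(\mathbb{A})$ and $f' = \bigotimes_v f_v'$ on $H(\mathbb{A})$. At the distinguished place $\mathfrak{q}$, $f_\mathfrak{q}$ and $f_\mathfrak{q}'$ are microlocal approximate projectors (in the refined Nelson--Venkatesh sense signaled in the abstract) onto coadjoint-orbit neighborhoods of the Satake parameters of $\pi_\mathfrak{q}$ and $\sigma_\mathfrak{q}$; at the auxiliary places $\mathfrak{p} \in S - \{\mathfrak{q}\}$, $f_\mathfrak{p}$ and $f_\mathfrak{p}'$ pick out the prescribed local representations in $\Pi_{G,\mathfrak{p}}$ and $\Sigma_{H,\mathfrak{p}}$; at $\mathfrak{p} \notin S$, they are unramified idempotents. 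The orbit-distinguished condition and the no-conductor-dropping hypothesis \eqref{eqn:no-conductor-drop-assumption} are exactly what guarantee that the required microlocal construction is clean, because the $\mathfrak{q}$-adic coadjoint orbits of $\pi$ and $\sigma$ remain in general position and away from the walls on which the calculus degenerates.

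On the spectral side, the Ichino--Ikeda / N.\ Harris factorization (as established in the stated generality by Beuzart-Plessis--Chaudouard--Zydor) turns the RTF into an expansion in which $|\mathcal{L}(\pi,\sigma)|^2$ appears with a positive weight essentially equal to the symplectic volume of the microlocal neighborhood used in $f_\mathfrak{q}$, and every other term is non-negative. Dropping the other terms reduces the problem to bounding the geometric side of the RTF, which is a sum of orbital integrals indexed by the $H \times H$ orbits on an appropriate symmetric variety attached to $G/H$.

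The geometric side then splits into the identity/small orbit, whose size is dictated by the normalization of the microlocal test function and which produces the trivial (convexity-strength) estimate, and the regular semisimple orbits, which must contribute a genuine power saving. The regular orbits are controlled by combining two inputs: Marshall's observation that the $p$-adic components of the geometric expansion localize each orbit representative into a very small set, and the microlocal decay of $f_\mathfrak{q}$ away from the identity of the symmetric variety. The $\vartheta$-tempered hypothesis at split finite primes enters at this stage, where split-prime local factors of the $\GL_n \times \GL_{n+1}$ type must be bounded in order to convert abstract orbital integrals into quantitative estimates. Adding up these orbital contributions and comparing to the spectral weight yields the bound with the numerical exponent $\delta < (1 - 2\vartheta)/(4 (A + 1 - 2 \vartheta))$, the constant $A = (2(n+1)^2 - n + 1/2)(n+1)$ emerging from the dimension count of the symmetric variety together with the polynomial spread of the refined microlocal operator on the group.

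The main obstacle, in my view, is the quantitative microlocal analysis at $\mathfrak{q}$. One must construct $f_\mathfrak{q}$ and $f_\mathfrak{q}'$ at a scale small enough to isolate $(\pi_\mathfrak{q}, \sigma_\mathfrak{q})$ spectrally, yet large enough that their orbital integrals off the identity can be tracked with a matching explicit power saving. Extending the earlier $\GL_n$ microlocal machinery to accommodate the nonabelian branching geometry of the GGP pair, and to deliver the explicit exponent $A$, will require careful control of how the test function spreads out under transport from the Lie algebra to the group, together with careful bookkeeping of its behavior along each stratum of the Jacquet--Rallis geometric expansion, especially at regular semisimple elements close to the identity where the two competing scales collide.
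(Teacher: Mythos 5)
Your overall skeleton (amplification, microlocal test vectors at $\mathfrak{q}$, positivity to drop all but the target term, main term versus off-diagonal, optimize the amplifier length) matches the paper's strategy, but two structural points are off. First, the paper does not use the Jacquet--Rallis RTF nor the Ichino--Ikeda formula in the proof of Theorem \ref{thm:main}: $\mathcal{L}(\pi,\sigma)$ is \emph{defined} by the ratio $\mathcal{P}=\mathcal{L}(\pi,\sigma)\,|\mathcal{Q}|$ of global to local period forms, the only positivity needed is that of $\mathcal{P}$ (a square of a period, so manifestly nonnegative), and the spectral input is just the amplified pretrace \emph{inequality} of Lemma \ref{lem:pretrace-inequality}, with $\Psi\in\sigma$ held fixed rather than spectrally expanded; your claim that $|\mathcal{L}(\pi,\sigma)|^2$ appears with positive weight via Ichino--Ikeda is both unnecessary and incorrect (the branching coefficient enters linearly). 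Second, the $\vartheta$-temperedness at split primes is not used on the regular orbits: it is needed to bound the \emph{diagonal} contribution $\gamma\in H(F)$, because the amplifier's Hecke operators must be restricted to $H$ and paired against $\sigma$, and the saving $L^{-(1-2\vartheta)j}$ there is exactly where $\vartheta$ enters.

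The genuine gap is that the decisive analytic step -- the power saving on the off-diagonal/geometric side -- is named as ``the main obstacle'' but no mechanism is proposed that would produce it, and this is where all the real content of the paper lies. Concretely, one needs the bilinear estimate of Theorem \ref{thm:construct-test-function}\eqref{thm:f-item-3}, in which the integral of $|f^\sharp(x^{-1}\gamma y)|$ against $|\Psi_1(x)\Psi_2(y)|$ saves a factor $T^{-1/4} d_H(\gamma)^{-1/2}$ over the trivial bound, together with the rational-point count and the lower bound $d_{H_\mathfrak{q}}(\gamma)\gg L^{-j}$ of Lemma \ref{lem:count-relevant-gamma} so that the saving survives amplification. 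The paper obtains the local saving by Marshall's idea in a form you do not reproduce: exploit the approximate equivariance of $\Psi$ under the \emph{center} $Z_H$ of the centralizer $H_{\tau_H}$, reduce to the volume bound \eqref{eqn:volume-to-bound-intro-2}, convert it via the implicit function theorem into a transversality statement between $\Ad^*(\gamma Z_H)\tau$ and the relative coadjoint orbit $\Ad^*(H)\tau$, and finally exclude the degenerate scenario by the linear-algebra Theorem \ref{thm:endgame}, whose proof is a resultant computation that is precisely where the no-conductor-dropping hypothesis \eqref{eqn:no-conductor-drop-assumption} is consumed. Your sketch instead attributes to Marshall a ``$p$-adic localization of orbit representatives'' (not what his observation is, and not how the paper uses it) and offers only ``microlocal decay of $f_\mathfrak{q}$ away from the identity,'' which by itself gives no saving for the critical $\gamma$ that approximately centralize $\tau$; without the $Z_H$-equivariance/transversality input and the endgame linear algebra, the argument does not yield any exponent, let alone the stated $A=(2(n+1)^2-n+1/2)(n+1)$.
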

For example, if $\vartheta =0$ and $n=1,2,\dotsc,10$, then the above bound may be written $\mathcal{L}(\pi,\sigma) \ll_\eps T^{n(n+1)/2 - \delta+\eps}$, where $\delta$ is the reciprocal of
\[32, \, 101, \, 238, \, 467, \, 812, \, 1297, \, 1946, \, 2783, \, 3832, \, 5117.
\]
\begin{corollary}\label{cor:main}
  Let $(\pi,\sigma) \in \mathcal{F}$.  Suppose that $\pi$ and $\sigma$ are everywhere tempered.  Then the subconvex bound
  \begin{equation*}
    L(\pi, \sigma,1/2)
    \ll
    C(\pi, \sigma)^{1/4 - \delta}
  \end{equation*}
  holds for each fixed
  \begin{equation*}
    \delta <
    \frac{1}{4  n(n+1) ((2 (n+1)^2 - n + 1/2)(n+1) + 1)   },
  \end{equation*}
  the RHS of which simplifies to the fraction indicated in the abstract.
\end{corollary}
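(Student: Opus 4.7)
The plan is to deduce Corollary~\ref{cor:main} directly from Theorem~\ref{thm:main} by combining three ingredients: the no-conductor-drop assumption, the everywhere tempered hypothesis, and the refined Gan--Gross--Prasad formula.

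First, the everywhere tempered hypothesis on $(\pi,\sigma)$ allows one to apply Theorem~\ref{thm:main} with $\vartheta = 0$; this yields, for any fixed $\delta_0 < 1/(4(A+1))$ with $A = (2(n+1)^2 - n + 1/2)(n+1)$, the bound
\[
  \mathcal{L}(\pi,\sigma) \ll T^{n(n+1)/2 - \delta_0}.
\]
Next, I would invoke the Ichino--Ikeda/N.~Harris formula in the form established by Beuzart-Plessis--Chaudouard--Zydor~\cite{2020arXiv200705601B} (summarized in \S\ref{sec:period-formulas}): in the everywhere tempered setting, $\mathcal{L}(\pi,\sigma)$ coincides with $L(\pi,\sigma,1/2)$ up to mild factors. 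Since $(\pi,\sigma)$ has uniformly bounded ramification at all places $\mathfrak{p} \neq \mathfrak{q}$ and the archimedean mild factors at $\mathfrak{q}$ enter only through gamma-factor ratios and adjoint $L$-values whose asymptotic size is controlled by the conductor calculation in \S\ref{sec:satake-param-arch}, these mild factors contribute only a bounded (in fact polynomially controlled, absorbable into the implicit constant) multiplicative error to the comparison between $\mathcal{L}(\pi,\sigma)$ and $L(\pi,\sigma,1/2)$.

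Finally, I would translate from the depth parameter $T$ to the global analytic conductor $C(\pi,\sigma)$. The no-conductor-drop assumption \eqref{eqn:no-conductor-drop-assumption} forces every factor in \eqref{eqn:conductor-at-q} to be of size $\asymp T^2$, giving $C_\mathfrak{q}(\pi,\sigma) \asymp T^{2n(n+1)}$; combined with the uniformly bounded ramification outside $\mathfrak{q}$, this yields $C(\pi,\sigma) \asymp T^{2n(n+1)}$, as already observed in the excerpt. Substituting $T \asymp C(\pi,\sigma)^{1/(2n(n+1))}$ into the estimate above turns the convex exponent $n(n+1)/2$ into $1/4$ and converts the saving $\delta_0$ into $\delta_0/(2n(n+1))$. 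The largest admissible exponent is therefore
\[
  \delta < \frac{1}{2n(n+1)} \cdot \frac{1}{4(A+1)} = \frac{1}{8\,n(n+1)\bigl((2(n+1)^2 - n + 1/2)(n+1) + 1\bigr)},
\]
which matches the statement of the corollary and, upon expanding $A$ and clearing denominators, simplifies to the reciprocal of $16 n^5 + 56 n^4 + 84 n^3 + 72 n^2 + 28 n$ as quoted in the abstract.

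There is no genuine obstacle in this deduction: essentially all of the analytic content lies in Theorem~\ref{thm:main}, and what remains is a bookkeeping exercise tying the conductor calculation to the period formula. The only point requiring some care is to confirm that the mild factors appearing in the Ichino--Ikeda identity (local volumes, adjoint $L$-values, archimedean gamma ratios) are controlled uniformly over $\mathcal{F}_T$ as $T \to \infty$; this is automatic given the bounded-depth hypotheses at auxiliary places together with the temperedness hypothesis at $\mathfrak{q}$.
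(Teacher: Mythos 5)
Your proposal is correct and follows essentially the same route as the paper: apply Theorem \ref{thm:main} with $\vartheta = 0$, invoke the Beuzart-Plessis--Chaudouard--Zydor formula to pass from $\mathcal{L}(\pi,\sigma)$ to $L(\pi,\sigma,1/2)$, and convert $T$ to $C(\pi,\sigma) \asymp T^{2n(n+1)}$ to obtain the stated exponent. The one point you label ``automatic'' is not quite free: the adjoint $L$-value at $s=1$ appearing in the comparison is not a priori bounded, and the paper bounds it by $T^{\eps}$ (which suffices, since the inequality on $\delta$ is strict) by majorizing the Euler product on $\Re(s) = 1+\eps$ using temperedness and then applying Phragm\'en--Lindel\"of, while the local factors at places in $S$ are $\O(1)$ by temperedness.
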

For example, for $n=1,2,\dotsc,10$, we obtain $L(\pi,\sigma,1/2) \ll_\eps C(\pi,\sigma)^{1/4-\delta+\eps}$, with $\delta$ the reciprocal of
\[
  128, \, 1212, \, 5712, \, 18680, \, 48720, \, 108948, \, 217952, \, 400752, \, 689760, \, 1125740.
\]
\begin{proof}[Proof of Corollary \ref{cor:main}]
  We apply Theorem \ref{thm:main} with $\vartheta = 0$.  By \cite[\S1.1.6]{2020arXiv200705601B}, we have $L(\pi, \sigma, 1/2) \asymp B_1 B_2 \mathcal{L}(\pi,\sigma)$, where $B_1$ denotes the value at $s=1$ of a certain adjoint $L$-function and $B_2 := \prod_{\mathfrak{p} \in S} L_\mathfrak{p}(\pi, \sigma, 1/2)$.  We bound $B_1 \ll T^{\eps}$ cheaply by majorizing its Euler product on the line $\Re(s) = 1 + \eps$, invoking the temperedness assumption, and applying Phragmen--Lindel{\"o}f.  We bound $B_2 \ll 1$ using the temperedness assumption.
\end{proof}

\begin{remark}\label{rmk:provable-examples}
  There are many infinite families to which Corollary \ref{cor:main} provably applies.  For instance, taking for $E/F$ an imaginary quadratic extension of $\mathbb{Q}$ and starting with one pair $(\pi,\sigma) \in \mathcal{F}$ for which $\pi$ and $\sigma$ are known to be everywhere tempered (e.g., via the cohomology of Shimura varieties), we obtain an infinite family of such pairs by twisting either $\pi$ or $\sigma$ by any sequence of characters having fixed finite conductor and increasing analytic conductor at the archimedean place.  Corollary \ref{cor:main} gives in such cases what one might call a ``twisted $t$-aspect'' subconvex bound on $\U_{n+1} \times \U_n$.
\end{remark}

\begin{remark}\label{rmk:unoptimized}
  We have not optimized the numerical quality of our results.  We have aimed instead for the simplest argument that yields an explicit exponent.  We indicate throughout the text several places where, with additional effort, one should be able to improve that exponent (see Remarks \ref{rmk:cancellation-in-integral}, \ref{rmk:opt-1}, \ref{rmk:opt-2}, \ref{rmk:refine-via-microlocalization-of-Psi}).  To indicate the potential for improvement, suppose $n=1$.  Corollary \ref{cor:main} then reads
  \[
    L(\pi, \sigma, 1/2) \ll_{\eps} C(\pi, \sigma)^{1/4 - 1/128 + \eps},
  \]
  but an optimized version of the proof would be equivalent to the original argument of Iwaniec--Sarnak \cite{iwan-sar} (see Remark \ref{rmk:compare-iwaniec-sarnak}), which gives (at least for $\sigma$ trivial) the much stronger bound
  \[
    L(\pi, \sigma, 1/2) \ll_{\eps} C(\pi, \sigma)^{1/4 - 1/24 + \eps}.
  \]
  We expect similar potential for improvement in higher rank.  We hope the pursuit of such improvement may serve as an interesting challenge. 
\end{remark}

\begin{remark}
  In the special case of Theorem \ref{thm:main} in which $\sigma$ is fixed, one can likely use Rankin--Selberg estimates to remove the dependence upon $\vartheta$.
\end{remark}

\begin{remark}
  The interested reader will see that many of our local calculations apply over any local field.  The restriction to the archimedean aspect comes primarily from invocation of the Kirillov formula, which is most readily available in the desired generality over archimedean local fields.
\end{remark}

\begin{remark}
  It would be an interesting challenge to adapt the proof to the split case $(G,H) = (\GL_{n+1}, \GL_n)$, possibly with $\pi$ or $\sigma$ an Eisenstein series.  The non-compactness of the corresponding quotients presents significant difficulties.\footnote{These difficulties have since been addressed in the preprint \cite{2021arXiv210915230N}.}
\end{remark}

\subsection{Related results}
There have been relatively few subconvex bounds in higher rank (i.e., for groups with a simple factor of rank $\geq 2$, such as $\GL_3$).  The first were X.\ Li's bounds \cite{MR2753605} for $\GL_3 \times \GL_1$ and $\GL_3 \times \GL_2$.  In her setup, the $\GL_3$ form is fixed (and self-dual) while the $\GL_2/\GL_1$ form varies.  Much recent work on the problem is similar in spirit (see for instance \cite{MR3334086, MR3369905, MR3418527,RHPNtwists,2019arXiv190309638A, 2018arXiv181000539M,2020arXiv201010153S,2019arXiv191209473L}).

The first subconvex bounds involving genuine variation of a form on a higher rank group were achieved in a seminal paper of Blomer--Buttcane \cite{MR4203038}, following their development of the $\GL_3$ Kuznetsov formula and a deep study of the associated integral transforms \cite{MR3127065,MR3130656,MR3461048}.  They proved that $L(\pi,1/2) \ll T^{3/4 - 1/120000}$ for full-level spherical Maass forms $\pi$ on $\PGL_3$ that are tempered at $\infty$ and whose parameter $\mu \in (i \mathbb{R})^3$ satisfies, with $T := \max(|\mu_1|, |\mu_2|,|\mu_3|)$, the following conditions for some fixed $c > 0$:
\begin{equation}\label{eqn:BlBu-cond-drop}
  |\mu_k| \geq c T
  \quad (1 \leq k \leq 3)
  \quad
  \text{``no conductor dropping,''}
\end{equation}
\begin{equation}\label{eqn:BlBu-Weyl-wall}
  |\mu_i - \mu_j| \geq c T
  \quad (1 \leq i < j \leq 3)
  \quad
  \text{``avoidance of Weyl chamber walls.''}
\end{equation}
Our assumption \eqref{eqn:no-conductor-drop-assumption} is analogous to \eqref{eqn:BlBu-cond-drop} (with the differences $\lambda_{\pi,i} - \lambda_{\sigma, j}$ playing the role of the $\mu_k$), but we do not require any assumption like \eqref{eqn:BlBu-Weyl-wall}.  Indeed, our method applies in the $t$-aspect, where $\mu_i - \mu_j \ll 1$.

Blomer--Buttcane generalized their result to the family of generalized principal series on $\GL_3$ \cite{MR4039487}.

Simon Marshall (March 2018 informal IAS seminar) tentatively announced a subconvex bound on GGP pairs in the $p$-adic depth aspect, for principal series representations with parameters satisfying the analogues of \eqref{eqn:BlBu-cond-drop} and \eqref{eqn:BlBu-Weyl-wall}, introducing important and fundamental ideas (see \S\ref{sec:relat-trace-form-1}).

Kumar--Mallesham--Singh \cite{2020arXiv200607819K}, by a different method than that of Blomer--Buttcane, recently established subconvex bounds on $\PGL_3$ (with any fixed $\GL_2$ twist) assuming \eqref{eqn:BlBu-cond-drop} and a modified form of \eqref{eqn:BlBu-Weyl-wall}.  P.\ Sharma \cite{2020arXiv201010153S} recently generalized Blomer--Buttcane's results to the case of fixed cuspidal $\GL_2$ twists.

\subsection{Proof sketch}\label{sec:proof-sketch}
We record here a very high-level overview of the proof, intended for experts.  We give in \S\ref{sec:overview-proof} a more leisurely introduction to the main ideas of the paper.

\subsubsection{Setup and notation}\label{sec:proof-sketch-notation}
For simplicity of presentation, we pretend that
\[
  F = \mathbb{Q}, \quad S = \{\infty\}, \quad \mathfrak{q} = \infty
\]
so that $F_\mathfrak{q} = \mathbb{R}$.  We suppose further that $E$ splits at $\infty$, so that
\begin{equation*}
  G(F_\mathfrak{q}) = G(\mathbb{R}) = \GL_{n+1}(\mathbb{R}), \quad H(F_\mathfrak{q}) = H(\mathbb{R}) = \GL_n(\mathbb{R}).
\end{equation*}
We replace the adelic quotients $G(F) \backslash G(\mathbb{A})$ and $H(F) \backslash H(\mathbb{A})$ with real quotients, and simplify our notation a bit:
\[
  G := \GL_{n+1}(\mathbb{R}), \quad H := \GL_{n}(\mathbb{R}), \quad [G] := \Gamma \backslash G, \quad [H] := \Gamma_H \backslash H.
\]
Here $\Gamma$ and $\Gamma_H$ are lattices satisfying $\Gamma_H = \Gamma \cap H$, with $\Gamma_H$ cocompact.  Finally, for $(\pi,\sigma) \in \mathcal{F}_T$, we will confuse $\pi$ and $\sigma$ with their local components at $\infty$.

We write $\mathfrak{g}, \mathfrak{h}$ for the Lie algebras and $\mathfrak{g}^\wedge, \mathfrak{h}^\wedge$ for their imaginary duals.  We write $Z \leq G$ and $Z_H \leq H$ for the centers and $\mathfrak{z}, \mathfrak{z}_H$ for their Lie algebras.

\subsubsection{Analytic test vectors}\label{sec:analyt-test-vect-1}
Let $(\pi,\sigma) \in \mathcal{F}_T$ with $T$ large.  We construct test vectors in the manner described in \cite[\S1.10]{nelson-venkatesh-1}:

The distinction assumption implies that we may choose an element $\tau \in \mathfrak{g}^\wedge$, with restriction $\tau_H \in \mathfrak{h}^\wedge$, so that $\tau$ (resp. $\tau_H$) lies in the coadjoint orbit $\mathcal{O}_\pi$ of $\pi$ (resp.  $\mathcal{O}_\sigma$ of $\sigma$) and $|\tau| \asymp T$.

We define a test function $f \in C_c^\infty(G)$, as follows.  We take $f$ supported near the identity.  We may describe it by its pullback to the Lie algebra $\mathfrak{g}$, hence by the Fourier transform $a : \mathfrak{g}^\wedge \rightarrow \mathbb{C}$ of that pullback.  We specify that $a$ is a smooth bump concentrated on
\begin{equation}\label{eq:lefttau-+-xi}
  \left\{\tau + \xi : \xi ' \ll T^{1/2+\eps}, \xi '' \ll T^{\eps} \right\},
\end{equation}
where $\xi = (\xi ', \xi '')$ denotes a coordinate system for which the $\xi '$-axis consists of all directions tangent to $\mathcal{O}_\pi$ at $\tau$ while the $\xi ''$-axis consists of the orthogonal directions, as in Figure \ref{fig:tau-coordinates-intro-0}.  Then $f$ is essentially supported on
\begin{equation}\label{eqn:support-f-intro}
  \left\{ g \in G : g = 1 + \O(T^{-\eps}),
    \, \,
    \Ad^*(g) \tau = \tau + \O(T^{1/2-\eps})
  \right\}.
\end{equation}
The corresponding operator $\pi(f)$ is approximately a rank one projector with range spanned by a unit vector ``microlocalized at $\tau$'' in the sense of \cite{nelson-venkatesh-1}.  This feature, which we discuss further in \S\ref{sec:micr-vect}, motivates the shape of \eqref{eq:lefttau-+-xi}.

\setlength{\unitlength}{1.5cm}
\begin{figure}
  \begin{picture}(4,3)

    \put(-1,0){\vector(1,0){6}}
    \put(-1,0){\vector(0,1){1.5}}

    {%
      \thicklines
      \color{black}%
    }

    {%
      \thicklines
      \color{black}%

      {%
        \thicklines
        \color{black}%
        \qbezier(0,1)(2,0)(4,1)
        \put(-0.6,1.1){$\mathcal{O}_\pi$}
      }

      \color{black}
      \put(-1.4,1.4){$\xi''$}
      \put(4.9,-0.25){$\xi'$}
      \put(2,0.35){$\tau$}
      \put(1.95,0.5){\circle*{0.1}}
    }

    {%
      \thicklines
      \color{black}%
      \multiput(0.9,0.3)(0,0.1){4}{\line(0,1){0.05}}
      \multiput(2.8,0.3)(0,0.1){4}{\line(0,1){0.05}}
    }

    {%
      \thicklines
      \color{black}%
      \multiput(0.8,0.3)(0.1,0){20}{\line(1,0){0.05}}
      \multiput(0.8,0.7)(0.1,0){20}{\line(1,0){0.05}}
      \put(2.9, 0.4){$T^{\eps}$}
      \put(1.5,0.8){$T^{1/2+\eps}$}
    }

  \end{picture}
  \caption{ The coadjoint orbit $\mathcal{O}_\pi$ near $\tau$.  The dotted rectangle indicates the support of $a$.  }
  \label{fig:tau-coordinates-intro-0}
\end{figure}
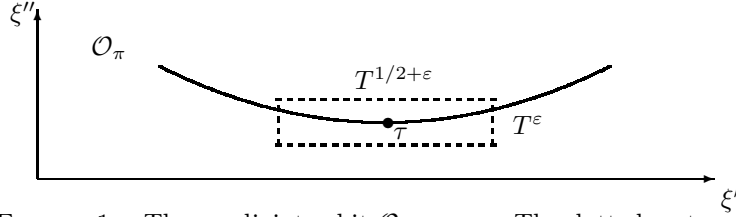

We construct a unit vector $\Psi \in \sigma$, microlocalized at $\tau_H$.

The definition of $\mathcal{L}(\pi,\sigma)$, combined with the local matrix coefficient integral asymptotics of \cite[\S19]{nelson-venkatesh-1}, give the integral representation
\begin{equation}\label{eqn:high-level-sketch-integral-rep}
  \mathcal{L}(\pi,\sigma)
  \approx
  T^{n^2/2}
  \sum _{\varphi \in \mathcal{B}(\pi)}
  \left\lvert
    \int _{[H]}
    \pi(f) \varphi
    \cdot \bar{\Psi}
  \right\rvert^2.
\end{equation}

A key ingredient in these constructions and proofs is a refinement of the microlocal calculus developed in \cite{nelson-venkatesh-1} (see \S\ref{sec:microlocal-calculus}).

\subsubsection{Relative trace formula}\label{sec:relat-trace-form-1}
The main point is to show that
\begin{equation}\label{eqn:basic-pretrace-intro-0}
  T^{-n/2}
  \int_{x,y \in [H]}
  \sum _{\gamma \in \Gamma }
  \bar{\Psi}(x)
  \Psi(y)
  f(x^{-1} \gamma y) \, d x \, d y
  = \text{(main term)}  + \O(T^{-\delta}).
\end{equation}
Indeed, by combining \eqref{eqn:high-level-sketch-integral-rep} with the pretrace formula for $[G]$ applied to $f$, we may understand the LHS of \eqref{eqn:basic-pretrace-intro-0} as an average of $\mathcal{L}(\pi,\sigma)$ over $\pi$ in a family of size $T^{n(n+1)/2}$.  In view of our ``no conductor dropping'' assumption, the family size is approximately $C(\pi,\sigma)^{1/4}$.  Strictly speaking, we should replace $f$ here by a convolution $f \ast f^*$, but that convolution has the same shape as $f$.  Given a sufficiently robust proof of \eqref{eqn:basic-pretrace-intro-0}, the amplification method will thus deliver a subconvex bound.  We refer to \S\ref{sec:relat-trace-form} for further discussion concerning the reduction to \eqref{eqn:basic-pretrace-intro-0}.

An apparent main term in \eqref{eqn:basic-pretrace-intro-0} of size $\O(1)$ comes from those $\gamma$ lying in $H Z$, so it suffices to show that the remaining terms contribute $\O(T^{-\delta})$.  Since $f$ is supported in a fixed compact set, the number of $\gamma$ is $\O(1)$ (although after amplification, that number will instead be a small positive power of $T$).  Fixing a fundamental domain $\mathcal{H}$ for $[H]$, our task is thus to show for each essentially fixed $\gamma \in \Gamma - H Z$ that
\begin{equation}\label{eqn:off-diagonal-intro}
  T^{-n/2} \int _{x,y \in \mathcal{H}}
  \bar{\Psi}(x) \Psi(y)
  f(x^{-1} \gamma y) \, d x \, d y
  \ll T^{-\delta}.
\end{equation}

Such integrals were first considered by S.\ Marshall in a related $p$-adic depth aspect.  He proposed that \eqref{eqn:off-diagonal-intro} should follow from consideration of the $L^2$-normalization of $\Psi$, the (approximate) equivariance of $\Psi$ with respect to the centralizer $H_{\tau_H}$ of $\tau_H$, and the size and support properties of $f$.  More precisely, he proposed the nontrivial volume bound
\begin{equation}\label{eqn:volume-to-bound-intro}
  \vol \left\{z \in H_{\tau_H} \middle\vert
    \begin{array}{c}
      z = \O(1),
      \\
      f(x^{-1} \gamma y z) \neq 0
      \text{ for some } x \in \mathcal{H}
    \end{array}
  \right\} \ll T^{-\delta},
\end{equation}
which turns out to yield \eqref{eqn:off-diagonal-intro} (with a different value of $\delta$) after using the approximate equivariance to replace $\Psi(y)$ by an average of $\Psi(y z)$ over small elements $z \in H_{\tau_H}$ and appealing to Cauchy--Schwarz.  We emphasize here that $\gamma \in \Gamma - H Z$ is essentially fixed.  

\subsubsection{Volume bounds}\label{sec:volume-bounds}
In the setting of Theorem \ref{thm:main}, we establish \eqref{eqn:volume-to-bound-intro} indirectly, only as a consequence of the stronger estimate
\begin{equation}\label{eqn:volume-to-bound-intro-2}
  \vol \left\{z \in Z_H
    \middle\vert
    \begin{array}{c}
      z = \O(1) ,
      \\
      f(x^{-1} \gamma y z) \neq 0
      \text{ for some } x \in \mathcal{H}
    \end{array}
  \right\} \ll T^{-\delta}
\end{equation}
obtained by specializing from the $n$-dimensional centralizer $H_{\tau_H}$ of $\tau_H$ to the $1$-dimensional center $Z_H$ of $H$.

That central directions suffice for establishing \eqref{eqn:volume-to-bound-intro} was first observed empirically after extensive numerical experimentation using CoCalc \cite{cocalc}

\subsubsection{Coadjoint reformulation}
The crucial case in proving \eqref{eqn:volume-to-bound-intro-2} (to which others ultimately reduce) is when $y = 1$ and $\Ad^*(\gamma) \tau = \tau$.  In view of the support condition \eqref{eqn:support-f-intro} for $f$, we reduce to verifying that
\begin{equation}\label{eqn:coadjoint-reformulate-volume-intro}
  \vol \left\{ z \in Z_H
    \middle\vert
    \begin{array}{c}
      z = \O(1), \\
      \Ad^*(\gamma z) \tau \in \Ad^*(H) \tau +
      \O(T^{1/2})
    \end{array}
  \right\}
  \ll T^{-\delta}.
\end{equation}

\subsubsection{Reduction to the Lie algebra}\label{sec:reduct-lie-algebra}
The estimate \eqref{eqn:coadjoint-reformulate-volume-intro} would fail most spectacularly if $\Ad^*(\gamma Z_H)\tau \subseteq \Ad^*( H ) \tau$.  Using that $\Ad^*(\gamma z ) \tau = \Ad^*(\gamma z \gamma^{-1}) \tau$ and taking Lie algebras, we would then have $\ad^*(\Ad(\gamma) \mathfrak{z}_H) \tau \subseteq \ad^*(\mathfrak{h}) \tau$.  If this last containment were to hold for all $\gamma$ in some one-parameter subgroup, we would obtain
\begin{equation}\label{eqn:intro-reduction-after-implicit-func-thm}
  \ad^*(\ad(x) \mathfrak{z}_H) \tau \subseteq
  \ad^*(\mathfrak{h}) \tau
  \text{ for some $x \in \mathfrak{g}_\tau - \mathfrak{z}$,}
\end{equation}
where $\mathfrak{g}_\tau := \{x \in \mathfrak{g} : \ad^*(x) \tau = 0\}$.

In \S\ref{sec:volume-estimates}, we use the implicit function theorem and related ideas to reverse the above reasoning: we show that to establish the required volume bound \eqref{eqn:volume-to-bound-intro-2}, it suffices to exclude the possibility of the apparent ``worst-case scenario'' \eqref{eqn:intro-reduction-after-implicit-func-thm} at the level of the Lie algebra.

\subsubsection{Endgame}\label{sec:endgame}
The exclusion of \eqref{eqn:intro-reduction-after-implicit-func-thm} is the content of the following linear algebra result, proved in \S\ref{sec:some-invar-theory}.  Here the eigenvalue condition on $\tau$ corresponds precisely to the ``no conductor dropping'' assumption on $(\pi, \sigma)$.

\begin{theorem}[Theorem \ref{thm:stability-consequence-for-1-H}, formulated explicitly]\label{thm:endgame}
  Let $M_{n}$ denote the space of complex $n \times n$ matrices, included in the space $M_{n+1}$ of $(n+1) \times (n+1)$ matrices as the upper-left block, e.g., for $n = 2$, as
  \[
    \begin{pmatrix}
      \ast & \ast & 0 \\
      \ast & \ast & 0 \\
      0 & 0 & 0
    \end{pmatrix}.
  \]
  Let $\tau$ be an element of $M_{n+1}$ with the property that no eigenvalue of $\tau$ is also an eigenvalue of the upper-left $n \times n$ submatrix $\tau_H$ of $\tau$.  Let $x \in M_{n+1}$ with $[x,\tau] = 0$, where $[a,b] := a b -b a$.  Let $z$ denote the image in $M_{n+1}$ of the identity element of $M_n$, thus $z = \diag(1,\dotsc,1,0)$ with $n$ ones.  Suppose that
  \[
    [x,[z,\tau]] = [y, \tau]
  \]
  for some $y \in M_n$.  Then $x$ is a scalar matrix.
\end{theorem}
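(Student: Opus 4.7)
The plan is to apply the Jacobi identity to reformulate the hypothesis, exploit the no-shared-eigenvalue condition to conclude that $\tau$ is a regular (cyclic) matrix, and then solve a small linear system. By the Jacobi identity and $[x,\tau]=0$, the condition $[x,[z,\tau]]=[y,\tau]$ becomes $[[x,z],\tau]=[y,\tau]$, i.e.\ $[x,z]\in M_n+\mathfrak{g}_\tau$ where $\mathfrak{g}_\tau:=\ker(\ad\tau)$. Next, I would show that $b$ (the upper part of the last column of $\tau=\bpm\tau_H & b\\ c^T & d\ebpm$) is a cyclic vector for $\tau_H$: if the cyclic subspace $V:=\mathbb{C}[\tau_H]\,b$ were proper in $\mathbb{C}^n$, then $V\oplus\mathbb{C} e_{n+1}$ would be a proper $\tau$-invariant subspace of $\mathbb{C}^{n+1}$ (since $\tau e_{n+1}=b+de_{n+1}$ and $\tau V\subseteq V+\mathbb{C} e_{n+1}$), and the induced action on the quotient $\mathbb{C}^{n+1}/(V\oplus\mathbb{C} e_{n+1})\cong\mathbb{C}^n/V$ would coincide with the induced action of $\tau_H$, producing a common eigenvalue of $\tau$ and $\tau_H$. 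A symmetric argument applied to $\tau^T$ shows $c$ is cyclic for $\tau_H^T$, and the identity $\tau^k e_{n+1}=\bpm V_k\\\alpha_k\ebpm$ with $V_k\in V$ generated by $V_{k+1}=\tau_H V_k+\alpha_k b$ then forces $e_{n+1}$ to be a cyclic vector for $\tau$ itself, so $\tau$ is regular and $\mathfrak{g}_\tau=\mathbb{C}[\tau]$. Hence $x=p(\tau)=\sum_{k=0}^n a_k\tau^k$ for some $(a_0,\dots,a_n)\in\mathbb{C}^{n+1}$.

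Writing $\tau^k=\bpm A_k & b_k\\ c_k^T & d_k\ebpm$ (with $b_0=c_0=0$, $d_0=1$, $b_1=b$, $c_1=c$, and the recursions $b_{k+1}=\tau_H b_k+d_k b$ and $c_{k+1}^T=c^T A_k+dc_k^T$), a direct block computation gives $[\tau^k,z]=\bpm 0 & -b_k\\ c_k^T & 0\ebpm$. Thus $[x,z]$ has zero weight-$0$ component under $\ad(z)$, weight-$(+1)$ column $-\sum_k a_k b_k$, and weight-$(-1)$ row $\sum_k a_k c_k^T$. Decomposing the equation $[x,z]=h+q(\tau)$ with $h\in M_n$ and $q(\tau)=\sum_k\beta_k\tau^k$ by $\ad(z)$-weight yields the system
\[
\sum_k(\beta_k+a_k)b_k=0,\qquad \sum_k(\beta_k-a_k)c_k^T=0,\qquad \sum_k\beta_k d_k=0,
\]
the last equation expressing that the $(n+1,n+1)$-entry of $q(\tau)$ vanishes so that $h=-q(\tau)_0$ lies in $M_n$.

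The recursion for $b_k$ combined with cyclicity of $b$ implies $\{b_1,\dots,b_n\}$ is a basis of $\mathbb{C}^n$, so the linear map $\gamma\mapsto\sum_k\gamma_k b_k$ from $\mathbb{C}^{n+1}$ to $\mathbb{C}^n$ has one-dimensional kernel $\mathbb{C} e_0$, where $e_0:=(1,0,\dots,0)$; similarly for $c$. Hence $a+\beta,\beta-a\in\mathbb{C} e_0$; adding forces $\beta\in\mathbb{C} e_0$, say $\beta=\mu e_0$, and the third equation gives $\mu=\sum_k\beta_k d_k=0$, so $\beta=0$ and $a=\lambda e_0$ for some scalar $\lambda$. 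Therefore $x=p(\tau)=\lambda I_{n+1}$ is a scalar matrix. The main obstacle in this plan is the cyclicity step; once $b$ is known to be cyclic for $\tau_H$ (and symmetrically for $c$), the remainder is essentially linear algebra in the cyclic basis.
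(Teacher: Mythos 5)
Your proof is correct, but it follows a genuinely different route from the paper. The paper interprets the hypothesis as degeneracy of the bilinear form $(x,y)\mapsto\langle [z,[x,y]],\tau\rangle$ on $\mathfrak{g}_\tau/\mathfrak{z}\times\mathfrak{h}_{\tau_H}$ and then computes the Gram determinant of this form in the power bases $\{\tau^i\}$, $\{\tau_H^{j-1}\}$: Theorem \ref{thm:det-id} identifies that determinant, up to the explicit constant $2^n(-1)^{n(n-1)/2}$, with the resultant $\Delta_0(\tau)$ of the characteristic polynomials of $\tau$ and $\tau_H$, by a Vandermonde-style divisibility--homogeneity argument plus evaluation at an explicit regular nilpotent $\tau$, and it quotes several stability facts from \cite{nelson-venkatesh-1} (regularity of stable elements, the common-eigenvector criterion, the identity $[\mathfrak{h},\tau]=\mathfrak{g}_\tau^\perp\cap\mathfrak{h}_{\tau_H}^\perp$). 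You instead use the Jacobi identity to rewrite the hypothesis as $[x,z]\in\mathfrak{h}+\mathfrak{g}_\tau$, prove directly via an invariant-subspace/quotient argument that the disjoint-spectrum hypothesis forces $b$ to be $\tau_H$-cyclic and $c^T$ to be cyclic for $\tau_H^T$, deduce that $e_{n+1}$ is $\tau$-cyclic so that $\mathfrak{g}_\tau=\mathbb{C}[\tau]$, and then solve a small linear system in the cyclic bases after an $\ad(z)$-weight decomposition; the unitriangular change of basis makes the kernels one-dimensional and the $(n+1,n+1)$-entry equation kills the remaining freedom. Your argument is self-contained and elementary, avoids the determinantal identity and the external stability machinery, and proves exactly the implication asserted in the theorem; the cyclicity facts you establish by hand are essentially the Rallis--Schiffmann-type stability criteria the paper cites. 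What the paper's longer route buys is stronger output: an if-and-only-if characterization of stability (Theorem \ref{thm:stab-characterization-1}) and the determinant identity $\Delta=\pm 2^n\Delta_0$, which are of independent interest (cf.\ Remark \ref{rmk:reduct-determ-ident-1}) and plug into the general GGP framework of the paper. One cosmetic remark: for the row vectors it is cleaner to use the recursion $c_{k+1}^T=c_k^T\tau_H+d_k\,c^T$ (from $\tau^{k+1}=\tau^k\tau$), which makes the unitriangularity of the change of basis immediate; your stated recursion is also correct and yields the same conclusion by induction.
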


\subsection{Organization and reading suggestions}

The paper is intended to be read linearly, but some general comments on its organization may be useful.

\S\ref{sec:gener-prel-notat} introduces general notation and conventions, in effect throughout the paper.

The paper is organized around a main local result, Theorem \ref{thm:construct-test-function}.  We encourage the interested reader to focus first on understanding the statement of that result.  In the language of \S\ref{sec:overview-proof}, it describes the essential properties of the test function $f$ and the vector $\Psi$.

The remainder of the paper divides roughly into four parts, each of which may be read independently:
\begin{itemize}
\item In \S\ref{sec:prel-proof-reduct}--\S\ref{sec:reduction-proof}, we assume our main local result and deduce our main global result, Theorem \ref{thm:main}.  These sections contain the expected global ingredients: amplification method, sums over rational points, and so on.  The remainder of the paper is devoted to the proof of the main local result.
\item \S\ref{sec:repr-theor-prel}--\S\ref{sec:analyt-test-vect} develop the harmonic analysis needed to reduce the proof of the main local result to that of bilinear forms estimates like \eqref{eqn:off-diagonal-intro}.  The techniques developed here may be more broadly useful.
\item \S\ref{sec:bilinear-forms-estimates} applies the Cauchy--Schwarz inequality and topological arguments to reduce such bilinear forms estimates to volume bounds like \eqref{eqn:volume-to-bound-intro-2}.  We show moreover that it suffices to treat the case that the group element $\gamma$ is arbitrarily close to the identity.
\item \S\ref{sec:volume-estimates} applies the implicit function theorem and related ideas to reduce the proof of the volume bounds to a problem in linear algebra.
\item \S\ref{sec:some-invar-theory} solves the linear algebra problem.
\end{itemize}

We have attempted to make each section self-contained.  We have included a short notational index at the end of the paper.

\subsection*{Acknowledgments}
We would like to thank Valentin Blomer, Simon Marshall, Philippe Michel, Abhishek Saha, Peter Sarnak, Akshay Venkatesh, Liyang Yang and Wei Zhang for their helpful feedback on an earlier draft.  We are also grateful to Liyang Yang for suggesting an improvement to Lemma \ref{prop:scratch-research:let-f-be} and to the anonymous referee for a very detailed reading and many helpful comments and corrections.

\section{Overview}\label{sec:overview-proof}
Expanding upon the proof sketch of \S\ref{sec:proof-sketch}, we now explain the main ideas of the paper in more detail, but still informally.  The reader interested only in actual proofs may proceed directly to \S\ref{sec:gener-prel-notat}.

We retain throughout this section the setup and notation of \S\ref{sec:proof-sketch-notation}.

\subsection{Amplified moments}
Our basic strategy may be phrased in a familiar way in terms of the amplification method applied to moments of families of $L$-functions.  The implementation of that strategy relies heavily upon the method developed in \cite{nelson-venkatesh-1} for analyzing families of automorphic forms using coadjoint orbits and microlocalized vectors.  We outline below how a refinement of that method applies in our setting and reduces our task to elementary problems in calculus and linear algebra.

We note the arguments in the body of this paper do not explicitly refer to families or microlocalized vectors, but instead work with them implicitly through their approximate projectors.  We hope that by phrasing this overview in such terms, it may serve as a useful guide to those arguments.

Let $(\pi_0,\sigma) \in \mathcal{F}_T$ with $T$ large.  The branching coefficients $\mathcal{L}(\pi,\sigma)$ are nonnegative (see \S\ref{sec:branch-coeff}), so we may bound $\mathcal{L}(\pi_0,\sigma)$ by the first moment $\sum_{\pi \in \Pi} \mathcal{L}(\pi,\sigma)$ taken over any family $\Pi$ containing $\pi_0$.  The Lindel{\"o}f hypothesis predicts that each $\mathcal{L}(\pi,\sigma)$ has size $\O(T^\eps)$.  We will chose $\Pi$ to have cardinality
\begin{equation}\label{eqn:Pi-approx-cardinality}
  |\Pi| \approx T^{n(n+1)/2}.
\end{equation}
In view of our ``no conductor dropping'' assumption, Lindel{\"of} on average for the moment then recovers the convexity bound for the branching coefficient of interest.  Experience suggests that if we can prove an asymptotic formula for the moment with enough room to spare, then the amplification method will deliver a subconvex bound.  Our main task is thus to prove (in a sufficiently robust way) that
\begin{equation}\label{eqn:moment-of-branching-coefs}
  |\Pi|^{-1}
  \sum _{\pi \in \Pi } \mathcal{L}(\pi,\sigma)
  = \text{(constant)}  + \O(T^{-\delta}).
\end{equation}

\begin{remark}
  If we were in the excluded case that $(G,H) = (\GL_3(\mathbb{R}), \GL_2(\mathbb{R}))$, $(\Gamma, \Gamma_H) = (\GL_3(\mathbb{Z}), \GL_2(\mathbb{Z}))$ and $\sigma$ were the $1 \boxplus 1$ Eisenstein series, then $\mathcal{L}(\pi,\sigma)$ would be essentially the fourth power of the standard $L$-function $L(\pi,1/2)$, and so the moment problem \eqref{eqn:moment-of-branching-coefs} would be of the same shape as that in \cite{MR4203038}.
\end{remark}

\subsection{Families and coadjoint orbits}\label{sec:famil-coadj-orbits}
Which family $\Pi$ should we take?  We explain our choice here, and then describe how the orbit method as advocated in \cite{nelson-venkatesh-1} provides a natural tool for its study.

The general shape of the family is perhaps unsurprising: we take those representations whose local components at $\infty$ belong to some carefully-chosen subset of the tempered dual of $G$, which we define in turn using the parameters $\lambda_{\pi,i}$ appearing in the formula \eqref{eq:gamma-factors-intro} for the local $L$-factor at $\infty$.  Roughly speaking, we take our family to consist of $\pi$ for which each coefficient of the polynomial
\[
  \prod_{i} \left(X - \frac{\lambda_{\pi,i}}{T} \right) \in \mathbb{C}[X]
\]
differs from the corresponding coefficient for $\pi_0$ by at most $\O(1/T)$.  Strictly speaking, we relax this condition by a factor of $T^\eps$, but to simplify exposition do not display such factors here.

It is natural to consider the coefficients of such polynomials.  One reason is that they are asymptotic to the eigenvalues of $\pi$ under a standard system of generators for the center of the universal enveloping algebra of
\[
  \mathfrak{g} = \Lie(G) = \glLie_{n+1}(\mathbb{R}).
\]
Using that Plancherel measure for $G$ is asymptotic to Lebesgue measure with respect to such coefficients (see for instance \cite[\S17.5]{nelson-venkatesh-1}) and assuming the Weyl law, one can check that $\Pi$ has the cardinality promised in \eqref{eqn:Pi-approx-cardinality}.  (Indeed, writing $c_j(\pi)$ for the coefficient of $X^{n+1-j}$ in $\prod_i (X - \lambda_{\pi,i})$, our family is defined by the conditions $c_j(\pi_0) = c_j(\pi) + \O(T^{j-1})$, and we have $\prod_{j=1}^{n+1} T^{j-1} = T^{n(n+1)/2}$.)

Another reason is that such coefficients are intimately related to harmonic analysis on the representation $\pi$.  The relationship is encoded by the Kirillov formula (\S\ref{sec:kirillov-formula}) for the distributional character $\chi_{\pi} : G \rightarrow \mathbb{C}$, which we now describe in the present example (simplifying a bit using special features of that example).  Recall that $\mathfrak{g}^\wedge = \Hom(\mathfrak{g}, i \mathbb{R})$ denotes the imaginary dual of $\mathfrak{g}$.  We identify $\mathfrak{g}^\wedge$ with the space $i \glLie_{n+1}(\mathbb{R})$ of imaginary matrices using the trace pairing.  A standard theorem from linear algebra is that the set of matrices
\[
  \mathcal{O}_\pi := \left\{\xi \in \mathfrak{g}^\wedge \text{ with minimal polynomial } \prod_i (X - \lambda_{\pi,i}) \right\}
\]
forms a conjugacy class (one can check that the unitarity of $\pi$ implies that $\mathcal{O}_\pi$ is nonempty).  In other words, $\mathcal{O}_\pi$ is a \emph{coadjoint orbit}, i.e., an orbit for the group $G$ under its coadjoint action on $\mathfrak{g}^\wedge$.  The Kirillov formula asserts roughly that for small enough $x \in \mathfrak{g}$,
\begin{equation}\label{eqn:kirillov-intro}
  \chi_{\pi}(\exp(x))
  \approx \int_{\xi \in \mathcal{O}_\pi}
  e^{\langle x, \xi  \rangle}
  \, d \omega(\xi),
\end{equation}
where $\omega$ denotes the normalized symplectic measure on a coadjoint orbit.  The measures $\omega$ are defined most directly using the Lie bracket on $\mathfrak{g}$, but may be characterized in the relevant examples as follows (see for instance \cite[(17.2)]{nelson-venkatesh-1} for details): one may compute an integral over $\mathfrak{g}^\wedge$ with respect to Lebesgue measure by first integrating over the space of polynomials (with respect to Lebesgue measure in each coefficient) and then integrating over each corresponding (regular) coadjoint orbit with respect to its symplectic measure $\omega$.  The Kirillov formula and the geometry of coadjoint orbits form the cornerstone of our approach to harmonic analysis on $\pi$, so it is natural that we parametrize our families in terms of associated quantities.

\begin{remark}
  If the parameters of $\pi_0$ are dyadically spaced in the sense that
  \begin{equation}\label{eqn:well-spaced-parameters}
    \lambda_{\pi_0,i} - \lambda_{\pi_0,j} \gg T
    \quad
    \text{ for } i \neq j,
  \end{equation}
  then our family $\Pi$ consists of $\pi$ for which
  \begin{equation}\label{eqn:small-family-usual}
    \lambda_{\pi,i} = \lambda_{\pi_0,i} + \O(1),
  \end{equation}
  as in \cite{MR4203038}.  On the other hand, if we suppose instead that, for instance, \eqref{eqn:well-spaced-parameters} holds for $(i,j) \neq (1,2)$ but $\lambda_{\pi_0,1} = \lambda_{\pi_0,2}$, then the condition \eqref{eqn:small-family-usual} is unchanged for $i \geq 3$ but must be modified for $i=1,2$ to the pair of conditions
  \[
    \lambda_{\pi,1} + \lambda_{\pi,2} = \lambda_{\pi_0,1} + \lambda_{\pi_0,2} + \O(1),
  \]
  \[
    \lambda_{\pi,1} - \lambda_{\pi,2} = \lambda_{\pi_0,1} - \lambda_{\pi_0,2} + \O(T^{1/2}).
  \]
  As this example already suggests, the description of our family in terms of individual parameters $\lambda_{\pi,i}$ becomes more complicated the more those parameters ``collide.''  We do not make use of any such description in this paper.
\end{remark}

\subsection{Weyl's law}\label{sec:weyls-law}
We pause to indicate one way to prove the Weyl law for the family $\Pi$ (smoothened and $T^\eps$-enlarged), i.e., for the moment as in \eqref{eqn:moment-of-branching-coefs}, but with each branching coefficient $\mathcal{L}(\pi,\sigma)$ replaced by the simpler quantity $1$.  This problem is much simpler than the moment problem of interest, but discussing it allows us to describe some techniques that apply also to the latter problem.

By the trace formula for the compact quotient $[G]$, our main task is to produce a test function $f$ on $G$, supported arbitrarily close to the identity element, for which $\trace(\pi(f))$ approximates the indicator function of $\Pi$.  Such a function $f$ may be described in terms of its pullback to the Lie algebra $\mathfrak{g}$ and then, by Fourier transform, in terms of a Schwartz function $a$ on $\mathfrak{g}^\wedge$.  By the Kirillov formula \eqref{eqn:kirillov-intro} and the Parseval relation, our task reduces to arranging that
\begin{itemize}
\item the inverse Fourier transform $a^\vee$ is supported close to the origin in $\mathfrak{g}$ (so that $f$ is supported close to the identity in $G$), and
\item the map $\pi \mapsto \int_{\xi \in \mathcal{O}_\pi} a(\xi) \, d \omega(\xi)$ approximates the indicator function of $\Pi$.
\end{itemize}
The support condition is valid provided that $a$ is essentially constant on balls of size $\O(1)$, so the main point is to achieve the required approximation property.

To that end, we choose some element $\pi$ of our family $\Pi$ (e.g., $\pi = \pi_0$, although the choice won't matter) and a regular element $\tau \in \mathfrak{g}^\wedge$ of Euclidean norm $|\tau| \asymp T$ which belongs to the corresponding coadjoint orbit $\mathcal{O}_\pi$, so that $\mathcal{O}_{\pi} = \Ad^*(G) \tau$.  For concreteness, we might construct $\tau$ using rational canonical form, e.g., for $G = \GL_3(\mathbb{R})$ by
\begin{equation}\label{eqn:tau-defn-intro}
  \tau =
  \sqrt{-1}
  \begin{pmatrix}
    0 & 0 & c_3 \\
    T & 0 &   c_2 \\
    0 & T & c_1
  \end{pmatrix}
  \quad 
  \text{ if }
  \prod_{j=1}^3
  \left(X - \frac{\lambda_{\pi,j}}{\sqrt{-1} T} \right)
  =
  X^3 - c_1 X^2 - c_2 X - c_3.
\end{equation}
Near such (regular) elements $\tau$, the map $\xi \mapsto \det(X - \xi)$ equips the space $\mathfrak{g}^\wedge$ with the structure of a fibered manifold, with fibers the coadjoint orbits (lemma \ref{lem:easy-properties-regular-elements}).  It is convenient to introduce coordinates $\xi = (\xi ', \xi '')$ on $\mathfrak{g}^\wedge$ as in \S\ref{sec:analyt-test-vect-1} (see \S\ref{sec:coord-tail-regul} for details), so that the $\xi '$-axis consists of all directions tangent to $\mathcal{O}_{\pi}$ at $\tau$ while the $\xi ''$-axis consists of those transverse to the fibers.  We then take
\begin{equation}\label{eqn:a-smooth-bump-on}
  a = \text{ smooth bump on }
  \left\{ \tau + \xi :
    |\xi '| \ll T^{1/2+\eps },
    |\xi ''| \ll T^\eps 
  \right\}.
\end{equation}
As depicted in Figure \ref{fig:tau-coordinates-intro-0} (see Lemma \ref{lem:parabola-y-x-squared} for some justification of this depiction), the support of $a$ is thus a thin ``coin-shaped'' neighborhood of $\tau$, concentrated near the coadjoint orbit $\mathcal{O}_{\pi}$.  One checks that the coadjoint orbits intersecting this neighborhood are those arising from our family $\Pi$, while the symplectic measures of the intersections are approximately one (see, e.g., the proof of lemma \ref{lem:pass-to-individual-u}).  The required approximation property follows.

The number of $\xi '$ (resp. $\xi ''$) directions is $n(n+1)$ (resp. $n+1$), so the total volume of the support of $a$ is approximately the promised cardinality $T^{n(n+1)/2}$ of the family $\Pi$ (as we could have predicted using the characterization of symplectic measures noted in \S\ref{sec:famil-coadj-orbits}).  In particular,
\begin{equation}\label{eqn:f-size-bound-intro}
  \|f\|_{L^\infty(G)}
  \approx f(1) \approx T^{n(n+1)/2}
\end{equation}

We note that the strategy suggested above is flexible with respect to the choice of $\tau$: the same argument works if we replace $\tau$ by any element of $\mathcal{O}_{\pi}$ with similar properties, e.g., by $\Ad^*(g) \tau$ for any group element $g \in G$ close to the identity.

\subsection{Microlocal calculus}\label{sec:microlocal-calculus}
A rigorous implementation of the strategy suggested in \S\ref{sec:weyls-law} might require working with positive-definite test functions, e.g., those obtained by convolving a function $f$ as above against its adjoint.  In \S\ref{sec:star-prod-extens}, we study in detail the asymptotics of the convolutions of two functions $f_1$ and $f_2$ as above in terms of the corresponding Fourier transforms $a_1$ and $a_2$ on $\mathfrak{g}^\wedge$ (i.e., both $a_1$ and $a_2$ are smooth bumps as in \eqref{eqn:a-smooth-bump-on} and Figure \ref{fig:tau-coordinates-intro-0}).  We provide a general calculus for doing so, valid for any connected reductive group $G$.  We hope this calculus will be more broadly useful.  It is rooted in a basic estimate to the effect that the convolution $f_1 \ast f_2$ corresponds to a certain star product $a_1 \star a_2$ admitting an asymptotic expansion
\begin{equation}\label{eqn:star-product-expansion-intro}
  a_1 \star a_2
  \sim
  a_1 a_2
  + a_1 \star^1 a_2 + a_1 \star^2 a_2 + \dotsb,
\end{equation}
with $\star^1$ a multiple of the Poisson bracket and $\star^j$ defined in general by a certain bidifferential operator.  The derivative bounds enjoyed by bumps as in \eqref{eqn:a-smooth-bump-on} turn out to force each successive term on the RHS of \eqref{eqn:star-product-expansion-intro} to be smaller than the previous terms, so that the expansion is not merely formal.  For example, using that the Poisson bracket is defined by vector fields tangent to the coadjoint orbits, we may check that $a_1 \star^1 a_2$ is of size $T^{-2 \eps}$.  Our results concerning this calculus may be understood as further steps in a direction suggested by Rieffel \cite{MR1064995}.

A first version of that calculus, inspired by the 
pseudodifferential calculus, was given in \cite{nelson-venkatesh-1}, but under hypotheses that would be prohibitively restrictive for our aims.  Indeed, that calculus applies to smooth bumps $a$ on regions roughly of the form $\left\{ \tau + \xi : |\xi| \ll T^{1/2+\eps } \right\}$, but not on somewhat rougher regions as in \eqref{eqn:a-smooth-bump-on}.  Such bumps are inadequate for the analysis of ``short'' families $\Pi$ required here.  For instance, in the ``dyadically spaced'' case, they would suffice for studying families defined by conditions like \eqref{eqn:small-family-usual}, but with $\O(1)$ replaced by $\O(T^{1/2})$.

We indicate why bumps as in \eqref{eqn:a-smooth-bump-on} represent a natural limit for our calculus:
\begin{enumerate}[(i)]
\item The coadjoint orbit $\mathcal{O}_\pi$ plays the role of the phase space for the representation $\pi$.  Balls of the form $\mathcal{O}_\pi \cap ( \tau + \O(T^{1/2}))$ have symplectic volume $\asymp 1$, corresponding to the Planck scale.  By the uncertainty principle, we cannot hope for a meaningful calculus involving smooth bumps on much smaller balls.  The resolution of our calculus is thus optimal (up to epsilons) in the $\xi '$-directions.
\item To understand the role of the $\xi ''$-directions, suppose for instance that $\pi$ belongs to the discrete series and $\tau$ is regular elliptic.  The representations $\pi '$ whose coadjoint orbits $\mathcal{O}_{\pi '}$ intersect the support of \eqref{eqn:a-smooth-bump-on} are then discrete series representations whose parameters differ from those of $\pi$ by $\O(T^\eps)$.  The number of such representations is $\O(T^\eps)$ (for a different value of $\eps$).  The indicated $\xi ''$-scale is thus the relevant one for projecting onto rather short families of representations.
\end{enumerate}

\subsection{Microlocalized vectors}\label{sec:micr-vect}
The function $a$ as in \eqref{eqn:a-smooth-bump-on}, being approximately the characteristic function of a set, satisfies $a^2 \approx a$.  From our calculus, it follows that the corresponding test function $f$ on $G$ satisfies $f \ast f \approx f$, and so the corresponding operator $\pi(f)$ is approximately idempotent.  The real-valuedness of $a$ forces $\pi(f)$ to be self-adjoint, while the noted symplectic measure calculation shows that $\trace(\pi(f)) \approx 1$.  Speaking informally, $\pi(f)$ is thus approximately a rank one orthogonal projection, and so corresponds approximately to some scaling class of unit vectors $v \in \pi$ for which $\pi(f) v \approx v$.

The relationship between $v$ and $\tau$ may be quantified more directly in terms of the action of Lie algebra elements $x \in \mathfrak{g}$ with $x = \O(T^{-1/2-\eps})$, under which $v$ enjoys the approximate equivariance property
\[
  \exp(x) v \approx e^{\langle x, \tau \rangle} v.
\]
This estimate persists for small $x$ lying within $\O(T^{-1/2-\eps})$ of the centralizer of $\tau$.  The test function $f$ should in turn be regarded as a proxy for the matrix coefficient $\langle g v, v \rangle$ (conjugated, normalized and truncated).

We say that such a vector $v$ is \emph{microlocalized} at the parameter $\tau \in \mathfrak{g}^\wedge$.  A key property of such vectors is that their matrix coefficients $\langle g v, v \rangle$, or equivalently, the corresponding test functions $f(g)$, concentrate near the centralizer of the parameter $\tau$.  Quantitatively, $f$ is essentially supported on the set \eqref{eqn:support-f-intro} consisting of small group elements that approximately centralize $\tau$.

One may understand our calculus and the associated notion of microlocalized vectors as giving an analytic archimedean analogue of the theory of types for $p$-adic groups.  The latter gives, among other things, many examples of open subgroups $J$ of $\GL_n(\mathbb{Z}_p)$ and one-dimensional representations $\chi$ of $J$ with the property that, up to twisting, there is at most one supercuspidal representation of $\GL_n(\mathbb{Q}_p)$ whose restriction to $J$ contains $\chi$; moreover, $\chi$ occurs in that restriction with multiplicity one.  In other words, the function $h$ on $\GL_n(\mathbb{Z}_p)$ given by $\chi^{-1}$ times the normalized characteristic function of $J$ has the property that, for each irreducible representation $\rho$ of $\GL_n(\mathbb{Q}_p)$, the operator $\rho(h)$ vanishes unless $\rho$ is a twist of the given supercuspidal, in which case $\rho(h)$ is a rank one projector with range spanned by some $(J,\chi)$-isotypic vector $u \in \rho$.  By comparison, we have noted in \S\ref{sec:microlocal-calculus} that our calculus produces functions $f$ supported near the identity element of the real group $G$ for which $\pi(f)$ is negligible unless $\pi$ belongs to $\O(T^\eps)$-many discrete series representations, in which case it is approximately a projector of rank $\O(T^{\eps})$.  Thus $f$ and $h$ are analogous.  The (informally and ambiguously defined) microlocalized vector $v \in \pi$ is analogous to the isotypic vector $u \in \rho$.

\subsection{Period formulas and
  matrix coefficient integral asymptotics}\label{sec:period-formulas-1}
Recall that our aim is to understand the moment \eqref{eqn:moment-of-branching-coefs} of branching coefficients $\mathcal{L}(\pi,\sigma)$ taken over $\pi$ in some family $\Pi$.  We have thus far discussed only the family, emphasizing the relevant local harmonic analysis on $G$.  We turn now to the branching coefficients.  They arise from a formula of the following shape: for automorphic forms $\varphi \in \pi$ and $\Psi \in \sigma$,
\begin{equation}\label{eqn:period-formula-intro}
  \left\lvert
    \int_{[H]}
    \varphi \bar{\Psi}
  \right\rvert^2
  \approx \mathcal{L}(\pi,\sigma)
  \mathcal{Q}(\varphi \otimes \Psi),
  \quad
  \mathcal{Q}(\varphi \otimes \Psi) :=
  \int_{h \in H}
  \langle h \varphi, \varphi  \rangle
  \langle \Psi, h \Psi  \rangle
  \, d h,
\end{equation}
where $\approx$ means ``up to unimportant inaccuracies'' (leading constants, local factors at ``auxiliary'' places, etc).  This formula amounts to the definition of $\mathcal{L}(\pi,\sigma)$ (see \S\ref{sec:branch-coeff}), and is all that enters into the proof of Theorem \ref{thm:main}; the fact that $\mathcal{L}(\pi,\sigma)$ is conjectured (and in many cases known) to be given by a special value of an $L$-function is relevant only for interpreting our results, as in Corollary \ref{cor:main}.

Bernstein--Reznikov \cite{MR2726097} initiated the systematic use of period formulas like \eqref{eqn:period-formula-intro} as a tool for estimating branching coefficients, emphasising spectral aspects.  Venkatesh \cite{venkatesh-2005} introduced related ideas in the level aspect, as well as the use of arithmetic amplification in the spirit of Duke--Friedlander--Iwaniec (see \cite{MR1923476} and references).  We mention also the influential works of Sarnak \cite{MR780071} and Iwaniec--Sarnak \cite{iwan-sar}.  A capstone application of this tool was the resolution of the subconvexity problem for $\GL_2$ \cite{michel-2009}.  The art in each such application lies in choosing vectors $\varphi$ and $\Psi$ for which one may prove that
\begin{itemize}
\item $\mathcal{Q}(\varphi \otimes \Psi)$ is not too small, and
\item $\int_{[H]} \varphi \bar{\Psi}$ is not too large.
\end{itemize}

The works mentioned above considered low rank examples.  The paper \cite{nelson-venkatesh-1} studied the higher rank setting and described
\begin{itemize}
\item an approach based on the orbit method for choosing vectors $\varphi$ to $\Psi$ to which the above strategy may be profitably applied (see \cite[\S1.10]{nelson-venkatesh-1}), and
\item a technique for asymptotically evaluating the local integrals $\mathcal{Q}(\varphi \otimes \Psi)$ (see \cite[\S1.9, \S19]{nelson-venkatesh-1}).
\end{itemize}
We briefly recall the main points.  One takes for $\varphi$ and $\Psi$ a pair of microlocalized unit vectors with corresponding parameters $\tau \in \mathcal{O}_\pi \subseteq \mathfrak{g}^\wedge$ and $\nu \in \mathcal{O}_\sigma \subseteq \mathfrak{h}^\wedge$, say.  If the restriction $\tau_H$ of $\tau$ to $\mathfrak{h}^\wedge$ is not close to $\nu$, then the approximate equivariance forces $\mathcal{Q}(\varphi \otimes \Psi)$ to be small.  The crucial case to understand is thus when $\nu = \tau_H$.  The ``no conductor dropping'' assumption now becomes relevant: it manifests at the level of the coadjoint orbits $\mathcal{O}_\pi \subseteq \mathfrak{g}^\wedge$ and $\mathcal{O}_\sigma \subseteq \mathfrak{h}^\wedge$ in that the ``relative coadjoint orbit''
\begin{equation}\label{eqn:O-pi-sigma-defn-intro}
  \mathcal{O}_{\pi,\sigma} := \left\{ \xi \in \mathcal{O}_{\pi} :
    \xi_H \in \mathcal{O}_\sigma \right\}
\end{equation}
is an $H$-torsor, i.e., transitive and free under the coadjoint action of $H$.  This says both that any two elements $\tau$ are equivalent under $H$ (so that little is lost in choosing a specific $\tau$) and that the centralizer in $H$ of $\tau$ is trivial.  The matrix coefficient concentration property noted in \S\ref{sec:micr-vect} then forces the integrand in the definition of $\mathcal{Q}(\varphi \otimes \Psi)$ to be concentrated roughly in the neighborhood $1 + \O(T^{-1/2})$ of the identity element of $H$, where it may be estimated using the Kirillov formula.  In particular, the integral has size $\approx (T^{-1/2})^{\dim(H)} = T^{-n^2/2}$.  We have oversimplified in this discussion; in practice, it becomes necessary to average over short (i.e., size $\O(T^\eps)$) famillies of microlocalized vectors.

In this paper, we choose vectors as in \cite{nelson-venkatesh-1} and appeal to the matrix coefficient integral asymptotics given in \cite[\S19]{nelson-venkatesh-1} as a black box (see \S\ref{sec:relat-char-asympt}).  The paper \cite{nelson-venkatesh-1} contains other ideas (e.g., related to ergodic theory) which do not enter here.

\subsection{Relative trace formula}\label{sec:relat-trace-form}
From the above discussion, we may construct
\begin{itemize}
\item a coadjoint element $\tau \in \mathfrak{g}^\wedge$ (the same choice of which works for all $\pi \in \Pi$, since the coadjoint orbits $\mathcal{O}_\pi$ are close to one another),
\item a unit vector $\Psi \in \sigma$, microlocalized at $\tau_H \in \mathfrak{h}^\wedge$, and
\item for each $\pi$ in our family $\Pi$, a unit vector $\varphi_\pi \in \pi$, microlocalized at $\tau$,
\end{itemize}
so that the integral representation
\begin{equation}\label{eqn:integral-rep-intro-branch-coef}
  \mathcal{L}(\pi,\sigma)
  \approx
  T^{n^2/2} \left\lvert \int_{[H]}
    \varphi_\pi \bar{\Psi} \right\rvert^2
\end{equation}
holds.  Since $\varphi_\pi$ is microlocalized at the parameter $\tau$, it satisfies $\pi(f) \varphi \approx \varphi$ for the test function $f$ described in \S\ref{sec:weyls-law}.

We have noted that the operator $\pi(f)$ is approximately a rank one idempotent, so that for an orthonormal basis $\mathcal{B}(\pi)$, we have $\sum_{\varphi \in \mathcal{B}(\pi)} \pi(f) \varphi(x) \overline{\varphi(y)} \approx \varphi_\pi(x) \overline{\varphi_\pi(y)}$.  The pretrace formula for $[G]$ applied to $f$ thus gives
\[
  \sum _{\pi \in \Pi } \varphi_\pi(x) \overline{\varphi_\pi(y)} \approx \sum _{\gamma \in \Gamma} f(x^{-1} \gamma y).
\]
Integrating both sides against $\Psi$, appealing to the integral representation \eqref{eqn:integral-rep-intro-branch-coef} and recalling the size \eqref{eqn:Pi-approx-cardinality} of $\Pi$, we obtain the basic formula
\begin{equation}\label{eqn:basic-pretrace-intro}
  |\Pi|^{-1}
  \sum _{\pi \in \Pi }
  \mathcal{L}(\pi,\sigma)
  \approx
  T^{-n/2}
  \int_{x,y \in [H]}
  \sum _{\gamma \in \Gamma }
  \bar{\Psi}(x)
  \Psi(y)
  f(x^{-1} \gamma y) \, d x \, d y.
\end{equation}
A comparison of \eqref{eqn:moment-of-branching-coefs} and \eqref{eqn:basic-pretrace-intro} now explains the sufficiency of \eqref{eqn:basic-pretrace-intro-0}.

We emphasize that in the body of the paper, we work most directly with the test function $f$ and the operators $\pi(f)$ rather than with the corresponding microlocalized vectors $\varphi_\pi$ (see for instance \S\ref{sec:constr-prop-test} and \S\ref{sec:analyt-test-vect}).  We hope that by having phrased this overview informally in terms of $\varphi_\pi$, it may usefully illustrate the rigorous arguments given in the body in terms of $f$.

\begin{remark}
  The present setup differs from that in \cite{nelson-venkatesh-1}, where an average of $\mathcal{L}(\pi,\sigma)$ for fixed $\pi$ over a large family of $\sigma$ is studied by spectrally decomposing the $L^2$-norm $\int_{[H]} |\varphi_\pi|^2$, averaging over a large family of microlocalized vectors $\varphi_\pi$, and appealing to Ratner theory.
\end{remark}

\begin{remark}\label{rmk:compare-iwaniec-sarnak}
  When $n=1$, the present setup is closely related to that of Iwaniec--Sarnak \cite{iwan-sar}.

  To explain, it is convenient to switch to the pair $(G,H) = (\U(1,1),\U(1))$, to which the above discussion applies without essential modification.  Modding out by the center of $G$ and ignoring issues of isogeny, we might as well pretend then that $(G,H) = (\SL_2(\mathbb{R}), \SO(2))$.

  Suppose that $\sigma$ is trivial, so that $\Psi$ is constant, and that $\pi$ belongs to the principal series, so that it contains an $H$-invariant unit vector $\varphi_{\pi,0}$.  Let us identify $G/H$ with the upper half-plane $\mathbb{H}$, so that $\varphi_{\pi,0}$ corresponds to some $L^2$-normalized Maass form on $\Gamma \backslash \mathbb{H}$.

  The integral $\int_{[H]} \varphi_\pi \bar{\Psi}$ is then a multiple of $\varphi_{\pi,0}(z)$, with $z \in \mathbb{H}$ the point stabilized by $H$.  For suitable $\pi_0$, our family $\Pi$ consists of $\pi$ with $r_{\pi} = T+ \O(1)$, with $1/4 + r_\pi^2$ the eigenvalue of $\varphi_{\pi,0}$.

  The LHS of \eqref{eqn:basic-pretrace-intro} is thus a multiple of $\sum_{r_\pi = T + \O(1)} |\varphi_{\pi,0}(z)|^2$.  These sums and their amplified variants were considered by Iwaniec--Sarnak \cite{iwan-sar} in their pioneering work on the sup norm problem.  Since $\Psi$ is constant, the RHS of \eqref{eqn:basic-pretrace-intro} is unchanged by replacing $f$ with its bi-$H$-invariant average, in which case it is essentially the same kernel function as considered by Iwaniec--Sarnak.

  Thus the general method described here specializes to that of Iwaniec--Sarnak (although, as noted in Remark \ref{rmk:unoptimized}, we have optimized our arguments to a much lesser extent).
\end{remark}

\begin{remark}
  It is instructive to consider also the closely related example $(G,H) = (\PGL_2(\mathbb{R}), \GL_1(\mathbb{R}))$, with the embedding $H \hookrightarrow G$ given by $y \mapsto \diag(y,1)$.  Suppose again that $\sigma$ is trivial.  The families $\Pi$ that we consider are then as in Remark \ref{rmk:compare-iwaniec-sarnak}.  The microlocalized vectors $\varphi_\pi$ are most conveniently described in the Kirillov model, where they are given by smooth bumps on the region $T + \O(T^{1/2})$ in $\mathbb{R}^\times$.  If we were in the (excluded, non-compact) case $(\Gamma, \Gamma_H) = (\PGL_2(\mathbb{Z}), \GL_1(\mathbb{Z}))$, then the sums $\sum_{\pi \in \Pi} \mathcal{L}(\pi,\sigma)$ that we consider would be the second moments of standard $L$-functions $\sum_{r_\pi = T + \O(1)} \left\lvert L(\pi, \tfrac{1}{2}) \right\rvert^2$ estimated first by Iwaniec \cite{Iwaniec1992}.  The method described here is similar to that of Iwaniec in that it also consists of bounding such amplified moments, but differs in that it directly averages and estimates a carefully chosen integral representation, while Iwaniec instead proceeds via the approximate functional equation, Kuznetsov formula and a couple applications of Poisson summation.
\end{remark}

\subsection{Local vs. convexity bounds}\label{sec:local-bound}
The method developed here may be understood as first giving, under our running ``no conductor dropping'' assumption, a ``local estimate'' for the branching coefficients $\mathcal{L}(\pi,\sigma)$ that recovers the convexity bound, and then improving that estimate to a subconvex bound via arithmetic amplification.  The first of these steps corresponds roughly to parts (i) and (ii) of Theorem \ref{thm:construct-test-function}, the second to part (iii) of the same theorem.  The branching coefficients $\mathcal{L}(\pi,\sigma)$ studied in this paper may be defined more generally than we have here (e.g., without assuming that our quotients are arithmetic or that our representations are Hecke eigenspaces), and our main local result (Theorem \ref{thm:construct-test-function}) gives the same local estimates in such generality.  Our local estimates are deduced from the integral representations involving microlocalized vectors given in \cite[\S19]{nelson-venkatesh-1}, and it is natural to ask whether there is a more direct approach.  It is also natural to ask whether the resulting estimates are sharp.  Here we address these two questions when the involved groups are compact.  We will see that many of our local estimates may be derived more directly, in sharp form, by considering multiplicities, much like in the first pages of \cite{sarnak-morawetz}.

Our observations apply in a more general setting, so let us temporarily set aside our running notation and assumptions and take now for $G$ any compact topological group and $H$ any closed subgroup, each equipped with the probability Haar measure.  Let $\pi$ and $\sigma$ be irreducible unitary representations of $G$ and $H$, respectively, satisfying the multiplicity one property
\begin{equation*}
  \dim \Hom_H(\pi,\sigma) = 1.
\end{equation*}
We may then identify $\sigma$ with a subrepresentation of $\pi$.  Let $\pr : \pi \rightarrow \sigma$ denote the orthogonal projection.

Relative to our prior discussion, we implicitly take $\Gamma$ and $\Gamma_H$ to be trivial subgroups in what follows.

Let $\iota_{\pi} : \pi \rightarrow L^2(G)$ and $\iota_{\sigma} : \sigma \rightarrow L^2(H)$ be equivariant linear isometries.  Inspired by \eqref{eqn:period-formula-intro}, we observe that there is a unique nonnegative real number $\mathcal{L}(\iota_{\pi}, \iota_{\sigma}) \geq 0$ such that for all $v \in \pi$ and $u \in \sigma$,
\begin{equation}\label{eqn:define-cal-L}
  \left\lvert
    \int _{h \in H}
    \iota_{\pi}(v)(h)
    \overline{\iota_{\sigma}(u)(h)} \, d h
  \right\rvert^2
  =
  \mathcal{L}(\iota_{\pi}, \iota_{\sigma})
  \int _{h \in H}
  \langle h v, v  \rangle
  \langle u,  h u  \rangle
  \, d h.
\end{equation}
Indeed, each side is a hermitian form on $\pi \otimes \sigma^\vee$ whose bilinearization is $H \times H$-invariant.  The multiplicity one assumption implies that the space of such forms is one-dimensional, so it suffices to show that the integral on the right hand side of \eqref{eqn:define-cal-L} is positive for some $u$ and $v$.  To see this, we appeal to the following identity, a consequence of the Schur orthogonality relations:
\begin{equation}\label{eqn:matrix-coeff-integral-vs-projections}
  \int _{h \in H} \langle h v, v \rangle \langle u, h u \rangle \, d h
  =
  \frac{1}{\dim \sigma }
  \left\lvert \left\langle \pr (v ), u  \right\rangle \right\rvert^2.
\end{equation}

\begin{proposition}[Local bound for $\mathcal{L}(\iota_{\pi}, \iota_{\sigma})$] \label{thm:local-bound-compact-case} We have $\mathcal{L}(\iota_{\pi}, \iota_{\sigma}) \leq \dim \pi$.  Equality holds for some $\iota_{\pi}$ and $\iota_{\sigma}$.
\end{proposition}
\begin{proof}
  Since $\pi$ and $\sigma$ are irreducible, the embeddings $\iota_{\pi}$ and $\iota_{\sigma}$ are given by taking inner products against some vectors $v_0 \in \pi$ and $u_0 \in \sigma$, i.e.,
  \begin{equation*}
    \iota_{\pi}(v)(g) = \langle g v, v_0 \rangle,
    \quad 
    \iota_{\sigma}(u)(h) = \langle h u, u_0 \rangle.
  \end{equation*}
  Our assumption that these embeddings are isometric translates, via the Schur orthogonality relations, to the normalizations
  \begin{equation}\label{eqn:v0-u0-norms}
    \|v_0\|^2 = \dim \pi,
    \quad
    \|u_0\|^2 = \dim \sigma.
  \end{equation}
  By evaluating the defining identity \eqref{eqn:define-cal-L} at $(v,u) = (v_0,u_0)$, we see that
  \begin{equation}\label{eqn:formula-for-cal-L}
    \mathcal{L}(\iota_{\pi}, \iota_{\sigma}) = \int _{h \in H} \langle h v_0, v_0 \rangle \langle u_0, h u_0 \rangle \, d h,
  \end{equation}
  provided the right hand side is nonzero.  Assuming for the moment that this is the case, we combine \eqref{eqn:formula-for-cal-L} with \eqref{eqn:matrix-coeff-integral-vs-projections} and clear denominators to obtain
  \begin{equation*}
    \mathcal{L} (\iota_{\pi}, \iota_{\sigma})
    =
    (\dim \pi )
    \frac{\left\lvert \left\langle \pr (v _0 ), u _0  \right\rangle \right\rvert^2}{\|v_0\| ^2 \|u _0 \| ^2 }.
  \end{equation*}
  Both sides of this identity vary continuously as the pair $(v_0,u_0)$ varies over the product of spheres defined by \eqref{eqn:v0-u0-norms}, so we may extend it continuously from the dense subset on which $\langle \pr (v_0), u_0 \rangle \neq 0$ to the full space of such pairs.  (We have used that in any sphere, the complement of a hyperplane is dense.)  Thus this identity holds in general, without our earlier assumption.  By Cauchy--Schwarz, its right hand side is at most $\dim \pi$, with equality attained precisely when $v_0$ lies in $\sigma \subseteq \pi$ and $u_0$ is a multiple of $v_0$.
\end{proof}

\begin{example}
  Suppose that $\sigma$ is the trivial representation $\mathbb{C}$.  Our assumptions then say that there is an essentially unique way to realize $\pi$ inside $L^2(H \backslash G)$, while our conclusion says that the largest $L^\infty$-norm of a unit vector in $\pi$ is $\sqrt{\dim \pi}$.  This case of our analysis is well-known in the sup norm literature (see \cite{sarnak-morawetz}).
\end{example}

\begin{remark}
  Since $\Hom_H(\pi,\sigma) \cong \Hom_H(\pi \otimes \sigma^\vee, \mathbb{C})$, our hypotheses apply also to the pair of representations $(\pi \otimes \sigma^\vee, \mathbb{C})$ of the product group $G \times H$ and the diagonal copy of $H$.  The local bound $\dim \pi$ for $(\pi, \sigma)$ is typically smaller than the local bound $\dim \pi \dim \sigma$ for $(\pi \otimes \sigma^\vee, \mathbb{C})$.  This is not too surprising: there are typically more ways to embed $\pi \otimes \sigma^\vee$ in $L^2(G \times H)$ than just the ``rank one'' products of embeddings of $\pi$ and $\sigma$.
\end{remark}

\begin{example}\label{examp:compact-unitary}
  Consider the ``GGP'' case $(G,H) = (\U(n+1), \U(n))$.  Let $\lambda$ (resp. $\mu$) denote the highest weight of $\pi$ (resp. $\sigma$).  Multiplicity one is equivalent to the interlacing condition
  \begin{equation*}
    \lambda_1 \geq \mu_1 \geq \lambda_2 \geq \mu_2 \geq \dotsb \geq \mu_n \geq \lambda_{n+1}.
  \end{equation*}
  The analytic conductor for $\mathcal{L}$ (i.e., for the $L$-function to which $\mathcal{L}$ corresponds in global arithmetic settings) is
  \begin{equation*}
    \mathcal{C} = \prod_{i=1}^{n+1} \prod_{j=1}^n (1 + |\lambda_i - \mu_j|)^2.
  \end{equation*}
  By the Weyl dimension formula, we have
  \begin{equation*}
    \dim \pi \asymp \prod_{i < j} (1 + |\lambda_i - \lambda_j|).
  \end{equation*}
  The interlacing condition implies that
  \begin{equation*}
    |\lambda_i - \mu_j| \leq
    \begin{cases}
      |\lambda_i - \lambda_{j+1}|   &  \text{ if } i \leq j, \\
      |\lambda_i - \lambda_j| & \text{ if } i > j.
    \end{cases}
  \end{equation*}
  It follows that
  \begin{equation*}
    \mathcal{C}^{1/4} \ll \dim \pi.
  \end{equation*}
  By Proposition \ref{thm:local-bound-compact-case}, we see that the convexity bound is always at least as strong as the local bound.

  In general, we have $\mathcal{C}^{1/4} \asymp \dim \pi$ if and only if
  \begin{equation}\label{eqn:no-cond-drop}
    1 + |\lambda_i - \mu_j| \asymp
    \begin{cases}
      1 + |\lambda_i - \lambda_{j+1}| & \text{ if } i \leq j, \\
      1 + |\lambda_i - \lambda_{j}| & \text{ if } i > j.
    \end{cases}
  \end{equation}
  Thus the local and convexity bounds are equivalent precisely when ``the conductor for $\pi \boxtimes \sigma^\vee$ does not drop'' in the precise sense afforded by \eqref{eqn:no-cond-drop}.

  Suppose now for simplicity that each $\lambda_i - \lambda_{i+1} \asymp T$.  Then $\mathcal{C}^{1/4} \asymp \dim \pi$ if and only if each $\lambda_i - \mu_j \asymp T$.  Some of the arguments of this paper (e.g., those leading to the first two parts of Theorem \ref{thm:construct-test-function}) may be understood as giving a different proof, using microlocalized vectors, of the bound $\mathcal{L}(\iota_{\pi}, \iota_{\sigma}) \ll T^{n(n+1)/2} \asymp \dim \pi$.  We see from Proposition \ref{thm:local-bound-compact-case} that this bound is sharp.

  In many ``conductor dropping'' cases, the local bound is worse than convexity.  For instance, if
  \begin{equation*}
    n = 2,
    \quad
    \lambda = (T, 0, -T), \quad
    \mu = (0,0),
  \end{equation*}
  then
  \begin{equation*}
    \dim \pi \asymp T^3,
    \quad
    \mathcal{C}^{1/4} \asymp T^2.
  \end{equation*}
\end{example}

\subsection{Reduction to volume bounds:
  remarks}\label{sec:volume-bounds-remarks}
We record here some miscellaneous remarks concerning the reduction to \eqref{eqn:off-diagonal-intro} and \eqref{eqn:volume-to-bound-intro-2} described in \S\ref{sec:proof-sketch}.

\begin{remark}
  The exploitation of invariance in the central direction is reminiscent of the conductor lowering trick applied by Munshi and others to the subconvexity problem on $\GL_3$ (see \cite{MR3369905,2018arXiv181000539M,2020arXiv201010153S,2019arXiv191209473L}).  It might be interesting to understand any relation more precisely.
\end{remark}

\begin{remark}
  \label{rmk:alternative-reduction-intro}
  There is another way to view the reduction to \eqref{eqn:volume-to-bound-intro} or \eqref{eqn:volume-to-bound-intro-2}.  Recall the integral representation \eqref{eqn:integral-rep-intro-branch-coef}.  In view of the $H_{\tau_H}$-equivariance of $\Psi$, it is natural to replace the microlocalized vector $\varphi_\pi$ with a modified vector $\varphi_{\pi,0}$, given by a weighted average of $\varphi_\pi$ under small elements of $H_{\tau_H}$, for which $\int _{[H]} \varphi_{\pi,0} \bar{\Psi} \approx \int _{[H]} \varphi_{\pi} \bar{\Psi}$.  We regard $\varphi_{\pi,0}$ as the more natural vector for the problem.  For example, in the $n=1$ case discussed in Remark \ref{rmk:compare-iwaniec-sarnak}, the modified vector $\varphi_{\pi,0}$ is a multiple of the normalized Maass form, which is clearly the natural vector for that case.  The microlocalized vectors $\varphi_{\pi}$ may be regarded as footholds into our understanding of the harmonic analytic difficulties and matrix coefficient integral asymptotics in higher rank.
  
  We apply the pretrace formula as before, but with $\varphi_\pi$ replaced by $\varphi_{\pi,0}$.  We reduce to estimating for $\gamma \in \Gamma - H Z$ the integrals
  \[
    \int_{x,y \in \mathcal{H}} \bar{\Psi}(x) \Psi(y) f_0(x^{-1} \gamma y) \, d x \, d y,
  \]
  where $f_0$ is given by a weighted average of $f$ on the left and right under small elements of $H_{\tau_H}$.  We may understand $f_0$ as a proxy for the matrix coefficient of $\varphi_{\pi,0}$.  Each factor in the integrand now transforms approximately under small elements of $H_{\tau_H}$ by a unitary character, so taking absolute values of the integrand leads to an integral that morally takes place on $(H/H_{\tau_H})^2$.

  Recall now from Perron--Frobenius theory that the spectral radius of a matrix with nonnegative entries is bounded by the largest row sum, and that for a symmetric matrix, the spectral radius bounds the operator norm.  If we formally apply this fact to the above integral with absolute value signs inserted (more precisely, a finite sum of such integrals having the required symmetry), then we are led to consider
  \[
    \max_{y \in \mathcal{H}} \int_{x \in \mathcal{H}} |f_0(x^{-1} \gamma y)| \, d x \lessapprox \max_{y \in \mathcal{H}} \int_{x \in \mathcal{H}} \int _{\substack{
        z \in H_{\tau_H} : \\
        z = \O(1) }} |f(x^{-1} \gamma y z)| \, d z \, d x.
  \]

  Nontrivial bounds for this last expression are essentially equivalent to \eqref{eqn:volume-to-bound-intro}, but this approach to the reduction may render it less mysterious: we are ``just'' estimating the matrix coefficients of the natural vector $\varphi_{\pi,0}$ and applying Perron--Frobenius.
\end{remark}

\subsection{Coadjoint reformulation: further discussion
}\label{sec:coadj-reform}
We explain here in more detail the content of the reformulation \eqref{eqn:coadjoint-reformulate-volume-intro} of the required volume bounds in terms of coadjoint orbits.

We may similarly reformulate \eqref{eqn:volume-to-bound-intro} in terms of the coadjoint action; it translates to an estimate like \eqref{eqn:coadjoint-reformulate-volume-intro}, but with $Z_H$ replaced by $H_{\tau_H}$.  We have noted already in \S\ref{sec:period-formulas-1} that the orbit $\Ad^*(H) \tau$ is the relative coadjoint orbit $\mathcal{O}_{\pi,\sigma}$ defined in \eqref{eqn:O-pi-sigma-defn-intro}.  It follows from the $H$-torsor property of $\mathcal{O}_{\pi,\sigma}$ that $\Ad^*(H_{\tau_H}) \tau$ is the fiber over $\tau_H$ in $\mathcal{O}_{\pi,\sigma}$:
\[
  \Ad^*(H_{\tau_H}) \tau = \mathcal{O}_{\pi,\sigma}(\tau_H) := \left\{ \xi \in \mathcal{O}_{\pi,\sigma} : \xi_H = \tau_H \right\}.
\]
The estimate \eqref{eqn:volume-to-bound-intro} thus boils down to exhibiting some approximate transversality between the varieties
\begin{equation}\label{eqn:transversality-orbits-intro}
  \Ad^*(\gamma) \mathcal{O}_{\pi,\sigma}(\tau_H)
  \quad
  \text{and}
  \quad
  \mathcal{O}_{\pi,\sigma}.
\end{equation}
These varieties have respective dimensions $n$ and $n^2$.  Both are contained in the $(n^2 + n)$-dimensional $G$-orbit $\mathcal{O}_\pi$.  It thus seems reasonable to expect that for ``generic'' $\gamma$ (in particular, $\gamma \notin H Z$), they should be literally transverse.  The point of \eqref{eqn:coadjoint-reformulate-volume-intro} is to exhibit an approximate form of such transversality using only the central direction in $H_{\tau_H}$, i.e., replacing the $n$-dimensional variety $\mathcal{O}_{\pi,\sigma}(\tau_H)$ with its one-dimensional subvariety $\Ad^*(Z_H) \tau$.

We can visualize \eqref{eqn:transversality-orbits-intro} directly only in the special case $n=1$.  That case is overly simplistic (e.g., because $Z_H = H_{\tau_H} = H$ and $\mathcal{O}_{\pi,\sigma}(\tau_H) = \mathcal{O}_{\pi,\sigma}$), but may nevertheless convey the flavor of the problem.  The visualization becomes slightly simpler to describe if we switch to the setting of compact unitary groups $(G,H) = (\U(2), \U(1))$.  Suppose then that $\pi$ is an irreducible representation of $\U(2)$ with trivial central character and highest weight $r \asymp T$ and that $\sigma$ is the trivial representation of $\U(1)$.  The varieties $\mathcal{O}_\pi$ and $\mathcal{O}_{\pi,\sigma}$ are then contained in the trace zero subspace $(\mathfrak{g}^\wedge )^0$ of $\mathfrak{g}^\wedge$, which we may identify with $\mathbb{R}^3$ in such a way that $\mathcal{O}_\pi$ is the Euclidean sphere
\[
  \mathcal{O}_{\pi} = \{(x,y,z) : x^2 + y^2 + z^2 = (r+1/2)^2 \}
\]
and so that the projection $(\mathfrak{g}^\wedge)^0 \rightarrow \mathfrak{h}^\wedge \cong \mathbb{R}$ is the vertical coordinate map $(x,y,z) \mapsto z$.  Then $\mathcal{O}_{\pi,\sigma} \subseteq \mathcal{O}_{\pi}$ is the equator
\[
  \mathcal{O}_{\pi,\sigma} = \{(x,y,0) : x^2 + y^2 = (r+1/2)^2\}.
\]
We may take for $\tau \in \mathcal{O}_{\pi,\sigma}$ any point along that equator.  The basic content of the required transversality is that the image of the equator under any nontrivial rotation of the sphere that fixes $\tau$ is transverse to the equator (see Figure \ref{fig:transversality}).

\begin{figure}
  \begin{tikzpicture} 
    \def\RadiusSphere{2} 
    \def\angEl{20} 
    \def\angAz{-20} 
    \filldraw[ball color=white] (0,0) circle (\RadiusSphere); \filldraw[fill=white] (0,0) circle (\RadiusSphere);

    \foreach \t in {0} { \DrawLatitudeCircle[\RadiusSphere]{\t} }

    \pgfmathsetmacro\H{\RadiusSphere*cos(\angEl)} 
    \coordinate (O) at (0,0);

    \def\angleLongitudeP{-110} 
    \def\angleLongitudeQ{-45} 
    \def\angleLatitudeQ{30} 
    \def\angleLongitudeA{-20} 

    \LongitudePlane[PLongitudePlane]{\angleLongitudeP}{\angAz} \LongitudePlane[QLongitudePlane]{\angleLongitudeQ}{\angAz} \LongitudePlane[ALongitudePlane]{\angleLongitudeA}{\angAz}

    \path[ALongitudePlane] (32.5:\RadiusSphere) coordinate (A'); \path[ALongitudePlane] (122.5:\RadiusSphere) coordinate (N'); \path[PLongitudePlane] (00:\RadiusSphere) coordinate (P);

 \begin{scope}[ x={(P)}, y={(A')}, z={(N')}]     
   \draw[blue] (-135:0.75) arc (-135:45:0.75) ; \draw[blue] (-135:0.75) arc (-135:-15:0.75) ; \coordinate (Q) at (-60:0.75);
 \end{scope}

 \NewLatitudePlane[equator]{\RadiusSphere}{\angEl}{00}; \path[equator] (-98:\RadiusSphere) coordinate (tau);

 \coordinate[mark coordinate] (N) at (tau); \draw[above left] node at (tau){$\tau$};

 \path[equator] (-180:\RadiusSphere) coordinate (OPS);

 \draw[left] node at (OPS){$\mathcal{O}_{\pi,\sigma}$};

 \NewLatitudePlane[slanted]{\RadiusSphere}{\angEl}{30}; \path[slanted] (0:\RadiusSphere*1.1) coordinate (OPSg); \draw[right, blue] node at (OPSg){$\Ad^*(\gamma) \mathcal{O}_{\pi,\sigma}(\tau_H) = \Ad^*(\gamma) \mathcal{O}_{\pi,\sigma}$};

 \draw[left] node at (0,0.8*\H){$\mathcal{O}_\pi$};

\end{tikzpicture}
\caption{ The required transversality for $(G,H) = (\U(2), \U(1))$.  }
\label{fig:transversality}
\end{figure}
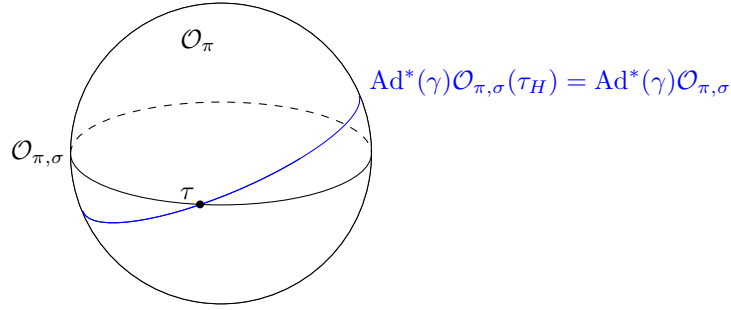

\begin{remark}
  
  There is another (closely related) way to arrive at the problem of controlling the transversality between the varieties \eqref{eqn:transversality-orbits-intro}.  For this remark, we assume familiarity with \cite[\S1]{nelson-venkatesh-1}.  The microlocal support of the vector $\varphi_{\pi,0}$ considered in Remark \ref{rmk:alternative-reduction-intro} is concentrated along the $H_{\tau_H}$-orbit $\mathcal{O}_{\pi,\sigma}(\tau_H)$.  The matrix coefficient of that vector at $\gamma$ should thus be related to the overlap between small neighborhoods of that orbit and its image under $\Ad^*(\gamma)$.  We noted in Remark \ref{rmk:alternative-reduction-intro} that the main point is to control the matrix coefficient of that vector along left $H$-cosets.  Since $\mathcal{O}_{\pi,\sigma}$ is an $H$-torsor, we arrive again at the problem of establishing some transversality between the varieties \eqref{eqn:transversality-orbits-intro}.
\end{remark}

\subsection{Endgame: discussion}
Recall from \S\ref{sec:proof-sketch} that -- having reduced the required moment estimate (\eqref{eqn:basic-pretrace-intro-0} or \eqref{eqn:moment-of-branching-coefs}) to volume bounds, and reduced the latter to a Lie algebra problem -- we must eventually confront the linear algebra problem addressed by Theorem \ref{thm:endgame}.  Here we outline the solution to that problem and record some miscellaneous remarks.

Theorem \ref{thm:endgame} is equivalent to the exclusion of the condition \eqref{eqn:intro-reduction-after-implicit-func-thm}.  Assuming for the sake of contradiction that that condition holds, we apply an infinitesimal form of the $H_{\tau_H}$-torsor property of $\mathcal{O}_{\pi,\sigma}(\tau_H)$ to deduce that
\begin{equation}\label{eqn:intro-after-applying-infinitesimal-torsor-property}
  \langle \mathfrak{h}_{\tau_H},   \ad^*(\ad(x) \mathfrak{z}_H)
  \tau \rangle = 0
  \text{ for some } x \in \mathfrak{g}_\tau - \mathfrak{z},
\end{equation}
where $\mathfrak{h}_{\tau_H} := \Lie(H_{\tau_H})$.  We may understand \eqref{eqn:intro-after-applying-infinitesimal-torsor-property} as an infinitesimal form of the failure of transversality in \eqref{eqn:transversality-orbits-intro}, specialized to central directions.  We interpret \eqref{eqn:intro-after-applying-infinitesimal-torsor-property} as the degeneracy of a certain bilinear form on $\mathfrak{h}_{\tau_H} \times \mathfrak{g}_{\tau}/\mathfrak{z}$.  We evaluate the determinant of that bilinear form with respect to a suitable basis.  That determinant turns out to be a nonzero multiple of the resultant of the characteristic polynomials of $\tau$ and $\tau_H$, hence vanishes precisely in the conductor dropping case.

The proof of Theorem \ref{thm:endgame} is then complete, modulo the verification of the determinantal identity.  There seem to be a few ways to do this (see Remark \ref{rmk:reduct-determ-ident-1}).  The given proof mimicks one of the standard ``divisibility and homogeneity'' calculations of the Vandermonde determinant, but with each step more complicated.  We appeal repeatedly to the results of \cite[\S14]{nelson-venkatesh-1}.  We eventually reduce to showing that there is \emph{some} $\tau$ for which the determinant in question does not vanish.  We conclude by exhibiting an explicit $\tau$ for which we may evaluate the determinant directly.  (We take for $\tau$ a regular nilpotent element for which $\tau_H$ is a regular semisimple element having the $n$th roots of unity as eigenvalues.)

\begin{remark}
  
  This last step is the only part of the proof of Theorem \ref{thm:main} specific to \emph{unitary} GGP pairs.  We leave to the reader the challenge of finding an analogous argument in the orthogonal case.  One should presumably focus there not on the (zero-dimensional) center $Z_H$, but instead on the unique one-parameter subgroup of the centralizer $H_{\tau_H}$ of the regular element $\tau_H$ for which some maximal isotropic subspace is an eigenspace.  The calculus arguments of \S\ref{sec:volume-estimates} apply, so the main challenge is to find a substitute for the linear algebra arguments of \S\ref{sec:some-invar-theory}.
\end{remark}

\begin{remark}
  We have noted already that the eigenvalue condition on $\tau$ in Theorem \ref{thm:endgame} comes from our ``no conductor dropping'' assumption.  Indeed, the eigenvalues of $\tau$ (resp. $\tau_H$) correspond to the parameters of $\pi$ (resp. $\sigma$) after rescaling by $T^{-1}$ and taking a limit.  It may be instructive to note moreover that any special case of Theorem \ref{thm:endgame} yields a corresponding special case of Theorem \ref{thm:main}.  For example, the case in which $\tau$ and $\tau_H$ are both regular semisimple corresponds to that in which the parameters of $\pi$ and $\sigma$ are ``dyadically spaced'' in the sense of \eqref{eqn:well-spaced-parameters}, so that the analogue of the ``avoidance of Weyl chamber walls'' condition \eqref{eqn:BlBu-Weyl-wall} holds.  On the other hand, the case in which $\tau$ and $\tau_H$ are both regular nilpotent (modulo the center) corresponds to the (twisted) $t$-aspect, as in Remark \ref{rmk:provable-examples}.
\end{remark}

\begin{remark}
  It would be interesting to understand whether ideas related to the case $n=2$ of Theorem \ref{thm:endgame} are implicit in existing proofs of spectral aspect subconvex bounds on $\GL_3$ \cite{MR4203038, MR4039487, 2020arXiv200607819K, 2020arXiv201010153S}.
\end{remark}

\section{General preliminaries}\label{sec:gener-prel-notat}

\subsection{Asymptotic notation}\label{sec:asymptotic-notation}

\subsubsection{Motivation}\label{sec:asymptotic-notation-motivation}
Analytic number theory papers often employ asymptotic notation in the following informal way:
\begin{itemize}
\item \emph{If $A \ll B$ and $B \ll C$, then $A \ll C$.}
\end{itemize}
Experienced readers know how to spell out such implications precisely:
\begin{itemize}
\item \emph{For each $c_1, c_2 \geq 0$ there exists $c_3 \geq 0$ so that for all $A,B,C$ with $|A| \leq c_1 |B|$ and $|B| \leq c_2 |C|$, we have $|A| \leq c_3 |C|$.}
\end{itemize}
This paper will involve the iterated application of many such implications.  To manage these coherently, it will be useful to take some care with foundational issues.

\subsubsection{Fixed elements}

We adopt E.\ Nelson's approach to nonstandard analysis \cite{MR469763}, but with the terminological substitution of ``fixed'' for ``standard.''  This choice of foundations should require little adjustment of habit for the reader familiar with analytic number theory papers.

Informally, a ``fixed'' quantity is an ``absolute constant,'' i.e., an element of some finite collection of objects in our mathematical universe, taken sufficiently large for the purposes of the paper.  Another informal perspective is that all quantities \emph{except} those labelled ``fixed'' are allowed by default to depend upon some implicit parameter tending off to $\infty$.  Formally, ``fixed'' is an undefined predicate equipped with some axioms for manipulating it, detailed in \cite{MR469763}, which yield a relatively consistent extension of ZFC.

This formulation of nonstandard analysis is well-suited to our aims.  It does not require constructing new objects, such as the hyperreals; one starts instead with the ``usual'' field of real numbers and introduces an undefined predicate ``fixed'' (or ``standard'') that applies to some of them.  We do not use any of the more sophisticated constructions of nonstandard analysis, but do require a coherent way to organize and iterate certain estimates.

\index{asymptotic notation!$\ll, \lll, \gg, \ggg, \O(\cdot), o(\cdot), \simeq$} The usual asymptotic notation and terminology is defined accordingly.  If $|A| \leq C |B|$ for some fixed $C \geq 0$, then we write $A \ll B$ or $B \gg A$ or $A = \O(B)$ (``$A$ is not much larger than $B$''); otherwise, we write $B \lll A$ or $A \ggg B$ or $B = o(A)$ (``$A$ is much larger than $B$'').  We write $A \asymp B$ as shorthand for $A \ll B \ll A$.  We often introduce absolute value signs, as in $|A| \asymp |B|$, for cosmetic reasons.  The notation $A \simeq B$ signifies that $A = B + o(1)$.  When an infinite exponent appears in an estimate, the meaning is that the indicated estimate holds with that exponent replaced by any fixed integer (e.g., $A \ll \h^\infty B$ means that $A \ll \h^N B$ for each fixed $N$).  Notation such as $A \ll_j B$ signifies that $A \ll B$ holds for all fixed $j$.  We write $\eps$ for a positive quantity, typically fixed and small enough.  ``Fix $n$'' carries the same meaning as ``Let $n$ be fixed.''

Experience suggests that the notation $A \lll B$ (or equivalently, $A = o(B)$) is more problematic than the more customary notation $\ll$ and $\gg$.  We emphasize that it carries the same content as ``not $A \gg B$.''  By introducing negations and taking contrapositives, any statement that involves $\lll$ or $\ggg$ may be translated to one that does not.  For example, the following statements are formally equivalent to each other (and true):
\begin{itemize}
\item \emph{If $A \lll B$ and $B \ll C$, then $A \lll C$.}
\item \emph{For each $c_2, c_3 \geq 0$ there exists $c_1 \geq 0$ so that for all $A,B,C$ with $|A| < c_1 |B|$ and $|B| \leq c_2 |C|$, we have $|A| < c_3 |C|$.}
\item \emph{If $A \gg C$ and $B \ll C$, then $A \gg B$.
  \item For each $c_2,c_3 \geq 0$ there exists $c_1 \geq 0$ so that for all $A,B,C$ with $|A| \geq c_3 |C|$ and $|B| \leq c_2 |C|$, we have $|A| \geq c_1 |B|$.}
\end{itemize}
We give a slightly more involved example of such translation in Remark \ref{rmk:translate-symbol-class-statement}.

\subsubsection{Some explication and translation}\label{sec:some-axioms}
While the notion ``fixed,'' and hence any asymptotic notation defined in terms of that notion, does not carry any intrinsic meaning, any statement that refers to that notion may be algorithmically translated to one that does not.  The algorithm is detailed in \cite[\S2]{MR469763}, but may be inferred from the informal equivalence between ``fixed'' and ``absolute constant.''  An example was given in \S\ref{sec:asymptotic-notation-motivation}.

We have attempted to write most of this paper like a typical paper in analytic number theory, working with asymptotic notation in the customary way and eschewing explicit mention of foundations.  In a couple places, we will be more pedantic, and it will be convenient then to refer back to the following consequences of the axioms of IST.
\begin{itemize}
\item \emph{Transfer}.  Let $A(x)$ be a statement in ZFC (i.e., a statement that does not contain ``fixed,'' even implicitly) that depends upon a quantity $x$ and possibly other fixed quantities (but no non-fixed quantities).  Assume that $A(x)$ holds for all fixed $x$.  Then $A(x)$ holds for all $x$.
\item \emph{Idealization}.  Let $A(x,y)$ be a statement in ZFC that depends upon quantities $x$ and $y$ (and possibly other quantities).  Assume that $A(x,y)$ is \emph{filtered} in $x$, i.e., $x$ ranges over some partially ordered set and if $x_1 \geq x_2$, then $A(x_1, y) \implies A(x_2, y)$.  Assume also that for each fixed $x$, there exists a $y$ so that $A(x,y)$ holds.  Then there exists $y$ so that $A(x,y)$ holds for all fixed $x$.
\end{itemize}
Each also has a dual form obtained by taking negations.

As illustration, one may combine transfer and idealization to verify the equivalence of the following pair of statements, for $A(x,y)$ filtered in $x$ satisfying the hypotheses of ``transfer'':
\begin{enumerate}[(i)]
\item For each $x$, there exists $y$ so that $A(x,y)$ holds.
\item There exists $y$ so that $A(x,y)$ holds for each fixed $x$.
\end{enumerate}
Many statements in this paper using asymptotic notation/terminology are of the form (ii) (or the corresponding dual form).

To give a further example, the axioms of IST imply that the following two assertions are equivalent for ZFC statements $A(x,y)$ and $B(x,z)$, with $A(x,y)$ filtered in $y$:
\begin{enumerate}[(i)]
\item For all $x$ for which $A(x,y)$ holds for some fixed $y$, we have that $B(x,z)$ holds for some fixed $z$.
\item For each $z$, there exists $y$ so that for all $x$ for which $A(x,y)$ holds, so does $B(x,z)$.
\end{enumerate}
For instance, the first italicized statement in \S\ref{sec:asymptotic-notation-motivation} is of the form (i), while the second is of the form (ii).  We refer again to \cite[\S2]{MR469763} for further details, examples and discussion concerning such translation.

\subsubsection{Classes}\label{sec:classes}
The discussion here becomes relevant only starting in \S\ref{sec:basic-symbol-classes}, so the reader may wish to refer back as needed.

``Illegal set formation'' is the forbidden practice of defining a set using ``fixed'' (or anything defined in terms of that notion, such as the above asymptotic notation).  For instance, there does not exist a subset $S \subseteq \mathbb{Z}$ consisting of precisely the integers $n$ with $n = \O(1)$; note that no axiom of ZFC allows one to construct such a set.  The customary way to work algebraically with predicates such as ``$n = \O(1)$'' (e.g., to assert that elements satisfying that predicate satisfy the ring axioms) is to introduce a dichotomy between ``internal'' and ``external'' sets, as in \cite[\S3]{MR469763}, \cite{MR723332} or \cite[\S3]{MR546178}.  For our limited purposes, a cheaper approach suffices.
\begin{definition}
  By a \emph{class}, we mean a pair $(X,P)$, where $X$ is a set and $P$ is a boolean formula that takes an element $x \in X$ as its argument and whose formulation may involve ``fixed.''
\end{definition}
We write ``$S$ is the class of all $x \in X$ satisfying $P(x)$'' or simply $S = \{x \in X : P(x)\}$ as longhand for $S = (X,P)$.  (In the language of the references noted above, classes define external subsets of internal sets.)

Classes play a purely syntactic role for us, giving a way to define symbolic expressions similar to ``$\O(B)$.''  For instance, given a scalar domain $X$ (e.g., $X = \mathbb{C}$) and a scalar $B \in X$, we may define the scalar class $\O(B)$ to be the pair $(X,P)$, where $P(A)$ is the predicate ``there is a fixed $C$ so that $|A| \leq C |B|$.''  This particular class will not play a role in this paper, but illustrates the basic idea.

We define only a handful of classes (Definitions \ref{defn:basic-symbol-class}, \ref{defn:new-symbol-class}, \ref{defn:oper-class} and mild derivatives thereof), but the notion provides a useful way to organize our estimates.

We may regard sets as classes (take for $P$ the predicate ``true'') and work with classes in much the same way we work with sets.  An element $x$ belongs to the class $(X,P)$ if $x \in X$ and if $P(x)$ is true.  One class $(X,P)$ is contained in another class $(Y,Q)$ if for every $x \in X$ satisfying $P(x)$, we have $x \in Y$ and $Q(x)$.  Two classes are equal if they contain the same elements, or equivalently, if each is contained in the other.  We may define intersections or unions of classes, class maps $f : (X,P) \rightarrow (Y,Q)$ between classes (i.e., subclasses of the product class satisfying the map axioms), and so on.

\subsection{Algebraic groups}\label{sec:algebraic-groups}
Let $F$ be a field of characteristic zero, and let $G$ be a linear algebraic group over $F$.  When $G$ is connected reductive, we often identify $G$ with its set of $F$-points: $G := G(F)$.  When $F$ is a global field, we write \index{adelic quotient $[G]$}
\begin{equation*} [G] := G(F) \backslash G(\mathbb{A})
\end{equation*}
for the corresponding adelic quotient, equip $G(\mathbb{A})$ with Tamagawa measure as in \cite[\S2.3.2]{2020arXiv200705601B}, and equip $[G]$ with the corresponding quotient measure.

\subsection{Unitary groups}\label{sec:unitary-groups}
Let $F$ be a field of characteristic zero.  Let $E/F$ be quadratic {\'e}tale $F$-algebra, thus either $E = F \times F$ or $E/F$ is a quadratic field extension.  Let $\iota$ denote the involution of $E$ fixing $F$.  Let $(V, \langle \rangle)$ be an $E$-vector space equipped with a nondegenerate $\iota$-hermitian form.  We write $\GL(V/E)$ for the group of $E$-linear automorphisms of $V$.  The subgroup
\[
  G := \Aut(V/E, \langle \rangle)
\]
consisting of automorphisms that preserve the given hermitian form is then an algebraic group.  It comes with a standard embedding $G \hookrightarrow\GL(V/E)$.  By a \emph{unitary group} over $F$, we mean a group arising in this way, together with its standard embedding.

In the split case $E = F \times F$, we may decompose $V = V^+ \oplus V^-$ as a sum of two $F$-vector spaces, with $E$ acting on the summands via the two projections to $F$.  We may identify $V^-$ with the dual of $V^+$ and
\[
  G \cong \GL(V^+)
\]
with the set of pairs $(g, {}^t g^{-1}) \in \GL(V^+) \times \GL(V^-)$ (see \cite[\S13.3]{nelson-venkatesh-1} for details).

\subsection{GGP pairs}
\label{sec:unitary-groups-ggp}
Retaining the setup of \S\ref{sec:unitary-groups}, let $e$ be an element of $V$ for which $E e$ is a free rank one $E$-module on which the form $\langle \rangle$ is nondegenerate.  Writing $V_H \subseteq V$ for the orthogonal complement of $E e$, we then have
\[
  V = V_H \oplus E e.
\]
The group
\[H := \Aut(V_H/E, \langle \rangle)\] embeds in $G$. 
By a \emph{unitary GGP pair} over $F$, we mean a pair $(G,H)$ arising in this way, together with its standard embeddings $H \hookrightarrow G \hookrightarrow \GL(V/E)$ and choice of $e \in E$.

We write $Z$ for the center of $G$ and
\begin{equation*}
  \bar{G} := G/Z
\end{equation*}
for the adjoint group.  We note that the intersection $Z \cap H$ is trivial, so we may regard $H$ also as a subgroup of $\bar{G}$.

In the split case $E = F \times F$, we may decompose $V = V^+ \oplus V^-$ and $V_H = V_H^+ \oplus V_H^-$ as in \S\ref{sec:unitary-groups}, and thereby identify
\[
  H \cong \GL(V_H^+) \hookrightarrow G \cong \GL(V^+).
\]

We may analogously define an \emph{orthogonal group} $G$ over $F$ and an \emph{orthogonal GGP pair} $(G,H)$ by taking $E := F$ in the above discussion (see \cite[\S13]{nelson-venkatesh-1} for details).  By a \emph{GGP pair}, we mean a unitary or orthogonal GGP pair.  (We focus in this paper on the unitary case, but many of our results apply just as well in the orthogonal case, so we state them in their natural generality.)

\subsection{Local distinction and matrix coefficient integrals}\label{sec:local-dist-matr}
Let $(G,H)$ be a GGP pair over a local field $F$.  Let $\pi$ and $\sigma$ be irreducible unitary representations of $G$ and $H$.  More precisely, we write $\pi$ and $\sigma$ for the spaces of smooth vectors in the given Hilbert spaces.

We say that the pair $(\pi,\sigma)$ is \emph{distinguished} if there is a nonzero $H$-invariant functional $\ell : \pi \rightarrow \sigma$.  It is known that the space of such functionals is then one-dimensional (see \cite{MR2874638}).

Suppose now that $\pi$ and $\sigma$ are tempered and that we are given a Haar measure $d h$ on $H$.  Then for $v \in \pi$ and $u \in \sigma$, the integral
\begin{equation*}
  Q(v \otimes u) :=
  \int _{h \in H}
  \langle h v, v  \rangle
  \langle u, h u  \rangle \, d h
\end{equation*}
converges absolutely and defines a \index{matrix coefficient integrals! quadratic, $\mathcal{Q}$} quadratic form $\mathcal{Q}$ on $\pi \otimes \sigma$ (see \cite[\S18]{nelson-venkatesh-1}, which refers to \cite[Prop 1.1]{MR2585578} and \cite[\S2]{MR3159075}).  It is expected that
\begin{enumerate}[(i)]
\item $\mathcal{Q}$ is not identically zero if and only if $(\pi,\sigma)$ is distinguished, and
\item $\mathcal{Q}$ is nonnegative-valued (rather than merely real-valued).
\end{enumerate}
Expectation (i) is known to hold in the unitary case \cite{2015arXiv150601452B}, so in that case, we may normalize the nonzero invariant functional $\ell : \pi \rightarrow \sigma$ for distinguished $(\pi,\sigma)$ by requiring that the identity
\begin{equation}\label{eqn:H-vs-ell-normalization}
  \mathcal{Q}(v \otimes u)
  = \pm |\langle \ell(v), u \rangle|^2
\end{equation}
hold for some sign $\pm 1$ depending only upon $(\pi,\sigma)$; in particular, $\mathcal{Q}$ is (positive or negative) definite.  Both expectations are known to hold in (at least) the following cases, in which we may provably normalize so that \eqref{eqn:H-vs-ell-normalization} holds with the expected sign $+1$:
\begin{itemize}
\item $F$ is non-archimedean, in which see \cite[Thm 5]{2015arXiv150601452B}, \cite[Prop 5.7]{Wald4}, \cite{SV}.
\item $F$ is archimedean and $G$ and $H$ are compact, in which the expectation follows readily from the Schur orthogonality relations for compact groups.
\item $F$ is archimedean and $\pi,\sigma$ arise by taking local components at the distinguished place $\mathfrak{q}$ of some constituent of the family $\mathcal{F}_T$ defined in \S\ref{sec:main-results}, with $T$ large enough.  In that case, the positivity follows from the relative character asymptotics established in \cite[\S19]{nelson-venkatesh-1} and recalled below in \S\ref{sec:relat-char-asympt}.
\end{itemize}

\subsection{Global notation and assumptions}\label{sec:assumpt-conc-s}
Let $F$ be a fixed number field.  We denote by $\mathbb{Z}_F$ its ring of integers and by $\mathbb{A}$ its adele ring.  For a place $\mathfrak{p}$ of $F$, we write $F_\mathfrak{p}$ for the completion.  If $\mathfrak{p}$ is finite, we write $\mathbb{Z}_\mathfrak{p} \subseteq F_\mathfrak{p}$ for the ring of integers and $q_{\mathfrak{p}}$ for the absolute norm.  For a finite set $S$ of places of $F$, we write $F_S := \prod_{\mathfrak{p} \in S} F_\mathfrak{p}$.  For an algebraic group $G$ over $F$, we sometimes abbreviate $G_\mathfrak{p} := G(F_\mathfrak{p})$ and $G_S = G(F_S)$.

Let $(G,H)$ be a fixed unitary GGP pair over $F$, with standard representation $G \hookrightarrow \GL(V/E)$.  We take for $S$ a fixed finite set of places of $F$ that is sufficiently large in the following senses.
\begin{itemize}
\item $S$ contains every archimedean place.
\item $G$ and $H$ admit smooth models over $\mathbb{Z}_F[1/S]$ and the inclusion $H \hookrightarrow G$ extends to a closed immersion.  We continue to denote simply by $G$ and $H$ these smooth models.  For each $\mathfrak{p} \notin S$, the groups $G(\mathbb{Z}_\mathfrak{p}) \leq G_\mathfrak{p}$ and $H(\mathbb{Z}_\mathfrak{p}) \leq H_\mathfrak{p}$ are then hyperspecial maximal compact subgroups.
\item The rings $\mathbb{Z}_F[1/S]$ and $\mathbb{Z}_E[1/S]$ (defined by localizing away from all finite primes in $S$) are principal ideal domains.
\item $G(F) G(F_S) \prod_{\mathfrak{p} \notin S} G(\mathbb{Z}_\mathfrak{p}) = G(\mathbb{A})$, and similarly for $H$.
\end{itemize}
The last two assumptions are likely technical, but convenient.

Under the above assumptions, we fix factorizations of the Tamagawa measures on $G(\mathbb{A})$ and $H(\mathbb{A})$ that assign volume one to $G(\mathbb{Z}_\mathfrak{p}) \subseteq G_\mathfrak{p}$ and $H(\mathbb{Z}_\mathfrak{p}) \subseteq H_\mathfrak{p}$ for all $\mathfrak{p} \notin S$.

\subsection{Branching coefficients}\label{sec:branch-coeff}
We retain the setting of \S\ref{sec:assumpt-conc-s}.  Recall the family $\mathcal{F}$ defined in \S\ref{sec:introduction}.  Let $(\pi,\sigma) \in \mathcal{F}$.  We equip $\pi$ (resp. $\sigma$) with the Petersson inner product defined by integrating over $[G/Z]$ (resp. $[H/Z_H]$).  As in \S\ref{sec:local-dist-matr}, we write $\pi$ and $\sigma$ for the spaces of smooth vectors in the given Hilbert spaces.

Writing $\pi_S \subseteq \pi$ and $\sigma_S \subseteq \sigma$ for the subspaces of vectors invariant by $\prod_{\mathfrak{p} \notin S} G(\mathbb{Z}_\mathfrak{p})$ and $\prod_{\mathfrak{p} \notin S} H(\mathbb{Z}_\mathfrak{p})$, respectively, we define quadratic forms $\mathcal{P}$ and $\mathcal{Q}$ on $\pi_S \otimes \sigma_S$ by the formulas
\begin{equation*}
  \mathcal{P}(v \otimes u)
  :=
  \left\lvert
    \int _{[H]}
    v \bar{u}
  \right\rvert^2
\end{equation*}
and, as in \S\ref{sec:local-dist-matr},
\begin{equation*}
  \mathcal{Q}(v \otimes u)
  :=
  \int _{h \in H(F_S)}
  \langle h v, v \rangle
  \langle u, h u \rangle \, d h.
\end{equation*}

As recalled in \S\ref{sec:local-dist-matr}, the local distinction of $(\pi,\sigma)$ implies that $\mathcal{Q}$ is not identically zero.  By multiplicity one, we may thus define a scalar \index{representations!branching coefficient $\mathcal{L}(\pi,\sigma)$} $\mathcal{L}(\pi,\sigma) \in \mathbb{R}_{\geq 0}$ by the relation
\begin{equation}\label{eqn:P-equals-L-H}
  \mathcal{P} = \mathcal{L}(\pi,\sigma)
  \cdot |\mathcal{Q}|.
\end{equation}
As discussed in \S\ref{sec:local-dist-matr}, it is expected that $\mathcal{Q}$ is positive-definite, so that the absolute value signs above are not necessary.  This expectation is likely provable in general, but not written down to the best of our knowledge.  It follows for $(\pi,\sigma) \in \mathcal{F}_T$ with $T$ large enough from the discussion of \S\ref{sec:local-dist-matr}, and we will apply \eqref{eqn:P-equals-L-H} only to such pairs $(\pi,\sigma)$.  For our proofs, we require only the obvious positivity of $\mathcal{P}$ (rather than that of $\mathcal{L}(\pi,\sigma)$, contrary to what the simplified presentation of \S\ref{sec:overview-proof} might suggest).

\subsection{Period formulas}\label{sec:period-formulas}
The conjectures of Ichino--Ikeda and N.\ Harris (see \cite{MR2585578, MR3159075} and \cite[\S25.5]{nelson-venkatesh-1}) assert that, with notation and assumptions as above, and with $L^{(S)}$ denoting the partial $L$-function obtained by omitting Euler factors in $S$,
\begin{equation*}
  \mathcal{L}(\pi,\sigma)
  =
  2^{- \beta}
  \frac{ L^{(S)}(\pi_E \otimes  \sigma_E^{\vee},1/2)}{L^{(S)}(\Ad, \pi \boxtimes {\sigma}^{\vee},1)}
  \Delta_G^{(S)}
\end{equation*}
for some specific $\beta = \O(1)$ and $\Delta_G^{(S)} \asymp 1$.  A proof is given in \cite[Thm 1.1.6.1]{2020arXiv200705601B} under the assumption that $\pi$ and $\sigma$ are everywhere tempered.  (In fact, an inspection of the proof suggests that it should suffice to assume that $\pi_\mathfrak{p}$ and $\sigma_\mathfrak{p}$ are tempered for all $\mathfrak{p} \in S$, with $S$ sufficiently large.)

\section{Statement of main local result}\label{sec:statement-main-local}
In \S\ref{sec:statement-main-local}, we formulate our main local result, Theorem \ref{thm:construct-test-function}.  In \S\ref{sec:reduction-proof}, we explain how Theorem \ref{thm:construct-test-function} implies our main global result, Theorem \ref{thm:main}.  The remainder of the paper will then be devoted to the proof of Theorem \ref{thm:construct-test-function}.
\subsection{Norms}
\subsubsection{}\label{sec:F-norms}
Let $F$ be a local field.  We denote by $|.|_F : F \rightarrow \mathbb{R}_{\geq 0}$ the modulus.  Let $V$ be a finite-dimensional vector space over $F$.  By an \emph{$F$-norm} on $V$, we mean a function $|.|_F : V \rightarrow \mathbb{R}_{\geq 0}$ given, for some $F$-basis $e_1,\dotsc,e_n$ of $V$, by the formula
\begin{equation*}
  |\sum c_j e_j|_F
  =
  \begin{cases}
    \max_j |c_j|_F & \text{ if $F$ is non-archimedean,}\\
    (\sum_j |c_j|^2_F)^{1/2} & \text{ if } F=\mathbb{R}, \\
    \sum_j |c_j|_F& \text{ if } F = \mathbb{C}.
  \end{cases}
\end{equation*}
In particular, $|.|_F$ is a continuous function such that
\begin{itemize}
\item $|v|_F = 0$ only if $v = 0$, and
\item $|t v|_F = |t|_F |v|_F$ for all $t \in F, v \in V$.
\end{itemize}
Any two $F$-norms are comparable in the sense that each is bounded from above and below by a positive real multiple of the other, so the precise choice is unimportant when $F$ and $V$ are fixed; in that case, we often write $|.|_F$ for some fixed but unspecified $F$-norm.

When $F = \mathbb{R}$ or $\mathbb{Q}_p$, we often write simply $|.|$ rather than $|.|_{\mathbb{R}}$.  For a general local field $F$ of characteristic zero, we write $|.| := |.|_F^{1/[F:F_0]}$, where $F_0$ is the subfield of $F$ isomorphic to $\mathbb{R}$ or $\mathbb{Q}_p$.

\subsubsection{}\label{sec:F-norms-global-to-local}
Let $V$ be a finite-dimensional vector space over a global field $F$.  By choosing a basis $e_1,\dotsc,e_n$ of $V$, we obtain from the above construction for each place $\mathfrak{p}$ of $F$ an $F_\mathfrak{p}$-norm $|.|_{F_\mathfrak{p}}$ on the completion $V_{\mathfrak{p}}$.

\subsubsection{}
If $E/F$ is an extension of local fields and $V$ is an $E$-vector space, then we may also regard $V$ as an $F$-vector space, hence we may speak of $E$-norms $|.|_E$ and $F$-norms $|.|_F$ on $V$.  Any such norms are related by the estimate (for fixed $E,F,V, |.|_F, |.|_E$)
\begin{equation}\label{eqn:relate-F-norms-inert}
  |v|_E \asymp |v|_F^{[E:F]}.
\end{equation}

\subsubsection{}
Suppose now that $E = F \times F$ is the split quadratic extension of a local field $F$ and that $V$ is an $E$-vector space.  We may then decompose
\[V = V^+ \oplus V^-,\] where $E$ acts on the summands via the two projections to $F$.  Writing $v = v^+ + v^-$ for the corresponding decomposition of a vector $v \in V$, we have (for fixed $F, V, |.|_F$)
\begin{equation}\label{eqn:F-norm-split-AM-GM}
  |v|_F \asymp |v^+|_F + |v^-|_F
  \gg \sqrt{|v^+|_F |v^-|_F}.
\end{equation}

\subsubsection{}
Let $G$ be a linear algebraic group over a local field $F$.  Suppose given some faithful embedding $G \hookrightarrow \SL_N(F)$.  By embedding $\SL_N(F)$ further in the space $M_N(F)$ of $N \times N$ matrices and choosing an $F$-norm $|.|_F$ on $M_N(F)$, we may define $|g - 1|_F$ for $g \in G$.

Suppose that $F$ and $G$ are fixed.  Let $|.|_F$ and $|.|_F'$ be any fixed pair of norms arising as above.  For $g \in G$, we then have $|g-1|_F \lll 1$ if and only if $|g-1|_F' \lll 1$, in which case $|g-1|_F \asymp |g-1|_F'$.  Thus $|g-1|_F$ gives a well-defined notion of the distance from small elements $g \in G$ to the identity element.

\subsection{Quantifying distance
  to the subgroup $H$}\label{sec:quant-dist-subgr}
Let $(G,H)$ be a fixed unitary GGP pair over a fixed local field $F$, with standard representation $G \hookrightarrow \GL(V/E)$ and accompanying decomposition
\begin{equation}\label{eqn:V-V-H-E-e-for-d-H}
  V = V_H \oplus E e.
\end{equation}
Recall that we denote by $Z$ the center of $G$ and by $\bar{G} = G/Z$ the adjoint group.  The purpose of this section is to record a quantification of the ``distance'' between an element $g \in \bar{G}$ and the subgroup $H \hookrightarrow \bar{G}$.  Since we will eventually need to perform explicit calculations involving this quantification, we will be explicit here.

The basic idea can be seen in the example $H = \GL_1(F) \hookrightarrow \bar{G} = \PGL_2(F)$.  An element of $\bar{G}$ is described by a scaling class of $2 \times 2$ matrices $\left(
  \begin{smallmatrix}
    a&b\\
    c&d
  \end{smallmatrix}
\right)$.  Such an element lies in $H$ precisely when the ratios $b/d$ and $c/d$ both vanish.  The sizes of those ratios thus quantify the distance from that element to $H$.

Turning to details, we may regard $V$ as a vector space over $E$ (which need not be a field) or over $F$.  We fix an $F$-norm $|.|_F : V \rightarrow \mathbb{R}_{\geq 0}$.  We then define $\tilde{d}_H : \PGL(V/E) \rightarrow [0,\infty]$ as follows.  Temporarily lifting $g$ to an element of $\GL(V/E)$, the decomposition of the vector $g e \in V$ with respect to \eqref{eqn:V-V-H-E-e-for-d-H} is given by
\[
  g e = \left( g e - \frac{\langle g e, e \rangle}{ \langle e, e \rangle} e \right) + \frac{\langle g e, e \rangle}{ \langle e, e \rangle} e.
\]
We define $\tilde{d}_H(g)$ to be the size of the vector obtained by dividing the $V_H$-component of $g e$ by the coefficient of $e/\langle e,e \rangle$ in the $E e$-component of $g e$:
\begin{equation*}
  \tilde{d}_H(g) :=
  \left|
    \frac{g e}{\langle g e, e \rangle}
    - \frac{e}{ \langle e, e \rangle}
  \right|_F,
\end{equation*}
with the convention that $\tilde{d}_H(g) := \infty$ if $\langle g e, e \rangle \notin E^\times$.  We then define \index{distance function $d_H$}
\[
  d_{H} : \PGL(V/E) \rightarrow [0,1],
\]
\begin{equation*}
  d_H(g) := \min(1,\tilde{d}_H(g) + \tilde{d}_H(g^{-1})).
\end{equation*}

We will be interested primarily in the restrictions of $\tilde{d}_H$ and $d_H$ to $\bar{G}$.  Since $H \hookrightarrow \bar{G}$ is the stabilizer of the line $E e$, it is clear that
\begin{equation*}
  H = \{g \in \bar{G} : d_H(g) = 0\},
\end{equation*}
so we may regard $d_H$ as quantifying how far $g \in \bar{G}$ is from $H$.

We may explicate these definitions in terms of matrix entries.  We choose an orthogonal $E$-basis $e_1,\dotsc,e_{n+1}$ of $V$ with $e_{n+1} = e$.  We accordingly identify $\GL(V/E) = \GL_{n+1}(E)$ and $\GL(V_H) = \GL_n(E)$, with the latter included in the former as the subgroup of upper-left block matrices.  For $g \in \PGL(V/E) = \PGL_{n+1}(E)$, we may write
\begin{equation}\label{eqn:g-a-b-c-d}
  g = \begin{pmatrix}
    a & b \\
    c & d
  \end{pmatrix}
\end{equation}
with $a,b,c,d$ matrices of respective dimensions $n \times n, n \times 1, 1 \times n, 1 \times 1$, well-defined up to simultaneous scaling.  We similarly write
\begin{equation}\label{eqn:g-inverse-a-b-c-d}
  g^{-1} = \begin{pmatrix}
    a' & b' \\
    c' & d'
  \end{pmatrix}.
\end{equation}
Then
\begin{equation*}
  \tilde{d}_H(g)
  \asymp
  |b/d|_F,
  \quad
  d_H(g)
  \asymp \min(1,|b/d|_F +  |c'/d'|_F),
\end{equation*}
where the implied constants quantify the ambiguity in defining $F$-norms.

To illustrate the definition of $d_H$ further, we explicate its restriction to $\bar{G}$ in the split case $E = F \times F$.  As explained in \S\ref{sec:unitary-groups-ggp}, we may identify
\[
  H = \GL_n(F) \hookrightarrow G = \GL_{n+1}(F) \hookrightarrow \GL(V/E) = \GL_{n+1}(F) \times \GL_{n+1}(F),
\]
with the final inclusion given by $g \mapsto (g, {}^t g^{-1})$.  For $g \in \bar{G} = \PGL_{n+1}(F)$, we have with the notation \eqref{eqn:g-a-b-c-d} and \eqref{eqn:g-inverse-a-b-c-d} that
\[
  \tilde{d}_H(g) \asymp |b/d|_F + |c'/d'|_F,
\]
\begin{equation*}
  d_H(g) \asymp
  \min(1,
  |b/d|_F
  +
  |b'/d'|_F
  + |c/d|_F
  + |c'/d'|_F).
\end{equation*}

The essential properties of $d_H$ are summarized in the following lemma.
\begin{lemma}\label{lem:d-H-essential-properties}
  Let $g \in \bar{G}$.
  \begin{enumerate}[(i)]
  \item We have
    \begin{equation}\label{eqn:d-H-Z-vs-dist-G-mod-Z}
      d_{H}(g) \ll |\Ad(g)-1|_F.
    \end{equation}  
  \item Let $u_1, u_2 \in \bar{G}$.  Suppose that $g$ belongs to a fixed compact set and that
    \[
      |\Ad(u_j)-1|_F \lll d_{H}(g) \quad (j=1,2).
    \]
    Then
    \begin{equation}\label{eqn:d-H-Z-compare-u-g-g}
      d_{H}(u_1 g u_2) \asymp d_{H}(g).
    \end{equation}
  \item Let $h_1, h_2 \in H$.  Suppose that $g,h_1,h_2$ each belong to some fixed compact set.  Then
    \begin{equation}\label{eqn:d-H-h1gh2}
      d_{H}(h_1 g h_2)
      \asymp d_{H}(g).
    \end{equation}
  \end{enumerate}
\end{lemma}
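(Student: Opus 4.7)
Part (iii) is essentially immediate from the definition of $H$: each $h \in H$ fixes the vector $e$ pointwise and preserves $\langle\,,\,\rangle$, so $\langle h v, e\rangle = \langle v, h^{-1}e\rangle = \langle v, e\rangle$. Substituting $h_1 g h_2$ into the formula for $\tilde d_H$, the $h_2$ disappears through $h_2 e = e$, the denominator $\langle h_1 g e, e\rangle$ equals $\langle g e, e\rangle$, and the numerator becomes $h_1$ applied to the vector $w := g e / \langle g e, e\rangle - e / \langle e, e\rangle$ (using $h_1 e = e$). Thus $\tilde d_H(h_1 g h_2) = |h_1 w|_F \asymp |w|_F = \tilde d_H(g)$, since $h_1$ ranges over a fixed compact set of invertible operators on $V$. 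Applying the same argument to $(h_1 g h_2)^{-1} = h_2^{-1} g^{-1} h_1^{-1}$ and adding gives \eqref{eqn:d-H-h1gh2}.

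For part (i), we may assume $|\Ad(g) - 1|_F$ is smaller than any fixed constant, since otherwise $d_H(g) \leq 1 \ll |\Ad(g) - 1|_F$ trivially. The map $\GL(V/E) \to \bar{G}$ has a local section near the identity with bounded differential, so we can lift $g$ to an element $\lambda(1 + X) \in \GL(V/E)$ with $\lambda \in E^\times$ and $|X|_F \ll |\Ad(g) - 1|_F$. The scalar $\lambda$ cancels in the definition of $\tilde d_H(g)$, reducing the problem to estimating $(e + X e)/(\langle e, e\rangle + \langle X e, e\rangle) - e/\langle e, e\rangle$. Since $\langle e, e\rangle \in E^\times$ is fixed and nonzero, the denominator stays bounded away from zero, and a first-order expansion in $X$ yields an $F$-norm $\ll |X|_F \ll |\Ad(g) - 1|_F$. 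The same reasoning bounds $\tilde d_H(g^{-1})$ via $|\Ad(g^{-1}) - 1|_F \asymp |\Ad(g) - 1|_F$.

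Part (ii) proceeds by a perturbation argument. The hypothesis $|\Ad(u_j) - 1|_F \lll d_H(g) \leq 1$ forces the $u_j$ to be infinitesimally close to the center in $\bar{G}$, so as in part (i) we may write $u_j = z_j(1 + \epsilon_j)$ in $\GL(V/E)$ with $z_j \in E^\times$ and $|\epsilon_j|_F \ll |\Ad(u_j) - 1|_F \lll d_H(g)$. The scalar $z_1 z_2$ cancels in $\bar{G}$, and $u_1 g u_2 = g + (\epsilon_1 g + g \epsilon_2 + \epsilon_1 g \epsilon_2)$ in $\GL(V/E)$, with perturbation of $F$-norm $\ll |\epsilon_1|_F + |\epsilon_2|_F$, uniformly in $g$ over the fixed compact set. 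A direct quotient-rule expansion in the definition of $\tilde d_H$ (valid since $\langle g e, e\rangle$ is bounded away from $0$ whenever $\tilde d_H(g)$ contributes meaningfully to $d_H$) then gives $|\tilde d_H(u_1 g u_2) - \tilde d_H(g)| \ll |\epsilon_1|_F + |\epsilon_2|_F$, with the analogous estimate on inverses. Adding yields $|d_H(u_1 g u_2) - d_H(g)| \lll d_H(g)$, hence \eqref{eqn:d-H-Z-compare-u-g-g}.

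The main technical obstacle is the perturbation argument for (ii), which must treat uniformly both the region where $d_H(g) \asymp 1$ (where the $\min$ with $1$ in the definition can interact with the perturbation, but continuity alone suffices) and the region where $d_H(g) \lll 1$ (where the quotient-rule expansion is rigorous and gives the Lipschitz estimate with the correct scale); beyond this, the arguments are largely routine manipulations of the explicit formula for $d_H$.
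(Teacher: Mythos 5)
Your proof is correct in substance, but it is organized differently from the paper's. The paper's proof pivots on a single linearizing claim, \eqref{eqn:d-H-vs-mu}: writing $g e = \mu(g) + \nu(g) e$ with $\mu(g) \in V_H$, one has $d_H(g) \asymp |\mu(g)|$ for $g$ in a fixed compact subset of $G$; the only nontrivial point is the control of $\tilde d_H(g^{-1})$, handled via Cramer's rule (when $|\mu(g)|$ is small, $\nu(g)$ and $\nu(g^{-1})$ lie in fixed compact subsets of $E^\times$ and $|\mu(g^{-1})| \asymp |\mu(g)|$). Since $\mu$ is linear in $g$, parts (i), (ii), (iii) then follow in one line each, with no case distinction: in (ii), $\mu(u_1 g u_2) = \mu(g) + o(\mu(g))$ in every regime. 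You instead work directly with the normalized quantity $\tilde d_H$, treat $g$ and $g^{-1}$ separately (using $|\Ad(g^{-1})-1| \asymp |\Ad(g)-1|$ in (i), and compactness of $g^{-1}$ in (ii)), and for (ii) split into the regimes $d_H(g) \lll 1$ (quotient-rule Lipschitz estimate) and $d_H(g) \asymp 1$ (continuity/compactness). Both routes work; the paper's buys uniformity across the regimes for free, while yours avoids introducing $\mu$ at the cost of the case split. Your (iii) is essentially the paper's computation and in fact needs no hypothesis on $g$, only that $h_1, h_2$ lie in a fixed compact subset of $H$.

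Two sub-steps that you assert should be spelled out, though both are true and routine. First, the parenthetical ``$\langle g e, e\rangle$ is bounded away from $0$'' needs the argument that, for $g$ in a fixed compact subset of $G$ and $\tilde d_H(g) \lll 1$, invertibility forces $|\mu(g)| + |\nu(g)| \gg 1$ and hence $\nu(g)$ bounded below; moreover in the split case $E = F \times F$ ``bounded away from $0$'' must mean bounded away from the non-units of $E$ (each component bounded below), which is exactly what the componentwise invertibility argument (the paper's Cramer's-rule step) delivers — the $F$-norm of $\nu(g)$ being $\gg 1$ alone does not control $\nu(g)^{-1}$. Second, in the regime $d_H(g) \asymp 1$, ``continuity alone suffices'' requires the short compactness argument: continuity of $d_H$ (including at points where $\tilde d_H = \infty$), compactness of $\{g \in K : d_H(g) \geq c\}$, and extraction of a uniform (fixed) neighborhood of $1$ for $u_1, u_2$; as written, the Lipschitz estimate in your main paragraph is neither valid nor needed there.
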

\begin{proof}
  We abbreviate $|.| := |.|_F$.  For $x \in \End(V/E)$, we set
  \[
    \nu(x) := \langle x e, e \rangle / \langle e, e \rangle
  \]
  and
  \[
    \mu(x) := x e - \nu(x) e
  \]
  so that $x e = \mu(x) +\nu (x) e$ with $\mu(x) \in V_H$ and $\nu (x) \in E$.  Then $\tilde{d}_H(g) \asymp |\mu(g)/\nu(g)|$, with the RHS well-defined.

  Suppose that $g \in G$ lies in a fixed compact set.  We may define $\tilde{d}_H(g)$ and $d_H(g)$ by composing with the quotient map $G \rightarrow \bar{G}$.  We claim that
  \begin{equation}\label{eqn:d-H-vs-mu}
    d_H(g) \asymp |\mu(g)|.
  \end{equation}
  To see this, note first that if $|\mu(g)| \asymp 1$, then $|\mu(g) / \nu(g)| \gg 1$, so $d_H(g) \asymp 1 \asymp |\mu(g)|$.  We may thus suppose that $|\mu(g)| \lll 1$.  Since $\mu(g)$ and $\nu(g)$ describe the rightmost column of $g \in \GL_{n+1}(E)$, we see then that $\nu(g)$ lies in a fixed compact subset of $E^\times$.  Since each entry of $g$ as well as $1/\det(g)$ is $\O(1)$, we deduce by Cramer's rule that $\nu(g^{-1})$ likewise lies in a fixed compact subset of $E^\times$ and that $|\mu(g^{-1})| \asymp |\mu(g)|$.  Thus $\tilde{d}_H(g) \asymp |\mu(g)|$ and $\tilde{d}_H(g^{-1}) \asymp |\mu(g^{-1})| \asymp |\mu(g)|$.  The estimate \eqref{eqn:d-H-vs-mu} follows.

  We now let $g \in \bar{G}$ and prove each assertion from the lemma in turn.
  \begin{enumerate}[(i)]
  \item Since $d_H(g) \leq 1$, we may assume that $|\Ad(g) - 1| \lll 1$, as the required estimate is otherwise trivial.  We may then lift $g$ to an element of $G$ with $|g-1| \asymp |\Ad(g)-1|$; in particular, $g \simeq 1$.  Writing $\mu(g) = (g-1) e - \nu(g-1) e$, we deduce that
    \[
      d_H(g) \asymp |\mu(g)| \ll |g-1| \ll |\Ad(g) - 1|,
    \]
    as required.
  \item Since $g$ lies in a fixed compact set, we may lift $g$ to an element of $G$ with the same property.  We may likewise lift $u_j$ to an element of $G$ with $|u_j - 1| \lll d_H(g) \asymp \mu(g)$ (by \eqref{eqn:d-H-vs-mu}).  Then $\mu(u_1 g u_2) = \mu(g) + o(\mu(g))$, hence $|\mu(u_1 g u_2)| \asymp |\mu(g)|$.  We conclude via \eqref{eqn:d-H-vs-mu}.

  \item Since $H$ stabilizes $e$ and $h_1$ lies in a fixed compact subset of $H$, we see that
    \[
      |\mu(h_1 g h_2)| = \left\lvert h_1 \left( \frac{g e}{\langle g e, e \rangle} - \frac{e}{\langle e, e \rangle} \right) \right\rvert \asymp |\mu(g)|.
    \]
    We conclude once again via \eqref{eqn:d-H-vs-mu}.
  \end{enumerate}

\end{proof}

\subsection{Construction
  of analytic test vectors: statements}\label{sec:constr-prop-test}
Fix a GGP pair $(G,H)$ over an archimedean local field $F$, with standard representation $G \hookrightarrow \GL(V/E)$.  Set $n + 1 := \dim_E(V)$, and write $\bar{G} = G/Z$ as usual.  We assume given fixed Haar measures on each group.  The following is the main local result of this paper.
\begin{theorem}\label{thm:construct-test-function}
  Let $T$ be a positive real with $T \ggg 1$.  Let $\pi$ and $\sigma$ be tempered irreducible unitary representations of $G$ and $H$, respectively.  Assume the following conditions (i.e., the local conditions imposed at the distinguished place ``$\mathfrak{q}$'' in the definition of ``$\mathcal{F}_T$'' from \S\ref{sec:introduction}):
  \begin{itemize}
  \item $(\pi,\sigma)$ is orbit-distinguished (\S\ref{sec:relat-coadj-orbits}),
  \item $T = \max(\{|\lambda_{\pi,i}|_F\} \cup \{|\lambda_{\sigma,j}|_F\})$ is the size of the largest archimedean Satake parameter of $\pi$ or $\sigma$ (\S\ref{sec:satake-param-arch}), and
  \item we have $|\lambda_{\pi,i} - \lambda_{\sigma,j}|_F \gg T$ for all $i$ and $j$, or equivalently, $C(\pi, \sigma) \asymp T^{2 n(n+1)}$.
  \end{itemize}
  
  Then for each fixed $\kappa > 0$, there is
  \begin{itemize}
  \item a test function $f \in C_c^\infty(G)$, supported in each fixed neighborhood of the identity in $G$, and
  \item a smooth unit vector $u \in \sigma$
  \end{itemize}
  with the following properties.

  Let $\omega_\pi : Z \rightarrow \U(1)$ denote the central character of $\pi$.  Define $f^\sharp \in C_c^\infty(\bar{G}, \omega_\pi^{-1})$ by the formula
  \begin{equation}\label{eqn:at-frakq-f-sharp-via-f0}
    f^\sharp(g) := \int _{z \in Z}
    \omega_\pi(z) (f \ast f^*)(z g) \, d z,
  \end{equation}
  where $f^*(g) := \overline{f(g^{-1})}$ and $f \ast f^*$ denotes the convolution product.
  \begin{enumerate}[(i)]
  \item \label{thm:f-item-1} With $\mathcal{Q}$ as in \S\ref{sec:local-dist-matr}, we have
    \begin{equation}\label{eqn:sum-v-H-pi-f-0-v-u}
      \sum _{v \in \mathcal{B}(\pi)}
      \mathcal{Q}(\pi(f) v \otimes u)
      \gg T^{-n^2/2-\kappa}.
    \end{equation}
    where $\mathcal{B}(\pi)$ denotes any orthonormal basis.
  \item \label{item:f-ell-one-norm-on-H} We have
    \[\int_{H} |f^\sharp| \ll T^{n/2+\kappa}.\]
  \item \label{thm:f-item-3} Let $\Omega$ be a fixed compact subset of $H$.  There is a fixed compact subset $\Omega '$ of $H$ so that the following assertions hold.  Let $\Psi_1, \Psi_2 : H \rightarrow \mathbb{C}$ be measurable functions satisfying $\Psi_j(g z) = \omega(z) \Psi_j(g)$ for some unitary character $\omega : Z_H \rightarrow \U(1)$ and all $g \in H, z \in Z_H$.  Let $\gamma \in \bar{G} - H$.  Then
    \begin{equation}\label{eq:int-_x-y}
      \int _{x, y \in \Omega } \left\lvert \overline{\Psi_1(x)} \Psi_2(y) f^\sharp(x^{-1} \gamma y) \right\rvert \, d x \, d y \ll \frac{T^{n/2 - 1/2 +\kappa}}{ d_H(\gamma)} \|\Psi_1\|_{L^2(\Omega')} \|\Psi_2\|_{L^2(\Omega')}.
    \end{equation}
  \end{enumerate}
\end{theorem}
\begin{proof}
  See \S\ref{sec:analyt-test-vect}.
\end{proof}
For details concerning the convergence of sums as in \eqref{eqn:sum-v-H-pi-f-0-v-u}, we refer to \cite[\S18]{nelson-venkatesh-1} and \S\ref{sec:relat-char-asympt}.

\begin{remark}\label{rmk:cancellation-in-integral}
  One could attempt a stronger estimate for the integral without absolute value signs.
\end{remark}

\section{Preliminaries for the proof of the reduction}\label{sec:prel-proof-reduct}
In this section we record some preliminaries for our deduction, given in \S\ref{sec:reduction-proof}, of Theorem \ref{thm:main} from Theorem \ref{thm:construct-test-function}.  Some of these results may be more broadly useful, but they will not be otherwise applied in this paper.

\subsection{Uniform distinction
  at auxiliary places}\label{sec:uniform-distinction}
Let $(G,H)$ be a GGP pair over a local field $F$.  We assume given a Haar measure $d h$ on $H$.  We define quadratic forms $\mathcal{Q}$ as in \S\ref{sec:local-dist-matr}.  All representations considered here are assumed smooth.  For a smooth unitary representation $V$ of some group, we write $\mathcal{B}(V)$ for an orthonormal basis consisting of vectors isotypic (i.e., generating a multiple of an irreducible representation) under some chosen maximal compact subgroup.

\begin{lemma}\label{lem:uniform-distinction-arch}
  Suppose that $F$ is archimedean and that $G$ and $H$ are compact.  For each irreducible unitary representation $\pi$ of $G$ and open neighborhood $\Omega$ of the identity element in $G$, there is a positive real $c$ and a test function $f \in C_c^\infty(\Omega)$ with the following property.  Let $\sigma$ be an irreducible unitary representation $H$ for which $(\pi,\sigma)$ is distinguished.  There is a unit vector $u \in \sigma$ with
  \begin{equation*}
    \sum _{v \in \mathcal{B}(\pi)}
    \mathcal{Q}(\pi(f) v, u)
    \geq c.
  \end{equation*}
\end{lemma}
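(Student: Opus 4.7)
The plan is to use the finite-dimensionality of $\pi$ (forced by the compactness of $G$) together with Schur orthogonality for $H$ to compute $\sum_{v} \mathcal{Q}(\pi(f)v, u)$ essentially exactly, and then to choose $f$ as a suitably concentrated bump near the identity.

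Since $G$ is compact, $\pi$ is finite-dimensional and its restriction decomposes as a finite orthogonal direct sum $\pi|_H = \bigoplus_{\sigma' \in \mathcal{S}(\pi)} \bigoplus_{i=1}^{m(\sigma')} \iota_i^{\sigma'}(\sigma')$, for some finite set $\mathcal{S}(\pi)$ of isomorphism classes and $H$-equivariant isometric embeddings $\iota_i^{\sigma'} : \sigma' \hookrightarrow \pi$. By Schur's lemma for the compact group $H$, the pair $(\pi, \sigma)$ is distinguished if and only if $\sigma \in \mathcal{S}(\pi)$. Fix any such $\sigma$ and pick any unit vector $u \in \sigma$.

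For $w \in \pi$, writing $w = \sum_{\sigma', i} w_{\sigma', i}$ along the isotypic decomposition and using the mutual orthogonality and $H$-stability of the blocks gives $\langle h w, w \rangle_\pi = \sum_{\sigma', i} \langle h w_{\sigma', i}, w_{\sigma', i}\rangle$. Schur orthogonality for $H$ kills every contribution with $\sigma' \neq \sigma$ to $\mathcal{Q}(w, u) = \int_H \langle h w, w \rangle \overline{\langle h u, u \rangle} \, dh$, and a short calculation yields
\[
  \mathcal{Q}(w, u) = \frac{c_H}{\dim \sigma} \sum_{i=1}^{m(\sigma)} |\langle w, \iota_i(u) \rangle_\pi|^2,
\]
for a positive constant $c_H$ depending only on the fixed Haar measure on $H$. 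Summing over $v \in \mathcal{B}(\pi)$ and applying Parseval gives
\[
  \sum_{v \in \mathcal{B}(\pi)} \mathcal{Q}(\pi(f) v, u) = \frac{c_H}{\dim \sigma} \sum_{i=1}^{m(\sigma)} \|\pi(f)^* \iota_i(u)\|^2.
\]

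To finish, take $f \in C_c^\infty(\Omega)$ real-valued and nonnegative with $\int_G f = 1$, supported in a small symmetric neighborhood of the identity contained in $\Omega$. Since $\pi$ is continuous and finite-dimensional, $\pi(f^*) \to \mathrm{id}_\pi$ in operator norm as the support of $f$ shrinks to $\{1\}$; one may therefore arrange $\|\pi(f)^* v - v\| \leq 1/2$ for every unit vector $v \in \pi$, giving $\|\pi(f)^* \iota_i(u)\|^2 \geq 1/4$ and hence
\[
  \sum_{v \in \mathcal{B}(\pi)} \mathcal{Q}(\pi(f) v, u) \geq \frac{c_H \, m(\sigma)}{4 \dim \sigma}.
\]
Setting $c := \min_{\sigma \in \mathcal{S}(\pi)} c_H \, m(\sigma) / (4 \dim \sigma) > 0$ (a finite minimum over finitely many positive reals) yields the claim uniformly over all distinguished $\sigma$. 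There is no serious obstacle: the argument rests on nothing deeper than Schur orthogonality, Parseval, and continuity of the representation. The only mild bookkeeping concern—that a single $f$ must work for every $\sigma \in \mathcal{S}(\pi)$ simultaneously—is resolved by the uniform operator-norm approximation $\pi(f^*) \to \mathrm{id}_\pi$ on the finite-dimensional space $\pi$.
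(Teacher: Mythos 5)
Your proof is correct and follows essentially the same route as the paper's: shrink the support of $f$ so that $\pi(f)$ is an approximate identity on the finite-dimensional space $\pi$, and use that only finitely many isomorphism classes of $\sigma$ can be distinguished, so the constant can be taken as a minimum over them. The only difference is presentational: where the paper invokes the positivity and non-vanishing of $\mathcal{Q}$ for compact groups (a consequence of Schur orthogonality, as noted in \S\ref{sec:local-dist-matr}) to define $c_\sigma$ as a maximum over unit vectors, you carry out the Schur orthogonality and Parseval computation explicitly, which yields the same conclusion with an explicit constant and shows that any unit vector $u \in \sigma$ works.
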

\begin{proof}
  We take for $f$ a smooth bump with $\int_G f = 1$ that is supported close enough to the identity, where the meaning of ``close enough'' depends upon of $\pi$ and $\Omega$.  Since $G$ is compact, the representation $\pi$ is finite-dimensional.  The operator $\pi(f)$ thus approximates the identity operator on $\pi$, and so the above sum is at least
  \begin{equation}\label{eqn:sum-v-B-pi-H-v-u}
    \frac{1}{2} \sum_{v \in \mathcal{B}(\pi)}
    \mathcal{Q}(v,u).
  \end{equation}
  Let $c_\sigma$ denote the maximum of \eqref{eqn:sum-v-B-pi-H-v-u} over all unit vectors $u \in \sigma$.  By our distinction assumption, we have $c_\sigma > 0$.  Since $\pi|_H$ is finite-dimensional, there are only finitely many possibilities for the isomorphism class of $\sigma$.  We conclude taking for $c$ the minimum of $c_\sigma$ over $\sigma$.
\end{proof}

\begin{lemma}\label{lem:padic-uniform-distinction}
  Suppose that $F$ is non-archimedean.  For each pair of compact open subgroups $J_G \leq G$ and $J_H \leq H$, there is a positive real $c$ and a pair of arbitrarily small compact open subgroups $U_G \leq G$ and $U_H \leq H$ with the following property.  Let $\pi$ and $\sigma$ be tempered irreducible unitary representations of $G$ and $H$.  Assume that $(\pi,\sigma)$ is distinguished and that the invariant subspaces $\pi^{J_G}$ and $\sigma^{J_H}$ are nontrivial.  Then there are unit vectors $v \in \pi^{U_G}$ and $u \in \sigma^{U_H}$ so that $\mathcal{Q}(v,u) \geq c$.
\end{lemma}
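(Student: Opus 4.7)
The plan is to adapt the proof of Lemma \ref{lem:uniform-distinction-arch} to the $p$-adic setting, replacing the trivial compactness ``finite-dimensional representations of a compact group'' with the compactness supplied by Bernstein's theory.

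First I would reduce to a statement internal to a fixed pair of Bernstein components. By Bernstein's decomposition of the category of smooth representations, the set of irreducible smooth $\pi$ with $\pi^{J_G} \neq 0$ meets only finitely many Bernstein components $\mathfrak{s}$ of $G$, and similarly there are finitely many such components $\mathfrak{t}$ of $H$. Within each component the tempered irreducible locus is a compact space (a finite quotient of a compact torus of unitary unramified characters of a Levi subgroup). Fixing one such pair $(\mathfrak{s}, \mathfrak{t})$, I would choose compact open subgroups $U_G \leq J_G$ and $U_H \leq J_H$ small enough that the functors $\pi \mapsto \pi^{U_G}$ and $\sigma \mapsto \sigma^{U_H}$ become equivalences from the respective components to categories of modules over finite-type Hecke algebras (invoking, e.g., Bushnell--Kutzko types, or simply sufficiently deep pro-$p$ subgroups). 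This allows me to realize the tempered representations on $\mathfrak{s}$ (resp. $\mathfrak{t}$) on a common finite-dimensional Hilbert space $V_\mathfrak{s}$ (resp. $V_\mathfrak{t}$), with the Hecke action, inner product, and distinction data varying continuously in the unramified character parameter.

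With such a uniform realization, the matrix coefficient integral defining $\mathcal{Q}$ converges uniformly by Harish-Chandra's $\Xi$-estimates for tempered matrix coefficients, so for fixed $v \in V_\mathfrak{s}$, $u \in V_\mathfrak{t}$, the value $\mathcal{Q}(v, u)$ is continuous in the Bernstein parameters. The function $(\pi, \sigma) \mapsto \sup_{\|v\|=\|u\|=1} \mathcal{Q}(v, u)$ is therefore lower semicontinuous on the compact tempered locus of $\mathfrak{s} \times \mathfrak{t}$; it is strictly positive on the subset of distinguished pairs by the multiplicity-one result of \cite{MR2874638}, and that subset is closed, since the dimension of the space of distinction functionals is upper semicontinuous. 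Compactness then yields a lower bound $c_{\mathfrak{s}, \mathfrak{t}} > 0$ uniform on distinguished pairs in the component, and taking the minimum of the $c_{\mathfrak{s}, \mathfrak{t}}$ together with the intersection of the corresponding $U_G$'s and $U_H$'s over the finite set of relevant component pairs yields the lemma.

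The main obstacle is assembling the continuity of $\mathcal{Q}$ in the Bernstein parameters, since the distinction functional is canonical only up to scalar, and the common realizations $V_\mathfrak{s}, V_\mathfrak{t}$ must be chosen so that both the Hermitian structure and the Hecke-module structure vary continuously in the unramified character. Working directly with $\mathcal{Q}$ (which is scalar-invariant) rather than with $\ell$ sidesteps the normalization ambiguity and reduces everything to a uniform $L^{2+\varepsilon}$ estimate on the matrix coefficients within each Bernstein family, which should be standard.
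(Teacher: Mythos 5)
Your overall strategy (finitely many Bernstein families, compact unramified-twist parameter, continuity plus compactness) is the right general shape, but there is a genuine gap at the step where you pass from the continuously varying \emph{family} to the individual \emph{irreducible} representations. Within a Bernstein component the object that varies continuously in the unitary unramified character $t$ is the full induced representation $i_M(\tau_t)$, not its irreducible tempered constituents: at points of reducibility the constituents jump, so your ``common finite-dimensional realization with continuously varying inner product, Hecke action and distinction data'' does not exist for the irreducibles themselves, and neither the continuity of $\mathcal{Q}$ on the ``tempered irreducible locus'' nor the asserted upper semicontinuity of $\dim\Hom_H(\pi\otimes\sigma^\vee,\mathbb{C})$ along such a family is justified. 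In fact the closedness claim is the crux and is suspect on its face: for tempered unitary pairs, distinction is equivalent to $\mathcal{Q}\not\equiv 0$ (this nonvanishing is Beuzart-Plessis's theorem, not the Sun--Zhu multiplicity-one result you cite), so if $\mathcal{Q}$ did vary continuously the distinguished locus would be \emph{open}, and nothing prevents a sequence of distinguished pairs from degenerating to a non-distinguished limit with $\sup_{\|v\|=\|u\|=1}\mathcal{Q}(v,u)\to 0$. Your argument as written does not exclude exactly this failure mode.

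The paper's proof avoids the issue by never working with the irreducible constituents as a family. It combines $\pi\otimes\sigma^\vee$ into a single tempered representation of $G\times H$, embeds it into some $i_M(\tau_{t_0})$ from a finite list, and then uses two inputs: (a) if one constituent of $i_M(\tau_{t_0})$ is $H$-distinguished then $i_M(\tau_t)$ is distinguished for \emph{every} $t$ (\cite[\S24.1, Lem 2]{nelson-venkatesh-1}), and (b) strong multiplicity one, which says $\pi$ is the \emph{only} distinguished summand, so that the relative character $\sum_{v\in\mathcal{B}(\pi^U)}\int_H\langle hv,v\rangle\,dh$ equals the same sum taken over all of $(i_M\tau_{t_0})^U$. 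The latter quantity is manifestly continuous in $t$ and positive at each $t$, and the unramified character group is compact, which yields the uniform lower bound without any semicontinuity statement about irreducibles or about spaces of invariant functionals. To repair your proposal you would need to import precisely these two ingredients (distinction of the whole induced family, plus strong multiplicity one to pass back to $\pi$); with them, your compactness argument goes through, and without them it does not.
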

\begin{proof}
  By uniform admissibility (see \cite[\S2.6.3]{michel-2009}), the dimensions of the spaces $\pi^{U_G}$ and $\sigma^{U_H}$ are bounded by a constant depending at most upon $U_G$ and $U_H$.  It is thus equivalent to produce $c > 0$ and $U_G, U_H$ so that
  \[\sum _{v \in \mathcal{B}(\pi^{U_G})}
    \sum _{u \in \mathcal{B}(\sigma^{U_H})} \mathcal{Q}(v,u) \geq c.\] To conclude, we apply the following result to the representation $\pi \otimes \sigma^\vee$ of $G \times H$.
\end{proof}
\begin{lemma}
  Suppose that $F$ is non-archimedean.  For each compact open subgroup $J$ of $G \times H$, there is a positive real $c$ and an arbitrarily small compact open subgroup $U$ of $G \times H$ so that for every tempered irreducible representation $\pi$ of $G \times H$ for which
  \begin{itemize}
  \item $\pi^J$ is nontrivial, and
  \item $\pi$ is distinguished, i.e., there is a nonzero $H$-invariant functional $\ell : \pi \rightarrow \mathbb{C}$,
  \end{itemize}
  we have
  \begin{equation}\label{eqn:sum-v-in-pi-U-int-h}
    \sum _{v \in \mathcal{B}(\pi^U)}
    \int _{h \in H}
    \langle h v, v \rangle \, d h \geq c.
  \end{equation}
\end{lemma}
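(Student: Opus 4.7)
The plan is a compactness argument in the Fell topology on the tempered dual of $G \times H$, exploiting the fact that the set of representations with nonzero $J$-fixed vectors lies in finitely many Bernstein components and is therefore parametrized by a compact Hausdorff space.

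For a compact open $U \le J$ and a distinguished tempered $\pi$, the pairing $(v_1, v_2) \mapsto \int_H \langle h v_1, v_2 \rangle \, dh$ is an $H$-invariant sesquilinear form on $\pi$, hence (by multiplicity one and the positivity of $\mathcal{Q}$ in the non-archimedean unitary case recalled in \S\ref{sec:local-dist-matr}) is of the form $c_\pi \ell_\pi \otimes \overline{\ell_\pi}$ for a positive scalar $c_\pi$ and a nonzero $H$-invariant functional $\ell_\pi$. Absorbing $c_\pi^{1/2}$ into $\ell_\pi$ gives
\[
F_U(\pi) := \sum_{v \in \mathcal{B}(\pi^U)} \int_H \langle h v, v \rangle \, dh = \sum_{v \in \mathcal{B}(\pi^U)} |\ell_\pi(v)|^2 = \|\ell_\pi|_{\pi^U}\|^2 \ge 0.
\]
In particular $F_U(\pi)$ is monotone non-decreasing as $U$ shrinks, and $\sup_U F_U(\pi) > 0$ whenever $\pi$ is distinguished.

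By Bernstein's decomposition and uniform admissibility, the tempered $\pi$ with $\pi^J \neq 0$ lie in finitely many Bernstein components, each of whose tempered subset is a compact Hausdorff space (parametrized by unitary twists of a fixed discrete-series module on a Levi). Harish-Chandra's uniform $L^{2+\eps}$ majorant for tempered matrix coefficients serves as a dominating integrable function on $H$, so by dominated convergence $F_U$ is continuous on this compact space for each fixed $U$. Now suppose for contradiction that no pair $(U,c)$ works: choose $U_n \to \{1\}$ and distinguished $\pi_n$ with $F_{U_n}(\pi_n) \to 0$, and extract a Fell-convergent subsequence $\pi_n \to \pi_\infty$. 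For each fixed $m$, monotonicity and continuity give
\[
F_{U_m}(\pi_\infty) = \lim_n F_{U_m}(\pi_n) \le \lim_n F_{U_n}(\pi_n) = 0,
\]
hence $\sup_m F_{U_m}(\pi_\infty) = 0$.

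The hard step will be to conclude a contradiction from this last equation, which requires showing that Fell limits of distinguished tempered representations retain enough distinction to make $\sup_U F_U$ positive at $\pi_\infty$ (noting that $\pi_\infty$ may in principle be reducible at a wall). This should follow from the explicit parametrization of tempered distinguished representations for $p$-adic unitary GGP pairs via Beuzart-Plessis's local Plancherel-type results: distinction cuts out a closed (in fact, a union of connected components) locus in the tempered dual within each Bernstein block, so that $\pi_\infty$ itself, or at least one of its irreducible constituents sharing its $J$-fixed vectors, must still be distinguished, contradicting $F_{U_m}(\pi_\infty) = 0$ for all $m$.
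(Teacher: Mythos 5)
There is a genuine gap, and it is exactly at the step you yourself flag as ``the hard step.'' Your compactness argument is run on the set of irreducible tempered representations with nonzero $J$-fixed vectors, equipped with the Fell topology, and it needs two things that are not established: (a) that $F_U$ is continuous (or at least lower semicontinuous) on this set, and (b) that the extracted limit $\pi_\infty$ is still distinguished. Neither is routine, because the object you call ``a compact Hausdorff space'' is not one: what is compact is the parameter space of unramified twists $\tau_t$ of finitely many discrete series on Levis, and the map from parameters to irreducible tempered representations fails to be a bijection precisely at the reducibility walls, where the Fell topology is non-Hausdorff. At such a wall a sequence $\pi_n = i_M\tau_{t_n}$ of irreducible distinguished representations converges simultaneously to \emph{every} irreducible constituent of $i_M\tau_{t_\infty}$, and only one of those constituents is distinguished (this is strong multiplicity one); your subsequential limit $\pi_\infty$ may perfectly well be a non-distinguished constituent, in which case $\sup_m F_{U_m}(\pi_\infty) = 0$ is no contradiction at all. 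The appeal to ``distinction cuts out a closed locus / union of connected components in the tempered dual'' is therefore both unproved and, interpreted naively at the level of irreducibles, exactly the assertion that fails at walls. The same wall phenomenon undermines the continuity claim for $F_U$ on irreducibles: matrix coefficients and $U$-fixed subspaces of an individual constituent do not vary continuously through a reducibility point, and the Harish--Chandra majorant only controls the integral over $H$, not the Fell-limit behaviour of the coefficients themselves.

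The paper sidesteps all of this by never working with individual irreducibles across the family. Given $\pi \hookrightarrow i_M\tau$, strong multiplicity one says $\pi$ is the \emph{only} distinguished summand, and since $\mathcal{Q}$ vanishes identically on non-distinguished summands, the quantity \eqref{eqn:sum-v-in-pi-U-int-h} for $\pi$ equals the same sum taken over $\mathcal{B}((i_M\tau)^U)$. One then reduces to finitely many families $\{i_M(\tau_t)\}$ (bounded depth), observes that distinction of \emph{some} member forces distinction of the full induced representation $i_M(\tau_t)$ for \emph{every} $t$ (the paper cites the proof of a lemma from the reference, not any closedness of the distinguished locus), and the function $t \mapsto \sum_{v \in \mathcal{B}((i_M\tau_t)^U)} \int_H \langle h v, v\rangle\,dh$ is genuinely continuous on the compact group of unramified unitary characters, which gives the uniform lower bound. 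If you want to rescue your argument, the fix is essentially to import this device: replace $F_U(\pi_n)$ by the corresponding sum over the full induced module before taking limits, at which point the compactness you invoke becomes the compactness of the twist parameter and the Hausdorff and closedness issues disappear.
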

\begin{proof}
  The proof is similar to that of \cite[\S24.5.1]{nelson-venkatesh-1}.
  
  Let $\pi$ satisfy the stated hypotheses.  It follows from the facts noted in \S\ref{sec:local-dist-matr} that the LHS of \eqref{eqn:sum-v-in-pi-U-int-h} converges absolutely, is nonnegative for all $U$, and is positive for some $U$.
  
  We recall (see \cite[\S23.4.3]{nelson-venkatesh-1}) that every tempered irreducible representation $\pi$ of $G \times H$ arises as a submodule of the normalized induction
  \begin{equation*}
    i_M \tau := \Ind_P^{G \times H} \tau
  \end{equation*}
  for some parabolic subgroup $P = M N$ of $G \times H$ and some irreducible representation $\tau$ of $M$ that is \emph{square-integrable}, i.e., has unitary central character and matrix coefficients square-integrable modulo the center.  The conjugacy class of the pair $(M,\tau)$ is moreover unique.  The unitary representation $i_M \tau$ decomposes as a finite direct sum of tempered irreducible representations of $G \times H$, one of which is $\pi$.  By strong multiplicity one (see \cite[\S24.1]{nelson-venkatesh-1} and references), $\pi$ is the only distinguished summand of $i_M \tau$.  It follows that
  \begin{equation*}
    \sum _{v \in \mathcal{B}(\pi^U)}
    \int _{h \in H}
    \langle h v, v \rangle \, d h
    = 
    \sum _{v \in \mathcal{B}((i_M \tau)^U)}
    \int _{h \in H}
    \langle h v, v \rangle \, d h.
  \end{equation*}

  Given a pair $(M,\tau)$ as above, we obtain another such pair by replacing $\tau$ with its twist $\tau_t$ by any unramified character $t$ of $M$.  As explained in \cite[\S23.8]{nelson-venkatesh-1}, we may find a finite list of pairs $(M,\tau)$ so that whenever $\pi^J \neq 0$, there is an element $(M,\tau)$ from this list so that $\pi \hookrightarrow i_M (\tau_t)$ for some $t$.  Since $\pi$ is distinguished, we know by the proof of \cite[\S24.1, Lemma 2]{nelson-venkatesh-1} that each of the representations $i_M(\tau_t)$ is distinguished.  We may thus find for each $t_0$ a compact open subgroup $U$ so that
  \begin{equation}\label{eqn:sum-v-int-h-t0}
    \sum _{v \in \mathcal{B}((i_M \tau_t)^{U})}
    \int _{h \in H}
    \langle h v, v \rangle \, d h > 0
  \end{equation}
  when $t = t_0$.  The LHS of \eqref{eqn:sum-v-int-h-t0} varies continuously in $t$, so the same choice of $U$ works uniformly in a small neighborhood of $t_0$.  Since the group of unramified characters of $M$ is compact, we may choose $U$ small enough that \eqref{eqn:sum-v-int-h-t0} holds for all $t$.  By continuity and compactness, the LHS of \eqref{eqn:sum-v-int-h-t0} is thus bounded from below by some $c > 0$.  This completes the proof.
\end{proof}

\subsection{Constructing an amplifier}\label{sec:constr-an-ampl}
An amplifier on general semisimple groups is constructed by Silberman--Venkatesh \cite[Lemma A.1]{SV-AQUE}.  The output of their construction is formulated qualitatively, so invoking it here as a black box would not lead to an effective numerical exponent $\delta$ as in Theorem \ref{thm:main}.  Blomer--Maga \cite[\S4]{MR3384442} give an explicit and quantitative amplifier in the setting of $\GL_n$.  We will recall their construction (in local language) and then carry out some estimates concerning the restriction to $\GL_n$ of the ``square'' of an amplifier on $\GL_{n+1}$.

Let $F$ be a non-archimedean local field with ring of integers $\mathfrak{o}$, maximal ideal $\mathfrak{p}$, uniformizer $\varpi \in \mathfrak{p}$, and $q := \# \mathfrak{o}/\mathfrak{p}$, so that $q$ is a power of some rational prime $p$.

\subsubsection{Preliminaries on the Hecke algebra}\label{sec:prel-hecke-algebra}
Let $n$ be a natural number.  Set $G := \GL_n(F)$, $K := \GL_n(\mathfrak{o})$.  We equip $G$ with the Haar measure assigning volume one to $K$.  The Hecke algebra $\mathcal{H}_G$ is the space of compactly-supported functions $t : K \backslash G / K \rightarrow \mathbb{C}$ equipped with the convolution product $\ast$.  A basis for $\mathcal{H}_G$ is indexed by $n$-tuples of integers $a = (a_1,\dotsc,a_n) \in \mathbb{Z}^n$ modulo permutations, with the corresponding basis element
\[
  T(a) \in \mathcal{H}_G
\]
given by the characteristic function of the double coset $K \diag(\varpi^{a_1}, \dotsc, \varpi^{a_n}) K$.  Rather than working modulo permutations, one can restrict to $a$ satisfying the dominance condition $a_1 \geq \dotsb \geq a_n$.  We introduce the abbreviation
\[T[j] := T((j,0,\dotsc,0))\] (here the implicit integer $n$ will always be clear from context).

For each Hecke algebra element $t \in \mathcal{H}_G$, we define the adjoint element $t^* \in \mathcal{H}_G$ by the formula $t^*(g) := \overline{t(g^{-1})}$.  Then
\begin{equation*}
  T(a_1,\dotsc,a_n)^*
  =
  T(-a_n,\dotsc,-a_1).
\end{equation*}

A \emph{Satake parameter} of $G$ is an element of $(\mathbb{C}^\times)^n / S(n)$, i.e., a multiset $\alpha = \{\alpha_1, \dotsc, \alpha_n\}$ of nonzero complex numbers.  We write $\pi_\alpha$ for the corresponding unramified induced representation of $G$, normalized so that $\pi_\alpha$ is tempered when each $|\alpha_j| = 1$.  The space $\pi_\alpha^K$ of $K$-fixed vectors is one-dimensional, so $\mathcal{H}_G$ acts on it via a character (i.e., algebra homomorphism)
\[
  \lambda_\alpha : \mathcal{H}_G \rightarrow \mathbb{C},
\]
i.e., for $v \in \pi_\alpha^K$ and $t \in \mathcal{H}_G$, we have $\pi_\alpha(t) v = \lambda_\alpha(t) v$.  The Satake isomorphism says that $\alpha \rightarrow \lambda_\alpha$ defines a bijection between the set of Satake parameters of $G$ and the set of characters of $\mathcal{H}_G$.  We note in passing that every irreducible representation of $G$ having a nontrivial $K$-fixed vector is isomorphic to some $\pi_\alpha$.

Let $\alpha$ be a Satake parameter.  For $\vartheta \geq 0$, we say that $\pi_\alpha$ is \emph{$\vartheta$-tempered} if
\begin{equation}\label{eqn:satake-parameter-unitary-central-character}
  |\alpha_1 \dotsb \alpha_n| = 1
\end{equation}
and
\begin{equation}\label{eqn:satake-parameter-bounds}
  q^{-\vartheta} \leq |\alpha_j| \leq q^{\vartheta}
  \text{ for } j = 1,\dotsc, n.
\end{equation}
The condition \eqref{eqn:satake-parameter-unitary-central-character} says that $\pi_\alpha$ has unitary central character.  We note that $\pi_\alpha$ is $0$-tempered if and only if $\pi_\alpha$ is tempered in the usual sense.

\subsubsection{Construction and properties}
Set
\[
  G := \GL_{n+1}(F), \quad H := \GL_n(F),\]
\[K := \GL_{n+1}(\mathfrak{o}), \quad K_H := \GL_n(\mathfrak{o}),\] with $H$ embedded in $G$ in the usual way as the upper-left block.  Thus, as discussed in \S\ref{sec:unitary-groups-ggp}, $(G,H)$ is a unitary GGP pair over $F$ attached to the split quadratic extension $E = F \times F$.  We equip $G$ and $H$ with measures as above, and define the corresponding Hecke algebras $\mathcal{H}_G$ and $\mathcal{H}_H$.  As usual, we write $Z$ for the center of $G$.  We equip $Z \cong F^\times$ with the Haar measure $d z$ assigning volume one to its maximal compact subgroup $\cong \mathfrak{o}^\times$.

Let $\omega$ be an unramified unitary character of $Z$.  We define a linear map
\begin{equation*}
  \res_{\omega} : \mathcal{H}_G \rightarrow \mathcal{H}_H
\end{equation*}
by the formula: for $g \in H$,
\begin{equation*}
  (\res_\omega t)(g)
  :=
  \int _{z \in Z}
  t(z g)
  \omega(z) \, d z.
\end{equation*}

\begin{lemma}
  Assume that $n$ is fixed.  There is a fixed $q_0$ so that for all $q \geq q_0$, the following assertions hold.
  
  The elements
  \begin{equation*}
    t_j := q^{-\frac{n j}{2}} T[j]
    \in \mathcal{H}_G
    \quad (j \geq 0)
  \end{equation*}
  have the following properties.
  \begin{enumerate}[(i)]
  \item For each Satake parameter $\alpha$ of $G$ for which $\pi_\alpha$ has unitary central character, we have
    \begin{equation}\label{eqn:blomer-maga}
      \max_{1 \leq j \leq n+1}
      |\lambda_\alpha(t_j)| \gg 1.
    \end{equation}
  \item We have
    \begin{equation}\label{eqn:blomer-maga-decomposition}
      t_j \ast t_j^*
      =
      \sum_{i=0}^j
      c_{i j}
      q^{-n i}
      T(i, 0, \dotsc, 0, -i)
    \end{equation}
    for some coefficients $c_{i j} \ll_{j} 1$.

  \item Fix $0 \leq \vartheta < 1/2$.  For each unramified unitary character $\omega$ of $Z$ and each Satake parameter $\beta$ of $H$ for which $\pi_\beta$ is $\vartheta$-tempered, we have
    \begin{equation}\label{eqn:estimate-res-h-j}
      \lambda_\beta(\res_\omega t_j)
      \ll_j
      q^{ - (\frac{1}{2} -\vartheta) j}
    \end{equation}
    and for $i \geq 0$
    \begin{equation}\label{eqn:estimate-res-h-j-ast-h-j}
      \lambda_\beta(\res_{\omega} q^{- n i }
      T(i,0,\dotsc,0,-i)) 
      \ll_i q^{ - (1- 2 \vartheta)) i} \leq 1.
    \end{equation}
    In particular,
    \begin{equation}\label{eqn:estimate-res-h-j-ast-h-j-2}
      \lambda_\beta(\res_\omega(t_j \ast t_j^*)) \ll_j 1.
    \end{equation}
  \end{enumerate}
\end{lemma}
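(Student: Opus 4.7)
The plan is to reduce each part to explicit computations via the Satake isomorphism and Smith normal form in $\GL_{n+1}$. Throughout, write $\alpha = (\alpha_1, \ldots, \alpha_{n+1})$ (or $(\alpha_1, \ldots, \alpha_n)$ in the $H$-setting) for the relevant Satake parameter, and $e_k, h_k$ for the elementary and complete homogeneous symmetric polynomials.

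For (i), I would follow the strategy of Blomer--Maga. The Macdonald formula expresses $\lambda_\alpha(t_j)$ as a symmetric polynomial in $\alpha$ whose coefficients lie in $\mathbb{Z}[q^{-1}]$ with magnitudes bounded in terms of $n$ and $j$ alone. One can then combine Newton's identities with this formula to express each elementary symmetric polynomial $e_j(\alpha)$, for $1 \leq j \leq n+1$, as a polynomial of bounded degree in $\lambda_\alpha(t_1), \ldots, \lambda_\alpha(t_{n+1})$ whose coefficients are $O_n(1)$ uniformly in $q$ large. Since the unitary central character hypothesis gives $|e_{n+1}(\alpha)| = |\alpha_1 \cdots \alpha_{n+1}| = 1$, not all $|\lambda_\alpha(t_j)|$ with $1 \leq j \leq n+1$ can be simultaneously smaller than some absolute positive constant, which proves (\ref{eqn:blomer-maga}).

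For (ii), since $T[j]^* = T((0,\ldots,0,-j))$, any product $g_1 g_2$ with $g_1 \in K \diag(\varpi^j, 1, \ldots, 1) K$ and $g_2 \in K \diag(1, \ldots, 1, \varpi^{-j}) K$ has, by a direct Smith-normal-form calculation, invariant factors of the form $(\varpi^i, 1, \ldots, 1, \varpi^{-i})$ for some $0 \leq i \leq j$; this gives the support statement in (\ref{eqn:blomer-maga-decomposition}). The coefficients $a_{ij}$ in the unnormalized product $T[j] \ast T[j]^* = \sum_i a_{ij} T(i, 0, \ldots, 0, -i)$ can then be computed by counting $K$-coset representatives, or equivalently by matching Satake transforms of both sides; they satisfy $a_{ij} \ll_j q^{n(j-i)}$, whence $c_{ij} = q^{-n(j-i)} a_{ij}$ satisfies $c_{ij} \ll_j 1$.

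For (iii), I would evaluate $\res_\omega$ directly using Smith normal form. With $h \in H = \GL_n$ embedded as the upper-left block and $z \in Z$ acting by scalars in $G$, the matrix $z h = \diag(zh, z) \in G$ has invariant factors equal to the sorted concatenation of those of $h$ with $\{v(z)\}$. Requiring this Smith form to be $(j, 0, \ldots, 0)$ forces either $v(z) = 0$ with $h$ in the $T_H[j]$-coset, or $v(z) = j$ with $h \in \varpi^{-j} K_H$, yielding
\[
  \res_\omega T[j] = T_H[j] + \omega(\varpi)^j \, \mathbf{1}_{\varpi^{-j} K_H}.
\]
The standard bound $\lambda_\alpha(T_H[j]) \ll_j q^{(n-1)j/2 + j\vartheta}$ for $\vartheta$-tempered $\pi_\alpha$ (i.e., the fact that the $L^2$-normalized $\GL_n$ Hecke operator $q^{-(n-1)j/2} T_H[j]$ acts with eigenvalue $O_j(q^{j\vartheta})$), together with $|\omega(\varpi)^j (\alpha_1 \cdots \alpha_n)^{-j}| = 1$, yields (\ref{eqn:estimate-res-h-j}) after multiplying by $q^{-nj/2}$, since $n \geq 1$ makes the central term negligible compared to the first. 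A parallel analysis for $T(i, 0, \ldots, 0, -i)$ gives a three-term sum whose dominant piece is $T_H(i, 0, \ldots, 0, -i)$; the corresponding bound $\lambda_\alpha(T_H(i, 0, \ldots, 0, -i)) \ll_i q^{(n-1)i + 2i\vartheta}$ multiplied by $q^{-ni}$ gives (\ref{eqn:estimate-res-h-j-ast-h-j}), and then (\ref{eqn:estimate-res-h-j-ast-h-j-2}) follows by applying $\res_\omega$ to the decomposition (ii) and summing. The main obstacle is the $q$-tracking in (ii), where one must verify via the Pieri rule or explicit coset counts that the Hecke structure constants do not hide extra powers of $q$ that would spoil the bound $c_{ij} \ll_j 1$.
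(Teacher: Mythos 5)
Your overall route is the same as the paper's: part (i) via a bounded-coefficient polynomial identity expressing the central character in terms of $\lambda_\alpha(t_1),\dotsc,\lambda_\alpha(t_{n+1})$, part (ii) via Smith normal form for the support plus a coefficient bound, and part (iii) via an explicit invariant-factor computation of $\res_\omega$ followed by tempered eigenvalue bounds. Indeed your restriction formula $\res_\omega T[j] = T_H[j] + \omega(\varpi)^j \mathbf{1}_{\varpi^{-j}K_H}$ is the correct one (the paper's displayed \eqref{eqn:res-T-j} writes the first term as $T(0,-j,\dotsc,-j)$, which appears to be a slip in transcribing \eqref{eqn:restricting-double-coset}; either way that term is dominated and \eqref{eqn:estimate-res-h-j} follows), and your exponent bookkeeping in (iii) agrees with the paper's.

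The genuine gap is exactly where you flag it: the quantitative claim in (ii) that the unnormalized structure constants satisfy $a_{ij} \ll_j q^{n(j-i)}$, i.e.\ $c_{ij} \ll_j 1$. This is not a routine bookkeeping point: the trivial mass/degree count, $\sum_i a_{ij} \deg T(i,0,\dotsc,0,-i) = (\deg T[j])^2$, only yields $a_{ij} \ll q^{2n(j-i)}$, which would ruin \eqref{eqn:estimate-res-h-j-ast-h-j-2} and the amplifier. One must actually count cosets (or match Satake transforms with the $q$-powers tracked); the paper does not do this either but cites Blomer--Maga, Lemma 4.4, for precisely this statement, whereas you assert it and defer the verification. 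A similar, milder issue occurs in (i): the assertion that each $e_j(\alpha)$ is a polynomial of bounded degree in the $\lambda_\alpha(t_k)$ with coefficients $O_n(1)$ uniformly for $q$ large is exactly the content of Blomer--Maga's identity (their Lemma 4.2); your Newton-identities sketch is plausible because $\lambda_\alpha(t_j)$ is a Hall--Littlewood deformation of $h_j(\alpha)$ with parameter $1/q$, but the uniform-in-$q$ inversion has to be carried out (or cited). Finally, the ``standard bound'' $\lambda_\alpha(T_H[j]) \ll_j q^{(n-1)j/2 + \vartheta j}$ (and its analogue for $T_H(i,0,\dotsc,0,-i)$) is true, but its uniformity in $\alpha$ near the walls $\alpha_i = \alpha_j$ is the one point the paper takes care to prove: Macdonald's formula has denominators $1 - \alpha_i/\alpha_j$, and the paper removes them by a pigeonhole-plus-Cauchy contour shift (their \eqref{eqn:lambda-alpha-T-a-estimate}). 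You could instead argue via the nonnegativity of the monomial-expansion coefficients of $\lambda_\alpha(T(a))$ coming from the Iwasawa coset sum, which avoids the wall issue altogether, but you should say which argument you intend rather than labeling it standard.
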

\begin{proof}
  The estimate \eqref{eqn:blomer-maga} is essentially given by Blomer--Maga \cite[Cor 4.3]{MR3384442}.  We outline their proof, noting the minor differences required here.  Let $\Pi(n+1)$ denote the (finite) set consisting of all $a = (a_1,\dotsc,a_{n+1}) \in \mathbb{Z}_{\geq 0}^{n+1}$ with $a_1 \geq \dotsb \geq a_{n+1}$ and $\sum_k a_k = n+1$.  It is shown in \cite[Lem 4.2]{MR3384442} that for some $y_a \in \mathbb{Q}$ ($a \in \Pi(n+1)$) with $|y_a| \ll 1$, we have for large enough $q$ the identity in $\mathcal{H}_G$
  \begin{equation*}
    q^n
    \sum _{a \in \Pi(n+1)}
    y_a
    \prod_{j=1}^{n+1}
    T[a_j] =
    q^{\frac{(n+1)(n+2)}{2}} T(1,\dotsc,1).
  \end{equation*}
  Strictly speaking, the cited reference addresses the case $F = \mathbb{Q}_p$ in classical language, but the same argument applies in our setting.  Since $\pi_\alpha$ is assumed to have unitary central character, we have $|\lambda_\alpha(T(1,\dotsc,1))| = 1$.  The stronger condition $\lambda_\alpha(T(1,\dotsc,1)) = 1$ is assumed in \cite[\S4]{MR3384442}, but the weaker condition $|\lambda_\alpha(T(1,\dotsc,1))| = 1$ suffices to apply the proof of \cite[Lem 4.3]{MR3384442} to see that at least one of the quantities $|\lambda_\alpha(t_j)|$ is bounded from below by $(|\Pi(n+1)| \max_{a \in \Pi(n+1)} |y_a|)^{-1} \gg 1$, as required.

  The identity \eqref{eqn:blomer-maga-decomposition} is again due to Blomer--Maga \cite[Lemma 4.4]{MR3384442} (up to the cosmetic substitution $i \mapsto j-i$).

  The proofs of \eqref{eqn:estimate-res-h-j} and \eqref{eqn:estimate-res-h-j-ast-h-j} require a couple preliminaries to which we now turn.
  
  Let $a = (a_1,\dotsc,a_{n+1}) \in \mathbb{Z}^{n+1}$ and $d \in \mathbb{Z}$.  We observe that for $z = \varpi^{-d} \in Z \cong F^\times$, the double coset
  \[
    z K \diag(\varpi^{a_1},\dotsc,\varpi^{a_{n+1}}) K = K \diag(\varpi^{a_1-d},\dotsc,\varpi^{a_{n+1}-d}) K
  \]
  has nontrivial intersection with $H$ if and only if $d = a_j$ for some $j$, in which case that intersection is the double coset
  \[
    K_H \diag(\varpi^{a_1-d}, \dotsc, \varpi^{a_{j-1}-d}, \varpi^{a_{j+1}-d}, \dotsc, \varpi^{a_{n+1}-d}) K_H.
  \]
  We deduce the identity
  \begin{equation}\label{eqn:restricting-double-coset}
    \res_\omega T(a)
    =
    \sum _{j =1,\dotsc,n+1}^\sharp
    \omega(\varpi^{-a_j})
    T(a_1-a_j,\dotsc,a_{j-1}-a_j,a_{j+1}-a_j,\dotsc,a_{n+1}),
  \end{equation}
  where $\sharp$ signifies that the sum runs over indices $j$ such that the $a_j$ run over the distinct elements of the multiset $\{a_1,\dotsc,a_{n+1}\}$.

  We pause to record the estimate: for any Satake parameter $\beta$ of $H$ for which $\pi_\beta$ is $\vartheta$-tempered, any integers $b_1 \geq \dotsb \geq b_{n}$ and any fixed $\eps > 0$, we have
  \begin{equation}\label{eqn:lambda-alpha-T-a-estimate}
    \lambda_\beta(T(b))
    \ll
    e^{\eps \sum_{j=1}^{n} |b_j|}
    q^{-v(b) + \frac{n+1}{2} |b| }
    \max_{w \in W}
    |\beta_{w(1)}^{b_1}
    \dotsb \beta_{w(n)}^{b_n}|
  \end{equation}
  where
  \begin{equation*}
    |b| := \sum_{k=1}^n b_k, \quad
    v(b) := \sum _{k = 1}^{n} k b_k.  
  \end{equation*}
  This is likely well-known (in sharper forms), but since we could not quickly locate a suitable reference, we sketch a proof.  We use the formula (see \cite[(4.3)]{MR3384442} or \cite[(1.7)]{MR0282982})
  \begin{equation}\label{eqn:lambda-via-P}
    \lambda_\beta(T(b_1,\dotsc,b_n))
    =
    q^{-v(b) + \frac{n+1}{2} |b| }
    P_b(\beta),
  \end{equation}
  where
  \begin{equation}\label{eqn:P-alpha-formula}
    P_b(\beta)
    =
    c(b)
    \sum_{w \in S(n)}
    \beta_{w(1)}^{b_1}
    \dotsb 
    \beta_{w(n)}^{b_n}
    \prod_{i > j}
    \frac{
      1 - q^{-1} \beta_{w(j)} / \beta_{w(i)}
    }
    {
      1 - \beta_{w(j)}    /    \beta_{w(i)}
    }
  \end{equation}
  for some leading constant $c(b) \asymp 1$.  The $\vartheta$-temperedness assumption implies $\beta_j/\beta_i \ll q^{2 \vartheta} \ll q$, so the numerators of the fractions in \eqref{eqn:P-alpha-formula} are $\O(1)$.  The only issue in deducing \eqref{eqn:lambda-alpha-T-a-estimate} is that the denominator has singularities along the hyperplanes $\beta_i = \beta_j$ ($i \neq j$).  For $\delta > 0$, let $\mathcal{D}(\delta)$ denote the set of all Satake parameters $\beta$ that avoid these hyperplanes in the sense that $|1 - \beta_i/\beta_j| > \delta$.  For $\beta \in \mathcal{D}(\delta)$, the denominators in \eqref{eqn:P-alpha-formula} are bounded away from zero, so the required estimate \eqref{eqn:lambda-alpha-T-a-estimate} holds but with the implied constant depending upon $\delta$.  We deduce \eqref{eqn:lambda-alpha-T-a-estimate} in general using Cauchy's theorem, as follows.  For each $\beta$, define a function $f_\beta : \mathbb{C}^{n-1} \rightarrow \mathbb{C}$ by $f_\beta(s) := \lambda_{\beta(s)} (T(b))$, where $\beta(s) := \{\beta_1 e^{s_1}, \dotsc, \beta_n e^{s_n} \}$ and $s_n := - (s_1 + \dotsb + s_{n-1})$.  By a pigeonhole argument, we see that for each $r > 0$ there exists $r_0 > 0$ and $\delta > 0$ (independent of the local field $F$) so that for all $\beta$, there exists tuple of radii $\rho = (\rho_1,\dotsc,\rho_{n-1}) \in (r_0, r)^{n-1}$ for which $\beta(s)$ belongs to $\mathcal{D}(\delta)$ whenever $s$ lies in the polycircle $\mathcal{C}(\rho) := \{ s : |s_j| = \rho_j \}$.  By Cauchy's theorem, it follows that
  \begin{equation}\label{eqn:lambda-alpha-via-alpha-of-s}
    \lambda_{\beta}(T(b))
    =
    f(0)
    \ll (\rho_1 \dotsb \rho_{n-1})^{-1}
    \sup_{s \in \mathcal{C}(\rho)}
    | \lambda_{\beta(s)}(T(b)) |.
  \end{equation}
  We deduce \eqref{eqn:lambda-alpha-T-a-estimate} by fixing $r$ sufficiently small, applying the identity \eqref{eqn:lambda-via-P} to the RHS of the bound \eqref{eqn:lambda-alpha-via-alpha-of-s}, and observing that
  \begin{equation*}
    |\beta(s)_{w(1)}^{b_1}
    \dotsb \beta(s)_{w(n)}^{b_n}|
    \leq
    e^{r \sum_{j=1}^n |b_j|}
    |\beta_{w(1)}^{b_1}
    \dotsb \beta_{w(n)}^{b_n}|.
  \end{equation*}

  Below, we will frequently apply the identity
  \begin{equation*}
    \lambda_\beta(T(b_1 + d,\dotsc,b_n + d))
    =
    (\beta_1 \dotsb \beta_n)^d
    \lambda_\beta(T(b_1,\dotsc,b_n))
  \end{equation*}
  and the hypothesis \eqref{eqn:satake-parameter-unitary-central-character}.

  We now establish \eqref{eqn:estimate-res-h-j}.  We observe first using \eqref{eqn:restricting-double-coset} that
  \begin{equation}\label{eqn:res-T-j}
    \res_\omega T[j]
    =
    \omega(\varpi^{-j})
    T(0,-j,\dotsc,-j)
    +
    T(j,0,\dotsc,0).
  \end{equation}
  On the other hand, we verify readily using \eqref{eqn:lambda-alpha-T-a-estimate} that
  \begin{equation*}
    \lambda_\beta(T(j,0,\dotsc,0))
    \ll_j q^{\frac{nj}{2}- (\frac{1}{2} -\vartheta) j}
  \end{equation*}
  and
  \begin{equation*}
    \lambda_\beta(T(0,-j,\dotsc,-j))
    \asymp
    \lambda_\beta(T(j,0,\dotsc,0))
    \ll_j q^{\frac{n j}{2} - (\frac{1}{2} -\vartheta) j}.
  \end{equation*}
  This completes the proof of \eqref{eqn:estimate-res-h-j}.

  We now verify \eqref{eqn:estimate-res-h-j-ast-h-j}.  For $i=0$, we have the adequate estimate
  \[
    \lambda_\beta(\res_\omega(T(0,\dotsc,0))) = \lambda_\beta(T(0,\dotsc,0)) = 1 \ll 1.
  \]
  For $0 < i \leq j$, we have by \eqref{eqn:restricting-double-coset} that
  \begin{equation}\label{eqn:res-T-2-j-minus-i-etc}
    \begin{split}
      \res_\omega T(i, 0, \dotsc, 0, -i) &= \omega(\varpi^{-i}) T(-i,\dotsc,-i,-2 i)
      \\
      &+
      T(i,0,\dotsc,0,i) \\
      &+ \omega(\varpi^{i}) T(2i,i,\dotsc,i).
    \end{split}
  \end{equation}
  By \eqref{eqn:lambda-alpha-T-a-estimate}, we have
  \begin{equation*}
    \lambda_\beta(
    T(-i,\dotsc,-i,-2 i)
    )
    \asymp
    \lambda_\beta(T(0,\dotsc,0,-i))
    \ll_i
    q^{i (\frac{n}{2} - (\frac{1}{2} - \vartheta))},
  \end{equation*}
  \begin{equation*}
    \lambda_\beta(T(i,0,\dotsc,0,-i))
    \ll_i
    q^{i (n - (1 - 2 \vartheta))}
  \end{equation*}
  and
  \begin{equation*}
    \lambda_\beta(T(2i,
    i,\dotsc,i))
    \asymp
    \lambda_\beta(T(i,\dotsc,0,0))
    \ll_i
    q^{i (\frac{n}{2} - (\frac{1}{2} - \vartheta))}.
  \end{equation*}
  These identities and estimates combine to give \eqref{eqn:estimate-res-h-j-ast-h-j}.
\end{proof}

\subsection{Pretrace inequality}\label{sec:pretrace-inequality}
Let $(G,H)$ be a GGP pair over a number field $F$.  As usual, we write $Z$ for the center of $G$ and $\bar{G} = G/Z$ for the adjoint group.  Let $\pi$ be a cuspidal automorphic representation of $G(\mathbb{A})$ with unitary central character $\omega : Z(\mathbb{A}) \rightarrow \U(1)$.  We equip $\pi$ with the Petersson inner product defined by integrating over $[\bar{G}]$.  We write $\mathcal{B}(\pi)$ for an orthonormal basis consisting of vectors isotypic under some chosen maximal compact subgroup of $G(\mathbb{A})$.

For $f_1, f_2 \in C_c^\infty(G(\mathbb{A}))$, we write $f_1 \ast f_2$ for their convolution.  For $f \in C_c^\infty(G(\mathbb{A}))$, we set
\begin{equation*}
  f^*(g) := \overline{f(g^{-1})},
\end{equation*}
\begin{equation*}
  f^\omega (g) := \int _{z \in Z(\mathbb{A})} \omega(z) f(z g) \, d z.
\end{equation*}
\begin{lemma}\label{lem:pretrace-inequality}
  Let $f \in C_c^\infty(G(\mathbb{A}))$.  Let $\Psi : [H] \rightarrow \mathbb{C}$ be a measurable function of rapid decay.  Then
  \[
    \sum _{\varphi \in \mathcal{B}(\pi)} \left\lvert \int _{[H]} \pi(f) \varphi \cdot \bar{\Psi} \right\rvert^2 \leq \int _{x, y \in [H]} \bar{\Psi}(x) \Psi(y) \sum _{\gamma \in \bar{G}(F)} (f \ast f^*)^{\omega}(x^{-1} \gamma y) \, d x \, d y,
  \]
  where $\omega$ is the central character of $\pi$.  Each of the above iterated sums/integrals converges absolutely.  Moreover, the same estimate holds for any orthonormal basis $\mathcal{B}(\pi)$.
\end{lemma}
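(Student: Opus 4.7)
The plan is to expand $\lvert \int_{[H]} \pi(f)\varphi \cdot \bar{\Psi} \rvert^2$ as a double integral, pull the sum over $\varphi \in \mathcal{B}(\pi)$ inside, and recognize the resulting kernel as a piece of the spectral decomposition of $R(f \ast f^*)$ on $L^2([G], \omega)$. Specifically,
\[
  \left\lvert \int_{[H]} \pi(f)\varphi \cdot \bar{\Psi} \right\rvert^2 = \int_{[H]^2} \pi(f)\varphi(x) \overline{\pi(f)\varphi(y)} \bar{\Psi}(x) \Psi(y) \, dx\, dy,
\]
so summing over $\varphi$ and formally interchanging sum and integral produces the spectral kernel
\[
  K_\pi(x,y) := \sum_{\varphi \in \mathcal{B}(\pi)} \pi(f)\varphi(x) \overline{\pi(f)\varphi(y)},
\]
which is the Schwartz kernel of the trace-class operator $\pi(f)\pi(f)^* = \pi(f \ast f^*)$ restricted to $\pi$, and is therefore independent of the chosen basis.

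Next, extend $\mathcal{B}(\pi)$ to an orthonormal basis of $L^2([G],\omega)$ via the Plancherel decomposition. Each spectral component of $R(f \ast f^*)$ contributes a kernel of the form $\sum |R(f)\varphi|^2$ (for cuspidal/residual discrete parts) or a Plancherel integral $\int |E(R(f)\phi,\lambda)|^2 \, d\lambda$ (for the Eisenstein continuous part), each of which is pointwise a positive semidefinite kernel. Pairing any such piece with $\bar{\Psi}(x)\Psi(y)$ yields a nonneg sum or integral of $|\int_{[H]} (\cdot) \bar{\Psi}|^2$. Consequently, replacing $K_\pi$ by the full kernel of $R(f \ast f^*)$ only enlarges the double integral, and the standard pretrace identity on $L^2([G],\omega)$ identifies that full kernel geometrically as
\[
  K(x,y) = \sum_{\gamma \in \bar{G}(F)} (f \ast f^*)^\omega(x^{-1} \gamma y),
\]
giving the claimed inequality.

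The main obstacle is justifying the various interchanges of summation and integration together with the absolute convergence asserted in the statement. For the geometric side this is routine: $(f \ast f^*)^\omega$ has compact support in $\bar{G}(\mathbb{A})$, so the sum over $\gamma$ is locally finite and controlled by an automorphic kernel of at most polynomial growth on $[G]^2$, which pairs absolutely against the rapidly decaying $\Psi$. For the spectral side, the trace-class nature of $\pi(f \ast f^*)$ together with continuity of the period functional $\varphi \mapsto \int_{[H]} \varphi \bar{\Psi}$ on smooth vectors of $\pi$ (using standard automorphic growth bounds combined with rapid decay of $\Psi$) ensures absolute convergence of $\sum_{\varphi} \lvert \int_{[H]} \pi(f)\varphi \bar{\Psi} \rvert^2$. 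The basis-independence of $K_\pi$ then yields the ``moreover'' clause.
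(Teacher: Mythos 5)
Your argument is the ``moral'' proof that the paper itself mentions and then deliberately declines: the authors note that the inequality would follow from the pretrace formula, ``including the continuous spectrum, as we have not assumed that $[G]$ is compact,'' but that implementing this ``could likely be\ldots done (following a preliminary discussion on spectral decomposition)'' and is avoided in favor of something simpler. The paper's actual proof never touches the spectral decomposition of $L^2([G],\omega)$: it represents the bounded functional $\varphi \mapsto \int_{[H]} \pi(f)\varphi\cdot\bar{\Psi}$ by a vector $\varphi_0 \in \pi$ (Riesz), so that by Parseval the left-hand side equals $\langle \varphi_0,\varphi_0\rangle$ (this also gives the basis-independence at once); then it writes $\langle\varphi_0,\varphi_0\rangle = \int_{[H]}\pi(f)\varphi_0\cdot\bar{\Psi}$, folds up the integral against the geometric kernel $\sum_\gamma f^\omega(x^{-1}\gamma y)$, applies Cauchy--Schwarz in the $[\bar{G}]$-variable, and unfolds the resulting square to land exactly on the right-hand side. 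Smoothness of $\varphi_0$ (Dixmier--Malliavin plus invariance under an open compact subgroup) and cuspidality give the decay needed for the absolute-convergence claims. So the two routes buy different things: yours makes the positivity structure of the full spectrum transparent and would give the inequality simultaneously for every automorphic constituent, while the paper's trades that generality for an argument that needs no spectral theory beyond the single representation $\pi$.

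The genuine weak point in your write-up is the step ``the standard pretrace identity on $L^2([G],\omega)$ identifies that full kernel geometrically'' together with dropping the non-$\pi$ spectral pieces after pairing with $\bar{\Psi}(x)\Psi(y)$. At this point of the paper neither $[G]$ nor even $[H]$ is assumed compact (that is why $\Psi$ is only required to be of rapid decay), so the kernel of $R(f\ast f^*)$ has a continuous Eisenstein part, and the pointwise spectral expansion of the automorphic kernel, its absolute convergence in a form that permits pairing against $\bar{\Psi}\otimes\Psi$ over $[H]\times[H]$, and the interchange of the Plancherel integral with that pairing are exactly the nontrivial inputs your sketch waves at with ``trace-class nature of $\pi(f\ast f^*)$'' (which controls only the single cuspidal block, not the full kernel). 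These points can be supplied -- this is the standard, if technical, route in the literature -- but as written they constitute the gap, and closing them is precisely the ``preliminary discussion on spectral decomposition'' the paper chose to sidestep.
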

\begin{proof}
  Morally, we may conclude via the pretrace formula, which says that the RHS of the required inequality is the integral of the LHS over all $\pi$ occurring in the spectral decomposition of $L^2([G],\omega)$ -- including the continuous spectrum, as we have not assumed that $[G]$ is compact.  While such an argument could likely be implemented (following a preliminary discussion on spectral decomposition), it seems simpler to verify the required conclusion directly, as follows.  Using the rapid decay of $\Psi$ and the smoothness of $f$, we see that $\varphi \mapsto \int _{[H]} \pi(f) \varphi \cdot \bar{\Psi}$ defines a bounded linear functional on the Hilbert space $\pi$.  We may represent that functional by a unique vector $\varphi_0 \in \pi$.  By Parseval, we see that $\langle \varphi_0, \varphi_0 \rangle$ expands to the LHS of the required inequality, showing incidentally that the LHS is independent of the choice of $\mathcal{B}(\pi)$.  Our task is then to bound $\langle \varphi_0, \varphi_0 \rangle$ by the RHS.  By combining the invariance of $f$ under an open subgroup of the finite adelic points of $G$ with an application of the Dixmier--Malliavin lemma to the archimedean points of $G$, we see that $\varphi_0$ is smooth; since $\pi$ is cuspidal, it follows that $\varphi_0$ is of rapid decay (a feature which simplifies the details of the proof, but is not ultimately necessary).  By construction, $\langle \varphi_0, \varphi_0 \rangle = \int _{[H]} \pi(f) \varphi_0 \cdot \bar{\Psi}$.  By folding up this last integral, we obtain
  \[
    \langle \varphi_0, \varphi_0 \rangle = \int _{x \in [H]} \bar{\Psi}(x) \int _{y \in [\bar{G}]} \varphi_0(y) \sum _{\gamma \in \bar{G}(F)} f^{\omega}(x^{-1} \gamma y).
  \]
  Using the rapid decay of $\Psi$ and $\varphi_0$ and crudely bounding the sum over $\gamma$, we see that this iterated integral/sum converges absolutely.  We may thus rearrange it and apply Cauchy--Schwarz to obtain
  \[
    \langle \varphi_0, \varphi_0 \rangle \leq \int _{y \in [\bar{G}]} \left\lvert \int _{x \in [H]} \bar{\Psi}(x) \sum _{\gamma \in \bar{G}(F)} f^\omega(x^{-1} \gamma y) \, d x \right\rvert^2 \, d y.
  \]
  Opening the square, executing the $y$-integral and unfolding, we readily obtain the RHS of the required inequality.
\end{proof}

\section{Reduction of the proof of the
  main theorem}\label{sec:reduction-proof}
In this section, we prove Theorem \ref{thm:main}, assuming Theorem \ref{thm:construct-test-function}.

\subsection{Setting}
We recall the setting of \S\ref{sec:main-results}.  Let $F$ be a fixed number field.  Let $(G,H)$ be a fixed unitary GGP pair over $F$ with standard representation $G \hookrightarrow \GL(V/E)$.  We choose a distinguished archimedean place $\mathfrak{q}$ of $F$.  We recall from \S\ref{sec:main-results} the following further assumptions:
\begin{itemize}
\item $H$ is anisotropic, so that $[H]$ is compact.
\item $G(F_\mathfrak{p})$ and $H(F_\mathfrak{p})$ are compact for all archimedean places $\mathfrak{p}$ other than the distinguished archimedean place $\mathfrak{q}$.
\end{itemize}
We fix a finite set $S$ of places of $F$, taken large enough in the sense of \S\ref{sec:assumpt-conc-s}.

As usual, we write $Z$ for the center of $G$ and $\bar{G} = G/Z$.  We write $n+1$ for the $E$-dimension of the hermitian space $V$, so that $(G,H)$ is a form of $(\U_{n+1}, \U_n)$.  We denote by $\eps$ some fixed positive quantity, not necessarily the same in each occurrence.

Recall the families $\mathcal{F}_T$ defined in \S\ref{sec:main-results}.  Let $T$ be a large positive real, and let $(\pi,\sigma) \in \mathcal{F}_T$.  We assume that $\sigma$ is $\vartheta$-tempered for some fixed $\vartheta \in [0,1/2)$.  To prove Theorem \ref{thm:main}, we must show that
\begin{equation}\label{eqn:reduction-goal-bound-1}
  \mathcal{L}(\pi,\sigma)
  \ll
  T^{n(n+1)/2 - \delta}
\end{equation}
for suitable fixed $\delta > 0$.

\subsection{Reduction to bounds
  for global periods}
Recall from \S\ref{sec:branch-coeff} that $\sigma_S \subseteq \sigma$ denotes the unramified-outside-$S$ subspace.  We choose an isometric factorization $\sigma_S = \otimes_{\mathfrak{p} \in S} \sigma_\mathfrak{p}$.  Let $\Psi = \otimes_{\mathfrak{p} \in S} \Psi_\mathfrak{p} \in \sigma_S$ be a factorizable vector.  Let $f_0 = f_{0,S} \otimes f_{0}^S \in C_c^\infty(G(\mathbb{A}))$, with $f_{0,S} = \otimes_{\mathfrak{p} \in S} f_{0,\mathfrak{p}}$ and $f_0^S = \otimes_{\mathfrak{p} \notin S} 1_{G(\mathbb{Z}_\mathfrak{p})}$, be a factorizable test function, unramified outside $S$.  The period formula \eqref{eqn:P-equals-L-H} implies that
\begin{equation*}
  \sum _{\varphi \in \mathcal{B}(\pi)}
  \left\lvert \int _{[H]}
    \pi(f_0) \varphi \cdot \overline{\Psi}
  \right\rvert^2
  =
  \mathcal{L}(\pi,\sigma)
  \prod _{\mathfrak{p} \in S}
  I_\mathfrak{p},
\end{equation*}
where $\mathcal{B}(\cdot)$ is an orthonormal basis chosen as in \S\ref{sec:uniform-distinction} and
\begin{equation*}
  I_\mathfrak{p} :=
  \sum _{v \in \mathcal{B}(\pi_{0,\mathfrak{p}})}
  \mathcal{Q}_\mathfrak{p}(
  \pi(f_{0,\mathfrak{p}}) v \otimes
  \Psi_\mathfrak{p}),
\end{equation*}
with $\mathcal{Q}_\mathfrak{p}$ the quadratic form defined by integration over $H(F_\mathfrak{p})$ as in \S\ref{sec:local-dist-matr}.

For $\mathfrak{p} \in S - \{\mathfrak{q} \}$, we fix $f_{0,\mathfrak{p}} \in C_c^\infty(G(F_\mathfrak{p}))$ as follows.
\begin{itemize}
\item If $\mathfrak{p}$ is archimedean, we take for $f_{0,\mathfrak{p}}$ an ``approximate identity:'' we require that $\int _{G(F_\mathfrak{p})} f_{0,\mathfrak{p}} = 1$ and that $f_{0,\mathfrak{p}}$ be supported in a small enough neighborhood of the identity element of the compact Lie group $G(F_\mathfrak{p})$.
\item If $\mathfrak{p}$ is non-archimedean, we take for $f_{0,\mathfrak{p}}$ the normalized characteristic function of some small enough compact open subgroup $U_\mathfrak{p}$ of the $p$-adic group $G(F_\mathfrak{p})$.  The meaning of ``normalized'' is that $\int_{G(F_\mathfrak{p})} f_{0,\mathfrak{p}} = 1$.
\end{itemize}
Using Lemmas \ref{lem:uniform-distinction-arch} and \ref{lem:padic-uniform-distinction}, we may find for each $\mathfrak{p} \in S - \{\mathfrak{q} \}$ a unit vector $\Psi_\mathfrak{p} \in \sigma_\mathfrak{p}$ so that $I_\mathfrak{p} \gg 1$.

At the distinguished place $\mathfrak{q}$, we choose $f_{0,\mathfrak{q}}$ and $\Psi_\mathfrak{q} \in \sigma_\mathfrak{q}$ according to Theorem \ref{thm:construct-test-function}, applied to $(\pi_{0,\mathfrak{q}},\sigma_\mathfrak{q})$ and with the exponent $\kappa$ taken sufficiently small (in particular, $\kappa \leq \eps$).  We may and shall assume that $f_{0,\mathfrak{q}}$ is supported sufficiently close to the identity element.

With these choices, we obtain
\[
  \prod_{\mathfrak{p} \in S} I_\mathfrak{p} \gg T^{-n^2/2 - \eps}.
\]
Our task \eqref{eqn:reduction-goal-bound-1} thereby reduces to showing that
\begin{equation}\label{eqn:reduction-goal-bound-2}
  \sum _{\varphi \in \mathcal{B}(\pi)}
  \left\lvert \int _{[H]}
    \pi(f_0) \varphi \cdot \overline{\Psi}
  \right\rvert^2
  \ll
  T^{n/2 - \delta}.
\end{equation}

\subsection{Application of amplified pretrace inequality}\label{sec:ampl-pretr-form}

\subsubsection{Choice of primes at which to amplify}
Let $L$ be a positive parameter with $\log L \asymp \log T$.  We eventually take $L$ to be a \emph{small} positive power of $T$ (see \eqref{eqn:L-optimal-choice} below), but do not assume it is such yet.

Let $\mathbb{P}$ denote the set of finite primes $\mathfrak{p}$ of $F$ with the following properties:
\begin{itemize}
\item The absolute norm $q_\mathfrak{p}$ of $\mathfrak{p}$ lies in the interval $[L, 2 L]$.
\item $\mathfrak{p}$ splits the field extension $E/F$, so that $E_\mathfrak{p} \cong F_\mathfrak{p} \times F_\mathfrak{p}$.
\end{itemize}
By a weak form of the prime number theorem in arithmetic progressions for $F$, we have
\begin{equation}\label{eqn:weak-PNT}
  T^{-\eps} L
  \ll |\mathbb{P}| \ll
  L.
\end{equation}

\subsubsection{Some notation}
For $\mathfrak{p} \notin S$, we write $\mathcal{H}_{G(F_\mathfrak{p})}$ for the corresponding spherical Hecke algebra with respect to $G(\mathbb{Z}_\mathfrak{p})$, and $\mathcal{H}_{G}^S := \otimes_{\mathfrak{p} \notin S} ' \mathcal{H}_{G(F_\mathfrak{p})}$ for the restricted tensor product of these algebras.

For $\mathfrak{p} \in \mathbb{P}$, we have $G(F_\mathfrak{p}) \cong \GL_n(F_\mathfrak{p})$ (see \S\ref{sec:unitary-groups}).  We write $T_\mathfrak{p}(a)$ ($a \in \mathbb{Z}^{n+1}$) and $t_{\mathfrak{p},j}$ ($j \geq 0$) for the elements of $\mathcal{H}_{G(F_\mathfrak{p})}$ defined in \S\ref{sec:constr-an-ampl}.  By abuse of notation, we identify $t_{\mathfrak{p},j}$ with its image in $\mathcal{H}_G^S$.

We write $\lambda_\pi$ for the character of $\mathcal{H}_G^S$ attached to $\pi$.

\subsubsection{Amplification on the spectral side}
By \eqref{eqn:blomer-maga}, we have $\sum_{j=1}^{n+1} |\lambda_{\pi}(t_{\mathfrak{p},j})| \gg 1$ for each $\mathfrak{p} \in \mathbb{P}$.  By the pigeonhole principle, we may thus find $j \in \{1, \dotsc, n+1\}$ so that
\begin{equation*}
  \sum _{\mathfrak{p} \in \mathbb{P} }
  |\lambda_\pi(t_{\mathfrak{p},j})|
  \gg |\mathbb{P}|.
\end{equation*}
For $\mathfrak{p} \in \mathbb{P}$, we set $x_{\mathfrak{p}} := \overline{\sgn (\lambda_{\pi}(t_{\mathfrak{p},j}))}$, where $\sgn(0) := 0$ and $\sgn(z) :=z/|z|$ for $z \neq 0$.  In particular, $|x_{\mathfrak{p}}| \leq 1$.  For each $\varphi \in \pi$, we then have
\[
  \pi \left(f_{0,S} \otimes \sum_{\mathfrak{p} \in \mathbb{P}} x_\mathfrak{p} t_{\mathfrak{p},j} \right) \varphi = \left( \sum _{\mathfrak{p} \in \mathbb{P} } \left\lvert \lambda_{\pi}(t_{\mathfrak{p},j}) \right\rvert \right) \pi(f_0) \varphi.
\]
Setting
\[
  \mathcal{R}(j) := \sum _{\varphi \in \mathcal{B}(\pi) } \left\lvert \int _{[H]} \pi \left(f_{0,S} \otimes \sum_{\mathfrak{p} \in \mathbb{P}} x_\mathfrak{p} t_{\mathfrak{p},j} \right) \varphi \cdot \bar{\Psi} \right\rvert^2,
\]
the proof of \eqref{eqn:reduction-goal-bound-2} reduces to that of the estimate $\mathcal{R}(j) \ll T^{n/2-\delta} L^2$.

\subsubsection{Amplification on the geometric side}
Let $\omega$ denote the central character of $\pi$.  By Lemma \ref{lem:pretrace-inequality}, we have
\begin{equation*}
  \mathcal{R}(j) \leq \sum _{\mathfrak{p}_1, \mathfrak{p}_2 \in \mathbb{P} } x_{\mathfrak{p}_1} \bar{x}_{\mathfrak{p}_2} \mathcal{R}(\mathfrak{p}_1,\mathfrak{p}_2,j),
\end{equation*}
where
\[
  \mathcal{R}(\mathfrak{p}_1,\mathfrak{p}_2,j) := \int _{x, y \in [H]} \bar{\Psi}(x) \Psi(y) \sum _{\gamma \in \bar{G}(F)} f^{(\mathfrak{p}_1,\mathfrak{p}_2,j),\omega}(x^{-1} \gamma y)
\]
with $f^{(\mathfrak{p}_1,\mathfrak{p}_2,j)} := (f_{0,S} \otimes t_{\mathfrak{p}_1,j}) \ast (f_{0,S} \otimes t_{\mathfrak{p}_2,j})^*$.  In the special case $\mathfrak{p}_1 = \mathfrak{p}_2$, we recall from \eqref{eqn:blomer-maga-decomposition} that
\begin{equation*}
  t_{\mathfrak{p_1},j} \ast t_{\mathfrak{p}_1,j}^*
  =
  \sum_{i=0}^j
  c_{i j}
  q_{\mathfrak{p}_1}^{-n i}
  T_{\mathfrak{p}_1}(i, 0, \dotsc, 0, -i)
\end{equation*}
with $c_{i j} \ll 1$.  This yields a decomposition $\mathcal{R}(\mathfrak{p}_1,\mathfrak{p}_1,j) = \sum _{i=0}^j c_{i j} \mathcal{R}'(\mathfrak{p}_1,i)$ indexed by $0 \leq i \leq j \leq n+1$.  Since $|x_{\mathfrak{p}}| \leq 1$, our task reduces to verifying that
\begin{equation}\label{eqn:R-decomposed}
  \sum _{
    \mathfrak{p}_1 \neq \mathfrak{p}_2 \in \mathbb{P} 
  }
  \sum _{{\mathbf{j}}=1}^{n+1}
  |\mathcal{R}(\mathfrak{p}_1, \mathfrak{p}_2, {\mathbf{j}})|
  +
  \sum _{
    \mathfrak{p}_1 = \mathfrak{p}_2 \in \mathbb{P} 
  }
  \sum _{{\mathbf{j}}=0}^{n+1}
  |\mathcal{R}'(\mathfrak{p}_1, {\mathbf{j}})|
  \ll
  T^{n/2-\delta} L^2.
\end{equation}
(We have replaced $j$ with the bold symbol $\mathbf{j}$ since this variable will remain active for several pages.)

\subsubsection{Definitions of the test function $f$}\label{sec:defin-test-funct}
We henceforth focus on an individual term on the LHS of \eqref{eqn:R-decomposed}, corresponding to some specific values of ${\mathbf{j}},\mathfrak{p}_1,\mathfrak{p}_2$.  We emphasize that the test functions defined in what follows depend upon these values, and in particular, upon ${\mathbf{j}}$.  We define $f = \otimes f_\mathfrak{p} \in C_c^\infty(G(\mathbb{A}))$, as follows.
\begin{itemize}
\item For $\mathfrak{p} \in S$, we set $f_\mathfrak{p} := f_{0,\mathfrak{p}} \ast f_{0,\mathfrak{p}}^*$.
\item For $\mathfrak{p} \notin S \cup \{\mathfrak{p}_1, \mathfrak{p}_2\}$, we take $f_\mathfrak{p} := 1_{G(\mathbb{Z}_{\mathfrak{p}})}$.
\item For $\mathfrak{p}_1 \neq \mathfrak{p}_2$, we set
  \[
    f_{\mathfrak{p}_1} = t_{\mathfrak{p}_1,{\mathbf{j}}} = q_{\mathfrak{p}_1}^{- \frac{n {\mathbf{j}}}{2}} T_{\mathfrak{p}_1}({\mathbf{j}},0,\dotsc,0),
  \]
  \[
    f_{\mathfrak{p}_2} = t_{\mathfrak{p}_2,{\mathbf{j}}}^* = q_{\mathfrak{p}_2}^{- \frac{n {\mathbf{j}}}{2}} T_{\mathfrak{p}_2}(0,\dotsc, 0,-{\mathbf{j}}).
  \]
\item For $\mathfrak{p}_1 = \mathfrak{p}_2$, we set
  \[
    f_{\mathfrak{p}_1} = q_{\mathfrak{p}_1}^{-n {\mathbf{j}}} T_{\mathfrak{p}_1}({\mathbf{j}},0,\dotsc,0,-{\mathbf{j}}).
  \]
\end{itemize}
We set
\begin{equation*}
  f^\sharp := \otimes f_\mathfrak{p}^\sharp,
  \quad
  f_\mathfrak{p}^\sharp(g) := \int _{Z_\mathfrak{p} }
  \omega_\mathfrak{p}(z) f_\mathfrak{p}(z g ) \, d z,
\end{equation*}
so that $f^\sharp = f^{\omega}$ in the notation of \S\ref{sec:pretrace-inequality}.  Then for $\mathfrak{p}_1 \neq \mathfrak{p}_2$ (resp. $\mathfrak{p}_1 = \mathfrak{p}_2$), the quantity $\mathcal{R}(\mathfrak{p}_1,\mathfrak{p}_2,{\mathbf{j}})$ (resp. $\mathcal{R}'(\mathfrak{p}_1, {\mathbf{j}}$)) is given by
\begin{equation}\label{eqn:R-equals-M-plus-E}
  \mathcal{R}(f)
  :=
  \int _{x, y \in [H]}
  \overline{\Psi}(x) \Psi(y)
  \sum _{\gamma \in \bar{G}(F)}
  f^\sharp(x^{-1} \gamma y) \, d x \, d y
  = \mathcal{M} + \mathcal{E},
\end{equation}
where $\mathcal{M}$ denotes the contribution from $\gamma \in H(F) \hookrightarrow \bar{G}(F)$ and $\mathcal{E}$ denotes the remaining contribution.

We note for future reference that
\begin{equation}\label{eqn:f-sharp-L-infinity-outside-S}
  \prod_{\mathfrak{p} \notin S}
  \|f^\sharp_\mathfrak{p} \|_{\infty}
  \ll
  L^{-n {\mathbf{j}}},
\end{equation}
which follows from the definition of $f_\mathfrak{p}$ for $\mathfrak{p} \in \{\mathfrak{p}_1, \mathfrak{p}_2\}$.

\subsubsection{Plan}
We estimate $\mathcal{M}$ in \S\ref{sec:main-term-estimates} and $\mathcal{E}$ in \S\ref{sec:error-term-estimates}.  We then combine these estimates and sum them in \S\ref{sec:optimization} to deduce the required bound \eqref{eqn:R-decomposed}.

\subsection{Main term estimates}\label{sec:main-term-estimates}
Since the quotient $[H]$ is compact, the Petersson inner product on $\sigma$, defined in general by integration over $[H/Z_H]$, may be written
\begin{equation*}
  \langle \Psi_1, \Psi_2 \rangle = c \int_{[H]} \Psi_1 \bar{\Psi}_2
\end{equation*}
for some fixed $c > 0$.  To simplify notation, let us renormalize the inner product on $\sigma$ so that this last identity holds with $c = 1$; doing so has no effect on the estimates to be proved.  We may then unfold $\mathcal{M}$ to
\begin{align*}
  \mathcal{M} &=
                \int _{x \in [H]} \int_{y \in H(\mathbb{A})}
                \bar{\Psi}(x) \Psi(y)
                f^\sharp(x^{-1} y)
                \, d x \, d y
  \\
              &=
                \int _{g \in H(\mathbb{A})}
                f^\sharp(g) \langle g \Psi, \Psi  \rangle \, d g
  \\
              &= \prod_{\mathfrak{p} \in S \cup \{\mathfrak{p}_1, \mathfrak{p}_2\}}
                I_\mathfrak{p},
                \quad
                I_\mathfrak{p}  := 
                \int _{g \in H(F_\mathfrak{p})}
                f^\sharp_\mathfrak{p} (g) \langle g \Psi_\mathfrak{p} , \Psi_\mathfrak{p}   \rangle
                \, d g.
\end{align*}
We estimate each of these local integrals separately:
\begin{itemize}
\item Since $\Psi_\mathfrak{q}$ is a unit vector, we may bound $I_\mathfrak{q}$ by $\int_{H(F_\mathfrak{q})} |f^\sharp_{\mathfrak{q}}|$, which is $\ll T^{n/2+\eps}$ thanks to part \eqref{item:f-ell-one-norm-on-H} of Theorem \ref{thm:construct-test-function}.
\item For $\mathfrak{p} \in S - \{\mathfrak{q} \}$, we have $\int_{H(F_\mathfrak{p})} |f^\sharp_{\mathfrak{p}}| \ll 1$ and $\|\Psi_\mathfrak{p} \| = 1$, so $I_\mathfrak{p} \ll 1$.
\item In the case $\mathfrak{p}_1 \neq \mathfrak{p}_2$, we see from \eqref{eqn:estimate-res-h-j} that $I_{\mathfrak{p}_k} \ll q_{\mathfrak{p}_k}^{-(1/2-\vartheta) k} \ll L^{-(1/2-\vartheta)k}$ for $k=1,2$.
\item In the case $\mathfrak{p}_1 = \mathfrak{p}_2$, we see from \eqref{eqn:estimate-res-h-j-ast-h-j} that $I_{\mathfrak{p}_1} \ll q_{\mathfrak{p}_1}^{-(1-2 \vartheta) {\mathbf{j}}} \ll L^{-(1-2 \vartheta) {\mathbf{j}}}$.
\end{itemize}
Therefore in all cases,
\begin{equation*}
  \mathcal{M} \ll
  T^{n/2+\eps}
  L^{-(1-2 \vartheta) {\mathbf{j}}}.
\end{equation*}

\subsection{Error term estimates}\label{sec:error-term-estimates}
The estimation of $\mathcal{E}$ consists of a few steps.

\subsubsection{Preliminary reductions}\label{sec:error-prel-reduct}
We observe that the integrand in \eqref{eqn:R-equals-M-plus-E} is invariant in both variables under $\prod_{\mathfrak{p} \notin S} H(\mathbb{Z}_\mathfrak{p})$.  By our hypotheses concerning $S$ (see \S\ref{sec:assumpt-conc-s}) and strong approximation, we may thus replace the integrals over $[H]$ with integrals over the $S$-arithmetic quotient $\Gamma \backslash H_S$.  This quotient is compact thanks to our hypothesis that $H$ is anisotropic.  We fix a compact fundamental domain $\mathcal{D}$ for this quotient.  We fix a compact factorizable neighborhood $\Theta_S = \prod_{\mathfrak{p} \in S} \Theta_\mathfrak{p}$ of the identity element of $H_S$ with $\mathcal{D} \subseteq \Theta_S$.  Then by our normalization of measures,
\begin{equation*}
  \mathcal{E} \leq
  \int _{x, y \in \Theta_S}
  \left\lvert
    \Psi(x) \Psi(y)
    \sum _{\gamma \in \bar{G}(F)
      - H(F)}
    f^\sharp(x^{-1} \gamma y)
  \right\rvert \, d x \, d y.
\end{equation*}
Setting
\[\Sigma :=
  \{ \gamma \in \bar{G}(F) - H(F) : f^\sharp(x^{-1} \gamma y) \neq 0 \text{ for some } x,y \in \Theta_S \},
\]
we obtain
\begin{equation*}
  \mathcal{E} \leq
  |\Sigma|
  \max_{\gamma \in \Sigma}
  I(\gamma),
  \quad
  I(\gamma) :=
  \int _{x, y \in \Theta_S}
  \left\lvert
    \Psi(x) \Psi(y)
    f^\sharp(x^{-1} \gamma y)
  \right\rvert \, d x \, d y.
\end{equation*}
We estimate $|\Sigma|$ in \S\ref{sec:bounds-sigma} and $I(\gamma)$ in \S\ref{sec:bounds-igamma}.  The resulting bound for $\mathcal{E}$ is then recorded in \S\ref{sec:error-cal-E-summary}.

In what follows, we write
\[
  d_{H_\mathfrak{q}} : \bar{G}(F_\mathfrak{q}) \rightarrow [0,1]
\]
for the function attached in \S\ref{sec:quant-dist-subgr} to the GGP pair $(G(F_\mathfrak{q}), H(F_\mathfrak{q}))$.

\subsubsection{Bounds for $|\Sigma|$}\label{sec:bounds-sigma}

\begin{lemma}\label{lem:count-relevant-gamma}
  Recall the parameter $\mathfrak{j} \in \{0, \dotsc, n+1\}$ fixed in \S\ref{sec:defin-test-funct}.  We have
  \begin{equation}\label{eqn:Sigma-cardinality}
    |\Sigma|  \ll
    L^{2 (n+1)^2 {\mathbf{j}}}
  \end{equation}
  Moreover, for each $\gamma \in \Sigma$, we have
  \begin{equation}\label{eqn:d-H-gamma}
    d_{H_\mathfrak{q}}(\gamma) \gg
    L^{-{\mathbf{j}}}
  \end{equation}
\end{lemma}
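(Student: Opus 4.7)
The plan is to pass from $\gamma \in \bar{G}(F)$ to an $F$-rational matrix lift $\tilde\gamma \in G(F) \subset \mathrm{Mat}_{n+1}(E)$ and then argue by geometry of numbers (for the cardinality bound) and by the product formula on $E^\times$ (for the distance lower bound). First I would translate the condition defining $\Sigma$---namely $f^\sharp(x^{-1}\gamma y)\neq 0$ for some $x,y \in \Omega_S$---into explicit local constraints on $\tilde\gamma$: at $\mathfrak{q}$ and the auxiliary places of $S$, the compact support of $f_{0,\mathfrak{p}}$ near the identity confines $\tilde\gamma$ (modulo the center) to a bounded subset; at $\mathfrak{p}\notin S\cup\{\mathfrak{p}_1,\mathfrak{p}_2\}$ one has $\tilde\gamma \in G(\mathcal{O}_\mathfrak{p}) Z(F_\mathfrak{p})$; and at $\mathfrak{p}_1,\mathfrak{p}_2$ the Hecke double-coset supports describing $f_\mathfrak{p}$ impose valuation bounds of order $j$ on both entries and determinant. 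Using our assumption (\S\ref{sec:assumpt-conc-s}) that $\mathcal{O}_E[1/S]$ is a PID, I can choose a global scaling of $\tilde\gamma$ in $Z(F)$ that absorbs the denominators at $\mathfrak{p}_1,\mathfrak{p}_2$: the resulting $\tilde\gamma$ lies in $\mathrm{Mat}_{n+1}(\mathcal{O}_E[1/S])$ with archimedean size at $\mathfrak{q}$ bounded by $O(L^j)$.

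The cardinality bound then follows from Minkowski-type lattice counting: $(n+1)\times(n+1)$ matrices over $\mathcal{O}_E$ whose archimedean size at $\mathfrak{q}$ is $\ll L^j$ number $\ll L^{2(n+1)^2 j}$, the exponent reflecting $(n+1)^2$ entries valued in a rank-two lattice at $\mathfrak{q}$. For the distance lower bound, I would use that $\gamma \notin H(F)$ forces $\mu(\tilde\gamma) := \tilde\gamma e - \nu(\tilde\gamma) e$ to have some nonzero $E$-coordinate (or analogously for $\gamma^{-1}$, to handle the second half of the definition of $d_H$). Applied to the resulting ratio $\mu_i/\nu \in E^\times$, the product formula $\prod_v |\mu_i/\nu|_v = 1$, combined with upper bounds at every place other than $\mathfrak{q}$ (of order $O(1)$ at the other archimedean places, $O(1)$ at finite $\mathfrak{p} \notin \{\mathfrak{p}_1,\mathfrak{p}_2\}$ by integrality, and bounded by $O(L^j)$ in total at $\mathfrak{p}_1,\mathfrak{p}_2$ by the Hecke support), forces $|\mu_i/\nu|_\mathfrak{q} \gg L^{-j}$; the formula for $\tilde{d}_H$ in \S\ref{sec:quant-dist-subgr} then translates this into $d_{H_\mathfrak{q}}(\gamma) \gg L^{-j}$.

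The main obstacle will be the coordinated bookkeeping of the center and the scaling of matrix representatives. Since $\gamma \in \bar{G}(F)$ has no canonical matrix lift, one must choose a normalization that is simultaneously compatible with the lattice count and the product-formula estimate. The PID hypothesis on $\mathcal{O}_E[1/S]$ is precisely what allows the various local denominators at $\mathfrak{p}_1,\mathfrak{p}_2$ to be absorbed into a single global element of $Z(F)$, and verifying that this choice does not spoil the archimedean size bound at $\mathfrak{q}$---so that the sharp exponents $2(n+1)^2$ and $-1$ both come out---is the principal technical point. Everything else is bookkeeping about supports of Hecke operators at split primes.
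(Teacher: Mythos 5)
Your overall strategy---lift $\gamma$ to a matrix over $E$, rescale using the PID hypothesis, count lattice points for \eqref{eqn:Sigma-cardinality}, and run the product formula for \eqref{eqn:d-H-gamma}---is the same as the paper's, which packages it as Lemma \ref{lem:counting-PGL-n-plus-1-E}. But there is a genuine gap in \emph{where} you do the lifting and rescaling. You lift $\gamma \in \bar{G}(F)$ to $\tilde{\gamma} \in G(F)$ and then rescale by $Z(F)$, the center of the unitary group. Two problems. First, the map $G(F) \to \bar{G}(F)$ is not surjective (the obstruction lies in $H^1(F,Z) \cong F^\times / N_{E/F}(E^\times)$), so the $\gamma$'s occurring in $\Sigma$ need not admit any lift to $G(F)$ at all. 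Second, and fatally for your normalization, $Z$ is the norm-one torus: at a prime $\mathfrak{p}_i$ split in $E$, a norm-one scalar has valuations $(a,-a)$ at the two primes of $E$ above $\mathfrak{p}_i$, so it cannot simultaneously clear the denominators of the two components $(g,{}^t g^{-1})$ of $\tilde{\gamma}$ dictated by the Hecke support; you cannot reach an everywhere-integral representative of controlled size while staying inside $G$. The paper avoids both issues by enlarging the ambient group: $\bar{G}(F) \subseteq \PGL_{n+1}(E)$, lifting to $\GL_{n+1}(E)$ (always possible, Hilbert 90), and rescaling by the full center $E^\times$---in fact by ideles. Once you make that replacement, your outline essentially \emph{is} the paper's proof.

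Even after that repair there are two points you assert but do not justify, and which the paper organizes differently precisely to avoid having to justify them. (a) You claim a representative that is integral with archimedean size $O(L^j)$ at $\mathfrak{q}$, and your per-entry count tacitly also needs $O(1)$ bounds at the remaining archimedean places; but the scalar produced by the PID hypothesis is controlled only up to $S$-units, so its individual archimedean absolute values require a unit adjustment before the ``rank-two lattice'' count is legitimate for general $F$. The paper never normalizes a single representative: in Lemma \ref{lem:counting-PGL-n-plus-1-E}(i) it counts elements of $E$ subject to local bounds at \emph{every} place whose product is $\ll D^{1/(n+1)} \asymp L^{2j}$, so only the product of the bounds matters and the unit issue disappears. (b) For the lower bound, your stated budget ``$O(L^j)$ in total at $\mathfrak{p}_1,\mathfrak{p}_2$'' is $O(L^{2j})$ in the normalized absolute values actually forced by the Hecke supports (it is $D^{1/(n+1)}$ with $D \asymp L^{2(n+1)j}$); the product formula then gives $\gg L^{-2j}$ for the product of the $E$-normalized values at the places over $\mathfrak{q}$, and the claimed $d_{H_\mathfrak{q}}(\gamma) \gg L^{-j}$ only follows after the $E$-norm to $F_\mathfrak{q}$-norm comparison \eqref{eqn:relate-F-norms-inert}, \eqref{eqn:F-norm-split-AM-GM}, which halves the exponent. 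Your sketch conflates the two normalizations; it lands on the right answer only because these two factors of two cancel, so this step needs to be made explicit.
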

\begin{remark}\label{rmk:opt-1}
  The estimate \eqref{eqn:Sigma-cardinality} is likely far from optimal (see the sentence following \eqref{eqn:cardinality-Sigma-D-n-plus-1} for details).  The factor of $2$ in the exponent could likely be removed with a bit more work, and the exponent could likely be reduced further for large $n$ by counting more carefully.
\end{remark}
\begin{remark}
  The content of Lemma \ref{lem:count-relevant-gamma} is best illustrated by passing to split unitary groups, i.e., general linear groups, where it amounts in the simplest case to (a weakened form of) the following easy exercise:

  \emph{For a natural number $\ell$, the number of matrices $\gamma \in \GL_{n+1}(\mathbb{Z})$ for which $\det(\gamma) = \ell^{n+1}$ and $|\ell^{-1} \gamma - 1| \leq \eps$ is $\O(\ell^{(n+1)^2})$.  Moreover, if such a matrix does not lie in $\GL_n(\mathbb{R})$ modulo the center, then for some $k \in \{1, \dotsc, n\}$, one of the entries $\gamma_{k,n+1}$ or $\gamma_{n+1,k}$ is a nonzero integer, hence $\gg 1$, while the entry $\gamma_{n+1,n+1}$ is $\asymp \ell$.}

  The proof of Lemma \ref{lem:count-relevant-gamma} is mildly more complicated due to the overhead coming from working $S$-arithmetically over a number field, but requires no significant new ideas.  The reader might thus wish to skip ahead to \S\ref{sec:bounds-igamma} on a first reading.
\end{remark}

The proof of Lemma \ref{lem:count-relevant-gamma} invokes the following preliminary result.  We write $\mathfrak{P}$ for a place of $E$ and, if $\mathfrak{P}$ is finite, $\mathcal{O}_\mathfrak{P} := \mathbb{Z}_{\mathfrak{P}}$ for the ring of integers of the completion $E_\mathfrak{P}$.
\begin{lemma}\label{lem:counting-PGL-n-plus-1-E}
  Fix a finite set $S_E$ of places of the number field $E$, containing every archimedean place.  For $\mathfrak{P} \in S_E$, suppose given a fixed compact neighborhood $\Omega_\mathfrak{P}$ of the identity element in $\PGL_{n+1}(E_\mathfrak{P})$.  For $\mathfrak{P} \notin S_E$, suppose given an element $D_\mathfrak{P} \geq 1$ of the value group of $E_\mathfrak{P}^\times$, with $D_\mathfrak{P} = 1$ for almost all $\mathfrak{P}$, and set
  \begin{equation*}
    \Omega_\mathfrak{P}
    := \text{ image in $\PGL_{n+1}(E_\mathfrak{P})$ of }
    \{g \in M_{n+1}(\mathcal{O}_\mathfrak{P}) : |\det
    g|_\mathfrak{P}   = 1 / D_\mathfrak{P} \},
  \end{equation*}
  so that $\Omega_\mathfrak{P} = \PGL_{n+1}(\mathcal{O}_\mathfrak{P})$ for almost all $\mathfrak{P}$.  Set
  \[
    \Omega := \prod_{\mathfrak{P}} \Omega_\mathfrak{P} \subseteq \PGL_{n+1}(\mathbb{A}_E), \quad D := \prod_{\mathfrak{P} \notin S_E} D_\mathfrak{P}.
  \]
  \begin{enumerate}[(i)]
  \item We have
    \begin{equation*}
      |\PGL_{n+1}(E) \cap \Omega| \ll D^{n+1}.
    \end{equation*}
  \item Suppose moreover that the following conditions holds.
    \begin{itemize}
    \item For each $\mathfrak{P} \in S_E$, the identity neighborhood $\Omega_\mathfrak{P}$ is of the form
      \begin{equation}\label{eqn:Omega-special-form}
        \Omega_{\mathfrak{P}}
        = 
        \Omega_{\mathfrak{P}}^1
        \Omega_{\mathfrak{P}}^0
        \Omega_{\mathfrak{P}}^1,
      \end{equation}
      where
      \begin{itemize}
      \item $\Omega_{\mathfrak{P}}^1$ is a fixed compact neighborhood of the identity element in $\GL_n(E_\mathfrak{P})$, included in $\PGL_{n+1}(E_{\mathfrak{P}})$ as the upper left $n \times n$ block, and
      \item $\Omega_{\mathfrak{P}}^0$ is a fixed sufficiently small compact neighborhood of the identity element in $\PGL_{n+1}(E_\mathfrak{P})$.
      \end{itemize}
    \item The set of places $S_E$ is large enough that $\mathcal{O}[1/S_E]$, the localization is a principal ideal domain.
    \end{itemize}
    Let $\gamma \in \PGL_{n+1}(E) \cap \Omega$.  Write
    \begin{equation}\label{eqn:gamma-a-b-c-d}
      \gamma = \begin{pmatrix}
        a & b \\
        c & d
      \end{pmatrix}
    \end{equation}
    with $a,b,c,d$ matrices of respective dimensions $n \times n, n \times 1, 1 \times n, 1 \times 1$, well-defined up to simultaneous scaling.  Then $d \neq 0$.  Moreover, for each subset $T$ of $S_E$, we have
    \begin{equation}\label{eqn:b-neq-0-impl}
      b \neq 0
      \implies
      \prod_{\mathfrak{P} \in T}
      |b/d|_{E_\mathfrak{P}} \gg D^{-\frac{1}{n+1}},
    \end{equation}
    \begin{equation}\label{eqn:c-neq-0-impl}
      c \neq 0
      \implies
      \prod_{\mathfrak{P} \in T}
      |c/d|_{E_\mathfrak{P}} \gg D^{-\frac{1}{n+1}}.
    \end{equation}
    where $|.|_{E_\mathfrak{P}}$ denotes any fixed $E_{\mathfrak{P}}$-norm.
  \end{enumerate}
\end{lemma}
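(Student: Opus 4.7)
The plan is to replace the projective group by an affine one via integral lifts. Given $\gamma \in \PGL_{n+1}(E) \cap \Omega$, use the PID property of $\mathcal{O}_E[1/S_E]$ (explicit in part (ii), and arranged by enlarging $S_E$ if necessary in part (i)) to choose a representative $\tilde\gamma \in M_{n+1}(\mathcal{O}_E[1/S_E]) \cap \GL_{n+1}(E)$ whose entries generate the unit ideal; this lift is unique up to $\mathcal{O}_E[1/S_E]^\times$. At each $\mathfrak{P} \in S_E$, fix a bounded lift $\tilde\Omega_\mathfrak{P} \subset \GL_{n+1}(E_\mathfrak{P})$ of $\Omega_\mathfrak{P}$ with $|\det|_\mathfrak{P} \asymp 1$; the hypothesis $\gamma \in \Omega_\mathfrak{P}$ yields a scalar $\mu_\mathfrak{P} \in E_\mathfrak{P}^\times$ with $\tilde\gamma \in \mu_\mathfrak{P} \tilde\Omega_\mathfrak{P}$, so $|\tilde\gamma_{ij}|_\mathfrak{P} \ll |\mu_\mathfrak{P}|_\mathfrak{P}$ and $|\det \tilde\gamma|_\mathfrak{P} \asymp |\mu_\mathfrak{P}|_\mathfrak{P}^{n+1}$. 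At $\mathfrak{P} \notin S_E$, integrality and the definition of $\Omega_\mathfrak{P}$ give $|\det \tilde\gamma|_\mathfrak{P} = 1/D_\mathfrak{P}$. Applying the product formula to $\det \tilde\gamma \in E^\times$ then delivers the key estimate
\[
  \prod_{\mathfrak{P}\in S_E} |\mu_\mathfrak{P}|_\mathfrak{P} \asymp D^{1/(n+1)}.
\]

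For part (i), fix a fundamental domain for $\mathcal{O}_E[1/S_E]^\times$ acting diagonally on the lifts, pinning down each $|\mu_\mathfrak{P}|_\mathfrak{P}$ up to an $\O(1)$ multiplicative factor (subject to the above product constraint). A standard $S$-arithmetic lattice point count bounds the number of individual entries by $\ll \prod_{\mathfrak{P}\in S_E} |\mu_\mathfrak{P}|_\mathfrak{P}$; multiplying over the $(n+1)^2$ entries yields $\ll (D^{1/(n+1)})^{(n+1)^2} = D^{n+1}$ as required.

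For part (ii), the sandwich form \eqref{eqn:Omega-special-form} is what lets us pin down the entry $d$ from below, not just from above. At each $\mathfrak{P} \in S_E$, write a lift of $\gamma$ as $h_1 k h_2$, where $h_1, h_2 \in \Omega_\mathfrak{P}^1$ are embedded in $\GL_{n+1}(E_\mathfrak{P})$ as $\diag(\cdot, 1)$ (since $H$ stabilizes the line $E e$) and $k \in \Omega_\mathfrak{P}^0$ is close to the identity. A direct block computation shows the $(n+1,n+1)$-entry of $h_1 k h_2$ equals the $(n+1,n+1)$-entry of $k$, which is $\asymp 1$ for $\Omega_\mathfrak{P}^0$ sufficiently small. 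Consequently $|d|_\mathfrak{P} \asymp |\mu_\mathfrak{P}|_\mathfrak{P} \neq 0$ at each $\mathfrak{P} \in S_E$, so $d \neq 0$ globally; the same computation forces $|b_i/d|_\mathfrak{P} \ll 1$ at every $\mathfrak{P} \in S_E$ for every entry $b_i$ of $b$.

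To conclude part (ii), fix a nonzero entry $b_i$ of $b$; then $b_i \in \mathcal{O}_E[1/S_E] \setminus \{0\}$, so $|b_i|_\mathfrak{P} \leq 1$ outside $S_E$ combined with the product formula gives $\prod_{\mathfrak{P} \in S_E} |b_i|_\mathfrak{P} \geq 1$. Together with $\prod_{\mathfrak{P} \in S_E} |d|_\mathfrak{P} \asymp D^{1/(n+1)}$ from the previous paragraph, this yields $\prod_{\mathfrak{P} \in S_E} |b_i/d|_\mathfrak{P} \gg D^{-1/(n+1)}$. For $T \subsetneq S_E$, divide out the places in $S_E \setminus T$, where $|b_i/d|_\mathfrak{P} \ll 1$, to obtain $\prod_{\mathfrak{P}\in T} |b_i/d|_\mathfrak{P} \gg D^{-1/(n+1)}$. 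Since $|b/d|_\mathfrak{P} \geq |b_i/d|_\mathfrak{P}$ when $|b|$ is taken to be a sup-norm on the vector $b$, the estimate \eqref{eqn:b-neq-0-impl} follows, and \eqref{eqn:c-neq-0-impl} is symmetric. The most delicate step is the sharp two-sided match $|d|_\mathfrak{P} \asymp |\mu_\mathfrak{P}|_\mathfrak{P}$ at each place of $S_E$: the upper bound comes automatically from the sizes of $\tilde\Omega_\mathfrak{P}$, but the lower bound requires the sandwich decomposition \eqref{eqn:Omega-special-form} together with $k$ staying close to the identity so that $d_k \asymp 1$; this is the only place the special form of $\Omega_\mathfrak{P}$ is genuinely used.
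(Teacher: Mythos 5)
Your part (ii) is essentially the paper's own argument: a global lift that is integral outside $S_E$ (this is where the PID hypothesis enters), the sandwich form \eqref{eqn:Omega-special-form} at the places of $S_E$ to pin the $(n+1,n+1)$-entry of the local lift to size $\asymp 1$ (hence $d \neq 0$ and $|b/d|_{\mathfrak{P}}, |c/d|_{\mathfrak{P}} \ll 1$ there), a determinant/product-formula computation giving $\prod_{\mathfrak{P} \in S_E} |d|_{\mathfrak{P}} \ll D^{1/(n+1)}$, and the product formula applied to a nonzero entry of $b$ or $c$. The only inaccuracy is your claim that the primitive lift satisfies $|\det \tilde\gamma|_{\mathfrak{P}} = 1/D_{\mathfrak{P}}$ at $\mathfrak{P} \notin S_E$: primitivity only forces the local scalar to satisfy $|z|_{\mathfrak{P}} \geq 1$, so one gets $1/D_{\mathfrak{P}} \leq |\det\tilde\gamma|_{\mathfrak{P}} \leq 1$, and your ``key estimate'' is really a one-sided bound $\prod_{\mathfrak{P}\in S_E}|\mu_{\mathfrak{P}}|_{\mathfrak{P}} \ll D^{1/(n+1)}$. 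Since only that direction is used in both parts, this is harmless, but the $\asymp$ should not be asserted. (The paper sidesteps this by using the PID hypothesis to choose the lift lying in $\tilde\Omega_{\mathfrak{P}}$ exactly, for all $\mathfrak{P}\notin S_E$.)

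The genuine gap is in part (i). Your argument rests on the primitive lift to $M_{n+1}(\mathcal{O}[1/S_E])$, which requires $\mathcal{O}[1/S_E]$ to be a PID --- a hypothesis of part (ii) only --- and the parenthetical fix ``enlarge $S_E$ if necessary'' does not come for free. If you enlarge $S_E$ to $S_E' = S_E \cup S_0$, then primes $\mathfrak{P} \in S_0$ with $D_{\mathfrak{P}} > 1$ become ``$S$-places'' at which $\Omega_{\mathfrak{P}}$ is not a fixed compact neighborhood of the identity (it varies with the data $D_{\mathfrak{P}}$), so your treatment of the places of $S_E$ (fixed bounded lifts $\tilde\Omega_{\mathfrak{P}}$ with $|\det| \asymp 1$, entries $\ll |\mu_{\mathfrak{P}}|_{\mathfrak{P}}$) does not apply to them; moreover the primitive lift is now only $\mathfrak{P}$-integral for $\mathfrak{P} \notin S_E'$, so the entry bounds and the determinant bookkeeping at the $S_0$-places must be redone (one can do it: at such a place $\tilde\gamma = z_{\mathfrak{P}} g_{\mathfrak{P}}$ with $g_{\mathfrak{P}}$ integral of determinant size $1/D_{\mathfrak{P}}$, and the factor $D_{\mathfrak{P}}$ re-enters the product formula so that the total box volume is still $\ll D^{1/(n+1)}$), but none of this is in your write-up. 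The paper avoids the issue entirely: it proves (i) without any PID assumption by majorizing $1_{\Omega_{\mathfrak{P}}}$ on $\PGL_{n+1}$ by an integral over the center of $1_{\tilde\Omega_{\mathfrak{P}}}$ on $\GL_{n+1}$, unfolding to an integral over $z \in \mathbb{A}_E^\times/E^\times$ (constrained by $|z|^{n+1} D \asymp 1$, hence of bounded volume) of the count $|\GL_{n+1}(E) \cap z^{-1}\tilde\Omega|$, which is then bounded entry-by-entry exactly as you do. Your route for (i) can be repaired along the lines above (and your unit-group fundamental domain plays the role of the paper's $z$-integral), but as stated the appeal to the PID property in (i) is a real gap; also note the entry count should be $\ll 1 + \prod_{\mathfrak{P}} C_{\mathfrak{P}}$ rather than $\ll \prod_{\mathfrak{P} \in S_E} |\mu_{\mathfrak{P}}|_{\mathfrak{P}}$, which is again harmless since $D \geq 1$.
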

\begin{remark}
  As we detail in the proof below, the estimate (i) follows by routine counting arguments.  The key feature in (ii) is that the stated estimates and the condition $d \neq 0$ hold if $\gamma$ is sufficiently close to the identity element of $\PGL_{n+1}(E_{\mathfrak{P}})$ and are unaffected by multiplying $\gamma$ on the left and right by elements of a fixed compact subset of $\GL_{n}(E_{\mathfrak{P}})$, exactly as in the split case of \eqref{eqn:d-H-h1gh2}.
\end{remark}
\begin{proof}
  We start with (i).  We fix a Haar measure $d z$ on $\mathbb{A}_E^\times \hookrightarrow \GL_{n+1}(\mathbb{A}_E)$ and a factorization $d z = \prod d z_\mathfrak{P}$ so that $d z_\mathfrak{P}$ assigns volume one to the unit group for all finite primes $\mathfrak{P}$.  For each $\mathfrak{P} \in S_E$, we may find a small neighborhood $\tilde{\Omega}_\mathfrak{P}$ of the identity element of $\GL_{n+1}(E_\mathfrak{P})$ so that for all $g \in \GL_{n+1}(E_\mathfrak{P})$ with image $[g] \in \PGL_{n+1}(E_\mathfrak{P})$, we have
  \begin{equation}\label{eqn:E-Omega-vs-tilde-Omega}
    1_{\Omega_\mathfrak{P} }([g])
    \ll
    \int _{z \in E_\mathfrak{P}^\times }
    1_{\tilde{\Omega}_\mathfrak{P} }(z g)
    \, d z.
  \end{equation}
  (Explicitly, take for $\tilde{\Omega}_{\mathfrak{P}}$ a large enough bounded subset of the preimage under $g \mapsto [g]$ of a bounded neighborhood of $\Omega_{\mathfrak{P}}$.)  For each $\mathfrak{P} \notin S_E$, we set
  \[
    \tilde{\Omega}_\mathfrak{P} := \{g \in M_{n+1}(\mathcal{O}_\mathfrak{P}) : |\det g|_\mathfrak{P} = 1/D_\mathfrak{P} \},
  \]
  so that \eqref{eqn:E-Omega-vs-tilde-Omega} holds.  Setting $\tilde{\Omega} := \prod \tilde{\Omega}_{\mathfrak{P}} \subseteq \GL_{n+1}(\mathbb{A}_E)$, we obtain
  \begin{align*}
    |\PGL_{n+1}(E) \cap \Omega|
    &=
      \sum _{\gamma \in \PGL_{n+1}(E)}
      1_{\Omega}(\gamma)
    \\
    &\ll
      \sum _{\gamma \in \PGL_{n+1}(E)}
      \int _{z \in \mathbb{A}_E^\times }
      1_{\tilde{\Omega}}(z \gamma) \, d z
    \\
    &=
      \int _{z \in \mathbb{A}_E^\times /E^\times }
      \sum _{\gamma \in \GL_{n+1}(E)}
      1_{\tilde{\Omega}}(z \gamma) \, d z.
  \end{align*}

  Suppose that $z,\gamma$ as above satisfy $z \gamma \in \tilde{\Omega}$.  Since the sets $\tilde{\Omega}_\mathfrak{P}$ ($\mathfrak{P} \in S_E$) are compact neighborhoods of the identity, we have for some fixed $B > 1$ that
  \[
    1/B < \prod_{\mathfrak{P} \in S_E} |\det (z_\mathfrak{P} \gamma)|_\mathfrak{P} < B.
  \]
  On the other hand, we have $\det(z) = z^{n+1}$, $\prod_{\mathfrak{P}} |\det (\gamma)|_\mathfrak{P} = 1$ (the product rule) and $\prod_{\mathfrak{P} \notin S_E} |\det (\gamma)|_\mathfrak{P} = D$.  It follows that the adelic absolute value $|z| = \prod_\mathfrak{P} |z_\mathfrak{P}|_\mathfrak{P}$ of $z$ satisfies
  \begin{equation}\label{eqn:z-half-two-n-D}
    1/B < |z|^{n+1} D < B,
  \end{equation}
  Since the idelic absolute value $|.| : \mathbb{A}_E^\times / E^\times \rightarrow \mathbb{R}^\times_+$ is measure-preserving up to a normalizing scalar, we have
  \begin{equation*}
    \vol \{z \in \mathbb{A}_E^\times / E^\times :    1/B < |z|^{n+1}  D < B \} \ll 1.
  \end{equation*}
  We thereby reduce to verifying that for each $z \in \mathbb{A}_E^\times$ satisfying \eqref{eqn:z-half-two-n-D}, we have
  \begin{equation}\label{eqn:count-tilde-Omega}
    |\GL_{n+1}(E) \cap z^{-1} \tilde{\Omega}| \ll D^{{n+1}}.
  \end{equation}

  Let $\gamma \in \GL_{n+1}(E) \cap z^{-1} \tilde{\Omega}$.  Then for all $i,j \in \{1,\dotsc,{n+1}\}$, the matrix entry $\gamma_{i j}$ satisfies $|\gamma_{i j}|_\mathfrak{P} \leq C_\mathfrak{P}$, where $C_\mathfrak{P} \ll |z|_\mathfrak{P}^{-1}$ for $\mathfrak{P} \in S_E$ and $C_\mathfrak{P} = |z|_\mathfrak{P}^{-1}$ for $\mathfrak{P} \notin S_E$.  By bounding separately the number of possibilities for each matrix entry of $\gamma$, it follows that
  \[|\GL_{n+1}(E) \cap z^{-1} \tilde{\Omega}| \ll N^{(n+1)^2}, \quad N := | \{x \in E : |x|_\mathfrak{P} \leq C_\mathfrak{P} \text{ for all } \mathfrak{P} \}.
  \]
  The required estimate \eqref{eqn:count-tilde-Omega} follows from the adelic Minkowski-type estimate
  \[N \ll 1 + \prod_{\mathfrak{P}} C_\mathfrak{P} \ll 1 + |z|^{-1} \ll D^{\frac{1}{n+1}}.\]

  We turn to (ii).  For notational simplicity, we abbreviate $|.|_{\mathfrak{P}} := |.|_{E_{\mathfrak{P}}}$.  By our hypothesis \eqref{eqn:Omega-special-form} concerning the $\Omega_\mathfrak{P}$ for $\mathfrak{P} \in S_E$, we have $d \neq 0$; moreover, $|b/d|_\mathfrak{P}, |c/d|_\mathfrak{P} \ll 1$.  By the product formula, the required implications \eqref{eqn:b-neq-0-impl} and \eqref{eqn:c-neq-0-impl} will follow if we can verify that
  \begin{equation}\label{eqn:prods-b-over-d-c-over-d-D-1-over-n-plus-1}
    \prod_{\mathfrak{P} \notin S_E}
    |b/d|_\mathfrak{P},
    \prod_{\mathfrak{P} \notin S_E}
    |c/d|_\mathfrak{P}
    \ll D^{\frac{1}{n+1}}.
  \end{equation}
  Here $b/d$ and $c/d$ are regarded as elements of the vector space $E^{n+1}$ or its dual, and we suppose that the various $E_{\mathfrak{P}}$-norms have been chosen as in \S\ref{sec:F-norms-global-to-local} using the standard basis.  We note for future reference that for $\mathfrak{P} \notin S_E$ and $v$ in $E^{n+1}$ or its dual, we have
  \begin{equation}\label{eqn:v-P-norm-integrality}
    \text{$|v|_{\mathfrak{P}} \leq 1$
      whenever each entry of $v$
      is $\mathfrak{P}$-integral.}
  \end{equation}
  
  To verify \eqref{eqn:prods-b-over-d-c-over-d-D-1-over-n-plus-1}, let us lift $\gamma$ to an element of $\GL_{n+1}(E)$.  By our (simplifying) assumption that $\mathcal{O}[1/S_E]$ is a principal ideal domain, we may choose the lift in such a way that $\gamma \in \tilde{\Omega}_\mathfrak{P}$ for all $\mathfrak{P} \notin S_E$.  By the proof of (i), we may find $z \in \mathbb{A}_E^\times$ with $|z| \asymp D^{-\frac{1}{n+1}}$ so that $z_\mathfrak{P} \gamma \in \tilde{\Omega}_\mathfrak{P}$ for all primes $\mathfrak{P}$.

  For $\mathfrak{P} \notin S_E$, both $\gamma$ and $z_\mathfrak{P} \gamma$ belong to $\tilde{\Omega}_\mathfrak{P}$; by comparing determinants, it follows that $|z_\mathfrak{P}|_\mathfrak{P} = 1$.  Thus $\prod_{\mathfrak{P} \in S_E} |z_\mathfrak{P} |_\mathfrak{P} = |z| \asymp D^{-\frac{1}{n+1}}$.
  
  For $\mathfrak{P} \in S_E$, we have $z_\mathfrak{P} \gamma \in \tilde{\Omega}_\mathfrak{P}$.  We may assume that the lift $\tilde{\Omega}_\mathfrak{P}$ is of the form $\Omega_{\mathfrak{P}}^1 \tilde{\Omega}_\mathfrak{P}^0 \Omega_{\mathfrak{P}}^1$ with $\tilde{\Omega}_{\mathfrak{P}}^0 \subseteq \GL_{n+1}(E_{\mathfrak{P}})$ sufficiently concentrated near the identity element.  It follows then that $|z_\mathfrak{P} d|_\mathfrak{P} \asymp 1$.
  
  Thus
  \[\prod_{\mathfrak{P} \notin S_E}
    |d|_\mathfrak{P} = \prod_{\mathfrak{P} \in S_E} |d|_\mathfrak{P}^{-1} \asymp D^{-\frac{1}{n+1}}.\] For $\mathfrak{P} \notin S_E$, we see from the condition $\gamma \in \tilde{\Omega}_\mathfrak{P} \subseteq M_{n+1}(\mathcal{O}_{\mathfrak{P}})$ and \eqref{eqn:v-P-norm-integrality} that $|b|_\mathfrak{P} \leq 1$.  Therefore $\prod_{\mathfrak{P} \notin S_E} |b/d|_\mathfrak{P} \ll D^{\frac{1}{n+1}}$, as required.  The same argument applies to $c/d$.
\end{proof}

\begin{proof}[Proof of Lemma \ref{lem:count-relevant-gamma}]
  We first prove \eqref{eqn:Sigma-cardinality}.  The aim is to apply Lemma \ref{lem:counting-PGL-n-plus-1-E}, taking for $S_E$ the set of places of $E$ lying above some place of $S$.  To that end, we will construct $\Omega_{\mathfrak{P}}$ so that
  \[
    \Sigma \subseteq \Omega := \prod_{\mathfrak{P}} \Omega_{\mathfrak{P}}.\]
  
  For each $\mathfrak{p} \in S$, we see from the compactness of the sets $\Theta_\mathfrak{p}$ and $\supp(f_\mathfrak{p}^{\sharp})$ that $\Sigma$ is contained in some fixed compact neighborhood of the identity element of $\bar{G}(F_\mathfrak{p})$.  Thus for the places $\mathfrak{P}$ of $E$ that lie over $S$, we may take for $\Omega_\mathfrak{P}$ a fixed compact neighborhood of the identity element.  Since $\Theta_\mathfrak{p}$ is contained in $H(F_\mathfrak{p})$ while $f_\mathfrak{p}^\sharp$ is supported near the identity element, we see moreover that $\Omega_{\mathfrak{P}}$ may be chosen to satisfy the hypothesis \eqref{eqn:Omega-special-form}.

  We recall from \S\ref{sec:ampl-pretr-form} that the primes $\mathfrak{p}_1, \mathfrak{p}_2 \in \mathbb{P}$ were chosen to split the extension $E/F$.  Using this feature of our choice, we could ``simplify'' some of the discussion below by noting that all inertial degrees must be $1$.  However, we prefer to carry out the analysis in full and ignore this feature of our choice.

  For places $\mathfrak{P}$ not lying over $S$, we define $\Omega_{\mathfrak{P}}$ as in Lemma \ref{lem:counting-PGL-n-plus-1-E} in terms of an element $D_{\mathfrak{P}}$ of the value group of $E_{\mathfrak{P}}^\times$, which we now describe case-by-case.
  \begin{itemize}
  \item If $\mathfrak{P}$ lies over some $\mathfrak{p} \notin S \cup \{\mathfrak{p}_1, \mathfrak{p}_2\}$, then we take $D_\mathfrak{P} := 1$.  We have
    \[\supp(f_{\mathfrak{p}}^\sharp)
      = \bar{G}(\mathbb{Z}_{\mathfrak{p}}) \subseteq \PGL_{n+1}(\mathcal{O}_{\mathfrak{P}}) = \Omega_{\mathfrak{P}}.
    \]
  \item If $\mathfrak{p}_1 \neq \mathfrak{p}_2$ and $\mathfrak{P}$ lies over $\mathfrak{p}_k$ for some $k \in \{1,2\}$, then we take
    \[D_{\mathfrak{P}} :=
      \begin{cases}
        q_{\mathfrak{p}_k}^{e {\mathbf{j}}}     &  \text{ if } k=1, \\
        q_{\mathfrak{p}_k}^{n e {\mathbf{j}}} & \text{ if } k=2,
      \end{cases}
    \]
    where $e \in \{1,2\}$ denotes the inertial degree of $\mathfrak{P}$ over $\mathfrak{p}_k$.  Since $\supp(f^\sharp_{\mathfrak{p}_1})$ is the image of
    \begin{align}
      \supp(f_{\mathfrak{p}_1})
      &= \supp(T_{\mathfrak{p}_1}({\mathbf{j}},0,\dotsc,0))
        \nonumber
      \\
      \label{constrain-support-f1}
      &\subseteq
        \{ g \in M_{n+1}(\mathbb{Z}_\mathfrak{p})
        :
        |\det g|_\mathfrak{p} = 1/ q_{\mathfrak{p}_k}^{\mathbf{j}}
        \}
    \end{align}
    and $\supp(f^\sharp_{\mathfrak{p}_2})$ is the image of
    \begin{align}
      \supp(f_{\mathfrak{p}_2}) 
      &= \supp(T_{\mathfrak{p}_2}({\mathbf{j}},\dotsc,{\mathbf{j}},0))
        \nonumber
      \\
      \label{constrain-support-f2}
      &\subseteq
        \{ g \in M_{n+1}(\mathbb{Z}_\mathfrak{p})
        :
        |\det g|_\mathfrak{p} = 1/ q_{\mathfrak{p}_k}^{n {\mathbf{j}}}
        \}
    \end{align}
    and $|\det g|_{\mathfrak{P}} = |\det g|_\mathfrak{p}^e$, we see that $\supp(f^\sharp_{\mathfrak{p}_k}) \subseteq \Omega_{\mathfrak{P}}$.
  \item If $\mathfrak{p}_1 = \mathfrak{p}_2$ and $\mathfrak{P}$ lies over $\mathfrak{p}_1$, then we take
    \[D_{\mathfrak{P}} := q_{\mathfrak{p}_1}^{e (n+1) {\mathbf{j}}}.\] Arguing as in the previous case and noting that
    \begin{align}
      \supp(f_{\mathfrak{p}_1})
      &= \supp(T_{\mathfrak{p}_1}({\mathbf{j}},0,\dotsc,-{\mathbf{j}})) \nonumber
      \\
      &= \supp(T_{\mathfrak{p}_1}(2 {\mathbf{j}},{\mathbf{j}},\dotsc,{\mathbf{j}},0)) \nonumber
      \\
      \label{constrain-support-f12}
      &\subseteq
        \{ g \in M_{n+1}(\mathbb{Z}_{\mathfrak{p}_1})
        :
        |\det g|_{\mathfrak{p}_1} = 1/ q_{\mathfrak{p}_1}^{(n+1) {\mathbf{j}}}
        \},
    \end{align}
    we see again that $\supp(f^\sharp_{\mathfrak{p}_k}) \subseteq \Omega_{\mathfrak{P}}$.
  \end{itemize}
  Let $D$ denote the product of $D_{\mathfrak{P}}$ taken over all such $\mathfrak{P}$.  Then
  \begin{equation*}
    D \asymp 
    L^{2 (n+1)  {\mathbf{j}}}.
  \end{equation*}
  For instance, suppose that $\mathfrak{p}_1 = \mathfrak{p}_2$ and that $\mathfrak{p}_1$ splits in $E$, say into two primes $\mathfrak{P}, \mathfrak{P} '$.  Then $D = D_{\mathfrak{P}} D_{\mathfrak{P}'}$.  Each of the factors $D_{\mathfrak{P}}, D_{\mathfrak{P} '}$ is $\asymp q_{\mathfrak{p}_1}^{(n+1) {\mathbf{j}}} \asymp L^{(n+1) {\mathbf{j}}}$, so their product satisfies the required estimate.  We argue similarly in the other cases.
  
  By Lemma \ref{lem:counting-PGL-n-plus-1-E}, we deduce that
  \begin{equation}\label{eqn:cardinality-Sigma-D-n-plus-1}
    |\Sigma| \leq
    |\PGL_{n+1}(E) \cap \Omega|
    \ll
    D^{n+1}.
  \end{equation}
  The proof of \eqref{eqn:Sigma-cardinality} is thus complete.

  To prove the required lower bound \eqref{eqn:d-H-gamma} for $d_{H_\mathfrak{q}}(\gamma)$, we write $\gamma$ as in \eqref{eqn:gamma-a-b-c-d} and see from the assumption $\gamma \notin H(F)$ that at least one of the conditions $b \neq 0$ or $c \neq 0$ holds.  We then apply \eqref{eqn:b-neq-0-impl} or \eqref{eqn:c-neq-0-impl}, taking for $T$ the set of primes $\mathfrak{Q}$ of $E$ lying over $\mathfrak{q}$.  Supposing for instance that $b \neq 0$, we obtain
  \begin{equation*}
    \prod_{\mathfrak{Q} | \mathfrak{q} }
    |b/d|_{E_\mathfrak{Q}}
    \gg
    D^{-\frac{1}{n+1}}.
  \end{equation*}
  We then apply \eqref{eqn:relate-F-norms-inert} (for $E_{\mathfrak{q}}$ a field) or \eqref{eqn:F-norm-split-AM-GM} (for $E_{\mathfrak{q}} \cong F_{\mathfrak{q}} \times F_{\mathfrak{q}}$) to deduce that
  \begin{equation*}
    |b/d|_{F_\mathfrak{q}} \gg
    D^{-\frac{1}{2 (n+1)}}.
  \end{equation*}
  Since $D \geq 1$, this yields the lower bound
  \begin{equation*}
    d_{H_\mathfrak{q}}(\gamma)
    \gg D^{-\frac{1}{2(n+1)}},
  \end{equation*}
  which translates to \eqref{eqn:d-H-gamma}.
\end{proof}

\begin{remark}
  The estimate \eqref{eqn:cardinality-Sigma-D-n-plus-1} could likely be improved by taking into account that $\Sigma \subseteq \bar{G}(F) \subsetneq \PGL_{n+1}(E)$ and that the containments \eqref{constrain-support-f1}, \eqref{constrain-support-f2} and \eqref{constrain-support-f12} are unlikely to be sharp when $n+1 \geq 3$ (compare with \cite[Lem 6.1]{MR3384442} and \cite{2014arXiv1405.6691B}).
\end{remark}

\subsubsection{Bounds for $I(\gamma)$}\label{sec:bounds-igamma}
\begin{lemma}\label{lem:bounds-igamma}
  Recall the parameter $\mathfrak{j} \in \{0, \dotsc, n+1\}$ fixed in \S\ref{sec:defin-test-funct}.  Let $\gamma \in \bar{G}(F) - H(F)$.  Then
  \[
    I(\gamma) \ll T^{n/2 - 1/2 + \eps} L^{-n {\mathbf{j}}} d_{H_\mathfrak{q}}(\gamma)^{-1/2}.
  \]
\end{lemma}
\begin{proof}
  Setting $S_0 := S - \{\mathfrak{q} \}$ and
  \[
    \Theta_{0} := \prod_{\mathfrak{p} \in {S_0}} \Theta_\mathfrak{p}, \quad f^\sharp_{0} := \otimes_{\mathfrak{p} \in {S_0}} f^\sharp_\mathfrak{p},
  \]
  we see by \eqref{eqn:f-sharp-L-infinity-outside-S} that
  \[
    I(\gamma) \ll L^{-n {\mathbf{j}}} \int _{x_{0}, y_{0} \in \Theta_{0}} \left\lvert f^\sharp_{0}(x_{0}^{-1} \gamma y_{0}) \right\rvert I(\gamma;x_{0}, y_{0}) \, d x_{0} \, d y_{0},
  \]
  where
  \[
    I(\gamma;x_{0}, y_{0}) := \int _{x_\mathfrak{q} , y_\mathfrak{q} \in \Theta_\mathfrak{q} } \left\lvert \Psi(x_{0} x_\mathfrak{q} ) \Psi(y_{0} y_\mathfrak{q} ) f_\mathfrak{q}^\sharp(x_\mathfrak{q} ^{-1} \gamma y_\mathfrak{q}) \right\rvert \, d x_\mathfrak{q} \, d y_\mathfrak{q}.
  \]
  Theorem \ref{thm:construct-test-function}, part \eqref{thm:f-item-3} implies that
  \[
    I(\gamma;x_{0}, y_{0}) \ll T^{n/2-1/2+\eps} d_{H_\mathfrak{q}}(\gamma)^{-1/2} \mathcal{N}(x_{0}) \mathcal{N}(y_{0}),
  \]
  where, with $\Theta_\mathfrak{q} '$ a fixed compact subset of $H(F_\mathfrak{q})$ large enough in terms of $\Theta_\mathfrak{q}$,
  \[
    \mathcal{N}(x_{0}) := \left( \int _{x_\mathfrak{q} \in \Theta_\mathfrak{q}' } |\Psi(x_{0} x_\mathfrak{q})|^2 \, d x_\mathfrak{q} \right)^{1/2}.
  \]
  Since $f_{0}^\sharp(x_{0}^{-1} \gamma y_{0}) \ll 1$, we obtain
  \[
    I(\gamma) \ll T^{n/2-1/2+\eps} L^{-n {\mathbf{j}}} d_{H_\mathfrak{q}}(\gamma)^{-1/2} \left( \int _{x_{0} \in \Theta_{0}} \mathcal{N}(x_{0}) \, d x_{0} \right)^2.
  \]
  By Cauchy--Schwarz and the estimate $\vol(\Theta_{0}) \ll 1$, we have
  \[
    \left( \int _{x_{0} \in \Theta_{0}} \mathcal{N}(x_{0}) \, d x_{0} \right)^2 \ll \int _{x_{0} \in \Theta_{0}} \mathcal{N}(x_{0})^2 \, d x_{0} = \|\Psi \|_{L^2(\Theta_S')}^2, \quad \Theta_S' := \Theta_{\mathfrak{q}}' \times \Theta_0.
  \]
  By covering $\Theta_S'$ by finitely many translates of a fundamental domain for $\Gamma_S \backslash H_S$, we see that $\|\Psi \|^2_{L^2(\Theta_S')} \ll \|\Psi \|^2 \ll 1$.  The required estimate follows.
\end{proof}

\subsubsection{Summary}\label{sec:error-cal-E-summary}
Combining Lemmas \ref{lem:count-relevant-gamma} and \ref{lem:bounds-igamma}, we obtain
\begin{align*}
  \mathcal{E}
  &\ll
    T^{n/2 - 1/2 + \eps} L^{-n {\mathbf{j}}}
    |\Sigma|
    \max_{\gamma \in \Sigma} d_{H_\mathfrak{q}}(\gamma)^{-1/2} \\
  &\ll
    T^{n/2 - 1/2 + \eps}
    L^{(2 (n+1)^2 - n + 1/2) {\mathbf{j}}}.
\end{align*}

\begin{remark}\label{rmk:opt-2}
  One could try to improve this estimate by taking into account that not all $\gamma \in \Sigma$ satisfy the worst case lower bound \eqref{eqn:d-H-gamma} for $d_H(\gamma)$.
\end{remark}

\subsection{Optimization}\label{sec:optimization}
The estimates obtained above for $\mathcal{M}$ and $\mathcal{E}$ combine to give
\begin{equation}\label{eqn:M-E-summary}
  \mathcal{M} + \mathcal{E} 
  \ll
  T^{n/2+\eps}
  \left(
    L^{-(1-2 \vartheta) {\mathbf{j}}}
    +
    T^{-1/2}
    L^{(2 (n+1)^2  -n + 1/2 ){\mathbf{j}}}
  \right).
\end{equation}
Recall our goal bound \eqref{eqn:R-decomposed}.  Recall from \eqref{eqn:weak-PNT} that $|\mathbb{P}|$ is approximately $L$.  By substituting \eqref{eqn:M-E-summary} into the decomposition \eqref{eqn:R-decomposed}, we deduce that
\begin{align*}
  \frac{\mathcal{R}}{L^2 T^{n/2+\eps}}
  &\ll
    \sum _{j=1}^{n+1}
    \left(
    L^{-(1-2 \vartheta) j}
    +
    T^{-1/2}
    L^{(2 (n+1)^2 - n+1/2 )j}
    \right)\\
  &\quad +
    L^{-1}
    \sum _{j=0}^{n+1}
    \left(
    L^{-(1-2 \vartheta) j}
    +
    T^{-1/2}
    L^{(2 (n+1)^2 - n+1/2 )j}
    \right)
  \\
  &\ll
    L^{-(1-2 \vartheta)}
    +
    T^{-1/2}
    L^{A},
\end{align*}
where
\[
  A := (2 (n+1)^2 - n + 1/2)(n+1).
\]
The optimal choice for $L$ is the solution to
\begin{equation*}
  L^{-(1-2 \vartheta)}
  =
  T^{-1/2}
  L^A,
\end{equation*}
i.e.,
\begin{equation}\label{eqn:L-optimal-choice}
  L
  =
  T ^{\frac{1}{2 (A + 1 - 2 \vartheta)} },
\end{equation}
which gives
\[
  \frac{\mathcal{R}}{L^2 T^{n/2+\eps}} \ll T ^{- \delta }, \quad \delta := \frac{1 - 2 \vartheta}{2 (A + 1 - 2 \vartheta)},
\]
as required.

This completes the proof that Theorem \ref{thm:construct-test-function} implies Theorem \ref{thm:main}.  The remainder of the paper is devoted to the proof of Theorem \ref{thm:construct-test-function}.

\section{Representation-theoretic
  preliminaries}\label{sec:repr-theor-prel}
The main purpose of \S\ref{sec:repr-theor-prel}--\S\ref{sec:relat-char-asympt} is to recall parts of the microlocal calculus developed in \cite[Parts 1,2,3]{nelson-venkatesh-1}, together with some crucial refinements (\S\ref{sec:refin-symb-class}, \S\ref{sec:proofs-star-product}).

\subsection{Lie groups}\label{sec:lie-groups}
Here we record some notation and preliminaries concerning Lie groups.  We are primarily interested in the case of real Lie groups, but discuss also the case of $p$-adic Lie groups.  The generality is motivated by the Lie-theoretic arguments of \S\ref{sec:bilinear-forms-estimates} and onwards; the results obtained by those arguments may be of independent interest, so we have formulated them over any local field of characteristic zero.

Let $F$ be a local field of characteristic zero.  In particular, $F$ is a non-discrete complete valued field in the sense of \cite[Part II, Ch. I]{MR2179691}.  We may thus speak of $n$-dimensional analytic manifolds $M$ over $F$, i.e., topological spaces equipped with a maximal atlas of charts $M \supseteq U \rightarrow F^n$ for which the transition maps are given locally by convergent power series (see \cite[Part II, Ch. III]{MR2179691} for details).  Many standard theorems from multivariable calculus hold in this setting, e.g., the implicit and inverse function theorems (see \S\ref{sec:inverse-function-theorem}).

We note that an analytic manifold over $F$ is naturally an analytic manifold over any subfield of $F$.  For this reason, we may often assume that $F$ is either $\mathbb{R}$ or $\mathbb{Q}_p$.

A Lie group $G$ over $F$ is a group equipped with the structure of an analytic manifold for which the multiplication and inversion maps are analytic.  We denote by $\mathfrak{g}$ the corresponding Lie algebra, by $\mathfrak{g}^*$ its linear dual, and by $\mathfrak{g}^\wedge$ its Pontryagin dual.  These are vector spaces over $F$ of the same dimension as $G$.  By choosing a nontrivial character $F \rightarrow \U(1)$, we could identify $\mathfrak{g}^\wedge$ with $\mathfrak{g}^*$, but will generally avoid doing so.  We denote typical elements of $\mathfrak{g}$ (resp. $\mathfrak{g}^\wedge$) by $x,y,z$ (resp. $\xi,\eta,\zeta,\tau$).

Suppose for the moment that $F$ is archimedean.  Without loss of generality, take $F = \mathbb{R}$.  We write
\begin{equation}\label{eq:ex-xi-in}
  e^{x \xi} \in \U(1)
\end{equation}
for the result of the canonical pairing between $x \in \mathfrak{g}$ and $\xi \in \mathfrak{g}^\wedge$.
We write $\langle x, \xi \rangle$ for the element of $i \mathbb{R}$ with the property that for all $t \in \mathbb{R}$, we have $\exp (t \langle x, \xi \rangle) = e ^{(t x ) \xi } = e ^{x (t \xi )}$.  We often abbreviate
\begin{equation}\label{eq:x-xi-:=}
  x \xi := \langle x, \xi \rangle.
\end{equation}
The notations \eqref{eq:ex-xi-in} and \eqref{eq:x-xi-:=} are then compatible: $e^{x \xi} = \exp(x \xi)$.  The pairing $\langle , \rangle$ identifies $\mathfrak{g}^\wedge$ with the imaginary dual $i \mathfrak{g}^*= \Hom(\mathfrak{g},i \mathbb{R})$.  We write $\mathfrak{g}_\mathbb{C} := \mathfrak{g} \otimes_{\mathbb{R}} \mathbb{C}$ for the complexification and $\mathfrak{g}_\mathbb{C}^*$ for its dual.  We regard $\mathfrak{g}^\wedge = i \mathfrak{g}^*$ as a real form of $\mathfrak{g}_{\mathbb{C}}^*$; it is the subspace fixed by the involution $\xi \mapsto - \overline{\xi}$.

Returning to the case of general $F$, the group $G$ acts on $\mathfrak{g}$ via the adjoint representation $x \mapsto \Ad(g) x$ and on $\mathfrak{g}^\wedge$ via the coadjoint representation $\xi \mapsto \Ad^*(g) \xi$.  We abbreviate these actions by $g \cdot x$ and $g \cdot \xi$ or simply $g \xi$.  We similarly denote the action of $G$ on functions on either space.  For instance, given $a : \mathfrak{g}^\wedge \rightarrow \mathbb{C}$, we set
\begin{equation*}
  (g \cdot a)(\xi) := a(\Ad^*(g)^{-1} \xi).
\end{equation*}
The Lie algebra $\mathfrak{g}$ acts on $\mathfrak{g}$ via the adjoint representation
\begin{equation*}
  \ad_x y = [x,y]
\end{equation*}
and on $\mathfrak{g}^\wedge$ via the coadjoint representation $\ad_x^* \xi$.  We often abbreviate the latter by $[x,\xi]$.

\subsection{Reductive groups}\label{sec:prelim-reductive-groups}
Suppose now that $G$ is a connected reductive group over a field $F$ of characteristic zero.  We may then speak of its Lie algebra $\mathfrak{g}$ and linear dual $\mathfrak{g}^*$.  When $F$ is a local field, we may regard $G$ also as a Lie group over $F$; in particular, we have defined the Pontryagin dual $\mathfrak{g}^\wedge$.

The vector spaces $\mathfrak{g}$, $\mathfrak{g}^*$ and (when defined) $\mathfrak{g}^\wedge$ are $G$-equivariantly isomorphic.  We denote by $[\mathfrak{g}], [\mathfrak{g}^*]$ and $[\mathfrak{g}^\wedge]$ the set of $F$-points of their geometric invariant quotients.\index{Lie algebra!GIT quotients $[\mathfrak{g}]$, $[\mathfrak{g}^*]$, $[\mathfrak{g}^\wedge]$} Thus, e.g., $[\mathfrak{g}^\wedge]$ is the set of $F$-algebra homomorphisms $R^G \rightarrow F$, where $R$ denotes the algebra of polynomial functions $\mathfrak{g}^\wedge \rightarrow F$.  These quotients are affine spaces over $F$ of dimension equal to the rank of $G$, and are (non-canonically) isomorphic to one another.

When $F$ is archimedean, we write $[\mathfrak{g}^*_{\mathbb{C}}]$ for the geometric invariant theory quotient of $\mathfrak{g}^*_{\mathbb{C}}$.  Then $[\mathfrak{g}^\wedge]$ is naturally a real form of $[\mathfrak{g}_\mathbb{C}^*]$, given as the fixed point set of the map $\lambda \mapsto - \bar{\lambda}$ on $[\mathfrak{g}_\mathbb{C}^*]$ descended from $\mathfrak{g}_\mathbb{C}^*$.  For details on these points, we refer to \cite[\S9.2]{nelson-venkatesh-1}.

We use a subscripted $\reg$, \index{Lie algebra!regular subset $\mathfrak{g}^\wedge_{\reg}$} as in $\mathfrak{g}_{\reg}$ or $\mathfrak{g}^\wedge_{\reg}$, to denote the subset of \emph{regular} elements, i.e., those whose centralizer in $\mathfrak{g}$ has minimal dimension.  We record in \S\ref{sec:interl-regul-elem} the essential properties of such elements.

For a vector space $V$ over $F$, \index{Lie algebra!$\dim, \dim_F, \rank, \rank_F$} we write $\dim_F(V)$ for its $F$-dimension and $\dim(V)$ for its $\mathbb{R}$-dimension, so that $\dim(V) = [F:\mathbb{R}] \dim_F(V)$.  This notation applies in particular to $V = \mathfrak{g}$.  We write $\rank_F(\mathfrak{g})$ (resp. $\rank(\mathfrak{g})$) for the $F$-dimension (resp. $\mathbb{R}$-dimension) of any Cartan subalgebra of $\mathfrak{g}$ (e.g., $\rank_F(\mathfrak{g}) = n$ if $G = \GL_n(F)$).

\subsection{Infinitesimal characters
  and Langlands parameters}\label{sec:infin-char-langl}
We retain the above setting, but suppose here that $F$ is archimedean.

We write \index{Lie algebra!enveloping algebra $\mathfrak{U}$, center $\mathfrak{Z}$} \index{representations!infinitesimal character $\lambda_\pi$} $\mathfrak{U}(\cdot)$ for the universal enveloping algebra and $\mathfrak{Z}(\cdot)$ for its center.  On any irreducible representation $\pi$ of $G$, the algebra $\mathfrak{Z}(\mathfrak{g}_\mathbb{C})$ acts via scalars.  The infinitesimal character $\lambda_\pi$ is the character of $\mathfrak{Z}(\mathfrak{g}_\mathbb{C})$ describing that action.  The Harish--Chandra isomorphism (see \cite[\S9.4]{nelson-venkatesh-1}) identifies $\mathfrak{Z}(\mathfrak{g}_\mathbb{C})$ with the ring of regular functions on $[\mathfrak{g}_\mathbb{C}^*]$.  In this way, we may regard $\lambda_\pi$ as an element of $[\mathfrak{g}_\mathbb{C}^*]$.  In the special case that $\pi$ is unitary, we have in fact $\lambda_\pi \in [\mathfrak{g}^\wedge]$ (see \cite[\S9.5]{nelson-venkatesh-1}).  There is a natural scaling action of $\mathbb{R}^\times$ (resp. $\mathbb{C}^\times$) on $[\mathfrak{g}^\wedge]$ (resp. $[\mathfrak{g}_\mathbb{C}^*]$), descended from the corresponding actions on $\mathfrak{g}^\wedge$ (resp. $\mathfrak{g}_\mathbb{C}^*$).

Let $G^\vee$ denote the complex dual group of $G$ -- regarding $G$ as an algebraic group over $F$ -- and $\mathfrak{g}^\vee$ its complex Lie algebra.  We may canonically identify
\begin{equation}\label{eqn:identification-inf-char-langlands}
  \mathfrak{g}^\vee \giit G^\vee
  \cong
  \mathfrak{t}^\vee / W
  \cong
  \begin{cases}
    \mathfrak{t}_\mathbb{C}^* / W \cong [\mathfrak{g}_\mathbb{C}^*]
    & \text{ if } F = \mathbb{R},  \\
    \mathfrak{t}^\wedge / W \cong [\mathfrak{g}^\wedge] & \text{ if } F = \mathbb{C},
  \end{cases}
\end{equation}
where $\mathfrak{t}_\mathbb{C} \leq \mathfrak{g}_\mathbb{C}$ (resp.  $\mathfrak{t} \leq \mathfrak{g}$) and $\mathfrak{t}^\vee \leq \mathfrak{g}^\vee$ are Cartan subalgebras and $W$ denotes the Weyl group.  The case $F = \mathbb{R}$ is discussed in \cite[\S15.1]{nelson-venkatesh-1}; for the case $F = \mathbb{C}$, we first identify $\mathfrak{t}^\vee$ canonically-modulo-$W$ with $\Hom(\mathfrak{t},\mathbb{C})$ and then identify the latter with $\mathfrak{t}^\wedge$ via the map $\mathbb{C} \rightarrow i \mathbb{R}$ sending a complex number to its imaginary component.

The identification \eqref{eqn:identification-inf-char-langlands} is compatible with the local Langlands correspondence in the following sense (see \cite[\S15.1]{nelson-venkatesh-1} and references).  Let $\pi$ be a tempered irreducible unitary representation of $G$.  It corresponds to a conjugacy class of representations $\phi_\pi : W_{F} \rightarrow {}^L G$ having bounded image.  Here $W_F$ denotes the Weil group of $F$ and ${}^L G = G^\vee \rtimes \Gal(\mathbb{C}/F)$ the Langlands dual group.  Using \eqref{eqn:identification-inf-char-langlands}, we may identify $\lambda_\pi$ with a semisimple $G^\vee$-conjugacy class in $\mathfrak{g}^\vee$.  The restriction of $\phi_\pi$ to the Weil group $W_\mathbb{C} =\mathbb{C}^\times \subseteq W_F$ of the complex numbers is then given for $t \in \mathbb{C}$ by the formula $\phi_\pi(\exp(t)) = \exp(2 i \Im(t \lambda_\pi))$, where $t \lambda_\pi = \Re(t \lambda_\pi) + i \Im(t \lambda_\pi)$.

\subsection{Eigenvalue multisets}\label{sec:eigenvalue-multisets}
Let $G$ be a unitary or orthogonal group over a field $F$ of characteristic zero.

For $x \in \mathfrak{g}$, we denote by $\ev(x)$ the ``eigenvalue multiset'' as defined in \cite[\S13.4.1]{nelson-venkatesh-1}.  Roughly speaking, it is the multiset of roots, taken in a given algebraic closure $\bar{F}$ of $F$, of the characteristic polynomial of the action of $x$ via the standard representation of $G$, except that in the odd orthogonal case, we subtract the ``obvious'' root zero with multiplicity one, noting that it always occurs.  This multiset depends only upon the image of $x$ in $[\mathfrak{g}]$, and so defines a function $\ev$ on $[\mathfrak{g}]$.

The spaces $\mathfrak{g}$ and $\mathfrak{g}^*$ are equivariantly isomorphic via the trace pairing with respect to the standard representation, while the spaces $\mathfrak{g}^*$ and $\mathfrak{g}^\wedge$ are equivariantly isomorphic via the choice of a nontrivial unitary character of $F$.  Having made such choices, we may extend the definition of $\ev$ to $[\mathfrak{g}]$ and to $[\mathfrak{g}^\wedge]$.

When $F$ is an archimedean local field, it will be convenient to work with a more canonical definition of $\ev(\lambda)$ for $\lambda \in [\mathfrak{g}^\wedge]$, given as follows.  The group $G^\vee$ comes equipped with a standard representation $G^\vee \hookrightarrow \GL_n(\mathbb{C})$.  In the unitary case, $n = \dim_E V$; in the orthogonal case, $n$ is even and $\dim_E V$ is either $n$ or $n+1$.  For $\lambda \in [\mathfrak{g}^\wedge]$, we \index{Lie algebra!eigenvalue multiset $\ev(\lambda)$} write
\begin{equation*}
  \ev(\lambda) = \{\lambda_1,\dotsc,\lambda_n\}
\end{equation*}
for the multiset of eigenvalues for the semisimple conjugacy class in $\mathfrak{g}^\vee \hookrightarrow \glLie_n(\mathbb{C})$ corresponding to $\lambda$ via \eqref{eqn:identification-inf-char-langlands}.


\subsection{Satake parameters
  for archimedean classical groups}\label{sec:satake-param-arch}
Let $G$ be a unitary or orthogonal group over an archimedean local field $F$.  We may associate to each tempered irreducible representation $\pi$ of $G$ a multiset
\begin{equation*}
\ev(\lambda_{\pi}) = \{\lambda_{\pi,1},\dotsc, \lambda_{\pi,n}\}
\end{equation*}
of complex numbers.  We refer to these as the \index{representations!archimedean Satake parameters $\lambda_{\pi,i}$} archimedean Satake parameters of $\pi$.

\subsection{Kirillov formula}\label{sec:kirillov-formula}
Let $\pi$ be an irreducible unitary representation of the connected reductive group $G$ over $\mathbb{R}$.  Set
\begin{equation*}
  2 d := \dim(\mathfrak{g}) - \rank(\mathfrak{g}).
\end{equation*}
We recall the statement of Rossmann's theorem \cite{MR508985, MR650379, MR587333} as summarized in \cite[\S6]{nelson-venkatesh-1}, which asserts the validity of the Kirillov formula \cite{MR1701415} for such $G$.
\begin{theorem}\label{thm:kirillov-formula}
  Assume that $\pi$ is tempered.  There \index{representations!coadjoint multiorbit $\mathcal{O}_\pi$} is a (unique) nonempty $G$-invariant subset $\mathcal{O}_\pi \subseteq \mathfrak{g}^\wedge_{\reg}$ with the following properties.
  \begin{enumerate}[(i)]
  \item The distributional character $\chi_\pi$ of $G$ is described for $x$ in a small enough neighborhood of the identity of $\mathfrak{g}$ by the formula \[\chi_\pi(\exp(x)) \sqrt{\jac(x)} = \int _{\xi \in \mathcal{O}_\pi } e^{x \xi},\] where $\jac(x)$ denotes the Jacobian of the exponential map as described in \cite[\S2.1]{nelson-venkatesh-1} and the integral is taken with respect to the symplectic measure on $\mathcal{O}_\pi$, normalized as in \cite[\S6.1]{nelson-venkatesh-1}.
  \item Let $\lambda_\pi \in [\mathfrak{g}^\wedge]$ denote the infinitesimal character of $\pi$, as defined in \S\ref{sec:infin-char-langl}.  Then $\mathcal{O}_\pi$ is contained in the preimage of $\lambda_\pi$.
  \end{enumerate}
\end{theorem}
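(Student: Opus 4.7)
The plan is to prove Rossmann's theorem by reduction to the discrete series of Levi subgroups via parabolic induction, following the classical strategy. First, I would invoke the classification of tempered irreducible representations: every such $\pi$ arises as the unitary parabolic induction $\Ind_P^G(\sigma \otimes \chi)$, where $P = LN$ is a cuspidal parabolic, $\sigma$ is an irreducible discrete series representation of $L$, and $\chi$ is a unitary character of the split component of $L$ (or, more generally, $\sigma$ is a limit of discrete series). The first reduction is to verify that if the Kirillov formula is known for $\sigma$ with associated orbit $\mathcal{O}_\sigma \subseteq \mathfrak{l}^\wedge_{\reg}$, then it holds for $\pi$ with
\[
  \mathcal{O}_\pi \;=\; G \cdot \bigl(\mathcal{O}_\sigma + \mathfrak{n}^\perp\bigr)\,\cap\, \mathfrak{g}^\wedge_{\reg},
\]
where $\mathfrak{n}^\perp \subseteq \mathfrak{g}^\wedge$ annihilates $\mathfrak{n} = \Lie(N)$; this is a general compatibility of the orbit method with induction, provable by integration in stages on the coadjoint orbit side and by the formula expressing induced characters in terms of orbital integrals on the group side (combined with Harish-Chandra's descent to small neighborhoods of the identity where the formula is to hold).

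Second, I would establish the discrete series case directly. Here $G$ admits a compact Cartan $T$, and Harish-Chandra's formula expresses $\chi_\pi$ on the regular semisimple elliptic set as an explicit alternating sum over the Weyl group, with leading data the Harish-Chandra parameter $\Lambda$. On the orbit side, $\mathcal{O}_\pi$ should be the (regular) integral coadjoint orbit through $\Lambda$ (shifted by $\rho$), and one compares using the Harish-Chandra/Kirillov formula for the Fourier transform of the symplectic measure on an elliptic regular orbit — originally Rossmann's identity, which follows from the Duistermaat--Heckman localization on the (compact) orbit and the Weyl integration formula. The match with the Jacobian factor $\sqrt{\jac(x)}$ comes from the standard $\jac^{1/2}$ correction relating the exponential and Cayley parametrizations.

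Third, I would check the properties of $\mathcal{O}_\pi$ asserted in (i) and (ii): regularity holds because discrete series orbits are regular elliptic and the induction construction preserves regularity on a dense subset (and one restricts to the regular locus); the infinitesimal character condition (ii) follows by computing both sides against $\mathfrak{Z}(\mathfrak{g}_{\mathbb{C}})$ using the Harish-Chandra isomorphism: invariants of $\mathfrak{g}^\wedge$ restricted to $\mathcal{O}_\pi$ give the scalars by which $\mathfrak{Z}$ acts on $\pi$, which is precisely $\lambda_\pi$. Uniqueness of $\mathcal{O}_\pi$ follows from the injectivity of the Fourier transform on $\mathfrak{g}$ combined with the fact that any $G$-invariant regular set supporting a tempered symplectic-type measure producing the given character must coincide with $\mathcal{O}_\pi$: two candidates would differ by a $G$-invariant measure whose Fourier transform vanishes near $0 \in \mathfrak{g}$, hence vanishes identically.

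The main obstacle is the discrete series base case, which requires the full weight of Harish-Chandra's character theory together with the explicit Fourier transform of symplectic measures on elliptic orbits; once this is in hand, the inductive step is essentially a formal manipulation of Weyl integration formulas and orbital integrals, and the matching of the $\sqrt{\jac(x)}$ factor is routine.
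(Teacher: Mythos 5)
The first thing to say is that the paper does not prove this statement at all: Theorem \ref{thm:kirillov-formula} is Rossmann's theorem, quoted as a black box with citations to \cite{MR508985, MR650379, MR587333} and to the summary in \cite[\S 6]{nelson-venkatesh-1}. So there is no in-paper argument to compare yours against; what you have written is an attempt to reconstruct the proof of a deep cited result that the authors deliberately outsource.

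Taken on its own terms, your outline follows the correct classical strategy --- reduce to the discrete series via parabolic induction from cuspidal parabolics, then handle the base case by matching Harish-Chandra's character formula against the Fourier transform of the symplectic measure on a regular elliptic orbit --- and you correctly flag the base case as carrying essentially all the weight. But two points would need real work before this could be called a proof. First, $\Ind_P^G(\sigma\otimes\chi)$ with $\sigma$ discrete series is in general reducible, and the given $\pi$ is only a direct summand; disentangling the characters of the individual constituents (via limits of discrete series and $R$-groups) is precisely why $\mathcal{O}_\pi$ is in general a finite \emph{union} of coadjoint orbits (the paper's ``multiorbit''), a phenomenon your induction formula does not account for. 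Second, your uniqueness argument is not valid as stated: a $G$-invariant measure supported on a non-compact set whose Fourier transform vanishes on a neighborhood of $0\in\mathfrak{g}$ need not vanish identically. Uniqueness does hold, but it requires the additional input that the distributions in question are invariant eigendistributions with infinitesimal character $\lambda_\pi$, and that the symplectic measures of the finitely many regular orbits lying over a fixed $\lambda_\pi$ have linearly independent Fourier transforms on every neighborhood of the origin --- facts from Harish-Chandra's theory, not from abstract Fourier analysis.
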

We refer to $\mathcal{O}_\pi$ as the \emph{coadjoint multiorbit} attached to $\pi$.  The terminology reflects that in general, $\mathcal{O}_\pi$ is a finite union of coadjoint orbits.  We refer to \cite[\S6]{nelson-venkatesh-1} for further discussion of this point, as well as the precise definition and normalization of the symplectic measures.

\section{Basic operator assignment}
\label{sec:micr-analys-lie}
\label{sec:basic-oper-assignm}
We fix a Lie group $G$ over $\mathbb{R}$ and retain the notation of \S\ref{sec:repr-theor-prel}.

\subsection{Measures and Fourier transforms}
We fix Haar measures $d x$ on $\mathfrak{g}$ and $d \xi$ on $\mathfrak{g}^\wedge$ that are Fourier dual in the sense that for Schwartz functions $a$ on $\mathfrak{g}^\wedge$ and $\phi$ on $\mathfrak{g}$, the Fourier transforms given by \index{Lie algebra!Fourier transforms $a^\vee, \phi^\wedge$}
\begin{equation*}
  a^\vee(x) := \int _{\xi \in \mathfrak{g} ^\wedge }
  a(\xi) e^{- x \xi} \, d \xi,
  \quad
  \phi^\wedge(\xi)
  = \int _{x \in \mathfrak{g} }
  \phi(x)
  e^{x \xi} \, d x
\end{equation*}
define mutually inverse transforms.  We note that the Schwartz measure $a^\vee(x) \, d x$ on $\mathfrak{g}$ depends only upon $a$, not upon the choice of Haar measure.

\subsection{Nice cutoffs}\label{sec:nice-cutoffs}
We fix small enough symmetric neighborhoods $\mathcal{G}_1 \subset \mathcal{G}_0$ of the origin in $\mathfrak{g}$, with $\mathcal{G}_1$ taken small enough in terms of $\mathcal{G}_0$.  We assume in particular that
\begin{itemize}
\item the exponential map defines an analytic diffeomorphism between $\mathcal{G}_1$ and its image, and
\item $\exp(x) \exp(y) \in \exp(\mathcal{G}_1)$ whenever $x,y \in \mathcal{G}_0$.
\end{itemize}

By a \emph{nice cutoff}, \index{Lie algebra!nice cutoff $\chi$} we mean a smooth function $\chi : \mathfrak{g} \rightarrow [0,1]$ that is supported in $\mathcal{G}_0$, satisfies $\chi(x) = \chi(-x)$, and is identically $1$ in some neighborhood of the origin.

\subsection{The wavelength parameter $\h$}\label{sec:wavel-param-h}
\index{wavelength parameter $\h$} Many of the definitions below depend upon the choice of a positive ``wavelength parameter'' $\h \in (0,1]$.  One can take, e.g., $\h = 1$, but we will be interested primarily in the case that $\h \lll 1$.  Since we will work primarily with one value of $\h$ at a time, we will often suppress the dependence of our definitions upon $\h$.

\subsection{From
  Schwartz functions on $\mathfrak{g}^\wedge$
  to distributions on $G$}
Let $a$ be a Schwartz function on $\mathfrak{g}^\wedge$ and let $\chi$ be a nice cutoff.  We define the rescaling \index{symbols!rescaling $a_{\h}(\xi) := a(\h \xi)$} $a_{\h}$ of $a$ by
\begin{equation*}
  a_{\h}(\xi) := a(\h \xi).
\end{equation*}
We obtain the rescaled inverse Fourier transform $a_{\h}^\vee$, given explicitly by
\begin{equation*}
  a_{\h}^\vee(x) = \h^{-\dim(\mathfrak{g})} a^\vee(x/\h).
\end{equation*}
The cutoff $\chi a_{\h}^\vee$ of the latter is then a smooth compactly-supported function on $\mathfrak{g}$.  We denote by \index{operators!$\widetilde{\Opp}$}
\[
  \widetilde{\Opp}_{\h}(a:\chi) := \exp_* (\chi a_{\h}^\vee \, d x)
\]
the pushforward under the exponential map of the smooth compactly-supported distribution $\chi(x) a_{\h}^\vee(x) \, d x$ on $\mathfrak{g}$.  Thus $\widetilde{\Opp}_{\h}(a:\chi)$ is a smooth distribution on $G$, supported in a small neighborhood of the identity element and given there by a smooth multiple of any (left or right) Haar measure $d g$ on $G$.  Explicitly,
\begin{equation*}
  \int _{g \in G}
  \widetilde{\Opp}_{\h}(a:\chi)(g)  \phi(g) 
  =
  \int _{x \in \mathfrak{g}}
  \chi(x) a_{\h}^\vee(x)
  \phi(\exp(x))
  \, d x
  \quad 
  \text{ for } \phi : G \rightarrow \mathbb{C}.
\end{equation*}
When $G$ is unimodular, we identify $\widetilde{\Opp}_{\h}(a,\chi)$ with an element of $C_c^\infty(G)$ by dividing by some fixed (left and right) Haar measure $d g$.

\subsection{Definition of operator assignment}
Given a representation $\pi$ of $G$ (on, e.g., a locally convex space), we define an operator $\Opp_{\h}(a:\pi, \chi) \in \End(\pi)$ \index{operators!$\Opp$} by integrating: for $v \in \pi$,
\begin{align*}
  \Opp_{\h}(a:\pi,\chi)
  v
  &= \int _{g \in G}
    \widetilde{\Opp}_{\h}(a:\chi)(g) \pi(g) v \\
  &=
    \int _{x \in \mathfrak{g}}
    \chi(x) a_{\h}^\vee(x) \pi(\exp(x))  v
    \, d x.
\end{align*}

\subsection{Adjoints}\label{sec:adjoints}
The significance of the evenness condition $\chi(x) = \chi(-x)$ imposed in \S\ref{sec:nice-cutoffs} is that if $\pi$ is a unitary representation, then the adjoint of such an operator is given by
\begin{equation}\label{eqn:adjoint-opp-a}
  \Opp_{\h}(a:\pi,\chi)^*
  = 
  \Opp_{\h}(\bar{a} : \pi , \chi).
\end{equation}

\subsection{Star products and composition}\label{sec:star-product:-basic}
We recall the star product considered in \cite{nelson-venkatesh-1} (and earlier, in \cite{MR1064995}).  Let $\phi_1, \phi_2 \in C_c^\infty(\mathfrak{g})$ be supported close enough to the origin.  The distribution $\phi_j(x) \, d x$ pushes forward under the exponential map to a smooth distribution $f_j$ on $G$ supported near the identity.  The convolution $f_1 \ast f_2$ is another smooth distribution on $G$ supported near the identity.  We define $\phi_1 \star \phi_2 \in C_c^\infty(\mathfrak{g})$ to be the function supported near the origin for which $(\phi_1 \star \phi_2)(x) \, d x$ pushes forward to $f_1 \ast f_2$.  Thus for any $\Psi \in C_c^\infty(G)$, we have
\[
  \int_{x_1, x_2 \in \mathfrak{g}} \phi_1(x_1) \phi_2(x_2) \Psi(\exp(x_1) \exp(x_2)) \, d x_1 \, d x_2 = \int_{x \in \mathfrak{g}} (\phi_1 \star \phi_2)(x) \Psi(\exp(x)) \, d x.
\]

Given a nice cutoff $\chi$ and Schwartz functions $a,b$ on $\mathfrak{g}^\wedge$, we define the star product $a \star b$ to be the Schwartz function on $\mathfrak{g}^\wedge$ given by the formula \index{symbols!star product $\star, \star_{\h}$}
\begin{equation}\label{eqn:unscaled-star-product}
  a \star b := (\chi a^\vee \star \chi b ^\vee)^\wedge.
\end{equation}
The definition depends implicitly upon $\chi$.  The rescaled star product $a \star_{\h} b$ is characterized by
\begin{equation}\label{eqn:rescaled-star-product}
  (a \star_{\h} b)_{\h} = a_{\h} \star b_{\h}.
\end{equation}

For small elements $x,y \in \mathfrak{g}$, we write
\begin{equation}\label{eq:x-ast-y-:=-logexpx-expy-}
  x \ast y := \log(\exp(x) \exp(y))
\end{equation}
for the small element of $\mathfrak{g}$ for which $\exp(x \ast y) = \exp(x) \exp(y)$.  If $\chi$ and $\chi '$ are nice cutoffs with the property that $\chi '( x \ast y) = 1$ whenever $\chi(x) \chi(y) \neq 0$, then
\begin{equation*}
  \widetilde{\Opp}_{\h}(a:\chi)
  \ast
  \widetilde{\Opp}_{\h}(b:\chi)
  =
  \widetilde{\Opp}_{\h}(a \star_{\h} b:\chi'),
\end{equation*}
hence for a representation $\pi$ of $G$,
\begin{equation}\label{eq:composition-for-basic-operator-assignment}
  \Opp_{\h}(a:\pi,\chi)
  {\Opp}_{\h}(b:\pi,\chi)
  =
  {\Opp}_{\h}(a \star_{\h} b:\pi,\chi').
\end{equation}

\subsection{Abbreviations}
We abbreviate $\Opp_{\h}(a:\pi,\chi)$ by $\Opp_{\h}(a:\pi)$ or simply $\Opp_{\h}(a)$ when $\chi$ and $\pi$ are clear by context, and similarly for $\widetilde{\Opp}_{\h}$.  We note that the precise choice of $\chi$ is often unimportant (see for instance \cite[\S5.4]{nelson-venkatesh-1}).  We abbreviate further by simply \index{operators!$\Opp$}
\[
  \Opp(a)
\]
when the wavelength parameter $\h$ is understood.\footnote{This convention differs from that in \cite{nelson-venkatesh-1}, where $\Opp$ refers to the ``unscaled'' operator assignment, i.e., the case $\h = 1$.  We refer here to the latter more verbosely as $\Opp_{1}$.}

\section{Symbols
  and star product asymptotics}\label{sec:symbols-star-product}
We retain the setting and notation of \S\ref{sec:basic-oper-assignm}.

\subsection{Spaces of symbols}\label{sec:spaces-symbols}
We fix a basis of $\mathfrak{g}^\wedge$.  With respect to this choice, we define for each multi-index
\[
  \alpha = (\alpha_1,\dotsc,\alpha_{\dim(\mathfrak{g})}) \in \mathbb{Z}_{\geq0}^{\dim(\mathfrak{g})}\] a differential operator $\partial^\alpha$ on $C^\infty(\mathfrak{g}^\wedge)$ (see \cite[\S4.1]{nelson-venkatesh-1} for details).

For each integer $m$ and multi-index $\alpha$, we define a seminorm $\nu_{m,\alpha}$ on $C^\infty(\mathfrak{g}^\wedge)$ (valued in the extended nonnegative reals) by the formula
\[
  \nu_{m,\alpha}(a) := \sup_{\xi \in \mathfrak{g}^\wedge} \langle \xi \rangle^{|\alpha| - m} \left\lvert \partial^\alpha a(\xi) \right\rvert,
\]
where
\begin{equation*}
  |\alpha| := \sum \alpha_j
\end{equation*}
denotes the order of $\alpha$ and
\begin{equation*}
  \langle \xi \rangle := (1 + |\xi|^2)^{1/2}.
\end{equation*}
We set\footnote{The space $\underline{S}^m$ was denoted in \cite[\S4.3]{nelson-venkatesh-1} by $S^m$.  We do not adopt the latter notation here.}  \index{symbols!underlying space $\underline{S}^m$}
\[
  \underline{S}^m := \{ a \in C^\infty(\mathfrak{g}^\wedge): \nu_{m,\alpha}(a) < \infty \text{ for all } \alpha \}.
\]
We extend the definition of $\underline{S}^m$ to $m = \infty$ (resp. $m=-\infty$) by taking the union (resp.  intersection) over all integers $m$.  We write more verbosely $\underline{S}^m(\mathfrak{g}^\wedge)$ when we wish to indicate which group is being considered.  For $m < \infty$, the space $\underline{S}^m$ is a Frechet space equipped with a distinguished family of seminorms, while $\underline{S}^\infty$ is an inductive limit of such spaces.  We note that $\underline{S}^{-\infty}$ is the Schwartz space on $\mathfrak{g}^\wedge$.

\subsection{Star product:
  formal expansion and
extension to symbols}\label{sec:star-prod-extens}
Recall from \S\ref{sec:star-product:-basic} that the star product $\star$ depends upon the choice of a nice cutoff $\chi$.  For small $x,y \in \mathfrak{g}$, write
\begin{equation*}
  \{x,y\} := x \ast y - x - y,
\end{equation*}
with $x \ast y = \log(\exp(x) \exp(y))$ as in \S\ref{sec:star-product:-basic}.  For the moment, let $a$ and $b$ be Schwartz functions on $\mathfrak{g}^\wedge$.  The rescaled star product admits the integral representation (see \cite[(4.2)]{nelson-venkatesh-1})
\begin{equation}\label{eqn:rescaled-star-integral-rep}
  a \star_{\h} b(\zeta)
  = \int_{x,y \in \mathfrak{g}}
  a_{\h}^\vee(x)
  b_{\h}^\vee(y)
  e^{\langle x, \zeta/h  \rangle}
  e^{\langle y, \zeta/\h  \rangle}
  e^{\langle \{x,y\}, \zeta/\h \rangle}
  \chi(x) \chi(y) \, d x \, d y.
\end{equation}

The analytic function $e^{\{ x , y \} \zeta} = e^{\langle \{ x , y \} ,\zeta \rangle}$ admits the Taylor expansion
\begin{equation}\label{eqn:taylor-expand-coarse}
  e^{\{x,y\} \zeta}
  = \sum_{\alpha,\beta,\gamma} c_{\alpha \beta \gamma}
  x^\alpha y^\beta \zeta^\gamma,
\end{equation}
where the multi-indices $\alpha,\beta,\gamma$ range over $\mathbb{Z}_{\geq 0}^{\dim(\mathfrak{g})}$ and we write
\[
  x^\alpha = \prod x_j^{\alpha_j}, \quad y^\beta = \prod y_j^{\beta_j} \text{ and } \zeta^\gamma = \prod \zeta_j^{\gamma_j}
\]
for the corresponding monomials, defined using coordinates $\mathfrak{g} \cong \mathbb{R}^{\dim(\mathfrak{g})}$ and $\mathfrak{g}^\wedge \cong i \mathbb{R}^{\dim(\mathfrak{g})}$ with respect to which the natural pairing $\mathfrak{g} \otimes \mathfrak{g}^\wedge \rightarrow i \mathbb{R}$ is given by $x \xi = \sum_{j=1}^{\dim(\mathfrak{g})} x_j \xi_j$.

Using the estimate
\begin{equation*}
  \{x,y\} \ll |x| \cdot |y|
  \quad (\text{for } x, y \lll 1),
\end{equation*}
we may see as in \cite[\S4.2]{nelson-venkatesh-1} that the coefficient $c_{\alpha \beta \gamma}$ in the Taylor expansion \eqref{eqn:taylor-expand-coarse} of $e^{\{x,y\} \zeta}$ vanishes unless
\begin{equation}\label{eqn:gamma-less-alpha-beta}
  |\gamma| \leq \min(|\alpha|, |\beta|).
\end{equation}
Grouping the terms in \eqref{eqn:taylor-expand-coarse} by their homogeneity degree $j = |\alpha| + |\beta| - |\gamma|$ (which is nonnegative in view of \eqref{eqn:gamma-less-alpha-beta}) and applying Fourier inversion to \eqref{eqn:rescaled-star-integral-rep} suggests the formal asymptotic expansion
\begin{equation}\label{eqn:star-prod-formal-suggestion}
  a \star_{\h} b
  \sim
  \sum _{j \geq 0} \h^j a \star^j b,
\end{equation}
where $\star^j$ denotes the finite bidifferential operator defined by
\begin{equation*}
  a \star^j b(\zeta) = \sum _{|\alpha| + |\beta| - |\gamma| = j}
  c_{\alpha \beta \gamma}
  \zeta^\gamma \partial^\alpha a(\zeta) \partial^\beta b(\zeta).
\end{equation*}

Various rigorous forms of \eqref{eqn:star-prod-formal-suggestion} were verified in \cite{nelson-venkatesh-1}.  For instance:
\begin{theorem}\label{thm:basic-star-product-extension-properties}
  The star product $\star$ and its rescaled variant $\star_{\h}$, defined initially for Schwartz functions, extend uniquely to a compatible family of continuous bilinear maps
  \begin{equation}\label{eqn:basic-star-h-underlined-thm-stmt}
    \star_{\h}
    : \underline{S}^{m_1} \times
    \underline{S}^{m_2} \rightarrow \underline{S}^{m_1 + m_2}
  \end{equation}
  with the convention $\infty + (-\infty) := -\infty$.  For each $(a,b) \in \underline{S}^{m_1} \times \underline{S}^{m_2}$ and $J \in \mathbb{Z}_{\geq 0}$, we have
  \[
    a \star_{\h} b \equiv \sum _{0 \leq j < J} \h^j a \star^j b \mod{\underline{S}^{m_1 + m_2 - J}},
  \]
  where the remainder term defines a continuous map $\underline{S}^{m_1} \times \underline{S}^{m_2} \rightarrow \underline{S}^{m_1 + m_2 - J}$.
\end{theorem}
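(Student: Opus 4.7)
The plan is to prove the theorem in three stages: establish the asymptotic expansion for Schwartz inputs via Taylor expansion of the BCH factor in the integral representation \eqref{eqn:rescaled-star-integral-rep}; produce seminorm estimates on the remainder that are uniform in symbol class; and extend from Schwartz functions to $\underline{S}^{m_i}$ by density and continuity. Uniqueness then follows because the Schwartz class is dense in each symbol class in a topology slightly weaker than that of $\underline{S}^m$.

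The core computation begins with \eqref{eqn:rescaled-star-integral-rep}, in which I would Taylor-expand $e^{\{x,y\}\zeta/\h}$ to order $J$ in the graded sense. Using $\{x,y\} \ll |x|\cdot|y|$, the coefficients $c_{\alpha\beta\gamma}$ of \eqref{eqn:taylor-expand-coarse} vanish unless \eqref{eqn:gamma-less-alpha-beta} holds, so the natural grading $j = |\alpha|+|\beta|-|\gamma|$ is nonnegative and splits the expansion into homogeneous pieces. Grouping by $j$, each monomial of the polynomial part yields (after converting $x^\alpha a_\h^\vee(x)$ and $y^\beta b_\h^\vee(y)$ through Fourier inversion into derivatives of $a,b$, with the rescaling carrying factors of $\h$) precisely $\h^j$ times $c_{\alpha\beta\gamma} \zeta^\gamma \partial^\alpha a(\zeta) \partial^\beta b(\zeta)$. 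Summation over $j<J$ recovers $\sum_{j<J}\h^j a \star^j b$.

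For the remainder estimate I would carry out integration by parts on each monomial $x^\alpha y^\beta (\zeta/\h)^\gamma$ of graded degree $\geq J$ occurring in $R_J$. Such a monomial produces, against the Fourier kernel in \eqref{eqn:rescaled-star-integral-rep}, a term of size $\h^{|\alpha|+|\beta|-|\gamma|}$ times $\zeta^\gamma \partial^\alpha a(\zeta) \partial^\beta b(\zeta)$. The crucial constraint $|\gamma|\leq \min(|\alpha|,|\beta|)$ means each factor in $\zeta^\gamma$ can be paired against a $\partial$-factor, so the product lies in $\underline{S}^{m_1+m_2-(|\alpha|+|\beta|-|\gamma|)} = \underline{S}^{m_1+m_2-j}$; summing over terms of graded degree $\geq J$ places the remainder in $\underline{S}^{m_1+m_2-J}$. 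The contribution arising because $\chi a^\vee \neq a^\vee$ is handled separately: $(1-\chi)$ vanishes near the origin, and the resulting oscillatory integral is negligible in powers of $\h$ via nonstationary phase. Careful tracking of seminorms shows that the map $(a,b) \mapsto R_J(a,b)$ is continuous from $\underline{S}^{m_1}\times \underline{S}^{m_2}$ into $\underline{S}^{m_1+m_2-J}$, with each $\nu_{m_1+m_2-J,\delta}$-norm of the output bounded by a finite product of seminorms of $a$ and $b$.

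Finally, to pass from Schwartz to general symbols, I would regularize $a_\epsilon(\xi) := \phi(\epsilon\xi)a(\xi)$ with $\phi$ a Schwartz bump identically one near the origin, and similarly for $b$. Each $a_\epsilon$ converges to $a$ as $\epsilon \to 0$ in $\underline{S}^{m_1+\delta}$ for every $\delta > 0$ (though not in $\underline{S}^{m_1}$ itself), and the uniform estimates from the previous step make $(a_\epsilon \star_\h b_\epsilon)$ Cauchy in $\underline{S}^{m_1+m_2+2\delta}$; the limit defines $a \star_\h b$ and satisfies the claimed expansion. The main obstacle is the combinatorial bookkeeping in the remainder estimate: one must verify that through every integration by parts -- including the derivatives falling on the cutoff $\chi$ and the nonlinear BCH correction $\{x,y\}$ -- the constraint \eqref{eqn:gamma-less-alpha-beta} is preserved, so that no term in the remainder escapes the required symbol class. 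Once this bookkeeping is in hand, uniqueness, continuity, and compatibility of $\star$ with $\star_\h$ via \eqref{eqn:rescaled-star-product} are all formal consequences.
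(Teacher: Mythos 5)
The paper does not prove this statement internally: its ``proof'' is the citation to \cite[Thm 1]{nelson-venkatesh-1}, and the template for the actual argument appears in this paper only for the refined classes, in the proof of Theorem \ref{thm:refined-star-prod} (\S\ref{sec:proofs-star-product}, especially \S\ref{sec:estimates-remainder}). Your overall strategy --- Taylor expansion of the Baker--Campbell--Hausdorff factor in the integral representation \eqref{eqn:rescaled-star-integral-rep}, identification of the homogeneous pieces with $\h^j\, a \star^j b$, and extension from Schwartz symbols by regularization together with uniform seminorm bounds --- is the same strategy as the cited proof, and your identification of the main terms and of the symbol order $m_1+m_2-j$ of $a\star^j b$ is fine.

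The genuine gap is in the remainder estimate. You propose to bound $R_J$ by estimating ``each monomial of graded degree $\geq J$'' and then ``summing over terms of graded degree $\geq J$.'' This cannot work: $\sum_{j \geq J} \h^j\, a \star^j b$ is only an asymptotic series, not a convergent one. The $j$-th term involves derivatives of $a$ and $b$ of order comparable to $j$, so controlling the sum would require infinitely many seminorms of $a$ and $b$ with no summability --- even for Schwartz inputs these seminorms may grow arbitrarily fast with the order --- and any naive majorization of the exponential tail via $\lvert e^{\{x,y\}\zeta/\h}\rvert \leq e^{C |x|\,|y|\,|\zeta|/\h}$ yields bounds growing exponentially in $\lvert \zeta \rvert$, whereas membership of the remainder in $\underline{S}^{m_1+m_2-J}$ requires polynomial bounds $\langle \zeta \rangle^{m_1+m_2-J-|\delta|}$ for every $\zeta$-derivative $\partial^\delta$. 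The correct argument estimates the remainder function $\Omega^{(J)} := e^{\{x,y\}\zeta} - \sum_{0 \leq j < J} \Omega_j$ as a whole, using the unimodularity $\lvert e^{\{x,y\}\zeta}\rvert = 1$ together with bounds on the finitely many subtracted homogeneous terms to obtain a bound of the shape $\Omega^{(J)} \ll \rho^J$ (as in \S\ref{sec:estimates-remainder}), and treats the regime where $\langle \zeta \rangle$ is large relative to the frequency scales of $a$ and $b$ by integration by parts in $(x,y)$ (nonstationary phase), after a dyadic decomposition of $a$ and $b$ that is needed in any case to make sense of the integral representation when $m_1, m_2$ are finite and positive, since $a^\vee$ is then only a tempered distribution. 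Your nonstationary-phase step addresses only the error from $\chi \neq 1$, not the large-$\lvert\zeta\rvert$ regime of the main term, and your regularization step presupposes uniform-in-$\epsilon$ seminorm bounds that are exactly what the missing remainder estimate must supply.
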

\begin{proof}
  See \cite[Thm 1]{nelson-venkatesh-1}.
\end{proof}

\subsection{Basic symbol classes}\label{sec:basic-symbol-classes}
As motivation, let us explicate the continuity of the map \eqref{eqn:basic-star-h-underlined-thm-stmt}, restricting for simplicity to the case that $m_1$ and $m_2$ are finite.
\begin{itemize}
\item \emph{Let $G$ be a Lie group.  Let $m_1, m_2 \in \mathbb{Z}$.  Let $\h \in (0,1]$.  Let $\chi$ be a nice cutoff, so that $\star$ and $\star_{\h}$ are defined.  For each multi-index $\gamma$, there exists $C \geq 0$ and a finite family $\mathcal{N}$ of pairs of multi-indices $(\alpha,\beta)$ so that for all $(a,b) \in \underline{S}^{m_1} \times \underline{S}^{m_2}$, we have}
  \[\nu_{m_1+m_2,\gamma}(a \star_{\h} b)
    \leq C \max_{(\alpha,\beta) \in \mathcal{N}} \nu_{m_1,\alpha}(a) \nu_{m_2,\beta}(b).\]
\end{itemize}
In this statement, $\mathcal{N}$ and $C$ may depend freely upon $\h$.  The purpose of this section is to quantify that dependence under additional hypotheses on $a$ and $b$.

\index{symbols!basic class $S_\delta^m$}
\begin{definition}\label{defn:basic-symbol-class}
  For fixed $0 \leq \delta < 1$ and $m \in \mathbb{Z}$, we define the following class (\S\ref{sec:classes}) of symbols:
  \[S^m_\delta := \left\{ a \in \underline{S}^ \infty : \nu_{m,\alpha}(a) \ll \h^{- \delta |\alpha|} \text{ for each fixed } \alpha \right\}.
  \]
  Thus $S^m_\delta$ consists of those $a \in \underline{S}^\infty $ satisfying
  \begin{equation}\label{eqn:S-m-delta-defn}
    \partial^\alpha a(\xi) \ll \h^{-\delta |\alpha|}
    \langle \xi  \rangle^{m-|\alpha|}
  \end{equation}
  for each fixed multi-index $\alpha$ and all $\xi \in \mathfrak{g}^\wedge$.  (More pedantically, $S^m_\delta$ is the pair $(\underline{S}^\infty,P)$, where $P(a)$ is the predicate ``for each fixed $\alpha$ there is a fixed $C$ so that $|\partial^\alpha a(\xi)| \leq C \h^{-\delta |\alpha|} \langle \xi \rangle^{m-|\alpha|}$ for all $\xi$.'')

  We extend the definition to $m = \pm \infty$ by taking for $S^\infty_\delta$ the union of $S^m_\delta$ over all fixed $m$ and for $S^{-\infty}_\delta$ the intersection of $S^m_\delta$ over all fixed $m$.  Thus in all cases, $S^m_\delta \subseteq \underline{S}^\infty$.  We write more verbosely $S^{m}_\delta(\mathfrak{g}^\wedge)$ or $S^{m}_\delta[\h]$ or $S^{m}_\delta(\mathfrak{g}^\wedge)[\h]$ when we wish to indicate which group and/or wavelength parameter we are considering.
\end{definition}

\begin{remark}
  This definition is a mild reformulation of that given in \cite[\S4.4]{nelson-venkatesh-1}.  There, we worked instead with functions $a(\xi,\h)$ of two variables $(\xi,\h) \in \mathfrak{g}^\wedge \times (0,1]$, smooth in the first variable, satisfying $|\partial^\alpha a(\xi,\h)| \leq C_\alpha \h^{-\delta |\alpha|} \langle \xi \rangle^{m-|\alpha|}$.  The two definitions have similar content, and the reader will lose little by working with whichever feels more comfortable.  The practical disadvantage of the definition of \cite[\S4.4]{nelson-venkatesh-1} is that its user must require many quantities to be ``$\h$-dependent.''  This disadvantage would be more severe for the applications pursued in this paper.  We discuss how to translate between the two formulations in the proof of Theorem \ref{thm:nv-star-prod-asymp-h-dependent}.
\end{remark}

\begin{remark}\label{rmk:translate-symbol-class-statement}
  We emphasize, following \S\ref{sec:asymptotic-notation}, that ``let $a \in S_\delta^{m}$'' itself carries no semantic content, but any complete statement involving $S_\delta^m$ does.  For example, statement (i) below, formulated in terms of symbol classes, is formally equivalent to statement (ii), formulated explicitly.  Both statements are moreover equivalent to (iii), an apparent strengthening of (ii) that follows by linearity.  (Each statement also happens to be true and readily provable using integration by parts.)
  \begin{enumerate}[(i)]
  \item \emph{Fix a Lie group $G$.  Let $\h \in (0,1]$.  Fix $\delta \in [0,1)$.  Let $a \in S_\delta^{-\infty}$.  Then the integral defining $a^\vee$ converges absolutely, and we have $a^\vee(x) \ll \lvert \h^{\delta} x \rvert^{-N}$ for all fixed $N \geq 0$ and all $x$.}
  \item \emph{Let $G$ be a Lie group. Let $\delta \in [0,1)$.  Let $(C_{m,\alpha})$ be a collection of nonnegative reals indexed by $m \in \mathbb{Z}$ and $\alpha \in \mathbb{Z}_{\geq 0}^{\dim(\mathfrak{g})}$.  For each $N \geq 0$, there is a finite subset $\mathcal{N}$ of $\mathbb{Z} \times \mathbb{Z}_{\geq 0}^{\dim(\mathfrak{g})}$ and $C \geq 0$ with the following property.  Let $\h \in (0,1]$.  Let $a \in \underline{S}^{\infty}$ with $|\partial^{\alpha} a(\xi)| \leq C_{m,\alpha} \h^{- \delta |\alpha|} \langle \xi \rangle^{m-|\alpha|}$ for $(m,\alpha) \in \mathcal{N}$ and all $\xi$.  Then the integral defining $a^\vee$ converges absolutely, and we have $|a^\vee(x)| \leq C \lvert \h^{\delta} x \rvert^{-N}$ for all $x$.}
  \item \emph{Let $G$ be a Lie group. Let $\delta \in [0,1)$.  For each $N \geq 0$, there is a finite subset $\mathcal{N}$ of $\mathbb{Z} \times \mathbb{Z}_{\geq 0}^{\dim(\mathfrak{g})}$ and $C \geq 0$ with the following property.  Let $\h \in (0,1]$.  Let $a \in \underline{S}^{\infty}$ with
      \[
        \nu(a) := \max _{(m,\alpha) \in \mathcal{N} } \sup_{\xi} \h^{\delta |\alpha|} \langle \xi \rangle^{|\alpha|-m} |\partial^{\alpha} a(\xi)| < \infty.
      \]
      Then the integral defining $a^\vee$ converges absolutely, and we have $|a^\vee(x)| \leq C \nu(a) \lvert \h^{\delta} x \rvert^{-N}$ for all $x$.}
  \end{enumerate}
  Every theorem that we state involving such classes admits a similar translation.  We hope the passage from the first statement to the second is intuitively clear thanks to the informal association between ``fixed'' and ``absolute constant,'' and refer again to \cite[\S2]{MR469763} for details on how to carry out such translations algorithmically.
\end{remark}

For a positive real $c$, we write $c S_\delta^m$ for the class of symbols of the form $c v$, with $v \in S_\delta^m$.  We write $\h^\infty S_\delta^m$ for the intersection over all fixed $N$ of the classes $\h^N S_\delta^m$.  The class $\h^\infty S^{-\infty}_{\delta}$ is independent of $\delta$; it consists of $a \in \underline{S}^\infty$ satisfying $\partial^\alpha a(\xi) \ll \h^N \langle \xi \rangle^{-N}$ for all fixed $\alpha$ and $N$.  This class, which should be regarded as consisting of ``negligible'' symbols, will be denoted simply by \index{symbols!negligible class $\h^\infty S^{-\infty}$}
\[
  \h^\infty S^{-\infty}.
\]

\index{symbols!star product $\star, \star_{\h}$}
\begin{theorem}\label{thm:nv-star-prod-asymp-h-dependent}
  Fix a nice cutoff $\chi$.  Fix $\delta \in [0,1/2)$ and $m_1, m_2 \in \mathbb{Z} \cup \{\pm \infty \}$.  The star product, extended as in Theorem \ref{thm:basic-star-product-extension-properties}, induces a class map
  \begin{equation}\label{eqn:star-h-symbol-class-mapping-old}
    \star_{\h} : S^{m_1}_{\delta} \times S^{m_2}_{\delta}
    \rightarrow S^{m_1+m_2}_{\delta}.
  \end{equation}
  For $(a,b) \in S^{m_1}_{\delta} \times S^{m_2}_{\delta}$ and any fixed $j \in \mathbb{Z}_{\geq 0}$, we have
  \begin{equation}\label{eqn:star-j-old-mapping-property}
    a \star^j b \in \h^{-2 \delta j} S^{m_1 + m_2 - j}_{\delta}.
  \end{equation}
  Moreover, for any fixed $J \in \mathbb{Z}_{\geq 0}$,
  \begin{equation}\label{eqn:star-old-asymp-expn}
    a \star_{\h} b
    \equiv \sum _{0 \leq j < J}
    \h^j a \star^j b \mod{ \h^{(1 - 2 \delta) J}
      S^{m_1 + m_2 - J}_{\delta}}.
  \end{equation}
\end{theorem}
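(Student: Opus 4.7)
My plan is to prove the three claims in order: \eqref{eqn:star-j-old-mapping-property} first by direct computation, then \eqref{eqn:star-old-asymp-expn} as the main effort, with \eqref{eqn:star-h-symbol-class-mapping-old} following as a corollary. The whole argument is an $\h$-quantitative refinement of the proof of Theorem \ref{thm:basic-star-product-extension-properties}, in which one tracks how the symbol-class bounds \eqref{eqn:S-m-delta-defn} propagate through the relevant integrations by parts.

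\emph{Step 1 (the individual terms $\star^j$).} By the definition of $\star^j$ as a finite bidifferential operator,
\[
  a \star^j b(\zeta) = \sum_{|\alpha|+|\beta|-|\gamma|=j}
  c_{\alpha\beta\gamma}\, \zeta^\gamma (\partial^\alpha a)(\zeta) (\partial^\beta b)(\zeta).
\]
For $a \in S^{m_1}_\delta$ and $b \in S^{m_2}_\delta$, repeated differentiation via Leibniz shows $(\partial^\alpha a)(\partial^\beta b) \in \h^{-\delta(|\alpha|+|\beta|)} S^{m_1+m_2-|\alpha|-|\beta|}_\delta$, while multiplication by the polynomial $\zeta^\gamma$ restores $|\gamma|$ orders of growth. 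The combinatorial constraint \eqref{eqn:gamma-less-alpha-beta} forces $|\gamma| \le \min(|\alpha|,|\beta|) \le (|\alpha|+|\beta|)/2$, so $|\alpha|+|\beta| \le 2j$ whenever $|\alpha|+|\beta|-|\gamma| = j$. Summing the finitely many contributions gives $a \star^j b \in \h^{-2\delta j} S^{m_1+m_2-j}_\delta$, which is \eqref{eqn:star-j-old-mapping-property}.

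\emph{Step 2 (asymptotic expansion).} I start from the integral representation \eqref{eqn:rescaled-star-integral-rep} and Taylor-expand $e^{\{x,y\}\zeta/\h}$ around $(x,y)=(0,0)$ to homogeneity order $J$ in $(x,y)$, with integral remainder. The polynomial terms are collected by their degree $j = |\alpha|+|\beta|-|\gamma|$; by Fourier inversion, each monomial $x^\alpha y^\beta$ integrated against $a_\h^\vee(x)$ and $b_\h^\vee(y)$ produces $\partial^\alpha a$ and $\partial^\beta b$, with an explicit factor $\h^{-|\gamma|}$ coming from $\zeta/\h$; the homogeneity constraint \eqref{eqn:gamma-less-alpha-beta} then reorganizes these as exactly $\sum_{0 \le j < J} \h^j (a \star^j b)(\zeta)$. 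The remainder has the schematic form
\[
  R_J(\zeta) = \sum_{|\alpha|+|\beta|=N}^{N \asymp J}
  \h^{-|\gamma_{\alpha\beta}|}\zeta^{\gamma_{\alpha\beta}}
  \int (\partial^\alpha a)(\zeta + \h \eta_1)(\partial^\beta b)(\zeta+\h \eta_2)\,
  K_{\alpha\beta}(\eta_1,\eta_2)\,d\eta_1\,d\eta_2
\]
for appropriate Schwartz kernels $K_{\alpha\beta}$; integrating by parts in $(x,y)$ against the phase $e^{(x+y)\zeta/\h}$ to extract $\langle\zeta\rangle^{-(m_1+m_2-J)}$ decay, and inserting the symbol bounds for $a$ and $b$, one obtains $R_J \in \h^{(1-2\delta)J} S^{m_1+m_2-J}_\delta$. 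The $\h$-bookkeeping is the same as in Step 1: the Taylor order contributes $\h^{+J}$, while derivatives on $a$ and $b$ cost at most $\h^{-2\delta J}$ by \eqref{eqn:gamma-less-alpha-beta}, giving the advertised gain $(1-2\delta)J$.

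\emph{Step 3 (class mapping).} Applied with $J = 1$, the expansion \eqref{eqn:star-old-asymp-expn} gives $a \star_\h b = ab + R_1$ with $R_1 \in \h^{1-2\delta} S^{m_1+m_2-1}_\delta$; Leibniz shows $ab \in S^{m_1+m_2}_\delta$, whence \eqref{eqn:star-h-symbol-class-mapping-old}.

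The main obstacle is the remainder estimate in Step 2: one must verify that each integration by parts in $(x,y)$ genuinely moves a power of $\zeta/\h$ onto a derivative of $a$ or $b$ without introducing uncontrolled growth, and one must keep track of the constraint \eqref{eqn:gamma-less-alpha-beta} through the Taylor remainder (not only its leading polynomial part). This bookkeeping is essentially already present in the proof of Theorem \ref{thm:basic-star-product-extension-properties}; what is new is merely that each derivative hitting $a$ or $b$ now costs $\h^{-\delta}$, and the whole argument remains convergent precisely because $\delta < 1/2$.
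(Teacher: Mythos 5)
Your argument is essentially correct, but it takes a genuinely different route from the paper. You re-run the semiclassical analysis: Taylor expansion of $e^{\{x,y\}\zeta/\h}$ in the integral representation \eqref{eqn:rescaled-star-integral-rep}, Fourier inversion to identify the terms $\h^j a\star^j b$, and integration by parts plus moment bounds on $a_\h^\vee, b_\h^\vee$ for the remainder, with the exponent count $(1-\delta)(|\alpha|+|\beta|)-|\gamma|\ge(1-2\delta)j$ coming from \eqref{eqn:gamma-less-alpha-beta}; your Step 1 computation for $\star^j$ is also correct. The paper, by contrast, proves nothing analytic at this point: it deduces the theorem \emph{formally} from \cite[Thm 1]{nelson-venkatesh-1}, whose $\h$-dependent symbol spaces $\underline{S}^m_\delta$ already encode exactly the uniform-in-$\h$ bounds, via a compactness/diagonalization argument that converts the class statement into a statement about sequences $(a_j,b_j,\h_j)$ and splits into the cases $\inf_j \h_j>0$ and $\h_j\to 0$. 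What the paper's route buys is that no estimate is re-verified; what your route buys is self-containedness and a template that does generalize (indeed the paper must carry out precisely your kind of direct analysis later, for the refined classes in Theorem \ref{thm:refined-star-prod}, where no formal reduction is available). One caveat: your remark that the bookkeeping is ``essentially already present in the proof of Theorem \ref{thm:basic-star-product-extension-properties}'' understates the work, since within this paper that theorem records only the unweighted ($\underline{S}^m$) statement; the uniform $\h$-tracking — including the cutoff error $\chi(x)\chi(y)-1$, the regime where $\zeta$ is large, and uniformity of all implied constants in $\h$ — is the actual content of the cited $\underline{S}^m_\delta$ version of \cite[Thm 1]{nelson-venkatesh-1}, so carrying out your Step 2 honestly amounts to redoing that proof with weights rather than borrowing it.
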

\begin{proof}
  As we will explain, this follows formally from \cite[Thm 1]{nelson-venkatesh-1}.
  
  Recall from \S\ref{sec:spaces-symbols} the spaces $\underline{S}^m$, defined in \cite[\S4.3]{nelson-venkatesh-1}, consisting of smooth functions $a : \mathfrak{g}^\wedge \rightarrow \mathbb{C}$ such that for each $\alpha$, there exists $C_\alpha(a) \geq 0$ so that $|\partial^\alpha a(\xi)| \leq C_\alpha(a) \langle \xi \rangle^{m-|\alpha|}$ for all $\xi$.  We now record another definition, this time from \cite[\S4.4]{nelson-venkatesh-1}, which we do not otherwise use in this paper.  Given a subset $\mathcal{H}$ of $(0,1]$ having $0$ as an accumulation point, let $\underline{S}^m_\delta$ denote the space of functions $a : \mathcal{H} \times \mathfrak{g}^\wedge \rightarrow \mathbb{C}$, smooth in the second variable, so that for each $\alpha$, there exists $C_\alpha(a) \geq 0$ so that for each $\h \in \mathcal{H}$, the specialization $a[\h] : \mathfrak{g}^\wedge \rightarrow \mathbb{C}, \xi \mapsto a(\h,\xi)$ satisfies
  \begin{equation*}
    |\partial^\alpha a[\h](\xi)| \leq C_\alpha(a) \h^{-\delta |\alpha|} \langle \xi \rangle^{m-|\alpha|}
  \end{equation*}
  for all $\xi$.  For $(a,b) \in \underline{S}_\delta^{m_1} \times \underline{S}_{\delta}^{m_2}$, we define $a \star_{\h} b : \mathcal{H} \times \mathfrak{g}^\wedge \rightarrow \mathbb{C}$ by requiring that $(a \star_{\h} b)[\h] = a[\h] \star_{\h} b[\h]$ for each $\h \in \mathcal{H}$.  We extend these definitions to infinite exponents $m = \pm \infty$ by taking unions or intersections.  Each of these spaces comes equipped with an ``evident topology'' defined by a family of seminorms (detailed in \emph{loc.\ cit.}).\footnote{ The space $\underline{S}^m_\delta$ was denoted ``$S^m_{\delta}$'' in \cite[\S4.4]{nelson-venkatesh-1}.  The latter notation is used differently here.  }

  We verify \eqref{eqn:star-h-symbol-class-mapping-old} in detail; we may similarly deduce \eqref{eqn:star-j-old-mapping-property} and \eqref{eqn:star-old-asymp-expn} from the corresponding assertions in \cite[Thm 1]{nelson-venkatesh-1}.  We consider the case $m_1, m_2 \in \mathbb{Z}$; the cases in which some $m_j = \pm \infty$ follow similarly.  As shown in \cite[Thm 1]{nelson-venkatesh-1} (and noted partially in Theorem \ref{thm:basic-star-product-extension-properties}), the star product and its rescaling define compatible families of continuous bilinear maps
  \begin{equation}\label{eqn:old-ast-S-m1-m2-sum}
    \star : \underline{S}^{m_1} \times \underline{S}^{m_2} \rightarrow \underline{S}^{m_1 + m_2}.
  \end{equation}
  \begin{equation}\label{eqn:old-ast-S-m1-m2-sum-h}
    \star_{\h} : \underline{S}^{m_1}_{\delta} \times \underline{S}^{m_2}_{\delta}
    \rightarrow \underline{S}^{m_1 + m_2}_{\delta}.
  \end{equation}
  We will see that \eqref{eqn:star-h-symbol-class-mapping-old} is a formal consequence of these mapping properties.

  For clarity, we employ here the more verbose notation $S^m_\delta = S^m_\delta[\h]$ for our symbol classes.  Let $\h \in (0,1]$ and $(a,b) \in S^{m_1}_{\delta}[\h] \times S^{m_2}_\delta[\h]$.  By Theorem \ref{thm:basic-star-product-extension-properties}, the rescaled star product $a \star_{\h} b$ lies in the space $\underline{S}^{m_1 + m_2}$.  We must check that it in fact lies in the class $S_{\delta}^{m_1 + m_2}[\h]$.  In other words, we must verify the following:
  \begin{itemize}
  \item \emph{Fix a Lie group $G$ and a nice cutoff $\chi$.  Let $\h \in (0,1]$.  Fix $\delta \in [0,1/2)$.  Let $a \in S_\delta^{m_1}[\h]$ and $b \in S_\delta^{m_2}[\h]$.  Then $a \star_{\h} b \in S_\delta^{m_1 + m_2}[\h]$.}
  \end{itemize}
  As in Remark \ref{rmk:translate-symbol-class-statement}, this is formally equivalent to:
  \begin{itemize}
  \item \emph{Let $G$ be a Lie group.  Let $\chi$ be a nice cutoff.  Let $\delta \in [0,1/2)$.  For each $m \in \mathbb{Z}$ and $\alpha \in \mathbb{Z}_{\geq 0}^{\dim(\mathfrak{g})}$ and $\h \in (0,1]$, we define a seminorm $\mu_{ m, \alpha}[\h]$ on $C^\infty(\mathfrak{g}^\wedge)$ by the formula}
    \[
      \mu_{m,\alpha}[\h](a) := \sup_{\xi} \h^{\delta |\alpha|} \langle \xi \rangle^{|\alpha| - m} |\partial^{\alpha} a(\xi)|.
    \]
    \emph{ Write $\mu_{m,\alpha}$ for the map $\h \mapsto \mu_{m,\alpha}[\h]$.  Set
      \begin{equation*}
        \mathcal{N}(m) := \{ \mu_{m,\alpha} : \alpha \in \mathbb{Z}_{\geq 0}^{\dim(\mathfrak{g})} \}.
      \end{equation*}
    }

    \emph{Let $m_1, m_2 \in \mathbb{Z}$.  For each $\mu \in \mathcal{N}({m_1 + m_2})$, there exists $C \geq 0$ and finite subsets $\mathcal{N}_1 \subseteq \mathcal{N}(m_1)$ and $\mathcal{N}_2 \subseteq \mathcal{N}(m_2)$ with the following property.  For each $(a,b) \in \underline{S}^{\infty} \times \underline{S}^{\infty}$ and $\h \in (0,1]$ for which the quantities}
    \[
      \nu_{\mathcal{N}_1}[\h](a) := \max_{\nu \in \mathcal{N}_1} \nu[\h](a) \quad \text{ and } \quad \nu_{\mathcal{N}_2}[\h](b) := \max_{\nu \in \mathcal{N}_2} \nu[\h](b)
    \]
    \emph{are finite, the rescaled star product $a \star_{\h} b$ satisfies}
    \[
      \mu[\h](a \star_{\h} b) \leq C \nu_{\mathcal{N}_1}[\h](a) \nu_{\mathcal{N}_2}[\h](b).
    \]
  \end{itemize}
  Suppose this fails.  We may then find a Lie group $G$, a nice cutoff $\chi$, and elements $\delta \in [0,1/2)$, $m_1, m_2 \in \mathbb{Z}$, $\mu \in \mathcal{N}(m_1 + m_2)$ with the following property: for each $C \geq 0$ and all finite subsets $\mathcal{N}_1 \subseteq \mathcal{N}(m_1)$ and $\mathcal{N}_2 \subseteq \mathcal{N}(m_2)$, there exists $(a,b,\h) \in \underline{S}^{\infty} \times \underline{S}^{\infty} \times (0,1]$ so that $\mu(a \star_{\h} b) > C \nu_{\mathcal{N}_1}[\h](a) \nu_{\mathcal{N}_2}[\h](a)$.  (We incorporate into this inequality the assertion that each factor on the RHS is finite.)  By a diagonalization argument and linearity, we may find a sequence of elements $(a_j, b_j, \h_j) \in \underline{S}^{\infty} \times \underline{S}^{\infty} \times (0,1]$ so that for all finite subsets $\mathcal{N}_1 \subseteq \mathcal{N}(m_1)$ and $\mathcal{N}_2 \subseteq \mathcal{N}(m_2)$, we have
  \begin{equation}\label{eqn:supposition-a-j-b-j-h-j}
    \sup_j
    \nu_{\mathcal{N}_1}[\h_j](a_j) < \infty,
    \quad
    \sup_j
    \nu_{\mathcal{N}_2}[\h_j](b_j) < \infty,
    \quad 
    \lim_{j \rightarrow \infty}
    \mu[\h_j](a_j \star_{\h_j} b_j)
    = \infty.
  \end{equation}
  By passing to a subsequence, we may reduce to the following cases.
  \begin{enumerate}[(i)]
  \item $\inf_j \h_j > 0$.  In this case, $\star_{\h}$ and $\star$ differ mildly, and our supposition \eqref{eqn:supposition-a-j-b-j-h-j} contradicts the continuity of \eqref{eqn:old-ast-S-m1-m2-sum}.
  \item $\h_j \rightarrow 0$.  By passing to a further subsequence, we may assume that the sequence $\h_j$ is decreasing.  Set $\mathcal{H} := \{h_j\}$, and define $a,b : \mathcal{H} \times \mathfrak{g}^\wedge \rightarrow \mathbb{C}$ by requiring that $a[\h_j] = a_j$ and $b[\h_j] = b_j$.  By \eqref{eqn:supposition-a-j-b-j-h-j}, we then have $a \in \underline{S}^{m_1}_{\delta}$ and $b \in \underline{S}^{m_2}_{\delta}$, but $a \star_{\h} b \notin \underline{S}^{m_1 + m_2}_{\delta}$, contrary to \eqref{eqn:old-ast-S-m1-m2-sum-h}.\qedhere
  \end{enumerate}

\end{proof}

\subsection{Refined symbol classes}\label{sec:refin-symb-class}
Supposing now that the Lie group $G$ is a connected reductive group over $\mathbb{R}$ (so that centralizers of regular elements behave as nicely as possible), we introduce some refinements of the basic symbol classes described above.  For motivation concerning the introduction of such refinements, we refer to \S\ref{sec:microlocal-calculus}.

\subsubsection{Coordinates
  tailored to regular coadjoint
  elements}\label{sec:coord-tail-regul}
Recall from \S\ref{sec:prelim-reductive-groups} that $\mathfrak{g}^\wedge_{\reg}$ denotes the set of regular elements in $\mathfrak{g}^\wedge$.  Let $\tau \in \mathfrak{g}^\wedge_{\reg}$.  \index{Lie algebra!$\mathfrak{g}_\tau, \mathfrak{g}_\tau^\perp, \mathfrak{g}_\tau^{\flat}, \mathfrak{g}_\tau^{\perp \flat}$} The centralizer $\mathfrak{g}_\tau$ in $\mathfrak{g}$ is then an abelian subalgebra of dimension equal to the rank of $\mathfrak{g}$ (see \cite{Kurtzke}).  As $\tau$ varies in $\mathfrak{g}^\wedge_{\reg}$, the centralizer $\mathfrak{g}_{\tau}$ varies smoothly in the Grassmannian of $\rank(\mathfrak{g})$-dimensional subspaces of $\mathfrak{g}$.

We may form the complement $\mathfrak{g}_\tau^\perp \subseteq \mathfrak{g}^\wedge$ of $\mathfrak{g}_\tau$ with respect to the canonical duality $\mathfrak{g} \otimes \mathfrak{g}^\wedge \rightarrow i \mathbb{R}$.  It has codimension equal to the rank of $\mathfrak{g}$.

The tangent space in $\mathfrak{g}^\wedge$ to the coadjoint orbit $G \cdot \tau$ at $\tau$ is the coset $\tau + \mathfrak{g}_\tau^\perp$.  This fact follows from the identities
\begin{equation}\label{eqn:adjoint-map-adjointness-zeta-centralizer}
  \langle [x,y], \tau \rangle = \langle x, [y,\tau] \rangle =
  - \langle y, [x,\tau] \rangle
  \quad \text{ for } x,y \in \mathfrak{g}
\end{equation}
and a comparison of dimensions.

We fix an inner product on $\mathfrak{g}^\wedge$.  We use this inner product to define an orthogonal complement $\mathfrak{g}_{\tau}^{\perp \flat} \subseteq \mathfrak{g}^\wedge$ to $\mathfrak{g}_\tau^\perp$.  We have the direct sum decomposition
\begin{equation}\label{eqn:decompose-via-zeta}
  \mathfrak{g}^\wedge = \mathfrak{g}_{\tau}^\perp \oplus \mathfrak{g}_{\tau}^{\perp \flat}.
\end{equation}

\begin{remark}
  It is convenient, but not essential, to take for $\mathfrak{g}_{\tau}^{\perp \flat}$ the orthogonal complement of $\mathfrak{g}_{\tau}^{\perp}$ with respect to an inner product; we could take instead any linear complement arising from a smooth assignment $\tau \mapsto \mathfrak{g}_{\tau}^{\perp \flat}$.
\end{remark}

Let $\mathfrak{g}_\tau^{\flat} \subseteq \mathfrak{g}$ denote the complement of $\mathfrak{g}_\tau^{\perp \flat}$ with respect to the canonical duality between $\mathfrak{g}$ and $\mathfrak{g}^\wedge$.  We obtain the following decomposition, dual to \eqref{eqn:decompose-via-zeta}:
\begin{equation}\label{eqn:decompose-via-zeta-2}
  \mathfrak{g} = \mathfrak{g}_\tau^{\flat}
  \oplus \mathfrak{g}_\tau.
\end{equation}

We equip $\mathfrak{g}$ with the inner product dual to that on $\mathfrak{g}^\wedge$.  Then both decompositions \eqref{eqn:decompose-via-zeta} and \eqref{eqn:decompose-via-zeta-2} are orthogonal.  In particular, $\mathfrak{g}_\tau^{\flat}$ is the orthogonal complement of $\mathfrak{g}_\tau$.

For $x \in \mathfrak{g}$ and $\xi ' \in \mathfrak{g}^\wedge$, we write $x = (x',x'')$ and $\xi = (\xi ', \xi '')$ for the coordinates induced by the above decompositions, so that
\begin{align*}
  &\xi ' \in
    \mathfrak{g}_\tau^\perp,
  &&\xi ''
     \in \mathfrak{g}_{\tau} ^{\perp \flat}, \\
  &x' \in \mathfrak{g}_\tau^{\flat},
    \quad
  &&x'' \in \mathfrak{g}_\tau.
\end{align*}
\index{Lie algebra!$\tau$-coordinates} We refer to these as \emph{$\tau$-coordinates}.  We note that $\langle x, \xi \rangle = \langle x ', \xi ' \rangle + \langle x '', \xi '' \rangle$.

\setlength{\unitlength}{1.5cm}
\begin{figure}
  \begin{picture}(4,3)

    \put(-1,0){\vector(1,0){6}}
    \put(-1,0){\vector(0,1){1.5}}

    {%
      \thicklines
      \color{black}%
      \multiput(2.1,0.2)(0,0.1){10}{\line(0,1){0.05}}
      \put(2.1,1.3){$\tau + \mathfrak{g}_\tau^{\perp \flat}$}
    }

    {%
      \thicklines
      \color{black}%
      \multiput(0,0.5)(0.1,0){40}{\line(1,0){0.05}}
      \put(4.2,0.4){$\tau + \mathfrak{g}_\tau^\perp = \tau + [\mathfrak{g},\tau]$
      }

      {%
        \thicklines
        \color{black}%
        \qbezier(0,1)(2,0)(4,1)
        \put(-0.6,1.1){$G \cdot \tau$}
      }

      \color{black}
      \put(-1.4,1.4){$\xi''$}
      \put(4.9,-0.25){$\xi'$}
      \put(2,0.3){$\tau$}
      \put(1.95,0.5){\circle*{0.1}}
    }

  \end{picture}
  \caption{ The coadjoint orbit $G \cdot \tau$ near $\tau$ in $\tau$-coordinates.  }
  \label{fig:tau-coordinates}
\end{figure}
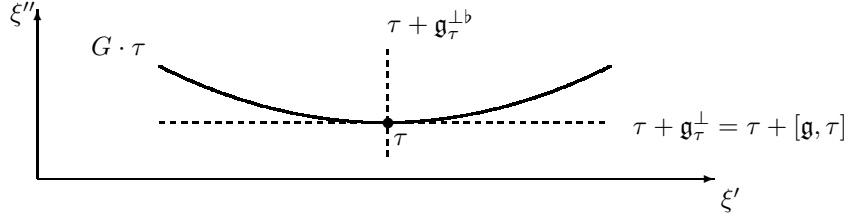

\begin{example}
  Suppose that $G = \GL_3(\mathbb{R})$.  Let us identify $\mathfrak{g}^\wedge$ with $\mathfrak{g}$ via the trace pairing divided by the imaginary unit $i$, and equip $\mathfrak{g}^\wedge$ with the inner product defined by taking the standard Euclidean inner product of the matrix entries.  We consider two representative examples:
  \begin{enumerate}[(i)]
  \item Suppose first that $\tau$ is regular semisimple, say a diagonal matrix with distinct entries.  Then
    \[
      \mathfrak{g}_\tau^\perp = \begin{pmatrix}
        & \ast  & \ast \\
        \ast &  & \ast  \\
        \ast & \ast &
      \end{pmatrix}, \quad \mathfrak{g}_\tau^{\perp \flat} = \begin{pmatrix}
        \ast &  &  \\
        & \ast &  \\
        & & \ast
      \end{pmatrix},
    \]
    \[
      \mathfrak{g}_\tau^{\flat} = \begin{pmatrix}
        & \ast & \ast \\
        \ast &  & \ast \\
        \ast & \ast &
      \end{pmatrix}, \quad \mathfrak{g}_\tau =
      \begin{pmatrix}
        \ast &  &  \\
        & \ast &  \\
        & & \ast
      \end{pmatrix}.
    \]
  \item Suppose next that $\tau$ is regular nilpotent, say
    \[
      \tau = \begin{pmatrix}
        0 & 0 & 0 \\
        1 & 0 & 0 \\
        0 & 1 & 0
      \end{pmatrix}.
    \]
    Then
    \[
      \mathfrak{g}_\tau^\perp = \left\{
        \begin{pmatrix}
          a & b & 0 \\
          c & d & -b \\
          e & f & -a-d
        \end{pmatrix}
      \right\}, \quad \mathfrak{g}_{\tau}^{\perp \flat} = \left\{ \begin{pmatrix}
          a & b & c \\
          & a & b \\
          & & a
        \end{pmatrix} \right\},
    \]
    \[
      \mathfrak{g}_\tau^{\flat} = \left\{
        \begin{pmatrix}
          a & b & c \\
          d & e & f \\
          0 & -d & -a-e
        \end{pmatrix}
      \right\}, \quad \mathfrak{g}_\tau = \left\{
        \begin{pmatrix}
          a &  &  \\
          b & a &  \\
          c & b & a
        \end{pmatrix}
      \right\}.
    \]
  \end{enumerate}
  We note that $\tau$ does not belong to a consistent summand of \eqref{eqn:decompose-via-zeta}: we have $\tau \in \mathfrak{g}_\tau^{\perp \flat}$ in case (i), $\tau \in \mathfrak{g}_\tau^{\perp}$ in case (ii).
\end{example}

\subsubsection{Euclidean coordinates}\label{sec:eucl-coord}
By choosing orthonormal bases for each of the spaces occurring in the decompositions \eqref{eqn:decompose-via-zeta} and \eqref{eqn:decompose-via-zeta-2}, we equip those spaces with Euclidean coordinates.  We assume our bases chosen so that the coordinates on the mutually dual spaces in the indicated decompositions are compatible in the sense that
\begin{equation}\label{eqn:coordinates-x-xi-primes-dual}
  \langle x', \xi '  \rangle
  = \sum_j x_j' \xi_j',\quad 
  \langle x'', \xi ''  \rangle
  = \sum_j x_j'' \xi_j''.
\end{equation}

\subsubsection{Definition and informal discussion of refined symbol classes}
\begin{definition}\label{defn:new-symbol-class}
  Assume that the wavelength parameter \index{symbols!refined class $S^\tau_{\delta ', \delta ''}$} satisfies $\h \lll 1$.  Let $\delta', \delta ''$ be fixed quantities satisfying
  \begin{equation}\label{eqn:delta-delta-prime-hypotheses}
    0 < \delta' < \delta '' < 2 \delta ' < 1.
  \end{equation}
  Let $\tau$ be an element of $\mathfrak{g}^\wedge$ that belongs to some fixed compact subset of $\mathfrak{g}^\wedge_{\reg}$.  We then define the symbol class (\S\ref{sec:classes})
  \[
    S_{\delta ', \delta ''}^\tau\] to consist of all $a \in \underline{S}^{-\infty}$ (i.e., Schwartz functions) with the following properties.
  \begin{enumerate}[(i)]
  \item For some fixed $R > 0$, we have $\supp(a) \subseteq \{\xi \in \mathfrak{g}^\wedge: |\xi - \tau| \leq R \h^{\delta'' - \delta' } \}$.
  \item For all fixed multi-indices $\alpha' \in \mathbb{Z}_{\geq 0}^{\dim(\mathfrak{g})-\rank(\mathfrak{g})}$ and $\alpha'' \in \mathbb{Z}_{\geq 0}^{\rank(\mathfrak{g})}$ and all $\xi \in \mathfrak{g}^\wedge$, we have
    \begin{equation}\label{eqn:symbol-class-defn-estimates}
      \partial^{\alpha'}_{\xi '}
      \partial^{\alpha''}_{\xi ''}
      a(\xi)
      \ll
      \h^{-\delta' |\alpha'|-\delta'' |\alpha''|},
    \end{equation}
    where $\xi = \xi ' + \xi ''$ are the $\tau$-coordinates and the differential operators $\partial^{\alpha'}_{\xi '}, \partial^{\alpha''}_{\xi ''}$ are defined using Euclidean coordinates as in \S\ref{sec:eucl-coord}.  (The definition does not depend upon the choice of orthonormal bases used to define those coordinates: any two choices are related via an orthogonal change of variables, the partial derivatives of which are $\O(1)$.)
  \end{enumerate}
  We write more verbosely $S_{\delta ', \delta ''}^{\tau}(\mathfrak{g}^\wedge)$ or $S_{\delta ', \delta ''}^{\tau}[\h]$ or $S_{\delta ', \delta ''}^{\tau}(\mathfrak{g}^\wedge)[\h]$ when we wish to indicate which group and/or wavelength parameter we are considering.
\end{definition}

Notation as at the end of \S\ref{sec:basic-symbol-classes} may be applied to the refined classes.  For instance, we may define $c S^\tau_{\delta ', \delta ''}$ for a positive real $c$.

Informally, $S_{\delta ', \delta ''}^\tau$ consists of symbols that are supported quite close to $\tau$ and that oscillate with wavelength
\begin{itemize}
\item at least $\h^{\delta'}$ in the $\xi '$-directions tangent to the coadjoint orbit $G \cdot \tau$ and
\item at least $\h^{\delta''}$ in the $\xi''$-directions transverse to that coadjoint orbit.
\end{itemize}
Since $\delta ''$ is larger than $\delta'$, less regularity is required in the transverse directions than in the tangent directions.  In applications, we will take $\delta'$ close to $1/2$ and $\delta ''$ close to $1$.

For the sake of visualization, a good example to keep in mind is when $G = \SO(3)$, so that $\mathfrak{g}^\wedge$ identifies with $\mathbb{R}^3$ in such a way that the coadjoint orbits are the Euclidean spheres, with $G$ acting via rotation.  If $\tau = (0,0,1)$ is the ``north pole'' of the unit sphere and $\langle , \rangle$ is the standard Euclidean inner product, then the $\tau$-coordinates of $\xi = (\xi_1,\xi_2,\xi_3)$ are $\xi ' = (\xi_1, \xi_2)$ and $\xi '' = x_3$.  For $\delta ' \approx 1/2$ and $\delta '' \approx 1$, a typical element of $S^{\tau}_{ \delta ', \delta ''}$ is roughly a smooth bump on the coin-shaped domain
\[
  \{\xi \in \mathbb{R}^3 : 1 - \h < |\xi| < 1 + \h, |\xi - \tau | < \h^{1/2}\}.
\]

We note that $S^\tau_{\delta ', \delta ''} \subseteq S^{-\infty}_{\delta ''} \cap \underline{S}^{-\infty}$.

\subsubsection{Insensitivity to basepoint}
The significance of the exponent $\delta '' - \delta '$ in the support condition (i) is to ensure that $S_{\delta ', \delta ''}^\tau$ is insensitive to the precise choice of ``basepoint'' $\tau$, in the following sense:
\begin{lemma}\label{lem:symbol-class-change-basepoint}
  Let $\h, \delta', \delta ''$ be as in Definition \ref{defn:new-symbol-class}.  Let $\tau_1, \tau_2$ be elements of $\mathfrak{g}^\wedge$ that belong to some fixed compact subset of $\mathfrak{g}^\wedge_{\reg}$.  If
  \begin{equation}\label{eqn:tau1-tau2-close}
    \tau_1 = \tau_2 + \O(\h^{\delta'' - \delta '}),
  \end{equation}
  then the symbol classes coincide:
  \[S^{\tau_1}_{\delta', \delta''} = S^{\tau_2}_{\delta', \delta ''};\] otherwise, their intersection is trivial:
  \[\cap_{j=1,2} S^{\tau_j}_{ \delta', \delta ''} = \{0\}.\]

  More precisely, if \eqref{eqn:tau1-tau2-close} holds, then for each smooth function $a : \mathfrak{g}^\wedge \rightarrow \mathbb{C}$ and each $\xi = \tau_j + \O(\h^{\delta '' - \delta'})$, the estimate \eqref{eqn:symbol-class-defn-estimates} holds with respect to $\tau_1$-coordinates if and only if it holds with respect to $\tau_2$-coordinates.
\end{lemma}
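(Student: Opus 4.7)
The plan is to treat the two cases of the dichotomy separately, then deduce the pointwise ``more precisely'' refinement in the equality case, which subsumes the global identity of classes.

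For the disjoint-support case, suppose $\tau_1 - \tau_2 \neq O(\h^{\delta''-\delta'})$, and let $a \in \bigcap_{j=1,2} S^{\tau_j}_{\delta',\delta''}$. By the support condition (i) of Definition \ref{defn:new-symbol-class}, there exist fixed $R_1, R_2 > 0$ with $\supp(a) \subseteq \{|\xi - \tau_j| \leq R_j \h^{\delta''-\delta'}\}$ for $j=1,2$. These balls are disjoint whenever $|\tau_1 - \tau_2| > (R_1 + R_2)\h^{\delta''-\delta'}$, which the hypothesis ensures, so $a \equiv 0$.

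For the equality case, I would assume $\tau_1 - \tau_2 = O(\h^{\delta''-\delta'})$ and fix $\xi$ with $\xi - \tau_j = O(\h^{\delta''-\delta'})$. The key geometric input is that the centralizer assignment $\tau \mapsto \mathfrak{g}_\tau$ is smooth on $\mathfrak{g}^\wedge_{\reg}$, hence Lipschitz on any fixed compact subset, so $\mathfrak{g}_{\tau_j}^\perp$ and $\mathfrak{g}_{\tau_j}^{\perp\flat}$ differ as Grassmannian points by $O(|\tau_1-\tau_2|) = O(\h^{\delta''-\delta'})$. One can then choose orthonormal bases $\{e_l^{(j)}\}$ of $\mathfrak{g}^\wedge$ adapted to the $\tau_j$-decomposition so that the orthogonal change-of-basis matrix $M$ between them satisfies $M = I + O(\h^{\delta''-\delta'})$; in particular, entries of $M$ that mix tangent ($\xi'$) and transverse ($\xi''$) directions have size $O(\h^{\delta''-\delta'})$.

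The derivative-translation step is then bookkeeping: writing $\partial_{\xi_{2,l}} = \sum_k M_{kl}\, \partial_{\xi_{1,k}}$ and expanding the commuting product $\partial_{\xi_2'}^{\alpha} \partial_{\xi_2''}^{\beta}$ yields a finite sum of monomials in the $\partial_{\xi_{1,k}}$'s. Applying \eqref{eqn:symbol-class-defn-estimates} at $\tau_1$-coordinates, a monomial obtained by $s$ tangent-to-transverse and $t$ transverse-to-tangent cross-block substitutions carries a coefficient of order $\h^{(s+t)(\delta''-\delta')}$ and a derivative bound of order $\h^{-|\alpha|\delta' - |\beta|\delta'' + (s-t)(\delta'-\delta'')}$; their product simplifies to $\h^{-|\alpha|\delta'-|\beta|\delta''} \cdot \h^{2t(\delta''-\delta')}$, which is $\ll \h^{-|\alpha|\delta'-|\beta|\delta''}$ since $\delta'' > \delta'$. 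Summing over the finitely many terms and symmetrizing in $\tau_1,\tau_2$ yields the required pointwise equivalence. The anticipated obstacle is purely notational accounting of the cross-block substitutions above; conceptually, only the Lipschitz variation of $\mathfrak{g}_\tau$ and the inequality $\delta' < \delta''$ of \eqref{eqn:delta-delta-prime-hypotheses} enter, the sharper conditions $\delta'' < 2\delta' < 1$ playing no role here and being presumably dictated by applications of $S^\tau_{\delta',\delta''}$ in the star-product calculus of later sections.
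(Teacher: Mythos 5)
Your proposal is correct and follows essentially the same route as the paper: the disjoint case via the support condition, and the close case via a linear change of coordinates whose cross-block entries are $\O(\h^{\delta''-\delta'})$ (from the Lipschitz variation of $\tau \mapsto \mathfrak{g}_\tau^\perp$ on a fixed compact set of regular elements), followed by chain-rule bookkeeping that uses only $\delta' \leq \delta''$. Your $(s,t)$ cross-block accounting reproduces exactly the paper's estimate (done there in the case $m=n=1$ with the general case "by inserting indices"), and your closing remark that $\delta'' < 2\delta' < 1$ is not needed here also matches the paper's proof.
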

A useful consequence of this result is that to check whether a smooth function $a$ supported on $\tau_1 + \O(\h^{\delta '' - \delta '})$ belongs to the class $S^{\tau_1}_{\delta', \delta ''}$, it suffices to verify for each $\tau_2 = \tau_1 + \O(\h^{ \delta '' - \delta '})$ that the derivative bounds \eqref{eqn:symbol-class-defn-estimates} hold at $\xi = \tau_2$ with respect to $\tau_2$-coordinates.

\begin{proof}
  The support condition (i) and our assumption $\h \lll 1$ imply that the intersection is trivial if \eqref{eqn:tau1-tau2-close} is not satisfied, so suppose \eqref{eqn:tau1-tau2-close} holds.  The support condition (i) is then the same for both symbol classes, so it is enough to verify that if a smooth function $a : \mathfrak{g}^\wedge \rightarrow \mathbb{C}$ satisfies \eqref{eqn:symbol-class-defn-estimates} with respect to the coordinates defined by $\tau_1$, then it satisfies the same with respect to the coordinates defined by $\tau_2$.  Set
  \begin{equation*}
    m := \dim(\mathfrak{g}) - \rank(\mathfrak{g}),
    \quad 
    n := \rank(\mathfrak{g}).
  \end{equation*}
  Let
  \begin{equation}\label{eqn:euclidean-isomorphism-insens}
    \mathfrak{g}_{\tau_j}^{\perp} \cong \mathbb{R}^m,
    \quad 
    \mathfrak{g}_{\tau_j}^{\perp \flat} \cong \mathbb{R}^n
  \end{equation}
  denote the isomorphisms arising from the Euclidean coordinates defined in \S\ref{sec:eucl-coord}.  Write $\xi = \xi_1' + \xi_1'' = \xi_2' + \xi_2''$ for the respective $\tau_1$- and $\tau_2$-coordinates.  With respect to \eqref{eqn:euclidean-isomorphism-insens}, these coordinates are related via an invertible linear change of variables
  \begin{equation*}
    \begin{pmatrix}
      \xi_1'  \\
      \xi_1''
    \end{pmatrix}
    =
    \begin{pmatrix}
      A & B \\
      C & D
    \end{pmatrix}
    \begin{pmatrix}
      \xi_2'  \\
      \xi_2''
    \end{pmatrix}
  \end{equation*}
  with $A,B,C,D$ matrices of respective dimensions $m \times m, m \times n, n \times m, n \times n$.  Since the assignment $\tau \mapsto \mathfrak{g}_{\tau}^{\perp}$ is smooth and in particular Lipschitz, we deduce from \eqref{eqn:tau1-tau2-close} the operator norm estimates
  \begin{equation}\label{eqn:A-B-C-D-bounds}
    A,D =
    1 + \O(\h^{\delta '' - \delta' })
    =
    \O(1),
    \quad B,C = \O(\h^{\delta '' - \delta' }).
  \end{equation}
  Let $f_1 : \mathbb{R}^m \oplus \mathbb{R}^n \rightarrow \mathbb{C}$ be the function describing $a$ with respect to the coordinates $\xi \mapsto (\xi_1 ', \xi_1 '')$.  Then $f_2(x,y) := f_1(A x + B y, C x + D y)$ describes $a$ with respect to the coordinates $\xi \mapsto (\xi_2 ', \xi_2 '')$, and our task is to show that if the estimates
  \begin{equation*}
    \partial_x^{\alpha} \partial_y^ \beta f_j(x,y) \ll
    \h^{-\delta' |\alpha| - \delta '' |\beta|}
    \quad
    \text{($\alpha \in \mathbb{Z}_{\geq 0}^m,
      \beta \in \mathbb{Z}_{\geq 0}^n$ fixed)}
  \end{equation*}
  hold for $j=1$, then they hold also for $j=2$.

  We explain the required implication in the notationally simplest case $m=n=1$, in which $A,B,C,D$ are scalars and $\alpha, \beta$ are integers.  (This case does not actually arise for any $\mathfrak{g}$, but the calculus exercise we are solving makes sense for general $m$ and $n$.)  In that case, $\partial_x^\alpha \partial_y^\beta f_2(x,y)$ evaluates via the chain rule to
  \begin{equation*}
    \sum _{k=0}^{\alpha}
    \sum _{l=0}^{\beta}
    \binom{\alpha}{k}
    \binom{\beta}{l}
    A^{\alpha-k}
    C^{k}
    B^l
    D^{\beta - l}
    \partial_1^{\alpha-k+ l}
    \partial_2^{\beta-l+k}
    f_1(A x + B y, C x + D y),
  \end{equation*}
  where for clarity we write $\partial_1, \partial_2$ for derivatives in the first and second variables.  Invoking our hypotheses concerning $f_1$, we reduce the required bound for $f_2$ to the estimate
  \begin{equation*}
    A^{\alpha-k}
    C^{k}
    B^l
    D^{\beta - l}
    \h^{- \delta' (-k+l) - \delta '' (-l+k)} \ll 1.
  \end{equation*}
  To verify this, 
  we invoke \eqref{eqn:A-B-C-D-bounds} in the weaker form
  \begin{equation*}
    A,B,D = \O(1),
    \quad C = \O(\h^{\delta '' - \delta'}).
  \end{equation*}
  and recall that $\delta ''\geq \delta'$.  The case of general $(m,n)$ is treated similarly by inserting indices throughout the above argument.
\end{proof}

\subsubsection{Basic properties}\label{sec:symbol-classes-basic-properties}
We record some basic mapping properties concerning the symbol classes just introduced, to be applied repeatedly in what follows.  Each property follows readily from the definition.
\begin{lemma}
  Let $\h, \delta ', \delta '', \tau$ be as in Definition \ref{defn:new-symbol-class}.  Abbreviate $S := S^{\tau}_{ \delta ', \delta ''}$.
  \begin{enumerate}[(i)]
  \item Let $P : \mathfrak{g}^\wedge \rightarrow \mathbb{C}$ be a fixed polynomial.  Then multiplication by $P$ defines a class map
    \begin{equation*}
      P : S \rightarrow S.
    \end{equation*}
  \item Set $m := \dim(\mathfrak{g}) - \rank(\mathfrak{g})$, $n := \rank(\mathfrak{g})$.  Fix $\alpha' \in \mathbb{Z}_{\geq 0}^{m}$, $\alpha'' \in \mathbb{Z}_{\geq 0}^{n}$.  Define the partial differential operators $\partial_{\xi '}^{\alpha'}, \partial_{\xi ''}^{\alpha''}$ as in \eqref{eqn:symbol-class-defn-estimates}, using the coordinates defined by $\tau$.  Then
    \begin{equation*}
      \partial_{\xi '}^{\alpha'} : S \rightarrow \h^{- \delta ' |\alpha'|} S,
    \end{equation*}
    \begin{equation*}
      \partial_{\xi '}^{\alpha''} : S \rightarrow \h^{- \delta '' |\alpha''|} S.
    \end{equation*}
  \end{enumerate}
\end{lemma}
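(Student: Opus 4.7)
The plan for both parts is to verify directly the two defining conditions of $S = S^{\tau}_{\delta',\delta''}$ (Definition \ref{defn:new-symbol-class})—the support condition and the derivative growth condition—for the output symbol, using Lemma \ref{lem:symbol-class-change-basepoint} only implicitly (since we keep the basepoint $\tau$ fixed throughout).

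Part (ii) is essentially immediate. Differentiation preserves support, so both $\partial^{\alpha'}_{\xi'} a$ and $\partial^{\alpha''}_{\xi''} a$ remain supported in $\tau + \O(\h^{\delta''-\delta'})$. For the derivative bound on $b := \partial^{\alpha'}_{\xi'} a$, I would simply compose and observe, for any fixed multi-indices $\beta', \beta''$, that
\[
\partial^{\beta'}_{\xi'} \partial^{\beta''}_{\xi''} b
=
\partial^{\alpha'+\beta'}_{\xi'} \partial^{\beta''}_{\xi''} a
\ll
\h^{-\delta'|\alpha'+\beta'|-\delta''|\beta''|}
=
\h^{-\delta'|\alpha'|}\cdot \h^{-\delta'|\beta'|-\delta''|\beta''|},
\]
which is exactly the bound required for $\h^{\delta'|\alpha'|} b \in S$. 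The argument for $\partial^{\alpha''}_{\xi''}$ is identical, substituting $\delta''$ for $\delta'$.

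Part (i) requires slightly more care. The support assertion is trivial since $\supp(Pa) \subseteq \supp(a)$. For the derivative bounds I would apply the Leibniz rule in $\tau$-coordinates to write $\partial^{\alpha'}_{\xi'}\partial^{\alpha''}_{\xi''}(Pa)$ as a fixed finite sum of products
\[
\binom{\alpha'}{\beta'}\binom{\alpha''}{\beta''}\,\partial^{\beta'}_{\xi'}\partial^{\beta''}_{\xi''} P\;\cdot\;\partial^{\alpha'-\beta'}_{\xi'}\partial^{\alpha''-\beta''}_{\xi''} a.
\]
The second factor is $\O(\h^{-\delta'|\alpha'-\beta'|-\delta''|\alpha''-\beta''|}) \leq \O(\h^{-\delta'|\alpha'|-\delta''|\alpha''|})$, which is the target bound. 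It remains to see that the first factor is $\O(1)$ on $\supp(a)$.

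The only subtle point—and the place where some thought is required—is that $P$ is a fixed polynomial in \emph{absolute} coordinates on $\mathfrak{g}^\wedge$, while the derivatives $\partial_{\xi'},\partial_{\xi''}$ are taken in the $\tau$-dependent coordinates of \S\ref{sec:coord-tail-regul}. Since $\tau$ varies in a fixed compact subset of $\mathfrak{g}^\wedge_{\reg}$ and the assignment $\tau \mapsto (\mathfrak{g}_\tau^\perp,\mathfrak{g}_\tau^{\perp\flat})$ is smooth, the orthogonal change of variables between the two coordinate systems and its inverse have entries of size $\O(1)$. Hence $\partial^{\beta'}_{\xi'}\partial^{\beta''}_{\xi''} P$ is a polynomial of fixed bounded degree in the absolute coordinates with $\O(1)$ coefficients; and since $\h \lll 1$, the set $\tau + \O(\h^{\delta''-\delta'})$ lies in a fixed compact subset of $\mathfrak{g}^\wedge$. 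Therefore $\partial^{\beta'}_{\xi'}\partial^{\beta''}_{\xi''} P = \O(1)$ on $\supp(a)$, completing the proof. No step presents a genuine obstacle; the only thing to watch is that the polynomial bound and the coordinate change together remain uniform because $\tau$ is constrained to a fixed compact subset of the regular locus.
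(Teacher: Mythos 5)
Your argument is correct and is exactly the direct verification the paper has in mind: the paper offers no written proof beyond the remark that each property ``follows readily from the definition,'' and your Leibniz-rule computation, the monotonicity $\h^{-\delta'|\alpha'-\beta'|-\delta''|\alpha''-\beta''|}\leq \h^{-\delta'|\alpha'|-\delta''|\alpha''|}$, and the observation that a fixed polynomial has $\O(1)$ derivatives in $\tau$-coordinates on the support (uniformly, since $\tau$ ranges in a fixed compact subset of $\mathfrak{g}^\wedge_{\reg}$ and the coordinate change is orthogonal) supply precisely the missing details. Nothing further is needed.
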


\subsubsection{``Completing'' the refined symbol classes}

By
\begin{equation*}
  S^{\tau}_{\delta', \delta''}
  + \h^\infty S^{-\infty}
\end{equation*}
we mean the class of symbols of the form $a + b$ with $a \in S^{\tau}_{ \delta ', \delta ''}$ and $b \in \h^\infty S^{-\infty}$.  It may be regarded informally as a ``completion'' of $S^{\tau}_{ \delta ', \delta ''}$ obtained by adjoining the ``negligible'' symbols.  This ``completed'' class admits a more direct characterization:
\begin{lemma}\label{lem:characterize-completed-symbols}
  Let $a \in \underline{S}^{\infty}$.  Then $a$ belongs to $S^{\tau}_{ \delta ', \delta ''} + \h^\infty S^{-\infty}$ if and only if there is a fixed $R > 0$ so that the following conditions hold.
  \begin{itemize}
  \item For all fixed $\alpha, N$ and all $\xi$ with $|\xi - \tau | > R \h^{\delta '' - \delta '}$, we have
    \[\partial^\alpha a(\xi) \ll \h^N \langle \xi
      \rangle^{-N},\] where as before $\langle \xi \rangle = ( 1 + |\xi|^2)^{1/2}$.
  \item For all fixed $\alpha, \beta$ and all $\xi$ with $|\xi - \tau | < 2 R \h^{\delta '' - \delta '}$, we have in $\tau$-coordinates
    \[\partial_{\xi'}^{\alpha} \partial_{\xi ''}^{\beta}
      a(\xi) \ll \h^{-\delta ' |\alpha| - \delta '' |\beta|}.\]
  \end{itemize}
\end{lemma}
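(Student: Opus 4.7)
The plan is to prove the two directions separately: the forward implication is tautological from the definitions, while the reverse implication is established by an explicit cutoff decomposition. Throughout, the technical point to track is the comparability between derivatives in the fixed basis on $\mathfrak{g}^\wedge$ and in $\tau$-coordinates; since $\tau$ lies in a fixed compact subset of $\mathfrak{g}^\wedge_{\reg}$ and the assignment $\tau \mapsto (\mathfrak{g}_\tau^\perp, \mathfrak{g}_\tau^{\perp\flat})$ is smooth, any such change of coordinates has fixed-order derivatives that are $\O(1)$, as already exploited in Lemma \ref{lem:symbol-class-change-basepoint}.

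For the forward direction, write $a = a_1 + a_2$ with $a_1 \in S^{\tau}_{\delta',\delta''}$ and $a_2 \in \h^\infty S^{-\infty}$, and fix $R$ so that $\supp(a_1) \subseteq \{|\xi - \tau| \leq R \h^{\delta''-\delta'}\}$. On $\{|\xi - \tau| > R \h^{\delta''-\delta'}\}$ we have $a = a_2$, and the rapid-decay bound follows from the definition of $\h^\infty S^{-\infty}$. On $\{|\xi - \tau| < 2 R \h^{\delta''-\delta'}\}$, the required $\tau$-coordinate bound on $\partial_{\xi'}^{\alpha}\partial_{\xi''}^{\beta} a_1$ is the very definition of $S^{\tau}_{\delta',\delta''}$, while the contribution of $a_2$ is of size $\O(\h^N)$ for every fixed $N$ (applying any fixed $\tau$-coordinate derivative produces a bounded linear combination of fixed-basis derivatives of $a_2$).

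For the reverse direction, I would construct the decomposition by a cutoff. Fix a smooth $\rho : [0,\infty) \to [0,1]$ with $\rho = 1$ on $[0,1]$ and $\rho = 0$ on $[2,\infty)$, set
\[
   \chi_0(\xi) := \rho\!\left( \frac{|\xi - \tau|}{R \h^{\delta''-\delta'}} \right),
\]
and put $a_1 := \chi_0 a$, $a_2 := (1-\chi_0) a$. Any fixed derivative of $\chi_0$ obeys $\partial^\gamma \chi_0 \ll \h^{-(\delta''-\delta')|\gamma|}$ in either coordinate system, and the hypothesis $0 < \delta' < \delta'' < 2\delta'$ of Definition \ref{defn:new-symbol-class} gives the crucial absorption inequality
\[
   \h^{-(\delta''-\delta')} \ll \min\bigl( \h^{-\delta'}, \, \h^{-\delta''}\bigr)
\]
(here $\delta'' - \delta' < \delta'$ uses $\delta'' < 2\delta'$, and $\delta'' - \delta' < \delta''$ uses $\delta' > 0$). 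A Leibniz expansion of $a_1$ in $\tau$-coordinates, combined with the interior hypothesis on $a$, then yields $\partial_{\xi'}^{\alpha}\partial_{\xi''}^{\beta} a_1 \ll \h^{-\delta'|\alpha| - \delta''|\beta|}$ and hence $a_1 \in S^{\tau}_{\delta',\delta''}$ (with support radius $2R$, still fixed). Symmetrically, a Leibniz expansion of $a_2$ in the fixed basis combined with the exterior rapid-decay hypothesis gives $a_2 \in \h^\infty S^{-\infty}$, any cutoff-derivative cost being swallowed by enlarging the decay exponent $N$.

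No serious obstacle is anticipated. The only step that one must be slightly careful about is verifying the $S^{\tau}_{\delta',\delta''}$ estimate for $a_1$ at points $\xi$ in the annular overlap $R\h^{\delta''-\delta'} < |\xi - \tau| < 2R\h^{\delta''-\delta'}$, where the cutoff is not identically $1$; but there both hypotheses on $a$ are available, and the estimate follows from the absorption inequality above.
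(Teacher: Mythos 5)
Your proof is correct and follows essentially the same route as the paper: the forward direction is read off from the definitions, and the converse is handled by a smooth radial cutoff at scale $\h^{\delta''-\delta'}$ around $\tau$, with the losses from differentiating the cutoff absorbed using $\delta''-\delta' < \delta' < \delta''$. The only (immaterial) difference is that the paper places the cutoff transition between radii $3R\h^{\delta''-\delta'}$ and $4R\h^{\delta''-\delta'}$, where the rapid-decay hypothesis does the work, while you place it between $R\h^{\delta''-\delta'}$ and $2R\h^{\delta''-\delta'}$ and invoke the interior derivative bounds there.
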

\begin{proof}
  The necessity of these conditions is clear from the definitions.  Conversely, suppose these conditions hold.  We may construct an element $p$ of the basic symbol class $S^{-\infty}_{\delta '' - \delta '}$ satisfying
  \begin{itemize}
  \item $p(\xi) = 1$ for $|\xi - \tau| < 3 R \h^{\delta '' - \delta '}$,
  \item $p(\xi) = 0$ for $|\xi - \tau| > 4 R \h^{\delta '' - \delta '}$.
  \end{itemize}
  We then verify readily that $p a$ belongs to $S^{\tau}_{ \delta ', \delta ''}$ and $(1-p) a$ belongs to $\h^\infty S^{-\infty}$.  We conclude by noting that $a = p a + (1 - p) a$.
\end{proof}

\subsubsection{Star product asymptotics}
We will establish analogues of \eqref{eqn:star-j-old-mapping-property} and \eqref{eqn:star-old-asymp-expn} for the refined symbol classes.  The possibility of doing so boils down to the fact that the BCHD formula (Theorem \ref{thm:BCHD}, below) consists of Lie polynomials; by contrast, the proof of Theorem \ref{thm:nv-star-prod-asymp-h-dependent} made direct use only of the analyticity of that formula and the fact that $x \ast y = x + y + \O(|x| \cdot |y|)$.

\begin{theorem}\label{thm:refined-star-prod}
  Fix a nice cutoff $\chi$.  Assume that $\h \lll 1$.  Fix $\delta ', \delta ''$ satisfying \eqref{eqn:delta-delta-prime-hypotheses}.  Write $j, J$ for fixed elements of $\mathbb{Z}_{\geq 0}$ and $\tau, \tau_1, \tau_2$ for elements of some fixed compact subset of $\mathfrak{g}^\wedge_{\reg}$.
  \begin{enumerate}[(i)]
  \item \label{item:star-prod-1} $\star_{\h}$ enjoys the mapping property
    \begin{equation}\label{eqn:star-h-new-mapping-property-super-localized}
      \star_{\h} :
      S^{\tau_1}_{\delta', \delta''} 
      \times 
      S^{\tau_2}_{\delta', \delta''} 
      \rightarrow
      S^{\tau_1}_{\delta', \delta ''}
      \cap S^{\tau_2}_{\delta', \delta ''}
      + \h^{\infty} S^{-\infty}.
    \end{equation}
  \item \label{item:star-prod-2} If $\tau_1 - \tau_2 \ggg \h^{\delta '' - \delta '}$, then $a \star^j b = 0$ for all $a \in S^{\tau_1}_{\delta',\delta''}$, $b \in S^{\tau_2}_{\delta ', \delta ''}$.
  \item \label{item:star-prod-3} $\star^j$ enjoys the mapping property
    \begin{equation}\label{eqn:star-j-new-mapping-property}
      \star^j :
      S_{\delta ', \delta ''}^{\tau }
      \times 
      S_{\delta ', \delta ''}^{\tau }
      \rightarrow \h^{- 2 \delta' j} 
      S^{\tau}_{\delta', \delta ''}.
    \end{equation}
    For any $a,b \in S_{\delta ', \delta ''}^\tau$, we have the asymptotic expansion
    \begin{equation}\label{eqn:star-j-asymp-expn-new}
      a \star_{\h} b
      \equiv  \sum _{0 \leq j < J}
      \h^j a \star^j b
      \mod{
        \h^{(1- 2 \delta') J} S^{\tau}_{\delta', \delta''}
        +
        \h^{\infty}
        S^{-\infty}}.
    \end{equation}

  \item \label{item:star-prod-4} Fix $\delta \in [0,\delta']$.  We have the mapping properties
    \begin{equation*}
      \star_{\h} : S^{\infty}_{\delta} \times S^{\tau}_{ \delta
        ', \delta ''}
      \rightarrow S^{\tau}_{ \delta ', \delta ''}
      + \h^\infty S^{-\infty},
    \end{equation*}
    \begin{equation*}
      \star^j : S^{\infty}_{\delta} \times S^{\tau}_{ \delta
        ', \delta ''}      \rightarrow \h^{-(\delta +  \delta ')
        j}
      S^{\tau}_{ \delta ', \delta ''}
    \end{equation*}
    and, for $(a,b) \in S^{\infty}_{\delta} \times S^{\tau}_{ \delta ', \delta ''}$, the asymptotic expansion
    \begin{equation*}
      a \star_{\h} b
      \equiv  \sum _{0 \leq j < J}
      \h^j a \star^j b
      \mod{
        \h^{(1- \delta - \delta') J}  S^{\tau}_{ \delta ', \delta ''} 
        +
        \h^{\infty} S^{-\infty}}.
    \end{equation*}
    Analogous results hold with $S^{\infty}_{\delta} \times S^{\tau}_{ \delta ', \delta ''}$ replaced by $S^{\tau}_{ \delta ', \delta ''} \times S^{\infty}_{\delta}$.
  \end{enumerate}
\end{theorem}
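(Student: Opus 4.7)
The plan centers on part (iii), the main technical result, from which parts (i), (ii), (iv) follow via localization arguments or by parallel analysis. The essential input distinguishing this refinement from Theorem \ref{thm:nv-star-prod-asymp-h-dependent} is the Lie polynomial structure of $\{x,y\}$: its Taylor expansion consists of iterated brackets in $x,y$, which when paired with $\zeta$ via \eqref{eqn:adjoint-map-adjointness-zeta-centralizer} yield operators whose interaction with $\tau$-coordinates is tightly constrained. In particular, the abelianness of $\mathfrak{g}_\tau$ forces purely transverse brackets to vanish, while the identity $[\mathfrak{g}_\tau,\tau] = 0$ forces structural cancellations at $\zeta = \tau$.

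First I would dispose of (ii) and the disjoint-support case of (i). If $\tau_1 - \tau_2 \ggg \h^{\delta''-\delta'}$, then for small $\h$ the supports of $a \in S^{\tau_1}_{\delta',\delta''}$ and $b \in S^{\tau_2}_{\delta',\delta''}$ are disjoint. Since each $\star^j$ is a bidifferential operator acting pointwise, $a \star^j b \equiv 0$, proving (ii). For the full rescaled product $a \star_{\h} b$ in this regime, the integral representation \eqref{eqn:rescaled-star-integral-rep} admits a nonstationary-phase treatment: the Fourier localizations of $a^\vee, b^\vee$ concentrate at frequencies $\tau_1/\h, \tau_2/\h$ that cannot match any $\zeta$ in the putative common support region, so repeated integration by parts in the Lie algebra variables yields $a \star_{\h} b \in \h^\infty S^{-\infty}$.

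The heart of the argument is then (iii). I would decompose a generic vector $v \in \mathfrak{g}$ as $v = v^\flat + v_\tau$ with $v^\flat \in \mathfrak{g}_\tau^\flat$ and $v_\tau \in \mathfrak{g}_\tau$, so that the gradient of a symbol $a \in S^\tau_{\delta',\delta''}$ splits as $\nabla a = \nabla^\flat a + \nabla^\tau a$, with $\nabla^\flat a = \O(\h^{-\delta'})$ and $\nabla^\tau a = \O(\h^{-\delta''})$ on the support of $a$. The key lemma---to be established by induction on $j$, unfolding each Lie word in the $j$-th piece of the Baker--Campbell--Hausdorff expansion---is that after writing $\zeta = \tau + (\zeta - \tau)$ and reorganizing via \eqref{eqn:adjoint-map-adjointness-zeta-centralizer}, every monomial contributing to $\star^j(a,b)(\zeta)$ can be expressed as a bounded sum of terms in which every purely transverse derivative $\nabla^\tau$ on $a$ or $b$ is accompanied by at least one compensating factor of $(\zeta-\tau) = \O(\h^{\delta''-\delta'})$. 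The $(\zeta-\tau)$ factors offset the extra cost $\h^{-(\delta''-\delta')}$ of transverse derivatives, and combined with the bound $|\gamma| \leq j$ from \eqref{eqn:gamma-less-alpha-beta}, this converts the naive estimate $\h^{-2\delta'' j}$ into the refined $\h^{-2\delta' j}$ of \eqref{eqn:star-j-new-mapping-property}.

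The asymptotic expansion \eqref{eqn:star-j-asymp-expn-new} follows by Taylor-expanding $e^{\h^{-1}\langle\{x,y\},\zeta\rangle}$ in the integral representation \eqref{eqn:rescaled-star-integral-rep} to order $J$; the $J$-th remainder is controlled by the same structural analysis, contributing a factor of $\h^{(1-2\delta')J}$. Part (i) in the close-basepoint case is then immediate from (iii) via Lemma \ref{lem:symbol-class-change-basepoint}. Part (iv) is proved by the same Lie-structural argument, but with one factor in $S^\infty_\delta$ whose derivatives cost $\h^{-\delta}$ uniformly: pairing these against the tangent/transverse decomposition of the refined factor yields the asymmetric saving $\h^{-(\delta+\delta')j}$ per order, and the asymptotic expansion follows analogously. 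The main obstacle is the combinatorial bookkeeping underlying the structural lemma for (iii)---systematically demonstrating, across all Lie words arising in BCH, that the constraints coming from abelianness of $\mathfrak{g}_\tau$ and from the identity \eqref{eqn:adjoint-map-adjointness-zeta-centralizer} conspire to enforce the claimed compensation of each transverse derivative by a factor of $\zeta - \tau$. Once this bookkeeping is in hand, the remainder of the argument follows standard patterns in microlocal analysis.
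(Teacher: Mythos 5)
The crux of your part (iii) is the claimed structural lemma that, after reorganizing via \eqref{eqn:adjoint-map-adjointness-zeta-centralizer}, every purely transverse derivative on $a$ or $b$ comes with a compensating factor of $\zeta-\tau$. That lemma is false. Take the degree-three BCHD term $\tfrac{1}{12}[x,[x,y]]$: decomposing $x=x'+x''$, $y=y'+y''$ in $\tau$-coordinates, it contributes the monomial $\langle [x',[x',y'']],\zeta\rangle$, i.e.\ two tangent derivatives on $a$, one transverse derivative on $b$, one power of $\zeta$, so $j=2$. At $\zeta=\tau$ this equals $-\langle [x',y''],[x',\tau]\rangle$, which is generically nonzero (e.g.\ $\mathfrak{g}=\glLie_2(\mathbb{R})$, $\tau$ regular semisimple diagonal, $x'$ off-diagonal, $y''$ diagonal); so the transverse derivative is not accompanied by any factor of $\zeta-\tau$. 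The true size of this term is $\h^{-2\delta'-\delta''}$, which for $\delta''$ close to $2\delta'$ essentially saturates the allowed threshold $\h^{-2\delta' j}$ and strictly exceeds the bound $\h^{-3\delta'}$ your lemma would force. The correct structural statement, which the paper isolates as Lemma \ref{lem:constraint-coefficients-via-BCDH}, is dual to yours: it constrains the $\zeta$-powers rather than the transverse derivatives, namely a nonvanishing Taylor coefficient requires $|\alpha'|+|\beta'|\geq 2|\gamma|$ (each factor of $\zeta$ must be matched by two tangent slots; this is exactly where the abelianness of $\mathfrak{g}_\tau$ and \eqref{eqn:adjoint-map-adjointness-zeta-centralizer} enter). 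Combined with the homogeneity relation $j=|\alpha|+|\beta|-|\gamma|$ and the hypothesis $\delta''\leq 2\delta'$, this gives $\delta'(|\alpha'|+|\beta'|)+\delta''(|\alpha''|+|\beta''|)\leq 2\delta' j$ and hence \eqref{eqn:star-j-new-mapping-property}. Your accounting lands on the right exponent only because the false lemma over-compensates; as written, the key estimate is not established.

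A secondary, plan-level gap: you dispose of the remainder in \eqref{eqn:star-j-asymp-expn-new} and of the disjoint-basepoint case of (i) in a sentence, but these carry real content. In the regime $\h^{\delta''-\delta'}\lll|\zeta-\tau|\lll 1$ the naive non-stationary phase gain degenerates, because the transverse dual variables are only localized at scale $\h^{1-\delta''}$; the paper's Lemma \ref{lem:outsourced-IBP} rescues this by an integration by parts that again uses the Lie structure, in the form $\partial_t\{x+z_t',y\}\zeta\ll B'|\zeta|+B''|\zeta-\tau|$, i.e.\ $\mathfrak{g}_\tau$-directions couple to $\zeta$ only through $\zeta-\tau$. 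Once you replace your structural lemma by the correct coefficient constraint and supply this refined stationary-phase input, the rest of your outline (localization, Cotlar-free trivial bounds for the tail, reduction of (i) to (iii) via Lemma \ref{lem:symbol-class-change-basepoint}, and the interpolation for (iv)) matches the paper's route.
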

\begin{proof}
  We indicate here the overall structure of the proof, the details of which are given below.

  Assertion \eqref{item:star-prod-2} follows immediately from the support condition \eqref{item:star-prod-1} in Definition \ref{defn:new-symbol-class}.

  As for \eqref{item:star-prod-3}, we establish in \S\ref{sec:mapp-prop-homog} the mapping property \eqref{eqn:star-j-new-mapping-property} for $\star^j$, then in \S\ref{sec:estimates-remainder} the asymptotic expansion \eqref{eqn:star-j-asymp-expn-new}.  The mapping property \eqref{eqn:star-h-new-mapping-property-super-localized} for $\star_{\h}$ follows in the special case $\tau_1 = \tau_2$ by specializing the asymptotic expansion \eqref{eqn:star-j-asymp-expn-new} to $J=0$.

  By the first part of Lemma \ref{lem:symbol-class-change-basepoint}, the mapping property \eqref{eqn:star-h-new-mapping-property-super-localized} in the case that \eqref{eqn:tau1-tau2-close} holds reduces to the case $\tau_1 = \tau_2$ already considered.  To complete the proof of the general case of \eqref{eqn:star-h-new-mapping-property-super-localized}, it remains to show that for
  \begin{equation*}
    \tau_1 - \tau_2 \ggg \h^{\delta ' - \delta ''},
  \end{equation*}
  one has $a \star_{\h} b \in \h^\infty S^{-\infty}$ for all $(a,b) \in S^{\tau_1}_{\delta ', \delta ''} \times S^{\tau_2}_{\delta ', \delta ''}$.  The proof in that case is implicit in the proof of \eqref{eqn:star-j-asymp-expn-new}, but for expository purposes we have treated it separately in \S\ref{sec:case-disj-supp}.
  
  We discuss part \eqref{item:star-prod-4} in \S\ref{sec:mixed-case}.  The proof amounts to an ``interpolation'' between the proofs given here and the proof of Theorem \ref{thm:nv-star-prod-asymp-h-dependent} given in \cite[\S7.3]{nelson-venkatesh-1}.
\end{proof}

\subsection{Proofs of refined star product asymptotics}\label{sec:proofs-star-product}
Here we complete the proof of Theorem \ref{thm:refined-star-prod} following the outline indicated above.  On a first reading, we recommend skipping this section and proceeding to \S\ref{sec:operator-classes}.

\subsubsection{Taylor coefficient support conditions}
We retain the notation and setting of \S\ref{sec:star-prod-extens}.  The support condition \eqref{eqn:gamma-less-alpha-beta} was sufficient for the purposes of \cite{nelson-venkatesh-1}.  We require here a stronger support condition.  Let $\tau \in \mathfrak{g}^\wedge_{\reg}$.  Recall from \S\ref{sec:star-prod-extens} the definition of $\{x,y\}$.  Let
\begin{equation}\label{eqn:taylor-expand-finer}
  e^{\{x,y\} \tau}
  = \sum_{\alpha', \alpha '', \beta', \beta '',\gamma} c_{\alpha' \alpha '' \beta' \beta '' \gamma}
  (x')^{\alpha '}
  (x'')^{\alpha ''}
  (y')^{\beta'}
  (y'')^{\beta''}
  \tau^\gamma
\end{equation}
denote the Taylor expansion in $\tau$-coordinates, where monomials such as $(x')^{\alpha '} = \prod _j (x'_j)^{\alpha _j'}$ are defined with respect to the Euclidean coordinates of \S\ref{sec:eucl-coord}.
\begin{lemma}\label{lem:constraint-coefficients-via-BCDH}
  Suppose that $c_{\alpha ' \alpha '' \beta ' \beta '' \gamma } \neq 0$.  Then
  \[
    |\alpha '| + |\beta '| \geq 2 |\gamma|,
  \]
  hence, with
  \begin{equation}\label{eq:j-:=-alpha}
    j := |\alpha '| + |\beta '| + |\alpha ''| + |\beta '' | - |\gamma|,
  \end{equation}
  we have
  \begin{equation}\label{eqn:alpha-beta-prime-2-gamma}
    |\alpha '| + |\beta '|
    +
    2 |\alpha ''| + 2 |\beta ''|
    \leq 2 j
  \end{equation}
\end{lemma}
\begin{proof}
  The two required inequalities are restatements of one another, so we focus on the first.  We use the fact (Theorem \ref{thm:BCHD}, below) that each homogeneous component of the Taylor expansion of $\{x,y\}$ is a finite iterated Lie polynomial in $x$ and $y$, i.e., a linear combination of expressions $L(x,y) = \ad_{z_1} \dotsb \ad_{z_{n-1}} z_n$ with each $z_j$ equal to either $x$ or to $y$.  By expanding the exponential series, we reduce to verifying for each such expression that if we write
  \[
    L(x,y) \tau = \sum_{\alpha', \alpha '', \beta', \beta '',\gamma} C_{\alpha' \alpha '' \beta' \beta '' \gamma} (x')^{\alpha '} (x'')^{\alpha ''} (y')^{\beta'} (y'')^{\beta''} \tau^\gamma,
  \]
  then
  \[
    C_{\alpha' \alpha '' \beta' \beta '' \gamma} \neq 0 \implies |\alpha '| + |\beta '| \geq 2 |\gamma|.
  \]
  Suppose $C_{\alpha' \alpha '' \beta' \beta '' \gamma} \neq 0$.  Set $n := |\alpha '| + |\alpha ''| + |\beta '| + |\beta ''|$.  Then $n$ is the degree of the Lie monomial $L$ (e.g., $n=3$ if $L(x,y) = [x,[x,y]]$), and we may find nonzero elements $x_1,\dotsc,x_n$ of $\mathfrak{g}_\tau^\flat$ or of $\mathfrak{g}_\tau$ so that
  \begin{itemize}
  \item the number of $x_j$ lying in $\mathfrak{g}_\tau^{\flat}$ is $|\alpha '| + |\beta '|$,
  \item the number of $x_j$ lying in $\mathfrak{g}_\tau$ is $|\alpha ''| + |\beta ''|$, and
  \item $\langle [x_1,[x_2,\dotsc,[x_{n-1},x_n] ] ], \tau \rangle \neq 0$.
  \end{itemize}
  We have $|\gamma| = 1$, so our task is to verify that $|\alpha '| + |\beta '| \geq 2$.  Suppose otherwise that $|\alpha'| + |\beta '| \leq 1$.  Then $|\alpha '' | + | \beta ''| \geq n-1$, i.e., at least $n-1$ of the $x_j$ centralize $\tau$.  Since $\mathfrak{g}_\tau$ is a subalgebra, it follows that at least one of the elements $x := x_1$ or $y := [x_2,\dotsc,[x_{n-1},x_n]]$ centralizes $\tau$, and yet $\langle [x,y], \tau \rangle = 0$.  The required contradiction then follows from \eqref{eqn:adjoint-map-adjointness-zeta-centralizer}.
\end{proof}

We recall a form of the BCHD formula, used in the above proof (see, e.g., \cite[\S I.IV.7 and II.V.4]{MR2179691}):
\begin{theorem}\label{thm:BCHD}
  There is a neighborhood $U$ of the origin in $\mathfrak{g}$ so that for all $x,y \in U$, the group law $x \ast y$ (see \eqref{eq:x-ast-y-:=-logexpx-expy-}) is defined and admits the absolutely convergent expansion
  \begin{equation}\label{eq:x-ast-y-=-sum_n-geq-1-z_n-}
    x \ast y = \sum_{n \geq 1} z_n,
  \end{equation}
  where $z_n$ is a linear combination of $n$-fold iterated commutators of $x$ and $y$.  For example,
  \begin{equation*}
    z_1 = x + y,
    \quad
    z_2 = \frac{[x,y]}{2},
    \quad
    z_3 =
    \frac{[x, [x,y]] + [y, [y x]]}{12},
    \quad
    z_4 =
    \frac{[y, [x, [y,x]]]}{24}.
  \end{equation*}
\end{theorem}

\subsubsection{ Mapping properties of homogeneous components }\label{sec:mapp-prop-homog}
\begin{proof}[Proof of \eqref{eqn:star-j-new-mapping-property}.]
  Let $a,b \in S^{\tau}_{\delta ', \delta ''}$.  We must verify that $a \star^j b$ belongs to the symbol class $\h^{-2 \delta'j } S^{\tau}_{ \delta ', \delta ''}$.  Since $\star^j$ is a finite order differential operator, the support condition in the definition of that symbol class clearly holds.  It is also clear that $a \star^j b \in \underline{S}^{-\infty}$.  Setting $m := \dim(\mathfrak{g}) - \rank(\mathfrak{g})$ and $n := \rank(\mathfrak{g})$, we must thus check that for all fixed multi-indices $\eta' \in \mathbb{Z}_{\geq 0}^{m}, \eta '' \in \mathbb{Z}_{\geq 0}^{n}$ and all $\xi = \tau + \O(\h^{\delta'' - \delta '})$, we have
  \begin{equation}\label{eqn:symbol-mapping-star-j-goal}
    \partial_{\xi '}^{\eta'} \partial_{\xi ''}^{\eta ''} (a \star^j b)(\xi)
    \ll
    \h^{
      - \delta ' |\eta '|
      - \delta '' | \eta ''|
      -2 \delta' j
    }.
  \end{equation}
  By Lemma \ref{lem:symbol-class-change-basepoint}, we may and shall assume that $\xi = \tau$.

  We begin by verifying \eqref{eqn:symbol-mapping-star-j-goal} in the special case $\eta ' = 0$, $\eta '' = 0$.  By expanding the definition of $\star^j$, we reduce to checking that for all fixed $\alpha ', \alpha '', \beta ', \beta '', \gamma$ satisfying $c_{\alpha ' \alpha '' \beta ' \beta '' \gamma} \neq 0$ and \eqref{eq:j-:=-alpha}, we have the following estimate, stated in $\tau$-coordinates:
  \begin{equation}\label{eqn:key-upper-bound-for-star-j-mapping}
    \left(
      \partial_{\xi '}^{\alpha '}
      \partial_{\xi ''}^{\alpha ''}
      a(\tau)
    \right)
    \left(
      \partial_{\xi '}^{\beta '}
      \partial_{\xi ''}^{\beta ''}
      b(\tau)
    \right)
    \tau^{\gamma}
    \ll
    \h^{- 2 \delta' j}.
  \end{equation}
  By Lemma \ref{lem:constraint-coefficients-via-BCDH}, the inequality \eqref{eqn:alpha-beta-prime-2-gamma} holds; this is all that we will use concerning $\alpha ', \alpha '', \beta ', \beta '' ,\gamma $.  By our hypothesis that $\tau = \O(1)$, we have in particular $\tau^\gamma \ll 1$.  By the definition of the symbol class $S^{\tau}_{ \delta ', \delta ''}$, we see that
  \begin{equation*}
    \partial_{\xi '}^{\alpha '}
    \partial_{\xi ''}^{\alpha ''}
    a(\tau)
    \ll \h^{- \delta ' |\alpha '| - \delta '' | \alpha ''|},
    \quad 
    \partial_{\xi '}^{\beta '}
    \partial_{\xi ''}^{\beta ''}
    b(\tau)
    \ll \h^{- \delta ' |\beta '| - \delta '' | \beta ''|}.
  \end{equation*}
  It will thus suffice to verify under the stated conditions that
  \begin{equation}\label{eqn:tons-of-primes-alpha-beta}
    \delta' |\alpha '| +
    \delta '' |\alpha ''|
    +
    \delta' |\beta '| +
    \delta '' |\beta ''|
    \leq 2 \delta' j.
  \end{equation}
  To see this, we use that $\delta '' \leq 2 \delta '$ and apply \eqref{eqn:alpha-beta-prime-2-gamma}.

  The proof is complete in the special case $\eta ' = 0, \eta '' = 0$.  The general case is deduced similarly, using the product rule to evaluate the application of the differential operator $\partial_{\xi '}^{\eta'} \partial_{\xi ''}^{\eta ''}$ to the LHS of \eqref{eqn:key-upper-bound-for-star-j-mapping} and recalling that $\tau = \O(1)$.
\end{proof}

\subsubsection{Estimates for the
  remainder}\label{sec:estimates-remainder}
We now prove \eqref{eqn:star-j-asymp-expn-new}.  The proof is similar in overall structure to \cite[Thm 6]{nelson-venkatesh-1}.  The present proof is simpler because our symbols have uniformly bounded support, but more complicated because of the more complicated nature of the symbol class.

\emph{Reduction to tail estimates.}  Set
\begin{equation*}
  r := a \star_{\h} b - \sum_{0 \leq j < J} \h^j a \star^j b.
\end{equation*}
By Theorem \ref{thm:basic-star-product-extension-properties}, we have $r \in \underline{S}^{-\infty}$.  By the criterion of Lemma \ref{lem:characterize-completed-symbols}, we must verify that for some fixed (large enough) $R > 0$,
\begin{itemize}
\item for all $\zeta \in \mathfrak{g}^\wedge$ with $|\zeta - \tau| > R \h^{\delta '' - \delta '}$, all fixed multi-indices $\gamma$ and each fixed $N$, we have (with $\langle \zeta \rangle = (1 = |\zeta|^2)^{1/2}$, as before)
  \begin{equation}\label{eqn:partial-zeta-gamma-r-zeta-trivz}
    \partial_\zeta^\gamma r(\zeta) \ll \h^N \langle \zeta  \rangle^{-N},
  \end{equation}
  and
\item for all $\zeta \in \mathfrak{g}^\wedge$ with $|\zeta - \tau| < 2 R \h^{\delta '' - \delta '}$ and all fixed multi-indices $\gamma', \gamma ''$, we have
  \begin{equation}\label{eqn:required-estimate-for-r}
    \partial_{\xi '}^{\gamma'}
    \partial_{\xi ''}^{\gamma ''} r(\zeta)
    \ll
    \h^{(1 - 2 \delta') J - \delta ' |\gamma '|
      - \delta '' | \gamma '' | }.
  \end{equation}
\end{itemize}
In fact, since $1 - 2 \delta' > 0$, the factors $\h^{(1- 2 \delta') J}$ decrease as $J$ increases.  Thus, by the mapping properties \eqref{eqn:star-j-new-mapping-property} established above, we see that the terms $\h^j a \star^j b$ ($j \leq J$) satisfy the analogue of the estimate \eqref{eqn:required-estimate-for-r} required by $r$.  These terms are supported on the intersection of the supports of $a$ and $b$.  Choosing $R$ large enough that $\{\xi : |\xi - \tau | < R \}$ contains those supports, we reduce to showing that for each fixed $\gamma$ and $N$, the estimate \eqref{eqn:partial-zeta-gamma-r-zeta-trivz} holds provided that $J$ is large enough but fixed.  We record the details below in the notationally simplest case $\gamma = 0$.  The general case may be verified by differentiating each step of the proof, noting that our inputs from \cite{nelson-venkatesh-1} apply to derivatives (see \cite[\S7.7, Proof of Prop]{nelson-venkatesh-1} for further details).

\emph{Reduction to the case of localized symbols.}  We may assume -- after smoothly decomposing $a$ and $b$ into $\h^{-\O(1)}$ pieces -- that for some $\omega_1, \omega_2 \in \mathfrak{g}^\wedge$ satisfying the estimates
\begin{equation*}
  \omega_1, \omega_2 =
  \tau + \O(\h^{\delta '' - \delta '}),
\end{equation*}
the symbols $a$ and $b$ satisfy the support conditions
\begin{equation}\label{eqn:support-a-vs-omega-1}
  \begin{split}
    \supp(a) &\subseteq \{ \xi \in \mathfrak{g}^\wedge : |\xi' - \omega_1'| \leq \h^{\delta '}, |\xi'' - \omega_1''| \leq \h^{\delta ''} \},
    \\
    \supp(b) &\subseteq \{ \xi \in \mathfrak{g}^\wedge : |\xi' - \omega_2'| \leq \h^{\delta '}, |\xi'' - \omega_2''| \leq \h^{\delta ''} \},
  \end{split}
\end{equation}
with $\tau$-coordinates $\xi = (\xi ', \xi '')$ as usual.  In view of our assumptions that $\h \lll 1$ and $\delta ', \delta '' > 0$, these support conditions constrain the symbols $a$ and $b$ to small neighborhoods of their respective ``basepoints'' $\omega_1$ and $\omega_2$.  It may help to recall here that
\begin{equation*}
  \delta '' - \delta ' < \delta ' < \delta ''.
\end{equation*}

We pause to describe the Fourier transforms $a_{\h}^\vee, b_{\h}^\vee$.  Set
\begin{equation}\label{eqn:notation-for-describing-fourier-transforms-of-superlocalized-symbols}
  A' := \h^{-1 + \delta '},
  \quad 
  A'' := \h^{-1 + \delta ''},
  \quad
  m := \dim(\mathfrak{g}) - \rank(\mathfrak{g}),
  \quad 
  n := \rank(\mathfrak{g}).
\end{equation}
It follows then readily from the definition of $S^{\tau}_{ \delta ', \delta ''}$, the support conditions \eqref{eqn:support-a-vs-omega-1} of $a$ and $b$ and elementary Fourier analysis that with respect to $\tau$-coordinates $x = (x',x'')$ on $\mathfrak{g}$, we may write
\begin{equation}\label{eqn:a-h-vee-fourier-transform}
  a_{\h}^\vee(x)
  = e^{- \langle x, \omega_1/\h \rangle}
  (A')^m
  (A'')^n
  \phi_1(A' x', A'' x''),
\end{equation}
\begin{equation*}
  b_{\h}^\vee(x)
  = e^{- \langle x, \omega_2/\h \rangle}
  (A')^m
  (A'')^n
  \phi_2(A' x', A'' x''),
\end{equation*}
where $\phi_1, \phi_2$ are Schwartz functions on $\mathfrak{g}$ with each fixed Schwartz seminorm of size $\O(1)$.  From this description we see that for any fixed nonnegative integers $k', k'', l', l''$,
\begin{equation}\label{eqn:moment-estimates-lotsa-primes}
  \int_{x,y \in \mathfrak{g}}
  \left\lvert
    a_{\h}^\vee(x)
    b_{\h}^\vee(y)
  \right\rvert
  |x'|^{k'}
  |x''|^{k''}
  |y'|^{l'}
  |y''|^{l''}
  \, d x \, d y
  \ll
  (A')^{-k'-l'}
  (A'')^{-k''-l''}.
\end{equation}

Set \[Q := \h^{-1} \langle \zeta \rangle \in \mathbb{R}_{\geq 1},\] so that our goal bound (specialized as indicated to $\gamma ' =0 , \gamma'' = 0$) may be written
\begin{equation}\label{eqn:required-estimate-for-r-2}
  r(\zeta)
  \ll
  Q^{-N}.
\end{equation}
Since $\h \lll 1$, we have $Q \ggg 1$.

\emph{Reduction to the case of controlled arguments.}  We fix $\eps > 0$ sufficiently small in terms of $\delta '$ and $\delta ''$.  We consider first the case that $|\zeta| \geq Q^{\eps}$.  In that case -- in view of the estimate $Q \ggg 1$ and the assumed support conditions \eqref{eqn:support-a-vs-omega-1} on $a$ and $b$ -- we have $a \star ^j b (\zeta) = 0$ for all $j$ (in particular, for all $j \leq J$).  Indeed, $a \star^j b$ is a linear combination of products of derivatives of $a$ and $b$; those derivatives have disjoint support, hence their products vanish.

We thereby reduce to showing that $a \star_{\h} b(\zeta) \ll Q^{-N}$.  This estimate may be verified by a crude application of partial integration to the integral representation \eqref{eqn:rescaled-star-integral-rep}, repeating the argument around \cite[(7.15), (7.16)]{nelson-venkatesh-1} verbatim.  We thereby reduce to the case $|\zeta| < Q^{\eps}$.

\emph{Reduction to the case of nearby arguments.}  We next to reduce to the critical case
\begin{equation}\label{eqn:critical-case-zeta-near-tau}
  \zeta = \tau + O(\h^{\delta '' - \delta'}).
\end{equation}
Suppose otherwise that $\zeta - \tau \ggg \h^{\delta '' - \delta'}$.  Since $a$ and $b$ are both supported on $\tau + \O(\h^{\delta '' - \delta '})$, we then have $a \star^j b (\zeta) = 0$ for all $j$, so our task is to check that
\[a \star_{\h} b(\zeta) \ll \h^N.\] We start with the integral representation \eqref{eqn:rescaled-star-integral-rep}.  The estimate \eqref{eqn:moment-estimates-lotsa-primes} says informally that $a_{\h}^\vee(x) b_{\h}^\vee(y)$ is concentrated on $x', y' \ll \h^{1 - \delta '}$ and $x'', y'' \ll \h^{1 - \delta ''}$.  Quantitatively, let us define smooth cutoffs $\chi '$ (resp. $\chi ''$) on $\mathfrak{g}_\tau^{\flat}$ (resp. $\mathfrak{g}_\tau$) by using the coordinates defined in \S\ref{sec:eucl-coord} to transport some fixed smooth cutoffs on the corresponding Euclidean spaces.  Setting \[B' := \h^{-\eps} (A')^{-1}, \quad B'' := \h^{-\eps} (A'')^{-1}.
\]
we deduce from \eqref{eqn:moment-estimates-lotsa-primes} that the error incurred in the integral representation \eqref{eqn:rescaled-star-integral-rep} by replacing our original cutoff $\chi(x) \chi(y)$ with the reduced cutoff
\begin{equation*}
  f(x,y) := \chi'(x'/B') \chi''(x''/B'') \chi'(y'/B')
  \chi''(y''/B'')
\end{equation*}
is $\O(\h^\infty)$.  Having shrunk the cutoff in this way, we open the Fourier integrals defining $a_{\h}^\vee(x)$ and $b_{\h}^\vee(y)$ to express $a \star_{\h} b(\zeta)$ up to negligible error as
\begin{equation*}
  \h^{-2 \dim(\mathfrak{g})}
  \int_{\xi,\eta}
  a(\xi) b(\eta)
  \left(
    \int_{x,y}
    f(x,y)
    e^{(x (\zeta - \xi) + y (\zeta - \eta) + \{ x, y \} \zeta )/\h}
    \, d x \, d y
  \right)
  \, d \xi \, d \eta
\end{equation*}
Since $a(\xi) b(\eta)$ vanishes unless
\begin{equation}\label{eqn:xi-eta-size-before-ibp}
  \xi, \eta = \tau + \O(\h^{\delta '' - \delta '})
\end{equation}
in which case $a(\xi) b(\eta) \ll 1$, it is enough to show that if \eqref{eqn:xi-eta-size-before-ibp} holds and $|\zeta - \tau| \ggg \h^{\delta '' - \delta'}$, then
\begin{equation}\label{eq:int_x-y-fx-y-ex-zeta-xi-+-y-zeta-eta-+--x-y}
  \int_{x,y}
  f(x,y)
  e^{(x (\zeta - \xi) + y (\zeta - \eta) + \{ x, y \} \zeta )/\h}
  \, d x \, d y
  \ll \h^N.
\end{equation}
To see this, we integrate by parts, but in a more refined way than in \cite[\S7]{nelson-venkatesh-1}.  We record the details below in Lemma \ref{lem:outsourced-IBP}.

\emph{Taylor expansion.}  Having reduced to the critical case \eqref{eqn:critical-case-zeta-near-tau}, we introduce the notation
\[
  \Omega(x,y,\zeta) := e ^{\{x, y \} \zeta } = \sum _{ \substack{
      \alpha, \beta, \gamma : \\
      |\gamma| \leq \min(|\alpha|,|\beta|) } } c_{\alpha \beta \gamma} x^{\alpha} y^{\beta} \zeta^{\gamma} = \sum _{j \geq 0} \Omega_j(x,y,\zeta),
\]
where $\Omega_j$ denotes the homogeneous component obtained by restricting the summation to $|\alpha| + |\beta| - |\gamma| = j$, so that
\[
  a \star^j b(\zeta) = \int _{x , y} a^\vee(x) b^\vee(y) e^{ x \zeta} e^{y \zeta} \Omega_j(x,y,\zeta) \, d x \, d y.
\]
We write $\Omega^{(J)} := \Omega - \sum_{0 \leq j < J} \Omega_j$ for the remainder obtained by subtracting from $\Omega$ its first $J$ homogeneous components.  Using the integral representation \eqref{eqn:rescaled-star-integral-rep}, we may then split $r(\zeta) = r'(\zeta) + r''(\zeta)$, where
\begin{equation*}
  r'(\zeta) :=
  \int_{x,y}
  a_{\h}^\vee(x) b_{\h}^\vee(y)
  \Omega^{(J)}(x,y,\zeta/\h)
  \, d x \, d y,
\end{equation*}
\begin{equation*}
  r''(\zeta) :=
  \int_{x,y}
  a_{\h}^\vee(x) b_{\h}^\vee(y)
  (\chi(x) \chi(y) - 1)
  \Omega(x,y,\zeta/\h)
  \, d x \, d y.
\end{equation*}
Since $|\Omega(x,y,\zeta)| = 1$ and $\chi$ is identically $1$ near the origin, we obtain from \eqref{eqn:moment-estimates-lotsa-primes} the estimate $r''(\zeta) \ll (A' A'')^{-M}$ for any fixed $M$.  Since $\max(\delta ' , \delta '') < 1$, it follows that $r''(\zeta) \ll \h^M$ for any fixed $M$.  Our restriction $|\zeta| \leq Q^{\eps}$ implies that, say, $Q \ll \h^{-2}$, so this last estimate for $r''(\zeta)$ suffices.

It remains to estimate $r'(\zeta)$.  For this we Taylor expand $\Omega^{(J)}$ as in \cite[\S7.4]{nelson-venkatesh-1}, but incorporating the refined Taylor coefficient support condition afforded by Lemma \ref{lem:constraint-coefficients-via-BCDH}.  Thanks to the condition \eqref{eqn:critical-case-zeta-near-tau} and Lemma \ref{lem:symbol-class-change-basepoint}, we may and shall assume that $\zeta = \tau$.  We apply Lemma \ref{lem:constraint-coefficients-via-BCDH} to write $\Omega_j(x,y,\tau/\h)$ as
\[
  \sum_{ \substack{ \alpha', \alpha '', \beta', \beta '',\gamma :
      \\
      |\alpha '| + |\alpha ''| + |\beta '| + |\beta ''|
      - |\gamma| = j,  \\
      |\gamma| \leq \min(|\alpha '| + |\alpha ''|, |\beta '| + |\beta ''|),
      \\
      |\alpha ' | + |\beta' | \geq 2 |\gamma| } } c_{\alpha' \alpha '' \beta' \beta '' \gamma} (x')^{\alpha '} (x'')^{\alpha ''} (y')^{\beta'} (y'')^{\beta''} (\tau/\h)^\gamma,
\]
with coordinates as in the formulation of that Lemma.  Using the analyticity of $\{x,y\}$ and the estimate $\{x,y\} \ll |x| \cdot |y|$, we see that $c_{\alpha' \alpha '' \beta' \beta '' \gamma} \ll R^{j}$ for some fixed $R > 0$ (with the implied constant independent of $j$).  We claim that for each term as in the above sum and all $x,y = \O(1)$, we have
\begin{equation}\label{eqn:epic-bound-by-rho}
  (x')^{\alpha '}
  (x'')^{\alpha ''}
  (y')^{\beta'}
  (y'')^{\beta''}
  (\tau/\h)^\gamma
  \ll
  \rho^j
\end{equation}
where we abbreviate
\begin{equation*}
  \rho :=
  \max(|x'|,|y'|,|x''|,|y''|,|x'|^2/\h,|y'|^2/\h).
\end{equation*}
Indeed, since $|\tau| \asymp 1$, the LHS of \eqref{eqn:epic-bound-by-rho} is majorized by
\begin{equation*}
  \max(|x'|, |y'|)^{|\alpha '| + |\beta '| - 2 |\gamma|}
  \max(|x''|,|y''|)^{|\alpha ''| + |\beta ''|}
  (\max(|x'|,|y'|)^2 /\h)^{|\gamma|}
\end{equation*}
which is bounded in turn, thanks to the inequality $|\alpha '| + |\beta '| - 2 |\gamma| \geq 0$, by
\begin{equation*}
  \rho ^{|\alpha '| + |\beta '| - 2 |\gamma|}
  \rho ^{|\alpha ''| + |\beta ''|}
  \rho^{|\gamma|}
  = \rho^j,
\end{equation*}
as required.  Bounding the number of relevant multi-indices crudely by $(1 + j)^{\O(1)}$, we deduce that
\begin{equation*}
  \Omega_j(x,y,\tau/\h) \ll (1 + j)^{\O(1)} (R \rho)^j.
\end{equation*}
We now argue as in \cite[\S7.4]{nelson-venkatesh-1} -- using the trivial bounds $\Omega(x,y,\tau/\h) \ll 1$, $\Omega_j(x,y,\tau/\h) \ll_j 1$ when $\rho \geq 1/2 R$ and summing over $j \geq J$ when $\rho \leq 1/2 R$ -- to see that
\begin{equation*}
  \Omega^{(J)}(x,y,\tau/\h)
  \ll \rho^J.
\end{equation*}
Our task thereby reduces to verifying that if $J$ is fixed but large enough in terms of $N$, then
\begin{equation}\label{eqn:required-moment-bound-with-rho}
  \int_{x,y} |a_{\h}^\vee(x)
  b_{\h}^\vee(y)| \rho^J \, d x \, d y
  \ll \h^N.
\end{equation}
The moment bound \eqref{eqn:moment-estimates-lotsa-primes} gives
\begin{align*}
  \int_{x,y} |a_{\h}^\vee(x)
  b_{\h}^\vee(y)| \rho^J \, d x \, d y
  &\ll
    \max
    \left(
    (A')^{-1},
    (A'')^{-1},
    (A')^{-2}/\h
    \right)^J
  \\
  &=
    \max
    \left(
    \h^{1 - \delta '},
    \h^{1 - \delta ''},
    \h^{1 - 2 \delta '}
    \right)^J.
\end{align*}
Since $\delta ' < \delta '' < 2 \delta ' < 1$, this last expression is indeed $\leq \h^N$ for large enough $J$.  The proof of \eqref{eqn:star-j-asymp-expn-new} is thus complete.  \qed

\subsubsection{Integration by parts}
We postponed in the above proof the following application of integration by parts.
\begin{lemma}\label{lem:outsourced-IBP}
  Let $\h \lll 1$, let $\delta', \delta ''$ be fixed quantities satisfying \eqref{eqn:delta-delta-prime-hypotheses}, let $\eps > 0$ be fixed but small enough in terms of $\delta ', \delta ''$, let $\tau$ belong to a fixed compact subset of $\mathfrak{g}^\wedge_{\reg}$, let $\xi, \eta \in \mathfrak{g}^\wedge$ with $\xi = \tau + \O(\h^{\delta '' - \delta '})$, and let $\zeta \in \mathfrak{g}^\wedge$ be of the form $\O(\h^{-\eps})$.  Suppose moreover that
  \[\zeta - \tau \ggg \h^{\delta '' - \delta '}.\]
  Set
  \[B' := \h^{1 - \delta ' - \eps } \leq B'' := \h^{1 - \delta '' - \eps } \lll 1.
  \]
  Let $f \in C_c^\infty(\mathfrak{g} \times \mathfrak{g})$ satisfy the support condition (in $\tau$-coordinates)
  \begin{equation*}
    f(x,y) \neq 0
    \quad \implies \quad  x', y' \ll B', \quad 
    x'', y'' \ll B''
  \end{equation*}
  and the derivative bounds
  \begin{equation*}
    \partial_{x'}^{\alpha'}
    \partial_{x''}^{\alpha''}
    \partial_{y'}^{\beta'}
    \partial_{y''}^{\beta''}
    f(x,y) \ll
    (B')^{-|\alpha'| - |\beta'|}
    (B'')^{-|\alpha''| - |\beta''|}
  \end{equation*}
  for all fixed multi-indices $\alpha ', \alpha '', \beta ', \beta ''$.  Define the phase function
  \[
    \phi(x,y,\zeta) := x (\zeta -\xi) + y (\zeta -\eta) + \{x, y\} \zeta \in i \mathbb{R}.
  \]
  Then for each fixed $\gamma \in \mathbb{Z}_{\geq 0}^{\dim(\mathfrak{g})}$ and $N \geq 0$, we have
  \begin{equation}\label{eqn:f-x-y-e-phi-blah-ibp-triv}
    \partial_\zeta^{\gamma}
    \int_{x,y} f(x,y)
    e^{\phi(x,y,\zeta)/\h} \, d x \, d y
    \ll \h^N
  \end{equation}
\end{lemma}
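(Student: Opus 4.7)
The reduction to $\gamma = 0$ is immediate: since $\phi$ is linear in $\zeta$ with $\partial_\zeta\phi = x + y + \{x,y\}$, we have $\partial_\zeta^\gamma e^{\phi/\h} = \h^{-|\gamma|}(\partial_\zeta\phi)^{\otimes|\gamma|} e^{\phi/\h}$ (all higher $\zeta$-derivatives of $\phi$ vanish), and on $\supp(f)$ this costs only an extra factor $\h^{-|\gamma|}(B')^{|\gamma|} = \h^{-|\gamma|(\delta'+\eps)}$, which is a fixed power of $\h$ (since $\gamma$ is fixed) and may be absorbed by enlarging $N$.

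To choose the IBP direction I exploit the hypothesis $|\zeta-\tau| \ggg \h^{\delta''-\delta'}$: decomposing $\zeta - \tau = (\zeta-\tau)' + (\zeta-\tau)''$ in $\tau$-coordinates, at least one component has norm $\ggg \h^{\delta''-\delta'}$, and fixing a $\tau$-coordinate basis of $\mathfrak{g}^\wedge$ I select an index $v$ with $|(\zeta-\tau)_v| \ggg \h^{\delta''-\delta'}$. Let $v^*$ denote the dual basis vector in $\mathfrak{g}$ (lying in $\mathfrak{g}_\tau^\flat$ or $\mathfrak{g}_\tau$) and $B_{v^*} \in \{B', B''\}$ the corresponding support scale of $f$. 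The IBP is performed scalar-wise in the direction $x_{v^*}$. The essential point is that on $\supp(f)$,
\[
\partial_{x_{v^*}}\phi = (\zeta-\xi)_v + (\partial_{x_{v^*}}\{x,y\})\cdot\zeta,
\]
where the leading term has size $\asymp |(\zeta-\tau)_v|$ (since $|\xi-\tau| \ll \h^{\delta''-\delta'}$ is much smaller than $|(\zeta-\tau)_v|$), while the correction is $O(|y||\zeta|) \ll B'\h^{-\eps} = \h^{1-\delta'-2\eps}$, which is $\lll \h^{\delta''-\delta'}$ provided $\delta'' < 1$ and $\eps$ is small. Hence $|\partial_{x_{v^*}}\phi| \asymp |(\zeta-\tau)_v|$ uniformly on $\supp(f)$.

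Iterating IBP $K$ times requires bounds on the derivatives $\partial_{x_{v^*}}^k(1/\partial_{x_{v^*}}\phi)$, which follow from $|\partial_{x_{v^*}}^k\{x,y\}| \ll |y|$ for all $k \geq 1$ (the Taylor expansion of $\{x,y\}$ is divisible by both $x$ and $y$, so each $x$-derivative leaves at least one factor of $y$); each such derivative costs a factor $\ll B_{v^*}^{-1}$, matching the cost of differentiating $f$ itself. Accordingly,
\[
|I| \ll \h^K \cdot |(\zeta-\tau)_v|^{-K} \cdot B_{v^*}^{-K} \cdot \vol(\supp f),
\]
and each per-step gain factor $\h/[|(\zeta-\tau)_v| \cdot B_{v^*}]$ is bounded by $\h / [\h^{\delta''-\delta'} \cdot \h^{1-\delta'-\eps}] = \h^{2\delta'-\delta''+\eps}$, a positive power of $\h$ thanks to the hypothesis $\delta'' < 2\delta'$. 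Since $\vol(\supp f)$ is itself a positive power of $\h$, choosing $K$ fixed but large enough yields $|I| \ll \h^N$ for any prescribed fixed $N$. The main obstacle is the need to control the cross term $\{x,y\}\cdot\zeta$ in the phase, both for its first-order contribution to the gradient of $\phi$ and for its higher $x_{v^*}$-derivatives; both are handled uniformly by combining $\delta'' < 1$, the constraint $|\zeta| \ll \h^{-\eps}$ with $\eps$ small, and the vanishing of $\{x,y\}$ to first order in each variable.
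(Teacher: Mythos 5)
Your overall skeleton (reduce to $\gamma=0$, pick a direction dual to a coordinate in which $\zeta-\tau$ is large, and integrate by parts in that direction) is the same as the paper's, but the crucial step is not justified, and as written it is false. You bound the cross term in the phase derivative by $\lvert(\partial_{x_{v^*}}\{x,y\})\cdot\zeta\rvert \ll |y|\,|\zeta| \ll B'\h^{-\eps}$, but on $\supp(f)$ one only has $|y|\ll B''$ (the $y''$-component may be as large as $B''=\h^{1-\delta''-\eps}$, which is much larger than $B'$). With the correct crude bound the correction is only $O(B''\h^{-\eps})=\h^{1-\delta''-2\eps}$, and this need not be $\lll \h^{\delta''-\delta'}$: under the standing hypotheses \eqref{eqn:delta-delta-prime-hypotheses} one can have, e.g., $\delta'=0.49$, $\delta''=0.97$ (which is the regime actually used later, $\delta'$ near $1/2$ and $\delta''$ near $1$), where the correction may be of size about $\h^{0.03}$ while the guaranteed size of your main term $(\zeta-\xi)_v$ is only about $\h^{0.48}$. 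So your claim $|\partial_{x_{v^*}}\phi|\asymp|(\zeta-\tau)_v|$ fails, the phase need not be non-stationary by your estimates, and the whole iteration collapses (your subsequent claim that each derivative of $1/\partial_{x_{v^*}}\phi$ costs only $B_{v^*}^{-1}$ also relies on this unproved lower bound).

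The missing idea is exactly the Lie-algebraic cancellation that the paper's proof is built around: decompose $y=y'+y''$ in $\tau$-coordinates and use that $y''\in\mathfrak{g}_\tau$ centralizes $\tau$, so every BCH monomial built from the differentiation direction and $y''$-factors pairs against $\zeta-\tau$ rather than $\zeta$ (e.g.\ $\langle[z,y''],\zeta\rangle=\langle[z,y''],\zeta-\tau\rangle$). This yields the refined bound $\partial\{x,y\}\cdot\zeta \ll B'|\zeta|+B''|\zeta-\tau|$ (the paper's \eqref{eqn:partial-t-brackets-zeta-B-primes}), and both terms are $\lll|\zeta-\tau|$ because $\delta''<1$ and $B''\lll 1$; only then does $|\partial\phi|\gg|\zeta-\tau|$ follow, after which a careful non-stationary-phase lemma (the paper invokes Lemma \ref{lem:IBP}, checking that $RU$ and $(QR)^2/Y$ are large, which uses $\delta''<2\delta'$ and $\delta''<1$) finishes the proof. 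Your crude bound $|\partial^k_{x_{v^*}}\{x,y\}|\ll|y|$ cannot see this cancellation, so the gap is genuine and is precisely the delicate part of the argument.
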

Lemma \ref{lem:outsourced-IBP} implies the estimate \eqref{eq:int_x-y-fx-y-ex-zeta-xi-+-y-zeta-eta-+--x-y} required earlier.  The proof of Lemma \ref{lem:outsourced-IBP} invokes the following useful lemma from \cite[\S8]{MR3127809}.
\begin{lemma}\label{lem:IBP}
  Let $Y \geq 1$, $X, Q, U, R > 0$, 
  and suppose that $w$ 
  is a smooth function with support on a compact interval $E \subseteq R$ such that for all $j \in \mathbb{Z}_{\geq 0}$,
  \begin{equation*}
    w^{(j)}(t) \ll_j X U^{-j}.
  \end{equation*}
  Suppose $\Phi$ is a smooth real-valued function on $E$ such that
  \begin{equation*}
    |\Phi'(t)| \geq R
  \end{equation*}
  for some $R > 0$, and
  \begin{equation*}
    \Phi^{(j)}(t) \ll_j Y Q^{-j}, \qquad \text{for } j=2, 3, \dots.
  \end{equation*}
  Then for all $A \geq 0$,
  \begin{equation*}
    \int_{t \in \mathbb{R} }
    w(t) e^{i \Phi(t)} dt \ll_A \vol(E) X [(QR/\sqrt{Y})^{-A} + (RU)^{-A}].
  \end{equation*}
\end{lemma}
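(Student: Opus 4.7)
The plan is to bound the oscillatory integral by repeated integration by parts against the phase $\Phi$. Define the operator $L f := -\tfrac{d}{dt}(f/(i\Phi'))$, which is well-defined since $|\Phi'| \geq R > 0$. Because $e^{i\Phi}/(i\Phi')$ has derivative $e^{i\Phi}$, integration by parts on $[\alpha,\beta]$ gives $\int f\, e^{i\Phi}\,dt = \int (Lf)\,e^{i\Phi}\,dt$ whenever $f$ vanishes at the endpoints. After a harmless smooth truncation (absorbed into the implicit constants) I may assume $w$ vanishes to infinite order at $\alpha,\beta$, so iterating $A$ times gives
\[
  \int_\alpha^\beta w\, e^{i\Phi}\, dt = \int_\alpha^\beta (L^A w)\, e^{i\Phi}\, dt,
\]
and the problem reduces to bounding $(\beta-\alpha)\sup_t |L^A w(t)|$.

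Next, I would verify by induction on $A$ that $L^A w$ is a finite sum of terms of the form
\[
  C_{k, r, \vec j}\, \frac{w^{(k)} \prod_{i=1}^{r} \Phi^{(j_i+1)}}{(i\Phi')^{r + A}},
\]
indexed by integers $k, r \geq 0$ and tuples $\vec j = (j_1,\dots,j_r)$ with $j_i \geq 1$ and $k + \sum_i j_i = A$, where the coefficients $C_{k,r,\vec j}$ and the total number of terms are bounded in terms of $A$ alone. The inductive step is routine: one application of $L$ either differentiates $w^{(k)}$ (raising $k$ by one), differentiates a factor $\Phi^{(j_i+1)}$ (raising that $j_i$ by one), or differentiates the denominator $(\Phi')^{r+A}$, which introduces a new factor $\Phi''$ (appending $j_{r+1}=1$) and increments $r$. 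In every case the identity $k + \sum j_i = (\text{current }A)$ is preserved.

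Substituting the hypotheses $|w^{(k)}| \leq X U^{-k}$, $|\Phi^{(j+1)}| \leq Y Q^{-(j+1)}$ (valid for $j \geq 1$), and $|\Phi'| \geq R$, and using $\sum j_i = A-k$ together with the algebraic identity $(Y/QR)^r = (\sqrt{Y}/QR)^{2r}(QR)^r$, each term is dominated by
\[
  X \cdot (RU)^{-k} \cdot (QR)^{-(A-k-r)} \cdot (\sqrt{Y}/QR)^{2r},
\]
subject to $0 \leq r \leq A-k$. The hard part will be funneling these three factors into the advertised sum. Outside the regime $RU \geq 1$ and $QR \geq \sqrt{Y}$, the bracketed factor on the right side of the lemma already exceeds $1$, and the claim follows from the trivial estimate $|\int w\, e^{i\Phi}| \leq (\beta-\alpha) X$. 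In the remaining regime, $Y \geq 1$ yields $(QR)^{-1} \leq \sqrt{Y}/QR$ and hence $(QR)^{-(A-k-r)} \leq (\sqrt{Y}/QR)^{A-k-r}$; combined with $\sqrt{Y}/QR \leq 1$ and $r \geq 0$, this collapses the bound to $X (RU)^{-k} (\sqrt{Y}/QR)^{A-k}$. The elementary inequality $a^k b^{A-k} \leq a^A + b^A$ for $a,b \in (0,1]$, applied with $a = 1/(RU)$ and $b = \sqrt{Y}/QR$, finishes the proof.
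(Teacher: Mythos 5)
Your argument is correct, and it fills in something the paper itself does not prove: Lemma \ref{lem:IBP} is quoted verbatim from \cite[\S 8, Lemma 8.1]{MR3127809} and used as a black box, so there is no in-paper proof to compare against. Your proof is the standard one and is essentially the argument of the cited source: iterate the operator $L f = -(f/(i\Phi'))'$, expand $L^A w$ into $O_A(1)$ terms of the form $w^{(k)}\prod_i \Phi^{(j_i+1)}/(i\Phi')^{r+A}$ with $j_i\geq 1$ and $k+\sum_i j_i = A$ (your induction verifying this bookkeeping is right), insert the hypotheses, and then the case split on $RU\geq 1$, $QR\geq\sqrt{Y}$ together with $a^k b^{A-k}\leq a^A+b^A$ for $a,b\in(0,1]$ yields the stated bound after multiplying by the length $\beta-\alpha$ of the support. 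The only blemish is the sentence about a ``harmless smooth truncation'': a genuine truncation of $w$ near the endpoints would \emph{not} be harmless (a cutoff on scale $\delta$ introduces derivatives of size $\delta^{-j}$, incompatible with the bound $X U^{-j}$ unless $\delta\gg U$, and the truncated region contributes an error of size $\approx X\delta$), and indeed some endpoint vanishing is essential, since for $w\equiv 1$ on $[\alpha,\beta]$ and $\Phi(t)=Rt$ the conclusion fails. Fortunately no truncation is needed: ``support on $[\alpha,\beta]$'' means $w\in C_c^\infty(\mathbb{R})$ with $\operatorname{supp}(w)\subseteq[\alpha,\beta]$ (this is how the lemma is applied in the proof of Lemma \ref{lem:outsourced-IBP}), so $w$ and all of its derivatives vanish at $\alpha$ and $\beta$ and every boundary term in the iterated integration by parts vanishes identically. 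Replace that sentence with this observation and the proof is complete.
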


\begin{proof}[Proof of Lemma \ref{lem:outsourced-IBP}]
  We may and shall assume that $\gamma = 0$.  Indeed, we may reduce to this case by absorbing factors such as $\{x,y\}^{\gamma}$ into the weight function $f$.
  
  At least one of the following possibilities occurs:
  \begin{enumerate}[(i)]
  \item $|\zeta ' - \tau '| \gg |\zeta - \tau|$.
  \item $|\zeta '' - \tau ''| \gg |\zeta - \tau|$.
  \end{enumerate}

  Consider first case (i).  We may then choose a unit speed one-parameter subgroup $t \mapsto z_t'$ of $\mathfrak{g}_\tau^{\flat}$ so that $\partial_{t} z_t' (\zeta - \tau) \gg |\zeta - \tau|$.  Set $e_1 := \partial_t z_t'$; it is the unit vector for which $z_t ' = t e_1$.  We extend $e_1$ to an orthonormal basis $e_1,\dotsc,e_{\dim(\mathfrak{g})}$ of $\mathfrak{g}$, and set $W := \oplus_{j \geq 2} \mathbb{R} e_j$.  The LHS of \eqref{eqn:f-x-y-e-phi-blah-ibp-triv} may then be written as the iterated integral
  \begin{equation*}
    \int_{ x \in W, y \in \mathfrak{g} }
    \int_{t \in \mathbb{R}}
    f(x + z_t', y)
    e^{\phi(x + z_t',y,\zeta)/\h}
    \, d t
    \, d x \, d y.
  \end{equation*}
  It suffices to show that the inner integral over $t$ is $\ll \h^N$ for all $x,y$ in the support of the outer integral.  We accordingly focus on individual $x,y \in \mathfrak{g}$ satisfying
  \begin{equation}\label{eqn:x-y-g-hypotheses-ibp}
    \text{$x',y' \ll B'$ and $x'',y'' \ll B''$}
  \end{equation}
  and set
  \begin{equation*}
    w(t) := f(x + z_t', y),
    \quad
    \Phi(t) := \phi(x + z_t', y,\zeta) / i \h.
  \end{equation*}
  We aim to show that
  \begin{equation}\label{eqn:ibp-case-y-prime}
    \int_{t \in \mathbb{R} }
    w(t)
    e^{i \Phi(t)} \, d t
    \ll \h^N.
  \end{equation}
  We will do so via Lemma \ref{lem:IBP}.  To apply that lemma, we must estimate the derivatives of $\Phi$.  To that end, we claim first that for $t \ll B'$ (equivalently, $z_t' \ll B'$), we have
  \begin{equation}\label{eqn:partial-t-brackets-zeta-B-primes}
    \partial_t \{ x + z_t', y \} \zeta \ll
    B' |\zeta| + B'' |\zeta - \tau|.
  \end{equation}
  To verify \eqref{eqn:partial-t-brackets-zeta-B-primes}, we expand the analytic function $\{,\}$ as a Taylor series.  From our assumed upper bounds on $x,y,\zeta$, we see that the error incurred in $\partial_t \{x + z _t ', y \} \zeta$ by replacing $\{, \}$ with the first $J$ homogeneous components of its Taylor series is $\O(\h^N)$, for any fixed $N$, provided that $J$ is fixed but large enough in terms of $N$.  Since our hypotheses imply that the RHS of \eqref{eqn:partial-t-brackets-zeta-B-primes} is bounded from below by some fixed power of $\h$, it is thus enough to verify for each fixed iterated Lie monomial $L$ of degree at least two that
  \begin{equation}\label{eqn:partial-t-L-x-t-prime-etc}
    \partial_t L( z_t', y ) \zeta \ll
    B' |\zeta| + B'' |\zeta - \tau|.
  \end{equation}
  We verify this first for $L$ the commutator bracket $[,]$.  By linearity, we may consider separately the cases $y =y '$ and $y = y''$.  In the first case, we have $\partial_t [z_t', y'] \ll B'$, which gives the adequate estimate $\partial_t [z_t', y] \zeta \ll B' |\zeta|$.  In the second case, the element $y''$ centralizes $\tau$, hence
  \begin{align*}
    \langle [z_t', y''], \zeta  \rangle
    &=
      \langle z_t', [y'', \zeta]  \rangle
    \\
    &=
      \langle z_t', [y'', \zeta - \tau ]  \rangle
    \\
    &=
      \langle [z_t',y''], \zeta - \tau  \rangle,
  \end{align*}
  giving again the adequate estimate $\partial_t [z_t', y''] \zeta \ll B'' |\zeta - \tau|$.  This completes the proof of \eqref{eqn:partial-t-L-x-t-prime-etc} for $L$ of degree two.  We argue similarly for $L$ of higher degree.  For instance, in the case of degree three, we obtain the adequate estimates
  \begin{itemize}
  \item $\partial_t [z_t', [z_t', y']] \zeta \ll (B')^2 |\zeta|$,
  \item $\partial_t [y_1', [z_t', y_2']] \zeta \ll (B')^2 |\zeta|$,
  \item $\partial_t [z_t', [z_t', y'']] \zeta \ll B' B'' |\zeta|$, and
  \item $\partial_t [y_1'', [z_t', y_2'']] \zeta \ll (B'')^2 |\zeta - \tau|$
  \item $\partial_t [y'', [z_t', y']] \zeta \ll B' B'' |\zeta|$.
  \end{itemize}
  for $y', y_1', y_2' \ll B'$ in $\mathfrak{g}_\tau^{\flat}$ and $y'', y_1'', y_2'' \ll B''$ in $\mathfrak{g}_\tau$.  In general, if our monomial features at least one $y'_j$ or at least two $z_t'$, then we obtain an adequate estimate by taking into account the sizes of $z_t'$, $y'_j$ and $|\zeta|$.  Otherwise, our monomial features one $z_t'$, many $y_j''$ and no $y_j'$.  We then use that each $y_j''$ centralizes $\tau$ to replace $\zeta$ by $\zeta - \tau$ and argue as in the case $L(x,y) = [x,y]$.  This completes the proof of \eqref{eqn:partial-t-L-x-t-prime-etc}, hence that of \eqref{eqn:partial-t-brackets-zeta-B-primes}.

  We observe next that
  \begin{equation}\label{eqn:h-prime-at-least-blah}
    \Phi'(t) \gg
    |\zeta - \tau |/\h.
  \end{equation}
  To see this, we first rewrite the definition in the form
  \begin{equation}\label{eqn:i-h-h-prime-t}
    i \h \Phi'(t) =
    \partial_t z_t' (\zeta - \tau)
    +
    \partial_t z_t' (\tau - \xi)
    +
    \partial_t \{x + z_t', y\} \zeta.
  \end{equation}
  By the construction of $z_t'$, the first term on the RHS of \eqref{eqn:i-h-h-prime-t} is $\gg |\zeta - \tau|$.  The second term is
  \[\ll \h^{\delta '' - \delta '}
    \lll |\zeta - \tau|.\] The third term is
  \[
    \ll B' |\zeta| + B'' |\zeta - \tau| \lll |\zeta - \tau|
  \]
  by \eqref{eqn:partial-t-brackets-zeta-B-primes} and our hypothesis $|\zeta| \ll \h^{-\eps}$.  This completes the verification of \eqref{eqn:h-prime-at-least-blah}.

  We now apply Lemma \ref{lem:IBP} with
  \begin{equation}\label{eqn:XURQ-choices}
    X = 1,
    U = B',
    \quad 
    R \asymp |\zeta - \tau|/\h,
    \quad Y =
    \frac{B'' |\zeta|}{\h}
    \ll  B'' \h^{-1+\eps},
    Q = 1.
  \end{equation}
  By \eqref{eqn:h-prime-at-least-blah}, we may find such an $R$ with $|\Phi'(t)| \geq R$.  We note that for fixed $j \geq 2$, we have $\Phi^{(j)}(t) \ll Y$, or equivalently, $\partial_t^j \{x + z_t', y\} \zeta \ll B'' |\zeta|$.  To see this, we expand $\{,\}$ as above in a Taylor series consisting of iterated Lie monomials and use that $y \ll B' + B'' \asymp B'' \leq 1$.  We note also that $w$ is a smooth function on $\mathbb{R}$ that is supported on an interval $E$ with $\vol(E) \ll B'$ and satisfies the derivative bounds $w^{(j)}(t) \ll_j (B')^{-j}$.  The hypotheses of Lemma \ref{lem:IBP} are thus satisfied.  We observe that
  \[
    R U \gg \h^{\delta '' - 2 \delta ' - \eps } \geq \h^{-\eps}
  \]
  and
  \[
    \frac{(Q R)^2}{Y} \gg \frac{(\h^{\delta '' - \delta ' }/\h)^2}{B'' \h^{-1-\eps}} = \h^{2 \eps - 2 ( 1 - \delta ') - ( 2 \delta ' - \delta '')} \geq \h^{-\eps}.
  \]
  Thus the required estimate \eqref{eqn:ibp-case-y-prime} follows from the conclusion of Lemma \ref{lem:IBP} upon taking $A$ sufficiently large in terms of $\eps$ and $N$.  This completes the proof of Lemma \ref{lem:outsourced-IBP} in case (i).

  The proof in case (ii) is similar but slightly simpler.  We choose a unit speed one-parameter subgroup $t \mapsto z_t''$ of $\mathfrak{g}_\tau$ so that $\partial_t z_t''(\zeta - \tau) \gg |\zeta - \tau|$.  The estimate \eqref{eqn:partial-t-brackets-zeta-B-primes} remains valid for $t \ll B''$, with essentially the same proof: we reduce to estimates for monomials as before and argue separately according as some $y_j'$ appears (in which case we get the bound $\ll B' |\zeta|$) or none appears (in which case we obtain $\ll B'' |\zeta - \tau|$).  (The second case could even be eliminated altogether by exploiting the fact that $\mathfrak{g}_\tau$ is abelian in our setup.)  We make the same choices \eqref{eqn:XURQ-choices} as before (improving $U$ to $B''$ if desired) and conclude once again via Lemma \ref{lem:IBP}.
\end{proof}

\subsubsection{The case of disjoint supports}
\label{sec:case-disj-supp}
We have noted following the statement of Theorem \ref{thm:refined-star-prod} that in order to complete the proof of \eqref{eqn:star-h-new-mapping-property-super-localized}, it suffices to show for
\begin{equation}\label{eqn:tau1-tau2-far-apart}
  \tau_1 - \tau_2 \ggg \h^{\delta '' - \delta '}
\end{equation}
that $a \star_{\h} b \in \h^\infty S^{-\infty}$ for $(a,b) \in S^{\tau_1}_{\delta ', \delta ''} \times S^{\tau_2}_{\delta ', \delta ''}$.  We must check that $\partial_{\zeta}^\gamma (a \star_{\h} b)(\zeta) \ll \h^N \langle \zeta \rangle^{-N}$ for fixed $\gamma,N$ and all $\zeta \in \mathfrak{g}^\wedge$.  To see this, we reduce as in \S\ref{sec:estimates-remainder} to the case that $a,b$ satisfy the support conditions \eqref{eqn:support-a-vs-omega-1} for some $\omega_j = \tau_j + \O(\h^{\delta '' - \delta '})$ and that $|\zeta| < Q^{\eps}$.  We see then by \eqref{eqn:tau1-tau2-far-apart} that $|\zeta - \tau_j| \ggg \h^{\delta '' - \delta '}$ for some $j=1,2$.  If $j=1$, then we conclude by applying the argument following \eqref{eqn:critical-case-zeta-near-tau} verbatim.  If $j=2$, then we apply the same argument but with the roles of $a$ and $b$ reversed.

\subsubsection{The mixed case}\label{sec:mixed-case}

Here we discuss the proof of part \eqref{item:star-prod-4} of Theorem \ref{thm:refined-star-prod}.  We have noted already that the proof amounts to an interpolation between the proofs of part \eqref{item:star-prod-3} and of \cite[\S7.3]{nelson-venkatesh-1}, so we will be brief.

As in the proof of part \eqref{item:star-prod-3}, it is enough to verify
\begin{itemize}
\item the claimed mapping properties for $\star^j$, and
\item the claimed asymptotic expansion for $a \star_{\h} b$, where (say) $a \in S_{\delta ', \delta ''}^{\tau}$ and $b \in S_\delta^m$ (the same argument applies with the roles of $a$ and $b$ reversed).
\end{itemize}
The former may be verified exactly as in \S\ref{sec:mapp-prop-homog}.  For the latter, we reduce as in \S\ref{sec:estimates-remainder} to verifying that the remainder $r := a \star_{\h} b - \sum _{0 \leq j < J} \h^j a \star^j b$ enjoys the estimate $\partial_\zeta^\gamma r(\zeta) \ll \h^N \langle \zeta \rangle^{-N}$, as in \eqref{eqn:partial-zeta-gamma-r-zeta-trivz}, provided that $J$ is fixed large enough in terms of $N$.  By decomposing our symbols into localized pieces, we may reduce as in \cite[\S7.7]{nelson-venkatesh-1} and \S\ref{sec:estimates-remainder} to the case that
\begin{itemize}
\item $a$ satisfies the support condition \eqref{eqn:support-a-vs-omega-1} for some $\omega_1 = \tau + \O(\h^{\delta '' - \delta '})$, while
\item $b$ is supported on $\omega_2 + \O(\h^\delta \langle \omega_2 \rangle)$ for some $\omega_2 \in \mathfrak{g}^\wedge$.
\end{itemize}
We then have with $(A', A'', m, n)$ as in \eqref{eqn:notation-for-describing-fourier-transforms-of-superlocalized-symbols} and $A := \h^{-1 + \delta} \langle \omega_2 \rangle$ the moment bound
\begin{equation}\label{eqn:moment-estimates-lotsa-primes-mixed}
  \int_{x,y \in \mathfrak{g}}
  \left\lvert
    a_{\h}^\vee(x)
    b_{\h}^\vee(y)
  \right\rvert
  |x'|^{k'}
  |x''|^{k''}
  |y'|^{l'}
  |y''|^{l''}
  \, d x \, d y
  \ll
  (A')^{-k'}
  (A'')^{-k''}
  A^{-l' - l''},
\end{equation}
with notation as in \eqref{eqn:moment-estimates-lotsa-primes}.  We set $Q := \h^{-1} \langle \zeta \rangle \langle \omega_2 \rangle$.  As in \cite[\S7.7]{nelson-venkatesh-1}, the required estimate follows by trivially estimating the integral representation \eqref{eqn:rescaled-star-integral-rep} unless $|\omega_2| \leq Q^{\eps}$.  We reduce further to the case $|\zeta| \geq Q^{2 \eps}$ by integrating by parts (crudely) as in \cite[\S7.7]{nelson-venkatesh-1}.  We reduce further to the case $\zeta = \tau + \O(\h^{\delta '' - \delta '})$ by the same argument as in the reduction to \eqref{eqn:critical-case-zeta-near-tau}, noting that Lemma \ref{lem:outsourced-IBP} imposes no constraints on the variable $\eta$.  Arguing exactly as in \S\ref{sec:estimates-remainder}, we reduce finally to verifying the estimate \eqref{eqn:required-moment-bound-with-rho}.  For this we appeal to the moment bound \eqref{eqn:moment-estimates-lotsa-primes-mixed} and the fact that $\max(2 \delta, 2 \delta ', \delta '') < 1$.

\section{Operators}\label{sec:operator-classes}
Let $\pi$ be a unitary representation of the fixed Lie group $G$ over $\mathbb{R}$.

\subsection{Spaces of operators}\label{sec:spaces-operators}
\index{Lie algebra!Laplacian $\Delta, \Delta_G$} We set
\begin{equation*}
  \Delta := 1 - \sum _{x \in \mathcal{B}(\mathfrak{g}) } x^2 \in \mathfrak{U}(\mathfrak{g})
\end{equation*}
for some fixed basis $\mathcal{B}(\mathfrak{g})$ of $\mathfrak{g}$.  We write more verbosely $\Delta_G$ when we wish to indicate which group is being considered.  As discussed in \cite[\S3.1]{nelson-venkatesh-1}, $\pi(\Delta)$ is a densely-defined self-adjoint positive operator, with bounded inverse having operator norm $\leq 1$.

We write $\pi^{\infty}$ for the space of smooth vectors.  For $s \in \mathbb{Z}$, we denote by $\pi^s$ the Hilbert space completion of $\pi^\infty$ with respect to the inner product
\begin{equation*}
  \langle v_1, v_2 \rangle_{\pi^s} := \langle \pi(\Delta)^s v_1, v_2 \rangle.
\end{equation*}
\index{representations!Sobolev space $\pi^s$} Up to natural identifications,
\[
  \pi^\infty = \cap \pi^s \leq \dotsb \leq \pi^{s+1} \leq \pi^s \leq \pi^{s-1} \leq \dotsb.
\]
The space $\pi^{-\infty}$ of distributional vectors is defined to be the union of the spaces $\pi^s$.  For future reference, we record \cite[(3.1)]{nelson-venkatesh-1}: for fixed $s \in \mathbb{Z}_{\geq 0}$,
\begin{equation}\label{eqn:Delta-s-positive-easy}
  \|v\|_{\pi^s}^2
  \asymp
  \sum _{r = 0}^s
  \sum _{x_1,\dotsc,x_r \in \mathcal{B}(\mathfrak{g})}
  \|\pi(x_1 \dotsb x_r) v\|^2.
\end{equation}

By an \emph{operator} on $\pi$, we mean a linear map $T : \pi^{\infty} \rightarrow \pi^{-\infty}$.  We denote by $\underline{\Psi}^m$ the space of operators $T$ such that for each $s \in \mathbb{Z}$, $n \in \mathbb{Z}_{\geq 0}$ and $x_1,\dotsc,x_n \in \mathfrak{g}$, the following iterated commutator defines a bounded map between the indicated Hilbert spaces:
\begin{equation}\label{eqn:defn-Psi-m}
  [\pi(x_1),[\pi(x_2),[\dotsc,[\pi(x_n),T] \dotsb]]]
  :
  \pi^s \rightarrow \pi^{s-m}.
\end{equation}
\index{operators!underlying space $\underline{\Psi}^m$} We extend the definition to $m = \pm \infty$ by taking the union or intersection over all integers $m$.  (For instance, elements of $\underline{\Psi}^{-\infty}$ may be understood as ``smoothing operators'': they map any space $\pi^s$ into $\pi^{-\infty}$.)  We write more verbosely $\underline{\Psi}^m(\pi)$ when we wish to indicate the representation under consideration.  As in \S\ref{sec:spaces-symbols}, for $m < \infty$, the space $\underline{\Psi}^m$ is a Frechet space equipped with a distinguished family of seminorms, while $\underline{\Psi}^\infty$ is an inductive limit of such spaces.  We refer to \cite[\S3]{nelson-venkatesh-1} for general discussion of the spaces $\underline{\Psi}^m$, parts of which will be recalled below as needed.

\subsection{Operator classes}\label{sec:operator-classes-basic-properties}

\begin{definition}\label{defn:oper-class}
  For fixed $m \in \mathbb{Z}$ and $\delta \in [0,1)$, we write\footnote{ The notation here differs from that in \cite[\S3]{nelson-venkatesh-1}.  There, we used ``$\Psi^m$'' in place of $\underline{\Psi }^m$ and ``$\Psi^m_\delta$'' for the space of $\h$-dependent operators whose norms are bounded in the indicated manner.  The class $\Psi^m_\delta$ defined here plays a similar role.  The relationship between these definitions is as described in the proof of Theorem \ref{thm:nv-star-prod-asymp-h-dependent}.  } \index{operators!operator class $\Psi^m_\delta$}
  \[
    \Psi^m_\delta
  \]
  for the class (\S\ref{sec:classes}) of all $T \in \underline{\Psi }^{\infty}$ such that for each fixed $s,n$ and $x_1,\dotsc,x_n$, the map \eqref{eqn:defn-Psi-m} has operator norm $\O(\h^{-\delta n})$.  We extend the definition to $m = \pm \infty$ as in Definition \ref{defn:basic-symbol-class}, thus $\Psi_\delta^{\infty}$ (resp. $\Psi_\delta^{-\infty}$) is the union (resp. intersection) of $\Psi_\delta^m$ over fixed integers $m$.  We write more verbosely $\Psi_\delta^m(\pi)$ when we wish to indicate the representation under consideration.
\end{definition}

As in \S\ref{sec:basic-symbol-classes}, we may define for a positive real $c$ the class $c \Psi_\delta^m$, and we denote by $\h^\infty \Psi_\delta^m$ the intersection of $\h^N \Psi_\delta^m$ over all fixed $N$.  The classes $\h^\infty \Psi_\delta^{m}$ are independent of $\delta$ and will be denoted simply by $\h^\infty \Psi^m$.

\begin{lemma}\label{lem:composition-operator-classes}
  Composition of operators induces, for each fixed $m_1, m_2$,
  \begin{equation*}
    \Psi_\delta^{m_1} \times \Psi_\delta^{m_2}
    \rightarrow \Psi_\delta^{m_1 + m_2},
  \end{equation*}
  with the convention $\infty + (-\infty) := -\infty$.
\end{lemma}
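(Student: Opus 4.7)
The plan is a direct Leibniz-rule calculation, adapted to track the $\hbar$-dependent operator norm bounds imposed by the class $\Psi_\delta^m$. Throughout, fix $T_1 \in \Psi_\delta^{m_1}$ and $T_2 \in \Psi_\delta^{m_2}$, so that in particular $T_j \in \underline{\Psi}^{m_j}$ and the iterated commutator
\[
  C^{(n)}(T_j; y_1,\dotsc,y_n)
  := [\pi(y_1),[\pi(y_2),[\dotsb,[\pi(y_n), T_j] \dotsb ]]]
\]
defines, for each fixed $s \in \mathbb{Z}$ and $y_1,\dotsc,y_n \in \mathfrak{g}$, a bounded map $\pi^s \rightarrow \pi^{s - m_j}$ with operator norm $\O(\hbar^{-\delta n})$.

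First I would verify that $T_1 T_2 \in \underline{\Psi}^{m_1 + m_2}$; this is really a statement with no $\hbar$ in it, and it follows from the analogous composition closure for the spaces $\underline{\Psi}^m$ recorded in \cite[\S3]{nelson-venkatesh-1} (it may be deduced directly: for fixed $y_1,\dotsc,y_n$, the Leibniz identity below expresses the $n$-fold commutator of $T_1 T_2$ as a finite sum of composites of commutators of the $T_j$, each such composite being a bounded map $\pi^s \rightarrow \pi^{s - m_1 - m_2}$ by the hypotheses $T_j \in \underline{\Psi}^{m_j}$).

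Next I would promote this to the class statement. The key ingredient is the iterated Leibniz rule for commutators against a product: for any operator $A,B$ and any $y_1,\dotsc,y_n \in \mathfrak{g}$,
\[
  C^{(n)}(AB; y_1,\dotsc,y_n)
  =
  \sum_{I \sqcup J = \{1,\dotsc,n\}}
  C^{(|I|)}(A; y_I) \, C^{(|J|)}(B; y_J),
\]
where $y_I$ denotes the tuple indexed by $I$ in its natural order (this is proved by a routine induction on $n$ using $[\pi(y), AB] = [\pi(y),A] B + A [\pi(y),B]$). Applied with $A = T_1$ and $B = T_2$, each summand factors as
\[
  C^{(|I|)}(T_1; y_I) \circ C^{(|J|)}(T_2; y_J)
  : \pi^s \xrightarrow{\ \ } \pi^{s - m_2} \xrightarrow{\ \ } \pi^{s - m_2 - m_1}.
\]
Using the defining estimates of $\Psi_\delta^{m_1}$ and $\Psi_\delta^{m_2}$, the first arrow has operator norm $\O(\hbar^{-\delta |J|})$ and the second $\O(\hbar^{-\delta |I|})$; the composite is $\O(\hbar^{-\delta n})$. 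Since the number of summands is $2^n = \O(1)$ when $n$ is fixed, summing gives the required bound $\|C^{(n)}(T_1 T_2; y_1,\dotsc,y_n)\|_{\pi^s \to \pi^{s-m_1-m_2}} \ll \hbar^{-\delta n}$, which by definition means $T_1 T_2 \in \Psi_\delta^{m_1 + m_2}$. The cases $m_j = \pm \infty$ follow by the usual union/intersection convention.

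I expect no genuine obstacle here: once the Leibniz rule is written out, the derivation is purely bookkeeping. The only subtlety worth guarding against is the translation between the ``$\hbar$-dependent'' formulation of $\Psi_\delta^m$ used in \cite{nelson-venkatesh-1} and the class formulation of Definition \ref{defn:oper-class}; this is the same translation discussed in the proof of Theorem \ref{thm:nv-star-prod-asymp-h-dependent}, and the argument above is stable under it because every estimate is uniform in the implicit $\hbar$.
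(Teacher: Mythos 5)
Your proof is correct and follows essentially the same route as the paper: the paper also expands the iterated commutator of $T_1 T_2$ (citing the proof in \cite[\S3.4]{nelson-venkatesh-1}, i.e.\ the Leibniz identity you write out) into a sum of compositions of iterated commutators of $T_1$ and $T_2$, and then factors each composition through the intermediate Sobolev space $\pi^{s-m_2}$ to combine the two $\hbar^{-\delta\cdot}$ operator norm bounds. The only difference is that you prove the Leibniz expansion explicitly rather than quoting it, which is fine.
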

\begin{proof}
  This follows from the proof of \cite[\S3.4]{nelson-venkatesh-1}.  Indeed, for $(T_1,T_2) \in \Psi_{\delta}^{m_1} \times \Psi_{\delta}^{m_2}$ and fixed $x_1,\dotsc,x_n \in \mathfrak{g}$, we may write $[\pi(x_1),\dotsc,[\pi(x_n),T_1 T_2]]$ as a linear combination of compositions $T_1' T_2'$, where $T_1' = [\pi(y_1),\dotsc,[\pi(y_{n_1}),T_1]]$ and $T_2' = [\pi(z_1),\dotsc,[\pi(z_{n_1}),T_2]]$ with $n_1 + n_2 = n$ and $y_i, z_j \in \mathfrak{g}$ fixed.  We must check that the composition $T_1' T_2' : \pi^s \rightarrow \pi^{s-m}$ has operator norm $\O(\h^{-\delta n})$.  To do so, we factor that composition as $\pi^s \xrightarrow{T_2'} \pi^{s- m_2} \xrightarrow{T_1'} \pi^{s - m}$ and apply our hypotheses to each factor.
\end{proof}

\begin{lemma}\label{lem:Delta-xj-vs-Psi}
  For fixed $m \in \mathbb{Z}$,
  \begin{equation}
    \pi(\Delta)^m \in \Psi_0^{2 m}.
  \end{equation}
  For fixed $x_1,\dotsc, x_m \in \mathfrak{g}$,
  \begin{equation}\label{eqn:lie-alg-ops-op-class}
    \pi(x_1 \dotsb x_m) \in \Psi_0^m.
  \end{equation}
\end{lemma}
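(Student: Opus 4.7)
The plan is to deduce both assertions from a single statement about operators of the form $\pi(u)$ for $u \in \mathfrak{U}(\mathfrak{g})$ a fixed element of degree $\leq k$, namely, that such an operator lies in $\Psi_0^k$. Indeed, for fixed $x_1, \dots, x_m \in \mathfrak{g}$, the element $u := x_1 \cdots x_m \in \mathfrak{U}(\mathfrak{g})$ has degree $m$, so \eqref{eqn:lie-alg-ops-op-class} is an immediate consequence. For $\pi(\Delta)^m$ with $m \geq 0$, we may take $u := \Delta^m$, which has degree $2m$.

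To prove the auxiliary statement, I would first verify that for any fixed $u \in \mathfrak{U}(\mathfrak{g})$ of degree $\leq k$, the operator $\pi(u)$ maps $\pi^s$ boundedly into $\pi^{s-k}$ for each fixed $s \in \mathbb{Z}$. For $s \geq k \geq 0$ this follows upon expanding $u$ in a PBW basis and invoking \eqref{eqn:Delta-s-positive-easy}; the remaining cases reduce to this one by taking formal adjoints with respect to the pairing between $\pi^s$ and $\pi^{-s}$, using the fact that the formal adjoint of $\pi(u)$ is $\pi(u^*)$ for the principal antiautomorphism $u \mapsto u^*$ of $\mathfrak{U}(\mathfrak{g})$, which preserves degree. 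Then to treat iterated commutators, observe that for $y \in \mathfrak{g}$ we have $[\pi(y), \pi(u)] = \pi([y,u])$ where $[y,u] \in \mathfrak{U}(\mathfrak{g})$ is again a fixed element of degree $\leq k$ (using the Leibniz rule and that $\mathrm{ad}(y)$ preserves degree). Iterating, each nested commutator in \eqref{eqn:defn-Psi-m} has the form $\pi(v)$ for a fixed $v$ of degree $\leq k$, and the already-established mapping property gives the required boundedness. Since nothing depends on $\h$, the operator norms are fixed, so the class membership holds with $\delta = 0$.

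It remains to handle $\pi(\Delta)^m$ for negative $m$. The starting point is that $\pi(\Delta)^{-1}$ is a bounded self-adjoint operator with $\|\pi(\Delta)^{-1}\| \leq 1$, and from the definition of the Sobolev norms it is an isometry $\pi^s \to \pi^{s+2}$ for every $s$, so $\pi(\Delta)^m \in \underline{\Psi}^{2m}$. For commutators, I would iterate the identity
\[
  [\pi(y), \pi(\Delta)^{-1}] = -\pi(\Delta)^{-1} \, \pi([y,\Delta]) \, \pi(\Delta)^{-1},
\]
and more generally use Leibniz to write $[\pi(y_1), \dots, [\pi(y_n), \pi(\Delta)^m]]$ as a finite sum of products $\pi(\Delta)^{-a_0} \pi(v_1) \pi(\Delta)^{-a_1} \cdots \pi(v_r) \pi(\Delta)^{-a_r}$ in which each $v_i$ is a fixed element of $\mathfrak{U}(\mathfrak{g})$ of degree $d_i \leq 2$, the total ``weight'' $\sum d_i - 2 \sum a_j$ equals $2m$, and the $a_j$ are nonnegative. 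Each such factor maps an appropriate Sobolev space boundedly to another, and composing (with indices telescoping to $2m$) gives a bounded map $\pi^s \to \pi^{s-2m}$. Uniformity in $\h$ is automatic, so we obtain $\pi(\Delta)^m \in \Psi_0^{2m}$.

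The only mildly delicate point is the bookkeeping that produces the commutator expansion above with well-defined Sobolev indices in each intermediate step; I expect this to be routine given that all the elementary pieces ($\pi(\Delta)^{\pm 1}$ and $\pi(v_i)$ for fixed $v_i \in \mathfrak{U}(\mathfrak{g})$) have already been placed inside the Sobolev scale. No new ideas beyond \eqref{eqn:Delta-s-positive-easy}, the Leibniz rule in $\mathfrak{U}(\mathfrak{g})$, and the definition of the $\pi^s$ are needed.
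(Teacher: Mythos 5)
Your argument is correct in substance, but it is worth knowing that the paper does not prove this lemma at all: it simply cites \cite[\S3.5]{nelson-venkatesh-1}, where essentially the argument you describe (Sobolev norms via monomials as in \eqref{eqn:Delta-s-positive-easy}, the Leibniz rule in $\mathfrak{U}(\mathfrak{g})$, and resolvent-type commutator identities for $\pi(\Delta)^{-1}$) is carried out. So what you have written is a self-contained substitute for that citation rather than a different idea. Two points deserve care. First, since $\pi$ is \emph{not} fixed in this framework (it varies with $\h$ in the applications), membership in $\Psi_0^m$ really asserts that the iterated commutator norms are bounded by constants depending only on $G$, $m$, $s$, $n$ and the $x_i$, uniformly over all unitary representations $\pi$; your proof does deliver this, but only because the implied constants in \eqref{eqn:Delta-s-positive-easy} are themselves uniform in $\pi$, and you should say so explicitly rather than treat uniformity as "automatic." Second, the reduction "$s \geq k$ directly, the rest by formal adjoints" does not literally cover the intermediate indices $0 < s < k$ (duality converts the case of source index $s \geq k$ into the case of source index $k - s \leq 0$, leaving a gap). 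This is easily repaired — e.g.\ split each PBW monomial $y_1 \dotsb y_j$ as a product of a factor of degree $\leq s$ (handled by the direct case) and a remaining factor (handled by the dual case), or interpolate along the scale $\pi^s$ defined by powers of the positive operator $\pi(\Delta)$ — but as stated that step is incomplete, and it matters also for your treatment of $\pi(\Delta)^m$ with $m < 0$, where the factors $\pi(v_i)$ must be estimated at arbitrary Sobolev indices. With those two clarifications your proof is a perfectly good replacement for the black-box citation.
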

\begin{proof}
  This follows from \cite[\S3.5]{nelson-venkatesh-1}.
\end{proof}
Using the self-adjointness of $\pi(\Delta)$, it is straightforward to check that $\Psi^m$ is preserved under taking adjoints.

\subsection{Operators attached to symbols}\label{sec:oper-attach-symb}
Recall that $\underline{S}^{-\infty}$ is the Schwartz space on $\mathfrak{g}^\wedge$.  As explained in \cite[\S5.1]{nelson-venkatesh-1}, the assignment \index{operators!$\Opp$}
\begin{equation}\label{eqn:basic-Opp-assignment}
  \begin{split}
    \Opp : \underline{S}^{-\infty} \rightarrow \{\text{operators on } \pi \}
    \\
    a \mapsto \Opp(a) := \Opp_{\h}(a:\pi,\chi),
  \end{split}
\end{equation}
defined in \S\ref{sec:basic-oper-assignm} with respect to some nice cutoff $\chi$, extends naturally to the space of symbols $\underline{S}^\infty$.  In particular, it extends to every symbol class defined in \S\ref{sec:symbols-star-product}.  The extension may be characterized in terms of matrix coefficients: for smooth vectors $u,v \in \pi$,
\begin{equation*}
  \langle \Opp(a) u, v \rangle
  =
  \int_{\xi \in \mathfrak{g}^\wedge} a(\h \xi)
  \left(\int_{x \in \mathfrak{g}}
    e^{-x \xi}
    \chi(x) \langle \pi(\exp(x)) u, v \rangle \, d x \right) \, d \xi.
\end{equation*}
Note that the parenthetical integral over $x$ defines a Schwartz function of $\xi$, so the remaining integral over $\xi$ converges absolutely.

Each polynomial function $p : \mathfrak{g}^\wedge \rightarrow \mathbb{C}$ (equivalently, element $p$ of the symmetric algebra $\Sym(\mathfrak{g}_\mathbb{C})$) defines an element of $\underline{S}^{\infty}$.  The image of such an element under $\Opp$ is described as follows.
\begin{lemma}\label{lem:polyn-symb}
  For each $p \in \Sym(\mathfrak{g}_\mathbb{C}) \subseteq \underline{S}^{\infty}$, we have
  \[
    \Opp(p) = \pi(\sym(p_{\h})),
  \]
  where $p_{\h}$ denotes the rescaling $p_{\h}(\xi) := p(\h \xi)$ and $\sym : \Sym(\mathfrak{g}_\mathbb{C}) \rightarrow \mathfrak{U}(\mathfrak{g}_\mathbb{C})$ denotes the symmetrization map, i.e., the linear isomorphism sending each monomial to the average of its permutations.
\end{lemma}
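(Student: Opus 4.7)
The plan is to reduce the identity to a spanning set of $\Sym(\mathfrak{g}_\mathbb{C})$ and to identify both sides by a direct Fourier-duality computation. By the polarization identity, $\Sym^n(\mathfrak{g}_\mathbb{C})$ is spanned by pure powers $x_0^n$ with $x_0 \in \mathfrak{g}_\mathbb{C}$, which under the canonical identification $\Sym(\mathfrak{g}_\mathbb{C}) \cong \mathbb{C}[\mathfrak{g}^\wedge]$ correspond to the polynomial symbols $p(\xi) = (x_0 \xi)^n$.  Both sides of the claimed identity are $\mathbb{C}$-linear in $p$, so it suffices to check these.  For such $p$ the symmetrization acts trivially, $\sym(x_0^n) = x_0^n \in \mathfrak{U}(\mathfrak{g}_\mathbb{C})$ (all orderings of the monomial coincide), giving $\pi(\sym(p_{\h})) v = \h^n \pi(x_0^n) v = \h^n \frac{d^n}{dt^n}\big|_{t=0} \pi(\exp(t x_0)) v$ for smooth $v$, by the standard description of the $\mathfrak{U}$-action on smooth vectors.

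For the left-hand side I would use the matrix-coefficient characterization of $\Opp$ recalled in \S\ref{sec:oper-attach-symb}: for smooth $u, v \in \pi$,
\[
\langle \Opp(p) u, v \rangle = \int_{\xi \in \mathfrak{g}^\wedge} p(\h \xi)  \left( \int_{x \in \mathfrak{g}} e^{-x\xi} g(x)\, dx \right) d\xi, \qquad g(x) := \chi(x) \langle \pi(\exp(x)) u, v \rangle,
\]
where $g$ is smooth and compactly supported, so the inner integral is Schwartz in $\xi$ (and the outer integral converges absolutely despite the polynomial growth of $p$). Substituting $\xi \mapsto -\xi$ converts the inner integral to $g^{\wedge}(\xi)$ and produces a factor $(-1)^n$ from $p(-\h\xi) = (-1)^n p(\h\xi)$.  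A standard Fourier-duality computation --- iterating $\partial_{x_0}[e^{x\xi}] = (x_0\xi) e^{x\xi}$ and integrating by parts --- yields $\int (x_0 \xi)^n g^{\wedge}(\xi)\, d\xi = (-1)^n (\partial_{x_0}^n g)(0)$; the two signs $(-1)^n$ cancel, leaving
\[
\langle \Opp(p) u, v \rangle = \h^n (\partial_{x_0}^n g)(0).
\]
Because $\chi \equiv 1$ in a neighborhood of $0$, the Leibniz rule discards every term involving a derivative of $\chi$, and $(\partial_{x_0}^n g)(0) = \partial_{x_0}^n\big|_{x=0} \langle \pi(\exp(x)) u, v \rangle = \langle \pi(x_0^n) u, v \rangle$.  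Matching the two computations gives $\langle \Opp(p) u, v \rangle = \langle \pi(\sym(p_{\h})) u, v \rangle$ for all smooth $u, v$, hence the asserted operator identity (both operators lie in $\underline{\Psi}^\infty$, the right-hand side by Lemma \ref{lem:Delta-xj-vs-Psi}).

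The only delicate point is tracking the Fourier-sign conventions of \S\ref{sec:basic-oper-assignm}; beyond that, the argument is essentially transparent because polynomial symbols correspond to distributions supported at the origin, precisely where the cutoff $\chi$ is constant, so no boundary/truncation issues enter.
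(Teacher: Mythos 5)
Your argument is correct, and it is genuinely different in character from what the paper does: the paper gives no internal proof at all, simply deferring to \cite[\S5.2, \S8.1]{nelson-venkatesh-1}, whereas you supply a self-contained verification. Your route — reduce by linearity and polarization to pure powers $p(\xi) = (x_0\xi)^n$, then compare matrix coefficients using the formula of \S\ref{sec:oper-attach-symb} and the Fourier identity $\int (x_0\xi)^n g^\wedge(\xi)\,d\xi = (-1)^n(\partial_{x_0}^n g)(0)$ — is sound, and I checked that your signs are consistent with the paper's conventions ($a^\vee(x) = \int a(\xi)e^{-x\xi}d\xi$, $\phi^\wedge(\xi) = \int \phi(x)e^{x\xi}dx$): the $(-1)^n$ from $p(-\h\xi)$ cancels against the $(-1)^n$ from differentiating $e^{-x\xi}$, and the cutoff $\chi$ drops out at the origin exactly as you say. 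The only wrinkle is your invocation of polarization over $\mathbb{C}$: for $x_0 \in \mathfrak{g}_\mathbb{C}\setminus\mathfrak{g}$ the expression $\pi(\exp(tx_0))$ is not defined (the group is real), so you should instead polarize over $\mathbb{R}$ — $\Sym^n(\mathfrak{g})$ is spanned by $n$-th powers of real elements, and $\Sym^n(\mathfrak{g}_\mathbb{C}) = \Sym^n(\mathfrak{g})\otimes_{\mathbb{R}}\mathbb{C}$, so $\mathbb{C}$-linearity of both sides reduces everything to $x_0 \in \mathfrak{g}$, where every step you wrote goes through verbatim. With that cosmetic repair, your proof is a complete and more transparent substitute for the citation; what the paper's approach buys is only brevity and consistency with the framework of \cite{nelson-venkatesh-1}, while yours buys a reader-checkable computation that also makes the role of the cutoff (polynomial symbols are Fourier-supported at the origin, where $\chi \equiv 1$) explicit.
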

\begin{proof}
  See \cite[\S5.2, \S8.1]{nelson-venkatesh-1}.
\end{proof}
For applications of Lemma \ref{lem:polyn-symb}, it is useful to note that $\sym(p^k) = \sym(p)^k$ for $k \in \mathbb{Z}_{\geq 0}$.  Moreover, for $x \in \mathfrak{g}$ -- regarded as a linear function $x : \mathfrak{g}^\wedge \rightarrow i \mathbb{R}$, or equivalently, as a degree one element of $\Sym(\mathfrak{g}_\mathbb{C})$ -- we have $\sym(x_{\h}) = \h x$ (regarded as an element of $\mathfrak{U}(\mathfrak{g})$), hence $\Opp(x) = \h \pi(x)$.

Throughout this section, we retain the abbreviation \eqref{eqn:basic-Opp-assignment}.  Our aim is to recall from \cite{nelson-venkatesh-1} the effect of $\Opp$ on the basic symbol classes $\underline{S}^m$ and $S^m_\delta$ and to describe its effect on the refined classes $S^\tau_{\delta ', \delta ''}$.

\begin{lemma}
  \label{lem:chi-doesnt-matter-underline}
  For any nice cutoffs $\chi_1, \chi_2$ and any $a \in \underline{S}^\infty$, we have
  \[\Opp_{\h}(a;\pi,\chi_1)- \Opp_{\h}(a;\pi,\chi_2) \in
    \underline{\Psi}^{-\infty}.\] This difference defines a continuous map $\underline{S}^\infty \rightarrow \underline{\Psi}^{-\infty}$.
\end{lemma}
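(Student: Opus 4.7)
The plan is to express the difference $T := \Opp_{\h}(a;\pi,\chi_1) - \Opp_{\h}(a;\pi,\chi_2)$ as an integral operator $T v = \int_{\mathfrak{g}} g(x)\, \pi(\exp x) v \, dx$ with $g := (\chi_1 - \chi_2)\, a_{\h}^\vee$, a compactly supported object on $\mathfrak{g}$. Since each $\chi_j$ equals $1$ near the origin, $\chi_1 - \chi_2$ vanishes identically in some open neighborhood of $0$. The core aim is to show that, despite the possible singularity of $a_{\h}^\vee$ at $0$, this vanishing forces $g$ to be a smooth compactly supported function on $\mathfrak{g}$ with every $C^k$-seminorm of order $\h^\infty$ and depending continuously on $a \in \underline{S}^\infty$. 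From such a $g$, membership in $\underline{\Psi}^{-\infty}$ and continuity of the assignment are standard.

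The main technical step is establishing this smoothness and smallness of $g$. Fix a Euclidean structure on $\mathfrak{g}$, and let $\square$ denote the corresponding constant-coefficient Laplacian on $\mathfrak{g}^\wedge$, so that $\square e^{-x\xi} = -|x|^2 e^{-x\xi}$. Because $\chi_1 - \chi_2$ vanishes in a neighborhood of $0$, for each fixed $N$ we may factor $\chi_1(x) - \chi_2(x) = |x|^{2N}\, \psi_N(x)$ with $\psi_N \in C_c^\infty(\mathfrak{g})$ (by division away from $0$ and smooth extension). Combining the Fourier identity $|x|^{2N} b^\vee = (-1)^N (\square^N b)^\vee$ applied to $b = a_{\h}$ with the rescaling $\square^N a_{\h} = \h^{2N} (\square^N a)_{\h}$ yields $g(x) = (-1)^N \h^{2N}\, \psi_N(x)\, (\square^N a)_{\h}^\vee(x)$. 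If $a \in \underline{S}^m$, then $\square^N a \in \underline{S}^{m-2N}$; for $N$ fixed but large enough in terms of any prescribed derivative order $k$, the function $\square^N a$ together with sufficiently many of its derivatives is integrable, so $(\square^N a)_{\h}^\vee$ is of class $C^k$ with $C^k$-norm bounded by a continuous seminorm of $a$ on $\underline{S}^\infty$. Multiplication by $\psi_N$ gives $g \in C_c^k(\mathfrak{g})$ with $\|g\|_{C^k} \ll_{k,N} \h^{2N}$ times such a seminorm; since $N$ and $k$ are arbitrary, $g \in C_c^\infty(\mathfrak{g})$ with all seminorms of order $\h^\infty$, continuously in $a$.

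To finish, one verifies the defining Sobolev bounds on iterated commutators. Each commutator $T_{\vec x} := [\pi(x_1),\ldots,[\pi(x_n),T]\ldots]$ is again of the form $\int g_{\vec x}(y)\, \pi(\exp y) v \, dy$ with $g_{\vec x} \in C_c^\infty(\mathfrak{g})$ obtained from $g$ by applying a fixed differential operator of order $\leq n$ with smooth coefficients, the coefficients arising from the adjoint action of $\exp(-y)$ via the BCH formula; its seminorms inherit continuous control from those of $g$. Pushing forward via $\exp$ to a compactly supported smooth density on $G$ and using the standard identity $\pi(X) \pi(f) = \pi(\tilde L_X f)$ for $X \in \mathfrak{U}(\mathfrak{g})$ and $f \in C_c^\infty(G)$ (with $\tilde L_X$ a left-invariant differential operator on $G$), combined with unitarity of $\pi$ and the Sobolev characterization \eqref{eqn:Delta-s-positive-easy}, one bounds $\|T_{\vec x} v\|_{\pi^{s-m}} \ll \|v\|_{\pi^s}$ for any fixed $s, m$ by applying appropriate powers of $\pi(\Delta)$ and invoking the $L^1$-integrability of the resulting smooth density. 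The main obstacle lies in the second paragraph: $a_{\h}^\vee$ is only a tempered distribution when $a$ has polynomial growth, and one must carefully exploit the precise vanishing of $\chi_1 - \chi_2$ near the origin, via the Fourier identity and the rescaling in $\h$, to promote $g$ to a smooth function with seminorms controlled continuously in $a$.
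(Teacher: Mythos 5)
Your argument is correct in outline, but it takes a genuinely different route from the paper: the paper's proof is a two-line reduction, observing that $a \mapsto a_{\h}$ is a topological automorphism of $\underline{S}^m$ (the statement involves no $\h$-uniformity), so one may set $\h = 1$ and quote \cite[\S5.4, Thm 2]{nelson-venkatesh-1} as a black box. You instead give a self-contained proof: factoring $\chi_1 - \chi_2 = |x|^{2N}\psi_N$ off its support away from the origin, trading $|x|^{2N}$ for $\square^N$ on the Fourier side, and gaining both smoothness of the kernel $g$ and powers $\h^{2N}$; the commutator/Sobolev bookkeeping at the end is the standard fact that $f \mapsto \pi(f)$ sends bounded sets of $C_c^\infty(G)$ into $\underline{\Psi}^{-\infty}$ (the paper elsewhere cites \cite[\S3.6, Lem 2]{nelson-venkatesh-1} for its $\h$-refined analogue). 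What your route buys is more than the lemma asks for: it yields the quantitative $\h^\infty$-smallness of all seminorms, i.e.\ essentially the content of the later Lemma \ref{lem:chi-doesnt-matter}, and for all of $\underline{S}^\infty$ rather than just $S^\infty_\delta$; what the paper's route buys is brevity, by outsourcing the analysis to the companion paper. One small imprecision to fix: the $C^k$-norm of $(\square^N a)_{\h}^\vee$ is not bounded uniformly in $\h$ by a seminorm of $a$ — rescaling the inverse Fourier transform produces factors of order $\h^{-\dim(\mathfrak{g})-k}$ — but since constants in this lemma may depend on $\h$, and since $N$ is arbitrary, the combined bound $\ll \h^{2N-\dim(\mathfrak{g})-k}$ still gives both the continuity you need and the $\h^\infty$ gain you claim, so this is a matter of bookkeeping rather than a gap.
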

\begin{theorem}\label{thm:opp-S-m-underlined}
  We have
  \[
    \Opp(\underline{S}^m) \subseteq \underline{\Psi }^m,
  \]
  and the induced map is continuous.  In particular, elements of $\Opp(\underline{S}^\infty)$ act on $\pi^\infty$, and so may be composed.  For $(a,b) \in \underline{S}^{m_1} \times \underline{S}^{m_2}$, the composition formula \eqref{eq:composition-for-basic-operator-assignment} remains valid, and we have
  \[
    \Opp(a) \Opp(b) \equiv \Opp(a \star_{\h} b) \mod{\underline{\Psi }^{-\infty}}.
  \]
\end{theorem}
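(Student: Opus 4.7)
The plan is to bootstrap from the Schwartz case (where $\Opp(a)$ is defined by an absolutely convergent integral and composition is easy) up to the polynomially-bounded class $\underline{S}^m$ via a mixture of approximation, commutator identities, and a Calderón–Vaillancourt–type $L^2$-boundedness input. The Schwartz composition identity \eqref{eq:composition-for-basic-operator-assignment} is already available, as is Lemma \ref{lem:polyn-symb} identifying polynomial symbols with symmetrized Lie algebra elements, and Lemma \ref{lem:chi-doesnt-matter-underline} saying the choice of nice cutoff $\chi$ is immaterial modulo $\underline{\Psi}^{-\infty}$. These three inputs will drive everything.

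First I would prove the mapping property $\Opp(\underline{S}^m) \subseteq \underline{\Psi}^m$. The heart of this is showing, for $a \in \underline{S}^0$, that $\Opp(a):\pi \to \pi$ is bounded with norm controlled by finitely many of the seminorms $\nu_{0,\alpha}(a)$ — a Calderón–Vaillancourt estimate in the Lie group setting. The cleanest route is to write $\Opp(a)^*\Opp(a) = \Opp(\bar a \star_{\h} a)$ for Schwartz $a$ (via \eqref{eqn:adjoint-opp-a} and \eqref{eq:composition-for-basic-operator-assignment}), iterate the Cotlar–Stein lemma by decomposing $a$ into pieces localized on unit-sized boxes in $\mathfrak{g}^\wedge$, and estimate the cross-terms using the support-separated case of star products (where \eqref{eqn:star-j-asymp-expn-new} gives rapid decay). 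Once $L^2$-boundedness for $\underline{S}^0$ is in hand, the mapping $\underline{S}^m \to \underline{\Psi}^m$ on Sobolev scales follows by writing $a = (1+|\xi|^2)^{m/2}\cdot \tilde{a}$ with $\tilde{a} \in \underline{S}^0$, using that $\Opp((1+|\xi|^2)^{m/2})$ differs from a polynomial in $\pi(\Delta)^{1/2}$ by a symbol of lower order (Lemma \ref{lem:polyn-symb}), and applying Lemma \ref{lem:Delta-xj-vs-Psi} together with \eqref{eqn:Delta-s-positive-easy}.

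Next I would handle the iterated commutators. The basic identity is that for $x \in \mathfrak{g}$, the commutator $[\pi(x),\Opp(a)]$ equals $\Opp(D_x a) + R$, where $D_x : \underline{S}^m \to \underline{S}^{m-1}$ is a first-order differential operator on $\mathfrak{g}^\wedge$ built from the coadjoint action (obtained by differentiating the BCHD expansion of $\exp(tx)\exp(y)\exp(-tx)$ at $t=0$ and Fourier-transforming), and $R \in \underline{\Psi}^{-\infty}$ absorbs boundary terms from the cutoff $\chi$, controlled via Lemma \ref{lem:chi-doesnt-matter-underline}. Iterating $n$ times gives that \eqref{eqn:defn-Psi-m} is $\Opp$ of a symbol in $\underline{S}^{m-n}$ plus a smoothing operator, hence bounded $\pi^s \to \pi^{s-m+n}$ by the first paragraph, which is stronger than what $\underline{\Psi}^m$ requires. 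Continuity of $\Opp: \underline{S}^m \to \underline{\Psi}^m$ falls out automatically from the quantitative form of these arguments (finitely many seminorms on the input control the operator seminorms on the output).

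Finally, for the composition formula, I would extend the Schwartz identity $\Opp(a)\Opp(b) = \Opp(a \star_{\h} b)$ to $(a,b) \in \underline{S}^{m_1}\times \underline{S}^{m_2}$ by approximation: take Schwartz truncations $a_R(\xi) = \phi(\xi/R)a(\xi)$ and $b_R(\xi) = \phi(\xi/R)b(\xi)$, which converge to $a$ and $b$ in the topology of $\underline{S}^{m_1+\eps}$ and $\underline{S}^{m_2+\eps}$ for any $\eps > 0$. Continuity of the operator assignment from step one and of $\star_{\h}$ from Theorem \ref{thm:basic-star-product-extension-properties} let us pass to the limit, with the passage being valid modulo $\underline{\Psi}^{-\infty}$ — this loss is exactly what comes from the cutoff-dependence issue (Lemma \ref{lem:chi-doesnt-matter-underline}) and from the failure of $a_R \star_{\h} b_R \to a\star_{\h} b$ in the sharp $\underline{S}^{m_1+m_2}$ topology rather than a slightly weaker one. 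The hardest step will be the Calderón–Vaillancourt input in the first paragraph: on a general Lie group one cannot appeal directly to the standard Euclidean proof, but must carry out the Cotlar–Stein decomposition relative to the exponential chart supported on $\supp(\chi)$ and use the separated-support star product estimates to bound the almost-orthogonality sums — this is where the technical heart of the theorem lies.
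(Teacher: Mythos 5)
Before anything else, note that the paper does not prove this theorem internally: its entire proof is the observation that the rescaling $a \mapsto a_{\h}$ is a topological automorphism of $\underline{S}^m$, reducing to $\h = 1$, followed by a citation of \cite[\S 5.4, Thm 2]{nelson-venkatesh-1}. So your proposal is not a variant of the paper's argument but an attempted from-scratch reconstruction of the cited result. The skeleton — Schwartz case plus an operator-norm bound for order zero, commutator identities for the iterated brackets in \eqref{eqn:defn-Psi-m}, and extension of \eqref{eq:composition-for-basic-operator-assignment} by Schwartz truncation with the cutoff change (Lemma \ref{lem:chi-doesnt-matter-underline}) accounting for the $\underline{\Psi}^{-\infty}$ discrepancy — is a reasonable such reconstruction, and your final approximation step is essentially sound.

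There are, however, two concrete problems. First, your order count for commutators is wrong: for a symbol depending only on $\xi$, the leading term of the symbol representing $[\pi(x),\Opp(a)]$ is a multiple of the Poisson bracket $\{x,a\}(\xi) = \sum_i \partial_i a(\xi)\, \langle \xi, [x,e_i] \rangle$, whose coefficients grow linearly in $\xi$; thus $D_x$ maps $\underline{S}^m$ to $\underline{S}^m$, not to $\underline{S}^{m-1}$, and iterated commutators are bounded $\pi^s \to \pi^{s-m}$ rather than $\pi^s \to \pi^{s-m+n}$. (This is precisely why \eqref{eqn:defn-Psi-m} demands the same loss $m$ for every iterated commutator.) The corrected count still yields exactly what membership in $\underline{\Psi}^m$ requires, so this is reparable. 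Second, and more seriously, the Calder\'on--Vaillancourt input that you yourself call the heart of the matter rests on an inapplicable reference: \eqref{eqn:star-j-asymp-expn-new} is proved only for the refined classes $S^{\tau}_{\delta',\delta''}$, with basepoints in a fixed compact set of regular elements and in the regime $\h \lll 1$, and says nothing about unit-box pieces of a general $a \in \underline{S}^0$ with unbounded support; the separated-support almost-orthogonality needed for your Cotlar--Stein sum would have to be proved directly by integration by parts in the defining integrals, and is genuinely delicate when the separation of two boxes is small compared with their distance from the origin (there the term $\{x,y\}\zeta$ in the phase is comparable to the separation). A cheaper route, consistent with the paper's own later arguments (the proof of Theorem \ref{thm:oper-assignm-comp-refined}, which uses approximate division by elliptic-type symbols together with \eqref{eqn:Delta-s-positive-easy}, and reserves Cotlar--Stein for Lemma \ref{lem:refined-symb-basic-opp-bound}), is to reduce to symbols of sufficiently negative order, for which $\chi a_{\h}^{\vee}$ is integrable and boundedness is immediate; this would also untangle the ordering problem in your plan, where the Sobolev-scale factorization in your first step quietly invokes composition-type identities that you only establish in your last step.
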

\begin{proof}[Proof of Lemma \ref{lem:chi-doesnt-matter-underline} and Theorem \ref{thm:opp-S-m-underlined}]
  The rescaling map $a \mapsto a_{\h}$ defines a topological automorphism of $\underline{S}^m$, so it suffices to consider the case $\h = 1$.  The required conclusions are given then by \cite[\S5.4]{nelson-venkatesh-1} and \cite[Thm 2]{nelson-venkatesh-1}.
\end{proof}

\begin{lemma}
  \label{lem:chi-doesnt-matter}
  For fixed nice cutoffs $\chi_1$ and $\chi_2$, fixed $\delta \in [0,1)$ and any $a \in S^\infty_{\delta}$, we have
  \begin{equation*}
    \Opp_{\h}(a;\pi,\chi_1)
    -
    \Opp_{\h}(a;\pi,\chi_2)
    \in \h^\infty \Psi^{-\infty} \cap \underline{\Psi }^{-\infty}.
  \end{equation*}
\end{lemma}
\begin{theorem}\label{thm:oper-assignm-comp-basic}
  Fix a nice cutoff $\chi$.  Fix $\delta \in [0,1/2)$.  For $m \in \mathbb{Z}$, we have
  \begin{equation}\label{eqn:opp-mapping-S-m-delta}
    \Opp(S^m_\delta)
    \subseteq
    \h^{\min(m,0)}
    \Psi_\delta^m.
  \end{equation}
  For $a, b \in S^\infty_\delta$, we have
  \begin{equation}\label{eqn:composition-formula-1}
    \Opp(a)
    \Opp(b)
    \equiv \Opp(a \star_{\h} b)
    \mod{ \h^\infty \Psi^{-\infty} \cap \underline{\Psi } ^{- \infty }}
  \end{equation}
  For each fixed $m_1, m_2 < \infty$ and $N \in \mathbb{Z}_{\geq 0}$ there is a fixed $J \in \mathbb{Z}_{\geq 0}$ so that for all $(a,b) \in S^{m_1}_\delta \times S^{m_2}_\delta$,
  \begin{equation}\label{eqn:composition-formula-2}
    \Opp(a)
    \Opp(b)
    \equiv \sum _{0 \leq j < J}
    \h^j \Opp(a \star^j b)
    \mod{\h^N \Psi_\delta^{-N}}.
  \end{equation}
\end{theorem}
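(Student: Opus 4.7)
The plan is to build on Theorem \ref{thm:opp-S-m-underlined} by carefully tracking $\h$-dependence, and to derive \eqref{eqn:composition-formula-1} and \eqref{eqn:composition-formula-2} from the mapping property \eqref{eqn:opp-mapping-S-m-delta} together with the star product asymptotics of Theorem \ref{thm:nv-star-prod-asymp-h-dependent}.

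The mapping property \eqref{eqn:opp-mapping-S-m-delta} is the main step. The inclusion $S^m_\delta \subseteq \underline{S}^m$ and Theorem \ref{thm:opp-S-m-underlined} give the baseline $\Opp(a) \in \underline{\Psi}^m$. To upgrade to the refined class $\h^{\min(m,0)} \Psi^m_\delta$, I will verify that the iterated commutators \eqref{eqn:defn-Psi-m} satisfy the $\h^{-\delta n}$ bounds required by Definition \ref{defn:oper-class}. The key identity is the commutator formula: by Lemma \ref{lem:polyn-symb}, for $x \in \mathfrak{g}$ we have $\pi(x) = \h^{-1}\Opp(x)$, so the already-established form \eqref{eq:composition-for-basic-operator-assignment} of the composition formula gives
\[
  [\pi(x), \Opp(a)]
  = \h^{-1}\Opp(x \star_{\h} a - a \star_{\h} x)
\]
modulo a term in $\h^\infty \Psi^{-\infty}$. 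Because $x$ is a polynomial of degree one, the constraint $|\gamma| \leq \min(|\alpha|, |\beta|)$ on the Taylor coefficients of $e^{\{x,y\}\zeta}$ (cf.\ \eqref{eqn:gamma-less-alpha-beta}) forces only finitely many terms of the expansion to survive, and each surviving term contributes at most a first derivative of $a$, giving a symbol in $\h^{-\delta} S^{m-1}_\delta$ by the derivative estimates \eqref{eqn:S-m-delta-defn}. Iterating, the $n$-fold commutator is expressed as $\Opp$ of a symbol in $\h^{-\delta n} S^{m-n}_\delta$ up to negligible error; the baseline continuous mapping $\Opp(\underline{S}^{m-n}) \to \underline{\Psi}^{m-n}$ from Theorem \ref{thm:opp-S-m-underlined} then yields the required operator norm bound. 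The $\h^{\min(m,0)}$ refinement for $m \leq 0$ is extracted by observing that the rescaled symbol $a_{\h}$ is effectively concentrated at scale $1/\h$, so when $a$ decays at infinity like $\langle \xi \rangle^m$, the $L^1$ mass of $a_\h^\vee$ is correspondingly improved by a factor $\h^{|m|}$; this is seen directly from the integral representation of $\Opp(a)$ via the exponential map.

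The composition formula \eqref{eqn:composition-formula-1} is then immediate from \eqref{eq:composition-for-basic-operator-assignment}, which gives $\Opp(a;\chi)\Opp(b;\chi) = \Opp(a \star_{\h} b; \chi')$ for a suitable enlarged cutoff $\chi'$. By Theorem \ref{thm:nv-star-prod-asymp-h-dependent} we have $a \star_{\h} b \in S^\infty_\delta$; hence Lemma \ref{lem:chi-doesnt-matter} absorbs the discrepancy between the two choices of cutoff into $\h^\infty \Psi^{-\infty}$, establishing \eqref{eqn:composition-formula-1}.

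The asymptotic expansion \eqref{eqn:composition-formula-2} follows by combining \eqref{eqn:composition-formula-1} with the finite expansion
\[
  a \star_{\h} b \equiv \sum_{0 \leq j < J} \h^j\, a \star^j b
  \mod{\h^{(1-2\delta)J} S^{m_1+m_2-J}_\delta}
\]
supplied by Theorem \ref{thm:nv-star-prod-asymp-h-dependent}. Applying $\Opp$ and invoking \eqref{eqn:opp-mapping-S-m-delta} to the remainder shows that the corresponding operator lies in $\h^{(1-2\delta)J}\,\Psi^{m_1+m_2-J}_\delta$; choosing $J$ fixed but large enough in terms of $N$ (so that simultaneously $(1-2\delta)J \geq N$ and $m_1+m_2-J \leq -N$) absorbs this into $\h^N \Psi^{-N}_\delta$, giving the desired bound. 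The main obstacle is the first step: establishing $\h$-uniform operator norm bounds on arbitrary iterated commutators requires propagating the calculus of \S\ref{sec:proofs-star-product} through the Sobolev scale, and extracting the $\h^{\min(m,0)}$ refinement for negative-order symbols requires revisiting the Fourier-analytic definition of $\Opp$ to convert pointwise decay of $a$ into operator smallness, without disturbing the commutator bookkeeping.
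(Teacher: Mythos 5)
Your reduction of \eqref{eqn:composition-formula-1} and \eqref{eqn:composition-formula-2} to the mapping property \eqref{eqn:opp-mapping-S-m-delta} plus Theorem \ref{thm:nv-star-prod-asymp-h-dependent} is sound, and your commutator bookkeeping (writing $[\pi(x),\Opp(a)]$ as $\h^{-1}\Opp$ of a star-product commutator and iterating) is in the right spirit -- it is essentially how the paper itself argues in the refined case (Theorem \ref{thm:oper-assignm-comp-refined}). But note that the paper does not reprove the present theorem at all: its proof of Theorem \ref{thm:oper-assignm-comp-basic} (and of Lemma \ref{lem:chi-doesnt-matter}) is a citation to \cite[\S5.4, \S5.6]{nelson-venkatesh-1}, plus the translation between the class formulation and the $\h$-dependent-family formulation carried out as in the proof of Theorem \ref{thm:nv-star-prod-asymp-h-dependent}. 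So you are attempting a genuinely different route: a direct proof of the content that the paper outsources.

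That direct route has a real gap at its crux. After reducing the $n$-fold commutator to $\Opp(b)$ with $b \in \h^{-\delta n} S^{m-n}_\delta$ (up to negligible error), you invoke the ``baseline continuous mapping $\Opp(\underline{S}^{m-n}) \to \underline{\Psi}^{m-n}$'' to conclude the operator norm bound. This does not work: the continuity in Theorem \ref{thm:opp-S-m-underlined} is proved by rescaling to $\h = 1$ and is therefore \emph{not} uniform in $\h$, while the relevant $\underline{S}$-seminorms of a symbol in $S^{m}_\delta$ are of size $\h^{-\delta|\alpha|}$, which blows up. The uniform-in-$\h$ Sobolev boundedness of $\Opp(a)$ for $a \in S^0_\delta$, $\delta < 1/2$, is exactly the Calder\'on--Vaillancourt-type statement that constitutes the hard analytic content of \cite[\S5]{nelson-venkatesh-1} (compare the paper's own Cotlar--Stein argument, Lemma \ref{lem:refined-symb-basic-opp-bound}, for the refined classes); a naive $L^1$ bound fails since $\|a_{\h}^\vee\|_{L^1} = \|a^\vee\|_{L^1}$ is only $\O(\h^{-\delta \dim \mathfrak{g}})$, not $\O(1)$. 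For the same reason your explanation of the gain $\h^{\min(m,0)}$ for $m<0$ via ``improved $L^1$ mass of $a_{\h}^\vee$'' is quantitatively wrong (that quantity is dilation-invariant and is not small); the actual mechanism is an approximate-division (parametrix) argument against elliptic symbols such as $\h^2 + |\xi|^2$, as in \cite[\S5.6]{nelson-venkatesh-1} and in the paper's proof of \eqref{eq:opp-of-refined-symb-into-Psi}. To make your approach complete you would need to supply these two ingredients rather than derive them from the $\h$-fixed continuity statement.
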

\begin{proof}[Proof of Lemma \ref{lem:chi-doesnt-matter} and Theorem \ref{thm:oper-assignm-comp-basic}]
  We appeal to \cite[\S5.4]{nelson-venkatesh-1} and \cite[\S5.6]{nelson-venkatesh-1}, translating to the present formulation as in the proof of Theorem \ref{thm:nv-star-prod-asymp-h-dependent}.
\end{proof}

We turn to the main new result of this section.
\begin{theorem}\label{thm:oper-assignm-comp-refined}
  Fix a connected real reductive group $G$ and a nice cutoff $\chi$.  Let $\h, \delta ', \delta '', \tau$ be as in Definition \ref{defn:new-symbol-class}.  Then for each fixed $m \in \mathbb{Z}$,
  \begin{equation}\label{eq:opp-of-refined-symb-into-Psi}
    \Opp(S^\tau _{\delta', \delta ''})
    \subseteq
    \h^{m} \Psi_{\delta '}^{m} \cap \underline{\Psi}^{-\infty}.
  \end{equation}
  In particular, $\Opp(S^\tau _{\delta', \delta ''}) \subseteq \Psi_{\delta '}^0$.  Moreover, the composition formulas \eqref{eqn:composition-formula-1} and \eqref{eqn:composition-formula-2} remain valid for $a,b \in S^\infty_{\delta '} \cup S^\tau_{\delta ', \delta ''}$.
\end{theorem}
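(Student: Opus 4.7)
The plan is to deduce each assertion from the refined star product asymptotics (Theorem \ref{thm:refined-star-prod}) combined with the basic operator assignment properties from Theorems \ref{thm:opp-S-m-underlined} and \ref{thm:oper-assignm-comp-basic}. I would handle the composition formulas first and then derive the mapping property by analyzing iterated commutators.

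For the composition formulas, both $S^\infty_{\delta'}$ and $S^\tau_{\delta', \delta''}$ embed in $\underline{S}^\infty$, so Theorem \ref{thm:opp-S-m-underlined} gives $\Opp(a) \Opp(b) \equiv \Opp(a \star_{\h} b) \pmod{\underline{\Psi}^{-\infty}}$. To upgrade the error to $\h^\infty \Psi^{-\infty}$, I would truncate the expansions of parts (iii) and (iv) of Theorem \ref{thm:refined-star-prod} at large order $J$: the remainder lies in $\h^N S^{-\infty}_{\delta''} \cap \underline{S}^{-\infty}$ for large $N$, and applying $\Opp$ via the basic mapping \eqref{eqn:opp-mapping-S-m-delta} (with a small auxiliary $\delta$) yields an operator in $\h^{N-O(1)} \Psi^{-\infty}$, which lies in $\h^\infty \Psi^{-\infty}$ for $N$ large. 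The finite-$J$ statement \eqref{eqn:composition-formula-2} follows similarly.

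The basic mapping $\Opp(S^\tau_{\delta', \delta''}) \subseteq \Psi^0_{\delta'}$ follows from computing iterated commutators. Since $\Opp(x) = \h \pi(x)$ for $x \in \mathfrak{g}$ (Lemma \ref{lem:polyn-symb}), we have $[\pi(x), \Opp(a)] = \h^{-1} [\Opp(x), \Opp(a)]$. Viewing $x$ as an element of $S^\infty_{\delta'}$ and applying the composition formula,
\[
    [\Opp(x), \Opp(a)] \equiv \Opp(x \star_{\h} a - a \star_{\h} x) \pmod{\h^\infty \Psi^{-\infty}}.
\]
The zeroth-order contribution $xa - ax$ vanishes, leaving $\h$ times the Poisson bracket $\{x, a\}(\xi) = \langle [x, \xi], \nabla a(\xi)\rangle$ as leading term. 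Crucially, $[x, \xi]$ is tangent to the coadjoint orbit through $\xi$, and on the support of $a$ (namely $\xi = \tau + O(\h^{\delta''-\delta'})$), this tangent direction agrees with $\mathfrak{g}_\tau^\perp$ up to a small perturbation; hence the directional derivative $\langle [x,\xi], \nabla a \rangle$ costs only $\h^{-\delta'}$, not the weaker $\h^{-\delta''}$. Thus $\{x, a\} \in \h^{-\delta'} S^\tau_{\delta', \delta''}$ (and higher-order star-terms are smaller by Theorem \ref{thm:refined-star-prod}(iv)), so $[\pi(x), \Opp(a)] = \h^{-\delta'} \Opp(b)$ for some $b \in S^\tau_{\delta', \delta''}$, modulo negligible operators. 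Iterating gives that the $n$-fold commutator is $\h^{-\delta' n} \Opp(b_n) \pmod{\h^\infty \Psi^{-\infty}}$ with $b_n \in S^\tau_{\delta', \delta''}$; since $b_n$ is Schwartz, Theorem \ref{thm:opp-S-m-underlined} places $\Opp(b_n) \in \underline{\Psi}^{-\infty}$, yielding boundedness on every $\pi^s$ and hence membership in $\Psi^0_{\delta'}$.

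For the refined mapping $\Opp(S^\tau_{\delta', \delta''}) \subseteq \h^m \Psi^m_{\delta'}$, I would exploit the compact support of $a$. Writing $\Delta^\sharp \in \Sym(\mathfrak{g}_\mathbb{C})$ for the polynomial symbol of $\Delta$, Lemma \ref{lem:polyn-symb} gives $\pi(\Delta) = \h^{-2} \Opp(\Delta^\sharp)$. Since $a$ is supported where $\langle \xi \rangle \asymp 1$, the product $(\Delta^\sharp)^s a$ remains in $S^\tau_{\delta', \delta''}$ for each fixed $s$, and the composition formula gives
\[
    \pi(\Delta)^s \Opp(a) = \h^{-2s} \Opp((\Delta^\sharp)^s a) + \h^\infty \Psi^{-\infty},
\]
which lies in $\h^{-2s} \Psi^0_{\delta'}$ by the previous step. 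Translating through the definitions of the Sobolev spaces $\pi^s$, this yields $\Opp(a) \in \h^{-2s} \Psi^{-2s}_{\delta'}$ for each fixed $s$, covering even negative $m$; odd and positive $m$ follow similarly (the positive-$m$ cases being weaker). The main obstacle is controlling the Poisson bracket estimate cleanly -- namely, showing that $\{x, a\}$ really only costs $\h^{-\delta'}$ -- but this is essentially the content of parts (iii)--(iv) of Theorem \ref{thm:refined-star-prod}, which can be invoked as a black box via the Taylor-coefficient support condition (Lemma \ref{lem:constraint-coefficients-via-BCDH}).
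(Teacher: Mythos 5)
There is a genuine gap at the heart of your argument: you never establish the uniform operator norm bound $\|\Opp(a)\|_{\pi \to \pi} \ll 1$ for $a \in S^\tau_{\delta',\delta''}$. In your second step you conclude membership in $\Psi^0_{\delta'}$ from the statement ``since $b_n$ is Schwartz, Theorem \ref{thm:opp-S-m-underlined} places $\Opp(b_n) \in \underline{\Psi}^{-\infty}$, yielding boundedness on every $\pi^s$.'' But the class $\Psi^0_{\delta'}$ requires the iterated commutators to have operator norm $\O(\h^{-\delta' n})$ with \emph{fixed} implied constants, whereas the continuity in Theorem \ref{thm:opp-S-m-underlined} only bounds $\|\Opp(b_n)\|$ in terms of the $\underline{S}^{-\infty}$-seminorms of $b_n$, and for $b_n \in S^\tau_{\delta',\delta''}$ these seminorms blow up like negative powers of $\h$ (derivatives cost $\h^{-\delta'}$ or $\h^{-\delta''}$ per order). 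The trivial bound $\|\Opp(b)\| \leq \|b_{\h}^\vee\|_{L^1(\mathfrak{g})}$ is likewise only $\h^{-\O(1)}$, because refined symbols may oscillate at wavelengths $\h^{\delta'}, \h^{\delta''}$ inside their support. This uniform $L^2$ bound is exactly the technical core of the paper's proof (Lemma \ref{lem:refined-symb-basic-opp-bound}): one decomposes $a$ into pieces localized on boxes of side $\h^{\delta'}$ in the $\xi'$-directions and $\h^{\delta''}$ in the $\xi''$-directions, on which the $L^1$-Fourier norm is $\O(1)$, and then applies the Cotlar--Stein lemma, with the refined star product asymptotics controlling the almost-orthogonality of the pieces. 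Nothing in your proposal supplies this ingredient, and without it both the $n$-fold commutator step and the final Sobolev estimates collapse to bounds that lose unspecified powers of $\h$.

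Two secondary points. First, your last paragraph inherits the gap: the assertion that $\pi(\Delta)^s \Opp(a) = \h^{-2s}\Opp((\Delta^\sharp)^s a) + \dotsb$ lies in $\h^{-2s}\Psi^0_{\delta'}$ ``by the previous step'' relies on the unproved uniform bound, and your bookkeeping only produces $\pi^0$-based estimates; the paper needs two-sided approximate division (by powers of an element $z \in \mathfrak{g}$ with $z(\tau) \asymp 1$, on the left and on the right) together with \eqref{eqn:Delta-s-positive-easy} to cover all Sobolev indices, including the case $s - m \leq 0$. Second, in your treatment of the composition formulas the remainder you place in $\h^N S^{-\infty}_{\delta''}$ cannot be fed through \eqref{eqn:opp-mapping-S-m-delta} ``with a small auxiliary $\delta$'': since $\delta''$ may exceed $1/2$, and since $\h^{N} S^{-\infty}_{\delta''} \not\subseteq \h^{N'} S^{-\infty}_{\delta}$ for $\delta < \delta''$ (derivatives of every fixed order occur), Theorem \ref{thm:oper-assignm-comp-basic} does not apply. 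The paper avoids this by observing that $a \star_{\h} b \in S^{\infty}_{\delta''}$ and invoking Lemma \ref{lem:chi-doesnt-matter}, which is valid for all $\delta < 1$, to absorb the cutoff discrepancy into $\h^\infty \Psi^{-\infty}$.
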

\begin{proof}
  Using the refined star product asymptotics afforded by Theorem \ref{thm:refined-star-prod}, we may complete the proof of Theorem \ref{thm:oper-assignm-comp-refined} exactly as in \cite[\S8.6]{nelson-venkatesh-1}.  For the sake of completeness, we record the details here.  The proof occupies the remainder of \S\ref{sec:oper-attach-symb}.

  We verify first that the composition formula \eqref{eqn:composition-formula-1} remains valid for $a,b \in S_{\delta '}^{\infty} \cup S_{\delta ', \delta ''}^\tau$.  Since in particular $a,b \in \underline{S}^{\infty}$, we have by Theorem \ref{thm:opp-S-m-underlined} that the composition formula \eqref{eq:composition-for-basic-operator-assignment} remains valid.  On the other hand, we see from the star product asymptotics given in Theorem \ref{thm:refined-star-prod} and the crude inclusion $S^\tau_{ \delta ', \delta ''} \subseteq S^{-\infty}_{\delta ''}$ that $a \star_{\h} b \in S^{\infty}_{\delta ''}$.  By Lemma \ref{lem:chi-doesnt-matter}, it follows that
  \[
    \Opp_{\h}(a \star_{\h} b: \pi, \chi ') \equiv \Opp(a \star_{\h} b) \mod{\h^\infty \Psi^{-\infty}}.
  \]
  The required formula \eqref{eqn:composition-formula-1} then follows from \eqref{eq:composition-for-basic-operator-assignment}.

  For the proof of the remaining assertions, a key step is to verify the following consequence of \eqref{eq:opp-of-refined-symb-into-Psi}.

\begin{lemma}\label{lem:refined-symb-basic-opp-bound}
  Let $\|.\|$ denote the operator norm on the space of linear maps $\pi \rightarrow \pi$.  For $a \in S^\tau_{\delta ', \delta''}$, we have
  \[
    \|\Opp(a)\| \ll 1.
  \]
\end{lemma}
\begin{proof}[Proof of Lemma \ref{lem:refined-symb-basic-opp-bound}]
  Let $\mathcal{N}$ denote the norm on the Schwartz space $\mathcal{S}(\mathfrak{g}^\wedge)$ given by the $L^1$-norm of the Fourier transform:
  \begin{equation*}
    \mathcal{N}(a) := \|a^\vee \|_{L^1(\mathfrak{g})}.
  \end{equation*}
  This norm is dilation-invariant: $\mathcal{N}(a) = \mathcal{N}(a_{\h})$.  It follows readily from the definition of $\Opp$ that $\|\Opp(a)\| \leq \mathcal{N}(a)$.  We have the crude bound
  \begin{equation}\label{eqn:crude-bound}
    \mathcal{N}(a) \ll \h^{-\O(1)}
  \end{equation}
  where the implied constant depends at most upon $\dim(\mathfrak{g})$ and $(\delta ', \delta '')$.  While $\mathcal{N}(a)$ can be quite large (e.g., a fixed positive power of $\h^{-1}$) for elements $a$ of $S^\tau_{\delta ', \delta ''}$, we will see below that any such element may be decomposed into ``almost-orthogonal'' pieces on which $\mathcal{N}$ is $\O(1)$.  We will then conclude by the Cotlar--Stein lemma \cite[Lem 18.6.5]{MR2304165}, which we recall here:
  \begin{itemize}
  \item \emph{Let $V_1, V_2$ be Hilbert spaces.  Let $T_j : V_1 \rightarrow V_2$ be a sequence of bounded linear operators.  Assume that}
    \begin{equation}\label{eq:cotlar-stein-hypothesis}
      \sup_j \sum_k
      \|T_j^* T_k\|^{1/2}
      \leq C,
      \quad 
      \sup_j \sum_k
      \|T_j T_k^*\|^{1/2} \leq C,
    \end{equation}
    \emph{Then the series $T := \sum T_j$ converges in the Banach space of bounded linear operators from $V_1$ to $V_2$, and has operator norm $\|T\| \leq C$.}
  \end{itemize}

  Let $a \in S^\tau_{\delta ', \delta ''}$.  Recall that $a$ is supported on elements of the form $\tau + \O(\h^{ \delta '' - \delta '})$.  For each element $\omega$ of that form, denote by $\mathcal{D}(\omega)$ the rectangular domain
  \begin{equation*}
    \mathcal{D}(\omega)
    :=
    \left\{ \xi \in \mathfrak{g}^\wedge : |\xi' - \omega'| \leq  \h^{\delta '}, |\xi ''- \omega ''| \leq \h^{\delta ''} \right\}.
  \end{equation*}
  Note that, by the definition of $S^\tau_{\delta ', \delta ''}$ (Definition \ref{defn:new-symbol-class}), the variation of $a$ on each such domain is mild.  We say that $a$ is \emph{localized at $\omega$} if it is supported on $\mathcal{D}(\omega)$.  This notion appeared implicitly in \S\ref{sec:estimates-remainder}, before the estimate \eqref{sec:estimates-remainder}.  By the one-dimensional analogue of that estimate, we see that if $a$ is localized at $\omega$, then $\mathcal{N}(a) \ll 1$.
  
  By taking a suitable smooth partition of unity, we may decompose any element $a \in S^\tau_{\delta ', \delta ''}$ as a finite sum $a = \sum _{\omega \in \Omega } a_\omega$, where
  \begin{itemize}
  \item $\Omega$ is a finite subset of $\mathfrak{g}^\wedge$, consisting of elements $\omega$ of the form $\tau + \O(\h^{\delta '' - \delta '})$, of cardinality $\# \Omega \ll \h^{-\O(1)}$,
  \item the domains $\mathcal{D}(\omega)$ ($\omega \in \Omega$) have uniformly bounded overlaps in the sense that
    \begin{equation}\label{eqn:Omega-well-spaced}
      \sup_{\omega_1 \in \Omega}
      \# \left\{ \omega_2 \in \Omega :
        \mathcal{D}(\omega_1) \cap
        \mathcal{D}(\omega_2) \neq \emptyset \right\} \ll
      1,
    \end{equation}
    and
  \item each summand $a_\omega$ lies in $S^\tau_{\delta ', \delta ''}$ and is localized at the corresponding element $\omega$.
  \end{itemize}
  
  Turning to the proof of the lemma, let $a \in S^\tau _{\delta ', \delta ''}$.  We decompose $a = \sum a_\omega$ as above.  Then $\Opp(a) = \sum \Opp(a_\omega)$ and $\Opp(a_\omega)^* = \Opp(\bar{a}_\omega)$ (cf.\ \S\ref{sec:adjoints}).  By the composition formula \eqref{eq:composition-for-basic-operator-assignment} for Schwartz functions, we see that
  \[
    \|\Opp(a_{\omega_1})^* \Opp(a_{\omega_2})\| \leq \mathcal{N}(\bar{a}_{\omega_1} \star_{\h} a_{\omega_2}).
  \]
  By the Cotlar--Stein lemma, we reduce to verifying that
  \begin{equation*}
    \sup_{\omega_1 \in \Omega}
    \sum _{\omega_2 \in \Omega }
    \mathcal{N} (\bar{a}_{\omega_1} \star_{\h}
    a_{\omega_2})^{1/2}
    \ll 1
  \end{equation*}
  (together with the analogous estimate in which complex conjugation is applied instead to $a_{\omega_2}$).  To see this, we fix $N \in \mathbb{Z}_{\geq 0}$ sufficiently large and evaluate $\bar{a}_{\omega_1} \star_{\h} a_{\omega_2}$ using the star product asymptotics \eqref{eqn:star-j-asymp-expn-new} as the sum $b_{\omega_1, \omega_2} + r_{\omega_1,\omega_2}$, where
  \[
    b_{\omega_1, \omega_2} := \sum _{0 \leq j < J} \h^j \bar{a}_{\omega_j} \star^j a_{\omega_2}
  \]
  denotes the result of truncating the asymptotic expansion up to some fixed index $J$, taken large enough in terms of $N$, and
  \[
    r_{\omega_1,\omega_2} \in \h^{(1- 2 \delta') J} S^{\tau}_{\delta', \delta''} + \h^{\infty} S^{-\infty}
  \]
  denotes the remainder.  By the triangle inequality for $\mathcal{N}$ and the bound $\sqrt{x + y} \ll \sqrt{x} + \sqrt{y}$ for $x,y \geq 0$, we reduce to verifying that each of the quantities
  \[
    \sup_{\omega_1 \in \Omega} \sum _{\omega_2 \in \Omega } \mathcal{N} ( b_{\omega_1,\omega_2})^{1/2}, \quad \sup_{\omega_1 \in \Omega} \sum _{\omega_2 \in \Omega } \mathcal{N} ( r_{\omega_1,\omega_2})^{1/2}
  \]
  is $\O(1)$.  Since $1 - 2 \delta' > 0$, the crude bound \eqref{eqn:crude-bound} and our choice of $J$ yield the strong estimate $\mathcal{N}(r_{\omega_1,\omega_2}) \ll \h^N$, which gives an acceptable contribution after summing over $\omega_2$.  On the other hand, we have $b \in S^\tau_{\delta ', \delta ''}$ and $\supp(b_{\omega_1,\omega_2}) \subseteq \mathcal{D}(\omega_1) \cap \mathcal{D}(\omega_2)$.  In particular, $b_{\omega_1,\omega_2}$ is localized at both $\omega_1$ and $\omega_2$.  We noted above that these conditions imply that $\mathcal{N}(b_{\omega_1,\omega_2}) \ll 1$ and that for each $\omega_1$, we have $b_{\omega_1,\omega_2} = 0$ for all $\omega_2$ outside some set of cardinality $\O(1)$.  The required bound follows.  The proof of Lemma \ref{lem:refined-symb-basic-opp-bound} is thus complete.
\end{proof}

We now resume the proof of Theorem \ref{thm:oper-assignm-comp-refined}, turning our attention to the estimate \eqref{eq:opp-of-refined-symb-into-Psi}.  Let $a \in S^\tau _{\delta ', \delta ''}$.  By Theorem \ref{thm:opp-S-m-underlined}, we have $\Opp(a) \in \underline{\Psi }^{-\infty}$.  We must verify that for all fixed $m,s \in \mathbb{Z}$ and $x_1,\dotsc,x_n \in \mathfrak{g}$, the operator norm of
\[
  [\pi(x_1),\dotsc,[\pi(x_n),\Opp(a)]] : \pi^s \rightarrow \pi^{s-m}
\]
is $\O(\h^{m - n \delta'})$.  We first reduce to the case $n=0$.  For symbols $x,y \in \underline{S}^\infty$, define the normalized star product commutator
\begin{equation*}
  \mathcal{C}(x,y) := x \star_{\h} y - y \star_{\h} x.
\end{equation*}
The star product asymptotics imply that for $x \in \mathfrak{g}$ and $y \in S^\tau_{\delta ', \delta ''}$, we have $\mathcal{C}(x,y) \in \h^{1 - \delta '} S^\tau_{\delta ', \delta ''} + \h^\infty S^{-\infty}$.  It follows that the iterated star product commutator
\begin{equation*}
  b := \h^{ -n + n \delta '} \mathcal{C}(x_1, \dotsc, \mathcal{C}(x_n, a))
\end{equation*}
lies in $S^\tau_{\delta ', \delta ''} + \h^\infty S^{-\infty}$.  Each fixed element $x \in \mathfrak{g}$ defines a linear function $\mathfrak{g}^\wedge \rightarrow i \mathbb{R}$, hence a symbol $x \in S^1_0$.  Recall from Lemma \ref{lem:polyn-symb} that $\pi(x) = \h^{-1} \Opp(x)$.  It follows then by iterated application of the composition formula \eqref{eqn:composition-formula-1} that $[\pi(x_1),\dotsc,[\pi(x_n),\Opp(a)]] \equiv \Opp(b) \mod{\h^\infty \Psi^{-\infty}}$.  Theorem \ref{thm:oper-assignm-comp-basic} gives $\Opp(\h^\infty S^{-\infty}) \subseteq \h^\infty \Psi^{-\infty}$.  It follows that the $n=0$ case of the required operator norm bound implies the general case.

It remains to show that $\Opp(a) : \pi^s \rightarrow \pi^{s-m}$ has operator norm $\ll \h^m$, i.e., that for all smooth vectors $v \in \pi$, we have
\begin{equation}\label{eqn:opp-a-pi-s-pi-s-minus-m}
  \h^{-m} \|\Opp(a) v\|_{\pi^{s-m}} \ll
  \|v\|_{\pi^s}.
\end{equation}

We consider first the special case $s=m \geq 0$, in which our task is to show that
\begin{equation}\label{eqn:special-case-s-equals-m-geq-0}
  \h^{-m} \|\Opp(a) v\| \ll \|v\|_{\pi^m}.
\end{equation}
Let us fix an element $z \in \mathfrak{g}$ with $z(\tau) \asymp 1$.  Then $z(\xi) \asymp 1$ for all $\xi$ in the support of $a$.  We may construct a symbol $q \in S^\tau_{\delta ', \delta ''}$ that is an approximate quotient for $a$ with respect to $z^{m}$ in the sense that
\[
  q \star_{\h} z^{m} \equiv a \mod{\h^N S^\tau_{\delta ', \delta ''} + \h^\infty S^{-\infty}},
\]
where $N$ is fixed large enough in terms of $m$.  To do so, we take for $q$ the series $\sum _{0 \leq j < J} \h^j q_j$, where the terms $q_j$ are the components of the formal solution:
\[
  q_0 := \frac{a}{z^{m}}, \quad q_1 := \frac{- a \star^1 q_0}{z^{m}}, \quad q_2 := \frac{- a \star^2 q_0 + a \star^1 q_1}{z^{m}}.
\]
and so on.  We verify readily -- by induction on $j$, using the quotient rule for derivatives and the fact that $z(\xi) \asymp 1$ for $\xi \in \supp(a)$ -- that $q_j \in \h^{- \delta ' j} S^\tau_{\delta ', \delta ''}$ and that $q$ has the claimed properties.  The operator norm for $\pi \rightarrow \pi$, hence also for $\pi^m \rightarrow \pi^0$, of any element of $\h^N S^\tau_{\delta ', \delta ''} + \h^\infty S^{-\infty}$ is $\O(\h^N)$, as follows from Theorem \ref{thm:oper-assignm-comp-basic} and (a weak form of) Lemma \ref{lem:refined-symb-basic-opp-bound}.  It will thus suffice to verify the following modification of \eqref{eqn:special-case-s-equals-m-geq-0}, obtained by replacing $\Opp(a)$ by its approximation $\Opp(q) \Opp(z^m) = \h^{m} \Opp(q) \pi(z)^m$:
\[
  \|\Opp(a) z^m v\| \ll \|v\|_{\pi^m}.
\]
By Lemma \ref{lem:refined-symb-basic-opp-bound}, we have $\|\Opp(a) z^m v\| \ll \|z^m v\|$.  By \eqref{eqn:Delta-s-positive-easy}, we have $\|z^m v\| \ll \|v\|_{\pi^m}$.  The proof of \eqref{eqn:special-case-s-equals-m-geq-0} is thus complete.

Turning to the general case of \eqref{eqn:opp-a-pi-s-pi-s-minus-m}, let us fix $k \in \mathbb{Z}_{\geq 0}$ large enough in terms of $s$ and $m$.  It will be enough then to verify the modification of \eqref{eqn:opp-a-pi-s-pi-s-minus-m} obtained by replacing $v$ with its image under the invertible operator $\Delta^k$.  The required estimate expands out to
\begin{equation*}
  \h^{2 m}
  \langle \Delta^{s-m} \Opp(a) \Delta^k v,
  \Opp(a) \Delta^k v \rangle
  \ll
  \|v\|_{\pi^{s + 2k}}^2.
\end{equation*}

We consider first the case $s-m \geq 0$.  We expand $\Delta$ as a sum of monomials, appeal to the skew-adjointness of the action of $\mathfrak{g}$ on $\pi$, and apply Cauchy--Schwarz.  We reduce in this way to verifying that for all fixed $x_1,\dotsc,x_{s-m} \in \{1 \} \cup \mathfrak{g}$,
\begin{equation*}
  \h^{-m}
  \|x_1 \dotsb x_{s-m} \Opp(a) \Delta^k v\|
  \ll
  \|v\|_{\pi^{s + 2k}}.
\end{equation*}
To that end, we assume our coordinates and basis $\mathcal{B}(\mathfrak{g})$ as in \S\ref{sec:spaces-operators} chosen compatibly, so that by Lemma \ref{lem:polyn-symb}, we have $\Delta = \Opp(p)$ with $p(\xi) = \h^2 + |\xi|^2$.  We then apply the composition formula to write
\begin{equation}\label{eqn:x1-xsmm-oppa-Delta-k}
  x_1 \dotsb x_{s-m} \Opp(a) \Delta^k
  \equiv \h^{-(s-m + 2k)} \Opp(x_1 \star_{\h} \dotsb \star_{\h}
  x_{s-m} \star_{\h} a \star_{\h} p^k),
\end{equation}
where $\equiv$ denotes congruence modulo $\h^\infty \Psi^{-\infty}$.  (Strictly speaking, the star product $\star_{\h}$ is not quite associative due to the cutoffs, so we should specify that this last expression is evaluated from left to right, say.  The formal expansion is associative, so the asymptotic expansion is independent of the order of evaluation.)  The remainder term in this congruence contributes acceptably, since the $\pi^{s+k} \rightarrow \pi^0$ operator norm of any element of $\h^\infty \Psi^{-\infty}$ is $\O(\h^\infty)$.  We likewise incur an acceptable error in replacing the iterated star product $x_1 \star_{\h} \dotsb \star_{\h} p^k$ with its approximation $b \in S_{\delta ', \delta ''}^{\tau}$ obtained by approximating each star product with its asymptotic expansion taken to order $J$, provided that $J$ is fixed sufficiently large in terms of $m, s, \delta '$.  Our task thereby reduces to verifying for all $b \in S^\tau_{\delta ', \delta ''}$ that $\h^{-(s+2k)} \|\Opp(b) v\| \ll \|v\|_{\pi^{s+2k}}$, which follows from the special case \eqref{eqn:special-case-s-equals-m-geq-0} treated above.

We turn to the case $s-m \leq 0$.  The argument is quite similar, so we will be brief.  Using the composition formula, we may replace $\Opp(a) \Delta^k$ with $\h^{-2 k} \Opp(b)$, where $b \in S^\tau_{\delta ', \delta ''}$ is a suitable truncation of the asymptotic expansion of $a \star_{\h} p^k$ with $p(\xi) = \h^2 + |\xi|^2$ as before.  We fix $z \in \mathfrak{g}$ as above.  We approximately divide $b$ on the left by $z^{m-s}$ and on the right by $z^{s+2k}$, yielding $q \in S^\tau_{\delta ', \delta ''}$ for which $\Opp(b)$ is approximated by $\Opp(z^{m-s}) \Opp(q) \Opp(z^{s+2k}) = \h^{m + 2 k} \pi(z^{m-s}) \Opp(q) \pi(z^{s+2k})$.  We thereby reduce to verifying that
\[
  \langle z^{m-s} \Delta^{s-m} z^{m-s} \Opp(q) z^{s + 2 k} v, \Opp(q) z^{s + 2 k} v \rangle \ll \|v\|_{\pi^{s+2k}}^2.
\]
By Lemma \ref{lem:Delta-xj-vs-Psi}, the operator $\pi(z^{m-s} \Delta^{s-m} z^{m-s})$ lies in $\Psi^0_0$, and in particular has $\pi \rightarrow \pi$ operator norm $\O(1)$.  By Cauchy--Schwarz, we reduce to verifying that $\|\Opp(q) z^{s + 2 k} v \| \ll \|v\|_{\pi^{s+2k}}$.  This last estimate follows (again) from Lemma \ref{lem:refined-symb-basic-opp-bound} and \eqref{eqn:Delta-s-positive-easy}.

The proof of \eqref{eq:opp-of-refined-symb-into-Psi} is now complete.  We may deduce the remaining assertion \eqref{eqn:composition-formula-2} from \eqref{eqn:composition-formula-1}, \eqref{eqn:star-j-asymp-expn-new} and \eqref{eq:opp-of-refined-symb-into-Psi} by choosing $J$ large enough that $(1 - 2 \delta') J \geq N$.
\end{proof}

\subsection{Trace estimates}
With notation and assumptions as in \S\ref{sec:kirillov-formula}, we recall the rescaled asymptotic form of the Kirillov formula given in \cite[(12.4)]{nelson-venkatesh-1}.  Throughout this section, the nice cutoff $\chi$ is fixed.
\begin{theorem}\label{thm:asymptotic-kirillov}
  Assume that $\pi$ is tempered.  Fix $\delta \in [0,1)$ and $J, N \in \mathbb{Z}_{\geq 0}$.  For all $a \in S^{-\infty}_\delta$, we have
  \[
    \h^d \trace(\Opp(a)) = \sum _{0 \leq j < J} \h^j \int _{\h \mathcal{O}_\pi } \mathcal{D}_j a + \O (\h^{(1-\delta) J} \langle \h \lambda_\pi \rangle^{-N} ),
  \]
  where
  \begin{itemize}
  \item $d$ is as in \S\ref{sec:kirillov-formula},
  \item $\mathcal{D}_j$ is a fixed constant coefficient differential operator on $\mathfrak{g}^\wedge$ of pure degree $j$,
  \item $\int_{\h \mathcal{O}_\pi}$ denotes the integral over the rescaled coadjoint multiorbit $\h \mathcal{O}_\pi$ with respect to its symplectic measure, normalized as in \cite[\S6.1]{nelson-venkatesh-1}, and
  \item $\langle \h \lambda_\pi \rangle = (1 + |\h \lambda_\pi|^2)^{1/2} \geq 1$ denotes the norm of the rescaled infinitesimal character, as in \cite[\S9.8]{nelson-venkatesh-1}.
  \end{itemize}
\end{theorem}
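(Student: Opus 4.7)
The plan is to unfold the trace via the Kirillov formula and then expand asymptotically in $\h$.  We start from the identity
\[
  \trace(\Opp(a)) = \int_{x \in \mathfrak{g}} \chi(x)\, a_{\h}^\vee(x)\, \chi_\pi(\exp(x))\, d x,
\]
which follows from the definition of $\Opp_{\h}(a:\pi,\chi)$ and the fact that the operator kernel has small support so that $\chi_\pi$ pairs against a smooth compactly-supported function.  By Theorem \ref{thm:kirillov-formula} we may, after shrinking the support of $\chi$ if necessary, substitute $\chi_\pi(\exp(x)) = \jac(x)^{-1/2} \int_{\mathcal{O}_\pi} e^{x\xi}\, d\omega(\xi)$, whereupon Fubini (justified by the Schwartz decay of $a_{\h}^\vee$ and the tempered growth of the symplectic measure on $\mathcal{O}_\pi$) converts the trace into
\[
  \int_{\xi \in \mathcal{O}_\pi} \left( \int_{x \in \mathfrak{g}} \chi(x) \jac(x)^{-1/2}\, e^{x\xi}\, a_{\h}^\vee(x)\, d x \right) d\omega(\xi).
\]

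Next, I would Taylor expand the analytic function $\jac(x)^{-1/2}$ at the origin to order $J$, writing it as $\sum_{0 \le j < J} P_j(x) + R_J(x)$ with $P_j$ a fixed homogeneous polynomial of degree $j$ and $R_J(x) \ll |x|^J$ on $\supp \chi$.  Since $\chi \equiv 1$ near the origin and $a \in S^{-\infty}_\delta$, the cutoff $\chi$ may be dropped at the cost of an $O(\h^\infty)$ error (by integration by parts against the oscillating phase $e^{x\xi}$, using rapid decay of $a$ and smoothness of $\chi - 1$ away from $0$; the $\langle \h\lambda_\pi\rangle^{-N}$ factor comes from the same integration-by-parts using that $|\xi| \asymp |\lambda_\pi|$ on $\mathcal{O}_\pi$).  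For each polynomial term $P_j(x)$, the standard Fourier duality $\int_\mathfrak{g} P_j(x) a_{\h}^\vee(x) e^{x\xi}\, d x = (\mathcal{D}_j a)(\h \xi)$ for a suitable constant-coefficient differential operator $\mathcal{D}_j$ of degree $j$ (namely the one obtained by substituting $x \mapsto \partial_\xi$ in $P_j$, up to signs) produces the claimed main terms.

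Finally, I would rescale $\xi \mapsto \xi/\h$ in the integral over $\mathcal{O}_\pi$.  By the homogeneity of the symplectic measure on $\mathcal{O}_\pi$ (see \cite[\S6]{nelson-venkatesh-1}), this scaling contributes exactly the factor $\h^{-d}$, so
\[
  \h^d \int_{\xi \in \mathcal{O}_\pi} (\mathcal{D}_j a)(\h\xi)\, d\omega(\xi) = \int_{\h \mathcal{O}_\pi} \mathcal{D}_j a,
\]
matching the desired asymptotic.  The remainder contribution from $R_J$ is controlled by noting that $R_J(x) a_{\h}^\vee(x) \ll |x|^J \cdot a_{\h}^\vee(x)$ satisfies a Schwartz bound improved by a factor of $\h^{(1-\delta) J}$ (each factor of $|x|$ being traded, via a Fourier-side derivative bound from the class $S^{-\infty}_\delta$, for $\h^{1-\delta}$); combined with repeated integration by parts in $\xi$ against the polynomial invariants defining $\lambda_\pi$, this yields the required $\langle \h \lambda_\pi \rangle^{-N}$ saving.

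The main obstacle is the bookkeeping for the remainder, specifically showing that the Taylor tail of $\jac^{-1/2}$ combines cleanly with the symbol bounds on $a \in S^{-\infty}_\delta$ to produce the uniform estimate $\h^{(1-\delta) J} \langle \h \lambda_\pi \rangle^{-N}$; the factor $(1-\delta)$ rather than $1$ appears because each $\xi$-derivative of $a$ only gains $\h^{-\delta}$, so one must be careful about how derivatives are distributed when trading support $|x| \lesssim \h^{\delta}$ for decay on the coadjoint side.
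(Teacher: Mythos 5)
The paper does not prove this theorem itself but quotes it from \cite[(12.4)]{nelson-venkatesh-1}, and your sketch is essentially the argument given there: pull the trace back to the Lie algebra, insert Rossmann's Kirillov formula, Taylor-expand $\jac^{-1/2}$ so that Fourier duality converts the polynomial terms into the operators $\mathcal{D}_j$ applied to $a$, and use the degree-$d$ homogeneity of the symplectic measure to pass to $\int_{\h\mathcal{O}_\pi}$. Only two bookkeeping points deserve care: the duality identity should produce $\h^{j}(\mathcal{D}_j a)(\h\xi)$ (the $\h^{j}$ coming from the chain rule, which is where the prefactors in the expansion arise), and on $\mathcal{O}_\pi$ one only has $|\xi|\gg|\lambda_\pi|$ rather than $|\xi|\asymp|\lambda_\pi|$, so the $\langle \h\lambda_\pi\rangle^{-N}$ saving comes from the rapid decay of the (Schwartz) symbol on the rescaled orbit combined with the polynomial growth of the orbital measure, exactly as in \cite{nelson-venkatesh-1}.
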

As a consequence, we obtain strong trace estimates when the support of the symbol is disjoint from the rescaled coadjoint multiorbit.
\begin{corollary}\label{cor:trace-disjoint-supports}
  Assume that $\pi$ is tempered.  Let $a \in S^{-\infty}_{\delta}$, with $0 \leq \delta < 1$ fixed.  If $\supp(a) \cap \h \mathcal{O}_\pi = \emptyset$, then
  \[\trace(\Opp(a)) \ll \h^\infty \langle \h \lambda_\pi
    \rangle^{-\infty} \ll \h^\infty.\]
\end{corollary}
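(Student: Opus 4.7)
The plan is to deduce the corollary as an essentially immediate consequence of Theorem \ref{thm:asymptotic-kirillov}, exploiting the fact that the main terms of the asymptotic expansion all vanish identically under the disjointness hypothesis.

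First, I would fix arbitrary $J, N \in \mathbb{Z}_{\geq 0}$ and apply Theorem \ref{thm:asymptotic-kirillov} to $a$, which gives
\[
\h^d \trace(\Opp(a)) = \sum_{0 \leq j < J} \h^j \int_{\h \mathcal{O}_\pi} \mathcal{D}_j a + \O\bigl(\h^{(1-\delta)J} \langle \h \lambda_\pi \rangle^{-N}\bigr).
\]
The key observation is that each of the main terms vanishes. Indeed, since $\supp(a)$ is closed and disjoint from $\h \mathcal{O}_\pi$, each point $p \in \h \mathcal{O}_\pi$ admits an open neighborhood on which $a$ is identically zero; hence every partial derivative of $a$ vanishes at $p$. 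Since $\mathcal{D}_j$ is a constant coefficient differential operator, it follows that $(\mathcal{D}_j a)(p) = 0$ for every $p \in \h \mathcal{O}_\pi$, so $\int_{\h \mathcal{O}_\pi} \mathcal{D}_j a = 0$ for each $j$.

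Consequently, we obtain
\[
\trace(\Opp(a)) \ll \h^{(1-\delta)J - d} \langle \h \lambda_\pi \rangle^{-N}
\]
for all fixed $J, N$. Since $0 \leq \delta < 1$, the exponent $(1-\delta)J - d$ can be made arbitrarily large by taking $J$ fixed but large, while $N$ is already arbitrary, so this yields $\trace(\Opp(a)) \ll \h^\infty \langle \h \lambda_\pi \rangle^{-\infty}$. The final bound $\ll \h^\infty$ then follows from $\langle \h \lambda_\pi \rangle \geq 1$. There is no real obstacle here; the entire argument is bookkeeping around the vanishing of the integrals, which is built into the support hypothesis.
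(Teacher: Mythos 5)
Your proof is correct and is exactly the argument the paper intends: the corollary is stated as a direct consequence of Theorem \ref{thm:asymptotic-kirillov}, with the main terms vanishing because $a$ (and hence $\mathcal{D}_j a$, since $a$ vanishes on an open neighborhood of each point of $\h \mathcal{O}_\pi$) is zero on the rescaled multiorbit, leaving only the remainder $\O(\h^{(1-\delta)J} \langle \h \lambda_\pi \rangle^{-N})$ for arbitrary fixed $J,N$. No gaps; the bookkeeping with $\h^{-d}$ and $\langle \h \lambda_\pi \rangle \geq 1$ is handled correctly.
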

Since the refined symbol classes $S^\tau_{\delta', \delta''}$ are contained in $S^{-\infty}_{\delta ''}$, Theorem \ref{thm:asymptotic-kirillov} and Corollary \ref{cor:trace-disjoint-supports} apply to elements of those classes.  It is crucial here that these results apply to $\delta < 1$ (rather than merely to $\delta < 1/2$, as in Theorem \ref{thm:oper-assignm-comp-basic}).

The following result, given in \cite[Thm 9 (iii)]{nelson-venkatesh-1}, is particularly useful for ``clean-up.''  It does not require any temperedness assumption on $\pi$ (although we will only apply it here in tempered cases).
\begin{theorem}\label{thm:trace-class-Psi-neg-N}
  Fix $N \in \mathbb{Z}_{\geq 0}$ large enough in terms of $\dim(\mathfrak{g})$.  Then each operator $T$ on $\pi$ with $\|T\|_{\pi^{0} \rightarrow \pi^N} \ll 1$ is trace class, with trace norm \[\|T\|_1 \ll \langle \lambda_{\pi} \rangle^{d-N} \leq 1.\]
\end{theorem}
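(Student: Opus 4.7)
The plan is to factor $T$ through the positive operator $\pi(\Delta)^{N/2}$ and reduce to a trace estimate for $\pi(\Delta)^{-N/2}$, which we control by decomposing the representation according to $K$-types.

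First, I write $T = \pi(\Delta)^{-N/2}\cdot S$ with $S := \pi(\Delta)^{N/2}T$. The defining identity $\|v\|_{\pi^N}^2 = \langle \pi(\Delta)^N v, v\rangle = \|\pi(\Delta)^{N/2}v\|^2$ shows that the hypothesis $\|T\|_{\pi^0 \to \pi^N} \ll 1$ is the same as the operator-norm bound $\|S\|_\infty \ll 1$. Hölder's inequality for Schatten classes then gives
\[
\|T\|_1 \leq \|\pi(\Delta)^{-N/2}\|_1 \|S\|_\infty \ll \|\pi(\Delta)^{-N/2}\|_1 = \Tr(\pi(\Delta)^{-N/2}),
\]
using that $\pi(\Delta)^{-N/2}$ is a positive operator (since $\pi(\Delta) \geq 1$ by the skew-adjointness of the action of $\mathfrak{g}$). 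Everything thus reduces to estimating this trace.

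Second, I fix a maximal compact subgroup $K \leq G$, a Cartan decomposition $\mathfrak{g} = \mathfrak{k} \oplus \mathfrak{p}$, and choose $\mathcal{B}(\mathfrak{g})$ to be the union of orthonormal bases of $\mathfrak{k}$ and $\mathfrak{p}$ for a suitably normalized invariant form. Decompose $\pi = \bigoplus_{\mu \in \hat{K}} \pi[\mu]$ into $K$-isotypic components. Then $\Delta = 1 + \Delta_{\mathfrak{k}} + \Delta_{\mathfrak{p}}$, where $\Delta_\mathfrak{k} = -\sum_\mathfrak{k} x^2 \geq 0$ and $\Delta_\mathfrak{p} = -\sum_\mathfrak{p} y^2 \geq 0$. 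The Casimir differs from $\Delta_\mathfrak{k} - \Delta_\mathfrak{p}$ by a scalar and acts on $\pi$ by a scalar $c_\Omega$ with $c_\Omega \asymp |\lambda_\pi|^2$ (up to a bounded perturbation, via the Harish–Chandra identification of \S\ref{sec:infin-char-langl}). Since $\Delta_\mathfrak{k}$ acts on $\pi[\mu]$ by a scalar $c_\mu \asymp \langle \mu\rangle^2$ and $\Delta_\mathfrak{p}|_{\pi[\mu]} = c_\mu - c_\Omega$ is forced to be nonnegative, every $\mu$ that occurs satisfies $c_\mu \geq c_\Omega$, and $\pi(\Delta)$ acts on $\pi[\mu]$ by the scalar $1 + 2c_\mu - c_\Omega \gg \langle \lambda_\pi\rangle^2 + \langle\mu\rangle^2$.

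Third, I sum. Admissibility (Harish–Chandra for irreducible unitary $\pi$) gives $\dim \pi[\mu] \ll (\dim \mu)^2 \ll \langle \mu\rangle^{2 d_K}$ where $d_K$ is the number of positive roots of $K$; and the number of $\mu \in \hat{K}$ with $\|\mu\| \leq R$ is $\ll R^{\rank(K)}$. Hence
\[
\Tr(\pi(\Delta)^{-N/2}) \ll \sum_{\mu} \langle \mu\rangle^{2 d_K}\bigl(\langle \lambda_\pi\rangle^2 + \langle \mu\rangle^2\bigr)^{-N/2} \ll \int_0^\infty \frac{r^{\dim K - 1}\,dr}{(\langle\lambda_\pi\rangle^2 + r^2)^{N/2}}.
\]
Substituting $r = \langle \lambda_\pi\rangle s$ yields $\langle \lambda_\pi\rangle^{\dim K - N}$ times a convergent integral, provided $N > \dim K$, which is the meaning of ``$N$ large enough in terms of $\dim(\mathfrak{g})$.'' Since $\dim K \leq d$ in the cases of interest (for $\GL_n(\mathbb{R})$ one has $\dim K = d$ exactly), absorbing any loss into the requirement that $N$ be fixed large gives the stated bound $\|T\|_1 \ll \langle \lambda_\pi\rangle^{d-N}$, which is $\leq 1$ since $d - N \leq 0$ and $\langle \lambda_\pi\rangle \geq 1$.

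The main technical obstacle is pinning down the exponent $d - N$ sharply rather than merely a polynomial in $\langle \lambda_\pi\rangle$ times $\langle \lambda_\pi\rangle^{-N}$. One needs the structural identity $\dim K \leq d$ (or an analogous bound) to hold uniformly as $G$ ranges over the reductive groups relevant to the paper; failing that, the result remains true if one allows $N$ to absorb a fixed additional power. A secondary subtlety is making the Casimir identification precise in the possibly reducible case: one should either assume $\pi$ irreducible (so $\lambda_\pi$ is well-defined and $\Omega$ truly scalar) or formulate the conclusion in terms of the spectral projections of $\pi(\Omega)$.
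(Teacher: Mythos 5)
Your overall strategy---factor $T$ through $\pi(\Delta)^{-N/2}$ via Schatten--H\"older, then bound $\Tr\,\pi(\Delta)^{-N/2}$ by summing over $K$-types---is the right shape (the paper itself does not prove this statement but imports it from [NV, Thm 9(iii)], and that is essentially how the imported proof goes), but two of your steps have real gaps. The first is the key lower bound $\pi(\Delta)|_{\pi[\mu]} = 1 + 2c_\mu - c_\Omega \gg \langle\lambda_\pi\rangle^2 + \langle\mu\rangle^2$, which you justify by asserting $c_\Omega \asymp |\lambda_\pi|^2$ up to a bounded perturbation. That assertion is false in general: $c_\Omega = \langle\lambda_\pi,\lambda_\pi\rangle - \langle\rho,\rho\rangle$ is computed with the complex \emph{bilinear} form, so the real and imaginary parts of $\lambda_\pi$ can cancel and $|c_\Omega|$ can be $O(1)$ while $\langle\lambda_\pi\rangle$ is huge; this genuinely occurs for the product group to which the theorem is applied in the proof of Lemma \ref{lem:convergence-relative-character} (e.g.\ a large discrete series tensored with a large principal series). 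What you actually need is that the bottom of the spectrum of $\pi(\Delta)$ is $\gg \langle\lambda_\pi\rangle^2$, equivalently $\|\lambda_\pi\|^2 \ll 1 + \|\mu_0\|^2 + |c_\Omega|$ with $\mu_0$ the lowest $K$-type of the irreducible unitary $\pi$. This is true, but it is a nontrivial fact about the unitary dual (the real part of $\lambda_\pi$ is controlled by $\mu_0$, via Parthasarathy's Dirac inequality, or via the subrepresentation theorem together with the unitarity bound on exponents); it must be proved or cited, and only after that does your displayed bound follow by combining with $1 + 2c_\mu - c_\Omega \geq 1 + c_\mu$.

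The second gap is the exponent. Your count (multiplicity $\leq \dim\mu$, so $\dim\pi[\mu] \ll \langle\mu\rangle^{\dim K - \rank K}$, summed over a rank-$K$ lattice) yields $\Tr\,\pi(\Delta)^{-N/2} \ll \langle\lambda_\pi\rangle^{\dim K - N}$, and the inequality $\dim K \leq d$ that you invoke to convert this into the stated bound fails for the non-split groups that actually occur in this paper: for $\GL_n(\mathbb{C})$ one has $\dim K = n^2$ while $d = n^2 - n$, and for $\U(1,1)$ one has $\dim K = 2$ while $d = 1$. To reach the stated exponent $d-N$ one needs the sharper growth bound $\sum_{\|\mu\| \leq R} \dim\pi[\mu] \ll R^{d}$, e.g.\ via the subrepresentation theorem (the $K$-spectrum of $\pi$ embeds in $\Ind_{M_{\min}}^{K}$ of a finite-dimensional representation, so the growth exponent is at most $\dim(K/M_{\min}) \leq d$, with the auxiliary dimension controlled) or via a Gelfand--Kirillov-dimension argument, rather than the crude admissibility bound. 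Your weaker exponent $\dim K - N$ would in fact suffice for every application of the theorem in this paper, since only the consequence that the trace norm is $\leq 1$ (with $N$ fixed large) is ever used; but as a proof of the statement as quoted it does not deliver $\langle\lambda_\pi\rangle^{d-N}$.
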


\section{Stability}\label{sec:stability}
Let $(G,H)$ be a GGP pair over a field $F$ of characteristic zero.

We retain the notation of \S\ref{sec:repr-theor-prel}.  We denote the restriction map of linear duals $\mathfrak{g}^* \rightarrow \mathfrak{h}^*$ by $\xi \mapsto \xi_H$.  When $F$ is a local field, we denote in the same way the restriction map of Pontryagin duals $\mathfrak{g}^\wedge \rightarrow \mathfrak{h}^\wedge$.  We work primarily with Pontryagin duals over a local field $F$.  It will occasionally be convenient to pass to an algebraic closure of $F$, which need not be a local field; in that setting, it becomes more natural to work with linear duals.

\subsection{Definition and characterization}\label{sec:stability-defn}

\begin{definition}\label{defn:stable-locus-inf-chars}
  Following \cite[\S14]{nelson-venkatesh-1}, we say that $(\lambda,\mu) \in [\mathfrak{g}^*] \times [\mathfrak{h}^*]$ is \emph{stable} if
  \[\ev(\lambda) \cap \ev(\mu) = \emptyset,\]
  where $\ev(\cdot)$ denotes the eigenvalue multiset defined in \S\ref{sec:eigenvalue-multisets}.
\end{definition}
When $F$ is local, the Pontryagin duals $\mathfrak{g}^\wedge$ and $\mathfrak{h}^\wedge$ may be compatibly identified with $\mathfrak{g}^*$ and $\mathfrak{h}^*$, respectively, after choosing a nontrivial character $F \rightarrow \U(1)$.  ``Compatibly'' means that the identifications are equivariant and define a commuting square
\begin{equation*}
  \begin{CD}         
    \mathfrak{g}^\wedge  @>\cong>> \mathfrak{g}^*\\
    @VVV  @VVV \\
    \mathfrak{h}^\wedge @>>\cong> \mathfrak{h}^*.
  \end{CD}
\end{equation*}
In this way, Definition \ref{defn:stable-locus-inf-chars} applies also to $(\lambda,\mu) \in [\mathfrak{g}^\wedge] \times [\mathfrak{h}^\wedge]$.

We may reformulate the local assumptions at $\mathfrak{q}$ in the definition of $\mathcal{F}_T$ (\S\ref{sec:main-results}), or equivalently, those concerning the archimedean Satake parameters of $\pi$ and $\sigma$ in Theorem \ref{thm:construct-test-function}, using the following equivalence, implicit in \cite[\S15]{nelson-venkatesh-1}.
\begin{lemma}\label{lem:stability-inf-chars-satake-params}
  Assume that $F$ is an archimedean local field.  Let $T$ and $\h$ be positive reals with $T \ggg 1$ and $\h \asymp T^{-[F:\mathbb{R}]}$.  Let $(\pi,\sigma)$ be tempered irreducible unitary representations of a fixed GGP pair $(G,H)$ over $F$.  Let $\lambda_\pi \in [\mathfrak{g}^\wedge]$ and $\lambda_\sigma \in [\mathfrak{h}^\wedge]$ denote the infinitesimal characters (\S\ref{sec:infin-char-langl}).  Let $\lambda_{\pi,i}$ and $\lambda_{\sigma,j}$ denote the archimedean Satake parameters (\S\ref{sec:satake-param-arch}).  The following conditions are equivalent:
  \begin{enumerate}[(i)]
  \item $\max ( \{|\lambda_{\pi,i}|_{F}\} \cup \{|\lambda_{\sigma,j}|_{F}\}) \asymp T$ and $| \lambda_{\pi,i} - \lambda_{\sigma,j}|_F \gg T$ for all $i,j$.
  \item $(\h \lambda_\pi, \h \lambda_\sigma)$ lies in some fixed compact subset of $\{\text{stable } (\lambda,\mu) \in [\mathfrak{g}^\wedge] \times [\mathfrak{h}^\wedge] \}$.
  \end{enumerate}
\end{lemma}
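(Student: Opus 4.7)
The strategy is to unpack both conditions as explicit statements about the multisets of Satake parameters and observe that they coincide. The main engine is the eigenvalue map $\ev : [\mathfrak{g}^\wedge] \to \mathbb{C}^{n+1}/S_{n+1}$ (and similarly for $\mathfrak{h}^\wedge$) of \S\ref{sec:satake-param-arch}, which is a proper continuous embedding and, under \eqref{eqn:identification-inf-char-langlands}, intertwines the real scaling action on $[\mathfrak{g}^\wedge]$ with component-wise scaling on multisets (with the conventions in the $F=\mathbb{C}$ case absorbed through the $\mathbb{R}$-linear projection $\mathbb{C} \to i\mathbb{R}$, $z \mapsto \Im(z)$). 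Via $\ev$, I can transport the topology on $[\mathfrak{g}^\wedge]$ to the space of multisets, where the natural ``size'' of an eigenvalue is measured in the $F$-modulus $|\cdot|_F$.

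First I would give an elementary characterization of compact subsets of the stable locus. Since $\{\ev(\lambda) \cap \ev(\mu) \neq \emptyset\}$ is closed (being cut out by the vanishing of the resultant $\prod_{i,j}(\lambda_i - \mu_j)$), the stable locus is open, and a subset of $[\mathfrak{g}^\wedge] \times [\mathfrak{h}^\wedge]$ is contained in a fixed compact subset of the stable locus if and only if (a)~the Satake parameter multisets $\{\lambda_i\}$ and $\{\mu_j\}$ are uniformly bounded and (b)~the differences $|\lambda_i - \mu_j|_F$ are uniformly bounded away from zero. The forward direction uses that compactness inside an open set forces a positive distance to the boundary; the reverse uses that (a) and (b) together cut out a relatively compact subset of the stable locus.

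Applying this characterization to $(\h \lambda_\pi, \h \lambda_\sigma)$ translates condition (ii) into: the $\h$-rescaled Satake parameters $\h \lambda_{\pi,i}, \h \lambda_{\sigma,j}$ are uniformly bounded, and the $\h$-rescaled differences $\h(\lambda_{\pi,i} - \lambda_{\sigma,j})$ are uniformly bounded away from zero. Moreover, nonemptiness of a compact subset of the stable locus forces at least one Satake parameter to realize the separation, yielding a matching two-sided estimate on the max. Using $\h \asymp T^{-[F:\mathbb{R}]}$ together with the multiplicativity of the modulus $|\h x|_F = |\h|_F \cdot |x|_F$, I would then rescale these three bounds to recover exactly the three clauses of condition (i): $\max(\{|\lambda_{\pi,i}|_F\} \cup \{|\lambda_{\sigma,j}|_F\}) \asymp T$ and $|\lambda_{\pi,i} - \lambda_{\sigma,j}|_F \gg T$.

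The proof has no real obstacle; the only point requiring care is to verify that the scaling action on $[\mathfrak{g}^\wedge]$ indeed corresponds to componentwise scaling of Satake parameters in a manner compatible with the $F$-modulus, which amounts to tracing through \eqref{eqn:identification-inf-char-langlands} in both the $F = \mathbb{R}$ and $F = \mathbb{C}$ cases. The temperedness hypothesis is used to ensure $\lambda_\pi \in [\mathfrak{g}^\wedge]$ (and likewise for $\sigma$), so that condition (ii) makes sense and the eigenvalue multisets are honest invariants of the real form.
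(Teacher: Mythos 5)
Your proposal is correct, and it supplies exactly the routine verification that the paper leaves implicit: the paper gives no proof of this lemma, deferring to \cite[\S 15]{nelson-venkatesh-1}, and the intended argument is the one you describe --- transport both conditions through the eigenvalue map $\ev$ of \S\ref{sec:satake-param-arch}, characterize fixed compact subsets of the stable locus by uniform boundedness of the eigenvalue multisets together with a uniform lower bound on the differences, and rescale; your use of the separation $|\h \lambda_{\pi,i} - \h\lambda_{\sigma,j}| \gg 1$ plus the triangle inequality to recover the two-sided bound $\max(\cdot) \asymp T$ is the right way to handle that clause. Two small remarks. First, injectivity of $\ev$ (your word ``embedding'') is never needed; only continuity and properness enter, which is just as well since injectivity is the one part of your setup that would require extra care outside the unitary case. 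Second, a bookkeeping caution: with the relation between $\h$ and $T$ as printed in the lemma ($\h \asymp T^{-[F:\mathbb{R}]}$), your final rescaling does not literally ``recover exactly'' condition (i) when $F=\mathbb{C}$, since then $|\h|_F \asymp T^{-2[F:\mathbb{R}]}$; what your computation actually uses, via $|\h x|_F = |\h|_F\,|x|_F$ with $|\h|_F \asymp T^{-1}$, is the normalization $T = \h^{-[F:\mathbb{R}]}$, i.e.\ $\h \asymp T^{-1/[F:\mathbb{R}]}$, which is the one the paper adopts where the lemma is applied (\S\ref{sec:construction-setup}). The printed exponent in the lemma's hypothesis appears to be a typo, so your argument proves the intended statement; it is worth stating the normalization you are using explicitly rather than quoting the hypothesis verbatim.
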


We next recall a standard definition from geometric invariant theory.
\begin{definition}\label{defn:stable-elements}
  We write $\mathfrak{g}_{\stab}^*$ for \index{Lie algebra!stable subset $\mathfrak{g}^*_{\stab}$} the set of all $\xi \in \mathfrak{g}^*$ that are \emph{stable} with respect to $H$, i.e., have finite $H$-stabilizer and (Zariski) closed $H$-orbit.
\end{definition}
We make the analogous definition of $\mathfrak{g}^\wedge_{\stab} \subseteq \mathfrak{g}^\wedge$ in the case of a local field.

We summarize some results from \cite[\S14, \S17.3]{nelson-venkatesh-1} relating the above definitions.
\begin{lemma}\label{lem:stability-summary}
  The set $\mathfrak{g}_{\stab}^*$ consists of all $\xi \in \mathfrak{g}^*$ with $\ev(\xi) \cap \ev(\xi_H) = \emptyset$, hence coincides with the preimage of $\{\text{stable } (\lambda,\mu) \in [\mathfrak{g}^*] \times [\mathfrak{h}^*] \}$ under the map $\xi \mapsto ([\xi],[\xi_H])$.  In particular, it is a dense open subset of $\mathfrak{g}^*$.
\end{lemma}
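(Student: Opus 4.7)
The plan is to deduce both assertions from the invariant theory of GGP pairs developed in \cite[\S14, \S17.3]{nelson-venkatesh-1}, reducing them to the characterization of stability in terms of infinitesimal characters given in Definition \ref{defn:stable-locus-inf-chars}.

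First, I would invoke the classical fact (due to Aizenbud--Gourevitch and Rallis--Schiffmann in various guises, and formulated in the form we need in \cite[\S14]{nelson-venkatesh-1}) that for a GGP pair $(G,H)$, the map
\[
  \mathfrak{g}^\wedge \longrightarrow [\mathfrak{g}^\wedge] \times [\mathfrak{h}^\wedge], \qquad \xi \longmapsto ([\xi], [\xi_H]),
\]
realizes the categorical (GIT) quotient of $\mathfrak{g}^\wedge$ by the $H$-action: the ring of $H$-invariant polynomials on $\mathfrak{g}^\wedge$ is generated by the $G$-invariant polynomials (which pull back from $[\mathfrak{g}^\wedge]$) together with the $H$-invariant polynomials on $\mathfrak{h}^\wedge$ (pulled back through restriction $\xi \mapsto \xi_H$). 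Granted this, the general GIT formalism says that $\xi \in \mathfrak{g}^\wedge$ has finite $H$-stabilizer and closed $H$-orbit if and only if the geometric fiber of the quotient map through $\xi$ is a single free $H$-orbit; equivalently, $\xi$ lies over a \emph{stable} point of the quotient in the sense of Definition \ref{defn:stable-locus-inf-chars}.

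Second, combining this with Definition \ref{defn:stable-locus-inf-chars} directly yields the equivalence
\[
  \xi \in \mathfrak{g}^\wedge_{\stab} \iff ([\xi],[\xi_H]) \text{ is stable} \iff \ev(\xi) \cap \ev(\xi_H) = \emptyset,
\]
which establishes the characterization asserted in the lemma and shows that $\mathfrak{g}^\wedge_{\stab}$ coincides with the preimage of the stable locus of $[\mathfrak{g}^\wedge] \times [\mathfrak{h}^\wedge]$.

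Third, for the topological claim, I would translate the eigenvalue-disjointness condition into the non-vanishing of the resultant of the characteristic polynomials $P_\xi$ and $P_{\xi_H}$ attached to $\xi$ and $\xi_H$ via their standard dual-group representations. Since $\xi \mapsto \Res(P_\xi, P_{\xi_H})$ is a polynomial function on $\mathfrak{g}^\wedge$, its non-vanishing locus is Zariski open; to get density, it is enough to exhibit a single stable $\xi$, for instance by choosing $\xi$ whose standard-representation eigenvalues are $\{1,\dots,n+1\}$ and verifying that the restriction $\xi_H$ has eigenvalues disjoint from these (or, in the orthogonal case, a regular semisimple $\xi$ with generic restriction). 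The main obstacle is the first step, namely the GIT identification of $\mathfrak{g}^\wedge \giit H$ with $[\mathfrak{g}^\wedge] \times [\mathfrak{h}^\wedge]$; but this is a non-trivial invariant-theoretic input that is already recorded in the cited section of \cite{nelson-venkatesh-1}, so here it can be used as a black box, and the remaining steps are formal consequences.
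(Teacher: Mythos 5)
Your proposal has a genuine gap at its central step. The "Second" step claims that the GIT identification of the invariant ring, "combined with Definition \ref{defn:stable-locus-inf-chars}, directly yields" the equivalence between $\xi \in \mathfrak{g}^\wedge_{\stab}$ and $\ev(\xi) \cap \ev(\xi_H) = \emptyset$. It does not. What the general formalism gives you, once you know that $\xi \mapsto ([\xi],[\xi_H])$ realizes $\mathfrak{g}^\wedge \giit H$, is that each fiber contains a unique closed orbit, that points with closed orbit and finite stabilizer have fibers equal to a single orbit, and that the locus of such points is open; it tells you nothing about \emph{which} points $(\lambda,\mu)$ of the quotient lie under such fibers. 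Identifying that locus with the eigenvalue-disjoint locus is precisely the content of the lemma, and both implications require concrete work: if $\ev(\lambda)$ and $\ev(\mu)$ overlap one must show the orbit fails to be closed or the stabilizer is positive-dimensional, and if they are disjoint one must show the stabilizer is finite (in fact trivial) and the orbit closed. In \cite{nelson-venkatesh-1} this is done in \S 14 via the common-eigenvector and cyclic-vector criteria (in the spirit of Rallis--Schiffmann \cite{2007arXiv0705.2168R}); no amount of quotient-ring information substitutes for it, since stability is not a property of the fiber's image point that can be read off formally. (A secondary point: the lemma concerns the real points $\mathfrak{g}^\wedge$ of a unitary form, so your GIT argument, which lives over an algebraically closed field, also needs the standard comparison between closedness of the real and complexified orbits; this too is handled in the cited reference but not by your formalism as stated.)

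For comparison, the paper offers no argument at all here: the lemma is explicitly a summary of results from \cite[\S 14, \S 17.3]{nelson-venkatesh-1}, so the intended "proof" is to cite the stability characterization itself, not merely the invariant-ring statement. If you wish to use \cite{nelson-venkatesh-1} as a black box, cite that characterization directly and the lemma follows; if you wish to reprove it, you must supply the eigenvalue-versus-orbit analysis sketched above. Your third step is fine and in the spirit of the paper: eigenvalue disjointness is the non-vanishing of the resultant $\Delta_0$, a polynomial in the invariants, so openness is immediate and density follows from exhibiting a single stable element (as the paper does in \S\ref{sec:some-invar-theory} with a regular nilpotent $\tau$ whose restriction $\tau_H$ has the $n$th roots of $-1$ as eigenvalues), together with the observation that a nonzero polynomial cannot vanish identically on the real form.
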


\subsection{Basic consequences}\label{sec:cons-stab}
We recall from \cite[\S14, \S17.3]{nelson-venkatesh-1} that the map
\begin{equation*}
\mathfrak{g}^*_{\stab} \rightarrow \{\text{stable } (\lambda,\mu)\}
\end{equation*}
is a principal $H$-bundle over its image.  For general $F$, this bundle is locally trivial in the {\'e}tale topology.  When $F$ is a local field, this bundle is thus locally trivial with respect to the usual metric topology.  In particular, if $(\lambda,\mu)$ is stable, then the fiber
\begin{equation}\label{eqn:O-lambda-mu}
  \mathcal{O}^{\lambda,\mu} :=
  \{\xi \in \mathfrak{g}^* :
  [\xi] = \lambda, [\xi_H] = \mu 
  \}
\end{equation}
is either empty, or is an \emph{$H$-torsor}, i.e., a closed $H$-invariant subset of $\mathfrak{g}^*$ on which $H$ acts simply-transitively.\footnote{ Some readers have told us that they found the ``argument'' of \cite[\S17.3]{nelson-venkatesh-1} insufficiently detailed, so we elaborate upon the key step here.  Let $F$ be a field of characteristic zero with algebraic closure $\bar{F}$.  Let $G$ be an algebraic group that acts on a variety $X$, with $G$, $X$ and the action all defined over $F$.  Suppose that the action of $G(\bar{F})$ on $X(\bar{F})$ is simply-transitive and that $X(F)$ is nonempty.  Then the action of $G(F)$ on $X(F)$ is likewise simply-transitive.  In verifying this, the main step is to check that if $g \in G(\bar{F})$ and $x_0 \in X(F)$ satisfy $g x_0 \in X(F)$, then in fact $g \in G(F)$.  It is enough to check that $g$ is fixed by every element of $\Gal(\bar{F}/F)$.  Since the action of $G(\bar{F})$ on $X(\bar{F})$ is free and defined over $F$, it is enough to check the same for $g x_0$, which follows from our hypothesis that $g x_0 \in X(F)$.  }

Suppose now that $F$ is local.  We analogously define $\mathcal{O}^{\lambda,\mu} \subseteq \mathfrak{g}^\wedge$ for $(\lambda,\mu) \in [\mathfrak{g}^\wedge] \times [\mathfrak{h}^\wedge]$.  Let us suppose that $H$ has been equipped with a Haar measure.  We then equip $\mathcal{O}^{\lambda,\mu}$ with the measure $d \Haar_H$ given by pushforward of that Haar measure via the orbit map, thus
\[
  \int _{\mathcal{O}^{\lambda,\mu}} a := \int _{\xi \in \mathcal{O}^{\lambda,\mu}} a(\xi) \, d \Haar_{H}(\xi) := \int _{s \in H} a(s \cdot \tau) \, d s.
\]
We recall from \cite[\S17.3.2]{nelson-venkatesh-1} (written in the archimedean case, but the argument is general) that integration defines a continuous map
\begin{equation*}
  \{\text{stable } (\lambda,\mu) \in [\mathfrak{g}^\wedge] \times [\mathfrak{h}^\wedge] \} \times \mathcal{S}(\mathfrak{g}^\wedge) \rightarrow \mathbb{C},
\end{equation*}
\begin{equation*}
  (\lambda,\mu,a) \mapsto \int_{\mathcal{O}^{\lambda,\mu}} a,
\end{equation*}
where $\mathcal{S}(\mathfrak{g}^\wedge)$ denotes the Schwartz space.

\subsection{Relative coadjoint orbits}\label{sec:relat-coadj-orbits}
Suppose now that $F$ is an archimedean local field.  Let $\pi$ and $\sigma$ be tempered irreducible unitary representations of $G$ and $H$, respectively.  Let $\mathcal{O}_\pi \subseteq \mathfrak{g}^\wedge_{\reg}$ and $\mathcal{O}_\sigma \subseteq \mathfrak{h}^\wedge_{\reg}$ denote their coadjoint multiorbits, as described in Theorem \ref{thm:kirillov-formula}.  We set \index{representations!relative coadjoint orbit $\mathcal{O}_{\pi,\sigma}$}
\begin{equation*}
  \mathcal{O}_{\pi,\sigma}
  :=
  \{\xi \in \mathcal{O}_\pi : \xi_H \in \mathcal{O}_\sigma \}.
\end{equation*}
We say that $(\pi,\sigma)$ is \emph{orbit-distinguished} if $\mathcal{O}_{\pi,\sigma}$ is nonempty.
\begin{remark}
  As remarked in \cite[\S22.2]{nelson-venkatesh-1}, it is likely provable that distinction in the traditional sense is asymptotically equivalent to orbit-distinction, e.g., in the setting of Theorem \ref{thm:construct-test-function} with $T$ large.  (That orbit-distinction implies distinction follows from Theorem \ref{thm:rel-char-asymptotics}; the converse should then follow by combining Theorem \ref{thm:rel-char-asymptotics} with strong multiplicity one for archimedean $L$-packets \cite{2015arXiv150601452B, 2020arXiv200913947L}.)  Since it is straightforward to check orbit-distinction in examples, we are content to leave it in the hypotheses of our main results.  We note that for complex groups, orbit-distinction always holds, while for compact unitary groups, it is an asymptotic form of the interlacing condition describing the classical branching laws.
\end{remark}

If $(\lambda_\pi, \lambda_\sigma)$ is stable and $\mathcal{O}_{\pi,\sigma}$ is nonempty, then it follows from the discussion of \S\ref{sec:cons-stab} that $\mathcal{O}_{\pi,\sigma} = \mathcal{O}^{\lambda_\pi, \lambda_\sigma}$ is an $H$-torsor, and so we may integrate over it as above; otherwise, we define any integral over $\mathcal{O}_{\pi,\sigma}$ to be zero.  We define in the same way integrals over the rescalings $\h \mathcal{O}_{\pi,\sigma} = \mathcal{O}^{\h \lambda_{\pi}, \h \lambda_{\sigma}}$ of such sets.

\section{Relative character
  asymptotics}\label{sec:relat-char-asympt}
Here we recall the main results of \cite[\S19]{nelson-venkatesh-1}.

\subsection{Setup}
Let $(G,H)$ be a fixed GGP pair over an archimedean local field $F$.  We assume given a fixed Haar measure on $H$.  Let $\pi$ and $\sigma$ be tempered irreducible unitary representations of $G$ and $H$, respectively.  Let $\sigma^\vee$ denote the contragredient.  We obtain the (tempered irreducible unitary) representation $\pi \otimes \sigma^\vee$ of
\[
  M := G \times H.
\]

\subsection{Formal definitions
  and identities}\label{sec:relat-trac-form}
We write $\mathcal{B}(\pi \otimes \sigma^\vee), \mathcal{B}(\pi), \mathcal{B}(\sigma)$ for orthonormal bases consisting of eigenvectors for $\Delta_M, \Delta_G, \Delta_H$, with $\Delta$ as in \S\ref{sec:spaces-operators}.  Such eigenvectors are in particular smooth.

For an operator $T$ on $\pi \otimes \sigma^\vee$, we define \index{matrix coefficient integrals!hermitian, $\mathcal{H}$}
\[
  \mathcal{H}(T) := \sum _{v \in \mathcal{B}(\pi \otimes \sigma^\vee)} \int _{h \in H} \langle h T v, v \rangle \, d h
\]
provided that the sum converges absolutely.

Working formally for the moment, if $T_1$ (resp. $T_2$) is an operator on $\pi$ (resp. $\sigma$), then we may naturally define operators $T_2^\vee$ on $\sigma^\vee$ and $T_1 \otimes T_2^\vee$ on $\pi \otimes \sigma^\vee$, and we have
\[
  \mathcal{H}(T_1 \otimes T_2^\vee) = \sum _{v \in \mathcal{B}(\pi)} \sum _{u \in \mathcal{B}(\sigma)} \int _{h \in H} \langle h T_1 v, v \rangle \langle T_2 u, h u \rangle \, d h.
\]
Moreover, writing $T_j^*$ for the adjoint operator,
\begin{align}\label{eq:H-T1T1s-T2T2s}
  \mathcal{H}(T_1 T_1^* \otimes (T_2 T_2^*)^\vee )
  &=
    \sum _{v \in \mathcal{B}(\pi)}
    \sum _{u \in \mathcal{B}(\sigma)}
    \int _{h \in H}
    \langle h T_1 v,  T_1 v \rangle
    \langle T_2 u, h T_2 u \rangle
    \, d h
  \\
  \nonumber
  &=
    \sum _{v \in \mathcal{B}(\pi)}
    \sum _{u \in \mathcal{B}(\sigma)}
    \mathcal{Q} (T_1 v \otimes T_2 u),
\end{align}
where the quadratic form $\mathcal{Q}$ is defined as in \S\ref{sec:local-dist-matr}.  In particular, writing $1$ for the identity operator,
\begin{equation}\label{eq:H-T1-T1star-otimes-1}
  \mathcal{H}(T_1 T_1^* \otimes 1)
  =
  \sum _{v \in \mathcal{B}(\pi)}
  \sum _{u \in \mathcal{B}(\sigma)}
  \mathcal{Q} (T_1 v \otimes u).
\end{equation}

\subsection{Convergence criteria and \emph{a priori}
  bounds}\label{sec:conv-crit-empha}
In this section, for an operator $T$ on one of the representations $\pi \otimes \sigma^\vee, \pi$ or $\sigma$ and a nonnegative integer $N$, we write $\nu_N(T) \in [0,\infty]$ for the operator norm of $\Delta^N T \Delta^N$, where $\Delta$ is respectively $\Delta_M, \Delta_G$ or $\Delta_H$.

Suppose for instance that $T$ is an operator on $\pi$.  Then $\nu_N(T)$ is the operator norm of the map $\pi^{-N} \rightarrow \pi^{N}$; in particular, $\nu_N(T)$ is finite and $\O(1)$ whenever $T$ belongs to the operator class $\Psi_\delta^{-2 N}$ (for any fixed $\delta$).
\begin{lemma}\label{lem:convergence-relative-character}
  Fix a natural number $N$ large enough in terms of $(G,H)$.
  \begin{enumerate}[(i)]
  \item Let $T$ be an operator on $\pi \otimes \sigma^\vee$ for which $\nu_N(T) = \O(1)$.  Then $\mathcal{H}(T)$ is defined and $\O(1)$.
  \item Let $T$ be an operator on $\pi$ for which $\nu_N(T) = \O(1)$.  Then $\mathcal{H}(T \otimes 1)$ is defined and $\O(1)$.
  \item
    \label{item:convergence-relative-character-3}
    Let $T_1$ and $T_2$ be operators on $\pi$ and $\sigma$, respectively, for which $\nu_N(T_1), \nu_N(T_2) = \O(1)$.  Then $\mathcal{H}(T_1 \otimes T_2^\vee)$ is defined and $\O(1)$; moreover, the formal identities stated in \S\ref{sec:relat-trac-form} are valid.
  \item The quadratic form $\mathcal{Q}$ extends continuously to the tensor product of Sobolev spaces $\pi^N \otimes \sigma^N$.
  \item
    \label{item:convergence-relative-character-4}
    The identities \eqref{eq:H-T1T1s-T2T2s} and \eqref{eq:H-T1-T1star-otimes-1} hold more generally taking for $\mathcal{B}(\pi)$ and $\mathcal{B}(\sigma)$ any orthonormal bases, provided that for \eqref{eq:H-T1-T1star-otimes-1}, the basis $\mathcal{B}(\sigma)$ is contained in $\sigma^N$.
  \end{enumerate}
\end{lemma}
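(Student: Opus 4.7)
The plan is to reduce all five parts to two ingredients: (a) the a priori bound
\[
|\mathcal{Q}(v \otimes u)| \ll \|v\|_{\pi^N}^2 \|u\|_{\sigma^N}^2
\]
for some fixed $N$ depending only on $(G,H)$, which expresses the absolute convergence of matrix coefficient integrals on tempered GGP pairs and is established via the Ichino--Ikeda-style estimates in \cite[\S18]{nelson-venkatesh-1}; and (b) the Weyl law, which controls the number of basis vectors in $\mathcal{B}(\pi \otimes \sigma^\vee)$ (or its factors) with $\Delta$-eigenvalue below $X$ polynomially in $X$. Part (iv) is essentially a restatement of (a), so I take it as our starting point.

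For part (i), the key maneuver is to exploit self-adjointness of $\Delta_M$ together with the eigenvector property of $\mathcal{B}(\pi \otimes \sigma^\vee)$. Writing $T = \Delta_M^{-N} \widetilde{T} \Delta_M^{-N}$ with $\widetilde{T} := \Delta_M^N T \Delta_M^N$ of operator norm $\nu_N(T) = O(1)$, and letting $\mu_v \geq 1$ denote the $\Delta_M$-eigenvalue of $v \in \mathcal{B}(\pi \otimes \sigma^\vee)$, we would obtain
\[
\mathcal{H}(T) = \sum_{v} \mu_v^{-2N} \int_H \langle h \widetilde{T} v, v \rangle \, dh.
\]
Each matrix coefficient integral is $O(\|\widetilde{T} v\|_{(\pi \otimes \sigma^\vee)^N} \cdot \|v\|_{(\pi \otimes \sigma^\vee)^N})$ by ingredient (a) (applied after writing $\pi \otimes \sigma^\vee$-matrix coefficients as products of $\pi$ and $\sigma^\vee$ matrix coefficients on the diagonal $H$). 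Using $\|v\|_{(\pi \otimes \sigma^\vee)^N} \ll \mu_v^{N/2}$ together with the operator-norm bound on $\widetilde{T}$, each term is $O(\mu_v^{-N})$. Absolute convergence, and the bound $\mathcal{H}(T) = O(1)$, then follow from (b) provided $N$ is fixed sufficiently large.

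Parts (ii) and (iii) follow from the same recipe applied to $T \otimes 1$ and $T_1 \otimes T_2^\vee$ respectively. For (iii), one notes that the tensor products $v \otimes u^\vee$ with $v \in \mathcal{B}(\pi)$ and $u \in \mathcal{B}(\sigma)$ both $\Delta$-eigenvectors furnish a basis of the required type (with eigenvalues controlled by $\mu_v + \mu_u$, up to lower order). The formal identities of \S\ref{sec:relat-trac-form} are then licensed by Fubini, justified by the absolute convergence just established. Part (v) is a Parseval-type argument: both sides of \eqref{eq:H-T1T1s-T2T2s} and \eqref{eq:H-T1-T1star-otimes-1} are absolutely convergent (by (iii)) and compute the same basis-independent pairing of the Hilbert--Schmidt operator $T_1 T_1^* \otimes (T_2 T_2^*)^\vee$ against the (now well-defined) integrated matrix coefficient functional on $\pi \otimes \sigma^\vee$; passing between orthonormal bases is then standard, with the hypothesis $\mathcal{B}(\sigma) \subseteq \sigma^N$ in \eqref{eq:H-T1-T1star-otimes-1} ensuring the individual matrix coefficient integrals remain well-defined term by term.

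The main obstacle is really the bookkeeping around ingredient (a): pinning down a uniform Sobolev order $N$ that handles all of (i)--(v) simultaneously, and verifying that the matrix coefficient estimate on $(G \times H, H_{\mathrm{diag}})$ (needed for (i), where the basis mixes the two factors) reduces to the pair-of-factors estimate for $(G,H)$. Both points are already implicit in the framework of \cite[\S18]{nelson-venkatesh-1}, so after checking them once, the rest is a clean consequence of self-adjoint functional calculus on $\Delta$ and the Weyl law.
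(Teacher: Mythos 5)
Your proposal is sound in outline and ultimately leans on the same external inputs as the paper, but it is organized differently, and the difference hides the one substantive point. The paper disposes of (i), (ii) and (iv) by direct citation to \cite[\S18--19]{nelson-venkatesh-1} (the product-case convergence bound near (19.14) for (i), the pair-case bound before (18.5) for (ii), and (18.1) plus A.2.2 for (iv)), and spends its effort on (iii) and (v), where it expands $T_1 v_1$ over a basis and justifies two separate interchanges of sum and integral, each resting on absolute convergence of triple sum-integrals controlled by the uniform trace-class property of $\Delta^{-N}$ (Theorem \ref{thm:trace-class-Psi-neg-N}). You instead propose to re-derive (i)--(ii) from the pair-case bound (your ingredient (a), essentially part (iv)) by writing $T = \Delta_M^{-N}\widetilde{T}\Delta_M^{-N}$ and summing over the $\Delta_M$-eigenbasis with the Weyl law. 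The weak link is the phrase ``writing $\pi\otimes\sigma^\vee$-matrix coefficients as products of $\pi$ and $\sigma^\vee$ matrix coefficients on the diagonal $H$'': this is valid only for pure tensors, and the vectors in $\mathcal{B}(\pi\otimes\sigma^\vee)$ are $\Delta_M$-eigenvectors, which need not be pure tensors. To run your estimate for a general eigenvector you must expand it in a tensor basis and then control a double sum of products of matrix coefficients \emph{with absolute values inside}, which requires the four-vector bilinear version of the estimate (not just the quadratic form bound at pure tensors) together with trace-class summability of $\Delta^{-N}$ to tame the expansion coefficients. That is precisely the content of the statements in \cite[\S18--19]{nelson-venkatesh-1} that the paper cites, so the reduction you defer as ``bookkeeping'' is in fact the main input rather than a routine check; if you import it from the reference, your argument for (i)--(ii) collapses back to the paper's. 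Your treatment of (iii) and (v) (``Fubini justified by absolute convergence'' and a Parseval/basis-independence argument) is morally the paper's argument, but note that the absolute convergence needed there is again of the four-vector type summed over two or three bases, not merely convergence of the defining sum for $\mathcal{H}$, so it deserves the explicit justification the paper gives.
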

\begin{proof}
  For (i), see just after \cite[(19.14)]{nelson-venkatesh-1}.  For (ii), see just before \cite[(18.5)]{nelson-venkatesh-1}.  The translation to the present formulation is as in the proof of Theorem \ref{thm:nv-star-prod-asymp-h-dependent}.

  For (iii), we observe first, by writing $\Delta_M$ in terms of $\Delta_G$ and $\Delta_H$ for suitably chosen bases, that $\nu_N(T_1 \otimes T_2^\vee) = \O(1)$; we then apply (i) to obtain the first assertion.  To complete the proof of (iii), it remains to verify the formal identities.  We start with \eqref{eq:H-T1-T1star-otimes-1}.  The RHS of \eqref{eq:H-T1-T1star-otimes-1} may be written
  \begin{equation}\label{eq:sum-_v_1-in}
    \sum _{v_1 \in \mathcal{B}(\pi)} \sum _{u \in \mathcal{B}(\sigma)} \int _{h \in H} \langle h T_1 v_1, T_1 v_1 \rangle \langle u, h u \rangle \, d h.
  \end{equation}
  We expand the second copy of $T_1 v_1$ as $\overline{T_1 v_1} = \sum _{v_2 \in \mathcal{B}(\pi)} \langle v_2, T_1 v_1 \rangle \overline{v _2}$ and rewrite the inner product as $\langle T_1^* v_2, v_1\rangle$.  By arguments as in \cite[\S18 and App. A]{nelson-venkatesh-1} (which boil down to the uniform trace class property of $\Delta^{-N}$ given, e.g., in Theorem \ref{thm:trace-class-Psi-neg-N}), we have
  \begin{equation}\label{eq:sum-_v_2-in}
    \sum _{v_2 \in \mathcal{B}(\pi)} \int_{h \in H} |\langle T_1^* v_2, v_1 \rangle \langle h T_1 v_1, v_2 \rangle \langle u, h u \rangle| \, d h < \infty.
  \end{equation}
  For the convenience of the reader, we outline the argument.  We observe first from \cite[Lemma A.2]{nelson-venkatesh-1} that for some $s \in \mathbb{Z}_{\geq 0}$ depending only upon $(G,H)$, we have
  \begin{equation*}
    \left\langle h T_1 v_1, v_2 \right\rangle
    \ll
    \Xi_G(h) \lVert T v_1 \rVert _{\pi ^s }
    \lVert v _2  \rVert _{\pi ^s },
    \quad
    \left\langle u, h u \right\rangle
    \ll
    \Xi_H(h) \lVert u \rVert _{\sigma ^s}.
  \end{equation*}
  Since $\int_H \Xi_G|_{H} \cdot \Xi_H < \infty$ (see \cite[(18.1)]{nelson-venkatesh-1}), our task reduces to checking that
  \begin{equation*}
    \sum _{v _2 \in \mathcal{B}(\pi)}
    \left\lvert
      \left\langle T _1 ^\ast v _2, v _1  \right\rangle
    \right\rvert
    \lVert v _2  \rVert _{\pi _s} < \infty.
  \end{equation*}
  By Cauchy--Schwarz, we have $\lvert \left\lvert T _1 ^\ast v _2, v _1 \right\rvert \rvert \leq \nu_N(T_1) \lVert \Delta^{-N} v_2 \rVert \lVert \Delta^{-N} v_1 \rVert$, so our task reduces to checking that the sum $\sum _{v \in \mathcal{B}(\pi)} \lVert \Delta^{-N} v_2 \rVert \, \lVert v_2 \rVert_{\pi^s}$ converges.  Writing $\lambda$ for the $\Delta$-eigenvalue of $v_2$, the summand is $\lambda^{s/2 - N}$, the sum of which converges for $N$ large enough (see \cite[Lemma A.3]{nelson-venkatesh-1}).

  The absolute convergence \eqref{eq:sum-_v_2-in} permits us to swap the sum over $v_2$ with the integral over $h$, giving the following rearrangement of \eqref{eq:sum-_v_1-in}:
  \[
    \sum _{v_1 \in \mathcal{B}(\pi)} \sum _{u \in \mathcal{B}(\sigma)} \left( \sum _{v_2 \in \mathcal{B}(\pi)} \langle T_1^* v_2, v_1 \rangle \int _{h \in H} \langle h T_1 v_1, v_2\rangle \langle u, h u \rangle \, d h \right).
  \]
  Again applying arguments as in \cite[\S18 and App. A]{nelson-venkatesh-1}, we see that
  \[
    \sum _{v_1 \in \mathcal{B}(\pi)} \sum _{u \in \mathcal{B}(\sigma)} \sum _{v_2 \in \mathcal{B}(\pi)} \left\lvert \langle T_1^* v_2, v_1 \rangle \int _{h \in H} \langle h T_1 v_1, v_2\rangle \langle u, h u \rangle \, d h \right\rvert < \infty.
  \]
  We may thus rearrange the outer summations, giving
  \[
    \sum _{v_2 \in \mathcal{B}(\pi)} \sum _{u \in \mathcal{B}(\sigma)} \left( \sum _{v_1 \in \mathcal{B}(\pi)} \langle T_1^* v_2, v_1 \rangle \int _{h \in H} \langle h T_1 v_1, v_2\rangle \langle u, h u \rangle \, d h \right).
  \]
  By reversing the reasoning applied in the previous step, we see that
  \[
    \sum _{v_1 \in \mathcal{B}(\pi)} \langle T_1^* v_2, v_1 \rangle \int _{h \in H} \langle h T_1 v_1, v_2\rangle \langle u, h u \rangle \, d h = \int _{h \in H} \langle h T_1 T_1^* v_2, v_2\rangle \langle u, h u \rangle \, d h,
  \]
  which completes the proof.  The proof of \eqref{eq:H-T1T1s-T2T2s} is similar, using, e.g., that
  \[
    \sum _{v_1, v_2 \in \mathcal{B}(\pi)} \sum _{u_1, u_2 \in \mathcal{B}(\sigma)} \left\lvert \int _{h \in H} \langle h T_1 v_1, T_1 v_2 \rangle \langle T_2 u_2, h T_2 u_1 \rangle \right\rvert < \infty.
  \]

  For (iv), combine \cite[A.2.2]{nelson-venkatesh-1} and \cite[(18.1)]{nelson-venkatesh-1}.  The proof of (v) is similar to that of (iii), cf.\  \cite[Lem 10.1]{MR855239} or \cite[Appendix 4]{MR2154720}.
\end{proof}

\subsection{Stability in the product setting}
We write $\mathfrak{m}$ for the Lie algebra of $M$.  We say that an element of $\mathfrak{m}^\wedge$ is \emph{stable} if under the diagonal action of $H$, it has finite stabilizer and closed orbit.  We note that $\xi \in \mathfrak{g}^\wedge$ is stable in the sense of \S\ref{sec:stability-defn} if and only if the element $(\xi,-\xi_H) \in \mathfrak{m}^\wedge$ is stable in the present sense (see \cite[\S19.4]{nelson-venkatesh-1} for further discussion).

\subsection{Main estimates}\label{sec:main-estimates}
\begin{theorem}\label{thm:rel-char-asymptotics}
  Let $0 \leq \delta < 1/2$ be fixed.  Let $\pi$ and $\sigma$ be tempered irreducible unitary representations of $G$ and $H$, as above.
  \begin{enumerate}[(i)]
  \item Let $a \in S^{-\infty}_\delta(\mathfrak{m}^\wedge)$ be supported in some fixed compact collection of stable elements of $\mathfrak{m}^\wedge$.  Set $A := \Opp_{\h}(a:\pi \otimes \sigma^\vee )$.  Then for each fixed $J \geq 0$, we have
    \begin{equation}\label{eqn:relative-character-asymptotics}
      \mathcal{H}(A)
      =
      \sum _{0 \leq j < J}
      \h^j
      \int _{\xi \in \h \mathcal{O}_{\pi,\sigma}}
      \mathcal{D}_j a(\xi, -\xi_H)
      \,
      d \Haar_{H}(\xi)
      + \O(\h^{(1-2 \delta) J}),
    \end{equation}
    where the $\mathcal{D}_j$ are fixed differential operators on $\mathfrak{m}^\wedge$ with the following properties:
    \begin{itemize}
    \item $\mathcal{D}_0 a = a$ for all smooth $a : \mathfrak{m}^\wedge \rightarrow \mathbb{C}$.
    \item $\mathcal{D}_j$ has order $\leq 2 j$ and homogeneous degree $j$: $\mathcal{D}_j (a_{\h}) = \h^j (\mathcal{D}_j a)_{\h}$.
    \end{itemize}
  \item Let $a \in S^{-\infty}_\delta(\mathfrak{g}^\wedge)$ be supported in some fixed compact collection of stable elements of $\mathfrak{g}^\wedge$.  Set $A := \Opp_{\h}(a:\pi )$, so that $A \otimes 1$ is an operator on $\pi \otimes \sigma^\vee$.  Then
    \begin{equation}\label{eqn:relative-character-asymptotics-2}
      \mathcal{H}(A \otimes 1)
      =
      \sum _{0 \leq j < J}
      \h^j
      \int _{\xi \in \h \mathcal{O}_{\pi,\sigma}}
      \mathcal{D}_j a(\xi)
      \,
      d \Haar_{H}(\xi)
      + \O(\h^{(1-2 \delta) J}),
    \end{equation}
    where the $\mathcal{D}_j$ are fixed differential operators on $\mathfrak{g}^\wedge$ with properties analogous to those stated in (i).
  \end{enumerate}
\end{theorem}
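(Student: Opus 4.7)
The plan is to view $\mathcal{H}(A)$ as (formally) $\int_{h \in H} \trace((\pi \otimes \sigma^\vee)(h) A) \, d h$, to compute the inner trace via the Kirillov formula on $M = G \times H$, and to reduce the resulting oscillatory integral over $\mathfrak{m}^\wedge \times H$ to an integral over $\mathcal{O}_{\pi,\sigma}$ by exploiting the $H$-torsor structure afforded by stability. First, I would establish absolute convergence of all manipulations via Lemma \ref{lem:convergence-relative-character}, using that $a \in S_\delta^{-\infty}$ forces $A$ to lie in $\Psi^{-\infty}_\delta$ by the mapping property \eqref{eqn:opp-mapping-S-m-delta}, so that $\nu_N(A) \ll 1$ for every fixed $N$.

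For part (i), I would apply the rescaled Kirillov formula (Theorem \ref{thm:kirillov-formula}) to $\pi \otimes \sigma^\vee$ on $M$, noting that the coadjoint multiorbit of the contragredient representation $\sigma^\vee$ is $-\mathcal{O}_\sigma$, so that $\mathcal{O}_{\pi \otimes \sigma^\vee} = \mathcal{O}_\pi \times (-\mathcal{O}_\sigma) \subseteq \mathfrak{m}^\wedge$. Swapping the basis sum with the $H$-integral, opening the definition of $\Opp_{\h}(a)$ as an integral over $\mathfrak{m}$, and applying Fourier inversion in the $\mathfrak{m}$-variable reduces $\mathcal{H}(A)$ to an expression of shape
\[
  \int_{h \in H} \int_{\xi \in \h \mathcal{O}_{\pi \otimes \sigma^\vee}} a(\xi) \, e^{\langle \log h, \xi\rangle / \h} \, J(h, \xi) \, d\omega(\xi) \, d h,
\]
where $J$ encodes the Jacobian $\sqrt{\jac}$ from Kirillov together with cutoff error (which is negligible thanks to stability and the Schwartz nature of $a$). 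The diagonal $H$-action on $\mathfrak{m}^\wedge$ has differential sending $y \in \mathfrak{h}$ to $(\ad^*(y)\xi_1, \ad^*(y)\xi_2)$ for $\xi = (\xi_1,\xi_2)$, so the phase $\langle \log h, \xi \rangle$ as a function of $h$ is stationary precisely on the locus $\{\xi_1|_{\mathfrak{h}} + \xi_2 = 0\}$; intersecting with $\mathcal{O}_\pi \times (-\mathcal{O}_\sigma)$ gives exactly the embedded copy of $\mathcal{O}_{\pi,\sigma}$, and stability guarantees nondegeneracy of the transverse Hessian in the $H$-directions.

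Stationary phase on the $H$-integral then produces the asymptotic expansion: the leading term is the integral of $a(\xi,-\xi_H)$ against $d\Haar_H$ over $\h \mathcal{O}_{\pi,\sigma}$, and each subsequent $\mathcal{D}_j$ arises from Taylor expansion of $a$ transverse to the stationary locus combined with the next terms in the Kirillov Jacobian; the order and homogeneity properties follow from the standard degree count in stationary phase (every power of $\h$ pays two transverse derivatives). The $\h^{(1-2\delta) J}$ error is precisely what one obtains applying stationary phase to a symbol whose transverse derivatives grow like $\h^{-\delta}$ per order. Part (ii) I would deduce from part (i) by approximating the identity operator on $\sigma$ by $\Opp_{\h}(\phi)$ for a cutoff $\phi \in S^{-\infty}_\delta(\mathfrak{h}^\wedge)$ that is identically $1$ on a neighborhood of $-\mathcal{O}_\sigma$ large enough to contain the projection of $\supp(a)$; applying (i) to $a \otimes \phi$ and then observing that $\phi$ evaluates to $1$ on the stationary locus makes the $\phi$-dependence disappear from the main terms, while the error in replacing $1$ with $\Opp_{\h}(\phi)$ is $\O(\h^\infty)$ by Corollary \ref{cor:trace-disjoint-supports}.

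The main obstacle will be executing the stationary phase rigorously in the $H$-variable while tracking uniformity in the symbol class $S^{-\infty}_\delta$: one must show that the remainder after $J$ terms lies in the correct error class, which requires carefully organizing the Taylor expansion of $a$ transverse to the stationary locus against the $\h^{-\delta}$ growth of its derivatives, and handling the non-compactness of $H$ via partition of unity and the repeated integration by parts in the $h$-variable using the nonvanishing of the phase gradient away from the stationary locus (where stability provides a uniform lower bound on a fixed compact neighborhood of $\supp(a)$).
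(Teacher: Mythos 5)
The first thing to say is that the paper itself contains no proof of this theorem: \S\ref{sec:relat-char-asympt} explicitly \emph{recalls} it from \cite[\S 19]{nelson-venkatesh-1}, and it is used throughout as a black box. So the comparison is really with the proof in that reference, whose broad strategy your outline does reproduce: express $\mathcal{H}(A)$ as an $H$-integral of traces, evaluate the traces via the Kirillov formula on $M = G\times H$ (with $\mathcal{O}_{\pi\otimes\sigma^\vee} = \mathcal{O}_\pi\times(-\mathcal{O}_\sigma)$), and use stability to localize onto the embedded copy $\xi\mapsto(\xi,-\xi_H)$ of $\mathcal{O}_{\pi,\sigma}$ and extract the expansion in powers of $\h^{1-2\delta}$.

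As a standalone argument, however, the sketch has genuine gaps. First, the Kirillov character formula is valid only for group elements in a small neighborhood of the identity, whereas the $h$-integral runs over the whole noncompact group $H$; outside a fixed neighborhood of $1$ there is simply no oscillatory-integral representation of $\trace\bigl((\pi\otimes\sigma^\vee)(h)A\bigr)$, so ``integration by parts in $h$ using the nonvanishing of the phase gradient'' cannot treat that range. One must first truncate the $h$-integral using the quantitative convergence inputs behind Lemma \ref{lem:convergence-relative-character} (temperedness and trace-norm bounds as in \cite[\S 18]{nelson-venkatesh-1}), and only then run a localization argument near $h=1$, where the relevant nonstationarity is in the $\xi$-variable along orbit directions and rests on the triviality of the infinitesimal diagonal $\mathfrak{h}$-stabilizer of stable elements. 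Second, the phrase ``stationary phase in the $H$-integral with nondegenerate transverse Hessian'' misdescribes the mechanism: for fixed $\xi$ the phase is essentially linear in $\log h$, so there are no nondegenerate critical points in $h$ alone; the localization onto $\{(\xi_1)_H+\xi_2=0\}$ is a joint $(\xi,h)$ (Fourier-duality) phenomenon, and what stability actually provides is that the $H$-action near the relative orbit is free with closed orbits, making $H\times\mathcal{O}_{\pi,\sigma}\to\mathcal{O}_\pi\times(-\mathcal{O}_\sigma)$ an open embedding of uniformly controlled geometry; this is what permits the change of variables producing $d\Haar_H$ on $\h\mathcal{O}_{\pi,\sigma}$ and the $\h^{(1-2\delta)J}$ remainder. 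Third, your deduction of (ii) from (i) is unsupported as written: Corollary \ref{cor:trace-disjoint-supports} bounds the trace of a single operator on a single representation, whereas the replacement error is $\mathcal{H}\bigl(A\otimes(1-\Opp_{\h}(\phi))^\vee\bigr)$, an $H$-integral of matrix coefficients, which that corollary does not control; in the source the $A\otimes 1$ variant is established directly by the same method rather than deduced this way.
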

\begin{proof}
  These are \cite[Theorems 19.4 and 19.3]{nelson-venkatesh-1}, respectively.
\end{proof}

As explained in \cite[\S19.3]{nelson-venkatesh-1}, such estimates apply readily to compositions: one just substitutes the composition formula \eqref{eqn:composition-formula-2} into \eqref{eqn:relative-character-asymptotics} or \eqref{eqn:relative-character-asymptotics-2}, using Lemma \ref{lem:convergence-relative-character} as an \emph{a priori} bound to clean up the remainder terms.

\section{Lie-theoretic interludes}
Let $F$ be a local field of characteristic zero.  In what follows, ``manifold'' means ``analytic manifold over $F$'' in the sense recalled in \S\ref{sec:lie-groups}.  Here we record some miscellaneous results that will be invoked throughout the remainder of the paper.

\subsection{Inverse function theorem}\label{sec:inverse-function-theorem}
Let $f : X \rightarrow X'$ be an analytic map of manifolds.  The inverse function theorem \cite[II.III.10]{MR2179691} implies that if the derivative $(d f)_x$ of $f$ at a point $x \in X$ is injective (resp. surjective), then there are local coordinates near $x$ with respect to which $f$ is given by an injective (resp. surjective) linear map between vector spaces.  In particular:
\begin{itemize}
\item If $(d f)_x$ is injective, then $f$ is bi-Lipschitz in some neighborhood of $x$.  To make ``bi-Lipschitz'' precise, we might suppose that $X$ (resp. $X'$) is realized as a submanifold of some normed vector space $V$ (resp. $V'$).
\item If $(d f)_x$ is surjective, then $f$ is open in some neighborhood of $x$.
\end{itemize}

These observations hold uniformly for certain families of such maps.  Let $S$ be a manifold, let $V$ and $V'$ be normed vector spaces, and let
\begin{equation*}
  \rho : V \times S \rightarrow S,
  \quad 
  \rho' : V' \times S \rightarrow S
\end{equation*}
denote the projection maps.  Let $X$ (resp. $X'$) be a submanifold of $V \times S$ (resp. $V' \times S$) such that the restricted projection $\rho|_X$ (resp. $\rho '|_{X'}$) is submersive, i.e., has everywhere surjective derivative.  For each $s \in S$, the fiber $\rho^{-1}(s)$ (resp. $(\rho')^{-1}(s)$) is then a submanifold which may be written $X_s \times \{s\}$ (resp. $X_s' \times \{s\}$) for some submanifold $X_s \subseteq V$ (resp. $X_s' \subseteq V'$).  Let $f : X \rightarrow X'$ be an analytic map such that $\rho'(f(x)) = \rho(x)$ for all $x \in X$.  Then the maps $f_s : X_s \rightarrow X_s'$ characterized by the identities $f(x,s) = (f_s(x),s)$ are analytic.  We may understand $(f_s)_{s \in S}$ informally as an analytic family of analytic maps between submanifolds of normed vector spaces.  We then have the following uniform form of the inverse function theorem.
\begin{lemma}\label{lem:sub-gln-2:injective-surjective-neighborhood-criteria}
  Let $(x_0,s_0) \in X$.
  \begin{enumerate}[(i)]
  \item If $(d f_{s_0})_{x_0}$ is injective, then there is a neighborhood $(x_0,s_0) \in U \subseteq X$ and $C > c > 0$ so that for all $(x_1,s), (x_2,s) \in U$, we have
    \begin{equation*}
      c |x_1 - x_2| \leq |f_s(x_1) - f_s(x_2)| \leq C |x_1 - x_2|.
    \end{equation*}
  \item If $(d f_{s_0})_{x_0}$ is surjective, then for each neighborhood $(x_0,s_0) \in U \subseteq X$, there is a neighborhood $f(x_0,s_0) \in U' \subseteq X'$ so that $f(U) \supseteq U'$.
  \end{enumerate}
\end{lemma}
\begin{proof}
  The conclusion is local on $S$, so after possibly shrinking $S$, we may assume that it is realized as a submanifold of a normed vector space.  Using the submersivity of $\rho|_{X}$ and $\rho '|_{X'}$, we verify readily that if the fiberwise derivative $(d f_{s_0})_{x_0}$ is injective (resp. surjective), then so is the derivative $(d f)_{(x_0,s_0)}$ taken on the ``entire'' space.  In either case, the inverse function theorem applies to $f$, giving that it is bi-Lipschitz (resp. open) in a neighborhood of $(x_0,s_0)$.  Assertion (ii) is then immediate, while assertion (i) follows upon specializing the bi-Lipschitz property to pairs of elements of $X$ having the same projection to $S$.
\end{proof}

These lemmas have analogues obtained by replacing the basepoints by compact sets.  For example, given a compact subset $E$ of $X$ such that $(d f_{s_0})_{x_0}$ is injective (resp. surjective) for all $(x_0,s_0) \in E$, we can find a neighborhood $U$ of $E$ so that the respective conclusions hold.  We can also streamline the statements by employing asymptotic notation and terminology:
\begin{lemma}\label{lem:FTOC-families-nonstandard}
  Let $(f_s)_{s \in S}$ be as above, with $(S,V,V',X,X',f)$ fixed.  Let $E$ be a fixed compact subset of $X$.
  \begin{enumerate}[(i)]
  \item \label{itm:sub-gln-2:ftoc-inj} Assume that for each $(x,s) \in E$, the map $(d f_s)_x$ is injective.  Then there is a fixed neighborhood $E \subseteq U \subseteq X$ so that for all $(x_1,s), (x_2,s) \in U$, we have
    \begin{equation}\label{eq:f_sx_1-f_sx_2-asymp-x_1-x_2.--7}
      |f_s(x_1) - f_s(x_2)| \asymp |x_1 - x_2|.
    \end{equation}
    In particular, the above estimate holds for all $(x_1,s), (x_2,s) \in X$ for which there exists $(x,s) \in E$ with $x_1, x_2 \simeq x$.
  \item \label{itm:sub-gln-2:ftoc-surj} Assume that for each all $(x,s) \in E$, the map $(d f_s)_x$ is surjective.  Then for each fixed neighborhood $E \subseteq U \subseteq X$, there is a fixed neighborhood $f(E) \subseteq U' \subseteq X'$ so that $f(U) \supseteq U'$.  In particular, for each $(x',s) \in X'$ for which there is some $(x,s) \in E$ with $(x',s) \simeq f(x,s)$ (quantified via first components), there exists $(x, s) \in X$ with $f(x,s) = (x',s)$ that is contained in each fixed neighborhood of $E$.
  \end{enumerate}
\end{lemma}
\begin{proof}
  The initial statements (up to ``in particular'') follow immediately from the transfer axiom (\S\ref{sec:some-axioms}) applied to Lemma \ref{lem:sub-gln-2:injective-surjective-neighborhood-criteria}.  We verify the subsequent statements:
  \begin{enumerate}[(i)]
  \item Suppose $(x_1,s), (x_2,s) \in X$ satisfy $x_1,x_2 \simeq x$ for some $(x, s) \in E$.  Let $U$ be a fixed neighborhood of $E$ on which the estimate \eqref{eq:f_sx_1-f_sx_2-asymp-x_1-x_2.--7} holds. Since $E$ is compact, we have $(x,s) \simeq (x_0,s_0)$ for some fixed $(x_0,s_0) \in E$.  Then $U$ is a neighborhood of $(x_0,s_0)$ and $(x_1,s), (x_2,s) \simeq (x_0,s_0)$, hence $(x_1,s), (x_2,s) \in U$, and so \eqref{eq:f_sx_1-f_sx_2-asymp-x_1-x_2.--7} holds.
  \item Let $(x', s) \in X'$ with $(x',s) \simeq f(z)$ for some $z \in E$.

    Let $U$ be any fixed neighborhood of $E$.  Using the compactness of $E$, there is again some fixed $(x_0,s_0) \in E$ with $(x',s) \simeq f(x_0,s_0)$.  We know that there is a fixed neighborhood $U'$ of $f(E)$ contained in $f(U)$.  Since $f(x_0,s_0)$ is fixed and $(x',s) \simeq f(x_0,s_0) \in U'$, we have $(x',s) \in U'$, hence $(x',s) \in f(U)$.  We may thus find some $(x,s) \in U$ with $(x',s) = f(x,s)$.

    In summary, we have shown that for each fixed neighborhood $U$ of $E$, there is some $(x,s) \in U$ so that $f(x,s) = (x',s)$.  By idealization (\S\ref{sec:some-axioms}), there is thus some $(x,s) \in X$ with $f(x,s) = (x',s)$ that is contained in each fixed neighborhood of $E$.
  \end{enumerate}
\end{proof}

\subsection{Orthogonal complements}\label{sec:orth-compl-tau-coord}
The purpose of this section is to explain how the definition of ``$\tau$-coordinates'' (\S\ref{sec:coord-tail-regul}) may be extended to the non-archimedean case.  The only archimedean-specific input to that definition was the construction of orthogonal complements $\mathfrak{g}_\tau^{\flat}$ and $\mathfrak{g}_\tau^{\perp \flat}$ to the subspaces $\mathfrak{g}_\tau \leq \mathfrak{g}$ and $\mathfrak{g}_\tau^{\perp} \leq \mathfrak{g}^\wedge$, respectively.  We define below a notion of ``orthogonal complement'' that serves as a suitable substitute in the non-archimedean case.

Recall that $F$ is a local field.  Let $V$ be a finite-dimensional vector space over $F$ equipped with an $F$-norm $|.|_F$ (\S\ref{sec:F-norms}).  Let $V_1$ be a subspace of $V$, and let $V_2$ be a complementary subspace, so that $V = V_1 \oplus V_2$.  We say that $V_2$ is an \emph{orthogonal complement} to $V_1$ if for all $(v_1,v_2) \in V_1 \times V_2$, we have
\begin{equation*}
  \lVert v_1 + v_2 \rVert_F =
  \begin{cases}
    (\lVert v_1 \rVert_{\mathbb{R}}^2 + \lVert v_2 \rVert_{\mathbb{R}}^2)^{1/2} & F = \mathbb{R} , \\
    \lVert v_1 \rVert_{\mathbb{C}} + \lVert v_2 \rVert_{\mathbb{C}} & F = \mathbb{C} , \\
    \max (\lVert v_1 \rVert_F, \lVert v_2 \rVert_F) & F \text{ nonarchimedean}.
  \end{cases}
\end{equation*}
Equivalently, $|.|_F$ can be defined as in \S\ref{sec:F-norms} using a basis of $V$ that is a union of bases of $V_1$ and $V_2$.  In particular, for all $v_1 \in V_1$, we have
\begin{equation}\label{eq:minimum-norm-for-orth-compl}
  \lVert v_1 \rVert = \min _{v_2 \in V_2} \lVert v_1 + v_2 \rVert.
\end{equation}

Write $n := \dim(V)$.  For $m \in \{0, \dotsc, n\}$, let $\Gr_m(V)$ denote the Grassmannian manifold consisting of $m$-dimensional subspaces.
\begin{lemma}\label{lem:orth-compl-general}
  There is an analytic map
  \begin{equation*}
    \flat : \Gr_m(V) \rightarrow \Gr_{n-m}(V)
  \end{equation*}
  such that for each $W \in \Gr_m(V)$, its image $W^{\flat} \in \Gr_{n-m}(V)$ under $\flat$ is an orthogonal complement to $W$.
\end{lemma}
\begin{example}\label{example:orth-compl-padic}
  Suppose $F = \mathbb{Q}_p$, $V = F^2$ equipped with the standard norm $\lvert (x,y) \rvert_F = \max \{\lvert x \rvert_F, \lvert y \rvert_F\}$, and $m=1$.  Every line $W$ in $V$ is generated either by $(x,1)$ for some $x \in \mathbb{Z}_p$ or by $(1,y)$ for some $y \in p \mathbb{Z}_p$.  Let us define $W^{\flat}$ to be the line generated in the first case by $(1,0)$ and in the second case by $(0,1)$.  Then the assignment $W \mapsto W^{\flat}$ satisfies the conclusions of Lemma \ref{lem:orth-compl-general}.
\end{example}
\begin{proof}[Proof of Lemma \ref{lem:orth-compl-general}]
  When $F$ is archimedean, the $F$-norm on $V$ is induced from a unique inner product on $V$ (in the complex case, the square of such an inner product), and we can take for $W^{\flat}$ the orthogonal complement of $W$, in the usual sense, with respect to that inner product.  Consider now the non-archimedean case.  The existence of an orthogonal complement $W^{\flat}$ to a given subspace $W$ is given by \cite[II.1, Cor to Prop 3]{MR0427267}.  We sketch here a construction, along the lines of Example \ref{example:orth-compl-padic}, which is locally constant, hence analytic.  Let $\mathfrak{o}$ denote the ring of integers of $F$, $\mathfrak{p}$ its maximal ideal, and $k := \mathfrak{o} / \mathfrak{p}$ the residue field. Set $L := \{v \in V : \lvert v \rvert_F \leq 1\}$; it is an $\mathfrak{o}$-submodule of rank $n$.  Set $\bar{L} := L / \mathfrak{p} L$; it is a $k$-vector space of dimension $n$.  Choose, for each $m$-dimensional subspace $M$ of $L$, a linear complement $M'$ together with an $\mathfrak{o}$-submodule $\tilde{M}' \leq L$ with image $M'$.  Given $W \in \Gr_m(V)$, let $M \leq \bar{L}$ denote the image of $W \cap L$, and define $W^{\flat}$ to be the $F$-vector space generated by $\tilde{M}'$.  This assignment defines a locally constant map $\flat$ with the desired properties.
\end{proof}

\subsection{Regular elements}\label{sec:interl-regul-elem}
We record some technical estimates to be applied below.  Let $F$ be a fixed local field of characteristic zero, and let $G$ be a fixed connected reductive group over $F$.  Recall from \S\ref{sec:prelim-reductive-groups} that $\mathfrak{g}^\wedge_{\reg} \subseteq \mathfrak{g}^\wedge$ denotes the subset of regular elements, i.e., those whose centralizer has minimal dimension (equal to $\rank(\mathfrak{g})$, the dimension of any Cartan subalgebra).  It is an open subset.  For our purposes, the most important property of this subset is the following result of Kostant (see \cite[Thm 0.1]{MR0158024}).
\begin{theorem}\label{thm:kostant-regular-elements}
  For $\tau \in \mathfrak{g}^\wedge$, the derivative of the map $\mathfrak{g}^\wedge \rightarrow [\mathfrak{g}^\wedge]$ is surjective at $\tau$ if and only if $\tau \in \mathfrak{g}^\wedge_{\reg}$.
\end{theorem}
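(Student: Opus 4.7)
The plan is to reduce the statement to a classical theorem of Kostant. First I would use Chevalley's restriction theorem to present the GIT quotient concretely: the ring $\mathbb{C}[\mathfrak{g}^\wedge]^G$ is polynomial on $r := \rank(\mathfrak{g})$ algebraically independent homogeneous generators $p_1, \dotsc, p_r$, which identify $[\mathfrak{g}^\wedge]$ with affine $r$-space and exhibit the quotient map as $\pi \colon \tau \mapsto (p_1(\tau),\dotsc,p_r(\tau))$. The differential at $\tau$ acts by $v \mapsto (\langle \nabla p_i(\tau), v \rangle)_{i=1}^r$ with $\nabla p_i(\tau) \in (\mathfrak{g}^\wedge)^* \cong \mathfrak{g}$, so surjectivity of $d\pi_\tau$ is equivalent to linear independence of the $r$ gradients $\nabla p_1(\tau),\dotsc,\nabla p_r(\tau) \in \mathfrak{g}$.

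Next I would observe that $G$-invariance places each gradient inside the centralizer $\mathfrak{g}_\tau$: differentiating $p_i(\exp(sX) \cdot \tau) = p_i(\tau)$ at $s = 0$ shows that $\langle \nabla p_i(\tau), [X,\tau]\rangle = 0$ for all $X \in \mathfrak{g}$, so $\nabla p_i(\tau)$ annihilates $[\mathfrak{g},\tau] \subseteq \mathfrak{g}^\wedge$. By \eqref{eqn:adjoint-map-adjointness-zeta-centralizer} together with a dimension count, the subspace $[\mathfrak{g},\tau]$ is the annihilator of $\mathfrak{g}_\tau$ under the canonical pairing, so dually $\nabla p_i(\tau) \in \mathfrak{g}_\tau$. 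The problem has thus been reduced to showing that the $r$ elements $\nabla p_1(\tau),\dotsc,\nabla p_r(\tau) \in \mathfrak{g}_\tau$ are linearly independent exactly when $\dim \mathfrak{g}_\tau = r$, which is the precise content of Kostant's theorem \cite[Thm 0.1]{MR0158024}.

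I would then invoke Kostant's theorem as a black box. Its proof constructs the Kostant slice $\mathcal{S} := e + \mathfrak{g}^\wedge_f$ from a principal $\mathfrak{sl}_2$-triple $(e,h,f)$, transported to $\mathfrak{g}^\wedge$ via a $G$-invariant non-degenerate bilinear form on $\mathfrak{g}$; Kostant verifies $\mathcal{S} \subseteq \mathfrak{g}^\wedge_{\reg}$ and identifies $\pi|_\mathcal{S} \colon \mathcal{S} \to [\mathfrak{g}^\wedge]$ as a scheme-theoretic isomorphism. This yields surjectivity of $d\pi$ along $\mathcal{S}$, and translation under the $G$-action (which preserves $\pi$ and permutes $\mathfrak{g}^\wedge$ by linear isomorphisms) propagates it to all of $G \cdot \mathcal{S} = \mathfrak{g}^\wedge_{\reg}$, giving the forward direction. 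For the reverse direction one shows that for non-regular $\tau$ the excess dimension of $\mathfrak{g}_\tau$ over $r$ forces non-trivial linear relations among the gradients $\nabla p_i(\tau) \in \mathfrak{g}_\tau$, exploiting both the invariance $p_i(\tau) = p_i(\tau_s)$ under the Jordan decomposition $\tau = \tau_s + \tau_n$ and the non-abelian structure of $\mathfrak{g}_\tau$. The substantive input — and the main obstacle, were one to reproduce the argument — is Kostant's analysis via principal $\mathfrak{sl}_2$-triples; the reduction I have described is otherwise routine.
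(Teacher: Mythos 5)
Your proposal is correct and matches the paper, which gives no independent argument: it simply cites Kostant \cite[Thm 0.1]{MR0158024}, whose statement (linear independence of the differentials of the basic invariants at $\tau$ precisely when $\tau$ is regular) is, after the routine Chevalley identification of $[\mathfrak{g}^\wedge]$ with affine $r$-space, exactly the surjectivity claim. Your extra observations (gradients lying in $\mathfrak{g}_\tau$, the slice construction) are accurate but not needed beyond the black-box citation.
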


We will frequently apply the facts summarized in the following lemma, each assertion of which follows readily from Theorem \ref{thm:kostant-regular-elements}.
\begin{lemma}\label{lem:easy-properties-regular-elements}
  Let $\tau \in \mathfrak{g}^\wedge_{\reg}$.  Let $U$ be a small enough open neighborhood of $\tau$.  Let $\gamma$ denote the restriction to $U$ of the map $\mathfrak{g} \rightarrow [\mathfrak{g}^\wedge]$.  Then $\gamma : U \rightarrow \gamma(U)$ is a fibered manifold, i.e., a surjective map whose derivative is surjective at each point.  The fibers of $\gamma$ are precisely the intersections with $U$ of the coadjoint orbits.
\end{lemma}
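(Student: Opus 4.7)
The plan is to deduce both assertions from Kostant's theorem (Theorem \ref{thm:kostant-regular-elements}) together with a dimension count and an openness argument.

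First I would establish the submersion property. By Theorem \ref{thm:kostant-regular-elements}, the derivative $d\gamma_\tau$ is surjective. Since the rank of the derivative of a smooth map is lower semicontinuous and $\mathfrak{g}^\wedge_{\reg}$ is open (by \S\ref{sec:prelim-reductive-groups}), the set of points at which $d\gamma$ is surjective is open and contains $\tau$. Shrinking $U$ to lie inside $\mathfrak{g}^\wedge_{\reg}$ and inside this open set, we see that $\gamma : U \to \gamma(U)$ is a smooth surjection with surjective derivative at every point, i.e., a fibered manifold. In particular, every fiber is a smooth submanifold of $U$ of codimension equal to $\dim[\mathfrak{g}^\wedge] = \rank(\mathfrak{g})$, so each fiber has dimension $\dim(\mathfrak{g}) - \rank(\mathfrak{g})$.

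Next I would identify the fibers with coadjoint orbits. Any coadjoint orbit through a point $\xi \in U$ is contained in $\gamma^{-1}(\gamma(\xi))$, since $[\cdot]$ is by definition $G$-invariant. For $\xi \in \mathfrak{g}^\wedge_{\reg}$, the coadjoint orbit $G \cdot \xi$ has dimension $\dim(\mathfrak{g}) - \dim(\mathfrak{g}_\xi) = \dim(\mathfrak{g}) - \rank(\mathfrak{g})$, which matches the fiber dimension computed above. Hence $G \cdot \xi \cap U$ is an open subset of the fiber $\gamma^{-1}(\gamma(\xi)) \cap U$.

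The remaining -- and main -- step is to rule out that the fiber through $\xi$ contains pieces of other coadjoint orbits when $U$ is taken small enough. The anticipated obstacle is that a fiber of the GIT quotient can in general be a finite union of coadjoint orbits sharing the same invariants. I would handle this as follows: by the previous paragraph, the orbit $G \cdot \tau$ is an open subset of $\gamma^{-1}(\gamma(\tau))$, hence (being a smooth submanifold of the same dimension as the fiber) is also closed in that fiber, so it is a connected component of $\gamma^{-1}(\gamma(\tau))$. Choosing $U$ to be a sufficiently small open ball around $\tau$, the intersection $\gamma^{-1}(\gamma(\tau)) \cap U$ is connected and therefore lies entirely in $G \cdot \tau$. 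Applying the same reasoning to each nearby fiber -- or, equivalently, using a local slice to $G \cdot \tau$ at $\tau$ transverse to the orbit (which exists precisely because $\tau$ is regular, so the orbit map is a submersion at the identity) and observing that the local $G$-orbit structure varies continuously -- we conclude that after shrinking $U$ further, every fiber of $\gamma|_U$ coincides with $(G \cdot \xi) \cap U$ for the unique $\xi$ in that fiber. This gives the required identification of fibers with coadjoint orbit intersections and completes the proof.
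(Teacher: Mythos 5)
Your route is the same as the paper's, which disposes of the lemma with the single sentence that everything follows readily from Theorem \ref{thm:kostant-regular-elements}; your write-up supplies the expected details (shrink $U$ into the open set $\mathfrak{g}^\wedge_{\reg}$, where the derivative is surjective by Kostant; count dimensions to see that each orbit through a regular point meets a fiber in an open subset; use connectedness of the fibers of $\gamma|_U$ to conclude that each fiber is a single orbit piece). Two points, one of which needs repair.

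The step that does not stand as written is the closedness claim: ``being a smooth submanifold of the same dimension as the fiber, $G\cdot\tau\cap U$ is also closed in that fiber'' is a non sequitur --- an open submanifold of full dimension need not be closed (an open interval inside a line already shows this). The correct argument is implicit in what you have already proved: once $U\subseteq\mathfrak{g}^\wedge_{\reg}$, \emph{every} point of the fiber $\gamma^{-1}(\gamma(\tau))\cap U$ is regular, so by your dimension count every coadjoint orbit meeting that fiber intersects it in an open subset; these intersections partition the fiber, so each has open complement, hence each is clopen, and connectedness of the fiber then forces it to lie in a single orbit. Second, connectedness of the fibers of $\gamma|_U$ is not automatic for an arbitrary small Euclidean ball: a fiber through a point near the boundary of the ball could in principle meet the ball in several pieces, and your alternative via ``the local $G$-orbit structure varies continuously'' is too vague to carry the load. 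Since the lemma lets you choose $U$, the clean fix is to take $U$ to be a chart on which the submersion is in its local normal form (so that $\gamma$ becomes a coordinate projection and every fiber of $\gamma|_U$ is a connected slice). With these two repairs, the identification of fibers with $(G\cdot\xi)\cap U$ in both directions goes through exactly as you indicate.
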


We next record a straightforward estimate for the symplectic measure of a small ball in a coadjoint orbit based at a regular element.  We state this result only in the archimedean case because that case is the only one in which we have set up the relevant preliminaries, and also the only one required by our present applications.
\begin{lemma}\label{lem:bounds-symplectic-measure-ball-regular}
  Assume that $F$ is archimedean.  Let $\tau$ belong to a fixed compact subset of $\mathfrak{g}^\wedge_{\reg}$.  Let $G \cdot \tau$ denote the coadjoint orbit containing $\tau$ and $\omega_{G \cdot \tau}$ the corresponding symplectic measure, normalized as in \cite[\S6.1]{nelson-venkatesh-1}.  For $r > 0$, set $B_\tau(r) := \{\xi \in \mathfrak{g}^\wedge : |\tau - \xi| < r\}$.  Then for $r \lll 1$, we have
  \begin{equation*}
    \omega_{G \cdot \tau}(B_\tau(r))
    \asymp r^{\dim(\mathfrak{g}) - \rank(\mathfrak{g})}.
  \end{equation*}
\end{lemma}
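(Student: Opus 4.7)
Here is the plan. Set $d := \dim(\mathfrak{g}) - \rank(\mathfrak{g})$, and fix a compact set $K \subseteq \mathfrak{g}^\wedge_{\reg}$ containing $\tau$. By Lemma \ref{lem:easy-properties-regular-elements} applied at each point of $K$ and a standard compactness argument, I can find a fixed neighborhood $U$ of $K$ in $\mathfrak{g}^\wedge$ and a fixed $r_0 > 0$ such that for every $\tau \in K$, the intersection $(G\cdot\tau) \cap B_\tau(r_0)$ is a smoothly embedded $d$-dimensional submanifold, given in the $\tau$-coordinates of \S\ref{sec:coord-tail-regul} as the graph of a smooth map $\phi_\tau : V_\tau \to \mathfrak{g}_\tau^{\perp\flat}$, where $V_\tau$ is a neighborhood of $0$ in $\mathfrak{g}_\tau^\perp$ and $\phi_\tau(0) = 0$, $d\phi_\tau(0) = 0$ (because the tangent plane to $G\cdot\tau$ at $\tau$ is exactly $\tau + \mathfrak{g}_\tau^\perp$, cf.\ the discussion around \eqref{eqn:adjoint-map-adjointness-zeta-centralizer}).

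Next I would establish that all relevant objects depend smoothly on the basepoint $\tau$ as $\tau$ ranges over $K$. The assignment $\tau \mapsto \mathfrak{g}_\tau$ is smooth on $\mathfrak{g}^\wedge_{\reg}$ (as recalled at the beginning of \S\ref{sec:coord-tail-regul}), hence so are the decompositions \eqref{eqn:decompose-via-zeta} and \eqref{eqn:decompose-via-zeta-2}. The implicit-function-theorem construction of $\phi_\tau$ above can be carried out uniformly in $\tau \in K$, so that for some fixed $C \geq 1$ one has $|\phi_\tau(x')| \leq C|x'|^2$ for all $x' \in V_\tau$, all $\tau \in K$. Consequently, shrinking $r_0$ if necessary, for each $r \leq r_0$ and each $\tau \in K$, the intersection $(G\cdot\tau) \cap B_\tau(r)$ is sandwiched between the graphs of $\phi_\tau$ over two Euclidean balls in $\mathfrak{g}_\tau^\perp$ of radii comparable to $r$.

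Now I pull back the symplectic measure. By the standard formula (see \cite[\S6.1]{nelson-venkatesh-1}), $\omega_{G\cdot\tau}$ is $(2\pi)^{-d/2}$ times $\frac{1}{(d/2)!}$ times the top exterior power of the canonical symplectic form on the coadjoint orbit. The symplectic form at $\xi \in G\cdot\tau$ is given, on tangent vectors $[x,\xi]$ and $[y,\xi]$, by $\langle [x,y], \xi\rangle$, which is a smooth non-degenerate function of $(\xi, x, y)$ on a neighborhood of $(\tau, *, *)$, and it depends continuously on the basepoint of the orbit as well. Transporting to the graph description, $\omega_{G\cdot\tau}|_{B_\tau(r)}$ becomes a smooth positive density on $V_\tau \subseteq \mathfrak{g}_\tau^\perp \cong \mathbb{R}^d$, and by compactness of $K$ this density is bounded above and below by fixed positive constants. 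A change of variables then yields
\[
  \omega_{G\cdot\tau}(B_\tau(r))
  \asymp \vol\bigl(\{x' \in \mathfrak{g}_\tau^\perp : |(x', \phi_\tau(x'))| < r\}\bigr)
  \asymp r^d
\]
uniformly in $\tau \in K$ and $r \lll 1$, which is the desired conclusion.

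The only step that is not entirely routine is the uniformity in $\tau$: I need to check that the implicit function theorem can be applied with constants depending only on $K$ (not on the individual $\tau$), and that the positive density representing $\omega_{G\cdot\tau}$ in the graph chart is bounded away from $0$ and $\infty$ uniformly in $\tau \in K$. Both follow from continuity of the relevant smooth data on $\mathfrak{g}^\wedge_{\reg}$ together with compactness of $K$, but that is where the real content of the proof lies.
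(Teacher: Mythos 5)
Your argument is correct and follows essentially the same route as the paper's proof: both rest on Lemma \ref{lem:easy-properties-regular-elements} to describe the orbit locally near $\tau$ (the paper via a trivializing chart of the fibration $U \rightarrow [\mathfrak{g}^\wedge]$ in which the orbit becomes a coordinate slice, you via a graph over the tangent plane $\tau + \mathfrak{g}_\tau^\perp$ in $\tau$-coordinates), then express $\omega_{G\cdot\tau}$ in that chart as a smooth positive density times Lebesgue measure on $\mathbb{R}^{\dim(\mathfrak{g})-\rank(\mathfrak{g})}$ --- using that it is a constant multiple of the top power of the canonical symplectic form, nonvanishing on regular orbits --- and conclude by bounding the density and obtaining uniformity in $\tau$ from compactness. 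The difference in local chart is cosmetic; the substance of the two proofs is the same.
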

\begin{proof}
  By restriction of scalars, we may and shall assume that $F = \mathbb{R}$.  Set $n := \rank(\mathfrak{g})$ and $m := \dim(\mathfrak{g}) - \rank(\mathfrak{g})$.  Let $\tau \in \mathfrak{g}^\wedge_{\reg}$.  We choose a small enough neighborhood $U$ of $\tau$.  We have noted (lemma \ref{lem:easy-properties-regular-elements}) that the fibers of $U \rightarrow [\mathfrak{g}^\wedge]$ are then the intersections with $U$ of coadjoint orbits.  By shrinking $U$ if necessary, we may suppose given a trivialization $\kappa : U \hookrightarrow \mathbb{R}^m \times \mathbb{R}^n$ of the fibered manifold $U \rightarrow \image(U) \subseteq [\mathfrak{g}^\wedge]$, i.e., a coordinate chart under which the map $\mathfrak{g} \rightarrow [\mathfrak{g}^\wedge]$ corresponds to the projection onto $\mathbb{R}^n$.  We may find a smooth function $\mu : \kappa(U) \rightarrow \mathbb{R}_{>0}$ so that the symplectic measures on those fibers are given by $\mu$ times the Lebesgue measure on $\mathbb{R}^m$.  (We use here that if $2 d = \dim(\mathfrak{g}) - \rank(\mathfrak{g})$, then the symplectic measure on a regular coadjoint orbit is induced by a constant multiple of the $d$th power of the canonical symplectic form, which is nonvanishing on $\mathfrak{g}^\wedge_{\reg}$.)  By shrinking $U$ if necessary, we may assume that $\mu$ is bounded from above and below by positive scalars.  The required estimates follow in the special case that $\Omega$ is a small enough neighborhood of $\tau$, then in general by compactness.
\end{proof}

We record another simple consequence of Lemma \ref{lem:easy-properties-regular-elements}:
\begin{lemma}\label{lem:submersivity-orbit-map-regular-elt}
  Let $\tau$ belong to a fixed compact subset of $\mathfrak{g}^\wedge_{\reg}$.  Write $\mathcal{O}_\tau := G \cdot \tau$ for its coadjoint orbit.  Let $\xi \in \mathfrak{g}^\wedge$ with $\xi \lll 1$ and $\tau + \xi \in \mathcal{O}_\tau$.  Then there exists $x \in \mathfrak{g}_\tau^{\flat}$ with
  \begin{equation*}
    \exp(x) \tau = \tau + \xi, \quad x \asymp \xi.
  \end{equation*}
\end{lemma}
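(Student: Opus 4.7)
The plan is to apply the inverse function theorem, uniformly in $\tau$, to the orbit map restricted to the transverse subspace $\mathfrak{g}_\tau^\flat$. Define
\[
F_\tau : \mathfrak{g}_\tau^\flat \rightarrow \mathfrak{g}^\wedge,
\qquad F_\tau(x) := \Ad^*(\exp(x)) \tau - \tau.
\]
I would first compute the derivative at the origin:
\[
dF_\tau\big|_0(x) = \ad_x^* \tau = [x,\tau].
\]
By definition, the kernel of $y \mapsto [y,\tau]$ acting on all of $\mathfrak{g}$ is $\mathfrak{g}_\tau$, and its image equals $\mathfrak{g}_\tau^\perp$ (the tangent space to $\mathcal{O}_\tau$ at $\tau$, as recalled in \S\ref{sec:coord-tail-regul}). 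Since $\mathfrak{g} = \mathfrak{g}_\tau^\flat \oplus \mathfrak{g}_\tau$ by construction of $\mathfrak{g}_\tau^\flat$, the restriction
\[
dF_\tau\big|_0 : \mathfrak{g}_\tau^\flat \xrightarrow{\;\sim\;} \mathfrak{g}_\tau^\perp
\]
is a linear isomorphism.

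Next I would note uniformity in $\tau$. The assignment $\tau \mapsto \mathfrak{g}_\tau$ varies smoothly in the Grassmannian (as recorded in \S\ref{sec:coord-tail-regul}), hence so do $\mathfrak{g}_\tau^\flat$ and $\mathfrak{g}_\tau^\perp$; thus as $\tau$ ranges over a fixed compact subset of $\mathfrak{g}^\wedge_{\reg}$, the operator norms of $dF_\tau|_0$ and of its inverse are uniformly $\asymp 1$. Taylor expansion in $x$ of the Baker–Campbell–Hausdorff-type expression for $F_\tau$ gives
\[
F_\tau(x) = [x,\tau] + O(|x|^2)
\]
with an $O(\cdot)$ implicit constant uniform in $\tau$ (again by compactness and smoothness). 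A quantitative inverse function theorem therefore produces a fixed $r > 0$, independent of $\tau$, and a smooth map $G_\tau$ defined on $\{\eta \in \mathfrak{g}^\wedge : |\eta| < r\} \cap (\mathcal{O}_\tau - \tau)$ inverting $F_\tau$, with $|G_\tau(\eta)| \asymp |\eta|$ uniformly.

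Given $\xi \lll 1$ with $\tau + \xi \in \mathcal{O}_\tau$, we have $|\xi| < r$ (for $\xi$ sufficiently small), so $x := G_\tau(\xi) \in \mathfrak{g}_\tau^\flat$ satisfies $\exp(x)\tau = \tau + \xi$ and $|x| \asymp |\xi|$, as required. The only real subtlety is the uniformity of the inverse function theorem over the compact family of basepoints $\tau$; this is routine given the smooth dependence of $\mathfrak{g}_\tau^\flat$ on $\tau$ and should not be a serious obstacle.
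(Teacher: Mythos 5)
Your proposal is correct and follows essentially the same route as the paper: the derivative of $x \mapsto \exp(x)\tau$ at $0$, restricted to $\mathfrak{g}_\tau^\flat$, is an isomorphism onto $\mathfrak{g}_\tau^\perp = T_\tau(\mathcal{O}_\tau)$, and the inverse function theorem with $\O(1)$ second-derivative bounds gives the bi-Lipschitz estimate $x \asymp \xi$. The only cosmetic difference is in how uniformity over the compact family of basepoints is obtained — the paper applies the inverse function theorem in a product setting over pairs $(\tau,x)$, while you invoke a quantitative inverse function theorem with uniform bounds coming from the smooth dependence of $\mathfrak{g}_\tau^\flat$ on $\tau$ — and these amount to the same thing.
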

\begin{proof}
  The derivative at the identity of the orbit map $G \rightarrow \mathcal{O}_\tau$, $g \mapsto g \tau$ is the map
  \begin{equation}\label{eq:mathfr-right-t_ta-=-mathfr-tau-quad-x-mapsto-x-tau}
    \mathfrak{g} \rightarrow T_\tau(\mathcal{O}_\tau) = [\mathfrak{g},\tau], \quad x \mapsto [x,\tau].
  \end{equation}
  That derivative has kernel $\mathfrak{g}_\tau$, so its restriction to the complementary subspace $\mathfrak{g}_\tau^{\flat}$ is an isomorphism.  We now appeal to Lemma \ref{lem:FTOC-families-nonstandard} with $(S,V,V') = (\mathfrak{g}^\wedge_{\reg}, \mathfrak{g}, \mathfrak{g}^\wedge)$, with $X$ and $X'$ the analytic bundles over $S$ whose fibers above $s \in S$ are $\mathfrak{g}_s^{\flat}$ and $G \cdot s$, respectively, with $f(x,s) = \exp(x) s$, and with $E = \left\{ (0,s) : s \in \mathfrak{S} \right\}$ for some fixed compact $\mathfrak{S} \subseteq \mathfrak{g}^\wedge_{\reg}$ that contains $\tau$.  The derivatives at the origin of the maps $f_s$ are the maps \eqref{eq:mathfr-right-t_ta-=-mathfr-tau-quad-x-mapsto-x-tau}, which we have noted to be bijective.  The element $\tau + \xi \in X'_{\tau}$ satisfies $(\tau + \xi,\tau) \simeq (\tau,\tau) = f(0,\tau) \in f(E)$, so by part \eqref{itm:sub-gln-2:ftoc-surj} of Lemma \ref{lem:FTOC-families-nonstandard}, there exists $(x,\tau) \in X$ with $f(x,\tau) = (\tau + \xi, \tau)$, i.e., $\exp(x) \tau = \tau + \xi$, and with $(x,\tau)$ contained in each fixed neighborhood of $E$.  In particular, $x \simeq 0$.  By part \eqref{itm:sub-gln-2:ftoc-inj} of Lemma \ref{lem:FTOC-families-nonstandard}, we then have
  \begin{equation}\label{eq:f_taux-f_tau0-asymp}
    |\xi| = |\exp(x) \tau - \tau| = |f_\tau(x) - f_\tau(0)| \asymp |x - 0| = |x|,
  \end{equation}
  as required.
\end{proof}

We record a convenient relationship between $\tau$-coordinates (\S\ref{sec:coord-tail-regul}, \S\ref{sec:orth-compl-tau-coord}) and the extent to which a Lie group element centralizes $\tau$:
\begin{lemma}\label{lem:approximate-x-prime-various}
  Let $\tau$ belong to a fixed compact subset of $\mathfrak{g}^\wedge_{\reg}$.  Let $x = (x',x'')$ denote $\tau$-coordinates.  Let $x \in \mathfrak{g}$.  Then
  \begin{equation*}
    x' \asymp [x,\tau].
  \end{equation*}
  If moreover $x \lll 1$, then
  \begin{equation*}
    x' \asymp [x,\tau] \asymp \exp(x) \tau - \tau.
  \end{equation*}
\end{lemma}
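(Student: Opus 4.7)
The plan is to verify the two comparisons $x' \asymp [x,\tau]$ and $[x,\tau] \asymp \exp(x)\tau - \tau$ separately, with the uniformity in $\tau$ coming in each case from compactness of the allowed range.

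For the first comparison, since $x'' \in \mathfrak{g}_\tau$ we have $[x,\tau] = [x',\tau]$, so it suffices to show that the linear map $L_\tau : \mathfrak{g}_\tau^{\flat} \to \mathfrak{g}^\wedge$, $y \mapsto [y,\tau]$, satisfies $L_\tau y \asymp y$. Its kernel is $\mathfrak{g}_\tau \cap \mathfrak{g}_\tau^{\flat} = 0$, so $L_\tau$ is injective. Since the assignment $\tau \mapsto L_\tau$ is smooth on $\mathfrak{g}^\wedge_{\reg}$, the operator norms of $L_\tau$ and of its inverse on the image are uniformly bounded as $\tau$ varies in a fixed compact subset.

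For the second comparison, the naive Taylor bound $\exp(x)\tau - \tau = [x,\tau] + O(|x|^2)$ is not sharp when $|x'| \ll |x|^2$, so one must first decompose $x$ at the group level. The smooth map
\[
\Phi : \mathfrak{g}_\tau^{\flat} \times \mathfrak{g}_\tau \to \mathfrak{g}, \quad (y', y'') \mapsto \log(\exp(y') \exp(y'')),
\]
has derivative at the origin given by the sum map $(y', y'') \mapsto y' + y''$, an isomorphism by \eqref{eqn:decompose-via-zeta-2}. An inverse function theorem argument (applied on the product manifold with $\tau$ as a parameter, to secure uniformity) yields smooth maps $y'(x) \in \mathfrak{g}_\tau^{\flat}$ and $y''(x) \in \mathfrak{g}_\tau$ satisfying $\exp(x) = \exp(y'(x)) \exp(y''(x))$ for $x \lll 1$. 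Since $\exp(y''(x))$ stabilizes $\tau$ under the coadjoint action, this gives $\exp(x) \tau = \exp(y'(x)) \tau$. Lemma \ref{lem:submersivity-orbit-map-regular-elt} (whose inverse function theorem proof equally yields $\exp(z)\tau - \tau \asymp z$ for $z \in \mathfrak{g}_\tau^{\flat}$ small) then gives $\exp(y'(x))\tau - \tau \asymp y'(x)$, so the task reduces to showing $y'(x) \asymp x'$.

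The crucial structural observation is that $y'$ vanishes identically on $\mathfrak{g}_\tau$: if $x \in \mathfrak{g}_\tau$, then the factorization $\Phi(0,x) = x$ forces $(y'(x), y''(x)) = (0, x)$ by uniqueness of the inverse. Combined with the fact that the derivative of $y'$ at the origin is the projection $z \mapsto z'$, the fundamental theorem of calculus applied along the path $t \mapsto x'' + tx'$ (on which $y'$ vanishes at $t=0$) gives
\[
y'(x) = \int_0^1 (D_1 y')(x'' + t x') \cdot x' \, dt = x' + O(|x|\,|x'|),
\]
hence $y'(x) \asymp x'$ for $x \lll 1$. The main obstacle is a bookkeeping one: tracking the uniformity of constants as $\tau$ varies in the compact set, which is handled by formulating each application of the inverse function theorem on the appropriate product manifold, exactly as in the proof of Lemma \ref{lem:submersivity-orbit-map-regular-elt}.
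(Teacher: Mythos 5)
Your argument is correct, but for the comparison $[x,\tau] \asymp \exp(x)\tau - \tau$ you take a genuinely different and heavier route than the paper. The first comparison $x' \asymp [x,\tau]$ is handled exactly as in the paper (reduce to $x = x' \in \mathfrak{g}_\tau^{\flat}$, use injectivity of $y \mapsto [y,\tau]$ on $\mathfrak{g}_\tau^{\flat}$ together with a compactness argument in $\tau$). For the second comparison, the paper does not need any group-level factorization: it simply expands
\[
\exp(x)\tau - \tau = [x,\tau] + \sum_{j \geq 2} \frac{1}{j!} (\ad_x^*)^{j-1}[x,\tau],
\]
and observes that every higher-order term is obtained by applying $\ad_x^*$ to $[x,\tau]$, so the tail is $\O(|x|\cdot|[x,\tau]|) \lll |[x,\tau]|$. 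In other words, the Taylor remainder is not merely $\O(|x|^2)$ (the pitfall you correctly identify) but is proportional to $[x,\tau]$ itself, which makes the naive expansion sharp after all. Your alternative — factor $\exp(x) = \exp(y')\exp(y'')$ with $(y',y'') \in \mathfrak{g}_\tau^{\flat} \times \mathfrak{g}_\tau$ by the inverse function theorem, use $\exp(y'')\tau = \tau$, invoke the Lemma \ref{lem:submersivity-orbit-map-regular-elt}-type estimate $\exp(y')\tau - \tau \asymp y'$, and then show $y'(x) = x' + \O(|x|\,|x'|)$ via the vanishing of $y'$ on $\mathfrak{g}_\tau$ and the fundamental theorem of calculus — is sound, and the uniformity-in-$\tau$ bookkeeping via the product-manifold trick is exactly in the spirit of the paper's other arguments. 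What it buys is a slightly stronger structural statement (an explicit factorization with $y' \asymp x'$); what it costs is an extra application of the inverse function theorem and its attendant uniformity verification, where the paper's observation disposes of the matter in two lines.
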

\begin{proof}
  We show first that
  \begin{equation*}
    x ' \asymp [x,\tau].
  \end{equation*}
  We have $[x,\tau] = [x',\tau]$, so we may assume that $x = x' \in \mathfrak{g}_\tau^{\flat}$.  Since both sides are linear in $x$, there is no loss of generality in assuming that $x \lll 1$.  We then argue as in the proof of Lemma \ref{lem:submersivity-orbit-map-regular-elt}, using that the map $\mathfrak{g}_\tau^{\flat} \rightarrow [\mathfrak{g},\tau]$, $x \mapsto [x,\tau]$ is injective.

  To complete the proof, we open the exponential series, giving
  \begin{equation*}
    \exp(x) \tau - \tau = [x,\tau] + \sum _{j \geq 2} \frac{1}{j!} (\ad_x^*)^{j-1} [x,\tau].
  \end{equation*}
  We use now that $x \lll 1$ and $\tau \ll 1$ to see that the sum over $j$ is much smaller than $[x,\tau]$.  It follows that the RHS is $\asymp [x,\tau]$, as required.
\end{proof}

We compare the coadjoint orbit $G \cdot \tau$ containing a regular element $\tau$ to its tangent plane $\tau + \mathfrak{g}_\tau^\perp$ at that element:
\begin{lemma}\label{lem:parabola-y-x-squared}
  Let $\tau$ belong to a fixed compact subset of $\mathfrak{g}^\wedge_{\reg}$.  Let $\xi, \eta \in \mathfrak{g}^\wedge$.  Suppose $\xi \lll 1$ and $\tau + \xi \in G \cdot \tau$.  Then, with $\tau$-coordinates $\xi = (\xi ', \xi '')$, we have $|\xi ''| \ll |\xi '|^2$.
\end{lemma}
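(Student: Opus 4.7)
The plan is to parametrize the orbit near $\tau$ by an element of $\mathfrak{g}_\tau^{\flat}$ and then do a second-order Taylor expansion of the coadjoint action, noting that the linear term lies entirely in the tangential direction $\mathfrak{g}_\tau^\perp$.

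First, I would apply Lemma \ref{lem:submersivity-orbit-map-regular-elt} to produce $x \in \mathfrak{g}_\tau^{\flat}$ with
\[
  \exp(x)\tau = \tau + \xi, \qquad x \asymp \xi.
\]
In particular $x \lll 1$, so I may expand the coadjoint action as a convergent power series
\[
  \xi \;=\; \exp(x)\tau - \tau \;=\; [x,\tau] + \tfrac{1}{2}[x,[x,\tau]] + \tfrac{1}{6}[x,[x,[x,\tau]]] + \dotsb.
\]
Since $\tau$ lies in a fixed compact set, each term of order $j\geq 2$ is $\O(|x|^j)$, so the tail sums to an error of size $\O(|x|^2)$. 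Writing $\xi = \xi' + \xi''$ in $\tau$-coordinates and using that $[x,\tau] \in [\mathfrak{g},\tau] = \mathfrak{g}_\tau^{\perp}$ is purely in the $\xi'$-direction, I obtain
\[
  \xi' = [x,\tau] + \O(|x|^2), \qquad \xi'' = \O(|x|^2).
\]

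The second ingredient is to relate $|x|$ back to $|\xi'|$. Since $x \in \mathfrak{g}_\tau^{\flat}$, the map $x \mapsto [x,\tau]$ is a linear isomorphism $\mathfrak{g}_\tau^{\flat} \to \mathfrak{g}_\tau^{\perp}$; as $\tau$ ranges over the fixed compact subset of $\mathfrak{g}^\wedge_{\reg}$ this isomorphism varies continuously, so I have the uniform bound $[x,\tau] \asymp x$ (this is also recorded as part of Lemma \ref{lem:approximate-x-prime-various}). Combined with $\xi' = [x,\tau] + \O(|x|^2)$ and the smallness of $|x|$, this gives $|\xi'| \asymp |x|$. Substituting into $\xi'' = \O(|x|^2)$ yields the required estimate $|\xi''| \ll |\xi'|^2$.

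There is no serious obstacle here; the only mild subtlety is to make sure the implied constants are uniform as $\tau$ varies over the fixed compact subset of $\mathfrak{g}^\wedge_{\reg}$, but this is automatic from the continuous variation of $\mathfrak{g}_\tau^{\flat}$ and $\mathfrak{g}_\tau^{\perp}$ (cf.\ the proof of Lemma \ref{lem:submersivity-orbit-map-regular-elt}).
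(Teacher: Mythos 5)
Your proposal is correct and follows essentially the same route as the paper: parametrize $\tau+\xi=\exp(x)\tau$ via Lemma \ref{lem:submersivity-orbit-map-regular-elt}, expand the coadjoint exponential series, use $[x,\tau]\asymp x$ (Lemma \ref{lem:approximate-x-prime-various}) and the fact that $[x,\tau]\in\mathfrak{g}_\tau^\perp$ so the quadratic tail controls $\xi''$. The only (immaterial) difference is the final bookkeeping — you show $|\xi'|\asymp|x|$ directly, whereas the paper concludes via $|\xi|\asymp|\xi'|+|\xi''|$ and $\xi\lll 1$.
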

\begin{proof}
  The reader is encouraged to review Figure \ref{fig:tau-coordinates} (\S\ref{sec:refin-symb-class}), which suggests a convincing ``proof by picture'' of the required estimate.  (Conversely, the present lemma may be understood as justifying that picture, hence the accuracy of the terminology ``coin-shaped'' that was used starting in \S\ref{sec:weyls-law}.)  We record a written verification for completeness.  By Lemma \ref{lem:submersivity-orbit-map-regular-elt}, we may write $\tau + \xi = \exp(x ) \tau$ with $x \in \mathfrak{g}_\tau^{\flat}$ and $x \asymp \xi \lll 1$.  Expanding the exponential series, we obtain
  \begin{equation*}
    \xi = [\tau,x] + \sum _{j \geq 2} \frac{1}{j!}  (\ad_x^*)^{j-1} [ \tau,x].
  \end{equation*}
  By Lemma \ref{lem:approximate-x-prime-various}, we have $[\tau,x] \asymp x \asymp \xi$.  Since $x \lll 1$, the sum over $j \geq 2$ is thus of size $\O(|\xi|^2)$.  Since $[\tau,x] \in [\mathfrak{g},\tau] = \mathfrak{g}_\tau^\perp$, it follows from the definition of $\tau$-coordinates that $\xi '' = \O(|\xi|^2)$.  We conclude via the relation $|\xi| \asymp |\xi ' | + |\xi ''|$ and the hypothesis $\xi \lll 1$.
\end{proof}

\section{Construction of analytic test
  vectors}\label{sec:analyt-test-vect}
The purpose of this section is to prove our main local result, Theorem \ref{thm:construct-test-function}, modulo a technical estimate which we postpone.

\subsection{Setup}\label{sec:construction-setup}
We adopt the setting of Theorem \ref{thm:construct-test-function}: $(G,H)$ is a fixed GGP pair over an archimedean local field $F$, $T \ggg 1$ is a positive real, and $(\pi,\sigma)$ is a pair of tempered irreducible unitary representations satisfying certain assumptions.  In view of Lemma \ref{lem:stability-inf-chars-satake-params}, those assumptions imply the following:
\begin{itemize}
\item Setting\index{wavelength parameter $\h$}
  \begin{equation}\label{eq:h-:=-t}
    \h := T^{-1/[F:\mathbb{R}]} \lll 1, \quad \text{ so that } T = \h^{-[F:\mathbb{R}]},
  \end{equation}
  the pair $(\h \lambda_\pi, \h \lambda_\sigma)$ of rescaled infinitesimal characters lies in a fixed compact subset of $\{\text{stable } (\lambda,\mu) \in [\mathfrak{g}^\wedge] \times [\mathfrak{h}^\wedge] \}$.
\item $\mathcal{O}_{\pi,\sigma}$ is nonempty.
\end{itemize}
We have noted in \S\ref{sec:relat-coadj-orbits} that $\mathcal{O}_{\pi,\sigma}$ is then an $H$-torsor.

We must show that for each fixed $\kappa > 0$, there is a test function $f \in C_c^\infty(G)$ and a smooth unit vector $u \in \sigma$, with $f$ supported in each fixed neighborhood of the identity element, so that the three properties enunciated in Theorem \ref{thm:construct-test-function} are satisfied.  We construct $f$ in \S\ref{sec:construction-f} and $u$ in \S\ref{sec:pass-an-indiv}.  We verify the first two properties in \S\ref{sec:proof-part-eqref} and \S\ref{sec:proof-part-eqref-1}.  The proof of the third property, which is more involved, is given in \S\ref{sec:proof-theor-refthm:c}.

We fix $0 < \delta = \delta ' < 1/2$ and $\delta ' < \delta '' < 2 \delta '$ (so that the condition \eqref{eqn:delta-delta-prime-hypotheses} is satisfied), with both $1/2 - \delta'$ and $1 - \delta ''$ taken small enough in terms of $\kappa$.

Throughout this section, $N$ is a fixed sufficiently large positive integer.

\subsection{Choice of $\tau$}

Let $\tau \in \mathfrak{g}^\wedge$ be an element of the rescaled orbit $\h \mathcal{O}_{\pi,\sigma}$ of (say) minimal Euclidean norm.  By the condition on $(\h \lambda_\pi, \h \lambda_\sigma)$ stated above and the ``principal bundle'' consequence of stability, $\tau$ belongs to a fixed compact subset of $\mathfrak{g}_{\stab}^\wedge$.  In view of the following lemma, $\tau$ (resp.  its restriction $\tau_H$) belongs to a fixed compact subset of $\mathfrak{g}_{\reg}^\wedge$ (resp. $\mathfrak{h}_{\reg}^\wedge$).
\begin{lemma}\label{lem:stab-impl-reg}
  Let $\tau \in \mathfrak{g}^\wedge_{\stab}$.  Then $\tau \in \mathfrak{g}^\wedge_{\reg}$ and $\tau_H \in \mathfrak{h}^\wedge_{\reg}$.
\end{lemma}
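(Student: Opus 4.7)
The approach is to translate the statement, via Lemma~\ref{lem:stability-summary}, into a purely linear-algebraic statement about the standard representations of $G$ and $H$, and then exploit the block structure of the inclusion $H \hookrightarrow G$ together with a Cauchy-type interlacing argument.

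First, I would apply Lemma~\ref{lem:stability-summary} to reformulate the hypothesis: $\tau \in \mathfrak{g}^\wedge_{\stab}$ is equivalent to $\ev(\tau) \cap \ev(\tau_H) = \emptyset$, i.e., the characteristic polynomials of $\tau$ and $\tau_H$ (in the respective standard representations) are coprime. Via the trace pairing and the embedding $G \hookrightarrow \GL(V/E)$, I would identify $\tau$ with a matrix acting on $V$ having $\tau_H$ as its principal block corresponding to the decomposition $V = V_H \oplus Ee$ of~\S\ref{sec:unitary-groups-ggp}. The multisets $\ev(\tau)$ and $\ev(\tau_H)$ are then precisely the eigenvalue multisets of these matrices.

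Next, I would show that disjointness of $\ev(\tau)$ and $\ev(\tau_H)$ forces $\tau$ to be non-derogatory (minimal polynomial $=$ characteristic polynomial) in the standard representation. In the definite Hermitian case $(G,H) = (\U(n+1),\U(n))$, this is immediate from Cauchy's interlacing theorem: disjointness upgrades the interlacing inequalities $a_i \leq b_i \leq a_{i+1}$ to strict inequalities, forcing all eigenvalues of $\tau$ to be distinct, and likewise for $\tau_H$. For indefinite signature, the split case, and the complex local field case, I would argue uniformly via the Schur complement identity
\[
  \chi_\tau(X) \;=\; \chi_{\tau_H}(X) \bigl(X - d - c(X - \tau_H)^{-1} b\bigr),
\]
where $\tau = \left(\begin{smallmatrix} \tau_H & b \\ c & d \end{smallmatrix}\right)$; coprimality of $\chi_\tau$ and $\chi_{\tau_H}$ combined with the GGP block shape then forces $\tau_H$ itself to be non-derogatory (by iterating the argument on generic perturbations inside the stable open locus, which is dense by Lemma~\ref{lem:stability-summary}).

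Finally, I would invoke the standard linear algebra fact that for a classical Lie algebra $\mathfrak{g}$, an element whose image under the standard representation is non-derogatory has centralizer of dimension exactly $\rank(\mathfrak{g})$, hence lies in $\mathfrak{g}^\wedge_{\reg}$. Applying the same reasoning to $\tau_H \in \mathfrak{h}^\wedge$ (noting that $\tau_H$ being non-derogatory as an $n\times n$ matrix is already established in the previous step) yields $\tau_H \in \mathfrak{h}^\wedge_{\reg}$.

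The main obstacle will be executing the uniform linear algebra step across all three local types listed at the start of \S\ref{sec:introduction} (real, quaternionic, complex), where the appropriate Hermitian/symmetric conditions and the meaning of ``eigenvalue'' (via $G^\vee$ in the sense of \S\ref{sec:satake-param-arch}) differ in detail but not in spirit. One should verify that coprimality of characteristic polynomials is genuinely insensitive to signature — a point that ultimately reduces to the Schur complement identity above holding formally over any field of characteristic zero.
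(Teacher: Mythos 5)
Your overall strategy is reasonable: the paper itself gives no argument here (it simply cites \cite[\S 14.3, Lem 2]{nelson-venkatesh-1}), so a direct proof is welcome, and your reduction via Lemma \ref{lem:stability-summary} to the statement that $\ev(\tau)\cap\ev(\tau_H)=\emptyset$ forces $\tau$ and $\tau_H$ to be non-derogatory is the right linear-algebra formulation. Your treatment of the definite case by Cauchy interlacing is correct. The gap is in the step that is supposed to handle the indefinite, split and complex cases. The Schur complement identity only constrains \emph{characteristic} polynomials, and characteristic polynomials can never detect regularity: they do not distinguish a regular nilpotent from a derogatory nilpotent, so no manipulation of $\chi_\tau$ and $\chi_{\tau_H}$ by itself can yield ``non-derogatory.'' Moreover, ``iterating the argument on generic perturbations inside the stable open locus, which is dense'' is not an argument: density of $\mathfrak{g}^\wedge_{\stab}$ tells you nothing about the fixed $\tau$ you started with, and regularity is an open, not closed, condition, so one cannot pass from generic elements to all stable elements by perturbation or by taking limits. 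As written, the non-definite cases (which include the cases actually needed at the place $\mathfrak{q}$, e.g.\ $\U(p+1,q)$ and $\GL_{n+1}$) are not proved.

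The statement is nevertheless true by an elementary argument that replaces your Schur-complement step. Complexify (or pass to the split form), so that $\tau$ is an $(n+1)\times(n+1)$ matrix whose upper-left $n\times n$ block is $\tau_H$; regularity in the real form may be checked after complexification because centralizers complexify. If some eigenvalue $\lambda$ of $\tau$ had geometric multiplicity $\geq 2$, its eigenspace would meet the codimension-one subspace $V_H$ nontrivially, and any nonzero $v\in V_H$ with $\tau v=\lambda v$ satisfies $\tau_H v=\lambda v$ (the projection along $E e$ fixes $v$), so $\lambda\in\ev(\tau)\cap\ev(\tau_H)$, a contradiction; hence $\tau$ is non-derogatory, i.e.\ regular. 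Dually, if $\mu$ were an eigenvalue of $\tau_H$ with geometric multiplicity $\geq 2$, the space of covectors $\phi$ on $V_H$ with $\phi\,\tau_H=\mu\,\phi$ has dimension $\geq 2$, so it contains a nonzero $\phi$ annihilating the column $b$ of $\tau=\left(\begin{smallmatrix}\tau_H & b\\ c& d\end{smallmatrix}\right)$; then $(\phi,0)$ is a left eigenvector of $\tau$ with eigenvalue $\mu$, again contradicting coprimality, so $\tau_H$ is regular. (Equivalently, you could quote the cyclic-vector characterization of stability, as in \cite[\S 14.3, Lem 3]{nelson-venkatesh-1} or Rallis--Schiffmann, since a matrix admitting a cyclic vector is regular.) With this substitution your proof closes; the interlacing discussion then becomes an unnecessary special case.
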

\begin{proof}
  This is \cite[\S14.3, Lem 2]{nelson-venkatesh-1}.
\end{proof}

\subsection{Construction of basic
  symbols}\label{sec:constr-basic-symb}
We may find symbols $a \in S^{-\infty}_\delta (\mathfrak{g}^\wedge)$ and $b \in S^{-\infty}_\delta (\mathfrak{h}^\wedge)$, valued in the unit interval $[0,1]$, that are smooth bumps on $\tau + \O(\h^\delta)$ and $\tau_H + \O(\h^\delta)$, respectively, in the sense that
\[
  a(\xi) \neq 1 \implies |\xi - \tau| \gg \h^\delta, \quad a(\xi) \neq 0 \implies |\xi - \tau| \ll \h^\delta,
\]
\[
  b(\eta) \neq 1 \implies |\eta - \tau_H| \gg \h^\delta, \quad b(\eta) \neq 0 \implies |\eta - \tau_H| \ll \h^\delta.
\]

\subsection{Application of relative character estimates}
Set
\begin{equation}\label{eq:A-B-defns-rel-char}
  \begin{split}
    A &:= \Opp_{\h}(a:\pi) \in \Psi_\delta^0(\pi) \cap \h^{-N}\Psi_\delta^{-N}(\pi),
    \\
    B &:= \Opp_{\h}(b:\sigma) \in \Psi_\delta^0(\sigma) \cap \h^{-N}\Psi_\delta^{-N}(\sigma),
  \end{split}
\end{equation}
where the memberships follow from \eqref{eqn:opp-mapping-S-m-delta}.  Recall from \S\ref{sec:local-dist-matr} the quadratic form $\mathcal{Q}$ on the smooth subspace of $\pi \otimes \sigma$ given by
\[
  \mathcal{Q}(v \otimes u) = \int_{s \in H} \langle s v, v \rangle \langle u, s u \rangle \, d s.
\]
As noted in Lemma \ref{lem:convergence-relative-character}, $\mathcal{Q}$ extends continuously to the tensor product of Sobolev spaces $\pi^N \otimes \sigma^N$.  We write $\mathcal{B}(\pi)$ and $\mathcal{B}(\sigma)$ for any orthonormal bases.
\begin{lemma}
  We have
  \begin{equation}\label{eqn:sum-v-u-Opp-a-Opp-b}
    \sum _{v \in \mathcal{B}(\pi)}
    \sum _{u \in \mathcal{B}(\sigma)}
    \mathcal{Q}(A v \otimes B u)
    \asymp
    \h^{\delta \dim(H)},
  \end{equation}
  with the LHS absolutely convergent.
\end{lemma}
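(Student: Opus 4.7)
The plan is to recognize the double sum as a relative character and apply Theorem \ref{thm:rel-char-asymptotics}. Since $a$ and $b$ are real-valued, \eqref{eqn:adjoint-opp-a} gives $A^* = A$ and $B^* = B$. The Sobolev-decay memberships \eqref{eq:A-B-defns-rel-char}, combined with parts \eqref{item:convergence-relative-character-3} and \eqref{item:convergence-relative-character-4} of Lemma \ref{lem:convergence-relative-character}, justify applying the identity \eqref{eq:H-T1T1s-T2T2s} with $T_1 = A$, $T_2 = B$, giving
\[
\sum_{v \in \mathcal{B}(\pi)} \sum_{u \in \mathcal{B}(\sigma)} \mathcal{Q}(A v \otimes B u) = \mathcal{H}(A^2 \otimes (B^2)^\vee).
\]

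Next, using the composition formula \eqref{eqn:composition-formula-2} together with the tensor-product factorization of the operator assignment on $\mathfrak{m} = \mathfrak{g} \oplus \mathfrak{h}$, I would identify $A^2 \otimes (B^2)^\vee$ with $\Opp(c)$ modulo $\h^\infty \Psi^{-\infty}$, where $c \in S_\delta^{-\infty}(\mathfrak{m}^\wedge)$ has leading part $(\xi,\eta) \mapsto a(\xi)^2 b(-\eta)^2$ (the sign reflecting the contragredient action on $\sigma^\vee$) and is supported near the stable element $(\tau,-\tau_H)$. Applying Theorem \ref{thm:rel-char-asymptotics}(i) with $J$ fixed large then yields
\[
\mathcal{H}(A^2 \otimes (B^2)^\vee) = \int_{\h \mathcal{O}_{\pi,\sigma}} a(\xi)^2 b(\xi_H)^2 \, d\Haar_H(\xi) + \sum_{1 \leq j < J} \h^j \int_{\h \mathcal{O}_{\pi,\sigma}} (\mathcal{D}_j c)(\xi, -\xi_H) \, d\Haar_H + O(\h^{(1-2\delta) J}).
\]

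For the leading integral, I would use the $H$-torsor structure of $\h \mathcal{O}_{\pi,\sigma}$ (from \S\ref{sec:cons-stab}) to pull back via the orbit map $s \mapsto s \cdot \tau$. Stability of $\tau$ (finite $H$-stabilizer) implies the injectivity on $\mathfrak{h}$ of $x \mapsto [x,\tau]$, hence $|\exp(x) \cdot \tau - \tau| \asymp |x|$ for small $x \in \mathfrak{h}$ (in the spirit of Lemma \ref{lem:approximate-x-prime-various}). The support conditions on $a$ and $b$ thus restrict the integration to $s = \exp(x)$ with $|x| \ll \h^\delta$, while on a sub-ball of comparable size both $a$ and $b$ attain the value $1$; the leading integral is therefore $\asymp \h^{\delta \dim H}$. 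The derivative bounds of $S_\delta^{-\infty}$ make each $j$-th correction term $\ll \h^{\delta \dim H + j(1-2\delta)}$, which is negligible compared to the leading term for $j \geq 1$, and choosing $J$ fixed larger than $(\delta \dim H)/(1-2\delta)$ controls the remainder. The main technical care lies in the second step — verifying that $A^2 \otimes (B^2)^\vee$ really identifies with a single $\Opp$-operator on $M$ with the claimed symbol and sign convention; once this is in hand, the rest of the argument is a direct application of the relative character asymptotics together with a volume estimate on the $H$-torsor.
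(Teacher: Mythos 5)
Your proposal is correct and follows essentially the same route as the paper: it identifies the double sum with the relative character $\mathcal{H}\bigl(A A^* \otimes (B B^*)^\vee\bigr)$ via \eqref{eq:H-T1T1s-T2T2s} and Lemma \ref{lem:convergence-relative-character}, feeds the composition formula into Theorem \ref{thm:rel-char-asymptotics} for the symbol $c(\xi,\eta)=a(\xi)b(-\eta)$ supported near the stable point $(\tau,-\tau_H)$, and evaluates the leading torsor integral $\asymp \h^{\delta\dim H}$ using the support of $a,b$ and stability, with the higher-order terms and remainder dominated exactly as in the paper. The only cosmetic difference is that you compose first and then apply the asymptotics, whereas the paper expands $\mathcal{H}(CC^*)$ into a double $(j_1,j_2)$ sum; these are the same maneuver.
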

\begin{proof}
  Setting $M := G \times H$, we apply the results of \S\ref{sec:relat-char-asympt} to the symbol $c \in S^{-\infty}_{\delta}(\mathfrak{m}^\wedge )$ given by $c(\xi,\eta) := a(\xi) b(-\eta)$.  We choose the nice cutoff on $\mathfrak{h}$ implicit in the definition of $\Opp$ to be the product of those on $\mathfrak{g}$ and $\mathfrak{h}$.  We see then that
  \[
    C := \Opp_{\h}(c : \pi \otimes \sigma^\vee) = A \otimes B^\vee
  \]
  and that the LHS of \eqref{eqn:sum-v-u-Opp-a-Opp-b} is $\mathcal{H}(C C^*)$.  The required absolute convergence follows from Lemma \ref{lem:convergence-relative-character}.  We note that $c$ is supported in a small neighborhood of $(\tau,-\tau_H)$, hence is supported in some fixed compact collection of stable elements.  The results of \S\ref{sec:main-estimates} give (for fixed $N$ and $J$, with $J$ large enough in terms of $N$)
  \begin{equation*}
    \mathcal{H}(C C^*)
    =
    \sum _{0 \leq j_1, j_2 < J}
    \h^{j_1 + j_2}
    \int _{\xi \in \h \mathcal{O}_{\pi,\sigma}}
    \mathcal{D}_{j_1} (c \star_{j_2} \bar{c})(\xi, -\xi_H)
    \,
    d \Haar_{H}(\xi)
    + \O(\h^{N}).
  \end{equation*}
  The $(j_1,j_2) = (0,0)$ term is
  \[
    \int _{\xi \in \h \mathcal{O}_{\pi,\sigma}} |a(\xi)|^2 |b(\xi_H)|^2 \, d \Haar_{H}(\xi) = \int _{s \in H} |a(s \cdot \tau )|^2 |b(s \cdot \tau_H)|^2 \, d s
  \]
  By the construction of $a$ and $b$ and the ``principal bundle'' consequence of stability, this term is $\asymp \h^{\delta \dim(H)}$.  The general $(j_1,j_2)$ term is an integral of a function of size $\h^{(1 - 2 \delta) (j_1+j_2)}$ over a domain of volume $\ll \h^{\delta \dim(H)}$, and is thus
  \[
    \ll \h^{(1 - 2 \delta) (j_1+j_2) + \delta \dim(H)}.
  \]
  Therefore the $(0,0)$ term dominates and the required estimate \eqref{eqn:sum-v-u-Opp-a-Opp-b} follows.
\end{proof}

\subsection{Construction of refined symbols}
Recall from the end of \S\ref{sec:construction-setup} that we have chosen some parameters $\delta ', \delta ''$.  Using that $\delta ' > \delta '' - \delta '$, we see that the symbol $a$ belongs to $S_{\delta ', \delta ''}^{\tau}$.  We may smoothly decompose $a = a' + a''$, where $a', a'' \in S_{\delta ', \delta ''}^{\tau }$ satisfy the following support conditions given in terms of $\tau$-coordinates (\S\ref{sec:coord-tail-regul}):
\begin{align}
  a ' (\tau + \xi ) \neq 0
  \quad   \implies
  \quad 
  &\xi ' \ll \h^{\delta '},
    \quad
    \xi '' \ll \h^{\delta ''},
    \nonumber
  \\
  \label{eqn:support-of-a-double-prime}
  a''(\tau + \xi) \neq 0
  \quad   \implies
  \quad 
  &\xi ' \ll \h^{\delta '},
    \quad
    \h^{\delta ''} \ll \xi '' \ll \h^{\delta '}.
\end{align}
Then
\[
  A = A' + A'', \quad A' := \Opp_{\h}(a':\pi), \quad A'' := \Opp_{\h}(a'':\pi).
\]

\setlength{\unitlength}{1.5cm}
\begin{figure}
  \begin{picture}(4,3)

    \put(-1,0){\vector(1,0){6}}
    \put(-1,0){\vector(0,1){1.5}}

    {%
      \thicklines
      \color{black}%
      \multiput(1.1,0.3)(0,0.1){12}{\line(0,1){0.05}}
      \multiput(3.0,0.3)(0,0.1){12}{\line(0,1){0.05}}
    }

    {%
      \thicklines
      \color{black}%
      \multiput(1,0.3)(0.1,0){20}{\line(1,0){0.05}}
      \multiput(1,0.7)(0.1,0){20}{\line(1,0){0.05}}
      \multiput(1,1.5)(0.1,0){20}{\line(1,0){0.05}}
      \put(3.2, 1.1){$\supp(a'')$}
      \put(3.2, 0.5){$\supp(a')$}

      {%
        \thicklines
        \color{black}%
        \qbezier(0,1)(2,0)(4,1)
        \put(-0.6,1.1){$G \cdot \tau$}
      }

      \color{black}
      \put(-1.4,1.4){$\xi''$}
      \put(4.9,-0.25){$\xi'$}
      \put(2,0.3){$\tau$}
      \put(1.95,0.5){\circle*{0.1}}
    }

  \end{picture}
  \caption{ A schematic of the decomposition $a = a' + a''$.  }
  \label{fig:a-decompose}
\end{figure}
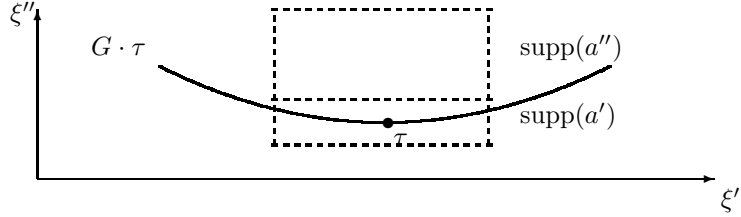

\begin{lemma}\label{lem:A-double-prime-negligible}
  $A'' \in \h^\infty \Psi^{-\infty}(\pi)$.
\end{lemma}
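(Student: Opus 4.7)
My strategy is to exploit the geometric fact that the support of $a''$ is disjoint from the (rescaled) coadjoint multiorbit $\mathcal{O}_\pi$, then convert this disjointness into operator-class membership via the trace estimates of \S\ref{sec:relat-char-asympt} and the refined calculus.

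First, I would establish the geometric disjointness $\supp(a'') \cap \mathcal{O}_\pi = \emptyset$. Since $\supp(a'')$ lies in an $\O(\h^{\delta'})$-neighborhood of $\tau$ and $\mathcal{O}_\pi$ is a finite union of coadjoint orbits (only one of which, namely $G \cdot \tau$, passes near $\tau$), it suffices to check disjointness from $G \cdot \tau$. Lemma \ref{lem:parabola-y-x-squared} says that any point $\tau + \xi \in G \cdot \tau$ with $\xi \lll 1$ satisfies $|\xi''| \ll |\xi'|^2$ in $\tau$-coordinates. Combined with the support condition \eqref{eqn:support-of-a-double-prime}, which forces $|\xi'| \ll \h^{\delta'}$ and $|\xi''| \gg \h^{\delta''}$, this would require $\h^{\delta''} \ll \h^{2\delta'}$, contradicting the standing assumption $\delta'' < 2\delta'$ and $\h \lll 1$.

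Second, I would reduce verification of $A'' \in \h^\infty \Psi^{-\infty}(\pi)$ to an operator-norm bound on iterated commutators. Fix $n, s, m$ and $y_1, \dotsc, y_n \in \mathfrak{g}$; writing $\pi(y_j) = \h^{-1} \Opp(y_j)$ and iteratively applying the composition formula \eqref{eqn:composition-formula-1} together with \eqref{eqn:star-j-new-mapping-property}, I would produce $c \in \h^{-K} S^{\tau}_{\delta', \delta''}$ (with $K$ fixed in terms of $n$) satisfying
\[
  [\pi(y_1), \dotsc, [\pi(y_n), A'']] \equiv \Opp(c) \pmod{\h^\infty \Psi^{-\infty}},
\]
with $\supp(c) \subseteq \supp(a'')$. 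The crucial support inclusion holds because each $\star^j$ is a bidifferential operator: the finitely many terms truncating the asymptotic expansion only take derivatives of $a''$ and multiply by polynomials, preserving support, and the remainder lies in $\h^\infty S^{-\infty}$, which under $\Opp$ lands in $\h^\infty \Psi^{-\infty}$ by Theorem \ref{thm:oper-assignm-comp-basic}.

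Third, I would convert the disjoint-support condition into the Sobolev operator bound. To get the $\pi^s \to \pi^{s-m}$ norm, I would conjugate by suitable powers of $\pi(\Delta)$ (using Lemma \ref{lem:Delta-xj-vs-Psi} and the composition calculus to rewrite this as $\Opp(\tilde c)$ for $\tilde c$ in a related refined class with the same disjoint support), reducing to the operator norm on $\pi$. Then I would estimate via Hilbert--Schmidt: $\|\Opp(\tilde c)\|_{\mathrm{op}}^2 \leq \|\Opp(\tilde c) \Opp(\tilde c)^*\|_1 = \trace(\Opp(\tilde c \star_\h \bar{\tilde c})) + \O(\h^\infty)$. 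The star product $\tilde c \star_\h \bar{\tilde c}$ lies in $\h^{-\O(1)} S^{\tau}_{\delta', \delta''} + \h^\infty S^{-\infty}$ by part (i) of Theorem \ref{thm:refined-star-prod}, and its main part is still supported in $\supp(a'')$, hence disjoint from $\mathcal{O}_\pi$. Corollary \ref{cor:trace-disjoint-supports} then gives $\trace(\Opp(\tilde c \star_\h \bar{\tilde c})) \ll \h^\infty$, closing the estimate.

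The argument is essentially bookkeeping: the geometric input is the one-line application of Lemma \ref{lem:parabola-y-x-squared}, and the rest follows formally from the refined calculus. The main technical point to handle carefully is ensuring that the disjointness of the symbol support from $\mathcal{O}_\pi$ survives the star-product manipulations, including the conjugations by Laplacian powers needed to access arbitrary Sobolev indices. This amounts to keeping track that the principal parts of the relevant star products have supports contained in $\supp(a'')$ while the tails are genuinely negligible in the class $\h^\infty S^{-\infty}$, which is guaranteed by the quantitative versions of Theorems \ref{thm:refined-star-prod} and \ref{thm:oper-assignm-comp-refined}.
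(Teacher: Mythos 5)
Your proposal is correct and follows essentially the same route as the paper: the geometric input is identical (disjointness of $\supp(a'')$ from the rescaled orbit via Lemma \ref{lem:parabola-y-x-squared} together with $\delta'' < 2\delta'$), and the conversion to operator negligibility runs through the same tools, namely the composition formula, Corollary \ref{cor:trace-disjoint-supports} for the symbol supported in $\supp(a'')$, and Theorem \ref{thm:trace-class-Psi-neg-N} for the $\h^{N}$-small remainders. The only difference is organizational: the paper avoids your explicit iterated-commutator bookkeeping by observing that, since $A'' \in \underline{\Psi}^{-\infty}(\pi)$, it suffices to bound the operator norm of $S := \pi(\Delta)^m A'' \pi(\Delta)^m$ for every fixed $m$, which it does through $\trace(S S^*)$ in exactly the manner of your Hilbert--Schmidt step.
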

\begin{proof}
  The key observation is that, as suggested by Figure \ref{fig:a-decompose}, the support of $a''$ is disjoint from the rescaled coadjoint orbit $\h \mathcal{O}_\pi$.  To see this, suppose $\tau + \xi$ lies in the support of $a''$ and also in $\h \mathcal{O}_\pi$.  We see then from \eqref{eqn:support-of-a-double-prime} that $\xi \lll 1$.  By Lemma \ref{lem:easy-properties-regular-elements} and the fact that $\mathcal{O}_\pi$ is contained in the fiber of $\lambda_\pi$ (Theorem \ref{thm:kirillov-formula}), it follows that $\tau + \xi \in G \cdot \tau$.  By Lemma \ref{lem:parabola-y-x-squared}, we obtain $|\xi''| \ll |\xi '|^2$.  From \eqref{eqn:support-of-a-double-prime}, we deduce that
  \[
    \h^{\delta ''} \ll |\xi ''| \ll |\xi|^2 \ll \h^{2 \delta '}.
  \]
  We obtain a contradiction in view of the inequality $2 \delta ' > \delta ''$ and our assumption $\h \lll 1$.

  Recall from \S\ref{sec:spaces-operators} the notation $\Delta = 1 - \sum _{x \in \mathcal{B}(\mathfrak{g})} x^2 \in \mathfrak{U}(\mathfrak{g})$.  By Theorem \ref{thm:opp-S-m-underlined}, we have $A'' \in \underline{\Psi }^{-\infty}(\pi)$, so to deduce the required conclusion that $A''$ belongs to $\h^\infty \Psi^{-\infty}(\pi)$, it suffices to show that for each fixed $m \in \mathbb{Z}_{\geq 0}$, the bounded operator $\pi \rightarrow \pi$ defined by
  \begin{equation*}
    S := \pi(\Delta)^m A'' \pi(\Delta)^m
  \end{equation*}
  has operator norm of size $\O(\h^\infty)$.  Since the operator norm is bounded by the Hilbert--Schmidt norm, it is enough to check that
  \[
    \trace(S S^*) \ll \h^\infty.
  \]
  To that end, let $p \in S^2_0(\mathfrak{g}^\wedge)$ denote the symbol given by the polynomial function
  \[p(\xi) = 1 - \sum _{x \in \mathcal{B}(\mathfrak{g})} \langle x, \xi \rangle^2 \geq 0.\] Since $\Delta$ is the symmetrization of $p$, we see that $\Delta^m$ is likewise the symmetrization of $p^m \in S^{2 m}_0(\mathfrak{g}^\wedge)$, hence by Lemma \ref{lem:polyn-symb} that
  \[\pi(\Delta)^m = \pi(\Delta^m) =
    \Opp_{1}(p^m) = \h^{-2 m} \Opp_{\h}(p^m).
  \]
  Fix $N \in \mathbb{Z}_{\geq 0}$ large enough in terms of $m$.  By the composition formula \eqref{eqn:composition-formula-2}, we may write
  \[
    S S^* = \h^{-8 m} \Opp_{\h}(q) + \mathcal{E}
  \]
  with (recall that $\delta ' = \delta $)
  \[\mathcal{E} \in \h^N \Psi^{-N}_\delta(\pi)\]
  and with $q \in S_{\delta ', \delta ''}^\tau \subseteq S_{\delta ''}^{-\infty}$ obtained by truncating the asymptotic expansion of \[p^m \star_{h} a'' \star_{\h} p^m \star_{\h} p^m \star_{\h} \overline{a''} \star_{\h} p^m
  \]
  (with the same conventions concerning order of evaluation as in \eqref{eqn:x1-xsmm-oppa-Delta-k}).  Then $\supp(q) \subseteq \supp(a'')$, hence
  \[
    \supp(q) \cap \h \mathcal{O}_\pi = \emptyset.
  \]
  By Corollary \ref{cor:trace-disjoint-supports}, we have $\trace(\Opp_{\h}(q)) \ll \h^\infty$.  By Theorem \ref{thm:trace-class-Psi-neg-N}, we have $\trace(\mathcal{E}) \ll \h^N$.  Since $N$ was arbitrary, the proof is now complete.
\end{proof}

\begin{lemma}\label{lem:discard-A-double-prime}
  We have
  \begin{equation}\label{eqn:Q-A-prime-B}
    \sum _{v \in \mathcal{B}(\pi)}
    \sum _{u \in \mathcal{B}(\sigma)}
    \mathcal{Q}(A ' v \otimes B u)
    \asymp
    \h^{\delta \dim(H)}.
  \end{equation}
\end{lemma}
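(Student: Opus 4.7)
The plan is to combine the already-established estimate \eqref{eqn:sum-v-u-Opp-a-Opp-b} with the negligibility of $A''$ from Lemma \ref{lem:A-double-prime-negligible}, reducing the question to showing that every term involving $A''$ contributes $\O(\h^\infty)$, which is dominated by $\h^{\delta \dim(H)}$.

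First I would rewrite the sums in \eqref{eqn:sum-v-u-Opp-a-Opp-b} and \eqref{eqn:Q-A-prime-B} as single applications of the functional $\mathcal{H}$. Using the identity \eqref{eq:H-T1T1s-T2T2s} recalled in \S\ref{sec:relat-trac-form}, which is valid by Lemma \ref{lem:convergence-relative-character} (iii)--(v) under the $\nu_N = \O(1)$ bounds satisfied by $A,A',B$ (all lying in $\Psi_\delta^0 \cap \h^{-N} \Psi_\delta^{-N}$ by \eqref{eq:A-B-defns-rel-char} and the refined calculus of Theorem \ref{thm:oper-assignm-comp-refined}), I can rewrite
\[
\sum_{v,u} \mathcal{Q}(A v \otimes B u) = \mathcal{H}(A A^* \otimes (B B^*)^\vee),
\qquad
\sum_{v,u} \mathcal{Q}(A' v \otimes B u) = \mathcal{H}(A' A'^* \otimes (B B^*)^\vee).
\]

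Next I would expand $A A^* - A' A'^* = A' A''^* + A'' A'^* + A'' A''^*$, using $A = A' + A''$. By Lemma \ref{lem:A-double-prime-negligible}, $A'' \in \h^\infty \Psi^{-\infty}(\pi)$; since $\h^\infty \Psi^{-\infty}$ is preserved under taking adjoints and composition with elements of $\Psi^m_\delta$ (Lemma \ref{lem:composition-operator-classes} together with the fact that $\Psi^m$ is adjoint-stable), each of the three summands lies in $\h^\infty \Psi^{-\infty}(\pi)$. Tensoring with the fixed operator $(BB^*)^\vee \in \Psi^0_\delta(\sigma^\vee)$ preserves this property (since $\Delta_M$ decomposes in terms of $\Delta_G$ and $\Delta_H$, as already used in the proof of Lemma \ref{lem:convergence-relative-character} (iii)), so the difference $A A^* \otimes (B B^*)^\vee - A' A'^* \otimes (B B^*)^\vee$ lies in $\h^\infty \Psi^{-\infty}(\pi \otimes \sigma^\vee)$.

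Finally I would apply Lemma \ref{lem:convergence-relative-character} (i), rescaled by a factor of $\h^M$ for arbitrary fixed $M$, to conclude that
\[
\mathcal{H}(A A^* \otimes (B B^*)^\vee) - \mathcal{H}(A' A'^* \otimes (B B^*)^\vee) \ll \h^\infty.
\]
Combined with \eqref{eqn:sum-v-u-Opp-a-Opp-b}, which gives $\mathcal{H}(AA^* \otimes (BB^*)^\vee) \asymp \h^{\delta \dim(H)}$, and using that $\delta < \infty$ so $\h^\infty$ is negligible compared to $\h^{\delta \dim(H)}$, this yields \eqref{eqn:Q-A-prime-B}. There is no serious obstacle here: the argument is purely bookkeeping once one has the negligibility of $A''$ (the real work having been done in Lemma \ref{lem:A-double-prime-negligible}, via the trace-class/support-disjointness estimate of Corollary \ref{cor:trace-disjoint-supports}).
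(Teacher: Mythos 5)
Your proposal follows essentially the same route as the paper: rewrite both sides via the identity \eqref{eq:H-T1T1s-T2T2s}, expand $A A^* - A' (A')^* = A'(A'')^* + A''(A')^* + A''(A'')^*$, use Lemma \ref{lem:A-double-prime-negligible} to see this difference lies in $\h^\infty \Psi^{-\infty}(\pi)$, and then use the \emph{a priori} bounds of Lemma \ref{lem:convergence-relative-character} to conclude that the corresponding $\mathcal{H}$-value is $\O(\h^\infty)$, which is dominated by $\h^{\delta \dim(H)}$ from \eqref{eqn:sum-v-u-Opp-a-Opp-b}. This is exactly the paper's proof, so the approach is correct.

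One intermediate assertion is stated more strongly than you justify it: from $D := A'(A'')^* + A''(A')^* + A''(A'')^* \in \h^\infty\Psi^{-\infty}(\pi)$ and $(BB^*)^\vee \in \Psi^0_\delta(\sigma^\vee)$ alone, one cannot conclude that $D \otimes (BB^*)^\vee \in \h^\infty\Psi^{-\infty}(\pi \otimes \sigma^\vee)$. Membership in $\Psi^{-\infty}$ of the product group requires smoothing of arbitrary order in the \emph{joint} $\Delta_M$-Sobolev scale, hence in particular in the $H$-direction, and an order-zero operator on $\sigma^\vee$ provides none of that; the operator $D \otimes 1$ already illustrates the failure (which is why the paper treats $T \otimes 1$ separately in Lemma \ref{lem:convergence-relative-character} and Theorem \ref{thm:rel-char-asymptotics}). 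The repair is immediate and costs nothing: what your final step actually needs is only that $\nu_N(D \otimes (BB^*)^\vee) \ll \h^\infty$ for the fixed $N$ of Lemma \ref{lem:convergence-relative-character}, and this follows from the separate bounds $\nu_N(D) \ll \h^\infty$ and $\nu_N(BB^*) \ll \h^{-\O(1)}$ together with the decomposition of $\Delta_M$ in terms of $\Delta_G$ and $\Delta_H$ --- i.e., exactly the content of part \eqref{item:convergence-relative-character-3} of Lemma \ref{lem:convergence-relative-character}, which is how the paper phrases this step. (Alternatively, your stronger tensor claim does become true if one invokes the full strength of $B = \Opp_{\h}(b:\sigma)$ with $b$ a Schwartz-class symbol, which gives $BB^* \in \h^{-2m}\Psi^{-2m}_\delta(\sigma)$ for every fixed $m$, so that the $\h^\infty$ gain from $D$ absorbs the fixed-power losses incurred by smoothing in the $H$-direction.)
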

\begin{proof}
  We will show that the left hand sides of \eqref{eqn:sum-v-u-Opp-a-Opp-b} and \eqref{eqn:Q-A-prime-B} differ by $\O(\h^\infty)$.  The difference in question is
  \[
    \sum _{u \in \mathcal{B}(\sigma)} \sum _{v \in \mathcal{B}(\pi)} \left( \mathcal{Q}(A v \otimes B u) - \mathcal{Q}(A ' v \otimes B u) \right) = \mathcal{H}(D \otimes E^\vee),
  \]
  where
  \[
    D := A' (A'')^* + A'' (A')^* + A'' (A'')^*, \quad E := B B^*.
  \]
  By Theorem \ref{thm:oper-assignm-comp-refined}, we have $A' \in \Psi^0_\delta(\pi)$ (recall that $\delta = \delta '$).  By Lemma \ref{lem:A-double-prime-negligible}, we have $A'' \in \h^\infty \Psi^{-\infty}(\pi)$.  As noted in \eqref{eq:A-B-defns-rel-char}, we have $B \in \h^{-N} \Psi_\delta^{-N}(\sigma)$.  By Lemma \ref{lem:composition-operator-classes}, it follows that $D \in \h^\infty \Psi^{-\infty}(\pi)$ and $E \in \h^{-2 N} \Psi_\delta^{-2 N}(\sigma)$.  In particular, with the notation $\nu_N$ as in Lemma \ref{lem:convergence-relative-character}, we obtain $\nu_N(D) \ll \h^\infty$ and $\nu_N(E) \ll \h^{-2 N}$.  Applying Lemma \ref{lem:convergence-relative-character} (part \eqref{item:convergence-relative-character-3}), we conclude that $\mathcal{H}(D \otimes E^\vee ) \ll \h^\infty$.
\end{proof}

\subsection{Construction of the test function $f$}\label{sec:construction-f}
By integrating by parts in the Fourier integral defining $(a'_{\h})^\vee$ and taking into account the smoothness and support properties of $a'$, we see that in $\tau$-coordinates $x = (x',x'')$,
\begin{equation}\label{eqn:a-h-vee-properties-for-test-vector-construction}
  \partial_x^\alpha (a'_{\h})^\vee (x)
  \ll
  \frac{\h^{-|\alpha|}
    (\h^{-1+\delta '})^{\dim(\mathfrak{g}) - \rank(\mathfrak{g})}
    (\h^{-1+\delta ''})^{\rank(\mathfrak{g})}}{
    \langle \h^{-1 + \delta '} x' \rangle^{N}
    \langle \h^{-1 + \delta ''} x'' \rangle^{N}
  }
\end{equation}
for all fixed $\alpha$ and $N$, where, as before,
\begin{equation*}
  \langle z \rangle := (1 + |z|^2)^{1/2}.
\end{equation*}
For now, the significance of the numerator in \eqref{eqn:a-h-vee-properties-for-test-vector-construction} is just that it is of size $\h^{-\O(1)}$.

We now truncate $(a'_{\h})^\vee$ to its essential support, as follows.  We fix $\nu > 0$ and $\eps > 0$ so that
\begin{equation}\label{eqn:nu-conditions}
  1 - \delta ' - \eps > 1/2 + \nu,
  \quad
  1 - \delta '' - \eps > \nu,
\end{equation}
as we may.  For $r > 0$, we set
\[\mathcal{D}(r) := \left\{x \in \mathfrak{g} :
    |x'| \leq r \h^{1 - \delta ' - \eps }, |x''| \leq r \h^{1 - \delta '' - \eps } \right\}.
\]
Let $\chi \in C_c^\infty(\mathfrak{g})$ be the nice cutoff (\S\ref{sec:nice-cutoffs}) implicit in the definition of $A'$, thus $A' = \Opp_{\h}(a:\pi,\chi)$.  We choose another nice cutoff $\tilde{\chi} \in C_c^\infty(\mathfrak{g})$ with
\begin{itemize}
\item $\tilde{\chi}(x) = 1$ for $x \in \mathcal{D}(1)$,
\item $\tilde{\chi}(x) = 0$ for $x \notin \mathcal{D}(2)$, and
\item $\partial^\alpha \tilde{\chi}(x) \ll \h^{-\O(1)}$ for all fixed multi-indices $\alpha$ and all $x \in \mathfrak{g}$.
\end{itemize}
Informally, $\tilde{\chi}$ is an envelope for the essential support of $(a'_{\h})^\vee$ as quantified by \eqref{eqn:a-h-vee-properties-for-test-vector-construction}.  Quantitatively, the difference $\chi (a'_{\h})^\vee - \tilde{\chi} (a'_{\h})^\vee$ is negligible in the sense that it is supported on a fixed compact set and has each fixed partial derivative of size $\O(\h^\infty)$.  The difference $\widetilde{\Opp}_{\h}(a':\chi) -\widetilde{\Opp}_{\h}(a':\tilde{\chi})$ (see \S\ref{sec:basic-oper-assignm} for notation) is then negligible in the analogous sense.  Setting
\[
  \tilde{A}' := \Opp_{\h}(a':\pi,\tilde{\chi})
\]
and using that $f \mapsto \pi(f)$ maps fixed bounded subsets of $C_c^\infty(G)$ to $\Psi^{-\infty}(\pi)$ (see \cite[\S3.6, Lem 2]{nelson-venkatesh-1}), we deduce that the difference $A' - \tilde{A}'$ is negligible in that it lies in $\h^\infty \Psi^{-\infty}(\pi)$.  By the proof of lemma \ref{lem:discard-A-double-prime}, we deduce that replacing $A'$ with $\tilde{A}'$ modifies the LHS of \eqref{eqn:Q-A-prime-B} by $\O(\h^\infty)$; in particular, that estimate remains valid for $\tilde{A}'$.

Since $a' \in \underline{S}^{-\infty}(\mathfrak{g}^\wedge)$, we may construct $f \in C_c^\infty(G)$ by setting
\[
  f := \widetilde{\Opp}_{\h}(a':\tilde{\chi}), \quad \text{ so that } \quad \pi(f) = \tilde{A}'.
\]
Strictly speaking, this defines $f$ as a smooth compactly-supported distribution on $G$; we identify it with an element of $C_c^\infty(G)$ by dividing by our choice of Haar measure $d g$ on $G$.

Since $\mathcal{D}(2)$ is concentrated near the origin in $\mathfrak{g}$, we see that $f$ is supported near the identity element of $G$.

\subsection{Passage to an individual vector $u$}\label{sec:pass-an-indiv}
Recall the fixed quantity $\kappa > 0$ introduced in \S\ref{sec:construction-setup}.  (This quantity is arbitrarily small but fixed.  It describes our target bounds.  We took $1/2 - \delta'$ and $1 - \delta ''$ small enough in terms of it.)

\begin{lemma}\label{lem:pass-to-individual-u}
  There exists a smooth unit vector $u \in \sigma$ so that
  \begin{equation}\label{eqn:sum-v-u-Opp-a-Opp-b-2}
    \sum _{v \in \mathcal{B}(\pi)}
    \mathcal{Q}(\tilde{A}' v \otimes B u)
    \gg
    \h^{\delta \dim(H)+\kappa/10}.
  \end{equation}
\end{lemma}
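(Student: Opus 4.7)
Define the positive semi-definite operator $L$ on $\sigma$ by $L := B^* \sigma(\check K) B$, where $K(h) := \tr(\pi(h) \tilde A'(\tilde A')^*)$ and $\check K(h) := K(h^{-1})$. A direct expansion of $\mathcal Q$ gives $F(u) = \langle L u, u\rangle$ for every smooth unit $u \in \sigma$; that $L \geq 0$ follows because $F(u) \geq 0$ by the positivity of $\mathcal Q$ on pure tensors. Summing over $u \in \mathcal B(\sigma)$ yields
\[
\sum_{u \in \mathcal B(\sigma)} F(u) = \tr(L) \asymp \h^{\delta \dim H}
\]
by Lemma \ref{lem:discard-A-double-prime} applied to $\tilde A'$ in place of $A'$, a substitution legitimate since $\tilde A' - A' \in \h^\infty \Psi^{-\infty}(\pi)$ (cf.\ \S\ref{sec:construction-f}) and since the difference that this introduces is $\O(\h^\infty)$ by the same argument as in the proof of that lemma.

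It suffices to show the operator-norm lower bound $\|L\|_\infty \gg \h^{\delta \dim H + \kappa/10}$: given such a bound, any unit $v \in \sigma$ with $\langle Lv, v\rangle \geq \|L\|_\infty/2$ can be converted into a smooth unit $u \in \sigma^\infty$ with the required estimate by normalizing $\sigma(f) v$ for a suitable $f \in C_c^\infty(H)$ approximating the identity (boundedness of $L$ ensures the estimate survives). I will derive the operator-norm bound from the elementary inequality $\|L\|_\infty \geq \tr(L^2)/\tr(L)$ valid for positive $L$, which reduces the task to the second-moment lower bound
\[
\tr(L^2) \gg \h^{2 \delta \dim H + (1-2\delta')d_H}, \qquad 2 d_H := \dim(\mathfrak h) - \rank(\mathfrak h).
\]
The hypothesis that $1/2 - \delta'$ is small enough in terms of $\kappa$ (\S\ref{sec:construction-setup}) ensures $(1-2\delta')d_H \leq \kappa/10$, whence $\|L\|_\infty \gg \h^{\delta \dim H + \kappa/10}$ as required.

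To establish the second-moment estimate, one expands $\tr(L^2) = \tr(MPMP)$ with $M := \sigma(\check K)$ and $P := BB^*$ as the double integral $\int_{H \times H} K(h_1^{-1}) K(h_2^{-1}) \tr(\sigma(h_1) P \sigma(h_2) P)\,dh_1\,dh_2$ and applies the refined microlocal calculus. The composition formula (Theorem \ref{thm:oper-assignm-comp-refined}) identifies $P \equiv \Opp_\h(|b|^2 : \sigma) \pmod{\h^\infty \Psi^{-\infty}}$; the Kirillov formula (Theorem \ref{thm:asymptotic-kirillov}) on $\sigma$ evaluates the inner trace as a symplectic integral over $\h \mathcal O_\sigma$; and the Kirillov formula on $\pi$ expresses $K$ itself as the Kirillov transform of $|a'|^2$ over $\h \mathcal O_\pi$ concentrated near the identity of $G$. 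After substitution the leading term reorganizes --- just as in the proof of Lemma \ref{lem:discard-A-double-prime} but with two integrations rather than one --- into a positive symplectic integral over the $H$-torsor $\h \mathcal O_{\pi,\sigma}$, multiplied by a Planck-cell factor $\h^{(1-2\delta')d_H}$ that counts the effective number of modes in $\supp(b) \cap \h \mathcal O_\sigma$; the net order matches the target. The main obstacle is the precise bookkeeping in this iterated Kirillov calculation, in particular verifying that the leading constant is positive so that the asymptotic yields a genuine lower rather than merely an upper bound.
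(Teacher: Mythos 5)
Your reduction is fine as far as it goes: the identity $\sum_{v}\mathcal{Q}(\tilde{A}'v\otimes Bu)=\langle Lu,u\rangle$ with $L=B^*\sigma(\check K)B$ is correct (modulo convergence justifications of the type supplied by Lemma \ref{lem:convergence-relative-character}), $L\geq 0$ follows from the positivity of $\mathcal{Q}$, $\tr(L)\asymp \h^{\delta\dim H}$ is \eqref{eqn:Q-A-prime-B} for $\tilde A'$, and the passage from an operator-norm lower bound for $L$ to a smooth unit vector is harmless. The genuine gap is the second-moment lower bound $\tr(L^2)\gg \h^{2\delta\dim H+(1-2\delta')d_H}$, which carries the entire content of the lemma and is not proved. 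It does not follow from anything quoted: Theorem \ref{thm:rel-char-asymptotics} gives asymptotics for a single integral over $H$ (one insertion of a symbol), whereas $\tr(MPMP)$ is a double integral over $H\times H$ with two insertions of $P=BB^*$, i.e.\ a second-moment relative-character quantity for which no asymptotic expansion is available in the paper; and even at the heuristic level you would need to identify a main term and verify it is positive and of the claimed size, which you yourself flag as the unresolved ``main obstacle.'' A lower bound of this type is genuinely delicate (an upper bound would be routine), so as written the argument is incomplete at exactly the step where a lower bound is needed.

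Note also that the effective-rank input you are trying to extract from $\tr(L^2)$ can be obtained much more cheaply, and this is how the paper proceeds: $B$ is self-adjoint with $\|B\|_{2}^2\ll\h^{-\kappa/10}$ (a one-line consequence of the composition formula, Theorem \ref{thm:asymptotic-kirillov} and Lemma \ref{lem:bounds-symplectic-measure-ball-regular}), and one also has the companion asymptotic \eqref{eqn:sum-v-u-Opp-a-Opp-b-3} in which $B$ is replaced by the identity. Diagonalizing $B$ and using the positivity of each term $\mathcal{Q}(\tilde A'v\otimes u)$, the eigenvectors with $|\lambda_u|<\eps$ contribute at most $\eps^2$ times \eqref{eqn:sum-v-u-Opp-a-Opp-b-3}, so for fixed small $\eps$ the sum localizes on the $\ll \eps^{-2}\h^{-\kappa/10}$ eigenvectors with $|\lambda_u|\geq\eps$, and pigeonhole produces the desired $u$ (automatically smooth, since $u=\lambda_u^{-1}Bu$). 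If you want to keep your operator-theoretic framing, replacing the inequality $\|L\|\geq\tr(L^2)/\tr(L)$ by this threshold-and-pigeonhole step would close the gap without any new asymptotic computation.
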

\begin{remark}
  The passage from \eqref{eqn:Q-A-prime-B} to \eqref{eqn:sum-v-u-Opp-a-Opp-b-2} serves primarily to improve the cosmetics of the arguments of \S\ref{sec:reduction-proof}.  It would be fine to retain the sum over $u$ throughout the whole argument.  We would then eventually require an estimate for the Hilbert--Schmidt norm of $B$, which is anyway the main input in the proof of Lemma \ref{lem:pass-to-individual-u}.  Thus while Lemma \ref{lem:pass-to-individual-u} is, strictly speaking, unnecessary, it allows us to simplify our presentation with negligible net cost.
\end{remark}
The proof appeals to results of \S\ref{sec:interl-regul-elem}, which are applicable in view of Lemma \ref{lem:stab-impl-reg}.

\begin{proof}[Proof of Lemma \ref{lem:pass-to-individual-u}]
  We observe first that the estimate \eqref{eqn:Q-A-prime-B} holds with $B$ replaced by the identity operator:
  \begin{equation}\label{eqn:sum-v-u-Opp-a-Opp-b-3}
    \sum _{v \in \mathcal{B}(\pi)}
    \sum _{u \in \mathcal{B}(\sigma)}
    \mathcal{Q} (\tilde{A}' v \otimes u)
    \asymp
    \h^{\delta \dim(H)}.
  \end{equation}
  To see this, we may reduce as in the proof of \eqref{lem:discard-A-double-prime} to verifying the same estimate but with $\tilde{A}'$ replaced by $A$.  We then apply the proof of \eqref{eqn:sum-v-u-Opp-a-Opp-b}, but using \eqref{eqn:relative-character-asymptotics-2} in place of \eqref{eqn:relative-character-asymptotics}.  The main term is given by $\int _{s \in H} |a(s \cdot \tau )|^2 \, d s \asymp \h^{\delta \dim(H)}$, while the remaining terms contribute much less.

  We observe next that, since $b$ is real-valued, the operator $B$ is self-adjoint (see \eqref{eqn:adjoint-opp-a}).  We now estimate the squared Hilbert--Schmidt norm $\|B\|_2^2$ of $B$, i.e., the trace of $B B^* = B^2$.  To do so, we apply the composition formula \eqref{eqn:composition-formula-2} to write $B^2 = \Opp_{\h}(q:\sigma) + \mathcal{E}$, where
  \begin{itemize}
  \item $q \in S_\delta^{-\infty}(\mathfrak{h}^\wedge)$ is supported on $\tau_H + \O(\h^\delta)$, and
  \item $\mathcal{E} \in \h^N \Psi_\delta^{-N}(\sigma)$.
  \end{itemize}
  By Theorem \ref{thm:trace-class-Psi-neg-N}, we have $\trace(\mathcal{E}) \ll \h^N$.  By the asymptotic form of the Kirillov formula (Theorem \ref{thm:asymptotic-kirillov}), we have
  \[\trace(\Opp_{\h}(q:\sigma))
    \ll \h^{-(\dim(\mathfrak{h}) - \rank(\mathfrak{h}))/2} \mu + \h^N,\] where $\mu$ denotes the symplectic volume of $\h \mathcal{O}_\sigma \cap \supp(b)$.  To estimate $\mu$, we recall the support conditions stated in \S\ref{sec:constr-basic-symb} and apply Lemma \ref{lem:bounds-symplectic-measure-ball-regular}.  We obtain
  \begin{equation*}
    \|B\|_2^2
    \ll
    \h^{2 (\delta -1/2) (\dim(\mathfrak{h}) -
      \rank(\mathfrak{h}))/2}
    \ll
    \h^{-\kappa/10},
  \end{equation*}
  using in the final step that $1/2-\delta$ is small enough in terms of $\kappa$.

  We may take for $\mathcal{B}(\sigma)$ a basis of eigenvectors $u$ for $B$, with eigenvalues $\lambda_u$ (cf.\ Lemma \ref{lem:convergence-relative-character}, part \eqref{item:convergence-relative-character-4}).  The LHS of \eqref{eqn:sum-v-u-Opp-a-Opp-b} then reads
  \begin{equation*}
    \sum _{u \in \mathcal{B}(\sigma)}
    |\lambda_u|^2
    \sum _{v \in \mathcal{B}(\pi)}
    \mathcal{Q}(\tilde{A}' v \otimes u).
  \end{equation*}
  We have
  \begin{equation}\label{eqn:bounds-for-lambda}
    \sum_{u \in \mathcal{B}(\sigma)}
    |\lambda_u|^2 = \|B\|_2^2 \ll \h^{-\kappa/10}.
  \end{equation}
  Recall from \eqref{eqn:H-vs-ell-normalization} that $\mathcal{Q}$ is definite.  For small enough fixed $\eps > 0$, we see by comparing \eqref{eqn:Q-A-prime-B} and \eqref{eqn:sum-v-u-Opp-a-Opp-b-3} that
  \begin{equation}\label{eqn:sum-restricted-to-lambda-u-at-least-eps}
    \sum _{\substack{
        u \in \mathcal{B}(\sigma) : \\
        |\lambda_u| \geq \eps 
      }
    }
    |\lambda_u|^2
    \sum _{v \in \mathcal{B}(\pi)}
    \mathcal{Q}(\tilde{A}' v \otimes u)
    \asymp \h^{\delta \dim(H)}.
  \end{equation}
  On the other hand, by \eqref{eqn:bounds-for-lambda}, we have $\# \{u \in \mathcal{B}(\sigma) : |\lambda_u| \geq \eps \} \ll \h^{-\kappa/10}$.  There is thus a unit eigenvector $u$ of $B$ for which \eqref{eqn:sum-v-u-Opp-a-Opp-b-2} holds (e.g., any eigenvector whose eigenvalue has maximal size).  We have $b \in \underline{S}^{-\infty}(\mathfrak{h}^\wedge)$, so $B \in \underline{\Psi}^{-\infty}(\sigma)$, and thus the vector $u = \lambda_u^{-1} B u$ is smooth (cf.\ \S\ref{sec:spaces-operators}).  The proof is now complete.
\end{proof}
As noted in \eqref{eq:A-B-defns-rel-char}, we have $B \in \Psi_\delta^0(\sigma)$, so the operator norm of $B$ is $\O(1)$.  We deduce that there is a smooth unit vector $u \in \sigma$ so that
\begin{equation}\label{eqn:desiderata-for-u}
  \sum _{v \in \mathcal{B}(\pi)}
  \mathcal{Q}(\tilde{A}' v \otimes u)
  \gg
  \h^{\delta \dim(H)+\kappa/10}.
\end{equation}
\begin{remark}\label{rmk:refine-via-microlocalization-of-Psi}
  The estimate \eqref{eqn:desiderata-for-u} suffices for our purposes, but is less precise than what is given by Lemma \ref{lem:pass-to-individual-u} in that it ``forgets'' the microlocalization of $u$ at $\tau_H$ imposed by $B$.  It may be possible to exploit that microlocalization property to refine the numerical exponent of Theorem \ref{thm:main}.  In the setting of \S\ref{sec:coadj-reform}, this would amount to exploiting more than just the central directions in $H_{\tau_H}$.
\end{remark}

\subsection{Proof of part \eqref {thm:f-item-1} of Theorem \ref{thm:construct-test-function}}\label{sec:proof-part-eqref}
We write $n$ for the natural number for which $(G,H)$ is a form of $(\U_{n+1},\U_n)$, so that $\dim H = [F:\mathbb{R}] n^2$ and $\dim_F H = n^2$.  Then $\h^{\dim(H)/2} = T^{-n^2/2}$.  The required bound
\begin{equation}
  \sum _{v \in \mathcal{B}(\pi)}
  \mathcal{Q}(\pi(f) v \otimes u)
  \gg T^{-n^2/2-\kappa}
  \tag{\ref{eqn:sum-v-H-pi-f-0-v-u}}
\end{equation}
follows from \eqref{eqn:desiderata-for-u} upon recalling that $\delta$ is close enough to $1/2$ in terms of $\kappa$.

\subsection{Proof of part \eqref{item:f-ell-one-norm-on-H} of Theorem \ref{thm:construct-test-function}}\label{sec:proof-part-eqref-1}
We recall \eqref{eqn:at-frakq-f-sharp-via-f0}:
\begin{equation}
  f^\sharp(g) := \int _{z \in Z}
  \omega_\pi(g) (f \ast f^*)(z g) \, d z,
  \tag{\ref{eqn:at-frakq-f-sharp-via-f0}}
\end{equation}
where $f^*(g) = \overline{f(g^{-1})}$ and $f \ast f^*$ denotes the convolution product.  Our task is to check that $\int_H |f^\sharp| \ll T^{n/2+\kappa}$.

By construction, $f$ is supported near the identity element of $G$ and satisfies the support condition that $f(\exp(x))$ vanishes unless $x ' \ll \h^{1 - \delta ' - \eps}$ and $x '' \ll \h^{1 - \delta '' - \eps}$.  These conditions imply in particular that $x \ll \h^{\nu}$, hence $\exp(x) = 1 + \O(\h^\nu)$ (recall from \eqref{eqn:nu-conditions} the defining properties of $\nu$).  By Lemma \ref{lem:approximate-x-prime-various}, we deduce further that $\exp(x) \tau = \tau + \O(\h^{1/2+\nu})$.  Thus every element $g$ of the support of $f$ satisfies
\begin{equation}\label{eqn:f-sharp-support}
  g = 1 + \O(\h^\nu),
  \quad
  g \tau = \tau + \O(\h^{1/2+\nu}).
\end{equation}
If $g$ satisfies \eqref{eqn:f-sharp-support}, then so does $g^{-1}$, and if two group elements satisfy \eqref{eqn:f-sharp-support}, then so does their product.  It follows that every element of the support of $f \ast f^*$ satisfies \eqref{eqn:f-sharp-support}.  The same is then true for every element of the support of $|f^\sharp| : \bar{G} \rightarrow \mathbb{R}_{\geq 0}$, where we now interpret the first estimate in \eqref{eqn:f-sharp-support} as taking place inside the adjoint group $\bar{G}$.

Recall from \S\ref{sec:prelim-reductive-groups} the notation $\dim, \dim_F, \rank, \rank_F$.  It follows from the basic Fourier estimate \eqref{eqn:a-h-vee-properties-for-test-vector-construction} that
\[\|f\|_{L^1(G)} \ll \|(a_{\h}')^\vee \|_{L^1(\mathfrak{g})} \ll 1\] and that
\[
  \|f\|_{L^\infty(G)} \ll \h^{(-1+\delta ') (\dim(\mathfrak{g}) - \rank(\mathfrak{g}))} \h^{(-1+\delta '') \rank(\mathfrak{g})} \ll T^{n(n+1)/2+\kappa},
\]
using here that $\dim_F(\mathfrak{g}) -\rank_F(\mathfrak{g}) = n(n+1)$ and that $(\delta ', \delta '')$ is close enough to $(1/2,1)$.  It follows that
\[
  \|f \ast f^*\|_{L^\infty(G)} \leq \|f\|_{L^1(G)} \|f\|_{L^\infty(G)} \ll T^{n(n+1)/2+\kappa}
\]
and hence also that
\begin{equation}\label{eqn:f-sharp-bound}
  \|f^\sharp\|_{\infty}
  \ll
  T^{n(n+1)/2 + \kappa},
\end{equation}
using for this last estimate that for a fixed compact subset $U$ of $G$ and all $g \in G$, we have $\vol \{z \in Z : g z \in U\} \ll 1$.

It follows from \eqref{eqn:f-sharp-support} and \eqref{eqn:f-sharp-bound} that
\[
  \int_H |f^\sharp| \ll T^{n(n+1)/2 +\kappa} \vol \{g \in H : |g \tau - \tau| \leq \h^{1/2} \},
\]
say.  Using the ``principal bundle'' consequence of stability (\S\ref{sec:cons-stab}), we see that
\[
  g \in H, \, |g \tau - \tau| \leq \h^{1/2} \quad \implies \quad |g - 1| \ll \h^{1/2},
\]
so the volume in question is $\ll \h^{(\dim H)/2} = T^{-n^2/2}$.  The required bound $\int_H |f^\sharp| \ll T^{n/2+\kappa}$ follows.

\subsection{Summary}
We have reduced the proof of our main local result (Theorem \ref{thm:construct-test-function}), hence that of our main global result (Theorem \ref{thm:main}), to establishing that the test function $f$ constructed above satisfies the conclusion of part \eqref{thm:f-item-3} of Theorem \ref{thm:construct-test-function}.  The proof is given in \S\ref{sec:proof-theor-refthm:c}.  The proof relies on Theorem \ref{thm:key-volume-bound}, whose proof is postponed to \S\ref{sec:volume-estimates}, which in turn relies on Theorem \ref{thm:stability-consequence-for-1-H}, whose proof is given in \S\ref{sec:some-invar-theory}.

\section{Bilinear forms estimates}\label{sec:bilinear-forms-estimates}

\subsection{Notation for distance functions and thickened centralizers}
Let $F$ be a local field of characteristic zero, let $G$ be an $F$-algebraic group, and let $\tau \in \mathfrak{g}^\wedge$.  We denote by $Z$ the center of $G$ and write $\bar{G} = G/Z$.  We define $D_\tau : \bar{G} \rightarrow \mathbb{R}_{\geq 0}$ by
\begin{equation*}
  D_\tau(g) := \max(\dist_F(g \tau,\tau), \dist_F(g^{-1} \tau, \tau)),
\end{equation*}
where we adopt the notation\index{Lie algebra!$\dist, \dist_F$}
\begin{equation*}
  \dist_F(\xi,\eta) := |\xi - \eta|_F,
  \quad
  \dist(\xi,\eta) := |\xi - \eta|.
\end{equation*}
We recall that $|.|_F$ denotes the normalized valuation, thus $|z|_F = |z|^{[F:F_0]}$ if $F_0$ is the prime subfield of $F$ (i.e., $F_0 = \mathbb{R}$ or $\mathbb{Q}_p$).  For a subset $S$ of $\mathfrak{g}^\wedge$, we set $\dist(\xi,S) := \inf_{\eta \in S} \dist(\xi,\eta)$ and define $\dist_F(\xi,S)$ similarly.

We have $D_\tau(g) = 0$ if and only if $g$ lies in the centralizer of $\tau$, so we may understand $D_\tau$ as quantifying the distance to that centralizer.  For $r \geq 0$, we define the ``thickened centralizer''
\begin{equation*}
  \mathcal{C}_\tau(r) = \left\{ g \in \bar{G} : D_\tau(g) \leq r \right\},
\end{equation*}
By construction, $\mathcal{C}_\tau(r)$ is symmetric.

\subsection{Statement of main bilinear forms estimate}

Let $F$ be a local field of characteristic zero.  Let $(G,H)$ be a unitary GGP pair over $F$.  Write, as usual, $Z$ (resp. $Z_H$) for the center of $G$ (resp. $H$), $\mathfrak{z}$ (resp. $\mathfrak{z}_H$) for the center of $\mathfrak{g}$ (resp. $\mathfrak{h}$), and $\bar{G} = G/Z$ for the adjoint group.  We assume that $Z_H$ and $H$ come equipped with Haar measures; volumes and integrals in what follows will be computed with respect to these measures.  We note that each of the centers $Z$ and $Z_H$ is one-dimensional over $F$.

Recall from \S\ref{sec:stability-defn} the definition of the stable subset $\mathfrak{g}^\wedge_{\stab} \subseteq \mathfrak{g}^\wedge$, as well as the properties of that subset summarized in \S\ref{sec:cons-stab}.  Recall from \S\ref{sec:quant-dist-subgr} the definition of $d_H : \bar{G} \rightarrow [0,1]$.

\begin{theorem}\label{thm:bilinear-forms-estimate}
  Let $\mathfrak{S}$ be a compact subset of $\mathfrak{g}^\wedge_{\stab}$.  There are bounded symmetric open neighborhoods $\mathcal{Z} \subseteq Z_H$ and $\mathcal{G} \subseteq \bar{G}$ of the respective identity elements, which may be chosen arbitrarily small, for which the following assertions hold.  Let $\Omega$ be a compact subset of $H$.  There exists $C \geq 0$ with the following property.  Let $u_1 ^0 , u_2 ^0 : H \rightarrow \mathbb{R}_{\geq 0}$ be nonnegative measurable functions.  Define the convolutions
  \begin{equation*}
    u_i(x) := \int _{z \in \mathcal{Z} } u_i ^0 (x z^{-1}) \, d z.
  \end{equation*}
  Let $\tau \in \mathfrak{S}$, $r \geq 0$ and $\gamma \in \bar{G} - H$.  Then the integral
  \begin{equation*}
    I := \int _{x, y \in \Omega } u_1(x) u_2(y) 1_{\mathcal{G} \cap \mathcal{C}_\tau(r)}  (x ^{-1} \gamma y) \,d x \, d y
  \end{equation*}
  satisfies the estimate
  \begin{equation}\label{eq:i-leq-c}
    I \leq C r^{\dim_F(H)} \min \left( 1, \frac{r}{ d_H(\gamma)} \right) \|u_1^0 \|_{L^2(\Omega \mathcal{Z} )} \|u_2^0 \|_{L^2(\Omega \mathcal{Z})}.
  \end{equation}
\end{theorem}

\subsection{Deduction of Theorem \ref{thm:construct-test-function}, part \eqref{thm:f-item-3}}\label{sec:proof-theor-refthm:c}
Let $(F,G,H,\mathfrak{S})$ be given and fixed, with $F$ archimedean.  Let $T \ggg 1$.  Fix $\kappa > 0$.  Let $f$ be as constructed in \S\ref{sec:construction-f}.  Our task is to verify part \eqref{thm:f-item-3} of Theorem \ref{thm:construct-test-function}.  We must show that for each fixed compact $\Omega \subseteq H$, there is a fixed compact $\Omega ' \subseteq H$ so that the estimate \eqref{eq:int-_x-y} holds.

We apply Theorem \ref{thm:bilinear-forms-estimate} to $(F,G,H,\mathfrak{S})$.  This yields $(\mathcal{Z},\mathcal{G})$, which may be taken fixed and bounded.  Let $\Omega \subseteq H$ be any fixed compact subset.  Theorem \ref{thm:bilinear-forms-estimate} yields some fixed $C$ for which \eqref{eq:i-leq-c} holds.  We will show that the required bound \eqref{eq:int-_x-y} holds for any fixed compact $\Omega '$ that contains $\Omega \mathcal{Z}$.

To that end, we recall from the discussion following \eqref{eqn:f-sharp-support} that $f ^\sharp (g) \neq 0$ only if
\begin{equation*}
  g = 1 + \O(\h^\nu), \quad  \dist(g \tau, \tau) \ll \h^{1/2+\nu} \quad \text{ and } \quad \dist(g^{-1} \tau, \tau) \ll \h^{1/2+\nu}.
\end{equation*}
In particular, recalling the relationship \eqref{eq:h-:=-t} between $T$ and $\h$, we then have
\begin{equation*}
  g \in \mathcal{G} \cap \mathcal{C}_\tau(T^{-1/2}).
\end{equation*}
We recall also the $L^\infty$-bound \eqref{eqn:f-sharp-bound}.  It follows that
\begin{equation*}
  f(g) \ll T^{n(n+1)/2 + \kappa} 1_{\mathcal{G} \cap \mathcal{C}_\tau(T^{-1/2})}(g).
\end{equation*}
Let $\Psi_1, \Psi_2$ be as in the hypotheses of \eqref{eq:int-_x-y}.  Let $u_i^0$ denote the function supported on $\Omega \mathcal{Z}$ and given there by $|\Psi_i|$.  For $x \in \Omega$, we have
\begin{equation*}
  u_i(x) = \int _{z \in \mathcal{Z} } |\Psi_i(x z)| \, d z
  = \vol(\mathcal{Z}) |\Psi_i(x)|,
\end{equation*}
using that $\Psi_i$ has a unitary central character.  Since $\mathcal{Z}$ is fixed and open, we have $\vol(\mathcal{Z}) \gg 1$.  It follows that for $x,y \in \Omega$,
\begin{equation*}
  \overline{\Psi_1(x)} \Psi_2(y) f^\sharp(x^{-1} \gamma y)
  \ll
  T^{n(n+1)/2 + \kappa}
  u_1(x) u_2(y)
  1_{\mathcal{G} \cap \mathcal{C}_\tau(T^{-1/2})}(x^{-1} \gamma y).
\end{equation*}
We now apply the conclusion \eqref{eq:i-leq-c} of Theorem \ref{thm:bilinear-forms-estimate}, taking the second alternative in the minimum.  We obtain
\begin{equation*}
  \int _{x, y \in \Omega }
  \left\lvert \overline{\Psi_1(x)} \Psi_2(y) f^\sharp(x^{-1} \gamma y) \right \rvert
  \, d x \, d y
  \ll
  \frac{T^{n/2 - 1/2 + \kappa}}{d_H(\gamma)}
  \|u_1^0\|_{L^2(\Omega \mathcal{Z})}
  \|u_2^0\|_{L^2(\Omega \mathcal{Z})}.
\end{equation*}
For any fixed compact neighborhood $\Omega '$ of $\Omega \mathcal{Z}$, we have $\|u_i\|^0_{L^2(\Omega \mathcal{Z})} \leq \|\Psi_i\|_{L^2(\Omega ')}$.  The required estimate \eqref{eq:int-_x-y} follows.

\subsection{Statement of volume bounds}
The key ingredient in the proof of Theorem \ref{thm:bilinear-forms-estimate} is the following.
\begin{theorem}\label{thm:key-volume-bound}
  Let $\mathfrak{S}$ be a compact subset of $\mathfrak{g}^\wedge_{\stab}$.  There exists $C \geq 0$ and neighborhoods $\mathcal{Z} \subseteq Z_H$ and $\mathcal{G} \subseteq \bar{G}$ of the respective identity elements so that for each $\tau \in \mathfrak{S}$, $g \in \mathcal{G}$ and $r \geq 0$, we have
  \begin{equation*}
    \vol  \left\{ z \in \mathcal{Z} : \dist(g z \tau, H \tau) \leq r \right\} \leq C \frac{r}{d_H(g)}.
  \end{equation*}
\end{theorem}
We note that the conclusion remains valid upon replacing $\mathcal{Z}$ and $\mathcal{G}$ with smaller neighborhoods, so we may assume in applications of Theorem \ref{thm:key-volume-bound} that these neighborhoods are small (in particular, bounded), open and symmetric.

\subsection{The near-identity case}
As a first step towards the proof of Theorem \ref{thm:bilinear-forms-estimate}, we treat what amounts to the special case in which $\Omega$ and $\gamma$ are taken sufficiently close to the identity element:

\begin{lemma}\label{prop:scratch-research:let-f-be}
  Let $F$ be a local field of characteristic zero.  Let $(G,H)$ be a unitary GGP pair over $F$, with the usual accompanying notation.  Let $\mathfrak{S}$ be a compact subset of $\mathfrak{g}^\wedge_{\stab}$.  There exists $C \geq 0$ and bounded symmetric open neighborhoods $\mathcal{Z} \subseteq Z_H$, $\mathcal{G} \subseteq \bar{G}$ and $\mathcal{H} \subseteq \mathcal{H} '' \subseteq H$ of the respective identity elements, which may be chosen arbitrarily small, so that the following assertions hold.  Let $u_1^0, u_2^0 : H \rightarrow \mathbb{R}_{\geq 0}$ be nonnegative measurable functions.  Define the convolutions
  \begin{equation*}
    u_i(y) := \int _{z \in \mathcal{Z} } u_i^0(y z^{-1}) \, d z.
  \end{equation*}
  Let $\tau \in \mathfrak{S}$, $g \in \mathcal{G}$, and $r \geq 0$.  Then the integral
  \begin{equation*}
    I := \int _{
      x, y \in \mathcal{H} 
    }
    u_1(x) u _2 (y)
    1_{\mathcal{C}_\tau(r)}(x^{-1} g y)
    \,d x \,d y
  \end{equation*}
  satisfies the estimate
  \begin{equation*}
    I \leq C r ^{\dim (H)} \min \left( 1, \frac{r}{d_H(g)} \right) \|u_1^0\|_{L^2(\mathcal{H} '')} \|u_2^0\|_{L^2(\mathcal{H} '')}.
  \end{equation*}
\end{lemma}
\begin{proof}
  By transfer (\S\ref{sec:some-axioms}), it is equivalent to prove the same estimate, but with quantification over $F$ through $C$ restricted to fixed quantities.  In other words, we assume that the quantities $F,G,H$ and $\mathfrak{S}$ are fixed, and aim to show that the claimed estimate holds for some $C \ll 1$.  The content of this assumption is that implied constants in asymptotic notation may depend upon such quantities.

  We apply Theorem \ref{thm:key-volume-bound} to $\mathfrak{S}$.  Let $C'$, $\mathcal{Z} '$ and $\mathcal{G} '$ be the results of that application; we may and shall assume that they are fixed and that $\mathcal{Z} '$ has volume at most one.  Then for all $g \in \mathcal{G} '$, we have
  \begin{equation}\label{eq:vol-left-z}
    \vol  \left\{ z \in \mathcal{Z}' : g z \in H \mathcal{C}_\tau(r) \right\} \ll \frac{r}{d_H(g)}.
  \end{equation}
  (We have used here that $g z \in H C_\tau(r) \implies \dist(g z \tau, H \tau) \leq r$.)  Let $\mathcal{Z} \subseteq Z_H$, $\mathcal{G} \subseteq \bar{G}$ and $\mathcal{H} \subseteq \mathcal{H} ' \subseteq \mathcal{H} '' \subseteq H$ be fixed bounded symmetric neighborhoods of the respective identity elements such that
  \begin{equation*}
    \mathcal{G} \mathcal{H}' \subseteq \mathcal{G} ',
    \quad
    \mathcal{Z} \mathcal{Z} \subseteq \mathcal{Z} ',
    \quad \mathcal{H} \mathcal{Z} \subseteq \mathcal{H} ',
    \quad
    \mathcal{H} ' \mathcal{Z} ' \subseteq \mathcal{H} ''.
  \end{equation*}
  We claim that the required estimate holds with such choices.

  Let $\tau \in \mathfrak{S}$, $g \in \mathcal{G}$ and $r \geq 0$.  Cauchy--Schwarz gives $I \leq \sqrt{I_1 I_2}$, where
  \begin{equation*}
    I_1 = \int _{
      x, y \in \mathcal{H}
    }
    u_1(x)^2
    1_{\mathcal{C}_\tau(r)}(x^{-1} g y)
    \, d x \, d y,
    \quad 
    I_2 = \int _{
      x, y \in \mathcal{H}
    }
    u_2(y)^2
    1_{\mathcal{C}_\tau(r)}(x^{-1} g y)
    \, d x \, d y.
  \end{equation*}
  We reduce to verifying that
  \begin{equation}\label{eq:i_i-ll-r}
    I_i \ll r ^{\dim_F(H)} \min \left( 1, \frac{r}{d_H(g)} \right) \|u_i^0\|_{L^2(\mathcal{H} '')}.
  \end{equation}

  Since the set $\mathcal{C}_\tau(r)$ is symmetric, we have $1_{\mathcal{C}_\tau(r)}(x^{-1} g y) = 1_{\mathcal{C}_\tau(r)}(y^{-1} g^{-1} x)$.  The case $i=2$ of \eqref{eq:i_i-ll-r} is thus obtained from the case $i=1$ by replacing $u_1^0$ with $u_2^0$ and $g$ with $g^{-1}$.  Since $d_H(g^{-1}) = d_H(g)$, it will suffice to treat the case $i=1$.  We have
  \begin{equation*}
    I_1 = \int _{x \in \mathcal{H} } u_1(x)^2
    \vol(\mathcal{H} \cap g^{-1} x \mathcal{C}_\tau(r))
    \, d x.
  \end{equation*}
  By the ``principal bundle'' consequence of stability (see \S\ref{sec:cons-stab}), we have
  \begin{equation*}
    \vol(\mathcal{H} \cap g^{-1} x \mathcal{C}_\tau(r)) \ll r^{\dim_F(H)} 1_{\mathcal{H} \mathcal{C}_\tau(r)}(g^{-1} x),
  \end{equation*}
  where the volume is computed with respect to the Haar measure on $H$.  Therefore
  \begin{equation*}
    I_1 \ll r^{\dim_F(H)} I_1', \quad I_1' := \int _{x \in \mathcal{H} } u_1(x)^2 1_{\mathcal{H} \mathcal{C}_\tau(r)}(g^{-1} x) \, d x.
  \end{equation*}
  The trivial bound
  \begin{equation*}
    I_1' \leq \|u_1\|_{L^2(\mathcal{H})}^2 \leq \vol(\mathcal{Z})^2 \|u_1^0\|^2_{L^2(\mathcal{H} ')}
    \leq \|u_1^0\|^2_{L^2(\mathcal{H} ')}
  \end{equation*}
  gives $I_1 \ll r^{\dim_F(H)} \|u_1^0\|_{L^2(\mathcal{H} '')}$, yielding part of the desired bound \eqref{eq:i_i-ll-r}.  To obtain the other part, we must verify that
  \begin{equation}\label{eq:i_1-ll-fracrdhg}
    I_1' \ll
    \frac{r}{d_H(g)}
    \|u_1^0\|^2_{L^2(\mathcal{H} ')}.
  \end{equation}
  To that end, we insert the definition of $u_1$ into the definition of $I_1'$, giving a triple integral over $x \in \mathcal{H}$ and $z_1, z_2 \in \mathcal{Z}$.  We then apply the change of variables $x \mapsto x z_1$ followed by $z_2 \mapsto z_2 z_1$.  These operations yield
  \begin{align*}
    I_1'
    &=
      \int _{x \in \mathcal{H} } \int _{z_1, z_2 \in \mathcal{Z} }
      u_1^0(x z_1^{-1})
      u_1^0(x z_2^{-1})
      1_{\mathcal{H} \mathcal{C}_\tau(r)} (g^{-1} x)
      \, d z_2 \, d z_1 \, d x
    \\
    &\leq  \int _{x \in \mathcal{H} '} u_1^0(x) \left( \int _{z_2 \in \mathcal{Z} '} u_1^0(x z_2^{-1}) \, d z_2 \right)
      \left( \int _{z_1 \in \mathcal{Z} } 1_{\mathcal{H} \mathcal{C}_\tau(r)} ( g^{-1} x z_1)
      \, d z_1
      \right)
      \, d x.
  \end{align*}
  We apply H\"{o}lder with exponents $(2,2,\infty)$ to obtain
  \begin{equation*}
    I_1' \leq \vol(\mathcal{Z} ') \|u_2^0\|_{L^2(\mathcal{H} '')}^2 V,
  \end{equation*}
  where
  \begin{equation*}
    V := \sup _{x \in \mathcal{H} '} \vol \left\{ z \in \mathcal{Z} : g^{-1} x z \in \mathcal{H} \mathcal{C}_\tau(r) \right\}.
  \end{equation*}
  For each $x \in \mathcal{H} '$, we have $g^{-1} x \in \mathcal{G} \mathcal{H} ' \subseteq \mathcal{G} '$.  We may thus apply \eqref{eq:vol-left-z} to see that
  \begin{equation*}
    V \ll \frac{r}{d_H(g^{-1} x)}.
  \end{equation*}
  Since $g$ (resp. $x$) lies in the fixed precompact subset $\mathcal{G} \subseteq \bar{G}$ (resp. $\mathcal{H}' \subseteq H$), we have by \eqref{eqn:d-H-h1gh2} that
  \begin{equation*}
    d_H(g^{-1} x) \asymp d_H(g^{-1}) = d_H(g).
  \end{equation*}
  The required bound \eqref{eq:i_1-ll-fracrdhg} follows.
\end{proof}

\subsection{Proof of Theorem \ref{thm:bilinear-forms-estimate}}
The basic idea is to split the domain into small pieces, reducing to Lemma \ref{prop:scratch-research:let-f-be}.

We may and shall assume that $(F,G,H,\mathfrak{S})$ are fixed.  We apply Lemma \ref{prop:scratch-research:let-f-be} to these.  Let $(C,\mathcal{Z},\mathcal{G}',\mathcal{H} , \mathcal{H} '')$ be the results.  After shrinking $\mathcal{H}$ if necessary, we may find a bounded symmetric open neighborhood $\mathcal{G} \subseteq \bar{G}$ for which
\begin{equation}\label{eq:mathc-mathc-mathc}
  \mathcal{H} \mathcal{G} \mathcal{H} \subseteq \mathcal{G}'.
\end{equation}
Let $\Omega$ be as indicated.  We may and shall assume that all of these quantities are fixed.  It will suffice then to verify that the required estimate \eqref{eq:i-leq-c} holds for some $C \ll 1$.  The integral to be estimated depends only upon the restrictions of the $u_i$ to the set $\Omega \mathcal{Z}$, so we may and shall assume that the $u_i$ vanish outside that set.  We thereby reduce to establishing the modified estimate obtained by replacing $L^2(\Omega \mathcal{Z})$ with $L^2(H)$, namely,
\begin{align*}
  &\int _{x, y \in \Omega } u_1(x) u_2(y) 1_{\mathcal{G} \cap \mathcal{C}_\tau(r)}  (x ^{-1} \gamma y) \,d x \, d y
  \\
  &\quad \ll r^{\dim_F(H)} \min \left( 1, \frac{r}{ d_H(\gamma)} \right) \|u_1^0 \|_{L^2(H)} \|u_2^0 \|_{L^2(H)}.
\end{align*}

We may cover the fixed compact set $\Omega$ by $\O(1)$ many translates $s \mathcal{H}$, with $s \in \Omega$, of the fixed neighborhood $\mathcal{H}$ of the identity.  We thereby reduce to verifying for each $s,t \in \Omega$ the corresponding bound for the ``localized'' integrals
\begin{equation*}
  I_{s,t} := \int _{x, y \in \mathcal{H} } u_1(s x) u_2(t y) 1_{\mathcal{G} \cap \mathcal{C}_\tau(r)}(x^{-1} g y)  \,d x \,d y,
  \quad
  g := s^{-1} \gamma t
\end{equation*}

Suppose given $x,y \in \mathcal{H}$ for which $x^{-1} g y \in \mathcal{G} \cap \mathcal{C}_\tau(r)$.  Then by \eqref{eq:mathc-mathc-mathc}, we have
\begin{equation}\label{eq:g-in-x}
  g \in x (\mathcal{G} \cap \mathcal{C}_\tau(r)) y^{-1} \subseteq \mathcal{G} '.
\end{equation}
We may thus apply Lemma \ref{prop:scratch-research:let-f-be} with $u_1^0$ and $u_2^0$ replaced by their left translates
\begin{equation*}
  u_1^0(s \cdot) := [x \mapsto u_1^0(s x)], \quad u_2^0(t \cdot) := [y \mapsto u_2^0(t y)] 
\end{equation*}
to see that
\begin{equation*}
  I_{s,t}
  \ll
  r^{\dim_F(H)}
  \min \left( 1, \frac{r }{ d_H(g) } \right) \|u_1^0(s \cdot)\|_{L^2(\mathcal{H}'')} \|u_2^0(t \cdot)\|_{L^2(\mathcal{H}'')}.
\end{equation*}
By \eqref{eq:g-in-x}, we see in particular that $g$ lies in a fixed compact subset of $\bar{G}$.  Since $s$ and $t$ are contained in the fixed compact subset $\Omega$ of $H$, we have by \eqref{eqn:d-H-h1gh2} that
\begin{equation*}
  d_H(g) \asymp d_H(\gamma).
\end{equation*}
We conclude by noting that
\begin{equation*}
  \|u_1^0(s \cdot)\|_{L^2(\mathcal{H}'')}
  \leq
  \|u_1^0\|_{L^2(H)},
  \quad 
  \|u_2^0(t \cdot)\|_{L^2(\mathcal{H}'')}
  \leq
  \|u_2^0\|_{L^2(H)}.
\end{equation*}

\section{Volume bounds}\label{sec:volume-estimates}
The purpose of this section is to prove Theorem \ref{thm:key-volume-bound}.

\subsection{Reformulation}
Before diving into the proof of Theorem \ref{thm:key-volume-bound}, it will be convenient to reformulate it using asymptotic notation and terminology:

\begin{theorem}[Reformulation of Theorem \ref{thm:key-volume-bound}]\label{thm:key-volume-bound-2}
  Let $(G,H)$ be a fixed unitary GGP pair over a fixed local field $F$ of characteristic zero.  Let $\tau$ belong to a fixed compact subset of $\mathfrak{g}^\wedge_{\stab}$.  Let $g$ be an element of $G$ with $g \simeq 1$.  Let $\mathcal{Z}$ be a subset of $Z_H$ that is contained in every fixed neighborhood of the identity.  Then for each $r \geq 0$,
  \begin{equation}\label{eqn:volume-estimate-with-H}
    \vol \left\{ z \in \mathcal{Z} : \dist_F(g z \tau, H \tau) \leq r \right\} \ll \frac{r}{d_H(g)}.
  \end{equation}
\end{theorem}

\begin{proof}[Verification of the equivalence between Theorems \ref{thm:key-volume-bound-2} and \ref{thm:key-volume-bound}]
  First of all, since the volume bound for a given set $\mathcal{Z}$ implies the same for any smaller set, Theorem \ref{thm:key-volume-bound} is obviously equivalent to the following:
  \begin{itemize}
  \item Let $(G,H)$ be a unitary GGP pair over a local field $F$ of characteristic zero.  Let $\mathfrak{S}$ be a compact subset of $\mathfrak{g}^\wedge_{\stab}$.  There exists $C \geq 0$ and neighborhoods $\tilde{\mathcal{Z}} \subseteq Z_H$ and $\mathcal{G} \subseteq \bar{G}$ of the respective identity elements so that for each $\tau \in \mathfrak{S}$, $g \in \mathcal{G}$, $r \geq 0$ and $\mathcal{Z} \subseteq \tilde{\mathcal{Z}}$, we have
    \begin{equation}\label{eq:vol-left-z-1}
      \vol  \left\{ z \in \mathcal{Z} : g z \in \dist(g z \tau, H \tau) \leq r \right\} \leq C \frac{r}{d_H(g)}.
    \end{equation}
  \end{itemize}
  By transfer (\S\ref{sec:some-axioms}), the above is equivalent to the same assertion, but with quantification over $G$ through $\mathcal{G}$ restricted to fixed quantities.  The conclusion is filtered with respect to $(C, \tilde{\mathcal{Z}}, \mathcal{G})$, so by idealization (\S\ref{sec:some-axioms}), we may commute quantification over those with that over $(\tau, g, r, \mathcal{Z})$.  We obtain the following further reformulation:
  \begin{itemize}
  \item Let $\mathfrak{S}$ be a fixed compact subset of $\mathfrak{g}^\wedge_{\stab}$.  Let $\tau \in \mathfrak{S}$.  Let $g \in \bar{G}$ and $r \geq 0$.  Let $\mathcal{Z} \subseteq Z_H$.  There is a fixed $C \geq 0$ and there are fixed neighborhoods $\tilde{\mathcal{Z}} \subseteq Z_H$ and $\mathcal{G} \subseteq \bar{G}$ of the respective identity elements so that if $g \in \mathcal{G}$ and $\mathcal{Z} \subseteq \tilde{\mathcal{Z}}$, then \eqref{eq:vol-left-z-1} holds.
  \end{itemize}
  The equivalence of this last assertion with Theorem \ref{thm:key-volume-bound-2} follows from elementary logic.  For instance, we use the equivalence of the following two statements:
  \begin{itemize}
  \item There is a $\tilde{\mathcal{Z}}$ so that if $\mathcal{Z} \subseteq \tilde{\mathcal{Z}}$, then $(\dotsb)$.
  \item If $\mathcal{Z} \subseteq \tilde{\mathcal{Z}}$ for all $\tilde{\mathcal{Z}}$, then $(\dotsb)$.
  \end{itemize}
\end{proof}

\subsection{Reduction to a size estimate}\label{sec:reduct-size-estim}
We may reduce the proof of Theorem \ref{thm:key-volume-bound-2} to that of the following.
\begin{theorem}\label{thm:key-z-g-bound-G-tau}
  Let $(G,H)$ be a fixed unitary GGP pair over a fixed local field $F$ of characteristic zero.  Let $\tau$ belong to a fixed compact subset of $\mathfrak{g}^\wedge_{\stab}$.  Let $g \in \bar{G}_\tau$ with $g \simeq 1$.  Let $z \in Z_H$ with $z \simeq 1$.  Then
  \begin{equation}\label{eqn:z-minus-1-Ad-g-minus-1-bound}
    |z - 1| \cdot |\Ad(g) - 1|
    \ll \dist(g z \tau, H \tau).
  \end{equation}
\end{theorem}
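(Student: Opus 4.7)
The plan is to prove \eqref{eqn:z-minus-1-Ad-g-minus-1-bound} by reducing to the linear algebra input of Theorem \ref{thm:endgame} via Taylor expansion and the implicit function theorem. First, I would parameterize: write $g = \exp(x)$ with $x \in \mathfrak{g}_\tau$ chosen orthogonal to $\mathfrak{z}$ (so $|\Ad(g)-1| \asymp |x| = |x \bmod \mathfrak{z}|$), and $z = \exp(w)$ with $w \in \mathfrak{z}_H$ (so $|z-1| \asymp |w|$). The inequality to establish becomes
\[
|x| \cdot |w| \ll \dist(\exp(x) \exp(w) \tau,\, H \tau).
\]

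Next, I would set up local coordinates for $H\tau$ near $\tau$. Stability gives $\mathfrak{h}_\tau = 0$, hence $\mathfrak{h} \to [\mathfrak{h}, \tau]$, $y \mapsto [y,\tau]$, is an isomorphism; choose a complement $V$ to $[\mathfrak{h},\tau]$ in $\mathfrak{g}^\wedge$. The map $\phi:\mathfrak{h} \times V \to \mathfrak{g}^\wedge$, $(y, v) \mapsto \exp(y)\tau + v$, is a local diffeomorphism near $(0,0)$, and for $p$ close to $\tau$ we have $\dist(p, H\tau) \asymp |\pi_V\phi^{-1}(p)|$. Applied to $p = gz\tau$, the quantity to lower bound is thus the function $v_*(x,w) := \pi_V \phi^{-1}(\exp(x)\exp(w)\tau)$.

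The crucial computation is the mixed second-order Taylor expansion of $v_*$ at the origin. Since $\exp(x) \in G_\tau$, we have $\exp(x)\exp(w)\tau = \exp(e^{\ad x} w)\tau$; moreover $v_*(0,w)=0$ (as $\exp(w)\tau \in H\tau$) and $v_*(x,0) = 0$ (as $\exp(x)\tau = \tau \in H\tau$), so every monomial in the Taylor expansion must contain at least one factor from each of $x$ and $w$. A short calculation using the chain rule, the identity $\partial_x(e^{\ad x} w)|_{x=0} = [\delta x, w]$ that vanishes at $w=0$, and the fact that the $y_*$-correction lies in $[\mathfrak{h},\tau]$ and dies under $\pi_V$, yields
\[
v_*(x, w) = \pi_V\bigl([[x, w], \tau]\bigr) + O\bigl(|x| \cdot |w| \cdot (|x|+|w|)\bigr).
\]
The linear algebra input now enters. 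Using $[x,\tau]=0$ and Jacobi, $[x,[w,\tau]] = [[x,w],\tau]$, so Theorem \ref{thm:endgame} precisely asserts that the linear map $\mathfrak{g}_\tau \to V$, $x \mapsto \pi_V([[x, z_0],\tau])$ (for $z_0$ a generator of $\mathfrak{z}_H$), has kernel exactly $\mathfrak{z}$. By continuous dependence of this map on $\tau$ and compactness of the parameter set, this injectivity promotes to a uniform quantitative bound
\[
\bigl|\pi_V([[x, w], \tau])\bigr| \gg |x \bmod \mathfrak{z}| \cdot |w|.
\]

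Combining this with the Taylor expansion, the leading term dominates the error for $|x|, |w|$ small, giving $|v_*(x,w)| \gg |x|\cdot |w|$, and hence $\dist(gz\tau, H\tau) \asymp |v_*(x,w)| \gg |\Ad(g)-1| \cdot |z-1|$ as required. The main obstacle is step four: promoting Theorem \ref{thm:endgame} -- stated pointwise for matrix algebras -- to a uniform quantitative estimate over the compact stable locus in the unitary (possibly nonsplit) setting. The pointwise statement passes to our setting via the eigenvalue characterization of stability (Lemma \ref{lem:stability-summary}), and compactness then yields uniformity; verifying this transfer carefully, together with isolating the mixed $\partial_x \partial_w$ term cleanly in the Taylor expansion, constitutes the technical heart of the argument.
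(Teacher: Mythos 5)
Your argument is correct in substance, and it rests on the same two pillars as the paper's proof -- the manipulation $gz\tau = \exp(\Ad(g)\log z)\tau$ and the Lie-algebraic transversality of Theorem \ref{thm:stability-consequence-for-1-H} (= Theorem \ref{thm:endgame}), made uniform over the compact stable locus by an implicit-function-theorem/compactness argument -- but the middle of your route is genuinely different from the paper's. The paper never linearizes in $g$: it proves the group-level two-sided estimate $|\Ad(g)-1| \asymp |\rho_\tau([\Ad(g)z,\tau])|$ (Lemma \ref{lem:appl-impl-funct}) by applying the implicit function theorem to the bundle map $\kappa(g,\tau) = (g\cdot\tau,\rho_{g\cdot\tau}(g\cdot[z,\tau]))$, and then converts the orbit distance $|gz\tau - h\tau|$ into linear algebra via exponential coordinates on $G\cdot\tau$ adapted to the complement $W(g)=\mathfrak{h}+\Ad(g)\mathfrak{z}_H + W_1$, whose quantitative transversality with $\mathfrak{g}_\tau$ requires its own Grassmannian-compactness argument (Lemmas \ref{lem:stab-cons-2}, \ref{lem:exp-tau-W}), with Theorem \ref{thm:stability-consequence-for-1-H} invoked again in the $g\to 1$ limit. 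Your joint Taylor expansion in $(x,w)$ isolates the bilinear term $\pi_V([[x,w],\tau]) = \pi_V([x,[w,\tau]])$ explicitly and absorbs the cubic error using $g,z\simeq 1$, which lets you bypass the analogues of Lemmas \ref{lem:stab-cons-2} and \ref{lem:exp-tau-W}; the trade-off is that you must supply the uniformity yourself at three points: (i) the complement $V=V(\tau)$ must be chosen canonically (e.g.\ the orthogonal complement of $[\mathfrak{h},\tau]$, exactly the paper's $\rho_\tau$) so that $P_{V(\tau)}$ varies continuously and the lower bound $|P_V([[x,z_0],\tau])|\gg|x \bmod \mathfrak{z}|$ follows by compactness of $\{(\tau,x):\tau\in\Omega,\ x\in\mathfrak{g}_\tau,\ |x|=1,\ x\perp\mathfrak{z}\}$; (ii) the claim $\dist(p,H\tau)\asymp|\pi_V\phi^{-1}(p)|$ needs the stability consequence of \S\ref{sec:cons-stab} that any near-minimizing $h$ satisfies $h\simeq 1$ (the paper makes this reduction explicitly at the start of its proof), plus uniform bi-Lipschitz control of $\phi$; and (iii) writing $g=\exp(x)$ with $x\in\mathfrak{g}_\tau$, $x\perp\mathfrak{z}$, $|\Ad(g)-1|\asymp|x|$ is justified uniformly, e.g.\ via Lemma \ref{lem:approximate-x-prime-various}. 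These are routine and your "technical heart" worry about transferring Theorem \ref{thm:endgame} to the unitary setting is unfounded: Theorem \ref{thm:stability-consequence-for-1-H} is already stated for unitary GGP pairs over $F$ and its proof (in \S\ref{sec:some-invar-theory}) is independent of the volume estimates, so you may quote it directly.
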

\begin{proof}[Proof of Theorem \ref{thm:key-volume-bound-2}, assuming Theorem \ref{thm:key-z-g-bound-G-tau}]
  Our hypotheses involve a subset $\mathcal{Z} \subseteq Z_H$ that is contained in every fixed neighborhood of the identity.  We will use in what follows that this condition on $\mathcal{Z}$ is equivalent to the implication $z \in \mathcal{Z} \implies z \simeq 1$.
  
  We first reduce to verifying that the required volume bound \eqref{eqn:volume-estimate-with-H} holds under the further hypothesis that
  \begin{equation}\label{eqn:hypothesis-g-tau-tau-r}
    \dist_F(g \tau , \tau) \ll r.
  \end{equation}
  Supposing for the moment that it does hold under that hypothesis, let us verify the general case of \eqref{eqn:volume-estimate-with-H}.  In doing so, we may assume that there exists $z_0 \in \mathcal{Z}$ and $h_0 \in H$ with $\dist_F(g z_0 \tau, h_0 \tau ) \leq r$, since otherwise the set whose volume we must estimate is empty.  Since $g \simeq 1$ and $z_0 \simeq 1$, we then have $h_0 \tau \simeq g z_0 \tau \simeq \tau$.  By stability, it follows that $h_0 \simeq 1$.  Setting $g' := h_0^{-1} g z_0 \in \bar{G}$, we see by construction that
  \[
    \dist_F(g' \tau, \tau) = \dist_F(h_0^{-1} g z_0 \tau, \tau) \asymp \dist_F(g z_0 \tau, h_0 \tau) \leq r,
  \]
  so $g'$ satisfies our hypothesis \eqref{eqn:hypothesis-g-tau-tau-r}.  We see from \eqref{eqn:d-H-h1gh2} that $d_H(g') \asymp d_H(g)$.  Suppose now that $z \in \mathcal{Z}$ satisfies $\dist_F(g z \tau, H \tau) \leq r$.  Then for some $r' \asymp r$, the element $z' := z_0^{-1} z$ satisfies $\dist_F(g' z' \tau, H \tau) \leq r'$.  Setting $\mathcal{Z} ' := \{z_0^{-1} z : z \in \mathcal{Z}\}$, we see that the hypotheses for \eqref{eqn:volume-estimate-with-H} and \eqref{eqn:hypothesis-g-tau-tau-r} are satisfied by $(g',\mathcal{Z} ', r')$, hence
  \begin{align*}
    \vol
    \left\{
    z \in \mathcal{Z} : \dist_F(g z \tau, H \tau) \leq r
    \right\}
    &\leq
      \vol
      \left\{
      z' \in \mathcal{Z}' : \dist_F(g' z' \tau, H \tau) \leq r'
      \right\} \\
    &\ll
      \frac{r '}{d_H(g')}
      \asymp \frac{r}{d_H(g)},
  \end{align*}
  as required.

  We now reduce further to the case that
  \begin{equation*}
    g \tau = \tau,
  \end{equation*}
  i.e., $g \in \bar{G}_\tau$.  Suppose that $g \in \bar{G}$ satisfies $g \simeq 1$ and \eqref{eqn:hypothesis-g-tau-tau-r}.  By Lemma \ref{lem:submersivity-orbit-map-regular-elt}, we may find $u \in \bar{G}$ so that
  \[
    u g \tau = \tau, \quad |\Ad(u) - 1|_F \ll r.
  \]
  Set
  \begin{equation*}
    g' := u g,
  \end{equation*}
  so that $g' \tau = \tau$.  For $z \in \mathcal{Z}$, we have
  \[
    \dist_F(g' \tau, g \tau) \ll |\Ad(u) - 1|_F \ll r,
  \]
  hence for some $r' \asymp r$, we have the implication
  \[
    \dist(g z \tau, H \tau) \leq r \quad \implies \quad \dist(g' z \tau, H \tau) \leq r'.
  \]
  We may assume that $r \lll d_H(g)$, because otherwise the required estimate \eqref{eqn:volume-estimate-with-H} follows from the fact that $\vol \mathcal{Z} \ll 1$.  Then $|\Ad(u) - 1|_F \lll d_H(g)$, so \eqref{eqn:d-H-Z-compare-u-g-g} implies that $d_H(g') \asymp d_H(g)$.  The estimate \eqref{eqn:volume-estimate-with-H} for $(g',\mathcal{Z},r')$ thus implies the same estimate for $(g,\mathcal{Z}, r)$.  This completes the proof of the claimed reduction.

  It remains to consider the case $g \in \bar{G}_\tau$.  It follows then from \eqref{eqn:d-H-Z-vs-dist-G-mod-Z} and \eqref{eqn:z-minus-1-Ad-g-minus-1-bound} that
  \[
    |z - 1|_F \cdot d_H(g) \ll |z - 1|_F \cdot |\Ad(g) - 1|_F \ll \dist_F(g z \tau, H \tau).
  \]
  Since $\dim_F \mathfrak{z}_H = 1$, we have for $r' \lll 1$ that
  \[\vol \{z \in \mathcal{Z} : |z - 1|_F \leq r'\} \ll r'.\]
  We may thus majorize the LHS of \eqref{eqn:volume-estimate-with-H} by $r / d_H(g)$, as required.  This completes the proof.
\end{proof}

The proof of Theorem \ref{thm:key-z-g-bound-G-tau} occupies the remainder of \S\ref{sec:volume-estimates}.

\subsection{A Lie-algebraic result}
Recall that
\begin{equation*}
  \mathfrak{g}_\xi := \{x \in \mathfrak{g} : [x,\xi] = 0\}
\end{equation*}
denotes the centralizer of $\xi \in \mathfrak{g}^\wedge$.  A key ingredient in the proof of Theorem \ref{thm:key-z-g-bound-G-tau} is the following Lie-algebraic result.
\begin{theorem}\label{thm:stability-consequence-for-1-H}
  Let $(G,H)$ be a unitary GGP pair over a field $F$ of characteristic zero.  Let $\tau \in \mathfrak{g}^*_{\stab}$, $x \in \mathfrak{g}_\tau - \mathfrak{z}$ and $0 \neq z \in \mathfrak{z}_H$.  Then
  \begin{equation}\label{eqn:x-1-H-tau-not-in-brak-h-tau}
    [x,[z,\tau]] \notin [\mathfrak{h},\tau].
  \end{equation}
\end{theorem}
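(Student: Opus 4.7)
The plan is to prove the contrapositive: if $[x,[z,\tau]] \in [\mathfrak{h},\tau]$, then $x \in \mathfrak{z}$. First I would reformulate the claim as the nondegeneracy of a bilinear form. Using the basic identity $\langle a, [b,\xi]\rangle = -\langle [a,b], \xi\rangle$ and the fact that $[\mathfrak{h},\tau]$ pairs trivially with exactly $\mathfrak{h}_{\tau_H}$ inside $\mathfrak{h}$ (since the restriction of the pairing to $\mathfrak{h}\times \mathfrak{g}^\wedge$ factors through $\mathfrak{h}\times \mathfrak{h}^\wedge$, where the centralizer $\mathfrak{h}_{\tau_H}$ is the annihilator of $[\mathfrak{h},\tau_H]$), the membership $[x,[z,\tau]]\in[\mathfrak{h},\tau]$ is equivalent to $B(h,x):=\langle h,[x,[z,\tau]]\rangle = 0$ for all $h\in\mathfrak{h}_{\tau_H}$. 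One checks $B$ is well-defined on $\mathfrak{g}_\tau/\mathfrak{z}$ (central $x$ kills $[z,\tau]$), and that by Lemma~\ref{lem:stab-impl-reg} the two sides $\mathfrak{h}_{\tau_H}$ and $\mathfrak{g}_\tau/\mathfrak{z}$ both have $F$-dimension $n$. The theorem is thus equivalent to $\det B\neq 0$.

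Next I would choose the natural power-series bases: since $\tau$ is regular, $\mathfrak{g}_\tau = F[\tau] = \mathrm{span}(I,\tau,\ldots,\tau^n)$, so $\{\tau,\tau^2,\ldots,\tau^n\}$ is a basis of $\mathfrak{g}_\tau/\mathfrak{z}$; similarly $\{I_H,\tau_H,\ldots,\tau_H^{n-1}\}$ is a basis of $\mathfrak{h}_{\tau_H}$. In these bases the entries of the matrix of $B$ are polynomial functions of the eigenvalues $\lambda_{\pi,1},\ldots,\lambda_{\pi,n+1}$ of $\tau$ and $\lambda_{\sigma,1},\ldots,\lambda_{\sigma,n}$ of $\tau_H$, homogeneous of controllable bidegree. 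I would then argue as in a Vandermonde-style ``divisibility and homogeneity'' calculation, invoking the invariant-theoretic machinery of \cite[\S14]{nelson-venkatesh-1}: whenever an eigenvalue of $\tau$ coincides with one of $\tau_H$ the relative orbit $\mathcal{O}^{\lambda,\mu}$ is no longer an $H$-torsor, and an infinitesimal version of the torsor degeneration forces $\det B$ to vanish along that hypersurface. Iterating over all pairs $(i,j)$ and matching degrees shows
\begin{equation*}
\det B \;=\; C \cdot \prod_{i,j}(\lambda_{\pi,i}-\lambda_{\sigma,j})
\end{equation*}
for a universal constant $C\in F$ independent of $\tau$, after suitable normalization.

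The main obstacle is pinning down that the scalar $C$ is nonzero. The plan is to verify this by exhibiting one explicit stable $\tau$ for which $B$ can be computed by hand and shown to have nonzero determinant. Following the suggestion in the discussion of the endgame, I would take $\tau$ to be regular nilpotent in rational canonical form, arranged so that $\tau_H$ is regular semisimple with eigenvalues the $n$th roots of unity (this is possible precisely because $(G,H)$ is a unitary GGP pair, which is where the orthogonal case requires a separate argument). For such $\tau$ the element $[z,\tau]$ has an essentially explicit shape in terms of the last row/column of $\tau$, and $[x,[z,\tau]]$ for $x = \tau^j$ may be computed directly; the resulting matrix of $B$ in the basis above becomes a discrete Fourier/Vandermonde-type matrix in the roots of unity, whose determinant is manifestly nonzero.

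Finally, combining the nonvanishing of $C$ with stability of $\tau$ (which is exactly the condition $\prod_{i,j}(\lambda_{\pi,i}-\lambda_{\sigma,j})\neq 0$ via Lemma~\ref{lem:stability-summary}), one obtains $\det B\neq 0$, so $B$ is nondegenerate. Thus if $B(h,x)=0$ for all $h\in\mathfrak{h}_{\tau_H}$, then $x\equiv 0$ in $\mathfrak{g}_\tau/\mathfrak{z}$, i.e.\ $x\in\mathfrak{z}$. This completes the proof of \eqref{eqn:x-1-H-tau-not-in-brak-h-tau}.
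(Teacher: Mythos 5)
Your proposal is correct in outline and follows essentially the same route as the paper: reduce \eqref{eqn:x-1-H-tau-not-in-brak-h-tau} to the nondegeneracy of the pairing $(x,y)\mapsto\langle [z,[x,y]],\tau\rangle$ on $\mathfrak{g}_\tau/\mathfrak{z}\otimes\mathfrak{h}_{\tau_H}$ (Theorem \ref{thm:stab-characterization-1}), compute its determinant in the power bases, show by ``divisibility and homogeneity'' that it is a constant multiple of the resultant $\prod_{i,j}(\lambda_i-\mu_j)$ (Lemmas \ref{lem:Delta-0-divides-Delta}--\ref{lem:Delta-constant-multiple-Delta0}), and pin down the constant at a regular nilpotent $\tau$ with $\tau_H$ regular semisimple (Lemma \ref{lem:verif-determ-ident}). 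The only substantive caveats are that your divisibility step (``torsor degeneration forces $\det B$ to vanish'') compresses the paper's concrete work — the common-eigenvector analysis, the separate treatment of irregular $\tau$ or $\tau_H$, and the radical-ideal point needed to pass from vanishing on the non-stable locus to divisibility in $R$ — and that in the paper's coordinates the test matrix comes out sparse (one contributing permutation, entries $\pm 2$) rather than Vandermonde-type, though either computation yields the needed nonvanishing.
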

The proof of Theorem \ref{thm:stability-consequence-for-1-H} occupies \S\ref{sec:some-invar-theory}.  When $F$ is a local field (so that the Pontryagin dual $\mathfrak{g}^\wedge$ is defined), the same conclusion holds for $\tau \in \mathfrak{g}^\wedge_{\stab}$ in view of the equivariant identifications noted in \S\ref{sec:stability-defn}.

The conclusion \eqref{eqn:x-1-H-tau-not-in-brak-h-tau} may be reformulated as follows:
\begin{equation*}
  [x,z] \notin \mathfrak{h} + \mathfrak{g}_\tau.
\end{equation*}
Indeed, since $x$ centralizes $\tau$, we have $[x,[z,\tau]] = [[x,z], \tau]$, so \eqref{eqn:x-1-H-tau-not-in-brak-h-tau} asserts that $[x,z]$ is not congruent modulo $\mathfrak{h}$ to an element of $\mathfrak{g}_\tau$.  We will pass between the two formulations freely.  We note also that, since $\mathfrak{z}_H$ is a one-dimensional vector space over $F$, it is equivalent to ask that the conclusion holds for some given nonzero element $z$.

\subsection{Application of the implicit function theorem}\label{sec:appl-impl-funct}
As in \S\ref{sec:orth-compl-tau-coord}, we fix an ``orthogonal complement'' assignment $\flat$ on the set of subspaces of $\mathfrak{g}^\wedge$.  We fix a basis element $0 \neq z \in \mathfrak{z}_H$.

For $\tau \in \mathfrak{g}^\wedge_{\stab}$, we denote by
\begin{equation*}
  \rho_\tau : \bar{G}_\tau \rightarrow \mathfrak{g}^\wedge
\end{equation*}
the map given by the composition
\begin{equation*}
  \bar{G}_\tau \rightarrow [\mathfrak{g},\tau] / [\mathfrak{h},\tau] \cong [\mathfrak{h},\tau]^{\flat} \subseteq \mathfrak{g}^\wedge,
\end{equation*}
where the first arrow is given by
\begin{equation*}
  g \mapsto [\Ad(g) z, \tau] \pmod{[\mathfrak{h},\tau]}.
\end{equation*}

\begin{lemma}\label{lem:appl-impl-funct}
  Let $\tau$ belong to a fixed compact subset of $\mathfrak{g}^\wedge_{\stab}$.  There is a fixed neighborhood $\mathcal{G} \subseteq \bar{G}$ of the identity element so that for all $g \in \mathcal{G} \cap \bar{G}_\tau$, we have
  \begin{equation}\label{eqn:compare-dist-G-mod-Z-via-g-tau-2}
    |\Ad(g) - 1|
    \asymp
    |\rho_\tau(g)|.
  \end{equation}
\end{lemma}
\begin{proof}
  We observe first that for each $\tau \in \mathfrak{g}^\wedge_{\stab}$, the map $\rho_\tau$ has injective derivative at the identity.  Indeed, it is equivalent to check that the linear map
  \begin{equation*}
    \mathfrak{g}_\tau / \mathfrak{z} \rightarrow  [\mathfrak{g},\tau] / [\mathfrak{h},\tau]
  \end{equation*}
  \begin{equation}\label{eq:x-mapsto-x-tau-1_h-pmodmathfrakh-tau-}
    x \mapsto [[x,z],\tau] \pmod{[\mathfrak{h},\tau]}
  \end{equation}
  has trivial kernel, i.e., that for all $0 \neq x \in \mathfrak{g}_\tau / \mathfrak{z}$, we have $[[x,z],\tau] \notin [\mathfrak{h},\tau]$.  This is precisely the conclusion of Theorem \ref{thm:stability-consequence-for-1-H} in view of the identity $[x,[z,\tau]] = [[x,z],\tau]$.

  We now apply Lemma \ref{lem:FTOC-families-nonstandard} with $(S,V,V') = (\mathfrak{g}^\wedge_{\stab}, \End(\mathfrak{g}), \mathfrak{g}^\wedge)$, $X = \{(g,s) : s \in S, g \in \bar{G}_s\}$, $X' = \mathfrak{g}^\wedge \times S$ and $f : X \rightarrow X'$ given by $f(g,s) := (\rho_s(g), s)$, with $X$ embedded in $V$ via the adjoint representation.  The submersivity of $X \rightarrow S$ follows from the fact that ``stable'' implies ``regular'' (Lemma \ref{lem:stab-impl-reg}), the definition of ``regular'' (\S\ref{sec:prelim-reductive-groups}), and the rank theorem.  Let $\mathfrak{S} \subseteq \mathfrak{g}^\wedge_{\stab}$ be a fixed compact set that contains $\tau$, and take $E := \mathfrak{S} \times \{1\}$.  Lemma \ref{lem:FTOC-families-nonstandard} yields a fixed neighborhood $U$ of $E$ such that, in particular, \eqref{eqn:compare-dist-G-mod-Z-via-g-tau-2} holds whenever $g \in U$.  Since $\mathfrak{S}$ is compact, we may find a fixed neighborhood $\mathcal{G} \subseteq \bar{G}$ of the identity such that for $X \cap (\bar{G} \times \mathfrak{S}) \subseteq U$.  The required conclusion holds with this choice of $\mathcal{G}$.
\end{proof}

\subsection{Uniform Lie-algebraic transversality}
For the following lemma, let $\mathcal{Y}$ denote the manifold consisting of pairs $(Y_1,Y_2)$ of $F$-subspaces $Y_1, Y_2$ of $\mathfrak{g}$ with $\dim_F(Y_1) = n+1$ and $\dim_F(Y_2) = n^2 + 1$.  Thus $\mathcal{Y}$ is a product of two Grassmannians.  In particular, $\mathcal{Y}$ is compact.  Let $\mathcal{Y}_0 \subseteq \mathcal{Y}$ denote the open submanifold consisting of pairs $(Y_1,Y_2)$ for which $Y_1 \cap Y_2 = \{0\}$.
\begin{lemma}\label{eqn:uniform-transversality-closure}
  Let $\mathfrak{S}$ be a compact subset of $\mathfrak{g}^\wedge_{\stab}$.  There is a neighborhood $\mathcal{G} \subseteq \bar{G}$ of the identity with the following properties.
  \begin{enumerate}[(i)]
  \item For all $\tau \in \mathfrak{S}$ and $1 \neq g \in \mathcal{G} \cap \bar{G}_\tau$, we have
    \begin{equation}\label{eq:adg-mathfr-nots}
      \Ad(g) \mathfrak{z}_H \not\subseteq \mathfrak{h} + \mathfrak{g}_\tau.
    \end{equation}
    In particular, the space
    \begin{equation}\label{eq:wg-:=-mathfrakh}
      W(g) := \mathfrak{h} + \Ad(g) \mathfrak{z}_H.
    \end{equation}
    is $(n^2+1)$-dimensional and intersects $\mathfrak{g}_\tau$ trivially, so the set of pairs
    \begin{equation*}
      \mathcal{X} := \{(\mathfrak{g}_\tau, W(g)) : \tau \in \mathfrak{S}, 1 \neq g \in \mathcal{G} \}
    \end{equation*}
    is a subset of $\mathcal{Y}_0$.
  \item Let $\bar{\mathcal{X}}$ denote the closure in $\mathcal{X}$ in the compact manifold $\mathcal{Y}$.  Then
    \begin{equation}\label{eqn:bar-X-inside-Y-0}
      \bar{\mathcal{X}} \subseteq \mathcal{Y}_0.
    \end{equation}
  \end{enumerate}
\end{lemma}
\begin{proof}
  By transfer (\S\ref{sec:some-axioms}), we may assume that $(F,G,H,\mathfrak{S})$ are fixed.  By Lemma \ref{lem:appl-impl-funct}, there is a neighborhood of the identity $\mathcal{G} \subseteq \bar{G}$ so that for all $\tau \in \mathfrak{S}$ and $g \in \mathcal{G} \cap \bar{G}_\tau$, the estimate \eqref{eqn:compare-dist-G-mod-Z-via-g-tau-2} holds.  By shrinking $\mathcal{G}$ if necessary, we may and shall assume that it is compact.

  We fix $0 \neq z \in \mathfrak{z}_H$, and define $\rho_\tau$ as in \S\ref{sec:appl-impl-funct}.  For $\tau \in \mathfrak{S}$ and $1 \neq g \in \mathcal{G} \cap \bar{G}_\tau$, the estimate \eqref{eqn:compare-dist-G-mod-Z-via-g-tau-2} implies in particular that $\rho_\tau(g) \neq 0$, i.e., $[\Ad(g) z, \tau] \notin [\mathfrak{h},\tau]$, or equivalently, $\Ad(g) z \notin \mathfrak{h} + \mathfrak{g}_\tau$.  Thus \eqref{eq:adg-mathfr-nots} holds.

  It remains to verify \eqref{eqn:bar-X-inside-Y-0}.  We appeal to the sequential criterion for closedness.  Let $(\tau_j, g_j)$ be a sequence of pairs with $\tau_j \in \mathfrak{S}$ and $1 \neq g_j \in \mathcal{G} \cap \bar{G}_{\tau_j}$ such that the sequence of pairs $(g_{\tau_j}, W(g_j))$ has a limit in $\mathcal{Y}$.  We must check that that this limit lies in $\mathcal{Y}_0$.  Since $\mathfrak{S}$ and $\mathcal{G}$ are compact, we may pass to a subsequence of the sequences $\tau_j$ and $g_j$ to reduce to the case that these sequence tend to respective limits $\tau \in \mathfrak{S}$ and $g \in \mathcal{G}$.  If $g \neq 1$, then the pair $(\mathfrak{g}_{\tau_j}, W(g_j)) \in \mathcal{X}$ tends to $(\mathfrak{g}_\tau, W(g)) \in \mathcal{X} \subseteq \mathcal{Y}_0$.  It remains to consider the case $g = 1$.  Let $S$ denote the unit sphere in $\mathfrak{g}/\mathfrak{z}$ with respect to some fixed norm.  For $j$ large enough, we may write $g_j = \exp(t_j x_j)$, where $t_j \in F^\times$ and $x_j \in S$.  Thus $t_j \rightarrow 0$.  We may assume that $x_j$ tends to some limit $x \in S$.  Since $x_j \in \mathfrak{g}_{\tau_j}/\mathfrak{z}$, we have $x \in \mathfrak{g}_{\tau}/\mathfrak{z}$.  Since $x \neq 0$, we know from Theorem \ref{thm:stability-consequence-for-1-H} that
  \begin{equation}\label{eq:x-1_h-tau} [[x,z],\tau] \notin [\mathfrak{h},\tau].
  \end{equation}
  In particular, $[x,z] \notin \mathfrak{h}$.  Since $z$ spans the one-dimensional $F$-vector space $\mathfrak{z}_H \subseteq \mathfrak{h}$, we have
  \begin{equation*}
    W(g_j) = \mathfrak{h} + F \Ad(g) z = \mathfrak{h} + F y_j,
  \end{equation*}
  where
  \begin{equation*}
    y_j := t_j^{-1} (\Ad(g_j) z - z).
  \end{equation*}
  By the Taylor expansion $\Ad(g_j) = \exp(\ad(t_j x_j)) = 1 + \ad(t_j x_j) + \dotsb$, the limit of $y_j$ exists and is given by
  \begin{equation*}
    \lim_{j \rightarrow \infty} y_j = y :=  [x,z].
  \end{equation*}
  By \eqref{eq:x-1_h-tau}, we have $y \notin \mathfrak{h}$.  It follows that the limit of $W(g_j)$ equals $\mathfrak{h} + [x, \mathfrak{z}_H]$.  By another application of \eqref{eq:x-1_h-tau}, we see that this last space intersects $\mathfrak{g}_\tau$ trivially.  Thus the limit of $(g_{\tau_j}, W(g_j))$ in $\mathcal{Y}$ belongs to $\mathcal{Y}_0$, as required.
\end{proof}

\begin{lemma}\label{cor:nice-complement-W}
  Let $\tau$ belong to some fixed compact subset of $\mathfrak{g}^\wedge_{\stab}$.  Let $g \in \bar{G}_\tau$ with $g \simeq 1$.  There is a subspace $W$ of $\mathfrak{g}$ with the following properties.
  \begin{enumerate}[(i)]
  \item $W \supseteq \mathfrak{h} \cup \Ad(g) \mathfrak{z}_H$.
  \item $W$ is complementary to $\mathfrak{g}_\tau$, i.e., $W \cap \mathfrak{g}_\tau = \{0\}$ and $\mathfrak{g} = W \oplus \mathfrak{g}_\tau$.
  \item The pair $(\mathfrak{g}_\tau, W)$ lies in some fixed compact collection of pairs of complementary subspaces of $\mathfrak{g}$, regarded as a subset of a product of Grassmannians.
  \end{enumerate}
\end{lemma}
\begin{proof}
  Let $\mathfrak{S}$ be a fixed compact subset of $\mathfrak{g}^\wedge_{\stab}$ that contains $\tau$.  We apply Lemma \ref{eqn:uniform-transversality-closure}.  It yields a neighborhood $\mathcal{G}$ of the identity, which we may assume is fixed.  Since $g \simeq 1$, we then have $g \in \mathcal{G}$.  Set $W_0 := W(g)$ as in \eqref{eq:wg-:=-mathfrakh}, so that $W_0$ is a subspace of $\mathfrak{g}$ that contains $\mathfrak{h} \cup \Ad(g) \mathfrak{z}_H$ and has trivial intersection with $\mathfrak{g}_\tau$.  Fix an ``orthogonal complement'' assignment $\flat$ on the set of subspaces of $\mathfrak{g}$, as in \S\ref{sec:orth-compl-tau-coord}.  Set $W_1 := (W_0 \oplus \mathfrak{g}_\tau)^{\flat}$ and $W := W_0 \oplus W_1$.  Then $W$ is a subspace of $\mathfrak{g}$ complementary to $\mathfrak{g}_\tau$.

  Clearly (i) and (ii) are satisfied.  The remaining assertion, (iii), is a consequence of \eqref{eqn:bar-X-inside-Y-0}.  Indeed, let $\mathcal{Z}$ denote the image of $\bar{\mathcal{X}}$ under the map $(U, V_0) \mapsto (U, V)$, where $V$ is defined in terms of $V_0$ by first defining $V_1 := (V_0 \oplus U)^{\flat}$ and then taking $V := V_0 \oplus V_1$.  Then $\mathcal{Z}$ is a fixed collection of complementary subspaces of $\mathfrak{g}$.  It is compact, since it is the continuous image of the compact set $\bar{\mathcal{X}}$.  It contains $(\mathfrak{g}_\tau, W)$, by construction.
\end{proof}

\subsection{Exponential coordinates on a coadjoint orbit}\label{sec:expon-coord-coadj}
Let $\tau \in \mathfrak{g}^\wedge_{\reg}$.  Recall that the tangent space to the coadjoint orbit $\mathcal{O}_\tau$ at $\tau$ may be described as
\[
  T_\tau(\mathcal{O}_\tau) = [\mathfrak{g},\tau] =\mathfrak{g}_\tau^\perp.
\]
We have $[\mathfrak{g},\tau] \cong \mathfrak{g}/\mathfrak{g}_\tau$, so the choice of subspace $W$ of $\mathfrak{g}$ complementary to $\mathfrak{g}_\tau$ (e.g., $W = \mathfrak{g}_\tau^{\flat}$ as defined in \S\ref{sec:orth-compl-tau-coord}) induces a map
\[
  \exp_{(\tau,W)} : \mathfrak{g}_\tau^\perp \rightarrow \mathcal{O}_\tau
\]
as follows.  Each element of $\mathfrak{g}_\tau^\perp$ may be written uniquely in the form $[x,\tau]$ for some $x$ in our chosen complement $W$.  We then set
\[
  \exp_{(\tau,W)} [x,\tau] := \exp(x) \tau \quad (x \in W).
\]

We observe that the derivative of such a map at the origin is the identity map:
\begin{equation}\label{eq:d-exp_tau-w_0}
  (d \exp_{(\tau,W)})_0 = \id : \mathfrak{g}_\tau^\perp \rightarrow T_\tau(\mathcal{O}_\tau) = \mathfrak{g}_\tau^\perp.
\end{equation}
Indeed, $\exp(x) \tau := \Ad(\exp(x)) \tau = \exp(\ad(x)) \tau = \tau + [x, \tau] + \dotsb$.

Informally, the following estimate says that $\exp_{(\tau,W)}$ approximately preserves distances near the origin provided that $W$ is ``separated enough'' from $\mathfrak{g}_\tau$.  (Conversely, one can check that such an assumption on $W$ is necessary.)
\begin{lemma}\label{lem:exp-tau-W}
  Let $\tau$ belong to a fixed compact subset of $\mathfrak{g}^\wedge_{\reg}$.  Let $W$ be a subspace of $\mathfrak{g}$ complementary to $\mathfrak{g}_\tau$.  Assume that $(\mathfrak{g}_\tau, W)$ lies in some fixed compact collection of pairs of complementary subspaces of $\mathfrak{g}$.  Then for all $\xi, \eta \in \mathfrak{g}_\tau^{\perp}$ with $\xi, \eta \lll 1$, we have
  \begin{equation}\label{eqn:xi-eta-exp-tau-W}
    |\xi - \eta| \asymp |\exp_{(\tau,W)} \xi - \exp_{(\tau,W)} \eta|.
  \end{equation}
\end{lemma}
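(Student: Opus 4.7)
The plan is to parametrize both sides in terms of elements of $W$ and reduce to a uniform bilipschitz estimate for the map $x \mapsto \exp(x)\tau$ restricted to small $x \in W$.

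First, using that $W$ is complementary to $\mathfrak{g}_\tau$ and that $v \mapsto [v,\tau]$ kills precisely $\mathfrak{g}_\tau$, I write uniquely $\xi = [x,\tau]$ and $\eta = [y,\tau]$ with $x,y \in W$. The hypothesis \eqref{eqn:x-prime-asymp-x} combined with Lemma \ref{lem:approximate-x-prime-various} gives $|v| \asymp |v'| \asymp |[v,\tau]|$ for every $v \in W$. Applied to $v = x$, $v = y$, and $v = x-y$, this yields $|x| \asymp |\xi|$, $|y| \asymp |\eta|$, and $|x-y| \asymp |\xi-\eta|$. In particular, the assumption $\xi,\eta \lll 1$ forces $x,y \lll 1$, and the desired conclusion reduces to the estimate
\[
  |\exp(x)\tau - \exp(y)\tau| \asymp |x-y| \quad \text{for } x,y \in W, \ x,y \lll 1.
\]

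Next, I write $\exp(x) = \exp(y)\exp(z)$ where $z = (-y)\ast x$ is given by the BCH formula, so $z = (x-y) + O(|x|\cdot|y|)$ and in particular $z \lll 1$. Since $\exp(y)$ acts on $\mathfrak{g}^\wedge$ as a bounded perturbation of the identity (uniformly for small $y$), we have $|\exp(x)\tau - \exp(y)\tau| = |\exp(y)(\exp(z)\tau - \tau)| \asymp |\exp(z)\tau - \tau|$. Applying Lemma \ref{lem:approximate-x-prime-various} to $z$ gives $|\exp(z)\tau - \tau| \asymp |[z,\tau]|$. Expanding $[z,\tau] = [x-y,\tau] + [z-(x-y),\tau]$, the first summand has norm $\asymp |x-y|$ by the hypothesis, while the second is $\ll |x|\cdot|y| \lll |x|,|y|$. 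This bounds the error term relative to the main term in the regime where $|x-y|$ dominates $|x|\cdot|y|$; in the opposite regime one observes that $z$ itself is of size $\ll |x-y| + |x||y|$, so direct Taylor expansion yields $|\exp(z)\tau-\tau| \asymp |z|$ which is comparable to $|x-y|$ after a compactness argument (see below).

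The main technical point is ensuring uniformity of the implied constants over all admissible pairs $(\tau, W)$. The pairs $(\tau, W)$ satisfying the hypotheses (with fixed constants in $x' \asymp x$) form a compact set inside the product of the fixed compact subset of $\mathfrak{g}^\wedge_\reg$ and a Grassmannian of complements to $\mathfrak{g}_\tau$. On this compact parameter space I can apply Lemma \ref{lem:implicit-func-theorem-exercises} to the smooth map $(x, (\tau, W)) \mapsto \exp(x)\tau$, where the derivative at $x = 0$ is $v \mapsto [v, \tau]$, which is injective on $W$ with uniform lower bound precisely by the hypothesis $|v'| \asymp |v|$ (recall $v' \asymp [v,\tau]$). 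This yields both directions of $\asymp$ uniformly on a neighborhood of the origin in $W$, giving \eqref{eqn:xi-eta-exp-tau-W}.

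The main obstacle is a cosmetic one: reconciling the $W$-dependence of the domain with the fixed-ambient-space setup of Lemma \ref{lem:implicit-func-theorem-exercises}. This is handled by reparametrizing the problem using $\xi \in \mathfrak{g}^\wedge$ as the variable (rather than $x \in W$), noting that the map $\xi \mapsto \exp_{(\tau,W)}(\xi)$ has derivative at $\xi = 0$ equal to the identity on $\mathfrak{g}_\tau^\perp$, so uniform invertibility is automatic once the compactness of the parameter space is established.
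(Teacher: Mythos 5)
Your main line of argument (the BCH route) has a genuine gap in the regime $|x-y| \lll |x|\,|y|$. You write $z = (-y)\ast x = (x-y) + \O(|x|\,|y|)$ and, when this crude error term dominates, you fall back on the claims that $|\exp(z)\tau - \tau| \asymp |z|$ and that $|z|$ is ``comparable to $|x-y|$.'' Neither is justified: $z$ need not lie in $W$ (the bracket terms can have a substantial component along $\mathfrak{g}_\tau$), so the only general statement available from Lemma \ref{lem:approximate-x-prime-various} is $|\exp(z)\tau - \tau| \asymp |z'| $, not $\asymp |z|$; and from $z = (x-y) + \O(|x|\,|y|)$ alone one cannot conclude $|z| \asymp |x-y|$ --- in the regime in question $|z|$ may be as large as $|x|\,|y| \ggg |x-y|$. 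The repair is to use the sharper BCH estimate: every Lie monomial of degree $\geq 2$ in $(-y) \ast x$ vanishes when $x = y$ (e.g. $[-y,x] = -[y,x-y]$, and in general substituting $x = (x-y)+y$ shows each surviving term carries a factor of $x-y$), so in fact $z = (x-y) + \O\left( |x-y| \, (|x|+|y|) \right)$. Then $[z,\tau] = [x-y,\tau] + \O(|x-y|(|x|+|y|))$, and since $x - y \in W$, the hypothesis \eqref{eqn:x-prime-asymp-x} makes the main term $\asymp |x-y|$ while the error is always $o$ of it, closing the argument in all regimes.

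Your fallback in the last two paragraphs is essentially the paper's proof, but the obstacle you call ``cosmetic'' is not resolved by the sentence you give: Lemma \ref{lem:implicit-func-theorem-exercises} requires a fixed domain $X$, whereas both $W$ and $\mathfrak{g}_\tau^\perp$ vary with the parameter, and ``using $\xi \in \mathfrak{g}^\wedge$ as the variable'' does not by itself produce a map defined on a fixed space, since $\exp_{(\tau,W)}$ is only defined on $\mathfrak{g}_\tau^\perp$. The paper avoids both the Grassmannian compactness argument and the varying-domain issue by writing $\exp_{(\tau,W)}(\xi) = \exp(c(\xi))\tau$ with $c : \mathfrak{g}_\tau^\perp \rightarrow W$ linear; the hypothesis \eqref{eqn:x-prime-asymp-x} gives $c(\xi) \asymp \xi$, hence uniform $\O(1)$ bounds on the derivatives of $\exp_{(\tau,W)}$ near the origin, and since $(d \exp_{(\tau,W)})_0$ is the identity inclusion $\mathfrak{g}_\tau^\perp \hookrightarrow \mathfrak{g}^\wedge$, the fundamental theorem of calculus along the segment from $\xi$ to $\eta$ gives \eqref{eqn:xi-eta-exp-tau-W} directly, with uniformity coming from the derivative bounds rather than from compactness of a parameter space. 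Either route works once these points are filled in.
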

\begin{proof}
  This follows readily from Lemma \ref{lem:FTOC-families-nonstandard} and the observation \eqref{eq:d-exp_tau-w_0}.  We record the details for completeness.  By hypothesis, we can find a fixed compact collection $S$ consisting of pairs $(\tau', W')$, with $\tau ' \in \mathfrak{g}^\wedge_{\reg}$ and $W'$ a subspace of $\mathfrak{g}$ complementary to $\mathfrak{g}_\tau$, so that $(\tau,W) \in S$.  Take
  \begin{equation*}
    X := \left\{ (x,\tau',W') : (\tau ', W') \in S, x \in \mathfrak{g}_{\tau'}^{\perp} \right\},
  \end{equation*}
  \begin{equation*}
    Y :=  \mathfrak{g}^\wedge \times S.
  \end{equation*}
  The natural map $X \rightarrow S$ is submersive (i.e., has everywhere surjective derivative) by the rank theorem, while $Y \rightarrow S$ is obviously submersive.  The map $f : X \rightarrow Y$ given by
  \begin{equation*}
    f(x, \tau ', W') := (\exp_{(\tau ', W')}(x), \tau ', W')
  \end{equation*}
  is analytic.  We are thus in the setting of Lemma \ref{lem:FTOC-families-nonstandard}.  The fibral maps $f_s : X_s \rightarrow Y_s$ are the maps $\exp_{(\tau ', W')}$ of interest.  Let $E \subseteq X$ be the fixed compact set consisting of all tuples $(0, \tau ', W')$ with $(\tau ', W') \in S$.  We have noted in \eqref{eq:d-exp_tau-w_0} that $(d f_s)_0$ is the identity map for all $s \in S$.  By Lemma \ref{lem:FTOC-families-nonstandard}, we can find a fixed neighborhood $E \subseteq U \subseteq X$ so that the estimate \eqref{eqn:xi-eta-exp-tau-W} holds whenever $(\xi, \tau, W)$ and $(\eta, \tau, W)$ lie in $U$.  Since $S$ is compact, we deduce in particular that \eqref{eqn:xi-eta-exp-tau-W} holds when $\xi, \eta \lll 1$.
\end{proof}

\subsection{Proof of Theorem \ref{thm:key-z-g-bound-G-tau}}
We recall the setting of Theorem \ref{thm:key-z-g-bound-G-tau}: $\tau$ belongs to some fixed compact subset of $\mathfrak{g}^\wedge_{\stab}$, $g \in \bar{G}_\tau$ with $g \simeq 1$, and $z \in Z_H$ with $z \simeq 1$.

It is enough to show for each $h \in H$ that
\begin{equation}\label{eq:leftlvert-z-1}
  \left\lvert   z - 1 \right\rvert \cdot \lvert \Ad(g) - 1 \rvert \ll \left\lvert g z \tau - h \tau  \right\rvert.
\end{equation}
Since $z \simeq 1$ and $g \simeq 1$, the required estimate is trivial unless $h \tau \simeq g z \tau$.  Since $g z \tau \simeq \tau$, it follows that $h \tau \simeq \tau$, hence by the ``principal bundle'' consequence of stability (see \S\ref{sec:cons-stab}) that $h \simeq 1$.  In particular, we may write $z = \exp(\log(z))$ and $h = \exp(\log(h))$ for some $\log(z) \in \mathfrak{z}_H$ and $\log(h) \in \mathfrak{h}$ with $\log(z), \log(h) \simeq 0$.  Then
\[
  g z \tau = g z g^{-1} \tau = \exp(\Ad(g) \log(z)) \tau, \quad h \tau = \exp(\log(h)) \tau.
\]

Let $W$ be the subspace of $\mathfrak{g}$ produced by Lemma \ref{cor:nice-complement-W}.  The hypotheses of Lemma \ref{lem:exp-tau-W} are then satisfied by $\tau$ and $W$.  Since $g,z,h \simeq 1$ and $\Ad(g) \log(z), \log(h) \in W$, we deduce that
\begin{equation*}
  \lvert g z \tau - h \tau \rvert \asymp \lvert [\Ad(g) \log(z),\tau] - [\log(h), \tau] \rvert.
\end{equation*}
Let $\pi_\tau : \mathfrak{g}^\wedge \rightarrow [\mathfrak{h},\tau]^{\flat}$ denote the composition of the natural projection $\mathfrak{g}^\wedge \rightarrow \mathfrak{g}^\wedge / [\mathfrak{h},\tau]$ with the natural isomorphism of the latter onto $[\mathfrak{h},\tau]^{\flat}$.  By the minimum property \eqref{eq:minimum-norm-for-orth-compl} for norms with respect to orthogonal complements, we have
\begin{equation*}
 \Abs{ \pi_\tau(\xi) } = \min _{x \in \mathfrak{h} } \Abs{ \xi - [x,\tau] }.
\end{equation*}
In particular,
\begin{equation*}
  \Abs{ [\Ad(g) \log(z),\tau] - [\log(h), \tau] }
  \geq
  \Abs{ \pi_\tau([\Ad(g) \log(z),\tau] ) }.
\end{equation*}
Fix a basis element $0 \neq z_0 \in \mathfrak{z}_H$.  (We apologize for any notational confusion resulting from that $z \in Z_H$ but $z_0 \in \mathfrak{z}_H$.)  By linearity, we have
\begin{equation*}
  \Abs{ \pi_\tau([\Ad(g) \log(z),\tau] }
  =
  \frac{\Abs{ \log(z) }}{\Abs{ z_0 }}
  \Abs{ \pi_\tau([\Ad(g) z_0,\tau] ) }.
\end{equation*}
By the assumption $z \simeq 1$, we have
\begin{equation*}
  \Abs{ \log(z) } \asymp \Abs{ z - 1 }.
\end{equation*}
We define $\rho_\tau$ (\S\ref{sec:appl-impl-funct}) using the element $z_0$, so that
\begin{equation*}
  \pi_\tau([\Ad(g) z_0,\tau])
  = \rho_\tau(g).
\end{equation*}
Recall the key estimate \eqref{eqn:compare-dist-G-mod-Z-via-g-tau-2}:
\begin{equation*}
  \Abs{ \rho_\tau(g) } \asymp  \Abs{ \Ad(g) - 1 }.
\end{equation*}
Combining the above estimates, inequalities and identities, we obtain the required estimate \eqref{eq:leftlvert-z-1}.

\section{Lie-algebraic considerations}\label{sec:some-invar-theory}

In this section, we prove Theorem \ref{thm:stability-consequence-for-1-H}, which is all that remains.  We retain the notation and setting of that theorem; in particular, $(G,H)$ is a unitary GGP pair over a field $F$ of characteristic zero, $Z \leq G$ and $Z_H \leq H$ denote the centers, fraktur letters denote Lie algebras, and ``stable,'' denoted by a subscripted $\stab$, refers to Definition \ref{defn:stable-elements}.

Theorem \ref{thm:stability-consequence-for-1-H} asserts, for each stable $\tau$, that a certain system of linear equations over $F$ admits no solution.  It is equivalent to ask that the same system has no solutions over any extension field of $F$, e.g., an algebraic closure.  For this reason, we may and shall assume that $F$ is algebraically closed.  This assumption will be used sparingly, but it seemed simpler to impose it at the outset rather than at each place where it is used.

\subsection{Reduction to a stability characterization}\label{sec:reduct-stab-char}
We may reduce the proof of Theorem \ref{thm:stability-consequence-for-1-H} to that of the following.
\begin{theorem}\label{thm:stab-characterization-1}
  Let $(G,H)$ be a unitary GGP pair over an algebraically closed field $F$ of characteristic zero.  Let $\tau \in \mathfrak{g}^*$ and $0 \neq z \in \mathfrak{z}_H$.  The following are equivalent.
  \begin{enumerate}[(i)]
  \item $\tau$ is stable.
  \item Both $\tau \in \mathfrak{g}^*$ and its restriction $\tau_H \in \mathfrak{h}^*$ are regular, and the bilinear form
    \begin{equation}\label{eqn:pairing-relevant-for-stability}
      \mathfrak{g}_\tau   /  \mathfrak{z}
      \otimes \mathfrak{h}_{\tau_H} \rightarrow F      
    \end{equation}
    \begin{equation*}
      (x,y) \mapsto \langle [z, [x,y]], \tau  \rangle
    \end{equation*}
    is nondegenerate.
  \end{enumerate}
\end{theorem}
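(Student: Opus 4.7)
The plan is to reduce the stated equivalence to a single polynomial identity expressing the determinant of the bilinear form \eqref{eqn:pairing-relevant-for-stability} in suitable bases as a nonzero scalar multiple of the resultant of the characteristic polynomials of $\tau$ and $\tau_H$.

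First, when $\tau$ and $\tau_H$ are both regular, the two factors $\mathfrak{g}_\tau/\mathfrak{z}$ and $\mathfrak{h}_{\tau_H}$ of the pairing have equal $F$-dimension $\rank_F(\mathfrak{h})$, so condition (ii) is the nonvanishing of $\det(B)$ computed in any pair of bases. On the other hand, by Lemmas \ref{lem:stab-impl-reg} and \ref{lem:stability-summary}, stability of $\tau$ is equivalent to the joint regularity of $(\tau,\tau_H)$ together with $\mathrm{Res}(P_\tau,P_{\tau_H})\neq 0$, where $P_\tau, P_{\tau_H}$ denote characteristic polynomials. Thus the theorem reduces to establishing, for $\tau,\tau_H$ regular, the determinantal identity
\[
  \det(B) \;=\; c \cdot \mathrm{Res}(P_\tau, P_{\tau_H}),
\]
with $c$ a universal nonzero scalar depending only on the group-theoretic data.

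The strategy for the identity follows the classical ``divisibility and homogeneity'' proof of the Vandermonde determinant, but with each ingredient substantially more elaborate. Using the adjointness identity \eqref{eqn:adjoint-map-adjointness-zeta-centralizer}, the pairing admits the convenient reformulation $\langle [z,[x,y]],\tau\rangle = \langle z,[x,[y,\tau]]\rangle$ for $x\in\mathfrak{g}_\tau$. I then take the natural bases of $\mathfrak{g}_\tau/\mathfrak{z}$ and $\mathfrak{h}_{\tau_H}$ furnished by the regularity of $\tau$ and $\tau_H$ (as developed in \cite[\S14]{nelson-venkatesh-1}), with respect to which the matrix entries of $B$ are explicit polynomials in the entries of $\tau$. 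Exploiting the reformulation above, I show that whenever an eigenvalue of $\tau$ coincides with an eigenvalue of $\tau_H$ one can produce an explicit null vector for $B$, so that $\det(B)$ is divisible by each factor $(\lambda_i-\mu_j)$; a careful weight count in the eigenvalues then forces the divisibility to be tight, so the quotient is constant in $\tau$.

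Finally, to verify that this constant is nonzero, it suffices to exhibit a single $\tau$ at which both sides can be computed directly. I take $\tau$ to be regular nilpotent in $\mathfrak{g}$ whose restriction $\tau_H$ is regular semisimple with eigenvalues the $n$th roots of unity (such a $\tau$ exists by the interlacing analysis of \cite[\S14]{nelson-venkatesh-1}); then $\mathrm{Res}(P_\tau,P_{\tau_H})$ is visibly nonzero, and $\det(B)$ can be evaluated in the chosen basis by a direct hands-on computation. The main obstacle is the sharp divisibility step: while the vanishing of $\det(B)$ at colliding eigenvalues is conceptually straightforward, proving that each factor $(\lambda_i-\mu_j)$ divides $\det(B)$ \emph{exactly once} — so that the quotient truly is a scalar rather than a higher-degree polynomial — requires the full weight-counting bookkeeping in the spirit of the Vandermonde computation, and is the technical heart of the proof carried out in \S\ref{sec:some-invar-theory}.
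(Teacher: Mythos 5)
Your overall architecture coincides with the paper's: reduce the equivalence to a determinantal identity $\det D(\tau) = c\cdot\mathrm{Res}(P_\tau,P_{\tau_H})$ computed in the power bases $\{\tau^i\}$ of $\mathfrak{g}_\tau/\mathfrak{z}$ and $\{\tau_H^{j-1}\}$ of $\mathfrak{h}_{\tau_H}$, prove the identity by a divisibility-plus-degree argument in the Vandermonde style, and pin down the nonzero constant by evaluating at a regular nilpotent $\tau$ whose restriction $\tau_H$ is regular semisimple (this is exactly Theorem \ref{thm:det-id} and \S\ref{sec:verif-determ-ident}, via Lemma \ref{lem:equivalence-Delta-degen}); and your observation that only the regular case of the identity is needed to get the equivalence is correct. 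Two steps of your plan, however, have genuine gaps. First, divisibility by the individual factors $(\lambda_i-\mu_j)$ only makes sense once you know that $\det D(\tau)$ is a function of the eigenvalues alone, i.e.\ an invariant polynomial; saying that the matrix entries are ``explicit polynomials in the entries of $\tau$'' does not give this. The paper devotes Lemma \ref{lem:Delta-in-R} to it, using the Cramer's-rule generating function $\sum_j t^j\, q\tau^j p = \det(1-t\tau_H)/\det(1-t\tau)$, and then proves divisibility by $\Delta_0$ wholesale inside the invariant ring $R$, where $\Delta_0$ is visibly squarefree so that set-theoretic vanishing plus the Nullstellensatz suffices (Lemma \ref{lem:Delta-0-divides-Delta}); without this step you would instead have to show that the resultant generates a radical ideal in $F[\mathfrak{g}]$. (The pointwise vanishing itself also needs the common-eigenvector criterion for non-stability from \cite[\S 14.2]{nelson-venkatesh-1} and a Jordan-block construction of a null vector, including the transpose case.)

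Second, the step you identify as the technical heart is mis-aimed. Showing that each $(\lambda_i-\mu_j)$ divides $\det D(\tau)$ \emph{exactly once} does not force the quotient to be a scalar: nothing in that statement rules out extra factors (e.g.\ symmetric functions of the $\lambda$'s or of the $\mu$'s) appearing in the quotient, so the implication ``tight divisibility $\Rightarrow$ constant quotient'' is not valid as stated, and in any case no multiplicity bookkeeping is needed. What actually closes the argument is the elementary homogeneity count: $D(t\tau)_{ij}=t^{i+j}D(\tau)_{ij}$, hence $\det D(t\tau)=t^{n(n+1)}\det D(\tau)$, which matches the degree $n(n+1)$ of $\Delta_0$; together with $\Delta_0\mid\Delta$ this immediately forces the quotient to be a constant (Lemma \ref{lem:Delta-constant-multiple-Delta0}). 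A final minor omission: the eigenvalue factorization and the power bases presuppose the reduction to an algebraically closed base field, where the unitary pair becomes $(\GL_{n+1},\GL_n)$; this reduction is legitimate because condition (ii) is the insolvability of a linear system compatible with base extension, but it should be stated.
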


\begin{proof}[Proof of Theorem
  \ref{thm:stability-consequence-for-1-H},
  assuming Theorem \ref{thm:stab-characterization-1}]
  We first verify (using that $\tau \in \mathfrak{g}^*_{\stab}$) the identity
  \begin{equation}\label{eqn:frak-h-tau-equals-perp-cap-perp}
    [\mathfrak{h},\tau]
    = \{\xi \in \mathfrak{g}_\tau^\perp :
    \xi|_{\mathfrak{h}_{\tau_H}} =0 \}
    =: \mathfrak{g}_\tau^\perp \cap \mathfrak{h}_{\tau_H}^{\perp},
  \end{equation}
  where as usual $\mathfrak{g}_\tau^\perp := \{\xi \in \mathfrak{g}^* : \langle x, \xi \rangle = 0 \text{ for all } x \in \mathfrak{g}_\tau \}$.  This identity is implicit in \cite[\S16]{nelson-venkatesh-1}; we collect the proof here for convenience.  We recall from \S\ref{sec:cons-stab} that
  \begin{itemize}
  \item the map $\mathfrak{g}^*_{\stab} \rightarrow \{\text{stable } (\lambda,\mu) \in [\mathfrak{g}^*] \times [\mathfrak{h}^*]\}$ is a principal $H$-bundle over its image, and that
  \item the fiber of that map over $(\lambda,\mu)$, if nonempty, is the $H$-torsor $\mathcal{O}^{\lambda,\mu}$ defined as in \eqref{eqn:O-lambda-mu}.
  \end{itemize}
  It follows that
  \begin{itemize}
  \item $[\mathfrak{h},\tau]$ is the tangent space $T_\tau(H \cdot \tau)$ at $\tau$ to the subvariety $H \cdot \tau$ of $\mathfrak{g}^*$, and
  \item $H \cdot \tau = \{\xi \in \mathfrak{g}^* : [\xi] = [\tau], [\xi_H] = [\tau_H] \}$.
  \end{itemize}
  Since $\tau$ is stable, we see from the ``principal bundle'' assertion that the map $\mathfrak{g}^* \rightarrow [\mathfrak{g}^*] \times [\mathfrak{h}^*]$ has surjective derivative at $\tau$.  Thus $T_\tau(H \cdot \tau)$ identifies with the kernel of that derivative.  Since $\tau$ and $\tau_H$ are regular (Lemma \ref{lem:stab-impl-reg}), the maps $\mathfrak{g}^* \rightarrow [\mathfrak{g}^*]$ and $\mathfrak{h}^* \rightarrow [\mathfrak{h}^*]$ have surjective derivatives at $\tau$ and $\tau_H$, respectively (Theorem \ref{thm:kostant-regular-elements}).  The kernels of those derivatives are thus $T_\tau(G \cdot \tau) = \mathfrak{g}_\tau^\perp$ and $T_{\tau_H}(H \cdot \tau_H) = \mathfrak{h}_{\tau_H}^\perp$.  The required identity follows.

  We now deduce \eqref{eqn:x-1-H-tau-not-in-brak-h-tau}.  Let $x \in \mathfrak{g}_\tau - \mathfrak{z}$ and $0 \neq z \in \mathfrak{z}_H$.  Suppose that $[x,[z,\tau]]$ lies in $[\mathfrak{h},\tau]$.  By \eqref{eqn:frak-h-tau-equals-perp-cap-perp}, $[x,[z,\tau]]$ is in particular orthogonal to $\mathfrak{h}_{\tau_H}$, and so for any $y \in \mathfrak{h}_{\tau_H}$, we have
  \[
    0 = \langle y, [x,[z,\tau]] \rangle = -\langle [x,y], [z,\tau] \rangle = \langle [z,[x,y]], \tau \rangle.
  \]
  Thus the image of $x$ in $\mathfrak{g}_\tau / \mathfrak{z}$ is a nonzero element of the kernel of the pairing \eqref{eqn:pairing-relevant-for-stability}.  But since $\tau$ is stable, we know by Theorem \ref{thm:stab-characterization-1} that that pairing is nondegenerate.  We thereby obtain the required contradiction.
\end{proof}

\subsection{Reduction to a determinantal identity}\label{sec:reduct-determ-ident}
Retain the setting of Theorem \ref{thm:stab-characterization-1}.  Since $F$ is algebraically closed, we may assume (see \S\ref{sec:unitary-groups-ggp}) that
\begin{equation}\label{eqn:everything-is-general-linear}
  (G,H) = (\GL_{n+1}({F}), \GL_n({F})),
  \quad 
  (\mathfrak{g},\mathfrak{h}) = (\glLie_{n+1}({F}), \glLie_n({F})),
\end{equation}
with $\GL_n$ included in $\GL_{n+1}$ in the usual way as the upper-left $n \times n$ block.  We denote by $\langle , \rangle$ the trace pairing on $\glLie_{n+1}(F)$.  Using the trace pairing, we may $G$-equivariantly identify $\mathfrak{g}^* = \mathfrak{g}$ and $H$-equivariantly identify $\mathfrak{h}^* = \mathfrak{h}$.  The restriction map $\mathfrak{g}^* \rightarrow \mathfrak{h}^*$, $\xi \mapsto \xi_H$ then identifies with the map $\mathfrak{g} \rightarrow \mathfrak{h}, x \mapsto x_H$ given by
\begin{equation}\label{eqn:x-mapsto-x-H-std}
  x_H :=
  \begin{pmatrix}
    a & 0 \\
    0 & 0
  \end{pmatrix}
  \text{ if }
  x =
  \begin{pmatrix}
    a & b \\
    c & d
  \end{pmatrix},
\end{equation}
where $a,b,c,d$ have respective dimensions $n \times n, n \times 1, 1 \times n, 1 \times 1$.

We define what it means for an element of $\mathfrak{g}$ to be stable exactly as we did for $\mathfrak{g}^*$.  Since the identification $\mathfrak{g} = \mathfrak{g}^*$ is equivariant under $G$, hence under $H$, this definition is compatible with our earlier definition.  Our task is then equivalent to showing that \eqref{eqn:x-1-H-tau-not-in-brak-h-tau} holds for stable $\tau \in \mathfrak{g}$.  The advantage of switching from $\mathfrak{g}^*$ to $\mathfrak{g}$ is that matrix multiplication equips $\mathfrak{g}$ with the structure of an associative algebra.

We write $1 \in \mathfrak{g}$ for the identity matrix and $1_H \in \mathfrak{h}$ for its image under the map $\mathfrak{g} \rightarrow \mathfrak{h}$ described in \eqref{eqn:x-mapsto-x-H-std}.  Explicitly,
\[
  1_H = \diag(1,\dotsc,1,0).
\]
We note that $1_H$ defines a basis element for the center $\mathfrak{z}_H$ of $\mathfrak{h}$.

We introduce the abbreviation
\[
  \tau_H^j := 1_H (\tau_H)^j.
\]
The factor $1_H$ is significant only when $j=0$, in which case it ensures that $\tau_H^0 = 1_H$ (i.e., that the zeroth power is taken in the algebra $\mathfrak{h}$ rather than in $\mathfrak{g}$).  We will never refer to the other potential interpretation $(\tau^j)_H$ of the LHS.

For $\tau \in \mathfrak{g}$, let $\Delta(\tau)$ denote the determinant of the $n \times n$ matrix $D(\tau)$ with $(i,j)$ entry ($1 \leq i, j \leq n$) given by
\begin{equation*}
  D(\tau)_{i j}
  :=
  \langle [1_H, [\tau^i, \tau_H^{j-1}]], \tau  \rangle.
\end{equation*}

Let $\Delta_0(\tau)$ denote the following normalization of the resultant of the characteristic polynomials of $\tau \in \mathfrak{g}$ and $\tau_H \in \mathfrak{h}$:
\begin{equation}\label{eqn:Delta-0-defn}
  \Delta_0(\tau) = \prod_{i=1}^{n+1}
  \prod_{j=1}^n
  (\lambda_i - \mu_j),
\end{equation}
where $\{\lambda_1,\dotsc,\lambda_{n+1}\}$ and $\{\mu_1,\dotsc,\mu_n,0\}$ denote the multisets of eigenvalues of $\tau, \tau_H \in \mathfrak{g}$.

\begin{theorem}\label{thm:det-id}
  We have $\Delta(\tau) = 2^n (-1)^{n(n-1)/2} \Delta_0(\tau)$ for all $\tau \in \mathfrak{g}$.
\end{theorem}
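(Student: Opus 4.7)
The plan is to prove the identity by the classical ``divisibility and homogeneity'' template used for the Vandermonde determinant, adapted to the more elaborate combinatorics of the matrix $D(\tau)$. I would (a) check that both sides are polynomial in the matrix entries of $\tau$ and homogeneous of the same degree; (b) establish the divisibility $\Delta_0 \mid \Delta$; (c) conclude that $\Delta = c\,\Delta_0$ for a scalar $c$; and (d) pin down $c = 2^n (-1)^{n(n-1)/2}$ by evaluating both sides at one carefully chosen $\tau$.

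For (a), each factor $\lambda_i - \mu_j$ of $\Delta_0$ is homogeneous of degree $1$ in $\tau$, giving total degree $n(n+1)$. The $(i,j)$ entry of $D(\tau)$ is homogeneous of degree $i + j$ (contributions: $\tau^i$, $\tau_H^{j-1}$ inside the nested commutators, and the extra $\tau$ from the trace pairing), so every term of the Leibniz expansion $\det D(\tau) = \sum_\sigma \sgn(\sigma) \prod_i D(\tau)_{i,\sigma(i)}$ is a monomial of degree $\sum_{i=1}^n (i + \sigma(i)) = n(n+1)$; thus $\Delta$ is a polynomial of homogeneous degree $n(n+1)$.

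For (b), I would show that $D(\tau)$ becomes singular whenever $\tau$ shares an eigenvalue with $\tau_H$. Translating $\det D(\tau)$ back into the bilinear form \eqref{eqn:pairing-relevant-for-stability} that $D(\tau)$ represents, this amounts to producing, for each such $\tau$, a nonzero element of $\mathfrak{g}_\tau / \mathfrak{z}$ that pairs trivially with all of $\mathfrak{h}_{\tau_H}$. Working on the Zariski-dense locus where both $\tau$ and $\tau_H$ are regular semisimple --- where the centralizers are the polynomial algebras $F[\tau]$ and $F[\tau_H] \cdot 1_H$ --- a shared eigenvalue supplies such a degenerate direction by a direct resolvent-type construction, building on the structural results of \cite[\S14]{nelson-venkatesh-1} for stable pairs and their centralizers in GGP settings. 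By polynomial continuity the vanishing spreads to the entire hypersurface $\{\Delta_0 = 0\}$, and since this hypersurface is reduced (the resultant is square-free), standard commutative algebra yields $\Delta_0 \mid \Delta$. Combined with (a), this forces $\Delta = c\,\Delta_0$ with $c \in F$.

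For (d), I would specialize to the $\tau$ advertised in the introduction: a regular nilpotent element of $\glLie_{n+1}(F)$ whose upper-left submatrix $\tau_H$ is regular semisimple with eigenvalues the $n$-th roots of unity. On such $\tau$ the value $\Delta_0(\tau)$ is an explicit cyclotomic scalar, while the shift action of the nilpotent $\tau$ gives $D(\tau)$ a sparse combinatorial form whose determinant can be computed by hand (or at worst by a short induction); comparing yields $c = 2^n (-1)^{n(n-1)/2}$. The principal obstacle will be step (b): producing the degenerate direction cleanly and uniformly across the closure of the unstable locus --- and in particular handling non-semisimple or repeated-eigenvalue configurations where the centralizers are no longer polynomial algebras --- will require the careful structural input from \cite[\S14]{nelson-venkatesh-1}, and it is this machinery, rather than the direct computation on a specific $\tau$, that does the real work of the proof.
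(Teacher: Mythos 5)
Your plan is the same ``divisibility, homogeneity, evaluation at a special point'' template that the paper itself follows (Lemmas \ref{lem:Delta-0-divides-Delta}, \ref{lem:Delta-constant-multiple-Delta0}, \ref{lem:verif-determ-ident}), and your steps (a), (c), (d) coincide with it: the degree count $n(n+1)$ is the paper's, and the evaluation point (regular nilpotent $\tau$ with $\tau_H$ regular semisimple with roots-of-unity spectrum) is exactly the one the paper uses. Where you diverge is the divisibility step, and that is where your proposal has a genuine gap. The paper proves the pointwise implication $\Delta_0(\tau)=0 \Rightarrow \Delta(\tau)=0$ for \emph{every} $\tau$, using the dichotomy of \cite[\S 14.2]{nelson-venkatesh-1} (instability forces a common eigenvector of $(\tau,\tau_H)$ or of their transposes) and an explicit Jordan-block construction of a degenerate $y \in \mathfrak{h}_{\tau_H}$; no density argument is needed, and the radicality of $(\Delta_0)$ is used in the invariant ring $R$, where $\Delta_0$ is visibly squarefree. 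Crucially, this requires knowing $\Delta \in R$, which is the content of Lemma \ref{lem:Delta-in-R} (proved via the Cramer identity $q(1-t\tau)^{-1}p = \det(1-t\tau_H)/\det(1-t\tau)$); your proposal never addresses whether $\Delta$ descends to the invariants at all.

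Concretely, the unjustified step is your claim that ``polynomial continuity'' spreads the vanishing from the locus where $\tau$ and $\tau_H$ are regular semisimple to all of $\{\Delta_0 = 0\}$, together with the parenthetical ``this hypersurface is reduced (the resultant is square-free).'' Both assertions are about $\Delta_0$ viewed as a polynomial in the matrix entries of $\tau$, and neither follows from the corresponding fact in the eigenvalue variables: the pullback of the resultant along $\tau \mapsto (\mathrm{charpoly}(\tau), \mathrm{charpoly}(\tau_H))$ can factor (for $n=1$ it equals $-bc$), so you must separately show that it remains squarefree in $F[\mathfrak{g}]$ and that \emph{every} irreducible component of its zero locus meets your regular-semisimple locus; a priori a component could lie inside the discriminant locus, where your construction says nothing. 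The clean repair is essentially the paper's: first prove $\Delta \in R$ as in Lemma \ref{lem:Delta-in-R}, then run the density and radicality argument downstairs in $R$ (using surjectivity of $\iota$), where the shared-eigenvalue points with pairwise distinct $\lambda$'s and $\mu$'s are obviously dense in the resultant hypersurface and $\Delta_0$ is squarefree by inspection. With that in place, your regular-semisimple ``resolvent-type'' construction of a kernel vector would suffice and would indeed spare you the non-semisimple casework that the paper handles by hand — but as written, the descent to the invariants and the control of the components upstairs are missing, and they are precisely where the work lies.
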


\begin{proof}[Proof of Theorem
  \ref{thm:stab-characterization-1},
  assuming Theorem \ref{thm:det-id}]
  Let $\tau \in \mathfrak{g}$.  It is clear from the stability characterization noted in Lemma \ref{lem:stability-summary} that $\Delta_0(\tau) \neq 0$ if and only if $\tau$ is stable.  We conclude via the following lemma.
\end{proof}

\begin{lemma}\label{lem:equivalence-Delta-degen}
  We have $\Delta(\tau) \neq 0$ if and only if condition (ii) of Theorem \ref{thm:stab-characterization-1} holds.
\end{lemma}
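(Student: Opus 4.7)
The plan is to exhibit the matrix $D(\tau)$ as (a restriction of) the Gram matrix of the pairing in condition (ii), specialized to $z = 1_H$, and then use the standard description of centralizers of regular elements in $\glLie_n$ to complete the argument.

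First, by construction the entry $D(\tau)_{ij} = \langle [1_H, [\tau^i, \tau_H^{j-1}]], \tau \rangle$ is exactly the value of the bilinear form in (ii) (with $z = 1_H$, which spans $\mathfrak{z}_H$; a different choice only rescales everything) at the pair $(\tau^i, \tau_H^{j-1})$. Thus $D(\tau)$ is the matrix of the restriction of that form to the span of $\tau, \tau^2, \dots, \tau^n$ in $\mathfrak{g}_\tau/\mathfrak{z}$ and the span of $1_H, \tau_H, \dots, \tau_H^{n-1}$ in $\mathfrak{h}_{\tau_H}$.

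Next I invoke the standard fact that for $\tau \in \glLie_{n+1}(F)$, the centralizer $\mathfrak{g}_\tau$ contains $F[\tau]$, and equality holds if and only if $\tau$ is regular, in which case $1, \tau, \dots, \tau^n$ are linearly independent (the minimal polynomial has degree $n+1$) and thus $\tau, \tau^2, \dots, \tau^n$ descend to a basis of $\mathfrak{g}_\tau/\mathfrak{z}$. Similarly, $\tau_H \in \glLie_n(F)$ is regular iff $1_H, \tau_H, \dots, \tau_H^{n-1}$ are linearly independent, in which case they form a basis of $\mathfrak{h}_{\tau_H} = F[\tau_H]$. When both are regular, $D(\tau)$ is literally the Gram matrix of the bilinear form in (ii), and so $\Delta(\tau) \neq 0$ is equivalent to its nondegeneracy.

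It remains to show the converse direction: that $\Delta(\tau) \neq 0$ forces regularity. If $\tau$ is not regular, then some nontrivial relation $c_1 \tau + \dots + c_n \tau^n = \alpha \cdot 1$ holds in $\mathfrak{g}$ (with $(c_1, \dots, c_n) \neq 0$); then $\sum_i c_i D(\tau)_{ij} = \langle [1_H, [\alpha \cdot 1, \tau_H^{j-1}]], \tau \rangle = 0$ for all $j$, since $[1, \tau_H^{j-1}] = 0$, so the rows of $D(\tau)$ are dependent. Symmetrically, if $\tau_H$ is not regular, a nontrivial relation among $1_H, \tau_H, \dots, \tau_H^{n-1}$ kills a linear combination of the columns. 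In either case $\Delta(\tau) = 0$, which is the required contrapositive.

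No step in this lemma is truly delicate — it is a bookkeeping exercise identifying the matrix with the pairing and invoking the well-known structure of centralizers of regular matrices. The only place that requires a small sanity check is the non-regular case, where one must verify that the linear dependence among powers of $\tau$ (which \emph{a priori} lives in $\mathfrak{g}$, not $\mathfrak{g}_\tau/\mathfrak{z}$) actually produces vanishing inside the bracket $[1_H, [\,\cdot\,, \tau_H^{j-1}]]$; this works precisely because the ``extra'' scalar matrix coming from the relation is central, hence commutes with $\tau_H^{j-1}$. The real content of Theorem~\ref{thm:stab-characterization-1} is in the determinantal identity of Theorem~\ref{thm:det-id}, not in the present lemma.
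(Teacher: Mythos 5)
Your proposal is correct and follows essentially the same route as the paper's own proof: identify $D(\tau)$ with the matrix of the pairing at the powers $\tau^i$ and $\tau_H^{j-1}$ (noting $z$ and $1_H$ differ by a nonzero scalar), use irregularity of $\tau$ or $\tau_H$ to produce a vector in the left or right kernel of $D(\tau)$, and use that in the regular case these powers give bases of $\mathfrak{g}_\tau/\mathfrak{z}$ and $\mathfrak{h}_{\tau_H}$, so $\Delta(\tau)\neq 0$ is precisely nondegeneracy. Nothing is missing.
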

\begin{proof}
  Suppose first that $\Delta(\tau) \neq 0$.  If $\tau$ is irregular, then its minimal polynomial has degree strictly less than $n+1$, so we may find coefficients $c_i \in F$ (not all zero) so that $x := \sum_{i=1}^n c_i \tau^i$ belongs to $\mathfrak{z}$.  The row vector $(c_1,\dotsc,c_n)$ then lies in the left kernel of the matrix $D(\tau)$.  Similarly, if $\tau_H$ is irregular, then we may find coefficients $c_j \in F$ (not all zero) so that $\sum_{j=1}^n c_j \tau_H^{j-1} = 0$, in which case the column vector $(c_1,\dotsc,c_n)^t$ lies in the right kernel of $D(\tau)$.  In either case, we obtain the contradiction that $\Delta(\tau) = 0$.  Thus $\tau$ and $\tau_H$ are both regular.  In that case,
  \[\{\tau^i : 1 \leq i \leq n \} \quad \text{ and } \quad \{\tau_H^{j-1} : 1 \leq j \leq n\}\] give bases for $\mathfrak{g}_\tau / \mathfrak{z}$ and $\mathfrak{h}_{\tau_H}$, respectively.  Since $z$ and $1_H$ are nonzero multiples of one another, we deduce that the nondegeneracy of the indicated bilinear form is equivalent to the nonvanishing of $\Delta(\tau)$.
  
  Conversely, if $\Delta(\tau) = 0$, then we see from the previous sentence that, even if $\tau$ and $\tau_H$ are regular, the indicated bilinear form is degenerate. Thus condition (ii) of Theorem \ref{thm:stab-characterization-1} fails.
\end{proof}

\begin{remark}\label{rmk:reduct-determ-ident-1}
  Theorem \ref{thm:det-id} yields the determinantal characterization
  \[\mathfrak{g}_{\stab} =
    \left\{\tau \in \mathfrak{g} : \Delta(\tau) \neq 0 \right\}
  \]
  of the stable subset.  Rallis--Schiffmann \cite[Prop 6.1]{2007arXiv0705.2168R} obtained a different determinantal characterization of a set closely related to $\mathfrak{g}_{\stab}$.  Their arguments give
  \begin{equation*}
    \mathfrak{g}_{\stab} = \left\{ \tau \in \mathfrak{g} :
      \Delta_{\RS}(\tau) \neq 0 \right\},
  \end{equation*}
  where $\Delta_{\RS}(\tau)$ denotes the determinant of the matrix $D_{\RS}(\tau)$ with $(i,j)$ entry
  \[
    D_{\RS}(\tau)_{i j} = \langle e_{n+1}^*, \tau^{i + j - 2} e_{n+1} \rangle \quad (1 \leq i, j \leq n),
  \]
  with $e_1,\dotsc,e_{n+1}$ and $e_1^*,\dotsc,e_{n+1}^*$ the standard basis and dual basis for $F^{n+1}$.  (In the notation used below, $D_{\RS}(\tau)_{i j} = q \tau^{i+j-2} p$.)  Wei Zhang has pointed out to us that one can use the identity \eqref{eqn:D-formula-1} stated below and some elementary row/column operations to show that $\Delta_{\RS}(\tau) = \pm 2^n \Delta(\tau)$.  This observation yields a shorter overall proof of Theorem \ref{thm:stab-characterization-1}, the result of primary interest.  One can show in turn that $\Delta_{\RS}(\tau) = \pm \Delta_0(\tau)$ by explicit calculation in the case that $\tau_H$ is regular semisimple, yielding an alternative proof of Theorem \ref{thm:det-id}.  We suspect that it may be possible to give further proofs using the many standard determinantal interpretations of resultants (Sylvester matrix, B{\'e}zout matrix, etc.).  We retain below our original treatment, which we emphasize is one among many possibilities.  Finally, it may comfort the skeptical reader to note that the statement of Theorem \ref{thm:det-id} was discovered via computer algebra and has been thoroughly tested.
\end{remark}

\subsection{Verification of the determinantal identity}\label{sec:verif-determ-ident}
In this section, we prove Theorem \ref{thm:det-id}.

\subsubsection{Roadmap}

Before diving into the details, we record an overall roadmap.  Recall the Vandermonde determinantal identity:
\[
  P(x) := \det \left( ( x_i^{j-1} )_{i,j=1}^n\right) = P_0(x) := \prod_{1 \leq i < j \leq n} (x_j - x_i).
\]

One of the standard proofs consists of three steps:
\begin{enumerate}
\item \emph{Divisibility}.  Viewing both sides as polynomials in the variables $x_1,\dotsc,x_n$, show that $P_0$ divides $P$.  The key observation is that $P(x)$ vanishes whenever some $x_i = x_j$ ($i \neq j$).
\item \emph{Homogeneity}.  Both sides are polynomials of the same degree, so the divisibility forces $P(x) = c P_0(x)$ for some constant $c$.
\item \emph{Evaluation of the constant}.  Comparing the coefficients of some well-chosen monomial, deduce that $c = 1$.  One can formulate this step in other ways, e.g., as a comparison of asymptotics as $x_{i+1}/x_{i} \rightarrow \infty$.
\end{enumerate}
The proof given below of Theorem \ref{thm:det-id} follows the same overall strategy, with Lemmas \ref{lem:Delta-0-divides-Delta}, \ref{lem:Delta-constant-multiple-Delta0} and \ref{lem:verif-determ-ident} fulfilling the three indicated steps.

\subsubsection{Notation and setup}

Turning to details, we retain the setup of \S\ref{sec:reduct-determ-ident}, but replacing \eqref{eqn:everything-is-general-linear} with
\[
  G = \GL(V), \quad H = \GL(V_H), \quad \mathfrak{g} = \glLie(V) = \End(V), \quad \mathfrak{h} = \glLie(V_H) = \End(V_H),
\]
where $V$ is an $(n+1)$-dimensional vector space equipped with a decomposition $V = V_H \oplus F e$ into an $n$-dimensional subspace $V_H$ and a one-dimensional subspace $F e$ spanned by some nonzero vector $e \in V$.  Thus \eqref{eqn:everything-is-general-linear} corresponds to the case
\begin{equation}\label{eqn:V-V-H-e-standard}
  V = F^{n+1}, \quad V_H = F^n
  = \{(\ast, \dotsc, \ast,0)\} \subseteq V,
  \quad
  e = (0,\dotsc,0,1).
\end{equation}
The reason for introducing this abstraction is that it will be convenient for us to work with multiple choices of coordinates throughout the proof.

We denote as usual by $[\mathfrak{g}]$ and $[\mathfrak{h}]$ the geometric invariant theory quotients.  The maps $\mathfrak{g} \rightarrow [\mathfrak{g}]$ and $\mathfrak{h} \rightarrow [\mathfrak{h}]$ may then be regarded as sending a matrix to the coefficients of its characteristic polynomial (see, e.g., \cite[\S13.4.1]{nelson-venkatesh-1}).  Let $R$ denote the coordinate ring of $[\mathfrak{g}] \times [\mathfrak{h}]$.  It is a free polynomial ring in $2 n + 1$ variables.  We may identify $R$ with
\[
  F[\lambda_1, \dotsc, \lambda_{n+1}, \mu_1, \dotsc, \mu_n]^{S(n+1) \times S(n)},
\]
where the $\lambda_i$ and $\mu_j$ are algebraically independent variables, corresponding to the generalized eigenvalues of a matrix, while $S(n+1)$ and $S(n)$ are symmetric groups acting by permutations on $\{\lambda_i\}$ and $\{\mu_j\}$, respectively.  Let
\[\iota : \mathfrak{g} \rightarrow
  [\mathfrak{g}] \times [\mathfrak{h}]
\]
denote the map assigning to $\tau \in \mathfrak{g}$ the invariants of the pair $(\tau, \tau_H)$.  The map $\iota$ is surjective (see for instance \cite[\S14.3, Lemma 1]{nelson-venkatesh-1}), so we may use the pullback $\iota^*$ to identify $R$ with a space of functions $\mathfrak{g} \rightarrow F$.  For instance, it is clear from \eqref{eqn:Delta-0-defn} that $\Delta_0 \in R$.

\subsubsection{$\Delta$ is an invariant polynomial}

\begin{lemma}\label{lem:Delta-in-R}
  We have $\Delta \in R$.
\end{lemma}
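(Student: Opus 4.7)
The plan is to prove that $\Delta$ is $H$-invariant and then invoke invariant-theoretic results from \cite[\S14]{nelson-venkatesh-1} to conclude membership in $R$.

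First, observe that $\Delta$ is manifestly a polynomial function on $\mathfrak{g}$, since each entry $D(\tau)_{ij}$ is polynomial in $\tau$. The content of the Lemma is therefore that $\Delta$ descends through the GIT quotient for the $H$-action, where $H$ acts on $\mathfrak{g}$ by conjugation (which makes sense because $H = \GL(V_H)$ preserves the decomposition $V = V_H \oplus Fe$ by fixing $e$). The first substantive step is to show that $\Delta$ is $H$-invariant.

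To verify $H$-invariance, let $h \in H$. Then $h$ preserves the splitting $V = V_H \oplus F e$, so conjugation by $h$ commutes with the projection $\tau \mapsto \tau_H$, i.e., $(h \tau h^{-1})_H = h \tau_H h^{-1}$. Moreover, $h \cdot 1_H = h 1_H h^{-1} = 1_H$, since $1_H$ acts as the identity on $V_H$ and as zero on $Fe$, both of which are $h$-stable. Consequently,
\[
  [1_H, [(h\tau h^{-1})^i, (h\tau h^{-1})_H^{j-1}]]
  = h \bigl[1_H, [\tau^i, \tau_H^{j-1}]\bigr] h^{-1}.
\]
Pairing against $h \tau h^{-1}$ and using invariance of the trace pairing under conjugation, we obtain $D(h \tau h^{-1})_{ij} = D(\tau)_{ij}$, hence $\Delta(h \tau h^{-1}) = \Delta(\tau)$.

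The second step is to invoke the fundamental theorem of invariant theory for the unitary GGP pair, as recorded in \cite[\S14]{nelson-venkatesh-1}: the ring of $H$-invariant polynomial functions on $\mathfrak{g}$ coincides with $\iota^* R$, i.e., is generated by the coefficients of the characteristic polynomials of $\tau$ and of $\tau_H$. (Recall that in the reduction of \S\ref{sec:reduct-determ-ident} we base-extended to an algebraically closed field and passed to the split GGP pair, where this statement is standard.) Combined with the $H$-invariance just established, this places $\Delta$ in $R$, completing the proof. No serious obstacle is expected: step one is a routine calculation exploiting the fact that $H$ fixes $1_H$ and commutes with the restriction map, and step two is a direct citation.
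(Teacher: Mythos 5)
Your first step (the $H$-invariance of $\Delta$) is correct: $H$ fixes $e$, conjugation by $h \in H$ commutes with $x \mapsto x_H$ and fixes $1_H$, and the trace pairing is conjugation-invariant, so indeed $D(h\tau h^{-1})_{ij} = D(\tau)_{ij}$. The gap is in the second step, which is where all the content lies. The assertion you cite as a "direct citation" -- that the ring of $H$-invariant polynomial functions on $\mathfrak{g}$ coincides with $\iota^* R$ -- is not recorded in \cite[\S14]{nelson-venkatesh-1}: that section supplies the surjectivity of $\iota$, the characterizations of stability, stable $\Rightarrow$ regular, and the principal-$H$-bundle statement over the stable locus, but not a first fundamental theorem identifying the full invariant ring. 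In the present paper this identification appears as a separate lemma stated \emph{after} Lemma \ref{lem:Delta-in-R}, explicitly flagged as "not necessary for our purposes," and its proof invokes Wei Zhang's result \cite[Lem 3.1]{MR3245011} (the invariant ring is generated by the characteristic-polynomial coefficients of $\tau$ together with the quantities $q\tau^j p$) \emph{and} the fact \eqref{eqn:tau-q-tau-j-p-in-R} that $q\tau^j p \in R$, which is exactly the Cramer's-rule/generating-series identity
\[
  \sum_{j \geq 0} t^j\, q \tau^j p = \frac{\det(1 - t\tau_H)}{\det(1 - t\tau)}
\]
that constitutes the heart of the paper's proof of Lemma \ref{lem:Delta-in-R} itself. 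So as written, your argument outsources its entire substance to a citation that does not exist in the cited location, and repairing it honestly forces you to reintroduce precisely the computation you were trying to avoid (or at least Zhang's lemma plus the identity above).

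Note also that the facts which genuinely are available at this point do not rescue the shortcut: from the principal-bundle statement over the stable locus one only gets that an $H$-invariant polynomial descends to a regular function on the image of $\mathfrak{g}_{\stab}$ in $[\mathfrak{g}] \times [\mathfrak{h}]$, and the complement of that image is the divisor $\{\Delta_0 = 0\}$, of codimension one; nothing prevents the descended function from having a pole along it, so you cannot conclude membership in $R$ without an input of FFT type. By contrast, the paper's proof bypasses invariant theory altogether: with $p = e$, $q = e^t$ it rewrites $\langle [1_H, x], \tau \rangle = q[\tau,x]p$, expands each entry $D(\tau)_{ij}$ (using $\tau_H = 1_H \tau 1_H$ and $1_H = 1 - pq$) into a polynomial in the quantities $q\tau^a p$, and then applies \eqref{eqn:tau-q-tau-j-p-in-R}. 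If you want to keep the invariance-theoretic flavor of your argument, the honest version is: invariance of $\Delta$, plus \cite[Lem 3.1]{MR3245011}, plus the identity \eqref{eqn:tau-q-tau-j-p-in-R} -- but the last item must still be proved.
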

\begin{proof}
  We work with the standard coordinates \eqref{eqn:V-V-H-e-standard}.  We regard $F^{n+1}$ as a space of column vectors.  We set $p := e$ (a column vector) and $q := e^t$ (a row vector).  Then, multiplying matrices and vectors in the usual way, we have
  \begin{equation}\label{eqn:frak-h-via-p-q}
    \mathfrak{h} = \{x \in \mathfrak{g} : x p = 0, q x = 0\},
  \end{equation}
  \begin{equation}\label{eqn:1-H-1-minus-p-q}
    1_H = 1 - p q.
  \end{equation}

  We observe that for each $j \in \mathbb{Z}_{\geq 0}$, we have
  \begin{equation}\label{eqn:tau-q-tau-j-p-in-R}
    [\tau \mapsto  q \tau^j p] \in R.
  \end{equation}
  This fact was observed (implicitly) in \cite[\S14]{nelson-venkatesh-1} to follow from the identity of formal power series
  \begin{equation*}
    \sum_{j \in \mathbb{Z}_{\geq 0}}
    t^j q \tau^j p
    =
    q (1 - t \tau)^{-1} p
    =
    \frac{\det(1 - t \tau_H)}{\det(1- t \tau)},
  \end{equation*}
  which is itself a consequence of Cramer's rule.

  We thereby reduce to verifying that each matrix entry $D(\tau)_{i j}$ is a polynomial combination of the quantities $q \tau^j p$.  To that end, we apply \eqref{eqn:1-H-1-minus-p-q} and the fact that $1$ commutes with everything to see that for any $x \in \mathfrak{g}$, we have
  \begin{align}\label{eq:1-H-z-tau-comm-brack}
    \langle [1_H, x], \tau  \rangle
    &=
      - \langle [ p q, x], \tau  \rangle
    \\ \nonumber
    &=
      - \langle p q, [x,\tau]  \rangle
    \\
    &=\nonumber
      q [\tau,x] p.
  \end{align}
  We then specialize this observation to $x = [\tau^i, \tau_H^{j-1}]$, so that $D_{i j}(\tau) = \langle [1_H, x], \tau \rangle$, and expand out $\tau_H = 1_H \tau 1_H$ and $1_H = 1 - p q$ and all commutators.
\end{proof}

The following result is not necessary for our purposes, but seems worth recording.
\begin{lemma}
  The ring $R$ consists of all $H$-invariant polynomial functions $\mathfrak{g} \rightarrow F$, i.e., $\iota$ defines the geometric invariant theory quotient of $\mathfrak{g}$ by $H$.
\end{lemma}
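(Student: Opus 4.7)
The plan is to identify $\mathfrak{g}$ as an $H$-module and apply the first fundamental theorem of classical invariant theory to obtain an explicit finite list of algebra generators for $F[\mathfrak{g}]^H$, then verify that each such generator lies in $R$, using computations already carried out in the proof of Lemma \ref{lem:Delta-in-R}. First I would work in the coordinates of \eqref{eqn:V-V-H-e-standard} and write each $\tau \in \mathfrak{g}$ in block form $\tau = \bigl(\begin{smallmatrix} a & b \\ c & d \end{smallmatrix}\bigr)$ with $a \in \End(V_H)$, $b \in V_H$, $c \in V_H^*$, $d \in F$, yielding an $H$-module isomorphism
\[
  \mathfrak{g} \cong \End(V_H) \oplus V_H \oplus V_H^* \oplus F
\]
on which $H = \GL(V_H)$ acts by conjugation on the first summand, by the defining representation on the second, by its contragredient on the third, and trivially on $F$.

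Next I would apply the characteristic-zero first fundamental theorem of invariant theory for $\GL(V_H)$ to this representation (cf.\ Weyl, \emph{The Classical Groups}, or the formulation of Procesi). This produces algebra generators for $F[\mathfrak{g}]^H$ consisting of $d$, traces $\tr(a^i)$, and traces of alternating words $\tr(a^{i_1}(bc)\,a^{i_2}(bc)\cdots a^{i_k}(bc))$ in $a$ and the rank-one operator $bc \in \End(V_H)$. The cyclicity of the trace combined with the identity $\tr(u \cdot bc) = c u b$ valid for any $u \in \End(V_H)$ shows that every alternating-word trace reduces to a product of scalars of the form $c a^i b$, and the Cayley--Hamilton theorem for $a$ caps the necessary exponents; hence $F[\mathfrak{g}]^H$ is generated by
\[
  d, \quad \tr(a^i)\ (1 \leq i \leq n), \quad c a^i b\ (0 \leq i \leq n-1).
\]

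Finally I would verify that each of these generators lies in $R$. The relation $d = \tr(\tau) - \tr(\tau_H)$ shows $d \in R$, and $\tr(a^i) = \tr(\tau_H^i)$ is in $R$ by the definition of $R$. For the bilinear contractions $c a^i b$, recall from the proof of Lemma \ref{lem:Delta-in-R} that $q \tau^j p \in R$ for every $j \geq 0$, where $p = e$ and $q = e^t$. With this choice, $q \tau^j p$ equals the bottom-right entry of $\tau^j$, and a direct block expansion using the recursion $\tau v_{j} = v_{j+1}$ with $v_j = \tau^j p$ gives, for each $j \geq 2$,
\[
  q \tau^j p = c a^{j-2} b + P_j(d,\ c a^i b : i < j-2)
\]
for some polynomial $P_j$. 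An induction on $j$ then extracts each $c a^i b$ as an element of $R$, completing the proof. The main obstacle is the invocation of the first fundamental theorem in the precise one-copy-each form required, and in particular the reduction of general cyclic-word invariants to the short list exhibited above; the block-matrix computation in the final step is mechanical but must be checked with care to confirm that the leading coefficient of $c a^{j-2} b$ is indeed $1$, enabling the inductive extraction.
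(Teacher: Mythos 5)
Your argument is correct, and it reaches the same endpoint as the paper by a somewhat different route. The paper simply quotes \cite[Lem 3.1]{MR3245011}, which asserts that the ring $S$ of $H$-invariant polynomials on $\mathfrak{g}$ is generated by the characteristic polynomial coefficients of $\tau$ together with the quantities $q\tau^j p$; since the latter were already shown to lie in $R$ in \eqref{eqn:tau-q-tau-j-p-in-R}, the inclusion $S \subseteq R$ is immediate and no further computation is needed. You instead derive a generating set $d$, $\tr(a^i)$, $c a^i b$ from the classical trace form of the first fundamental theorem for $\End(V_H)\oplus V_H \oplus V_H^*\oplus F$ and then convert: the reduction of the alternating-word traces via $\tr(a^{i_1}(bc)\cdots a^{i_k}(bc)) = \prod_j c a^{i_j} b$ is right, and your block-expansion identity $q\tau^j p = c a^{j-2}b + P_j(d,\, c a^i b : i \leq j-3)$ with unit leading coefficient does hold (the leading term comes from the paths in $\tau^j$ that avoid the index $n+1$ in all intermediate positions, and every other path factors into $d$'s and shorter contractions), so the induction extracting each $c a^i b$ from \eqref{eqn:tau-q-tau-j-p-in-R} goes through. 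What your route buys is self-containedness relative to the classical literature, at the cost of invoking the Procesi-type mixed FFT (note that Weyl's book alone handles vectors and covectors but not the conjugation variable $a$; you do need the trace-identity form due to Procesi, as you indicate) and the extra change-of-generators computation; the paper's route is shorter only because Zhang's lemma is phrased in exactly the generators already known to lie in $R$. Two small points to tidy: state the trivial inclusion $R \subseteq S$ (conjugation by $H$ preserves the characteristic polynomials of both $\tau$ and $\tau_H$), since the lemma asserts an equality; and note that the Cayley--Hamilton capping of exponents, while true, is not needed — it suffices that $c a^i b \in R$ for every $i \geq 0$, which your induction already gives.
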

\begin{proof}
  Let $S$ denote the ring of $H$-invariant polynomial functions $\mathfrak{g} \rightarrow F$.  We must check that $R = S$.  Clearly $R \subseteq S$.  Conversely, it follows from \cite[Lem 3.1]{MR3245011} that $S$ is generated by
  \begin{itemize}
  \item the coefficients of the characteristic polynomial of $\tau$, and
  \item the quantities $q \tau^j p$ with $j \in \{1,\dotsc,n\}$ as in \eqref{eqn:tau-q-tau-j-p-in-R}.
  \end{itemize}
  By \eqref{eqn:tau-q-tau-j-p-in-R}, we conclude that $S \subseteq R$.
\end{proof}

\subsubsection{Divisibility}
We next verify ``one divisibility'' in the required identity relating $\Delta$ and $\Delta_0$.
\begin{lemma}\label{lem:Delta-0-divides-Delta}
  $\Delta_0$ divides $\Delta$ in the ring $R$.
\end{lemma}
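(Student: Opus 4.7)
The plan is to follow the divisibility step of the classical Vandermonde-style proof. First, I pass to the ambient polynomial ring
\[
  \tilde R := F[\lambda_1,\dotsc,\lambda_{n+1},\mu_1,\dotsc,\mu_n],
\]
a UFD containing $R$ as the subring of $S(n+1)\times S(n)$-invariants. In $\tilde R$, the element $\Delta_0$ factors as the product of the pairwise coprime linear irreducibles $\lambda_i-\mu_j$. Since $\Delta,\Delta_0\in R$, any quotient $\Delta/\Delta_0 \in \tilde R$ is automatically invariant and thus lies in $R$; so divisibility in $R$ follows from divisibility in $\tilde R$, which by unique factorization reduces to showing that each $\lambda_i-\mu_j$ divides $\Delta$. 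The $S(n+1)\times S(n)$-invariance of $\Delta$ further reduces this to the single pair $(i,j)=(1,1)$, and then the Nullstellensatz (together with the surjectivity of $\iota$) converts divisibility by the reduced linear polynomial $\lambda_1-\mu_1$ into the pointwise assertion: $\Delta(\tau)=0$ whenever $\tau \in \mathfrak g$ is such that $\tau$ and $\tau_H$ share an eigenvalue.

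To establish this vanishing, I would argue by cases. If $\tau$ or $\tau_H$ fails to be regular, then the elementary argument from the proof of Lemma \ref{lem:equivalence-Delta-degen} already produces an explicit nontrivial left or right null vector of $D(\tau)$; hence $\Delta(\tau)=0$. I may therefore assume $\tau$ and $\tau_H$ are both regular, and by Zariski density of the regular semisimple locus within the hypersurface $\{\lambda_1=\mu_1\}$ combined with continuity of $\Delta$, even that $\tau$ and $\tau_H$ are regular semisimple with exactly one shared eigenvalue $\lambda$. In this case $\mathfrak g_\tau$ and $\mathfrak h_{\tau_H}$ are spanned by the spectral idempotents $P_1,\dotsc,P_{n+1}$ of $\tau$ in $\mathfrak g$ and $Q_1,\dotsc,Q_n$ of $\tau_H$ in $\mathfrak h$. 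Using the reformulation $\langle[1_H,u],\tau\rangle=q[\tau,u]p$ with $p=e$, $q=e^\ast$ derived in the proof of Lemma \ref{lem:Delta-in-R}, I would rewrite the bilinear form underlying $D(\tau)$ as a trilinear expression in these idempotents, $\tau$, and the boundary data $p,q$, and exhibit an explicit null vector from the shared-eigenvalue identities $(\tau-\lambda)P_1=0$ and $1_H(\tau-\lambda)Q_1=0$.

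The main obstacle will be this final explicit construction in the regular semisimple case. The na\"\i ve candidates---the spectral projectors $P_1$ and $Q_1$ onto the shared eigenspaces---fail individually to lie in the corresponding left or right kernel: $P_1$ and $Q_1$ do not commute in general, because $Q_1$ is not an eigenvector of $\tau$ (its image under $\tau$ typically leaks into the $Fe$-direction, so that $(\tau-\lambda)Q_1$ has a nonzero $p$-component even though $1_H(\tau-\lambda)Q_1=0$). The correct null vector must therefore mix the two pieces of spectral data, and its verification is a linear-algebraic calculation combining the left and right $\lambda$-eigenvector relations for $\tau$ with the vanishing identities $yp=0$ and $qy=0$ for $y\in\mathfrak h$, which encode the embedding $H\hookrightarrow G$ through the fixed vector $e$ and its dual.
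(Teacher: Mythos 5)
Your reduction scheme (factor $\Delta_0$ into the linear forms $\lambda_i-\mu_j$ in the ambient polynomial ring, use the $S(n+1)\times S(n)$-symmetry to treat one factor, and convert divisibility by $\lambda_1-\mu_1$ into the pointwise statement that $\Delta(\tau)=0$ whenever $\tau$ and $\tau_H$ share an eigenvalue) is sound and parallels the paper's first step, which phrases the same reduction by observing that $(\Delta_0)$ is a radical ideal of $R$ and invoking the Nullstellensatz. The genuine gap is that you never prove the pointwise vanishing, which is where all of the content of the lemma lies. After reducing (by Zariski density on the hypersurface $\{\lambda_1=\mu_1\}$) to the case where $\tau$ and $\tau_H$ are regular semisimple with one shared eigenvalue, you state that an explicit null vector of the form $(x,y)\mapsto\langle[1_H,[x,y]],\tau\rangle$ must exist, observe that the obvious candidates seem to fail, and stop, explicitly flagging the construction as ``the main obstacle.'' A proof proposal that ends at the obstacle does not establish the lemma: without exhibiting a null vector (or otherwise proving degeneracy, equivalently $\det D(\tau)=0$) in the shared-eigenvalue case, nothing has been shown.

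For comparison, the paper does not pass to the semisimple locus at all. It quotes the dichotomy from \cite[\S 14.2]{nelson-venkatesh-1} that non-stability forces $\tau$ and $\tau_H$ (or their transposes) to admit a common eigenvector, chooses a basis adapted to the Jordan block of $\tau_H$ containing that eigenvector, and writes down an explicit $y\in\mathfrak h_{\tau_H}$ for which $[1_H,[x,y]]$ has only the $(1,n+1)$ entry, so that $\langle[1_H,[x,y]],\tau\rangle$ is a multiple of $\tau_{n+1,1}=0$ for every $x\in\mathfrak g_\tau$ (regularity of $\tau$ enters through writing $x$ as a polynomial in $\tau$). I would add that your diagnosis of the difficulty is also off: in your regular semisimple setting, writing $P_1=v_1u_1^*$, $Q_1=w_1w_1^*$ for the spectral projectors attached to the shared eigenvalue and setting $\alpha_1:=q\tau w_1$, $\beta_1:=w_1^*\tau p$, the eigenvalue relations give $\alpha_1\,(u_1^*p)=0$ and $\beta_1\,(q v_1)=0$, and a short case analysis using biorthogonality of left and right eigenvectors shows that either the column of $Q_1$ or the row of $P_1$ in the matrix of the form vanishes; so one of the ``na\"{\i}ve'' candidates always works, but one must choose which according to the vanishing of $\alpha_1$ and $\beta_1$, and this is precisely the argument your proposal is missing.
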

\begin{proof}
  The ideal $(\Delta_0)$ is generated by a squarefree element of the unique factorization domain $R$, hence is radical, so by the Nullstellensatz, it is enough to verify that \[\Delta_0(\tau) = 0 \implies \Delta(\tau) = 0.\]

  Suppose that $\Delta_0(\tau) = 0$, i.e., that $\tau \in \mathfrak{g}$ is not stable.  We must check that $\Delta(\tau) = 0$.  In view of Lemma \ref{lem:equivalence-Delta-degen}, it is enough to check that condition (ii) of Theorem \ref{thm:stab-characterization-1} fails.  Suppose otherwise, for the sake of contradiction, that it holds.  In particular, $\tau$ and $\tau_H$ are regular.

  Since $\tau$ and $\tau_H$ are not stable, we see from \cite[\S14.2, Lem]{nelson-venkatesh-1} that either
  \begin{itemize}
  \item $\tau$ and $\tau_H$ admit a common eigenvector, or
  \item their transposes admit a common eigenvector.
  \end{itemize}
  We consider below the first case.  The second case may be treated similarly by applying the same argument to the transposes.

  Since $\tau_H$ is regular, we may decompose $V_H$ into generalized eigenspaces $W_1,\dotsc,W_m$ for $\tau_H$, with distinct eigenvalues $\mu_1,\dotsc,\mu_m$, and with the action of $\tau_H$ on $W_j$ given with respect to some basis by a standard Jordan block, given in the case $\dim(W_j)=3$ (say) by
  \begin{equation}\label{eqn:tau-H-jordan-form}
    \tau_H|_{W_j}
    \sim
    \begin{pmatrix}
      \mu_j & 1 & 0 \\
      & \mu_j & 1 \\
      & & \mu_j
    \end{pmatrix}.
  \end{equation}
  The operator $\tau_H$ has exactly $m$ eigenspaces, each one-dimensional, one for each summand $W_j$.

  By relabeling, we may suppose that $\tau$ has an eigenvector in $W_1$.  Let $e_1,\dotsc,e_{\dim(W_1)}$ be a basis of $W_1$ with respect to which $\tau_H|_{W_1}$ is described by a matrix as in \eqref{eqn:tau-H-jordan-form}.  Then $e_1$ is an eigenvector of $\tau$.  We extend to a basis $e_1,\dotsc,e_n$ of $V_H$ and then further to a basis $e_1,\dotsc,e_{n+1}$ of $V$.
  
  The centralizer $\mathfrak{h}_{\tau_H}$ is the direct sum over $j \in \{1, \dotsc, m\}$ of the $\glLie(W_j)$-centralizer of $\tau_H|_{W_j}$, the latter of which consists in the case $\dim(W_j) = 3$ (say) of all matrices of the form
  \[
    \begin{pmatrix}
      a & b & c \\
      0 & a & b \\
      0 & 0 & a
    \end{pmatrix}.
  \]
  There is thus a unique (nonzero) element $y \in \mathfrak{h}_{\tau_H}$ for which
  \begin{itemize}
  \item $y|_{W_j} = 0$ for $j \neq 1$, and
  \item $y|_{W_1} e_i = 0$ for $1 \leq i < \dim(W_1)$ and $y|_{W_1} e_{\dim(W_1)} = e_1$.
  \end{itemize}
  For instance, in the case $\dim(W_1) = 3$ (say),
  \begin{equation*}
    y|_{W_1}
    \sim
    \begin{pmatrix}
      0 & 0 & 1 \\
      0 & 0 & 0 \\
      0 & 0 & 0
    \end{pmatrix}.
  \end{equation*}

  Let $x$ be any element of $\mathfrak{g}_\tau$.  We claim that
  \begin{equation}\label{eqn:key-claim-stability-failure-implies-degeneracy}
    [1_H, [x,y]]_{i j} = 0 \text{ unless } (i,j) = (1,n+1).
  \end{equation}
  Indeed, since $\tau$ is regular, we know that the element $x \in \mathfrak{g}_\tau$ is a linear combination of powers of $\tau$, and so $e_1$ is an eigenvector of $x$.  Thus $x_{i,1} \neq 0$ only if $i = 1$.  By the construction of $y$ and some matrix multiplication, we deduce that $[x,y]_{i j} \neq 0$ only if $i = 1$.  For any $z \in \mathfrak{g}$, we have
  \[
    [1_H,z]_{i j} = \begin{cases}
      0 & \text{ if } i \leq n, j \leq n, \\
      0  & \text{ if } i=j=n+1, \\
      z_{i j} &  \text{ if } i \leq n, j = n+1, \\
      - z_{i j} & \text{ if } i = n+1, j \leq n.
    \end{cases}
  \]
  Applying this observation to $z = [x,y]$ yields the claim.

  We deduce from \eqref{eqn:key-claim-stability-failure-implies-degeneracy} that $\langle [1_H, [x,y]], \tau \rangle$ is a multiple of $\tau_{n+1,1}$, which vanishes in view of our assumption that $e_1$ is a $\tau$-eigenvector.  Since $y$ is nonzero and this last property holds for all $x \in \mathfrak{g}_\tau$, we deduce that the bilinear form in condition (ii) of Theorem \ref{thm:stab-characterization-1} is degenerate, contrary to hypothesis.  This gives the required contradiction and completes the proof of the lemma.
\end{proof}

\subsubsection{Homogeneity}
We may upgrade lemma \ref{lem:Delta-0-divides-Delta} as follows:
\begin{lemma}\label{lem:Delta-constant-multiple-Delta0}
  There exists $c \in F$ so that $\Delta(\tau) = c \Delta_0(\tau)$ for all $\tau \in \mathfrak{g}$.
\end{lemma}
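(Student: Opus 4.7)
The plan is to apply a homogeneity argument to upgrade the divisibility relation obtained in Lemma~\ref{lem:Delta-0-divides-Delta}, in the spirit of the Vandermonde proof sketched in the roadmap. Since $R$ is a unique factorization domain and $\Delta_0 \mid \Delta$ in $R$, we may write $\Delta = P \cdot \Delta_0$ for some $P \in R$. The task reduces to showing that $P$ is constant, which will follow once I verify that $\Delta$ and $\Delta_0$ are homogeneous of the same degree as polynomial functions $\mathfrak{g} \to F$ under the dilation $\tau \mapsto t\tau$.

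First I would record the two homogeneity statements. For $\Delta_0$, the eigenvalues $\lambda_i$ of $\tau$ and $\mu_j$ of $\tau_H$ all scale by $t$ under $\tau \mapsto t\tau$, and $\Delta_0$ is a product of $n(n{+}1)$ linear factors of the form $\lambda_i - \mu_j$, so $\Delta_0(t\tau) = t^{n(n+1)} \Delta_0(\tau)$. For $\Delta$, I would inspect the definition: the $(i,j)$-entry of $D(\tau)$ is $\langle [1_H, [\tau^i, \tau_H^{j-1}]], \tau \rangle$, which, as a polynomial in the matrix entries of $\tau$, is homogeneous of degree $i + (j-1) + 1 = i+j$ (the bracket with the constant matrix $1_H$ preserves degree, and the trace pairing against $\tau$ contributes one). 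Expanding the determinant as a signed sum over permutations $\sigma \in S(n)$, each summand has degree $\sum_{i=1}^n (i + \sigma(i)) = n(n+1)$, so $\Delta(t\tau) = t^{n(n+1)} \Delta(\tau)$.

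To conclude, I would note that the pullback $\iota^*$ identifies $R$ with a graded subring of the polynomial ring on $\mathfrak{g}$, where the grading assigns each eigenvalue variable $\lambda_i$ or $\mu_j$ degree $1$ in the matrix entries of $\tau$ (equivalently, $R$ is generated by coefficients of characteristic polynomials, each of positive degree in $\tau$). The only elements of $R$ that are homogeneous of degree $0$ on $\mathfrak{g}$ are the constants. Since $\Delta$ and $\Delta_0$ lie in the degree $n(n+1)$ component and $\Delta = P \cdot \Delta_0$, the element $P$ must lie in the degree $0$ component, hence $P = c$ for some $c \in F$, as required.

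I do not foresee a serious obstacle here: the degree bookkeeping for $\Delta$ is the only calculation, and it is essentially an iterated application of the Leibniz rule for the degree of a determinant. The genuine content of Theorem~\ref{thm:det-id}, namely the evaluation $c = 2^n (-1)^{n(n-1)/2}$, is deferred to the subsequent step (Lemma~\ref{lem:verif-determ-ident}), where one would specialize $\tau$ to a convenient element (e.g.\ a regular nilpotent $\tau$ with $\tau_H$ semisimple having $n$th roots of unity as eigenvalues, as announced in the proof sketch) and compute both sides directly.
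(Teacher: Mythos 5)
Your proposal is correct and follows essentially the same route as the paper: both compute that $D(t\tau)_{ij} = t^{i+j}D(\tau)_{ij}$, deduce $\Delta(t\tau) = t^{n(n+1)}\Delta(\tau)$ and $\Delta_0(t\tau) = t^{n(n+1)}\Delta_0(\tau)$, and then use the divisibility $\Delta_0 \mid \Delta$ from Lemma~\ref{lem:Delta-0-divides-Delta} together with homogeneity to conclude that the quotient is a constant. Your remarks on the grading of $R$ merely spell out what the paper's ``the conclusion follows'' leaves implicit.
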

\begin{proof}
  We consider the behavior of $\Delta$ and $\Delta_0$ under homothety.  For $t \in F^\times$, we have
  \begin{equation*}
    D(t \tau)_{i j} =
    t^{i +j} D(\tau)_{i j}.
  \end{equation*}
  Since $1 + 2 + \dotsb + n = n(n+1)/2$, we obtain
  \begin{equation}\label{eqn:homothety-Delta}
    \Delta(t \tau) = t^{n(n+1)} \Delta(\tau).
  \end{equation}
  On the other hand, if $\lambda_1,\dotsc,\lambda_{n+1}$ and $\mu_1,\dotsc,\mu_n$ are the eigenvalues of $\tau$ and $\tau_H$, then
  \begin{equation}\label{eqn:homothety-Delta0}
    \Delta_0(t \tau) =
    \prod_{i=1}^{n+1} \prod_{j=1}^n (t \lambda_i
    - t \mu_j)
    =
    t^{n(n+1)} \Delta_0(\tau).
  \end{equation}
  Thus $\Delta$ and $\Delta_0$ are homogeneous elements of $R$ of the same degree.  We have seen in Lemma \ref{lem:Delta-0-divides-Delta} that $\Delta_0$ divides $\Delta$.  Since $\Delta_0$ is not identically zero, the conclusion follows.
\end{proof}

\begin{remark}
  To complete the proofs of Theorems \ref{thm:stability-consequence-for-1-H} and \ref{thm:stab-characterization-1}, it suffices at this point to show that $\Delta$ is not identically zero: lemma \ref{lem:Delta-constant-multiple-Delta0} then forces $\Delta$ to be a nonzero constant multiple of $\Delta_0$.
\end{remark}

\subsubsection{Evaluation of the constant}

The following calculation shows that the constant $c$ in Lemma \ref{lem:Delta-constant-multiple-Delta0} equals $2^n (-1)^{n(n-1)/2}$, thereby completing the proof of Theorem \ref{thm:det-id}.
\begin{lemma}\label{lem:verif-determ-ident}
  There exists an element $\tau \in \mathfrak{g}$ for which
  \begin{equation}\label{eqn:det-tau-2-n}
    \Delta(\tau) = 2^n (-1)^{n(n-1)/2}
  \end{equation}
  and
  \begin{equation}\label{eqn:delta-0-tau-neg-1-n}
    \Delta_0(\tau) = 1.
  \end{equation}
\end{lemma}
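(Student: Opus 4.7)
The plan is to exhibit $\tau$ as a regular nilpotent element of $\mathfrak{g}$ whose $H$-restriction $\tau_H$ is regular semisimple with a particularly convenient set of eigenvalues, and then compute both $\Delta_0(\tau)$ and $\Delta(\tau)$ directly. Working in the standard coordinates \eqref{eqn:V-V-H-e-standard}, take $A := \tau_H|_{\mathfrak{h}}$ to be the diagonal matrix $\diag(\mu_1,\dots,\mu_n)$ where $\mu_1,\dots,\mu_n$ are the $n$th roots of $-1$; then write
\[
  \tau = \begin{pmatrix} A & \tilde b \\ \tilde c^T & 0 \end{pmatrix}, \qquad \tilde b = (1,\dots,1)^T, \qquad \tilde c_k = -\mu_k^2/n.
\]
A Lagrange-interpolation identity forces the characteristic polynomial of $\tau$ to be $x^{n+1}$, so $\tau$ is regular nilpotent, and $\tau_H$ is visibly regular semisimple. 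Since all eigenvalues of $\tau$ vanish, the definition \eqref{eqn:Delta-0-defn} immediately gives $\Delta_0(\tau) = \prod_j(-\mu_j)^{n+1}$. Substituting $x=0$ into $\prod(x-\mu_j) = x^n+1$ yields $\prod(-\mu_j) = 1$, so $\Delta_0(\tau)=1$ as required.

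The substantive computation is $\Delta(\tau) = \det D(\tau)$. Applying \eqref{eq:1-H-z-tau-comm-brack} with $p = e_{n+1}$, $q=e_{n+1}^T$ and noting that $\tau_H^{j-1}p=0$ and $q\,\tau_H^{j-1}=0$, the expansion of $[\tau,[\tau^i,\tau_H^{j-1}]]$ collapses to
\[
  D(\tau)_{ij} = -u_1^\flat A^{j-1} v_i^\flat - u_i^\flat A^{j-1} v_1^\flat,
\]
where $v_i^\flat$ and $u_i^\flat$ denote the first $n$ coordinates of $\tau^i e_{n+1}$ and of $e_{n+1}^T\tau^i$ respectively. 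The key step is then to show by induction, using the power-sum identities $\sum_k \mu_k^s = 0$ for $1\le s\le n-1$ and $\sum_k\mu_k^n=-n$, that
\[
  (v_i^\flat)_k = \mu_k^{i-1}, \qquad (u_i^\flat)_k = -\mu_k^{i+1}/n \qquad (1\le i\le n).
\]
Substituting these formulas and using $\tilde b_k=1$, $\tilde c_k=-\mu_k^2/n$ gives $D(\tau)_{ij} = \tfrac{2}{n}\sum_k \mu_k^{i+1}\mu_k^{j-1}$, which factors as $D(\tau) = \tfrac{2}{n} E V^{T}$ with $E_{ik}=\mu_k^{i+1}$ and $V_{jk}=\mu_k^{j-1}$.

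Taking determinants, factoring $\mu_k^2$ from each column of $E$ and using $\prod_k\mu_k^2 = 1$, one obtains $\Delta(\tau) = (2/n)^n (\det V)^2$. The square of the Vandermonde determinant $(\det V)^2$ is the discriminant of $x^n+1$, which is readily evaluated via the resultant $\operatorname{Res}(x^n+1,nx^{n-1}) = n^n$ as $(-1)^{n(n-1)/2} n^n$. Hence $\Delta(\tau) = (2/n)^n\cdot (-1)^{n(n-1)/2} n^n = 2^n(-1)^{n(n-1)/2}$, as required. The main obstacle is the inductive computation of $v_i^\flat$ and $u_i^\flat$ -- everything else is then a clean Vandermonde/discriminant manipulation. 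Small cases $n=1,2$ (where some power sums fail to vanish and $\tilde c^T\tilde b\neq 0$) need to be checked separately, but a direct calculation confirms the same formulas for $D(\tau)$ hold, so the final identity goes through uniformly.
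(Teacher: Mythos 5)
Your proof is correct, and at bottom it makes the same choice as the paper: a $\tau$ whose invariants are ``regular nilpotent $\tau$, regular semisimple $\tau_H$ with characteristic polynomial $x^n+1$,'' together with the collapse of $D(\tau)_{ij}$ to two terms via \eqref{eq:1-H-z-tau-comm-brack} (the paper's \eqref{eqn:D-formula-1}). The execution differs in two respects worth noting. First, you realize this conjugacy class in the standard coordinates \eqref{eqn:V-V-H-e-standard} with $\tau_H$ diagonal and a bordered matrix, whereas the paper keeps $\tau$ as the standard Jordan block and instead tilts $q$ to $e_1^*+e_{n+1}^*$; since $\Delta$ lies in the invariant ring $R$, either representative is legitimate, and both in fact produce the same data (your $\tfrac{2}{n}\sum_k\mu_k^{i+j}$ is exactly the paper's matrix with entries $-2$ on the antidiagonal $i+j=n$, $+2$ at $(n,n)$, and $0$ elsewhere). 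Second, you evaluate the determinant by the rank factorization $D=\tfrac{2}{n}EV^{T}$ and the identity $(\det V)^2=\operatorname{disc}(x^n+1)=(-1)^{n(n-1)/2}n^n$, while the paper reads off the single contributing permutation of its sparse matrix; your route trades the explicit computation of $\tau_H^{\ell}$ and the permutation-sign bookkeeping for a clean Vandermonde/discriminant computation, at the cost of the inductive determination of $v_i^\flat,u_i^\flat$ (which is fine: the power sums $p_s$ vanish for $1\le s\le n-1$, which is all the induction uses for $i\le n$). Two small remarks: your caution about $n=2$ is unnecessary, since the Lagrange identity and the induction only use $p_s=0$ for $s\le n-1$ and so already cover $n=2$; and for $n=1$ your $\tau$ is not nilpotent (its characteristic polynomial is $x^2+x+1$), but, as you say, a direct check still gives $\Delta(\tau)=2$ and $\Delta_0(\tau)=1$, so the lemma holds there as well.
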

\begin{proof}
  We will exhibit $\tau$ explicitly.  To do so, it is convenient to choose our coordinates carefully.

  First, some notation.  For $v \in V, m \in \End(V)$ and $u \in V^*$, we may canonically define
  \[m v \in V, \quad u m \in V^*, \quad u m v \in F, \quad u v \in F, \quad v u \in \End(F)
  \]
  via ``matrix multiplication,'' regarding $V$ (resp. $V^*$) as a space of column (resp. row) vectors.

  Recall that the $(n+1)$-dimensional space $V$ is the sum of the $n$-dimensional space $V_H$ and the line $F e$.  Set $p := e \in V$, and let $q \in V^*$ denote the unique element with $q|_{V_H} = 0$ and $q p = 1$.  Then $\mathfrak{h}$ is described as in \eqref{eqn:frak-h-via-p-q}, and $1_H = 1 - p q$.

  We choose an isomorphism $V \cong F^{n+1}$ so that, with the respect to the corresponding basis $e_1,\dotsc,e_{n+1}$ of $V$ and dual basis $e_1^*,\dotsc,e_{n+1}^*$ of $V^*$, we have
  \[
    p = e_1, \quad q = e_1^* + e_{n+1}^*.
  \]
  
  We remark that the resulting embedding $\glLie_{n}(F) \cong \mathfrak{h} \hookrightarrow \mathfrak{g} \cong \glLie_{n+1}(F)$ is not customary.  For instance, when $n=2$, it is may given by
  \begin{equation*}
    \begin{pmatrix}
      a & b \\
      c & d
    \end{pmatrix}
    \mapsto
    \begin{pmatrix}
      0 & -c & -d \\
      0 & a & b \\
      0 & c & d
    \end{pmatrix}.
  \end{equation*}

  In these coordinates, we take for $\tau$ the standard lower-triangular nilpotent Jordan block, given for (say) $n=3$ by
  \[
    \tau = \begin{pmatrix}
      0 & 0 & 0 & 0 \\
      1 & 0 & 0 & 0 \\
      0  & 1 & 0 & 0 \\
      0 & 0 & 1 & 0
    \end{pmatrix}.
  \]
  We note that both $p$ and $q$ are $\tau$-cyclic, so the element $\tau \in \mathfrak{g}$ is stable by the stability characterization recorded in \cite[\S14.3, Lem 3]{nelson-venkatesh-1} (see also \cite[Thm 6.1]{2007arXiv0705.2168R}).  We write $e_{i j} := e_{i,j} := e_i e_j^* \in \mathfrak{g}$ for the elementary matrices with respect to the chosen coordinates.  The matrix $p q$ is given by
  \[p q = e_{1, 1} + e_{1, n+1},\] and so
  \[1_H = 1 - p q = \sum_{i=2}^{n+1} e_{i,i} -e_{1,n+1}.\] We deduce that
  \[\tau_H = \sum_{i=2}^n e_{i+1,i} - e_{1,n} - e_{2,n+1}\]
  and then, by induction on $\ell \geq 0$, that
  \begin{equation}\label{eqn:formula-one-H-tau-H-ell}
    \tau_H^\ell =
    \sum_{i,j: i \geq 2, i -j = \ell} e_{i,j}
    -
    \sum_{i,j: j - i = n - \ell} e_{i,j}.
  \end{equation}
  For example, when $n=3$, the elements $\tau_H^\ell$ are given for $\ell=0,1,2$ by the matrices
  \begin{equation}\label{eqn:1-H-nu-H-ell}
    \begin{pmatrix}
      0 & 0 & 0 & -1 \\
      0 & 1 & 0 & 0 \\
      0 & 0 & 1 & 0 \\
      0 & 0 & 0 & 1 \\
    \end{pmatrix}, \quad
    \begin{pmatrix}
      0 & 0 & -1 & 0 \\
      0 & 0 & 0 & -1 \\
      0 & 1 & 0 & 0 \\
      0 & 0 & 1 & 0 \\
    \end{pmatrix}, \quad
    \begin{pmatrix}
      0 & -1 & 0 & 0 \\
      0 & 0 & -1 & 0 \\
      0 & 0 & 0 & -1 \\
      0 & 1 & 0 & 0 \\
    \end{pmatrix}.
  \end{equation}

  The identity \eqref{eqn:delta-0-tau-neg-1-n} is now simple to verify.  Since $\tau$ is nilpotent, every eigenvalue $\lambda_i$ of $\tau$ is zero.  By computing the characteristic polynomial of $\tau_H$ (e.g., the second matrix in \eqref{eqn:1-H-nu-H-ell}), we see that the eigenvalues $\{\mu_1,\dotsc,\mu_n\}$ of $\tau_H$ on $V_H$ are the solutions $\mu$ to $\mu^n = -1$.  The product of $-\mu$ over such $\mu$ is $1$, whence \eqref{eqn:delta-0-tau-neg-1-n}.

  It remains to verify \eqref{eqn:det-tau-2-n}.  Let $k, \ell \in \{1, \dotsc, n\}$.  We use \eqref{eq:1-H-z-tau-comm-brack} to write
  \[
    D(\tau)_{k,l} = q [\tau, [\tau^k, \tau_H^{\ell-1}]] p.
  \]
  Expanding both commutators, we obtain four terms.  Two terms vanish because $\tau_H^{l-1} p =0$ and $q \tau_H^{l-1} = 0$.  We obtain
  \begin{equation}\label{eqn:D-formula-1}
    D(\tau)_{k,l}
    =
    - q \tau \tau_H^{\ell-1}\tau^k p
    - q \tau^k \tau_H^{\ell-1}  \tau p.
  \end{equation}
  Using \eqref{eqn:formula-one-H-tau-H-ell} and the definition of $\tau$, we see that
  \begin{equation}\label{eqn:D-formula-3}
    q \tau \tau_H^{\ell-1}
    =
    1_{l < n}
    e_{n-l+1}^*
    -
    1_{l=n}
    e_{n+1}^*,
    \quad
    \tau^k p = e_{k+1},
  \end{equation}
  and
  \begin{equation}\label{eqn:D-formula-4}
    q \tau^k
    =
    e_{n-k+1}^*,
    \quad 
    \tau_H^{\ell-1}
    \tau 
    p
    =
    e_{\ell+1}
    -
    1_{\ell = n} e_1.
  \end{equation}
  Combining \eqref{eqn:D-formula-1}, \eqref{eqn:D-formula-3} and \eqref{eqn:D-formula-4}, we arrive at
  \begin{equation*}
    D(\tau)_{k,l}
    = \begin{cases}
      -2 & \text{ if } k + \ell = n, \\
      2 & \text{ if } (k,\ell) = (n,n), \\
      0 & \text{ otherwise}.
    \end{cases}
  \end{equation*}
  For instance, in the case $n=3$, we obtain
  \[
    D(\tau) = \begin{pmatrix}
      0 & -2 & 0 \\
      -2 & 0 & 0  \\
      0 & 0 & 2
    \end{pmatrix}.
  \]
  Thus only one permutation contributes to the sum formula for the determinant of $D(\tau)$.  The required identity \eqref{eqn:det-tau-2-n} follows after computing the sign of that permutation, which we leave to the reader.
\end{proof}

\printindex


\def\cprime{$'$} \def\cprime{$'$} \def\cprime{$'$} \def\cprime{$'$}

\end{document}